    \renewcommand*{\bm}[1]{\textbf{#1}}%
    \renewcommand*{\Vec}[1]{\textbf{#1}}%
    \renewcommand{\THEOREM}{Theorem\ }
    \renewcommand{\LEMMA}{Lemma\ }
    \renewcommand{\LEMMAS}{Lemmas\ }
    \renewcommand{\STEP}{Step\ }
    \renewcommand{\EQUATION}{Equation\ }
    \renewcommand{\EQUATIONS}{Equations\ }
    \renewcommand{\COROLLARY}{Corollary\ }
    \renewcommand{\COROLLARIES}{Corollaries\ }
    \renewcommand{\ref}[1]{#1}
    \renewcommand{\eqref}[1]{(#1)}
\definecolor{todocolor}{rgb}{1,0,0}
\definecolor{cmtcolor}{rgb}{0.1,0.1,0.9}
\definecolor{cmtqcolor}{HTML}{0959aa}
\def\ldelim{(}
\def\rdelim{)}
\newcommand{\IfNE}[2]{\ifthenelse{\isempty{#1}}{}{#2}}
\newcommand{\IfENE}[3]{\ifthenelse{\isempty{#1}}{#2}{#3}}
\NewDocumentCommand{\Sub}{s m O{} O{}}{\IfBooleanTF{#1}{\IfENE{#2}{_{#3#4}}{_{#3#2#4}}}{\IfNE{#2}{_{#3#2#4}}}}
\NewDocumentCommand{\Sup}{s m O{} O{}}{\IfBooleanTF{#1}{\IfENE{#2}{^{#3#4}}{^{#3#2#4}}}{\IfNE{#2}{^{#3#2#4}}}}
\newcommand{\negphantom}[1]{\ifmmode\settowidth{\dimen0}{$#1$}\else\settowidth{\dimen0}{#1}\fi\hspace*{-\dimen0}}
\protected\def\<{%
  \ifmmode
    \mskip0.5\thinmuskip
  \else
    \ifhmode
      \kern0.08334em
    \fi
  \fi
}
\titleformat{\paragraph}[runin]{\normalfont\itshape}{}{}{\theparagraph #1.}
\titleformat{\paragraph}[runin]{\normalfont\itshape}{}{}{#1.} 
\renewenvironment{proof}[1]{\par
 \pushQED{\qed}%
 \normalfont \topsep6\p@\@plus6\p@\relax
 \trivlist
 \item\relax
 {\itshape
 \proofname\ifthenelse{\equal{#1}{}}{\@addpunct{.}}{ (Proof of #1)\@addpunct{.}}}\hspace\labelsep\ignorespaces
}{%
 \popQED\endtrivlist\@endpefalse \vskip8\p@\@plus6\p@\relax
}
\newenvironment{subproof}[1]{\par
 \pushQED{\qed}
 \normalfont \topsep6\p@\@plus6\p@\relax
 \trivlist
 \item\relax
 {\itshape
 \proofname\ifthenelse{\equal{#1}{}}{\@addpunct{.}}{ (Proof of #1)\@addpunct{.}}}\hspace\labelsep\ignorespaces
}{%
 \popQED\endtrivlist\@endpefalse \vskip8\p@\@plus6\p@\relax
}
\newtheoremstyle{claim}
{3pt}
{3pt}
{\itshape}
{}
{\bfseries}
{.}
{.5em}
{}
\newtheoremstyle{property}
{3pt}
{3pt}
{\itshape}
{}
{}
{}
{.5em}
{}
\newtheoremstyle{LemmaRepeat}
{3pt}
{3pt}
{\itshape}
{}
{\bfseries}
{.}
{.5em}
{\thmname{#1}\thmnote{ #3}}
\newtheoremstyle{mystyle}
  {}
  {}
  {\itshape}
  {}
  {\bfseries}
  {.}
  { }
  {\thmname{#1}\thmnumber{ #2}\thmnote{ ({\normalfont#3})}}
\theoremstyle{mystyle}
\newtheorem{theorem}{Theorem}[section]
\newtheorem{corollary}[theorem]{Corollary}
\newtheorem{lemma}[theorem]{Lemma}
\newtheorem{assumption}[theorem]{Assumption}
\newtheorem{fact}[theorem]{Fact}
\newtheorem{remark}{Remark}[section]
\newtheorem*{theorem*}{Restatement of Theorem}
\newtheorem*{lemma*}{Restatement of Lemma}
\newtheorem*{claim*}{Restatement of Claim}
\newtheorem*{corollary*}{Restatement of Corollary}
\newtheorem*{fact*}{Restatement of Fact}
\theoremstyle{definition}
\newtheorem{definition}{Definition}[section]
\theoremstyle{remark}
\theoremstyle{claim} 
\newtheorem{claim}[theorem]{Claim}
\theoremstyle{property} 
\theoremstyle{LemmaRepeat} 
\renewenvironment{remark}[1][]{\par
 \pushQED{\qed}%
 \normalfont \topsep6\p@\@plus6\p@\relax
 \trivlist
 \item\relax
 \refstepcounter{remark}
 {{\bfseries Remark \theremark}\ifthenelse{\equal{#1}{}}{\@addpunct{.}}{ (#1)\@addpunct{.}}}\hspace\labelsep\ignorespaces
}{%
 \popQED\endtrivlist\@endpefalse \vskip8\p@\@plus6\p@\relax
}
\newcommand{\smallsquare}{%
  \text{\fboxsep=-.2pt\raisebox{0.75mm}{\fbox{\rule{0pt}{0.8ex}\rule{0.8ex}{0pt}}}}%
}
\newcommand{\Enum}[2][]{\IfENE{#1}{(#2)}{\Label{#1}{(#2)}}\xspace}
\newcommand{\TagEqn}{\stepcounter{equation}\tag{\theequation}}
\NewDocumentCommand{\Label}{s m m}{%
  \@bsphack
  \csname phantomsection\endcsname 
  \IfBooleanF{#1}{#3}%
  \def\@currentlabel{#3}{#2}
  \@esphack
}
\newcommand{\subalign}[1]{%
  \vcenter{%
    \Let@ \restore@math@cr \default@tag
    \baselineskip\fontdimen10 \scriptfont\tw@
    \advance\baselineskip\fontdimen12 \scriptfont\tw@
    \lineskip\thr@@\fontdimen8 \scriptfont\thr@@
    \lineskiplimit\lineskip
    \ialign{\hfil$\m@th\scriptstyle##$&$\m@th\scriptstyle{}##$\hfil\crcr
      #1\crcr
    }%
  }%
}
\DeclareMathAlphabet{\mathsfit}{T1}{\sfdefault}{\mddefault}{\sldefault}
\SetMathAlphabet{\mathsfit}{bold}{T1}{\sfdefault}{\bfdefault}{\sldefault}
\newcommand{\R}{\mathbb{R}}
\newcommand{\Z}{\mathbb{Z}}
\newcommand{\ZeroTo}[1]{\{ 0, \dots, #1 \}}
\newcommand{\ZminusO}[1][]{\IfENE{#1}{\Z \setminus \{0\}}{( \Z \setminus \{0\} )^{#1}}}
\newcommand{\RminusO}[1][]{\IfENE{#1}{\R \setminus \{0\}}{( \R \setminus \{0\} )^{#1}}}
\let\OperatorName\operatorname
\newcommand{\OperatorNameWithLimits}[1]{\operatornamewithlimits{#1}}
\newcommand{\Distr}[1]{\mathcal{#1}}
\newcommand{\Set}[1]{\mathcal{#1}}
\renewcommand{\Pr}{\operatornamewithlimits{\mathit{P}}}
\newcommand{\N}{\mathcal{N}}
\newcommand{\T}{T}
\newcommand{\IOp}{\mathbb{I}}
\newcommand{\IpmOp}{\mathbb{I}_{\pm}}
\NewDocumentCommand{\Log}{s O{} D(){}}{{%
  \ifthenelse{\isempty{#3}}{\log}{
    {
      \log\IfNE{#2}{_{#2}}
      \mathchoice
        {\left( #3 \right)} 
        {\IfBooleanTF{#1}{\left( #3 \right)}{(#3)}} 
        {\left( #3 \right)}
        {\left( #3 \right)}
}}}}
\NewDocumentCommand{\Sign}{t. s O{} D(){}}{{%
  \def\dfmt{\left( #4 \right)}
  \def\tfmt{( #4 )}
  \IfBooleanT{#1}{\;}
  \SignOp\Sub{#3}
  \IfNE{#4}{
    \IfBooleanTF{#2}{\tfmt}{
      \mathchoice
        {\dfmt} 
        {\tfmt} 
        {\tfmt} 
        {\tfmt} 
}}}}
\NewDocumentCommand{\SignO}{s D(){}}{{%
  \ifthenelse{\isempty{#2}}{}{
    \IfBooleanTF{#1}{\SignOOp(#2)}{
      \mathchoice
        {\SignOOp\left( #2 \right)} 
        {\SignOOp(#2)} 
        {\SignOOp(#2)} 
        {\SignOOp(#2)} 
}}}}
\NewDocumentCommand{\Supp}{s D(){}}{{%
  \def\dfmt{\left( #2 \right)}
  \def\tfmt{( #2 )}
  \SuppOp%
  \IfBooleanTF{#1}{\tfmt}{
    \mathchoice
      {\tfmt} 
      {\tfmt} 
      {\tfmt} 
      {\tfmt} 
}}}
\newcommand{\E}{\operatornamewithlimits{\mathbb{E}}}
\NewDocumentCommand{\I}{s D(){}}{{%
  \IOp%
  \ifthenelse{\isempty{#2}}{}{
    \IfBooleanTF{#1}{(#2)}{
      \mathchoice
        {\bigl( #2 \bigr)} 
        {(#2)} 
        {(#2)} 
        {(#2)} 
}}}}
\NewDocumentCommand{\Ipm}{s D(){}}{{%
  \IpmOp%
  \ifthenelse{\isempty{#2}}{}{
    \IfBooleanTF{#1}{(#2)}{
      \mathchoice
        {\left( #2 \right)} 
        {(#2)} 
        {(#2)} 
        {(#2)} 
}}}}
\NewDocumentCommand{\BigO}{s t' D(){}}{{%
  \def\dfmt{\IfBooleanTF{#1}{\IfNE{#3}{(#3)}}{\IfNE{#3}{\left( #3 \right)}}}
  \def\tfmt{\IfBooleanTF{#1}{\IfNE{#3}{\left( #3 \right)}}{\IfNE{#3}{(#3)}}}
  \IfBooleanTF{#2}{\BigOOp'}{\BigOOp}
  \IfBooleanTF{#1}{\tfmt}{
    \mathchoice
      {\dfmt} 
      {\tfmt} 
      {\tfmt} 
      {\tfmt} 
}}}
\NewDocumentCommand{\BigOmega}{s t' D(){}}{{%
  \def\dfmt{\IfBooleanTF{#1}{(#3)}{\left( #3 \right)}}
  \def\tfmt{\IfBooleanTF{#1}{\left( #3 \right)}{(#3)}}
  \IfBooleanTF{#2}{\BigOmegaOp'}{\BigOmegaOp}
  \IfBooleanTF{#1}{\tfmt}{
    \mathchoice
      {\dfmt} 
      {\tfmt} 
      {\tfmt} 
      {\tfmt} 
}}}
\NewDocumentCommand{\BigTheta}{s t' D(){}}{{%
  \def\dfmt{\IfBooleanTF{#1}{(#3)}{\left( #3 \right)}}
  \def\tfmt{\IfBooleanTF{#1}{\left( #3 \right)}{(#3)}}
  \IfBooleanTF{#2}{\BigThetaOp'}{\BigThetaOp}
  \IfBooleanTF{#1}{\tfmt}{
    \mathchoice
      {\dfmt} 
      {\tfmt} 
      {\tfmt} 
      {\tfmt} 
}}}
\NewDocumentCommand{\LittleO}{s t' D(){}}{{%
  \IfBooleanTF{#2}{\LittleOOp'}{\LittleOOp}
  \ifthenelse{\isempty{#3}}{}{
    \IfBooleanTF{#1}{(#3)}{
      \mathchoice
        {\left( #3 \right)} 
        {(#3)} 
        {(#3)} 
        {(#3)} 
}}}}
\NewDocumentCommand{\LittleOmega}{s t' D(){}}{{%
  \IfBooleanTF{#2}{\LittleOmegaOp'}{\LittleOmegaOp}
  \ifthenelse{\isempty{#3}}{}{
    \IfBooleanTF{#1}{(#3)}{
      \mathchoice
        {\left( #3 \right)} 
        {(#3)} 
        {(#3)} 
        {(#3)} 
}}}}
\NewDocumentCommand{\Paren}{s O{} m}{{%
  #2%
  \ifthenelse{\isempty{#3}}{}{
    \IfBooleanTF{#1}{(#3)}{
      \mathchoice
        {\left( #3 \right)} 
        {(#3)} 
        {(#3)} 
        {(#3)} 
}}}}
\NewDocumentCommand{\BigOOp}{t'}{\IfBooleanTF{#1}{\tilde{O}}{O}}
\NewDocumentCommand{\BigThetaOp}{t'}{\IfBooleanTF{#1}{\tilde{\Theta}}{\Theta}}
\NewDocumentCommand{\BigOmegaOp}{t'}{\IfBooleanTF{#1}{\tilde{\Omega}}{\Omega}}
\newcommand{\LittleOOp}{o}
\newcommand{\LittleOmegaOp}{\omega}
\newcommand{\SignOp}{\OperatorName{sign}}
\newcommand{\SignOOp}{\OperatorName{sign}_{0}}
\newcommand{\SuppOp}{\OperatorName{supp}}
\NewDocumentCommand{\Ker}{s O{}}{\IfBooleanTF{#1}{\widehat{\OperatorName{ker}}\Sub{#2}}{\OperatorName{ker}}}
\newcommand{\Span}{\OperatorName{span}}
\newcommand{\Diag}{\OperatorName{diag}}
\newcommand{\Dim}{\OperatorName{dim}}
\NewDocumentCommand{\Proj}{s m D(){}}{{%
  \def\dfmt{\IfNE{#3}{\left( #3 \right)}}
  \def\tfmt{\IfNE{#3}{( #3 )}}
  \mathsf{proj}_{#2}
  \IfBooleanTF{#1}{\tfmt}{
    \mathchoice
      {\dfmt} 
      {\tfmt} 
      {\tfmt} 
      {\tfmt} 
}}}
\newcommand{\lnorm}[1]{\ell_{#1}}
\RenewDocumentCommand{\Vec}{s +m}{\IfBooleanTF{#1}{#2}{\VecFmt{#2}}}
\newcommand{\VecFmt}[1]{\mathbf{#1}}
\newcommand{\MatFmt}[1]{\mathbf{#1}}
\NewDocumentCommand{\SVec}{s +m O{} +m}{\IfBooleanTF{#1}{#2^{\IfNE{#3}{(#3);}#4}}{\mathbf{#2}^{\IfNE{#3}{(#3);}#4}}}
\NewDocumentCommand{\BVec}{s O{} +m O{}}{\IfNE{#4}{\,(}\IfBooleanTF{#1}{(\mathbf{1}^{#3})\S{#2}}{\mathbf{1}^{\IfNE{#2}{(#2);}#3}}\IfNE{#4}{)_{#4}}}
\NewDocumentCommand{\SMat}{s +m O{} +m}{\IfBooleanTF{#1}{#2^{\IfNE{#3}{(#3);}#4}}{\bm{\mathrm{#2}}^{\IfNE{#3}{(#3);}#4}}}
\NewDocumentCommand{\PMat}{s +m O{} +m}{\IfBooleanTF{#1}{#2_{\IfNE{#3}{(#3);}(#4)}}{\bm{\mathrm{#2}}_{\IfNE{#3}{(#3);}(#4)}}}
\NewDocumentCommand{\Mat}{s +m}{\IfBooleanTF{#1}{#2}{\MatFmt{#2}}}
\NewDocumentCommand{\MatRow}{s +m +m O{}}{\IfBooleanTF{#1}{#2}{\bm{\mathrm{#2}}}\IfENE{#4}{_{#3}}{_{#3,#4}}}
\NewDocumentCommand{\MatCol}{s +m +m O{}}{\IfBooleanTF{#1}{#2}{\bm{\mathrm{\underline{#2}}}}_{#3}\IfNE{#4}{^{#4}}}
\NewDocumentCommand{\Submat}{s +m +m}{{\bm{\mathrm{#2}}}^{(#3)}}
\NewDocumentCommand{\SubmatRow}{s +m +m +m O{}}{\IfBooleanTF{#1}{#2}{\bm{\mathrm{#2}}}\IfENE{#5}{_{#4}}{_{#4,#5}}^{(#3)}}
\newcommand{\Var}{\OperatorName{Var}}
\newcommand{\Binomial}{\OperatorNameWithLimits{Binomial}}
\newcommand{\iid}[1][~]{i.i.d.#1}
\NewDocumentCommand{\MGF}{s}{\IfBooleanTF{#1}{moment generating function}{mgf}\xspace}
\NewDocumentCommand{\MGFs}{s}{\IfBooleanTF{#1}{moment generating functions}{mgfs}\xspace}
\newcommand{\subgaussian}{subgaussian\xspace}
\newcommand{\SphereSym}{S}
\newcommand{\Sphere}[2][]{\ifthenelse{\isempty{#1}}{\SphereSym^{#2-1}}{\mathcal{S}^{#2}}}
\newcommand{\SparseRealSubspace}[2]{{\R^{#2} \cap \Sigma_{#1}^{#2}}}
\newcommand{\SparseSphereSubspace}[2]{{\SphereSym^{#2-1} \cap \SparseSubspace{#1}{#2}}}
\newcommand{\SparseSubspace}[2]{\Sigma_{#1}^{#2}}
\newcommand{\hyphen}{\text{-}}
\NewDocumentCommand{\Th}{t'}{\IfBooleanTF{#1}{^{\prime\mathrm{th}}}{\!\,^{\mathrm{th}}}}
\newcommand{\defeq}{\triangleq}
\NewDocumentCommand{\dCmt}{s O{\qquad} m}{#2\blacktriangleright \IfBooleanTF{#1}{\text{#3}}{\IfNE{#3}{\text{#3} \ }}}
\NewDocumentCommand{\dCmtx}{s O{} m}{#2\dCmtIndent \IfBooleanTF{#1}{\text{#3}}{\IfNE{#3}{\text{#3} \ }}}
\newcommand{\dCmtIndent}{\phantom{\qquad \blacktriangleright \ \ }}
\newcommand{\cIf}{\text{if}\ }
\newcommand{\cWP}{\mathrm{with\ probability}\ }
\newcommand{\SubjectTo}{\text{subject to}}
\newcommand{\THEOREM}[1][~]{Theorem#1\ignorespaces}
\newcommand{\COROLLARY}[1][~]{Corollary#1\ignorespaces}
\newcommand{\LEMMA}[1][~]{Lemma#1\ignorespaces}
\newcommand{\FACT}[1][~]{Fact#1\ignorespaces}
\newcommand{\CLAIM}[1][~]{Claim#1\ignorespaces}
\newcommand{\EQUATION}[1][~]{Equation#1\ignorespaces}
\newcommand{\DEFINITION}[1][~]{Definition#1\ignorespaces}
\newcommand{\FIGURE}[1][~]{Figure#1\ignorespaces}
\newcommand{\ALGORITHM}[1][~]{Algorithm#1\ignorespaces}
\newcommand{\ASSUMPTION}[1][~]{Assumption#1\ignorespaces}
\newcommand{\STEP}[1][~]{Step#1\ignorespaces}
\newcommand{\CASE}[1][~]{Case#1\ignorespaces}
\newcommand{\CONDITION}[1][~]{Condition#1\ignorespaces}
\newcommand{\TASK}[1][~]{Task#1\ignorespaces}
\newcommand{\COROLLARIES}[1][~]{Corollaries#1\ignorespaces}
\newcommand{\LEMMAS}[1][~]{Lemmas#1\ignorespaces}
\newcommand{\EQUATIONS}[1][~]{Equations#1\ignorespaces}
\newcommand{\DEFINITIONS}[1][~]{Definitions#1\ignorespaces}
\newcommand{\SECTION}[1][~]{Section#1\ignorespaces}
\newcommand{\SECTIONS}[1][~]{Sections#1\ignorespaces}
\newcommand{\APPENDIX}[1][~]{Appendix#1\ignorespaces}
\newcommand{\APPENDICES}[1][~]{Appendices#1\ignorespaces}
\newcommand{\STEPS}[1][~]{Steps#1\ignorespaces}
\newcommand{\CASES}[1][~]{Cases#1\ignorespaces}
\NewDocumentCommand{\dueto}{s O{~}}{\IfBooleanTF{#1}{Due~to}{due~to}#2\ignorespaces}
\NewDocumentCommand{\see}{s}{\IfBooleanTF{#1}{See}{see},~\ignorespaces}
\NewDocumentCommand{\seealso}{s}{\IfBooleanTF{#1}{See~also}{see~also},~\ignorespaces}
\NewDocumentCommand{\seeeg}{s}{\IfBooleanTF{#1}{See}{see},~e.g.,~\ignorespaces}
\NewDocumentCommand{\Dueto}{s O{~}}{\IfBooleanTF{#1}{due~to}{Due~to}#2\ignorespaces}
\NewDocumentCommand{\See}{s}{\IfBooleanTF{#1}{see}{See},~\ignorespaces}
\NewDocumentCommand{\Seealso}{s}{\IfBooleanTF{#1}{see~also}{See~also},~\ignorespaces}
\NewDocumentCommand{\Seeeg}{s}{\IfBooleanTF{#1}{see}{See},~e.g.,~\ignorespaces}
\newcommand{\tab}{\ensuremath{~~~~}}
\newcommand{\Tab}[1][1]{%
    \ifthenelse{#1>0}{\tab}{}%
    \ifthenelse{#1>1}{\tab}{}%
    \ifthenelse{#1>2}{\tab}{}%
    \ifthenelse{#1>3}{\tab}{}%
    \ifthenelse{#1>4}{\tab}{}%
    \ifthenelse{#1>5}{\tab}{}%
    \ifthenelse{#1>6}{\tab}{}%
    \ifthenelse{#1>7}{\tab}{}%
    \ifthenelse{#1>8}{\tab}{}%
    \ifthenelse{#1>9}{\tab}{}%
    \ifthenelse{#1>10}{\tab}{}%
    \ifthenelse{#1>11}{\tab}{}%
    \ifthenelse{#1>12}{\tab}{}%
    \ifthenelse{#1>13}{\tab}{}%
    \ifthenelse{#1>14}{\tab}{}%
    \ifthenelse{#1>15}{\tab}{}%
    \ifthenelse{#1>16}{\tab}{}%
    \ifthenelse{#1>17}{\tab}{}%
    \ifthenelse{#1>18}{\tab}{}%
    \ifthenelse{#1>19}{\tab}{}%
    \ifthenelse{#1>20}{\tab}{}%
}
\newcommand{\AlignSp}{\tab}
\newcommand{\AlignIndent}{\tab}
\newcommand{\hfrac}[2]{#1/#2}
\NewDocumentCommand{\SBinom}{s +m +m}{\IfBooleanTF{#1}{\binom{#2}{#3}}{\binom{[#2]}{#3}}}
\newcommand{\+}{\phantom{+}}
\newcommand{\Ell}{\ell}
\newcommand{\Id}[1]{\Mat{I}_{#1}}
\newcommand{\DistSOp}[1]{d_{\Sphere{#1}}}
\NewDocumentCommand{\Dist}{s +m +m}{d\IfBooleanTF{#1}{( #2, #3 )}{\left( #2, #3 \right)}}
\NewDocumentCommand{\DistH}{s O{} +m +m}{{%
 \def\dfmt{\bigl( #3, #4 \bigr)}
 \def\tfmt{( #3, #4 )}
 d_{H}#2
 \IfBooleanTF{#1}{\dfmt}{
    \mathchoice
     {\tfmt} 
     {\tfmt} 
     {\tfmt} 
     {\tfmt} 
}}}
\NewDocumentCommand{\DistS}{s O{n} O{} +m +m}{{%
 \def\dfmt{\bigl( #4, #5 \bigr)}
 \def\tfmt{( #4, #5 )}
 \DistSOp{#2}#3
 \IfBooleanTF{#1}{\dfmt}{
    \mathchoice
     {\tfmt} 
     {\tfmt} 
     {\tfmt} 
     {\tfmt} 
}}}
\NewDocumentCommand{\LatestUpdate}{s +m}{\IfBooleanTF{#1}{(Latest update on: #2)}{(\emph{Latest update on: #2}.)}}
\newcommand{\thsp}{\hspace{0.2pt}}
\NewDocumentCommand{\Mid}{s m}{%
  \def\dfmt{\,#2\,}
  \def\tfmt{\,#2\,}
  \def\sfmt{\thsp#2\thsp}
  \def\ssfmt{\thsp#2\thsp}
  \IfBooleanTF{#1}{\tfmt}{%
    \mathchoice
      {\dfmt} 
      {\tfmt} 
      {\sfmt} 
      {\ssfmt} 
}}
\newcommand{\onebitcs}{1-bit compressed sensing\xspace}
\newcommand{\BIHT}{BIHT\xspace}
\NewDocumentCommand{\NBIHT}{s}{\IfBooleanTF{#1}{(normalized)}{normalized} \BIHT}
\NewDocumentCommand{\VIx}{+m O{}}{\Sub{#1\IfNE{#2}{,#2}}}
\newcommand{\IIx}[2][]{^{(#2)#1}}
\NewDocumentCommand{\RVIx}{+m O{}}{\Sub{#1\IfNE{#2}{,#2}}}
\newcommand{\Net}[2][]{\mathcal{C}_{#2\IfNE{#1}{;#1}}}
\newcommand{\BallNet}[2][]{\mathcal{D}_{#2\IfNE{#1}{;#1}}}
\newcommand{\Ball}[2][]{\mathcal{B}_{#2}\IfNE{#1}{^{(#1)}}}
\newcommand{\BallX}[2][]{\mathcal{B}'_{#2}\IfNE{#1}{^{(#1)}}}
\newcommand{\BallAngular}[2][]{\mathcal{B}_{\theta\leq #2}\IfNE{#1}{^{(#1)}}}
\NewDocumentCommand{\BallSparseSphere}{O{} +m D(){}}{\mathcal{B}_{#2}\IfNE{#1}{^{(#1)}}( #3 ) \cap \Sphere{n} \cap \SparseSubspace{k}{n}}
\newcommand{\ThresholdOp}{T\!}
\NewDocumentCommand{\Threshold}{s +m D(){}}{{%
  \def\dfmt{\left( #3 \right)}
  \def\tfmt{( #3 )}
  \ThresholdOp\Sub{#2}%
  \ifthenelse{\isempty{#3}}{}{
  \IfBooleanTF{#1}{\tfmt}{
    \mathchoice
      {\dfmt} 
      {\tfmt} 
      {\tfmt} 
      {\tfmt} 
}}}}
\NewDocumentCommand{\ThresholdSet}{s t' O{} +m D(){}}{{%
  \def\dfmt{\left( #5 \right)}
  \def\tfmt{( #5 )}
  \ThresholdOp_{\IfBooleanF{#2}{#3}#4}%
  \ifthenelse{\isempty{#5}}{}{%
  \IfBooleanTF{#1}{\tfmt}{
    \mathchoice
      {\dfmt} 
      {\tfmt} 
      {\tfmt} 
      {\tfmt} 
}}}}
\newcommand{\realvalued}{real-valued\xspace}
\newcommand{\pointwise}{point-wise\xspace}
\newcommand{\coordinatewise}{coordinate-wise\xspace}
\newcommand{\entrywise}{entry-wise\xspace}
\newcommand{\orderwise}{order-wise\xspace}
\newcommand{\eg}{e.g.,\xspace}
\newcommand{\ie}{i.e.,\xspace}
\newcommand{\WLOG}{without loss of generality\xspace}
\newcommand{\errorrate}{error-rate\xspace}
\newcommand{\topk}[1][k]{top-\(  #1  \)\xspace}
\newcommand{\polytime}{polynomial time\xspace}
\newcommand{\informationtheoretical}{information theoretical\xspace}
\newcommand{\informationtheoretically}{information theoretically\xspace}
\newcommand{\nonseparable}{non-separable\xspace}
\newcommand{\SACap}[2][]{\mathsf{A}\Sub{#2\IfNE{#1}{,#1}}}
\newcommand{\RegularizedIncompleteBetaFunctionOp}[1]{I_{#1}}
\NewDocumentCommand{\RegularizedIncompleteBetaFunction}{s m m m}{{%
  \def\dfmt{\left( #3, #4 \right)}
  \def\tfmt{( #3, #4 )}
  \RegularizedIncompleteBetaFunctionOp{#2}
  \IfBooleanTF{#1}{\tfmt}{
    \mathchoice
      {\dfmt} 
      {\tfmt} 
      {\tfmt} 
      {\tfmt} 
}}}
\NewDocumentCommand{\RHS}{s}{\IfBooleanTF{#1}{RHS}{right-hand-side}\xspace}
\NewDocumentCommand{\LHS}{s}{\IfBooleanTF{#1}{LHS}{left-hand-side}\xspace}
\renewcommand{\k}{k}
\newcommand{\kO}{k_{0}}
\newcommand{\kOExpr}{\min \{ 2\k+\max_{\JCoords \in \JS} | \JCoords |, \n \}}
\newcommand{\kX}{k'}
\newcommand{\kOX}{k''_{0}}
\newcommand{\kOXExpr}{\min \{ \k+\max_{\JCoordsX \in \JSX} | \JCoordsX |, \n \}}
\newcommand{\kOXX}{k'_{0}}
\newcommand{\kOXXExpr}{\min \{ \max \{ 2\nO, \max_{\JCoordsXX' \in \JSXX} | \JCoordsXX' | \}, \n \}}
\newcommand{\kOXXExprX}{\min \{ \max \{ {| \Supp( \thetaX ) \cup \Supp( \thetaXX ) |}, \allowbreak | \JCoordsXX | \}, \n \}}
\newcommand{\m}{n}
\newcommand{\mOneS}{\m_{1}}
\newcommand{\mTwoS}{\m_{2}}
\newcommand{\mThreeS}{\m_{3}}
\newcommand{\mFourS}{\m_{4}}
\newcommand{\mFiveS}{\m_{5}}
\newcommand{\n}{d}
\newcommand{\nO}{\k}
\newcommand{\epsilonX}{\epsilon}
\newcommand{\Iter}{t}
\newcommand{\IterX}{t'}
\newcommand{\InductiveClaim}[1]{C(#1)}
\newcommand{\rhoX}{\rho}
\newcommand{\rhoLDX}{\rho_{1}}
\newcommand{\rhoLDXX}{\rho_{3}}
\newcommand{\rhoSD}{\rho_{2}}
\newcommand{\betaX}{\beta}
\newcommand{\betaXX}{r}
\newcommand{\gammaX}{\gamma}
\newcommand{\GAMMAX}{\gamma}
\newcommand{\zetaX}{\zeta}
\newcommand{\alphaX}{\alpha}
\newcommand{\alphaO}{\alphaX_{0}}
\newcommand{\alphaOExpr}[1][\alphaOthreshold]{\max \{ \alphaX, {#1} \}}
\newcommand{\deltaX}{\delta}
\newcommand{\tauX}{\tau}
\newcommand{\tauXEXPR}[1][]{\frac{\nuX \IfNE{#1}{( #1 )}}{\ConstdSD \log \left( \frac{2e}{\nuX \IfNE{#1}{( #1 )}} \right)}}
\newcommand{\nuX}{\eta}
\newcommand{\nuXEXPR}[1][]{\frac{\GAMMAX \Constb \deltaX}{\sqrt{\frac{2}{\pi} \log \left( \frac{4e}{\nuX \IfNE{#1}{( #1 )}} \right)}}}
\newcommand{\nuXEXPRLROne}{\displaystyle \frac{\bigl( 1 - \frac{\cO^{2}}{6} \bigr) \Constb \betaX \deltaX}{\sqrt{2\pi \log \left( \frac{4e}{\nuX} \right)}}}
\newcommand{\nuXEXPRLRTwo}{\displaystyle \frac{\bO \Constb \deltaX}{\sqrt{\log \left( \frac{4e}{\nuX} \right)}}}
\newcommand{\nuXEXPRLRThree}{\displaystyle \frac{\bO \Constb \deltaX}{\sqrt{\log \left( \frac{4e}{\nuX} \right)}}}
\newcommand{\nuXEXPRPR}[1][]{\frac{\Constb \deltaX}{\sqrt{\frac{\pi}{2} (1 + \frac{1}{\betaX^{2}} )\log \left( \frac{4e}{\nuX \IfNE{#1}{( #1 )}} \right)}}}
\newcommand{\uX}{u}
\newcommand{\vX}{v}
\newcommand{\ConstbLD}{c}
\newcommand{\ConstbSD}{c'}
\newcommand{\ConstdSD}{\ConstCFive}
\newcommand{\ConstC}{a_{1}}
\newcommand{\Constc}{a_{3}}
\newcommand{\Constb}{a_{2}}
\newcommand{\cO}{b_{1}}
\newcommand{\bO}{b_{0}}
\newcommand{\ConstCOne}{c_{1}}
\newcommand{\ConstCTwo}{c_{1}}
\newcommand{\ConstCThree}{c_{2}}
\newcommand{\ConstCFour}{c_{3}}
\newcommand{\ConstCFive}{c_{4}}
\newcommand{\ConstCTwoValue}{\frac{192}{\ConstbLD}}
\newcommand{\ConstCThreeValue}{\sqrt{\frac{800}{\pi}} \ConstC}
\newcommand{\ConstCFourValue}{64}
\newcommand{\ConstCFiveValue}{256}
\newcommand{\ConstA}{a}
\newcommand{\ConstB}{b}
\newcommand{\ConstAValue}{24}
\newcommand{\ConstBValue}{3}
\newcommand{\ParamSpace}{\Theta}
\newcommand{\ParamCover}{\Set{C}}
\newcommand{\ParamCoverJ}[1][\JCoords]{\Set{C}_{#1}}
\newcommand{\ParamCoverX}{\Set{\tilde{C}}}
\newcommand{\BallXCover}[1]{\Set{D}( #1 )}
\newcommand{\THETA}{\theta}
\NewDocumentCommand{\thetaStar}{s}{\IfBooleanTF{#1}{\THETA}{\boldsymbol{\THETA}}^{\ast}}
\NewDocumentCommand{\thetaHat}{s O{}}{\IfBooleanTF{#1}{\hat{\THETA}}{\boldsymbol{\hat{\THETA}}}\IfNE{#2}{\IIx{#2}}}
\NewDocumentCommand{\thetaHatX}{s O{}}{\IfBooleanTF{#1}{\tilde{\THETA}}{\boldsymbol{\tilde{\THETA}}}\IfNE{#2}{\IIx{#2}}}
\NewDocumentCommand{\thetaX}{s}{\IfBooleanTF{#1}{\THETA}{\boldsymbol{\THETA}}}
\NewDocumentCommand{\thetaXX}{s}{\IfBooleanTF{#1}{\hat{\THETA}}{\boldsymbol{\hat{\THETA}}}}
\NewDocumentCommand{\thetaXXX}{s}{\IfBooleanTF{#1}{w_{\thetaXX}}{\Vec{w}_{\thetaXX}}}
\NewDocumentCommand{\CovV}{s}{\IfBooleanTF{#1}{x}{\VecFmt{x}}}
\NewDocumentCommand{\CovVX}{s}{\IfBooleanTF{#1}{x}{\VecFmt{\tilde{x}}}}
\NewDocumentCommand{\CovM}{s}{\IfBooleanTF{#1}{X}{\MatFmt{X}}}
\NewDocumentCommand{\CovMX}{s}{\IfBooleanTF{#1}{\tilde{X}}{\MatFmt{\tilde{X}}}}
\NewDocumentCommand{\RespV}{s}{\IfBooleanTF{#1}{y}{\VecFmt{y}}}
\newcommand{\gFn}[1][]{\bar{h}\Sub{#1}}
\newcommand{\hFn}[1][]{h\Sub{#1}}
\NewDocumentCommand{\gXFn}{s O{} m m}{\tilde{h}'\Sub{#2}\IfBooleanF{#1}{(#4;#3)}}
\NewDocumentCommand{\hXFn}{s O{} m m}{\tilde{h}\Sub{#2}\IfBooleanF{#1}{(#4;#3)}}
\newcommand{\hfFn}[1][]{h_{\fFn \IfNE{#1}{;#1}}}
\newcommand{\gfFn}[1][]{\bar{h}_{\fFn \IfNE{#1}{;#1}}}
\newcommand{\fFn}[1][]{f\Sub{#1}}
\newcommand{\pFn}{p}
\newcommand{\pbetaFn}[2][\betaX]{\pFn \IfNE{#2}{( #2 )}}
\newcommand{\betaXParam}{}
\newcommand{\linkFn}{g}
\newcommand{\intlinkFn}{G}
\newcommand{\objectiveFn}{\mathcal{J}}
\newcommand{\pFnL}{\pFn_{\operatorname{logistic}}}
\newcommand{\pFnP}{\pFn_{\operatorname{probit}}}
\newcommand{\alphaXL}{\alphaX_{\operatorname{logistic}}}
\newcommand{\alphaXP}{\alphaX_{\operatorname{probit}}}
\newcommand{\gammaXL}{\gammaX_{\operatorname{logistic}}}
\newcommand{\gammaXP}{\gammaX_{\operatorname{probit}}}
\newcommand{\integrandL}{f_{\operatorname{logistic}}}
\newcommand{\integrandP}{f_{\operatorname{probit}}}
\newcommand{\ICoords}{J''}
\newcommand{\JCoords}{J}
\newcommand{\JCoordsX}{J''}
\newcommand{\JCoordsXX}{J'}
\newcommand{\JCoordsXXX}{J'''}
\newcommand{\JCoordsu}{J'}
\newcommand{\JCoordsv}{J''}
\newcommand{\JS}{\Set{J}}
\newcommand{\JSX}{\Set{J}''}
\newcommand{\JSXX}{\Set{J}'}
\newcommand{\JSXXX}{\Set{J}'''}
\newcommand{\iIx}{i}
\newcommand{\jIx}{j}
\newcommand{\lIx}{\ell}
\newcommand{\wX}{w}
\newcommand{\zX}{z}
\newcommand{\zO}{z_{0}}
\newcommand{\zXX}{z'}
\newcommand{\rX}[1][]{r\Sub{#1}}
\newcommand{\yX}{y}
\newcommand{\sX}{s}
\newcommand{\sXX}{s}
\newcommand{\sXXX}{s'}
\newcommand{\tX}{t}
\newcommand{\tXX}{t'}
\newcommand{\tXXX}{t''}
\NewDocumentCommand{\pExprx}{s +m}{\IfBooleanTF{#1}{1 - \pFn( #2 ) + \pFn( -(#2) )}{1 - \pFn( #2 ) + \pFn( -#2 )}}
\NewDocumentCommand{\pExprxx}{s +m}{\IfBooleanTF{#1}{1 - \pFn( #2 ) + \pFn( -#2 )}{1 - \pFn( \betaX #2 ) + \pFn( -\betaX #2 )}}
\let\pExpr\pExprx
\newcommand{\pExprFn}{\nu}
\newcommand{\ZRV}{Z}
\newcommand{\ZRVX}{Z}
\newcommand{\RRV}{R}
\newcommand{\LRV}{L}
\newcommand{\LRVO}{L_{0}}
\newcommand{\lX}{\ell}
\newcommand{\EllX}{\ell}
\newcommand{\Zi}[1][\iIx]{\ZRV\RVIx{#1}}
\NewDocumentCommand{\Zij}{O{\iIx} O{\jIx}}{Z\RVIx{#1,#2}}
\NewDocumentCommand{\Wij}{O{\iIx} O{\jIx}}{W\RVIx{#1,#2}}
\NewDocumentCommand{\Yi}{O{\iIx}}{Y\RVIx{#1}}
\NewDocumentCommand{\Uij}{O{\iIx} O{\jIx}}{U\RVIx{#1,#2}}
\NewDocumentCommand{\Uj}{O{\jIx}}{U\RVIx{#1}}
\NewDocumentCommand{\Ui}{O{\iIx}}{U\RVIx{#1}}
\NewDocumentCommand{\URV}{}{U}
\NewDocumentCommand{\Vi}{O{\iIx}}{V\RVIx{#1}}
\NewDocumentCommand{\VRV}{}{V}
\newcommand{\Ri}[1][\iIx]{\RRV\RVIx{#1}}
\newcommand{\Wi}[1][\iIx]{W\RVIx{#1}}
\newcommand{\WRV}{W}
\newcommand{\muX}[1][]{\mu\Sub{#1}}
\newcommand{\LRVX}[1]{L(#1)}
\newcommand{\LRVXX}[2]{L(#1,#2)}
\newcommand{\ARV}[3]{A\Sub{#1}(#2,#3,\JCoordsXX)}
\newcommand{\XRV}{X}
\newcommand{\dX}{m}
\newcommand{\ej}[1][\jIx]{\Vec{e}\Sub{#1}}
\newcommand{\WS}[1]{\Set{W}( #1 )}
\newcommand{\wS}{\Vec{w}}
\newcommand{\pdf}[1]{f\Sub{#1}}
\newcommand{\mgf}[1]{\psi\Sub{#1}}
\newcommand{\fFnX}{f_{1}}
\newcommand{\fFnXX}{f_{2}}
\newcommand{\uV}{u}
\newcommand{\vV}{v}
\newcommand{\wV}{w}
\newcommand{\ux}{u}
\newcommand{\vx}{v}
\newcommand{\wx}{w}
\newcommand{\tx}{t}
\newcommand{\fx}{f_{1}}
\newcommand{\fxx}{f_{2}}
\newcommand{\zx}{z}
\newcommand{\ExistsST}[2]{\exists #1 \ \ #2}
\newcommand{\Forall}[2]{\forall #1 \ \ #2}
\newcommand{\VEE}{\text{ or }}
\newcommand{\ADIST}{\arccos( \langle \thetaStar, \thetaX \rangle )}
\newcommand{\EDIST}{\| \thetaStar - \thetaX \|_{2}}
\newcommand{\ADISTX}{\arccos( \langle \thetaX, \thetaXX \rangle )}
\newcommand{\JSSIZES}{\sum_{\ell=0}^{\k} \binom{\n}{\ell}}
\newcommand{\JSPCSIZES}{\sum_{\ell=0}^{\k} \binom{\n}{\ell} \sum_{\ell'=0}^{\k} \binom{\n}{\ell'} \left( \frac{\ConstB}{\tauX} \right)^{\ell'}}
\newcommand{\PCSIZES}{\sum_{\ell=0}^{\k} \binom{\n}{\ell} \left( \frac{3}{\tauX} \right)^{\ell}}
\newcommand{\JSPCSIZED}{\left( \frac{6}{\tauX} + 2 \right)^{\n}}
\newcommand{\MaxBallXCoverSize}{Q}
\newcommand{\qX}{q}
\newcommand{\qXExpr}{\sum_{\ell=0}^{\nuX \m} \binom{\m}{\ell}}
\let\BallXX\BallX
\newcommand{\SX}{\Set{S}}
\newcommand{\SXX}{\Set{T}}
\newcommand{\sSX}{s}
\newcommand{\sSXX}{t}
\newcommand{\dSX}{d_{\SX}}
\newcommand{\dDim}{m}
\newcommand{\RAICs}{RAICs\xspace}
\newcommand{\DENOM}{\| \E[ \thetaX + \hfFn( \thetaStar, \thetaX ) ] \|_{2}}
\NewDocumentCommand{\mEXPR}{O{d} O{}}{%
  \max \!\!\begin{array}[t]{l} \displaystyle \Biggl\{
    \frac{\ConstCTwo \alphaO}{\GAMMAX^{2} \deltaX^{2}}
    \log \left( \frac{\ConstA}{\rhoX} \ifthenelse{\equal{#1}{s}}{\JSPCSIZES}{\ifthenelse{\equal{#1}{d}}{\JSPCSIZED}{| \JS | | \ParamCover |}} \right)
    ,\\ \displaystyle \phantom{\Bigg\{}
    \frac{\ConstCThree}{\GAMMAX \deltaX {\sqrt{\log \left( \frac{4e}{\nuX} \right)}}}
    \log \left( \frac{\ConstA}{\rhoX} \ifthenelse{\equal{#1}{s}}{\JSPCSIZES}{\ifthenelse{\equal{#1}{d}}{\JSPCSIZED}{| \JS | | \ParamCover |}} \right)
    ,\\ \displaystyle \phantom{\Bigg\{}
    \frac{\ConstCFour}{\nuX} \log \left( \frac{\ConstA}{\rhoX} \right)
    ,
    \frac{\ConstCFive \ifthenelse{\equal{#1}{s}}{\k}{\n}}{\nuX} \log \left( \frac{1}{\nuX} \right)
  \Biggr\} #2 \end{array}
}
\newcommand{\tauXInvOEXPR}[2][]{\frac{\IfENE{#1}{1}{#1}}{#2}}
\newcommand{\nuXInvOEXPR}[2][]{\frac{\IfENE{#1}{1}{#1}}{#2}}
\NewDocumentCommand{\mOEXPRD}{+m O{}}{%
  \BigO \Biggl( \max \!\!\begin{array}[t]{l} \displaystyle \Biggl\{
    \frac{\alphaO \n}{\GAMMAX^{2} #1^{2}}
    \log \left( \tauXInvOEXPR{#1} \right)
    +
    \frac{\alphaO}{\GAMMAX^{2} #1^{2}}
    \log \left( \frac{1}{\rhoX} \right)
    ,
    \nuXInvOEXPR[\n]{#1} \log^{3/2} \left( \nuXInvOEXPR{#1} \right)
    ,
    \nuXInvOEXPR{#1} \sqrt{\log \left( \nuXInvOEXPR{#1} \right)} \log \left( \frac{1}{\rhoX} \right)
  \Biggr\} \Biggr) #2 \end{array}
}
\NewDocumentCommand{\mOEXPRS}{+m O{}}{%
  \BigO \Biggl( \max \!\!\begin{array}[t]{l} \displaystyle \Biggl\{
    \frac{\alphaO \k}{\GAMMAX^{2} #1^{2}}
    \log \left( \tauXInvOEXPR[\n]{#1\k} \right)
    +
    \frac{\alphaO}{\GAMMAX^{2} #1^{2}}
    \log \left( \frac{1}{\rhoX} \right)
    ,
    \nuXInvOEXPR[\k]{#1} \log^{3/2} \left( \nuXInvOEXPR{#1} \right)
    ,
    \nuXInvOEXPR{#1} \sqrt{\log \left( \nuXInvOEXPR{#1} \right)} \log \left( \frac{1}{\rhoX} \right)
  \Biggr\} \Biggr) #2 \end{array}
}
\newcommand{\mOneEXPR}[1][d]{%
  \begin{cases}
  \displaystyle
  \frac{432\pi^{3}}{\bigl( 1 - \frac{\cO^{2}}{6} \bigr)^{2} \betaX^{2} \deltaX}
  \log \left( \frac{24}{\rhoX} \ifthenelse{\equal{#1}{s}}{\JSPCSIZES}{\JSPCSIZED} \right)
  ,& \cIf \betaX \in (0, \betaXThresholdLROne) ,\\
  \displaystyle
  \frac{216\pi^{2}}{\bO^{2} \deltaX}
  \log \left( \frac{24}{\rhoX} \ifthenelse{\equal{#1}{s}}{\JSPCSIZES}{\JSPCSIZED} \right)
  ,& \cIf \betaX \in [\betaXThresholdLROne, \betaXThresholdLRTwo) ,\\
  \displaystyle
  \frac{216\pi^{2}}{\bO^{2} \deltaX}
  \log \left( \frac{24}{\rhoX} \ifthenelse{\equal{#1}{s}}{\JSPCSIZES}{\JSPCSIZED} \right)
  ,& \cIf \betaX \in [\betaXThresholdLRTwo, \infty) ,
  \end{cases}
}
\NewDocumentCommand{\mOneOEXPR}{O{} +m O{,}}{%
  \newcommand{\SIZEEXPR}{\ifthenelse{\equal{#1}{s}}{\tauXInvOEXPR[\n]{#2\k}}{\tauXInvOEXPR{#2}}}
  \newcommand{\NK}{\ifthenelse{\equal{#1}{s}}{\k}{\n}}
  \begin{cases}
  \displaystyle
  \BigO'(
  \frac{\NK}{\betaX^{2} #2}
  \log \left( \SIZEEXPR \right)
  +
  \frac{1}{\betaX^{2} #2}
  \log \left( \frac{1}{\rhoX} \right)
  )
  ,& \cIf \betaX \in (0, \betaXThresholdLROne) ,\\
  \displaystyle
  \BigO'(
  \frac{\NK}{#2}
  \log \left( \SIZEEXPR \right)
  +
  \frac{1}{#2}
  \log \left( \frac{1}{\rhoX} \right)
  )
  ,& \cIf \betaX \in [\betaXThresholdLROne, \betaXThresholdLRTwo) ,\\
  \displaystyle
  \BigO'(
  \frac{\NK}{#2}
  \log \left( \SIZEEXPR \right)
  +
  \frac{1}{#2}
  \log \left( \frac{1}{\rhoX} \right)
  )
  ,& \cIf \betaX \in [\betaXThresholdLRTwo, \infty) #3
  \end{cases}
}
\newcommand{\mTwoEXPR}[1][d]{%
  \begin{cases}
  \displaystyle
  \frac{96\pi^{2}}{\bigl( 1 - \frac{\cO^{2}}{6} \bigr)^{2} \ConstbLD^{2} \betaX^{2} \deltaX^{2}}
  \log \left( \frac{24}{\rhoX} \ifthenelse{\equal{#1}{s}}{\JSPCSIZES}{\JSPCSIZED} \right)
  ,& \cIf \betaX \in (0, \betaXThresholdLROne) ,\\
  \displaystyle
  \frac{\sqrt{2\pi} 96}{\bO^{2} \ConstbLD^{2} \betaX \deltaX^{2}}
  \log \left( \frac{24}{\rhoX} \ifthenelse{\equal{#1}{s}}{\JSPCSIZES}{\JSPCSIZED} \right)
  ,& \cIf \betaX \in [\betaXThresholdLROne, \betaXThresholdLRTwo) ,\\
  \displaystyle
  \frac{96 \pi}{\bO^{2} \ConstbLD^{2} \deltaX}
  \log \left( \frac{24}{\rhoX} \ifthenelse{\equal{#1}{s}}{\JSPCSIZES}{\JSPCSIZED} \right)
  ,& \cIf \betaX \in [\betaXThresholdLRTwo, \infty) ,
  \end{cases}
}
\NewDocumentCommand{\mTwoOEXPR}{O{} +m O{,}}{%
  \newcommand{\SIZEEXPR}{\ifthenelse{\equal{#1}{s}}{\tauXInvOEXPR[\n]{#2\k}}{\tauXInvOEXPR{#2}}}
  \newcommand{\NK}{\ifthenelse{\equal{#1}{s}}{\k}{\n}}
  \begin{cases}
  \displaystyle
  \BigO'(
  \frac{\NK}{\betaX^{2} #2^{2}}
  \log \left( \SIZEEXPR \right)
  +
  \frac{1}{\betaX^{2} #2^{2}}
  \log \left( \frac{1}{\rhoX} \right)
  )
  ,& \cIf \betaX \in (0, \betaXThresholdLROne) ,\\
  \displaystyle
  \BigO'(
  \frac{\NK}{\betaX #2^{2}}
  \log \left( \SIZEEXPR \right)
  +
  \frac{1}{\betaX #2^{2}}
  \log \left( \frac{1}{\rhoX} \right)
  )
  ,& \cIf \betaX \in [\betaXThresholdLROne, \betaXThresholdLRTwo) ,\\
  \displaystyle
  \BigO'(
  \frac{\NK}{#2}
  \log \left( \SIZEEXPR \right)
  +
  \frac{1}{#2}
  \log \left( \frac{1}{\rhoX} \right)
  )
  ,& \cIf \betaX \in [\betaXThresholdLRTwo, \infty) #3
  \end{cases}
}
\newcommand{\mThreeEXPR}[1][d]{%
  \begin{cases}
  \displaystyle
  \frac{200 \sqrt{2\pi} \Constb}{\bigl( 1 - \frac{\cO^{2}}{6} \bigr) \Constc^{2} \betaX \deltaX}
  \frac{\log \left( \frac{24}{\rhoX} \ifthenelse{\equal{#1}{s}}{\JSPCSIZES}{\JSPCSIZED} \right)}{\sqrt{\log \left( \frac{4e}{\nuX} \right)}}
  ,& \cIf \betaX \in (0, \betaXThresholdLROne) ,\\
  \displaystyle
  \frac{200 \Constb}{\bO \Constc^{2} \deltaX}
  \frac{\log \left( \frac{24}{\rhoX} \ifthenelse{\equal{#1}{s}}{\JSPCSIZES}{\JSPCSIZED} \right)}{\sqrt{\log \left( \frac{4e}{\nuX} \right)}}
  ,& \cIf \betaX \in [\betaXThresholdLROne, \betaXThresholdLRTwo) ,\\
  \displaystyle
  \frac{200 \Constb}{\bO \Constc^{2} \deltaX}
  \frac{\log \left( \frac{24}{\rhoX} \ifthenelse{\equal{#1}{s}}{\JSPCSIZES}{\JSPCSIZED} \right)}{\sqrt{\log \left( \frac{4e}{\nuX} \right)}}
  ,& \cIf \betaX \in [\betaXThresholdLRTwo, \infty) ,
  \end{cases}
}
\NewDocumentCommand{\mThreeOEXPR}{O{} +m O{,}}{%
  \newcommand{\SIZEEXPR}{\ifthenelse{\equal{#1}{s}}{\tauXInvOEXPR[\n]{#2\k}}{\tauXInvOEXPR{#2}}}
  \newcommand{\NK}{\ifthenelse{\equal{#1}{s}}{\k}{\n}}
  \begin{cases}
  \displaystyle
  \BigO'(
  \frac{\NK}{\betaX #2}
  \ifthenelse{\equal{#1}{s}}{\frac{\log \left( \SIZEEXPR \right)}{\sqrt{\log \left( \nuXInvOEXPR{#2} \right)}}}{\sqrt{\log \left( \SIZEEXPR \right)}}
  +
  \frac{\log \left( \frac{1}{\rhoX} \right)}{\betaX #2 \sqrt{\log \left( \nuXInvOEXPR{#2} \right)}}
  )
  ,& \cIf \betaX \in (0, \betaXThresholdLROne) ,\\
  \displaystyle
  \BigO'(
  \frac{\NK}{#2}
  \ifthenelse{\equal{#1}{s}}{\frac{\log \left( \SIZEEXPR \right)}{\sqrt{\log \left( \nuXInvOEXPR{#2} \right)}}}{\sqrt{\log \left( \SIZEEXPR \right)}}
  +
  \frac{\log \left( \frac{1}{\rhoX} \right)}{#2 \sqrt{\log \left( \nuXInvOEXPR{#2} \right)}}
  )
  ,& \cIf \betaX \in [\betaXThresholdLROne, \betaXThresholdLRTwo) ,\\
  \displaystyle
  \BigO'(
  \frac{\NK}{#2}
  \ifthenelse{\equal{#1}{s}}{\frac{\log \left( \SIZEEXPR \right)}{\sqrt{\log \left( \nuXInvOEXPR{#2} \right)}}}{\sqrt{\log \left( \SIZEEXPR \right)}}
  +
  \frac{\log \left( \frac{1}{\rhoX} \right)}{#2 \sqrt{\log \left( \nuXInvOEXPR{#2} \right)}}
  )
  ,& \cIf \betaX \in [\betaXThresholdLRTwo, \infty) #3
  \end{cases}
}
\newcommand{\mFourEXPR}[1][d]{%
  \begin{cases}
  \displaystyle
  \frac{64 \sqrt{2\pi}}{\bigl( 1 - \frac{\cO^{2}}{6} \bigr) \Constb \betaX \deltaX}
  \sqrt{\log \left( \frac{4e}{\nuX} \right)}
  \log \left( \frac{24}{\rhoX} \right)
  ,& \cIf \betaX \in (0, \betaXThresholdLROne) ,\\
  \displaystyle
  \frac{64}{\bO \Constb \deltaX}
  \sqrt{\log \left( \frac{4e}{\nuX} \right)}
  \log \left( \frac{24}{\rhoX} \right)
  ,& \cIf \betaX \in [\betaXThresholdLROne, \betaXThresholdLRTwo) ,\\
  \displaystyle
  \frac{64}{\bO \Constb \deltaX}
  \sqrt{\log \left( \frac{4e}{\nuX} \right)}
  \log \left( \frac{24}{\rhoX} \right)
  ,& \cIf \betaX \in [\betaXThresholdLRTwo, \infty) ,
  \end{cases}
}
\NewDocumentCommand{\mFourOEXPR}{O{} +m O{,}}{%
  \newcommand{\SIZEEXPR}{\ifthenelse{\equal{#1}{s}}{\tauXInvOEXPR[\n]{#2\k}}{\tauXInvOEXPR{#2}}}
  \newcommand{\NK}{\ifthenelse{\equal{#1}{s}}{\k}{\n}}
  \begin{cases}
  \displaystyle
  \BigO'(
  \ifthenelse{\equal{#1}{s}}{
  \frac{\k}{\betaX #2}
  \sqrt{\log \left( \nuXInvOEXPR{#2} \right)}
  \log \left( \frac{\n}{\k} \right)
  +
  }{}
  \frac{1}{\betaX #2}
  \sqrt{\log \left( \nuXInvOEXPR{#2} \right)}
  \log \left( \frac{1}{\rhoX} \right)
  )
  ,& \cIf \betaX \in (0, \betaXThresholdLROne) ,\\
  \displaystyle
  \BigO'(
  \ifthenelse{\equal{#1}{s}}{
  \frac{\k}{#2}
  \sqrt{\log \left( \nuXInvOEXPR{#2} \right)}
  \log \left( \frac{\n}{\k} \right)
  +
  }{}
  \frac{1}{#2}
  \sqrt{\log \left( \nuXInvOEXPR{#2} \right)}
  \log \left( \frac{1}{\rhoX} \right)
  )
  ,& \cIf \betaX \in [\betaXThresholdLROne, \betaXThresholdLRTwo) ,\\
  \displaystyle
  \BigO'(
  \ifthenelse{\equal{#1}{s}}{
  \frac{\k}{#2}
  \sqrt{\log \left( \nuXInvOEXPR{#2} \right)}
  \log \left( \frac{\n}{\k} \right)
  +
  }{}
  \frac{1}{#2}
  \sqrt{\log \left( \nuXInvOEXPR{#2} \right)}
  \log \left( \frac{1}{\rhoX} \right)
  )
  ,& \cIf \betaX \in [\betaXThresholdLRTwo, \infty) #3
  \end{cases}
}
\newcommand{\mFiveEXPR}[1][d]{%
  \begin{cases}
  \displaystyle
  \frac{\sqrt{2\pi} \ConstdSD \ifthenelse{\equal{#1}{s}}{\k}{\n}}{\bigl( 1 - \frac{\cO^{2}}{6} \bigr) \Constb \betaX \deltaX}
  \sqrt{\log \left( \frac{4e}{\nuX} \right)}
  \log \left( \frac{1}{\nuX} \right)
  ,& \cIf \betaX \in (0, \betaXThresholdLROne) ,\\
  \displaystyle
  \frac{\ConstdSD \ifthenelse{\equal{#1}{s}}{\k}{\n}}{\bO \Constb \deltaX}
  \sqrt{\log \left( \frac{4e}{\nuX} \right)}
  \log \left( \frac{1}{\nuX} \right)
  ,& \cIf \betaX \in [\betaXThresholdLROne, \betaXThresholdLRTwo) ,\\
  \displaystyle
  \frac{\ConstdSD \ifthenelse{\equal{#1}{s}}{\k}{\n}}{\bO \Constb \deltaX}
  \sqrt{\log \left( \frac{4e}{\nuX} \right)}
  \log \left( \frac{1}{\nuX} \right)
  ,& \cIf \betaX \in [\betaXThresholdLRTwo, \infty) ,
  \end{cases}
}
\NewDocumentCommand{\mFiveOEXPR}{O{} +m O{,}}{%
  \newcommand{\SIZEEXPR}{\ifthenelse{\equal{#1}{s}}{\tauXInvOEXPR[\n]{#2\k}}{\tauXInvOEXPR{#2}}}
  \newcommand{\NK}{\ifthenelse{\equal{#1}{s}}{\k}{\n}}
  \begin{cases}
  \displaystyle
  \BigO'(
  \frac{\NK}{\betaX #2}
  \log^{3/2} \left( \nuXInvOEXPR{#2} \right)
  )
  ,& \cIf \betaX \in (0, \betaXThresholdLROne) ,\\
  \displaystyle
  \BigO'(
  \frac{\NK}{#2}
  \log^{3/2} \left( \nuXInvOEXPR{#2} \right)
  )
  ,& \cIf \betaX \in [\betaXThresholdLROne, \betaXThresholdLRTwo) ,\\
  \displaystyle
  \BigO'(
  \frac{\NK}{#2}
  \log^{3/2} \left( \nuXInvOEXPR{#2} \right)
  )
  ,& \cIf \betaX \in [\betaXThresholdLRTwo, \infty) #3
  \end{cases}
}
\NewDocumentCommand{\mOEXPRLR}{O{d} +m O{,}}{
\newcommand{\NK}{\ifthenelse{\equal{#1}{s}}{\k}{\n}}
  \begin{cases}
  \displaystyle
  \BigO'(
  \frac{\NK}{\betaX^{2} #2^{2}}
  )
  ,& \cIf \betaX \in (0, \betaXThresholdLROne) ,\\
  \displaystyle
  \BigO'(
  \frac{\NK}{\betaX #2^{2}}
  )
  ,& \cIf \betaX \in [\betaXThresholdLROne, \betaXThresholdLRTwo) ,\\
  \displaystyle
  \BigO'(
  \frac{\NK}{#2}
  )
  ,& \cIf \betaX \in [\frac{\ConstbetaXThrsholdLR}{\epsilonX}, \infty) #3
  \end{cases}
}
\NewDocumentCommand{\mOEXPRPR}{O{d} +m O{,}}{
  \begin{cases}
  \displaystyle
  \BigO'( \frac{\k}{\betaX^{2} \epsilonX^{2}} ) ,&\cIf \betaX \in (0,\betaXThresholdPROne) ,\\
  \displaystyle
  \BigO'( \frac{\k}{\betaX \epsilonX^{2}} )     ,&\cIf \betaX \in [\betaXThresholdPROne,\betaXThresholdPRTwo) ,\\
  \displaystyle
  \BigO'( \frac{\k}{\epsilonX} )                ,&\cIf \betaX \in [\betaXThresholdPRTwo, \infty) #3
  \end{cases}
}
\newcommand{\TS}{\Supp( \thetaStar ) \cup \Supp( \thetaHat[\Iter-1] ) \cup \Supp( \thetaHat[\Iter] )}
\newcommand{\sep}{\;}
\newcommand{\MeasM}{\Mat{X}}
\newcommand{\MeasV}{\Vec{x}}
\newcommand{\Jbiht}{\mathcal{J}_{\mathrm{BIHT}}}
\newcommand{\Jglmtron}{\mathcal{J}_{\mathrm{\GLMtronX}}}
\newcommand{\GLMtron}{GLM\hyphen tron\xspace}
\newcommand{\GLMtronX}{GLM\hyphen tron\xspace}
\newcommand{\betaXnamelr}{inverse temperature\xspace}
\newcommand{\betaXnamepr}{SNR\xspace}
\newcommand{\betaXThresholdLROne}{\cO}
\newcommand{\betaXThresholdLRTwo}{\frac{\ConstbetaXThrsholdLR}{\epsilonX}}
\newcommand{\ConstbetaXThrsholdValueLR}{\frac{3}{\sqrt{2 \pi}} ( 5+\sqrt{21} )}
\newcommand{\ConstbetaXThrsholdValuedeltaLR}{\sqrt{\frac{2}{\pi}}}
\newcommand{\ConstbetaXThrsholdLR}{b_{2}}
\newcommand{\betaXThresholdPROne}{\ConstbetaXThresholdPROne}
\newcommand{\betaXThresholdPRTwo}{\frac{\ConstbetaXThresholdPRTwo}{\epsilonX}}
\newcommand{\ConstbetaXThresholdPROne}{b_{3}}
\newcommand{\ConstbetaXThresholdPROneValue}{1}
\newcommand{\ConstbetaXThresholdPRTwo}{b_{4}}
\newcommand{\ConstbetaXThresholdPRTwoValue}{\frac{3}{2}( 5+\sqrt{21} )}
\newcommand{\GTR}{>}
\newcommand{\EWRValue}{\frac{\sqrt{\hfrac{2}{\pi}}-\gammaX}{2\alphaX}}
\definecolor{LinkColor}{HTML}{667699}
\definecolor{CiteColor}{HTML}{507395}
\definecolor{URLColor}{rgb}{0.3,0.5,0.7}
\definecolor{MathInlineColor}{HTML}{000000}
\def\lightgraycolor{0.9}
\def\mediumgraycolor{0.55}
\def\darkgraycolor{0.4}
\definecolor{light-gray}{gray}{\lightgraycolor}
\definecolor{medium-gray}{gray}{\mediumgraycolor}
\definecolor{dark-gray}{gray}{0.4}
\definecolor{cmtc}{rgb}{0.1,0.1,0.9}
\NewDocumentCommand{\StartComment}{s}{%
\IfBooleanTF{#1}{%
\color{light-gray}\everymath{\color{light-gray}}%
\def\graycolor{\lightgraycolor}
\definecolor{todocolor}{gray}{\graycolor}%
\definecolor{cmtcolor}{gray}{\graycolor}%
\definecolor{cmtqcolor}{gray}{\graycolor}%
\definecolor{MathInlineColor}{gray}{\graycolor}%
\definecolor{LinkColor}{gray}{\graycolor}%
\definecolor{CiteColor}{gray}{\graycolor}%
\definecolor{URLColor}{gray}{\graycolor}%
}{%
\color{medium-gray}\everymath{\color{medium-gray}}%
\def\graycolor{\mediumgraycolor}%
\definecolor{todocolor}{gray}{\graycolor}%
\definecolor{cmtcolor}{gray}{\graycolor}%
\definecolor{cmtqcolor}{gray}{\graycolor}%
\definecolor{MathInlineColor}{gray}{\graycolor}%
\definecolor{LinkColor}{gray}{\graycolor}%
\definecolor{CiteColor}{gray}{\graycolor}%
\definecolor{URLColor}{gray}{\graycolor}%
}}
\NewDocumentCommand{\StartRemark}{s}{%
\IfBooleanTF{#1}{%
\color{dark-gray}\everymath{\color{dark-gray}}%
\def\graycolor{\darkgraycolor}
\definecolor{todocolor}{gray}{\graycolor}%
\definecolor{cmtcolor}{gray}{\graycolor}%
\definecolor{cmtqcolor}{gray}{\graycolor}%
\definecolor{MathInlineColor}{gray}{\graycolor}%
\definecolor{LinkColor}{gray}{\graycolor}%
\definecolor{CiteColor}{gray}{\graycolor}%
\definecolor{URLColor}{gray}{\graycolor}%
}{%
\color{dark-gray}\everymath{\color{dark-gray}}%
\def\graycolor{\darkgraycolor}
\definecolor{todocolor}{gray}{\graycolor}%
\definecolor{cmtcolor}{gray}{\graycolor}%
\definecolor{cmtqcolor}{gray}{\graycolor}%
\definecolor{MathInlineColor}{gray}{\graycolor}%
\definecolor{LinkColor}{gray}{\graycolor}%
\definecolor{CiteColor}{gray}{\graycolor}%
\definecolor{URLColor}{gray}{\graycolor}%
}}
\newcommand{\xToDo}[1]{\normalfont\textcolor{todocolor}{$\textcolor{todocolor}{\ldelim}$\textbf{To\,do}\IfENE{#1}{\textbf{:}\;Fill this in.}{\textbf{:}\;{#1}}$\textcolor{todocolor}{\rdelim}$}}
\newcommand{\ToDo}[1]{%
  \colorlet{origcolor}{.}\everymath{\color{todocolor}}%
  \ifthenelse{\boolean{showcomments}}%
  {\relax\ifmmode\text{\xToDo{#1}}\else\xToDo{#1}\fi}%
  {}%
  \everymath{\color{origcolor}}
}%
\newcommand{\xCOMMENT}[1]{\normalfont\textcolor{cmtcolor}{$\textcolor{cmtcolor}{\ldelim}$\textbf{Note}\IfNE{#1}{\textbf{:}\;{#1}}$\textcolor{cmtcolor}{\rdelim}$}}
\newcommand{\COMMENT}[1]{%
  \colorlet{origcolor}{.}\everymath{\color{cmtcolor}}%
  \ifthenelse{\boolean{showcomments}}%
  {\relax\ifmmode\text{\xCOMMENT{#1}}\else\xCOMMENT{#1}\fi}%
  {}%
  \everymath{\color{origcolor}}
}%
\NewDocumentCommand{\?}{s +m}{%
  \colorlet{origcolor}{.}\everymath{\color{cmtqcolor}}%
  \ifthenelse{\boolean{showcomments}}%
  {\normalfont\textcolor{cmtqcolor}{{\IfBooleanF{#1}{$\textcolor{cmtqcolor}{\ldelim}$}#2$\textcolor{cmtqcolor}{\rdelim}$}}}%
  {}%
  \everymath{\color{origcolor}}
}%
\NewDocumentCommand{\BEGINMARK}{s}{\everymath{\color{blue}}\color{blue}}
\definecolor{cmtoutcolor}{gray}{0.7}
\newcommand{\xGRAYOUT}[1]{\normalfont\textcolor{cmtoutcolor}{#1}}
\newcommand{\GRAYOUT}[1]{%
  \colorlet{origcolor}{.}\everymath{\color{cmtoutcolor}}%
  \ifthenelse{\boolean{showcomments}}%
  {\relax\ifmmode\text{\xGRAYOUT{\ensuremath{#1}}}\else\xGRAYOUT{#1}\fi}%
  {}%
  \everymath{\color{MathInlineColor}}%
}%
\newcommand{\BEGINGRAYOUT}{\everymath{\color{cmtoutcolor}}\color{cmtoutcolor}}
\newcommand{\ENDGRAYOUT}{\everymath{\color{MathInlineColor}}\color{black}}
\newcommand{\checkoff}{}
\newcommand{\mostlycheckoff}{}
\newcommand{\checkoffbutmayberecheck}{}
\newcommand{\checkthis}[1][]{}
\newcommand{\XXX}[1]{}
\newcommand\blfootnote[1]{%
  \begingroup
  \renewcommand\thefootnote{}\footnote{#1}%
  \addtocounter{footnote}{-1}%
  \endgroup
}
\title{Learning sparse generalized linear models with binary outcomes via iterative hard thresholding}
\author{Namiko Matsumoto \and Arya Mazumdar\blfootnote{The authors are with the University of California San Diego. This work is supported in part by NSF awards 2133484, 2127929 and  NSF GRFP No. DGE-2038238. Emails: \texttt{\{nmatsumoto,arya\}@ucsd.edu}.}}
\begin{document}
 \maketitle

\begin{abstract}

In statistics, generalized linear models (GLMs) are widely used for modeling
data and can expressively capture 
potential nonlinear dependence of the model's outcomes on its covariates.
Within the broad family of GLMs, those with binary outcomes, which include logistic and probit regressions, are motivated by common tasks such as  binary classification with (possibly) non-separable data. In addition, in modern machine learning and statistics, data is often high-dimensional yet has a low intrinsic dimension, making sparsity constraints in models another reasonable consideration. In this work, we propose to use and analyze an iterative hard thresholding (projected gradient descent on the ReLU loss) algorithm, called \emph{binary iterative hard thresholding (BIHT)}, for parameter estimation in sparse GLMs with binary outcomes. We establish that BIHT is statistically efficient and converges to the correct solution for parameter estimation in  a general class of sparse binary GLMs.
Unlike many other methods for learning GLMs, including maximum likelihood estimation, generalized approximate message passing, and GLM-tron \citep{kakade2011efficient,bahmani2016learning}, BIHT does not require knowledge of the GLM's link function, offering flexibility and generality in allowing the algorithm to learn arbitrary binary GLMs.
As two applications, logistic and probit regression are additionally studied.
In this regard, it is shown that in logistic regression, the algorithm is in fact statistically optimal in the sense that the order-wise sample complexity matches (up to logarithmic factors) the lower bound obtained previously.
To the best of our knowledge, this is the first work achieving statistical optimality for logistic regression in all noise regimes with a computationally efficient algorithm. Moreover, for probit regression, our sample complexity is on the same order as that obtained for logistic regression. 
\end{abstract}
\section{Introduction}
\label{outline:intro}

\subsection{Generalized Linear Models}
\label{outline:intro|>glm}

Generalized linear models (GLMs) are a popular statistical paradigm that has been extensively studied since their introduction by \cite{nelder1972generalized} as a generalization
and unifying framework encompassing several
common
statistical models.
In a GLM, each response (random) variable, \(  \RespV* \in \R \), has distribution with a parameter, \(  \thetaStar \in \ParamSpace  \), taken from a parameter space, \(  \ParamSpace \subseteq \R^{\n}  \), and dependent on a covariate, \(  \CovV  \in \R^{\n} \), such that,
\(  \E[\RespV* \Mid| \CovV] = \linkFn^{-1}( \langle \CovV, \thetaStar \rangle )  \),
where \(  \linkFn  \) is the \emph{link function} that ``links'' the linear combination, \(  \langle \CovV, \thetaStar \rangle  \), to the conditional expectation of the response, \(  \RespV* \Mid| \CovV  \).
This
framework
offers a flexible extension of the popular linear regression model, 
\(  \E[\RespV* \Mid| \CovV] = \langle \CovV, \thetaStar \rangle  \)
---to allow for nonlinearities.
The reader is referred to \cite{mccullagh2019generalized,dobson2018introduction,fahrmeir1994multivariate,hardin2007generalized} for background on GLMs.

A fundamental problem in GLMs is parameter estimation---that is, the estimation of the parameter, \(  \thetaStar \in \ParamSpace  \), when given \(  \m  \)  i.i.d. samples,
\(  ( \CovV\VIx{1}, \RespV*_{1} ), \dots, ( \CovV\VIx{\m}, \RespV*_{\m} )  \),
where the observed responses,
\(  \RespV*_{1}, \dots, \RespV*_{\m}  \),
and known covariates,
\(  \CovV\VIx{1}, \dots, \CovV\VIx{\m}  \),
are related
in the manner stated earlier.
%
%
Maximum likelihood estimation (MLE) \citep{myung2003tutorial,richards1961method,gallant1987semi,wald1949note,aitchison1960maximum} is a predominant approach to parameter estimation in GLMs \citep{mcculloch1997maximum,hardin2007generalized}, where the estimates can be obtained through techniques such as iterative weighted least-squares methods \citep{nelder1972generalized,firth1992generalized,hardin2007generalized}, the Newton-Raphson method \citep{jin2022statistical,hardin2007generalized}, and the Gauss-Newton method \citep{wedderburn1974quasi}.
Gradient descent can also compute maximum likelihood estimates for parameter estimation in GLMs.
One such line of work has studied gradient descent on the objective,
\(  \objectiveFn( \thetaX ) = \sum_{\iIx} \intlinkFn( \langle \CovV\VIx{\iIx}, \thetaX \rangle ) - \RespV*_{\iIx} \langle \CovV\VIx{\iIx}, \thetaX \rangle  \),
where \(  \intlinkFn  \) is defined such that
\(  \linkFn^{-1} = \partial \intlinkFn  \).
\cite{kakade2011efficient} propose a perceptron-like algorithm, \emph{\GLMtron}, for learning 
GLMs by performing gradient descent on this loss function.
\cite{bahmani2016learning} study a similar gradient decent algorithm which incorporates (sparse) projections.
When the outcomes, \(  \RespV*_{\iIx}  \), are \iid and follow an exponential distribution, \(  \objectiveFn  \) is the negative log-likelihood function, and therefore, under these conditions, the minimizer of \(  \objectiveFn  \) is the maximum likelihood estimate.
Note, however, that \cite{kakade2011efficient,bahmani2016learning} consider more general classes of GLMs, 
meaning that their results extend beyond MLE in some cases.
These works will be discussed further and compared to the contributions of this work in Appendix \ref{outline:intro|>contributions|>comparison}.
Of note, the \emph{Sparsitron} algorithm of \cite{klivans2017learning}, a multiplicative-weights update method, improves on the error of \GLMtron. 

One consideration in learning GLMs is whether the link function is known.
In contrast to many other works, \eg \cite{nelder1972generalized,kakade2011efficient,bahmani2016learning,barbier2019optimal}, the algorithm studied in this work does not require knowledge of the specific link function. 
Some other works do consider learning a class of single-index models (SIMs) agnostically, without access to the link function, 
e.g. \cite{gollakota2024agnostically} via omnipredictors,  which are predictors that optimize over all loss functions in a collection. The setup and guarantees of this line of work is quite different than ours.

While the discussion thus far has not constrained the outcomes, \(  \RespV*  \), GLMs with binary outcomes are popular for classification and particularly useful when data is \nonseparable.
For brevity, this class of GLMs will be referred to as \emph{binary GLMs} throughout this manuscript.
They contain several important families of models, such as a subset of the exponential family that includes the ubiquitous logistic and probit regression models.
This work is concerned with binary GLMs with a mild assumption on their link functions, which indeed holds for logistic and probit regression.
Appendix \ref{outline:intro|>contributions|>comparison} briefly surveys some relevant prior works on parameter estimation in binary GLMs.
 
In treating binary GLMs as (linear) classifiers, where \(  \thetaX  \) becomes a feature vector, the assumption of sparsity in a high-dimensional parameter space---or in
this analogy
the feature space---is a commonplace paradigm in machine learning.
Moreover, interpreting the parameter \(  \thetaX  \) as a signal or data vector, parameter estimation in GLMs can be framed as
the inverse problem of
signal reconstruction from noisy measurements.
In this regard, requiring high-dimensional parameters---or signals in this perspective---to be contained in a sparse subspace is again a frequent consideration. 
In fact, this present work is motivated by the connection of binary GLMs to \onebitcs, a topic within compressed sensing where the entries of the compressed signal representations are quantized to single bits: the \(  \pm  \) signs of the unquantized values.
The next section, \SECTION \ref{outline:intro|>1bcs}, briefly introduces \onebitcs and explores this connection.


\subsection{1-Bit Compressed Sensing and Binary Iterative Hard Thresholding}
\label{outline:intro|>1bcs}

The \onebitcs problem~\cite{boufounos20081} seeks to recover an unknown vector \(  \thetaStar  \) when given \(  \m  \) responses,
\(  \RespV*_{\iIx} = \Sign(\langle \MeasV\VIx{\iIx}, \thetaStar \rangle )  \),
where
\(  \MeasV\VIx{1}, \dots, \MeasV\VIx{\m} \in \R^{\n}  \) are measurement vectors, and only $\pm$ signs of the linear measurements are being kept.
Letting
\(  \MeasM = ( \MeasV\VIx{1} \;\cdots\; \MeasV\VIx{\m} )^{\T}  \)
and extending the notation of signs
to vectors by applying it \coordinatewise,
this representation can be written concisely as
\(  \RespV = ( \RespV*_{1}, \dots, \RespV*_{\m} ) = \Sign( \MeasM \thetaStar )  \).
Typically, it is assumed that \(  \m \ll \n  \), and thus,
\(  \RespV  \)
is a compressed representation of the original signal vector, \(  \thetaStar  \).
The compressibility of \(  \thetaStar  \) is often incorporated by a notion of sparsity.
In this work, it is assumed that \(  \thetaStar  \) is \emph{\(  \k  \)-sparse}---that is, the vector is supported on at most \(  \k \leq \n  \) nonzero entries.
Additionally, one of the most common choices of measurements is \iid standard Gaussian random vectors, as is the design studied in this work.
Most often, in \onebitcs, the unknown vector, \(  \thetaStar  \), is assumed to have unit norm since information about the norm is lost by the binarization of the responses.
In relating \onebitcs to binary classification, this noiseless model corresponds with classification of separable data.
Alternatively, in connecting \onebitcs to binary GLMs, the sign function can be replaced by a random function \(  \fFn  \), to incorporate noise,   defined such that each \(  \iIx\Th  \) response takes the value \(  1  \) with probability \(  \pFn( \langle \MeasV\VIx{\iIx}, \thetaStar \rangle )  \) and the value \(  -1  \) with probability \(  1-\pFn( \langle \MeasV\VIx{\iIx}, \thetaStar \rangle )  \) for some function, \(  \pFn : \R \to [0,1]  \).
This setting, which can also be interpreted as binary classification of non-separable data, is studied in this work.

For reconstruction in \onebitcs, \cite{jacques2013robust} propose the \emph{binary iterative hard thresholding (BIHT)} algorithm, inspired by the existing {\em iterative hard thresholding} algorithm for compressed sensing~\cite{blumensath2009iterative}. BIHT is a projected, (sub)gradient descent algorithm on the (negative) ReLU loss, given for
\(  \thetaX \in \R^{\n}  \)
by
\begin{gather}
\label{eqn:notations:objective-function}
  \mathcal{J}( \thetaX )
  =
  \| [ \Diag( \RespV ) \CovM \thetaX ]_{-} \|_{1}
,\end{gather}
where
\(  \RespV \defeq \Sign( \CovM \thetaStar ) \in \{ -1,1 \}^{\m}  \)
is the vector of binary responses and where
\(  ( [ \Vec{v} ]_{-} )_{\iIx} = [ \Vec*{v}_{\iIx} ]_{-} = \min \{ \Vec*{v}_{\iIx}, 0 \}  \),
\(  \iIx \in [\m]  \),
for a vector \(  \Vec{v} \in \R^{\m}  \).
\cite{jacques2013robust} shows that
\begin{gather}
\label{eqn:notations:grad-objective-function}
  \CovM^{\T} \sep \frac{1}{2} ( \Sign( \CovM \thetaX ) - \RespV ) \in \nabla_{\thetaX} \mathcal{J}( \thetaX )
,\end{gather}
leading to their BIHT algorithm.
This present work studies the normalized variant of the BIHT algorithm of \cite{jacques2013robust}, presented in \ALGORITHM \ref{alg:biht:normalized}, which iteratively performs
\Enum[{\label{step:alg:biht:normalized:i}}]{i} first a (sub)gradient descent step on the objective function \(  \objectiveFn  \) in \EQUATION \eqref{eqn:notations:objective-function}, 
followed by
\Enum[{\label{step:alg:biht:normalized:ii}}]{ii} a projection onto \(  \ParamSpace  \),
by performing the \topk thresholding operation and then normalizing.
Note that in the dense regime, when \(  \k=\n  \), the algorithm applies no thresholding in the second step but still normalizes the approximation.
%
\par 
%
For this noiseless response setting in \onebitcs,
\cite{friedlander2021nbiht} shows that under the Gaussian design, the BIHT algorithm converges to the correct solutions with a suboptimal sample complexity. 
Subsequently, \cite{matsumoto2022binary} improves the sample complexity to the theoretically \orderwise optimal (up to logarithmic factors) sample complexity:
\(  \BigO'( \frac{\k}{\epsilonX} \log( \frac{\n}{\k} ) \sqrt{\log( \smash[b]{\frac{1}{\epsilonX}} )} + \frac{\k}{\epsilonX} \log^{3/2}( \frac{1}{\epsilonX} ) )  \),
matching lower bound on the sample complexity for recovery in \onebitcs established by \cite{jacques2013robust}.
One limitation of \cite{friedlander2021nbiht,matsumoto2022binary} is the inability to immediately extend the convergence result to \nonseparable data and settings with noise.
This was partially addressed in later work, in which \cite{matsumoto2024robust} shows robustness properties of BIHT when \(  \fFn  \) incorporates adversarial noise: convergence of BIHT is possible with a similar sample complexity to that stated for the noiseless (or \nonseparable) setting.
This begs the question: what convergence guarantees are possible in other models of noise or \nonseparable data?
This work seeks an answer to this question for one such class of models: binary GLMs.


\begin{algorithm}
\caption{Normalized binary iterative hard thresholding (BIHT)\label{alg:biht:normalized}}
\Given{\(  \RespV, \CovM  \)}
\(
  \thetaHat^{(0)}
  \sim
  \SparseSphereSubspace{\k}{\n}
\)\;

\For
{\( \Iter = 1, 2, 3, \dots \)}
{
  \(
    \thetaHatX[\Iter]
    \gets
    \thetaHat^{(\Iter-1)}
    +
    \frac{\sqrt{2\pi}}{\m}
    \sep
    \CovM^{\T}
    \sep
    \frac{1}{2}
    \left(
      \RespV - \Sign{} \big( \CovM \thetaHat[\Iter-1] \big)
    \right)
  \)\; \label{alg:biht:normalized:intermediate}

  \(
    \thetaHat[\Iter]
    \gets
    \frac
    {\Threshold{\k}{}(\thetaHatX[\Iter])}
    {\| \Threshold{\k}{}(\thetaHatX[\Iter]) \|_{2}}
  \)\; \label{alg:biht:normalized:projection}
}
\end{algorithm}
\subsection{Main Contributions}
\label{outline:intro|>contributions}


One realization that leads to this work is that the BIHT algorithm (Algorithm~\ref{alg:biht:normalized}) can be applicable to any possibly randomized $1$-bit  quantization function, \(  \fFn  \), without any change, and not restricted to the sign function. Therefore, in particular,  Algorithm~\ref{alg:biht:normalized} is a perfect candidate for efficient parameter estimation in binary GLMs, and can be advantageous over some existing estimation methods as it is oblivious of the link function.

 This work establishes that, when applied to binary GLMs where the link function satisfies a reasonable property (\ie \ASSUMPTION \ref{assumption:p}), the BIHT algorithm iteratively produces approximations that converge to the \(  \epsilonX  \)-ball around the true parameter, \(  \thetaStar  \), with high probability under the Gaussian covariates design with a sample complexity of
\begin{gather*}
  \m = \BigO'(\max\Bigg[\frac{\max(\alpha,\epsilonX)}{\gammaX^2\epsilonX^{2}} \k \log \left( \frac{\n}{\epsilonX \k} \right), \frac{k}{\epsilonX} \log^{3/2} \left( \frac{1}{\epsilonX} \right) \Bigg] )
,\end{gather*}
where 
the big-O term hides  some less-significant factors for the sake of readability, and
\(  \alphaX  \) and \(  \gammaX  \) are  properties of the link function defined in Equations~\eqref{eqn:notations:alpha:def} and \eqref{eqn:notations:gamma:def}
which can be explicitly computed for standard GLMs (leading to  tighter sample complexity \(\BigO'(\k/\epsilonX)\) in some regimes - equivalent to {\em optimistic rate} in learning theory). In addition the estimation error decays at an exponential rate with respect to the number of iterations, \(  \Iter \in \Z_{\geq 0}  \), of the algorithm, bounded from above by
\begin{gather*}
  \| \thetaStar - \thetaHat[\Iter] \|_{2}
  =
  \BigO( \epsilon^{1-2^{-\Iter}} )
,\end{gather*}
and hence, asymptotically approaches the \errorrate of \(  \epsilonX  \) as \(  \Iter \to \infty  \).
%
\par 
%
When specialized to two of the most popular binary GLMs, logistic and probit regressions, convergence of BIHT can be shown and explicit expressions
can derived for the sample complexities (via closed-form bounds on \(  \alphaX  \) and \(  \gammaX  \)).
For logistic and probit regressions these
lead to
the optimal scaling with the ``signal-to-noise ratio'' SNR, \(  \betaX  \), and \errorrate, \(  \epsilonX  \), as \(  \betaX  \) is varied.
Notably, as \(  \betaX \to \infty  \), the dependence of the sample complexity on the \errorrate, reduces to \(  \frac{1}{\epsilonX}  \).
Due to lower bounds of \cite{hsu2024sample},
the resultant sample complexity for logistic regression is optimal up to logarithmic factors in all 
regimes.
As discussed in \cite{hsu2024sample}, for logistic regression with Gaussian covariates,  computationally efficient algorithms  with optimal sample complexity were known only for the high noise regime, 
e.g., \cite{plan2017high}, and 
the noiseless regime~\cite{matsumoto2022binary}, while no such optimal algorithms were known for the intermediate regimes\footnote{However, for $\k=\n$, MLE achieves optimal samples for low noise case, by recent work of~ \cite{chardon2024finite}.}.
Thus, to the best of our knowledge, BIHT is the first computationally efficient algorithm with the optimal sample complexity in all noise regimes
for logistic regression under the Gaussian covariate design.

\paragraph{Organization}
In \SECTION \ref{outline:intro|>contributions|>techniques} we give an overview of how our results are obtained. In \SECTION \ref{outline:notations}, we provide the notations used throughout the paper, and formally define binary GLMs and related quantities and assumptions.
In \SECTION \ref{outline:main-result}, the main theorems of this work, which establish the convergence of BIHT to the correct solution for parameter estimation in binary GLMs, are formally stated.
\SECTION \ref{outline:main-result|outline-of-pf} outlines the key steps in the proof of the main results.
\SECTION \ref{outline:main-technical-result} presents the main technical theorem: the invertibility condition for Gaussian covariates. 
In \APPENDIX \ref{outline:intro|>contributions|>comparison} we compare our techniques with existing results, notably with~\cite{matsumoto2022binary}. 
The main theorems are formally proved in \APPENDIX \ref{outline:pf-main-result}, while formal proofs of the main technical theorems are in \APPENDIX \ref{outline:pf-main-technical-result}.
Lastly, \APPENDIX \ref{outline:concentration-ineq} derives some concentration inequalities that are needed in the proof of the main technical theorems.
%
%

\section{Overview of  Techniques}
\label{outline:intro|>contributions|>techniques}

The proof of the convergence of the BIHT approximations for any GLM satisfying \ASSUMPTION \ref{assumption:p} adapts the following approach.
It consists of two primary bounds on the approximation error:
\Enum[{\label{enum:contributions:1:a}}]{a}
a deterministic bound that relates the approximation error to an {\em invertibility condition} satisfied by Gaussian matrices, and
\Enum[{\label{enum:contributions:1:b}}]{b}
a probabilistic bound that describes this invertibility condition for Gaussian matrices.
The former of these bounds, \ref{enum:contributions:1:a}, is relatively straightforward to establish using standard techniques, including 
bounding of a recurrence relation.
On the other hand, the derivation of the latter bound, \ref{enum:contributions:1:b}, which is the primary technical contribution of this work, entails extensive analysis that constitutes the majority of this manuscript.
The establishment of the  invertibility condition for Gaussian covariate matrices in regard to this latter bound follows a similar approach to  \cite{matsumoto2022binary} (i.e., the noiseless case) to prove an analogous invertibility condition for Gaussian matrices therein, though there are some major technical differences, which are highlighted in \APPENDIX \ref{outline:intro|>contributions|>comparison-biht}.
%
\par 
%
The intuition behind the argument for the probabilistic bound, \ref{enum:contributions:1:b}, is as follows.
The aforementioned invertibility condition (see Theorem~\ref{thm:main-technical:sparse}) upper bounds
\begin{gather}
\label{eqn:contributions:1}
  \left\|
    \thetaStar
    -
    \frac{\thetaXXX}{\| \thetaXXX \|_{2}}
  \right\|_{2}
,\end{gather}
where
\(
    \thetaXXX
    \defeq
    \thetaXX + \hfFn[\JCoords]( \thetaStar, \thetaXX )
,\)
uniformly for every \(  \thetaXX \in \ParamSpace  \) and every \(  \JCoords \subseteq [\n]  \), \(  | \JCoords | \leq \k  \), and for a particular random function, \(  \hfFn[\JCoords] : \R^{\n} \times \R^{\n} \to \R^{\n}  \), parameterized by the coordinate subset, \(  \JCoords  \), and dependent on the covariate matrix, \(  \CovM  \).
The specification of the function, \(  \hfFn[\JCoords]  \), which is determined by the GLM function \(  \fFn  \), is deferred to \SECTION 
\ref{outline:main-result|outline-of-pf} as this informal overview can be understood without its formal definition.
One can view an upper bound on the quantity of \EQUATION \eqref{eqn:contributions:1} to be a {\em single-step progress} towards estimating \(\thetaStar\) via the BIHT algorithm.
One
salient
characteristic
of the invertibility condition is that
stronger guarantees are provided for points, \(  \thetaXX  \), which are closer to \(  \thetaStar  \).
The invertibility condition will be proved to hold for Gaussian covariate matrices with high probability.
Towards this, the quantity in \EQUATION \eqref{eqn:contributions:1} will be shown to describe a notion of deviation of the random vector \(  \thetaXXX  \) around its mean in the sense that
\(
  \left\|
    \thetaStar
    -
    \frac{\thetaXXX}{\| \thetaXXX \|_{2}}
  \right\|_{2}
  =
  \left\|
    \frac{\thetaXXX}{\| \thetaXXX \|_{2}}
    -
    \frac{\E[ \thetaXXX ]}{\| \E[ \thetaXXX ] \|_{2}}
  \right\|_{2}
.\) 
Furthermore, it can be shown that this deviation is roughly proportional to the deviation of the random function \(  \hfFn[\JCoords]  \) around its mean:
\begin{gather}
\label{eqn:contributions:2b}
  \left\|
    \frac{\thetaXXX}{\| \thetaXXX \|_{2}}
    -
    \frac{\E[ \thetaXXX ]}{\| \E[ \thetaXXX ] \|_{2}}
  \right\|_{2}
  \propto
  \| \hfFn[\JCoords]( \thetaStar, \thetaXX ) - \E[ \hfFn[\JCoords]( \thetaStar, \thetaXX ) ] \|_{2}
.\end{gather}
The deviation of \(  \hfFn[\JCoords]  \) is then decomposed into (and upper bounded by) three components of deviation:
\Enum[{\label{enum:contributions:2:i}}]{i}
a global component for each point in a cover over the parameter space, \(  \ParamSpace  \), that is sufficiently far from \(  \thetaStar  \);
\Enum[{\label{enum:contributions:2:iii}}]{ii}
a local component for each point in the cover and every point in a small region surrounding it; and
\Enum[{\label{enum:contributions:2:ii}}]{iii}
a component which arises from the randomness of the GLM---specifically, through the function \(  \fFn  \).
Upper bounds on the first and third components, \ref{enum:contributions:2:i} and \ref{enum:contributions:2:ii}, can be established  following Gaussian concentration.
Meanwhile, the second component, \ref{enum:contributions:2:iii}, relies on the local binary embeddings of \cite{oymak2015near}.

Notably, the two components, \ref{enum:contributions:2:i} and \ref{enum:contributions:2:iii}, do not depend on the random function \(  \fFn  \): they replace the random function \(  \fFn  \) with the deterministic \(  \Sign  \) function.
Thus, the third component, \ref{enum:contributions:2:ii}, entirely captures the deviation associated with the randomness induced by \(  \fFn  \).
The upper bounding of all three components exploits the statistical ``niceness'' of the Gaussian covariates.
In particular, for the first component, \ref{enum:contributions:2:i}, the angular uniformity of \iid Gaussian random vectors is crucial because
it
controls the number of covariates involved in the computation of the random function, \(  \hfFn[\JCoords]  \). 
However, this breaks down when points are too close together as the number of samples involved in the computation of \(  \hfFn[\JCoords]  \) cannot be guaranteed to further decrease beyond a certain threshold (a distance on the order of \(  \epsilonX  \)).
Once this occurs, the local guarantees provided by the local binary embeddings of \cite{oymak2015near} take over through \ref{enum:contributions:2:iii} to ensure that the randomness of \(  \hfFn[\JCoords]  \) is well-controlled to provide sufficient guarantees within these local regions.
The (sub)gaussianity of the covariates is also critical for bounding the third component of deviation, \ref{enum:contributions:2:ii}, though not
strictly
through angular uniformity.
In effect, it limits the amount of deviation that can be introduced by the randomness of the GLM, which comes from standard knowledge.
%
\par 
%
Taking a step back to examine the relationship between this invertibility condition and the convergence of BIHT, there are a couple key ideas underlying the behavior exhibited by the algorithm.
First, the last two components of deviation, \ref{enum:contributions:2:iii} and \ref{enum:contributions:2:ii},
introduce error to the algorithm's approximations which is more or less ``baked in'' once the model and its covariates are fixed---that is, these contributions to the approximation error will not decay as the algorithm continues to iterate.
In contrast, the first component of deviation, \ref{enum:contributions:2:i}, contributes error into the approximation which indeed decays.
Simply put, this is the consequence of the invertibility condition imposing a stronger bound for points which are closer to the true parameter, \(  \thetaStar  \).
In essence, since the number of covariates---and hence also the variance---involved in the evaluation of \(  \hfFn[\JCoords]  \) at a pair of points
decreases as the distance between the points decreases, the improvement of the approximation in one iteration of BIHT leads to even better control over \(  \hfFn[\JCoords]  \), and thus a better approximation, in the next iteration.
However, because the invertibility condition only guarantees this improvement up to but not within small, local regions, the error cannot be guaranteed to reduce once the approximations reach the \(  \epsilonX  \)-ball around \(  \thetaStar  \).
At the same time, the error will remain within a small threshold due to the local result, \ie \ref{enum:contributions:2:iii}.


\section{Notations and Key Properties of GLMs}
\label{outline:notations}

For a set of real numbers, \(  \Set{S} \subseteq \R  \), let \(  \Set{S}_{\geq 0}, \Set{S}_{+} \subseteq \Set{S}  \) denote the sets of nonnegative elements and, respectively, positive elements in \(  \Set{S}  \)---formally,
\(  \Set{S}_{\geq 0} \defeq \{ s \in \Set{S} : s \geq 0 \}  \) and
\(  \Set{S}_{+} \defeq \{ s \in \Set{S} : s > 0 \}  \).
For \(  t \in \Z_{+}  \), let \(  [t] \defeq \{ 1, \dots, t \}  \).
Vectors and matrices are denoted in lowercase and uppercase bold typeface, respectively, e.g.,
\(  \Vec{v} \in \R^{n}  \) and \(  \Mat{M} \in \R^{m \times n}  \),
with their entries in italic font such that, e.g., \(  \Vec{v} = ( \Vec*{v}_{j} )_{j \in [\n]}  \) and \(  \Mat{M} = ( \Mat*{M}_{i,j} )_{(i,j) \in [\m] \times [\n]}  \).
The all-zeros vector is written in boldface: \(  \Vec{0} = ( 0, \dots, 0 )  \).
For a coordinate subset \(  J \subseteq [\n]  \), the vector with entries indexed by \(  J  \) taking value \(  1  \) and with all other entries set to \(  0  \) is written as \(  \BVec{J} \in \R^{\n}  \).
The set of all \(  s  \)-sparse, real-valued vectors is denoted by
\(  \SparseSubspace{s}{\n} \defeq \{ \Vec{v} \in \R^{\n} : \| \Vec{v} \|_{0} \leq s \}  \).
For \(  r > 0  \) and \(  \Vec{v} \in \R^{\n}  \), specify the radius-\(  r  \) ball around \(  \Vec{v}  \) by
\(  \Ball{r}( \Vec{v} ) \defeq \{ \Vec{u} \in \R^{\n} : \| \Vec{u} - \Vec{v} \|_{2} \leq r \}  \),
and let
\(  \BallX{r}( \Vec{v} ) \defeq \Ball{r}( \Vec{v} ) \cap \Sphere{\n} \cap \{ \Vec{u} \in \R^{\n} : \Supp( \Vec{u} ) = \Supp( \Vec{v} ) \}  \).
Let \(  \Set{X}  \), \(  \Set{Y}  \), and \(  \Set{U} \subseteq \Set{X}  \) be sets, and let \(  f : \Set{X} \to \Set{Y}  \).
The image of \(  \Set{U}  \) under \(  f  \) is denoted by
\(  f[ \Set{U} ] \subseteq \Set{Y}  \).
The natural logarithm is denoted by \(  \log : \R \to \R  \).
The indicator function, \(  \I  \), is given for a true/false condition, \(  C  \), by \(\I( C )
  = 0\) if \(C\) is false and \(\I( C )
  = 1\) if \(C\) is true,
where this notation extends to vectors by applying it \entrywise.
The sign function, \(  \SignOp : \R \to \{ -1, 1 \}  \), simply returns the \(  \pm  \) sign of the input, i.e., \(\Sign( a )
  = +1\) if and only if \(a\geq 0\),
 for \(  a \in \R  \) with this notation extending to vectors by applying it \entrywise.
For \(  s \in \Z_{+}  \), \emph{the \topk[s] thresholding operation}, written \(  \Threshold{s} : \R^{\n} \to \R^{\n}  \), maps \(  \Vec{v} \mapsto \Threshold{s}( \Vec{v} )  \), where \(  \Threshold{s}( \Vec{v} )  \) retains the largest magnitude entries in \(  \Vec{v}  \) and sets all other entries to \(  0  \) with ties broken arbitrarily.
Similarly, for a set \(  J \subseteq [\n]  \), define the \emph{subset thresholding operation}, denoted by \(  \ThresholdSet{J} : \R^{\n} \to \R^{\n}  \), to be the map which takes a vector \(  \Vec{v} \in \R^{\n}  \) to a vector with \(  j\Th  \) entries \(  \ThresholdSet{J}( \Vec{v} )_{j} = \Vec*{v}_{j} \I( j \in J )  \), \(  j \in [\n]  \).
Note that the latter thresholding operation is a linear transformation given by
\(  \ThresholdSet{J}( \Vec{v} ) = \Diag( \BVec{J} ) \Vec{v}  \)
for \(  \Vec{v} \in \R^{\n}  \).
%
\par 
\par 
%
Denote by \(  X \sim \Distr{D}  \) a random variable \(  X  \), which follows a distribution \(  \Distr{D}  \). If \(\Set{S}\) is a set then  \(  X \sim \Set{S}  \) means \(X\) follows the uniform distribution over  \(  \Set{S}  \).
Additionally, the density function and \MGF* (\MGF) (when well-defined) of a random variable, \(  X  \), are written \(  \pdf{X}  \) and \(  \mgf{X}  \), respectively.
%
\par 
%
\paragraph{\bf Binary GLMs} 
Throughout this manuscript,
\(  \n, \k, \m \in \Z_{+}  \)
denote, in order, the dimension of the parameter space, the sparsity, and the number of samples (or measurements), and
the \errorrate is denoted by
\(  \epsilonX \in (0,1)  \).
The parameter space is written as
\(  \ParamSpace = \SparseSphereSubspace{\k}{\n} \subseteq \R^{\n}  \).
Note that the results in this manuscript extend to the dense regime by taking
\(  \k = \n  \) and \(  \ParamSpace = \Sphere{\n}  \).
The covariates are \(  \n  \)-variate \iid standard Gaussian random vectors, written as
\(  \CovV\VIx{1}, \dots, \CovV\VIx{\m} \sim \N( \Vec{0}, \Id{\n} )  \),
which are stacked up into the covariate matrix,
\(  \CovM = ( \CovV\VIx{1} \,\cdots\, \CovV\VIx{\m} )^{\T} \in \R^{\m \times \n}  \).
The unknown parameter vector which is being estimated is denoted by
\(  \thetaStar \in \ParamSpace  \),
and the \(  \Iter\Th  \) approximations produced by the \(  \Iter\Th  \) iteration of the recovery algorithms are written as
\(  \thetaHat[\Iter] \in \ParamSpace  \),
where \(  \Iter \in \Z_{\geq 0}  \).
There is assumed access to the covariates, \(  \CovV\VIx{\iIx}  \), \(  \iIx \in [\m]  \), as well as \(  \m  \) binary measurement responses specified as the vector
\(  \RespV \in \{ -1,1 \}^{\m}  \).
%

For a function,
\(  \pFn : \R \to [0,1]  \),
the \(  \iIx\Th  \) measurement responses, \(  \RespV*\VIx{\iIx} \in \{ -1,1 \}  \), \(  \iIx \in [\m]  \), are obtained through a random function \(  \fFn : \R \to \{ -1,1 \}  \), given by
\begin{gather}
\label{eqn:notations:f:def}
  \fFn( z )
  =
  \begin{cases}
  -1 ,&\cWP 1 - \pFn( z )  ,\\
  \+1 ,&\cWP \pFn( z ) ,
  \end{cases}
\end{gather}
for \(  z \in \R  \),
such that
\(  \RespV*\VIx{\iIx} = \fFn( \langle \CovV\VIx{\iIx}, \thetaStar \rangle )  \),
where the notation of \(  \fFn  \) extends to vectors, i.e., \(  \fFn : \R^{\m} \to \{ -1,1 \}^{\m}  \), by applying it \entrywise independently so that the response vector is given concisely by
\(  \RespV = \fFn( \CovM \thetaStar )  \).

\begin{definition}[``Noise'' and ``Slope'']
    There are two important quantities related to the function \(  \pFn  \), which are concisely represented as the variables
\(  \alphaX >0\) that measures the amount of ``noise'' in the random function $f$ compared to the $\Sign$ function; and \( \gammaX > 0  \) that measures the average ``slope'' of the function:
\begin{gather}
\label{eqn:notations:alpha:def}
  \alphaX
  \defeq
  \Pr(
    \fFn( Z ) \neq \Sign( Z )
  )
  ,\\
\label{eqn:notations:gamma:def}
  \gammaX
  \defeq
  \E[ Z \fFn( Z ) ]
,\end{gather}
where \(  Z \sim \N(0,1)  \) is a standard univariate Gaussian random variable and the probabilities and expectations are with respect to $Z$ and the randomness of $f$. Note that, \(\gammaX
  \leq
  \E[|Z|] = \sqrt{\frac{2}{\pi}} \). From Stein's lemma, if the function $p$ is differentiable then \[\gamma = 2\E[p'(Z)], \]
where the expectation is now with respect to $Z$.
\end{definition}

In addition, for \(  \epsilon > 0  \), define
\begin{gather}
\label{eqn:notations:alpha_0:def}
  \alphaO \defeq \max \{ \alphaX, \frac{\epsilonX}{\frac{3}2(5+\sqrt{21})} \}
.\end{gather}
Note that when \(  \pFn(-z) = 1-\pFn(z)  \)---as is the case in logistic and probit regression (\see the formal definitions of these models in \DEFINITIONS \ref{def:p:logistic-regression} and \ref{def:p:probit} below)---the expression for \(  \alphaX  \) simplifies to
\begin{gather*}
  \alphaX
  =
  \frac{1}{\sqrt{2\pi}}
  \int_{\zX=0}^{\zX=\infty}
  e^{-\frac{1}{2} \zX^{2}}
  (\pExpr{\zX})
  d\zX
  =
  \sqrt{\frac{2}{\pi}}
  \int_{\zX=0}^{\zX=\infty}
  e^{-\frac{1}{2} \zX^{2}}
  \pFn( -\zX )
  d\zX
.\end{gather*}
%
In addition, the function \(  \pFn  \) must satisfy two assumptions, stated together in \ASSUMPTION \ref{assumption:p}, below.
%
\begin{assumption}
\label{assumption:p}
The following conditions are enforced on \(  \pFn  \):
\Enum[\label{condition:assumption:p:i}]{i} \(  \pFn  \) monotonically increases over the real line; and
\Enum[\label{condition:assumption:p:ii}]{ii} let $\nu(z) \equiv \pExpr{\zX}$; the function \begin{gather}
\label{eqn:assumption:p:ii}
\frac{\nu(\zX+\wX)}{\nu(\zX)}
 \end{gather}
is non-increasing in $\zX \geq 0,$ for any \(  \wX > 0  \).
\end{assumption}
Intuitively, the second condition means that the ``noise'' of the GLM defined above decreases at a faster rate away from ``margin'' ($z=0)$. Indeed, $\nu(z) = P(f(\zX)=-1)+P(f(-\zX)=1)$ for $\zX \ge 0$ can be thought of as a proxy for the noise with respect to the $\Sign$ function, and the ratio in \eqref{eqn:assumption:p:ii} quantifies the growth-rate of the function.

Here, it is worth noting---and later, it will be proved---that \ASSUMPTION \ref{assumption:p} is satisfied by two ubiquitous models in binary classification and statistical modeling with binary outcomes: logistic and probit regression.
As these two models will be studied later on, the functions, \(  \pFn  \), corresponding with these models are formally defined below in \DEFINITION \ref{def:p:logistic-regression} and \ref{def:p:probit}.
To provide greater generality with these models, these definitions introduce an addition parameter: \(  \betaX \GTR 0  \), which denotes the \betaXnamelr and signal-to-noise ratio (\betaXnamepr) in logistic and probit regression, respectively.
%
\begin{definition}
\label{def:p:logistic-regression}
For {\bf logistic regression} with \betaXnamelr \(  \betaX \GTR 0  \), the function
\(  \pFn : \R \to [0,1]  \)
is given at \(  z \in \R  \) by
\begin{gather}
  \pFn( z )
  =
  \frac{1}{1+e^{-\betaX z}}
.\end{gather}
\end{definition}
%
\begin{definition}
\label{def:p:probit}
For the {\bf probit model} with signal-to-noise ratio (SNR) \(  \betaX \GTR 0  \), the function
\(  \pFn : \R \to [0,1]  \)
is given at \(  z \in \R  \) by
\begin{gather}
  \pFn( z )
  =
  \frac{1}{\sqrt{2\pi}}
  \int_{u=-\infty}^{u=\betaX z}
  e^{-\frac{1}{2} u^{2}}
  du
.\end{gather}
Note that, equivalently, \(  \pFn  \) is simply the distribution function of a standard Gaussian random variable composed with multiplication by \(  \betaX  \).
\end{definition}


\section{Main Results\label{outline:main-result}}
The main result for the convergence of \ALGORITHM \ref{alg:biht:normalized} to the correct solution is stated below as \THEOREM \ref{thm:approx-error:sparse}, whose proof is overviewed in \SECTION \ref{outline:main-result|outline-of-pf} and presented in full in \APPENDIX \ref{outline:pf-main-result}.
Its analog for the dense parameter regime, when \(  \k = \n  \) and \(  \ParamSpace = \Sphere{\n}  \), is provided as \COROLLARY \ref{corollary:approx-error:dense}.
Additionally, the specializations of the main result to logistic and probit regression are presented below in \COROLLARY \ref{corollary:approx-error:logistic-regression} 
which is also proved in \APPENDIX \ref{outline:pf-main-result}.
Essentially, these theorems say that for a sufficiently large number of samples, \(  \m  \), the approximations produced by \ALGORITHM \ref{alg:biht:normalized} converge to the \(  \epsilonX  \)-ball around the true parameter, \(  \thetaStar \in \ParamSpace  \).

\begin{theorem}
\label{thm:approx-error:sparse}
%
Fix
\(  \n, \k, \m \in \Z_{+}  \), \(  \k \leq \n  \),
and
\(  \epsilonX, \rhoX \in (0,1)  \).
Write
\(  \alphaO  \defeq \max \{ \alphaX, \frac{\epsilonX}{\frac{3}2(5+\sqrt{21})} \}  \)
as in \EQUATION \eqref{eqn:notations:alpha_0:def}.
Let
\(  \ParamSpace = \SparseSphereSubspace{\k}{\n}  \),
and fix
\(  \thetaStar \in \ParamSpace  \)
as the unknown parameter.
Fix \(  \m  \) \iid standard Gaussian covariates,
\(  \CovV\VIx{1}, \dots, \CovV\VIx{\m} \sim \N( \Vec{0}, \Id{\n} )  \),
and let
\(  \CovM = ( \CovV\VIx{1} \, \cdots \, \CovV\VIx{\m} )^{\T}  \)
be the covariate matrix.
For a number of samples
\begin{align}
\nonumber
  \m
  &=
  \mOEXPRS{\epsilonX}[,]
  \\
\label{eqn:approx-error:sparse:m}
.\end{align}
of the model specified in \EQUATION \eqref{eqn:notations:f:def} and under \ASSUMPTION \ref{assumption:p}, with probability at least \(  1-\rhoX  \), the sequence of approximations,
\(  \{ \thetaHat[\Iter] \in \ParamSpace \}_{\Iter \in \Z_{\geq 0}}  \),
produced by \ALGORITHM \ref{alg:biht:normalized} with the covariate matrix \(  \CovM  \) converges to the \(  \epsilonX  \)-ball around \(  \thetaStar  \) such that
\begin{gather}
\label{eqn:approx-error:sparse:asymptotic}
  \lim_{\Iter \to \infty} \| \thetaStar-\thetaHat[\Iter] \|_{2}
  \leq
  \epsilonX
,\end{gather}
with the rate of convergence upper bounded at each \(  \Iter\Th  \) iteration, \(  \Iter \in \Z_{\geq 0}  \) by
\begin{gather}
\label{eqn:approx-error:sparse:iterative}
  \| \thetaStar-\thetaHat[\Iter] \|_{2}
  \leq
  2^{2^{-\Iter}}
  \epsilonX^{1-2^{-\Iter}}
.\end{gather}
\end{theorem}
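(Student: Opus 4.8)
The plan is to assemble the two ingredients singled out in \SECTION \ref{outline:intro|>contributions|>techniques}: a \emph{deterministic} per-iteration reduction of the error $\|\thetaStar-\thetaHat[\Iter]\|_2$ to the quantity in \eqref{eqn:contributions:1}, together with the \emph{probabilistic} fact that Gaussian covariates satisfy the invertibility condition, which is exactly \THEOREM \ref{thm:main-technical:sparse}. First I would fix the covariate matrix $\CovM$ and the realization of the GLM's randomness, and condition on the event $\mathcal{E}$ that the invertibility condition of \THEOREM \ref{thm:main-technical:sparse} holds; by that theorem, $\Pr(\mathcal{E})\ge 1-\rhoX$ whenever $\m$ is at least the quantity in \eqref{eqn:approx-error:sparse:m}, and on $\mathcal{E}$ everything that follows is deterministic --- in particular the denominators in \LINE \ref{alg:biht:normalized:projection} are nonzero, so each $\thetaHat[\Iter]$ is well-defined.

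Next I would carry out the deterministic reduction, translating one step of \ALGORITHM \ref{alg:biht:normalized} into \eqref{eqn:contributions:1}. Given $\thetaHat[\Iter-1]$, take $\thetaXX=\thetaHat[\Iter-1]$ and $\JCoords=\TS$, so $|\JCoords|\le 3\k$. With $\hfFn[\JCoords]$ as specified in \SECTION \ref{outline:main-result|outline-of-pf}, the intermediate iterate $\thetaHatX[\Iter]$ of \LINE \ref{alg:biht:normalized:intermediate}, restricted to $\JCoords$, satisfies $\ThresholdSet{\JCoords}(\thetaHatX[\Iter])=\thetaXX+\hfFn[\JCoords](\thetaStar,\thetaXX)=\thetaXXX$, using that $\ThresholdSet{\JCoords}(\thetaHat[\Iter-1])=\thetaHat[\Iter-1]$; and since $\Supp(\thetaHat[\Iter])\subseteq\JCoords$ by the definition of $\JCoords$, zeroing out the coordinates outside $\JCoords$ does not change the \topk thresholding in \LINE \ref{alg:biht:normalized:projection} (resolving ties consistently), so $\Threshold{\k}(\thetaHatX[\Iter])=\Threshold{\k}(\thetaXXX)$ and hence $\thetaHat[\Iter]=\Threshold{\k}(\thetaXXX)/\|\Threshold{\k}(\thetaXXX)\|_2$. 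A short best-$\k$-sparse-approximation argument then bounds $\|\thetaStar-\thetaHat[\Iter]\|_2$ by an absolute constant times $\|\thetaStar-\thetaXXX/\|\thetaXXX\|_2\|_2$: positive scaling of $\thetaXXX$ leaves $\Threshold{\k}(\thetaXXX)/\|\Threshold{\k}(\thetaXXX)\|_2$ unchanged, so one may assume $\|\thetaXXX\|_2=1$; then $\Threshold{\k}(\thetaXXX)$ is the closest $\k$-sparse vector to $\thetaXXX$ and $\thetaStar$ is $\k$-sparse, giving $\|\thetaStar-\Threshold{\k}(\thetaXXX)\|_2\le 2\|\thetaStar-\thetaXXX\|_2$ and $1\ge\|\Threshold{\k}(\thetaXXX)\|_2\ge 1-\|\thetaStar-\thetaXXX\|_2$, so the final normalization displaces $\Threshold{\k}(\thetaXXX)$ by at most $\|\thetaStar-\thetaXXX\|_2$, for a total of at most $3\|\thetaStar-\thetaXXX\|_2$.

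Then I would combine this with the invertibility condition on $\mathcal{E}$, applied with $\thetaXX=\thetaHat[\Iter-1]$ and $\JCoords=\TS$. This is legitimate precisely because \THEOREM \ref{thm:main-technical:sparse} holds uniformly over \emph{all} $\thetaXX\in\ParamSpace$ and all admissible coordinate sets $\JCoords$, so the dependence of $\thetaHat[\Iter-1]$ --- and of $\thetaHat[\Iter]$, which enters only through $\JCoords$ --- on $\CovM$ introduces no circularity. The ``closer points receive stronger guarantees'' feature of the condition, together with the factor-$3$ projection/normalization loss above absorbed into the error parameter with which the condition is invoked (the role of the normalization constants appearing in \eqref{eqn:notations:alpha_0:def} and in \eqref{eqn:approx-error:sparse:m}), yields the recurrence $\|\thetaStar-\thetaHat[\Iter]\|_2\le\sqrt{\epsilonX\,\|\thetaStar-\thetaHat[\Iter-1]\|_2}$ for every $\Iter\ge 1$, with the understanding that once the right-hand side reaches $\epsilonX$ it stays there.

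Finally, since $\thetaStar$ and $\thetaHat[0]$ are both unit vectors, $\|\thetaStar-\thetaHat[0]\|_2\le 2$; an induction on $\Iter$ using the recurrence gives $\|\thetaStar-\thetaHat[\Iter]\|_2\le 2^{2^{-\Iter}}\epsilonX^{1-2^{-\Iter}}$, which is \eqref{eqn:approx-error:sparse:iterative}, and letting $\Iter\to\infty$ (so that $2^{-\Iter}\to 0$, $2^{2^{-\Iter}}\to 1$, and $\epsilonX^{1-2^{-\Iter}}\to\epsilonX$) yields \eqref{eqn:approx-error:sparse:asymptotic}. The genuinely hard step is \THEOREM \ref{thm:main-technical:sparse} itself --- the probabilistic invertibility bound, proved in \APPENDIX \ref{outline:pf-main-technical-result} via the three-part deviation decomposition of \SECTION \ref{outline:intro|>contributions|>techniques}, which is where the sample-complexity expression \eqref{eqn:approx-error:sparse:m} (with its $\frac{\alphaO \k}{\gammaX^{2}\epsilonX^{2}}\log(\frac{\n}{\epsilonX\k})$ term and its $\frac{\k}{\epsilonX}\log^{3/2}(\frac{1}{\epsilonX})$ term) originates; within the present proof the only delicate point is arranging the per-iteration reduction so that the \emph{uniform}-in-$(\thetaXX,\JCoords)$ condition can be applied along the algorithm's random trajectory.
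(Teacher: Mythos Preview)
Your plan is the paper's plan: condition on the event of \THEOREM \ref{thm:main-technical:sparse}, reduce one BIHT step to the normalized quantity in \eqref{eqn:contributions:1} with a constant-factor loss, and close the resulting recurrence. Two points need tightening.

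First, \THEOREM \ref{thm:main-technical:sparse} is stated only for $|\JCoords|\le\k$, not $3\k$. The paper sidesteps this by taking $\JCoords=\Supp(\thetaHat[\Iter])$ (size $\le\k$) and using that, by the definition of $\hfFn[\JCoords]$ in \eqref{eqn:notations:hfJ:def}, the thresholding set is automatically $\Supp(\thetaStar)\cup\Supp(\thetaHat[\Iter-1])\cup\JCoords$. So your vector $\thetaXXX$ coincides with the paper's, but the $\JCoords$ you feed into the invertibility condition must be the single support $\Supp(\thetaHat[\Iter])$, not the triple union.

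Second, the recurrence you actually obtain after the factor-$3$ loss and \THEOREM \ref{thm:main-technical:sparse} (invoked with parameter $\deltaX=\epsilonX/\tfrac{3}{2}(5+\sqrt{21})$) is
\[
\|\thetaStar-\thetaHat[\Iter]\|_2\le\sqrt{9\deltaX\,\|\thetaStar-\thetaHat[\Iter-1]\|_2}+3\deltaX,
\]
not the clean $\sqrt{\epsilonX\,\cdot}$ you wrote; the additive $3\deltaX$ does not simply disappear once the error reaches $\epsilonX$. The paper handles this via \FACT \ref{fact:recurrence} (taking $v=9\deltaX$, $w=\tfrac13$, $u=\tfrac12(1+\sqrt{7/3})$), which shows the two-term recurrence is still dominated by $2^{2^{-\Iter}}\epsilonX^{1-2^{-\Iter}}$; the constant $\tfrac{3}{2}(5+\sqrt{21})=9u^2$ in the definition of $\deltaX$ is chosen precisely so that $u^2\cdot 9\deltaX=\epsilonX$. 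Your direct best-$\k$-approximation argument for the factor $3$ is correct and a bit more elementary than the paper's \LEMMA \ref{lemma:error:deterministic}, which achieves the same constant by a different route.
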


There are a few special cases of interest, formalized below as \COROLLARIES \ref{corollary:approx-error:dense}--\ref{corollary:approx-error:logistic-regression}.
Unless stated otherwise, the following corollaries to \THEOREM \ref{thm:approx-error:sparse} use notations consistent with the theorem.
As the first of these, \COROLLARY \ref{corollary:approx-error:dense} takes a look at the dense parameter regime.

\begin{corollary}
\label{corollary:approx-error:dense}
Let
\(  \ParamSpace = \Sphere{\n}  \).
Then, under \ASSUMPTION \ref{assumption:p}, the convergence guarantees for \ALGORITHM \ref{alg:biht:normalized} stated in \EQUATIONS \eqref{eqn:approx-error:sparse:asymptotic} and \eqref{eqn:approx-error:sparse:iterative} of \THEOREM \ref{thm:approx-error:sparse} hold for a number of samples
\begin{align}
\nonumber
  \m
  &=
\mOEXPRD{\epsilonX}[.]
\end{align}
\end{corollary}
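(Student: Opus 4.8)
The plan is to obtain \COROLLARY \ref{corollary:approx-error:dense} as the $\k=\n$ case of \THEOREM \ref{thm:approx-error:sparse}, since the dense parameter space is exactly that case: as $\SparseSubspace{\n}{\n}=\R^{\n}$, we have $\SparseSphereSubspace{\n}{\n}=\Sphere{\n}=\ParamSpace$. First I would check that \ALGORITHM \ref{alg:biht:normalized} run with sparsity parameter $\k=\n$ is the normalization-only iteration referred to in the dense regime: the operator $\Threshold{\n}$ keeps the $\n$ largest-magnitude coordinates of an $\n$-dimensional vector, i.e.\ all of them, so $\Threshold{\n}$ is the identity on $\R^{\n}$, the thresholding in the projection step has no effect, and that step reduces to $\thetaHat[\Iter]\gets\thetaHatX[\Iter]/\|\thetaHatX[\Iter]\|_{2}$ while the gradient step is unchanged. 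Hence the sequence $\{\thetaHat[\Iter]\}_{\Iter\in\Z_{\geq 0}}$ generated in the dense regime is precisely the one produced by \ALGORITHM \ref{alg:biht:normalized} with $\k=\n$, so \THEOREM \ref{thm:approx-error:sparse} applies with no change and delivers \eqref{eqn:approx-error:sparse:asymptotic} and \eqref{eqn:approx-error:sparse:iterative} verbatim (both are free of $\k$), with \ASSUMPTION \ref{assumption:p} inherited unchanged since it only constrains $\pFn$.

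The remaining task is to verify that the sample-complexity bound of \THEOREM \ref{thm:approx-error:sparse}, evaluated at $\k=\n$, reduces to the bound asserted in the corollary. I would do this in either of two equivalent ways. Directly: setting $\k=\n$ in the stated bound, the logarithmic factor $\log(\n/(\epsilonX\k))$ becomes $\log(1/\epsilonX)$, the coefficient $\k/\epsilonX$ in the second term becomes $\n/\epsilonX$, and every other term is untouched, which is exactly the claimed expression. Equivalently, tracing through the proof — in particular the main technical result \THEOREM \ref{thm:main-technical:sparse}, from which the sample complexity is extracted — the sparse bound carries a union over support sets $\JCoords\subseteq[\n]$ with $|\JCoords|\le\k$ together with a $\tauX$-net $\ParamCover$ of the $\k$-sparse sphere, contributing the factor $\JSPCSIZES$ inside the logarithm; when $\k=\n$ there is a single relevant support $\JCoords=[\n]$ (so $|\JS|=1$ and the union is vacuous) and a $\tauX$-net of all of $\Sphere{\n}$ has size at most $(3/\tauX)^{\n}$, so $\JSPCSIZES$ is replaced by $\JSPCSIZED$ — with equality when $\k=\n$ since $\sum_{\ell=0}^{\n}\binom{\n}{\ell}\sum_{\ell'=0}^{\n}\binom{\n}{\ell'}(\ConstB/\tauX)^{\ell'}=2^{\n}(1+3/\tauX)^{\n}=(6/\tauX+2)^{\n}$ for the constant $\ConstB=3$ used there. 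Plugging in the choices of $\nuX$ and $\tauX$ made in the proof of \THEOREM \ref{thm:approx-error:sparse} then gives exactly the bound stated in \COROLLARY \ref{corollary:approx-error:dense}.

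Since both routes are essentially bookkeeping, I do not expect a genuine obstacle; the only point that should be confirmed rather than assumed is that \THEOREM \ref{thm:main-technical:sparse} and the argument of \THEOREM \ref{thm:approx-error:sparse} are valid for all $\k\le\n$, including $\k=\n$, so that the support-set union and the \topk projection degenerate cleanly rather than needing a separate dense argument, and that the constants hidden by the big-$O$ absorb $\sum_{\ell=0}^{\n}\binom{\n}{\ell}(3/\tauX)^{\ell}\le(1+3/\tauX)^{\n}\le(6/\tauX+2)^{\n}$ in passing from the sparse net to the dense net. With that in hand, \COROLLARY \ref{corollary:approx-error:dense} follows immediately.
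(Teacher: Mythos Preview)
Your proposal is correct and matches the paper's approach: the paper treats \COROLLARY \ref{corollary:approx-error:dense} as the immediate $\k=\n$ specialization of \THEOREM \ref{thm:approx-error:sparse}, noting explicitly that in the dense regime the \topk thresholding is the identity and that the results ``extend to the dense regime by taking $\k=\n$.'' Your additional bookkeeping on how the net-size factor $\JSPCSIZES$ collapses to $\JSPCSIZED$ when $\k=\n$ is correct and more detailed than what the paper spells out, but it is consistent with the paper's (implicit) derivation.
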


We now proceed to applications of the main result to two well-studied GLMs: logistic and probit regression models.
For these models, not only can \THEOREM \ref{thm:approx-error:sparse} be shown to be valid, but also closed-form bounds on the sample complexity in the theorem can be obtained, as per \COROLLARY \ref{corollary:approx-error:logistic-regression} 
below.
For conciseness, only \orderwise sample complexities are presented in the corollary's statement, while precise bounds on the sample complexity are specified in its proof in \APPENDIX \ref{outline:pf-main-result|pf-main-corollaries}.

\begin{corollary}
\label{corollary:approx-error:logistic-regression}
%
When \(  \pFn  \) is the logistic function with \betaXnamelr \(  \betaX \geq 0  \), as in \DEFINITION \ref{def:p:logistic-regression} (or the probit regression with \betaXnamepr \(  \betaX \geq 0  \) defined as in \DEFINITION \ref{def:p:probit}), there exist positive constants, \(  b_{1}, b_{2} > 0  \), such that the convergence guarantees for \ALGORITHM \ref{alg:biht:normalized} stated in \EQUATIONS \eqref{eqn:approx-error:sparse:asymptotic} and \eqref{eqn:approx-error:sparse:iterative} of \THEOREM \ref{thm:approx-error:sparse} hold if
\begin{align}
\label{eqn:corollary:approx-error:logistic-regression:m:sparse}
   \m &= \mOEXPRLR[s]{\epsilonX}[.]
\end{align}
Note that the constants \(  b_{1}, b_{2} > 0  \) can be different for the logistic and probit cases.
\end{corollary}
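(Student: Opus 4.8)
\emph{Proof proposal.} The plan is to obtain the corollary from \THEOREM \ref{thm:approx-error:sparse} in three steps: (i) verify that the logistic and probit link functions satisfy \ASSUMPTION \ref{assumption:p}; (ii) bound the link-dependent quantities $\alphaX$ and $\gammaX$ of \EQUATIONS \eqref{eqn:notations:alpha:def}--\eqref{eqn:notations:gamma:def} as functions of $\betaX$; and (iii) substitute those bounds into the sample complexity of \THEOREM \ref{thm:approx-error:sparse} and simplify separately on each of the three $\betaX$-ranges, thereby also identifying the threshold constants $b_{1}, b_{2}$.

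For step (i), monotonicity of $\pFn$ (\CONDITION \ref{condition:assumption:p:i}) is immediate in both models. For \CONDITION \ref{condition:assumption:p:ii}, both logistic and probit satisfy $\pFn(-z) = 1 - \pFn(z)$, hence $\nu(z) = \pExpr{z} = 2\pFn(-z)$ and $\nu(z+w)/\nu(z) = \pFn(-(z+w))/\pFn(-z)$; so it is enough that $z \mapsto \log \pFn(-z)$ be concave on $[0,\infty)$. For logistic regression $\log \pFn(-z) = -\log(1+e^{\betaX z})$, with second derivative $-\betaX^{2}\pFn(z)(1-\pFn(z)) \le 0$. For probit regression, writing $\Phi, \phi$ for the standard Gaussian CDF and density, $\log \pFn(-z) = \log \Phi(-\betaX z)$, whose concavity --- after the linear substitution $t = \betaX z \ge 0$ --- amounts to $\phi(t) - t\,\Phi(-t) \ge 0$ for $t \ge 0$, i.e.\ to the classical Mills-ratio inequality $t(1-\Phi(t)) < \phi(t)$. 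This gives \ASSUMPTION \ref{assumption:p} for both models and every $\betaX > 0$.

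For step (ii), $\pFn(-z) = 1 - \pFn(z)$ gives $\gammaX = \E[Z(2\pFn(Z)-1)]$ and $\alphaX = \sqrt{2/\pi}\int_{0}^{\infty} e^{-z^{2}/2}\,\pFn(-z)\,dz$; since the sample bound of \THEOREM \ref{thm:approx-error:sparse} increases in $\alphaO$ and decreases in $\gammaX$, only a lower bound $\gammaX = \BigOmega(\min\{\betaX,1\})$ and an upper bound $\alphaX = \BigO(\min\{1,1/\betaX\})$ are needed. For probit both are essentially closed form: by Stein's lemma $\gammaX = 2\E[\pFn'(Z)] = \sqrt{2/\pi}\,\betaX/\sqrt{1+\betaX^{2}}$, and the Gaussian tail bound $\Phi(-t) \le e^{-t^{2}/2}$ gives $\alphaX \le 1/\sqrt{1+\betaX^{2}}$. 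For logistic regression $\gammaX = \E[Z\tanh(\betaX Z/2)]$ is increasing in $\betaX$ with $\gammaX = \BigTheta(\min\{\betaX,1\})$ (via $\tanh$ bounds), while bounding the integrand of $\alphaX$ by $e^{-\betaX z}$ and by $\tfrac12 e^{-z^{2}/2}$ yields $\alphaX \le \min\{\sqrt{2/\pi}/\betaX,\ 1/2\}$. These estimates pin down the thresholds: $b_{1} = \BigTheta(1)$ (indeed one may take $b_{1} \le 1$) is where $\gammaX$ becomes bounded below by a positive constant and $\alphaX$ enters its $\BigO(1/\betaX)$ decay, and $b_{2}/\epsilonX$ is where this $1/\betaX$ bound meets the $\BigTheta(\epsilonX)$ cutoff in the definition \eqref{eqn:notations:alpha_0:def} of $\alphaO$, so that $\alphaO = \BigTheta(\epsilonX)$ once $\betaX \ge b_{2}/\epsilonX$.

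For step (iii), after absorbing polylogarithmic and $\rhoX$-dependent factors into $\BigO'$ the requirement of \THEOREM \ref{thm:approx-error:sparse} is $\m = \BigO'( \max\{ \tfrac{\alphaO \k}{\gammaX^{2} \epsilonX^{2}},\ \tfrac{\k}{\epsilonX} \} )$; plugging in the step-(ii) bounds and evaluating the maximum range by range gives: for $\betaX \in (0, b_{1})$, $\gammaX = \BigTheta(\betaX)$ and $\alphaO = \BigTheta(1)$, so the first term $\tfrac{\k}{\betaX^{2}\epsilonX^{2}}$ governs (and dominates $\tfrac{\k}{\epsilonX}$ since $\betaX < b_{1} \le 1$ forces $\betaX^{2}\epsilonX < 1$); for $\betaX \in [b_{1}, b_{2}/\epsilonX)$, $\gammaX = \BigTheta(1)$ and $\alphaO = \BigO(1/\betaX)$, giving $\tfrac{\k}{\betaX\epsilonX^{2}}$; and for $\betaX \ge b_{2}/\epsilonX$, $\gammaX = \BigTheta(1)$ and $\alphaO = \BigTheta(\epsilonX)$, giving $\tfrac{\k}{\epsilonX}$. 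This is exactly \eqref{eqn:corollary:approx-error:logistic-regression:m:sparse}, and the convergence-rate bounds \eqref{eqn:approx-error:sparse:asymptotic}--\eqref{eqn:approx-error:sparse:iterative} carry over verbatim because they reference only $\epsilonX$. I expect the main difficulty to be step (ii): obtaining the bounds on $\alphaX(\betaX)$ and $\gammaX(\betaX)$ with the correct dependence on $\betaX$ \emph{uniformly} through the crossover region $\betaX \asymp 1$, and carrying out the probit case, where the relevant integrals are Gaussian and where log-concavity of $\Phi$ (equivalently, monotonicity of the Mills ratio) is what underlies both the verification of \ASSUMPTION \ref{assumption:p} and the bound on $\alphaX$.
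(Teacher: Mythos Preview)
Your three–step plan matches the paper's exactly: the paper reduces the corollary to \THEOREM \ref{thm:approx-error:sparse} via (a) verifying \ASSUMPTION \ref{assumption:p}, (b) bounding $\alphaX(\betaX)$ and $\gammaX(\betaX)$, and (c) substituting into the sample complexity range-by-range. Your step (iii) and the identification of the thresholds $b_1,b_2$ are carried out the same way in the paper.

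The differences are all in the execution of steps (i)--(ii), and yours are in fact somewhat cleaner. For \ASSUMPTION \ref{assumption:p}\ref{condition:assumption:p:ii} the paper computes $\tfrac{\partial}{\partial z}\tfrac{1+e^{\beta z}}{1+e^{\beta(z+w)}}$ directly in the logistic case and, for probit, reduces to the monotonicity of $t\mapsto \E[|U|\mid |U|\ge t]$; your unified log-concavity argument (with the Mills-ratio inequality for probit) is equivalent and shorter. For the probit $\alphaX,\gammaX$ the paper actually obtains the closed forms $\alphaX=\tfrac{1}{\pi}\arctan(1/\betaX)$ and $\gammaX=\sqrt{2/\pi}\,\betaX/\sqrt{1+\betaX^2}$ by direct integration; your Stein-lemma/Gaussian-tail bounds give the same orders. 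The most substantive deviation is the logistic lower bound on $\gammaX$: the paper proves the relation $\gammaX\ge\sqrt{2/\pi}(1-2\alphaX)$ by showing that $\zeta(\betaX)/\alphaX(\betaX)$ is nonincreasing in $\betaX$ (via a variance-is-nonnegative identity for a reweighted half-normal, \CLAIM \ref{claim:pf:corollary:main-technical:logistic-regression:1}), and then feeds in the Hsu--Mazumdar bound on $\alphaX$; your direct route through $\gammaX=\E[Z\tanh(\betaX Z/2)]$ with elementary $\tanh$ bounds reaches $\gammaX=\Theta(\min\{\betaX,1\})$ without that detour. Both approaches yield the same final bounds; yours trades the paper's sharper inequality $\gammaX\ge\sqrt{2/\pi}(1-2\alphaX)$ for a simpler argument.
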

\begin{remark}
\label{remark:}
%
In \COROLLARY \ref{corollary:approx-error:logistic-regression}, \ALGORITHM \ref{alg:biht:normalized} achieves the \orderwise optimal sample complexity (up to logarithmic factors) for parameter estimation in logistic regression under the Gaussian design.
\See \cite{hsu2024sample} for the establishment of the optimal sample complexity.
\end{remark}


  \subsection{Overview of the Proof of the Main Result\label{outline:main-result|outline-of-pf}}

\checkoffbutmayberecheck%
While the formal proof of the main theorem, \THEOREM \ref{thm:approx-error:sparse}, is deferred to \APPENDIX \ref{outline:pf-main-result|pf-main}, the arguments are outlined here.
(Meanwhile, \COROLLARY \ref{corollary:approx-error:logistic-regression} 
is proved in \APPENDIX \ref{outline:pf-main-result|pf-main-corollaries} but not outlined here.)
This proof resembles the approach in \cite{matsumoto2022binary}, but some important differences are necessary to handle the randomness introduced into the responses. 
In particular, the analysis in this work relies on the normalization in \STEP \ref{alg:biht:normalized:projection} of \ALGORITHM \ref{alg:biht:normalized} in order to reduce the error induced by \(  \fFn  \).
However, this feature of the analysis will not be apparent until the technical proofs.
%
\par 
%
To facilitate this overview, as well as the upcoming formal analysis, the following notations are defined.
For \(  \Vec{u}, \Vec{v} \in \R^{\n}  \) and \(  \JCoords \subseteq [\n]  \), let
\begin{gather}
  \label{eqn:notations:h:def}
  \hFn( \Vec{u}, \Vec{v} )
  \defeq
  \frac{\sqrt{2\pi}}{\m}
  \sep
  \CovM^{\T}
  \sep
  \frac{1}{2}
  \left( \Sign( \CovM \Vec{u} ) - \Sign( \CovM \Vec{v} ) \right)
  ,\\ \label{eqn:notations:hJ:def}
  \hFn[\JCoords]( \Vec{u}, \Vec{v} )
  \defeq
  \ThresholdSet{\Supp( \Vec{u} ) \cup \Supp( \Vec{v} ) \cup \JCoords}(
  \hFn( \Vec{u}, \Vec{v} )
  )
  ,\\ \label{eqn:notations:hf:def}
  \hfFn( \Vec{u}, \Vec{v} )
  \defeq
  \frac{\sqrt{2\pi}}{\m}
  \sep
  \CovM^{\T}
  \sep
  \frac{1}{2}
  \left( \fFn( \CovM \Vec{u} ) - \Sign( \CovM \Vec{v} ) \right)
  ,\\ \label{eqn:notations:hfJ:def}
  \hfFn[\JCoords]( \Vec{u}, \Vec{v} )
  \defeq
  \ThresholdSet{\Supp( \Vec{u} ) \cup \Supp( \Vec{v} ) \cup \JCoords}(
  \hfFn( \Vec{u}, \Vec{v} )
  )
.\end{gather}

\subsubsection{Outline of the Proof}
\label{outline:main-result|outline-of-pf|outline}

The proof of \THEOREM \ref{thm:approx-error:sparse} is outlined as follows.
\checkoff%

\begin{enumerate}[nolistsep,leftmargin=*]
\item \label{enum:outline-pf-main:1}
The error of the \(  0\Th  \) approximation,
\(  \thetaHat[0] \sim \ParamSpace  \),
produced by BIHT is clearly bounded from above by the diameter of the unit sphere \(  \Sphere{\n}  \), i.e., no more than \(  2  \).
\item \label{enum:outline-pf-main:2}
The vast majority of the work thus falls onto analyzing any subsequent \(  \Iter\Th  \) approximation, \(  \thetaHat[\Iter] \in \ParamSpace  \), \(  \Iter \in \Z_{+}  \).
For this, the analysis is divided into establishing two main bounds:
\Enum[{\label{enum:outline-pf-main:2:i}}]{i}
a deterministic bound on the error of the \(  \Iter\Th  \) approximation obtained from BIHT (\see \LEMMA \ref{lemma:error:deterministic}), and
\Enum[{\label{enum:outline-pf-main:2:ii}}]{ii}
a probabilistic bound, which is amounts to a {\em restricted  invertibility} property that holds for Gaussian matrices with high probability (\see \THEOREM \ref{thm:main-technical:sparse}).
Then, these bounds are combined into the convergence guarantees for BIHT stated in the main theorem.
\item \label{enum:outline-pf-main:3}
Regarding the first bound, \ref{enum:outline-pf-main:2:i}, it can be shown that the error of the \(  \Iter\Th  \) approximation obtained via \ALGORITHM \ref{alg:biht:normalized} is bounded from above by (\see \LEMMA \ref{lemma:error:deterministic} and \ALGORITHM \ref{alg:biht:normalized})
\begin{gather*}
  \| \thetaStar - \thetaHat[\Iter] \|_{2}
  =
  \BigO(
  \left\|
    \thetaStar
    -
    \frac
    {\ThresholdSet*{\TS}( \thetaHat[\Iter-1] + \hfFn( \thetaStar, \thetaHat[\Iter-1] ) )}
    {\| \ThresholdSet*{\TS}( \thetaHat[\Iter-1] + \hfFn( \thetaStar, \thetaHat[\Iter-1] ) ) \|_{2}}
  \right\|_{2}
  ).
\end{gather*}

\item \label{enum:outline-pf-main:4}
For the second bound, \ref{enum:outline-pf-main:2:ii}, stated in \STEP \ref{enum:outline-pf-main:2}, a variant of the restricted approximate invertibility condition that appeared in \cite[{\DEFINITION 3.1}]{matsumoto2022binary} is established for Gaussian matrices when the number of rows in the covariate matrix (alternatively, the number of covariates or measurements), \(  \m  \), is sufficiently large.
More precisely, Gaussian matrices are shown to have the property that
\begin{gather*}
  \left\|
    \thetaStar
    -
    \frac
    {\thetaXX + \hfFn[\JCoords]( \thetaStar, \thetaXX )}
    {\| \thetaXX + \hfFn[\JCoords]( \thetaStar, \thetaXX ) \|_{2}}
  \right\|_{2}
  =
  \BigO(
  \sqrt{\epsilonX \| \thetaStar-\thetaXX \|_{2}}
  +
  \epsilonX
  )
\end{gather*}
uniformly for all \(  \thetaXX \in \ParamSpace  \) and \(  \JCoords \subseteq [\n]  \), \(  | \JCoords | \leq \k  \), with high probability when \(  \m  \) is at least what is specified in \EQUATION \eqref{eqn:approx-error:sparse:m} (\see \THEOREM \ref{thm:main-technical:sparse}).
\item \label{enum:outline-pf-main:5}
The results of \STEPS \ref{enum:outline-pf-main:2} and \ref{enum:outline-pf-main:3} are then combined in order to upper bound the error of the \(  \Iter\Th  \) approximation by the following recurrence relation, which holds with bounded probability dictated by \THEOREM \ref{thm:main-technical:sparse}, \ie by the probability that the bound \ref{enum:outline-pf-main:2:ii} holds:
\begin{gather*}
  \| \thetaStar - \thetaHat[0] \|_{2} \leq 2
  ,\\
  \| \thetaStar - \thetaHat[\Iter] \|_{2}
  =
  \BigO(
  \sqrt{\epsilonX \| \thetaStar - \thetaHat[\Iter-1] \|_{2}}
  +
  \epsilonX
  )
  ,\qquad
  \Iter \in \Z_{+}
.\end{gather*}
\item \label{enum:outline-pf-main:6}
Per \FACT \ref{fact:recurrence}, the above recurrence relation is \pointwise bounded from above to yield the rate of convergence and, 
consequently,
the asymptotic convergence in the limit as \(  \Iter \to \infty  \)
of the approximations produced by BIHT:
\begin{gather*}
  \| \thetaStar - \thetaHat[\Iter] \|_{2} \leq 2^{2^{-\Iter}} \epsilonX^{1-2^{-\Iter}}
  ,\quad \Iter \in \Z_{\geq 0}
  ,\\
  \lim_{\Iter \to \infty} \| \thetaStar - \thetaHat[\Iter] \|_{2} \leq \epsilonX
,\end{gather*}
completing the proof of the main theorem.
\end{enumerate}
 \section{Restricted Approximate Invertibility of GLMs}
\label{outline:main-technical-result}

The crux of the analysis for the convergence of \ALGORITHM \ref{alg:biht:normalized} is a variant of the restricted approximate invertibility conditions (\RAICs) studied in \cite{friedlander2021nbiht,matsumoto2022binary}, which is established for Gaussian matrices in \THEOREM \ref{thm:main-technical:sparse}.
The formal proofs of these technical results, which constitute the primary technical contributions of this work, are located in \APPENDIX \ref{outline:pf-main-technical-result} and overviewed in \SECTION \ref{outline:main-technical-result|outline-of-pf|outline}.

%

%

The main technical theorem will be formalized next.
\begin{theorem}
\label{thm:main-technical:sparse}
%
Fix
\(  \n, \k, \m \in \Z_{+}  \), \(  \k \leq \n  \), and \(  \rhoX, \deltaX \in (0,1)  \) where
\(
  \deltaX \defeq \frac{\epsilonX}{\frac{3}{2} ( 5+\sqrt{21} )}.
\)
Write
\(  \alphaO = \alphaO( \deltaX ) \defeq \alphaOExpr[\deltaX]  \)
as in \EQUATION \eqref{eqn:notations:alpha_0:def}.
Let
\(  \ParamSpace = \SparseSphereSubspace{\k}{\n}  \),
and fix
\(  \thetaStar \in \ParamSpace  \).
Under \ASSUMPTION \ref{assumption:p}, for a number of samples 
\begin{align}
\nonumber
  \m
  &=
  \mOEXPRS{\deltaX}[,]
  \\
\label{eqn:main-technical:sparse:m}
\end{align}
 with probability at least \(  1-\rhoX  \), uniformly for all \(  \thetaXX \in \ParamSpace  \) and all \(  \JCoords \subseteq [\n]  \), \(  | \JCoords | \leq \k  \),
\begin{gather}
\label{eqn:main-technical:sparse:1}
  \left\|
    \thetaStar
    -
    \frac
    {\thetaXX + \hfFn[\JCoords]( \thetaStar, \thetaXX )}
    {\| \thetaXX + \hfFn[\JCoords]( \thetaStar, \thetaXX ) \|_{2}}
  \right\|_{2}
  \leq
  \sqrt{\deltaX \| \thetaStar-\thetaXX \|_{2}}
  +
  \deltaX
.\end{gather}
\end{theorem}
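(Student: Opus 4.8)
The plan is to recognize the left-hand side of \eqref{eqn:main-technical:sparse:1} as, up to a multiplicative constant, the deviation of the renormalized random vector $\thetaXXX = \thetaXX + \hfFn[\JCoords]( \thetaStar, \thetaXX )$ about its mean, and then to bound that deviation by a covering argument. First I would compute $\E[\hfFn[\JCoords]( \thetaStar, \thetaXX )]$: decomposing each covariate into its components along $\thetaStar$, along $\thetaXX$, and in the orthogonal complement, using the independence of the internal randomness of $\fFn$ from the orthogonal part, the definition of $\gammaX$, and $\E|Z| = \sqrt{2/\pi}$ for $Z \sim \N(0,1)$, together with the linearity of $\ThresholdSet{\cdot}$ and the fact that $\thetaStar$ and $\thetaXX$ are supported on $\Supp(\thetaStar)\cup\Supp(\thetaXX)\cup\JCoords$, one obtains
\[
  \E[\hfFn[\JCoords]( \thetaStar, \thetaXX )] = \tfrac{\sqrt{2\pi}}{2}\,\gammaX\,\thetaStar - \thetaXX ,
  \qquad\text{hence}\qquad
  \E[\thetaXXX] = \tfrac{\sqrt{2\pi}}{2}\,\gammaX\,\thetaStar .
\]
Since $\gammaX > 0$ under \ASSUMPTION \ref{assumption:p}, $\E[\thetaXXX]$ is a positive multiple of $\thetaStar$, so $\E[\thetaXXX]/\|\E[\thetaXXX]\|_2 = \thetaStar$ and $\|\E[\thetaXXX]\|_2 = \tfrac{\sqrt{2\pi}}{2}\gammaX$. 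The elementary bound $\bigl\|\tfrac{u}{\|u\|_2} - \tfrac{v}{\|v\|_2}\bigr\|_2 \le \tfrac{2\|u-v\|_2}{\|v\|_2}$ (for nonzero $u,v$) then yields
\[
  \left\| \thetaStar - \frac{\thetaXXX}{\| \thetaXXX \|_{2}} \right\|_{2}
  \;\le\;
  \frac{4}{\sqrt{2\pi}\,\gammaX}\,\bigl\| \hfFn[\JCoords]( \thetaStar, \thetaXX ) - \E[\hfFn[\JCoords]( \thetaStar, \thetaXX )] \bigr\|_{2} ,
\]
so it suffices to prove that, with probability at least $1-\rhoX$, uniformly over $\thetaXX \in \ParamSpace$ and $\JCoords \subseteq [\n]$ with $|\JCoords| \le \k$, one has $\bigl\| \hfFn[\JCoords]( \thetaStar, \thetaXX ) - \E[\hfFn[\JCoords]( \thetaStar, \thetaXX )] \bigr\|_{2} \lesssim \gammaX\bigl( \sqrt{\deltaX\|\thetaStar-\thetaXX\|_2} + \deltaX \bigr)$.

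To bound this deviation I would peel off the GLM noise, writing $\hfFn[\JCoords]( \thetaStar, \thetaXX ) = \hFn[\JCoords]( \thetaStar, \thetaXX ) + \bigl( \hfFn[\JCoords]( \thetaStar, \thetaXX ) - \hFn[\JCoords]( \thetaStar, \thetaXX )\bigr)$, where $\hFn[\JCoords]$ is the noiseless analogue \eqref{eqn:notations:hJ:def} obtained by replacing $\fFn$ with $\Sign$. The first summand is the object analyzed in the noiseless regime of \cite{matsumoto2022binary}: the vector $\Sign(\CovM\thetaStar) - \Sign(\CovM\thetaXX)$ is supported on the rows where the two linear forms have different signs, a set whose size concentrates around $\m\arccos\langle\thetaStar,\thetaXX\rangle/\pi \asymp \m\|\thetaStar-\thetaXX\|_2$ by the angular uniformity of \iid Gaussian directions; this shrinking active fraction is what produces the $\sqrt{\deltaX\|\thetaStar-\thetaXX\|_2}$ term. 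The second summand equals $\ThresholdSet{\Supp(\thetaStar)\cup\Supp(\thetaXX)\cup\JCoords}( \tfrac{\sqrt{2\pi}}{2\m}\CovM^{\T}(\fFn(\CovM\thetaStar)-\Sign(\CovM\thetaStar)) )$, whose dependence on $(\thetaXX,\JCoords)$ is only through the coordinate set being retained; its active rows $\{\iIx : \fFn(\langle\CovV\VIx{\iIx},\thetaStar\rangle)\neq\Sign(\langle\CovV\VIx{\iIx},\thetaStar\rangle)\}$ number about $\alphaX\m$ irrespective of $\thetaXX$, which is what produces the non-vanishing $\deltaX$ floor (and where $\alphaO = \max\{\alphaX,\deltaX\}$ enters). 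I would then fix a minimal $\deltaX$-net of $\ParamSpace$ and, about each net point $\bar{\thetaXX}$, exploit the \emph{local} structure of $\BallX{\deltaX}(\bar{\thetaXX})$; for $\thetaXX$ with nearest net point $\bar{\thetaXX}$, decompose by the triangle inequality
\[
  \bigl\| \hFn[\JCoords]( \thetaStar, \thetaXX ) - \E[\hFn[\JCoords]( \thetaStar, \thetaXX )] \bigr\|_2
  \le
  \underbrace{\bigl\| \hFn[\JCoords]( \thetaStar, \bar{\thetaXX} ) - \E[\hFn[\JCoords]( \thetaStar, \bar{\thetaXX} )] \bigr\|_2}_{\text{(i) global}}
  +
  \underbrace{\bigl\| \hFn[\JCoords]( \thetaStar, \thetaXX ) - \hFn[\JCoords]( \thetaStar, \bar{\thetaXX} ) \bigr\|_2}_{\text{(ii) local}}
  + (\text{lower-order mean term}),
\]
while the noise summand contributes a third piece, (iii), namely $\bigl\| (\hfFn[\JCoords]( \thetaStar, \thetaXX ) - \hFn[\JCoords]( \thetaStar, \thetaXX )) - \E[\,\cdot\,] \bigr\|_2$, handled uniformly by a union bound over the $\sum_{\ell=0}^{\k}\binom{\n}{\ell}$ possible support sets.

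For (i), fixing $\bar{\thetaXX}$ and $\JCoords$ and conditioning on the (size-controlled) active set, each of the $O(\k)$ coordinates kept by $\ThresholdSet{\cdot}$ is a centred sum of \iid sub-exponential terms, so $\bigl\|\hFn[\JCoords]( \thetaStar, \bar{\thetaXX} ) - \E[\,\cdot\,]\bigr\|_2$ concentrates at scale $O\bigl(\sqrt{\k\|\thetaStar-\bar{\thetaXX}\|_2/\m}\bigr)$; a union bound over the net and over the $\sum_{\ell=0}^{\k}\binom{\n}{\ell}$ choices of $\JCoords$ (combined log-cardinality $O(\k\log(\n/(\deltaX\k)) + \log(1/\rhoX))$), together with the choice of $\m$ in \eqref{eqn:main-technical:sparse:m}, makes this $O\bigl(\gammaX\sqrt{\deltaX\|\thetaStar-\bar{\thetaXX}\|_2}\bigr)$ simultaneously, and $\|\thetaStar-\bar{\thetaXX}\|_2 \le \|\thetaStar-\thetaXX\|_2 + \deltaX$ converts this to $O\bigl(\gammaX(\sqrt{\deltaX\|\thetaStar-\thetaXX\|_2}+\deltaX)\bigr)$. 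Piece (iii) is estimated identically but with active fraction $\alphaO$ in place of $\arccos\langle\thetaStar,\thetaXX\rangle/\pi$, yielding $O(\gammaX\deltaX)$; here it is the subgaussianity of the covariates, rather than their angular uniformity, that limits the chaos. For the local piece (ii) I would invoke the local binary embedding bound of \cite{oymak2015near}: uniformly over $\thetaXX \in \BallX{\deltaX}(\bar{\thetaXX})$, the number of rows on which $\Sign(\langle\CovV\VIx{\iIx},\thetaXX\rangle)$ and $\Sign(\langle\CovV\VIx{\iIx},\bar{\thetaXX}\rangle)$ disagree is $O(\m\deltaX\,\mathrm{polylog}(1/\deltaX))$ with high probability once $\m \gtrsim \k\deltaX^{-1}\log^{3/2}(1/\deltaX)$; since the underlying difference of the $\Sign$-vectors is then supported on that many rows, the same sub-exponential estimate gives $\bigl\|\hFn[\JCoords]( \thetaStar, \thetaXX ) - \hFn[\JCoords]( \thetaStar, \bar{\thetaXX} )\bigr\|_2 = O(\gammaX\deltaX)$. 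Summing (i)+(ii)+(iii) and the lower-order deterministic mean term (of size $O(\|\thetaXX-\bar{\thetaXX}\|_2) = O(\deltaX)$) gives the desired uniform bound $O\bigl(\gammaX(\sqrt{\deltaX\|\thetaStar-\thetaXX\|_2}+\deltaX)\bigr)$.

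I expect the main obstacle to be the local piece (ii), and more generally making the estimate uniform in $\thetaXX$ all the way down to scale $\deltaX$: the map $\thetaXX \mapsto \hFn[\JCoords]( \thetaStar, \thetaXX )$ is not Lipschitz --- it jumps whenever a row changes sign --- so a plain net plus the triangle inequality is insufficient, and one genuinely needs the local-embedding machinery of \cite{oymak2015near} to bound the sign-flip count uniformly over a small ball while keeping the net cardinalities, hence the sample complexity \eqref{eqn:main-technical:sparse:m}, under control; this is exactly the regime in which the number of active covariates can no longer be driven below $\Theta(\m\deltaX)$, so the error stops improving there. This is also where the analysis departs most from the noiseless case of \cite{matsumoto2022binary}: the presence of $\fFn$ forces the clean isolation of the noise term (iii), which relies on the exact mean computation of the first paragraph and on the normalization step of \ALGORITHM \ref{alg:biht:normalized} --- without which the left side of \eqref{eqn:main-technical:sparse:1} would additionally carry a non-vanishing bias of size $1-\tfrac{\sqrt{2\pi}}{2}\gammaX$. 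The remaining work --- verifying the four regimes inside the $\max$ of \eqref{eqn:main-technical:sparse:m} by balancing them against the estimates for (i), (ii), (iii) and the net/union-bound cost, and using the monotonicity in \ASSUMPTION \ref{assumption:p} where it is needed to compare the noise at $\thetaXX$ with that at $\bar{\thetaXX}$ --- is bookkeeping.
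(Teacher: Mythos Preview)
Your proposal is correct and follows essentially the same route as the paper: the same mean computation $\E[\thetaXXX]=\sqrt{\pi/2}\,\gammaX\,\thetaStar$, the same normalization inequality (Fact~\ref{fact:dist-btw-normalized-vectors}), the same three-term split into global/local/noise pieces (Lemma~\ref{lemma:combine}), and the same reliance on the Oymak--Recht local embedding for the local piece. One minor correction: \ASSUMPTION~\ref{assumption:p} is not used to compare noise at $\thetaXX$ versus $\bar{\thetaXX}$---piece (iii) depends on $\thetaXX$ only through the retained coordinate set, as you yourself observed---but rather to bound the conditional MGF of $|Z|\cdot\I(\fFn(Z)\neq\Sign(Z))$ given that the indicator is one (Lemma~\ref{lemma:pf:lemma:concentration-ineq:noisy:f1,f2}), which is what makes the subgaussian concentration for (iii) go through.
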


The main technical theorem holds for logistic and probit regression---as formalized below in \COROLLARY \ref{corollary:main-technical:logistic-regression}
---because \ASSUMPTION \ref{assumption:p} is satisfied by both models.
Moreover,
closed-form bounds on the sample complexity in \THEOREM \ref{thm:main-technical:sparse} can be derived for these two
exemplary
GLMs, which are stated as \orderwise results in \COROLLARY \ref{corollary:main-technical:logistic-regression} 
with the specification of precise bounds and constants left to the proof of the corollary in \APPENDIX \ref{outline:pf-main-technical-result|pf-main-corollaries}.

\begin{corollary}
\label{corollary:main-technical:logistic-regression}
%
Let \(  \pFn  \) be the logistic function with \betaXnamelr \(  \betaX \GTR 0  \), as in \DEFINITION \ref{def:p:logistic-regression} (or the probit function with SNR \(  \betaX \GTR 0  \), as in \DEFINITION \ref{def:p:probit}).
If there exist absolute constants \(  \cO, \ConstbetaXThrsholdLR > 0  \) then for a number of samples given by
 \EQUATION \eqref{eqn:corollary:approx-error:logistic-regression:m:sparse},
 the bound stated as \EQUATION \eqref{eqn:main-technical:sparse:1} in \THEOREM \ref{thm:main-technical:sparse} holds uniformly for all \(  \thetaXX \in \ParamSpace  \) and all \(  \JCoords \subseteq [\n]  \), \(  | \JCoords | \leq \k  \), with probability at least \(  1-\rhoX  \).
\end{corollary}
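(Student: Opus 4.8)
The plan is to deduce the corollary from \THEOREM \ref{thm:main-technical:sparse} in two stages. First I would check that the logistic function of \DEFINITION \ref{def:p:logistic-regression} and the probit function of \DEFINITION \ref{def:p:probit} satisfy \ASSUMPTION \ref{assumption:p}, so that \THEOREM \ref{thm:main-technical:sparse} applies verbatim and already delivers the bound \eqref{eqn:main-technical:sparse:1} with the generic sample complexity \eqref{eqn:main-technical:sparse:m}. Second, I would obtain two-sided estimates of the link-function quantities $\alphaX = \alphaX(\betaX)$ and $\gammaX = \gammaX(\betaX)$ (and, through them, of $\nuX$ and $\alphaO$) as functions of $\betaX$, and substitute these into that generic bound so that it collapses into the three-regime expression \eqref{eqn:corollary:approx-error:logistic-regression:m:sparse}; the breakpoints of the regimes then exhibit the required absolute constants $\cO$ and $\ConstbetaXThrsholdLR$.

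For the first stage, the monotonicity condition of \ASSUMPTION \ref{assumption:p} is immediate, since the logistic sigmoid and the standard Gaussian distribution function are strictly increasing on $\R$. For the second condition, observe that both models are antisymmetric, $\pFn(-z) = 1 - \pFn(z)$, so $\nuX(z) \equiv 1 - \pFn(z) + \pFn(-z) = 2\left( 1 - \pFn(z) \right)$, a scaled survival function; the requirement that $\nuX(z+w)/\nuX(z)$ be non-increasing in $z \geq 0$ for every $w>0$ is exactly log-concavity of $\nuX$ on $[0,\infty)$. For logistic regression $\log \nuX(z) = \log 2 - \log\left(1 + e^{\betaX z}\right)$, and the softplus map $z \mapsto \log(1+e^{\betaX z})$ is convex, so $\nuX$ is log-concave. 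For probit regression $\nuX(z) = 2\left( 1 - \Phi(\betaX z) \right)$ with $\Phi$ the standard Gaussian distribution function; the Gaussian density is log-concave, hence its upper tail $1 - \Phi$ is log-concave (a classical fact), and precomposition with the increasing linear map $z \mapsto \betaX z$ preserves log-concavity. Thus \ASSUMPTION \ref{assumption:p} holds for both models.

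For the second stage I would use the antisymmetric simplification $\alphaX = \sqrt{2/\pi}\int_0^\infty e^{-z^2/2}\,\pFn(-z)\,dz$ together with Stein's lemma $\gammaX = 2\,\E[\pFn'(Z)]$. For logistic regression $\pFn'(z) = \betaX\,\sigma(\betaX z)\left(1 - \sigma(\betaX z)\right) \in (0,\betaX/4]$, which gives $\gammaX \asymp \betaX$ for $\betaX$ bounded and $\gammaX \to \sqrt{2/\pi}$ as $\betaX \to \infty$, i.e. $\gammaX = \Theta(\min\{\betaX,1\})$; and $\pFn(-z) = 1/(1+e^{\betaX z})$ gives $\alphaX \to 1/2$ as $\betaX \to 0$ and $\alphaX = \Theta(1/\betaX)$ as $\betaX \to \infty$, i.e. $\alphaX = \Theta(\min\{1,1/\betaX\})$; for probit, $\gammaX = \sqrt{2/\pi}\,\betaX/\sqrt{1+\betaX^2}$ in closed form and standard Gaussian (Mills-ratio) tail estimates yield the same orders for $\alphaX$. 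Hence $\alphaO = \max\{\alphaX,\Theta(\epsilonX)\} = \Theta(\max\{\min\{1,1/\betaX\},\epsilonX\})$, while resolving the self-referential relation defining $\nuX$ by a short fixed-point argument gives $\nuX = \widetilde{\Theta}(\gammaX\deltaX)$ with $\deltaX = \Theta(\epsilonX)$. Substituting these orders into the maximum that defines the sample complexity \eqref{eqn:main-technical:sparse:m}: for $\betaX \in (0,\cO)$ one has $\gammaX \asymp \betaX$ and $\alphaO \asymp 1$, so the leading term $\frac{\alphaO\k}{\gammaX^2\deltaX^2}\log(\cdot)$ is $\widetilde{O}(\k/(\betaX^2\epsilonX^2))$; for $\betaX \in [\cO,\ConstbetaXThrsholdLR/\epsilonX)$ one has $\gammaX \asymp 1$ and $\alphaO \asymp 1/\betaX$, giving $\widetilde{O}(\k/(\betaX\epsilonX^2))$; and for $\betaX \in [\ConstbetaXThrsholdLR/\epsilonX,\infty)$ the noise floor dominates, $\alphaO \asymp \epsilonX$ and $\gammaX \asymp 1$, giving $\widetilde{O}(\k/\epsilonX)$. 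These are precisely the three cases of \eqref{eqn:corollary:approx-error:logistic-regression:m:sparse}, and the thresholds $\cO$ (where $\gammaX$ saturates) and $\ConstbetaXThrsholdLR$ (fixed by the crossover $\alphaX \asymp \epsilonX$, i.e. $1/\betaX \asymp \epsilonX$) supply the claimed absolute constants.

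I expect the main obstacle to be bookkeeping rather than anything conceptual: one must establish the two-sided bounds on $\alphaX(\betaX)$ and $\gammaX(\betaX)$ with explicit constants uniform over the entire range of $\betaX$ (so that the interpolation between the small-$\betaX$ and large-$\betaX$ behaviors actually pins down $\cO$ and $\ConstbetaXThrsholdLR$), resolve the implicit definition of $\nuX$, and then verify in each of the three regimes that the claimed term dominates the remaining terms of the maximum while all residual logarithmic factors are absorbed into $\widetilde{O}(\cdot)$. The probit case requires slightly more delicate (but standard) Gaussian tail estimates for $\alphaX$; the log-concavity of $1 - \Phi$ invoked above is classical.
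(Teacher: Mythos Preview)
Your proposal is correct and follows the same two-stage structure as the paper: first verify \ASSUMPTION \ref{assumption:p} for both link functions, then derive explicit bounds on $\alphaX(\betaX)$ and $\gammaX(\betaX)$ and substitute into the generic sample complexity \eqref{eqn:main-technical:sparse:m} of \THEOREM \ref{thm:main-technical:sparse} to obtain the three regimes of \eqref{eqn:corollary:approx-error:logistic-regression:m:sparse}.

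The execution of two sub-steps differs from the paper in ways worth noting. For \CONDITION \ref{condition:assumption:p:ii}, you recognize that the requirement on $\nu(z+w)/\nu(z)$ is precisely log-concavity of the survival function $\nu(z)=2(1-\pFn(z))$, and dispatch both models in one line (softplus convexity for logistic; classical log-concavity of the Gaussian tail for probit). The paper instead verifies the condition by direct differentiation in the logistic case and by a longer conditional-expectation argument (comparing $\E[|U|\mid |U|\geq z]$ to $\E[|U|\mid |U|\geq z+w]$) in the probit case. For the lower bound on $\gammaX$ in the logistic case, you invoke Stein's lemma $\gammaX=2\,\E[\pFn'(Z)]$ directly; the paper takes a more circuitous route, defining $\zetaX=1-\sqrt{\pi/2}\,\gammaX$ and proving $\zetaX\leq 2\alphaX$ by showing that $\zetaX(\betaX)/\alphaX(\betaX)$ is non-increasing in $\betaX$ via a variance computation (their \CLAIM \ref{claim:pf:corollary:main-technical:logistic-regression:1}), before finally applying the Hsu--Mazumdar bound on $\alphaX$. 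Your shortcuts are cleaner and lose nothing; the paper's route has the minor advantage that it yields the sharper inequality $\gammaX\geq\sqrt{2/\pi}(1-2\alphaX)$ explicitly, but for the order-wise conclusion this is unnecessary. For probit the paper actually computes $\alphaX=\tfrac{1}{\pi}\arctan(1/\betaX)$ and $\gammaX=\sqrt{2/\pi}\,\betaX/\sqrt{1+\betaX^2}$ in closed form, matching your claim.
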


\section{Conclusion}
In this paper, we  made a case for binary iterative hard thresholding, (projected) gradient descent on the ReLU loss, as a universal learning algorithm for classification tasks. Under very general models of nonseparable (and separable) data, that include logistic, probit, and random classification noise models, BIHT is statistically optimal in parameter estimation. We observe this in practice as well. 

We have restricted ourselves to Gaussian covariates, for which our results are tight. However it will be worth exploring the performance of BIHT for more general classes of distributions. We also note an observation that contrasts the noiseless case from generalized linear models, as far as the dynamics of BIHT is concerned. It is known that the normalization step in BIHT, though essential for the convergence proof of \cite{friedlander2021nbiht,matsumoto2022binary}, can be redundant in practice for the noiseless case, see Figure~\ref{fig:error-decay-nbiht-biht-comparison-beta=1-plot:sign} in the appendix. On the other hand, for the setting of this paper, the normalization seems to be crucial for the stability of the algorithm especially in the high-noise regime (as can be seen in Figure~\ref{fig:error-decay-nbiht-biht-comparison-beta=1-plot:logistic-regression} in the appendix). 

Finally, it will  be interesting to analyze BIHT (and a stochastic perceptron-like version of it) from a learning theory perspective, especially in the agnostic setting, where data not necessarily comes from a GLM. Other noise models, such as Massart noise, can also be interesting.

\bibliography{refs}


\appendix

\let\SECTIONREF\SECTION
\let\SECTIONSREF\SECTIONS
\let\SECTION\APPENDIX
\let\SECTIONS\APPENDICES
\section{Comparison to Prior Works}
\label{outline:intro|>contributions|>comparison}

A special case of the projected gradient decent algorithm studied by \cite{bahmani2016learning} offers an alternative gradient-based method to BIHT for parameter estimation in some classes of GLMs, including those with nondecreasing, Lipschitz transfer functions, \(  \linkFn^{-1}  \).
Note that this class  encompasses GLMs whose responses
follow
an exponential distribution, which is one of the most widely studied families of GLMs.
Like BIHT, this algorithm projects its approximations onto the set of \(  \k  \)-sparse vectors, where the sparsity can be taken as \(  \k=\n  \) in the dense parameter regime so as to effectively eliminate the projection step of the algorithm.
In the dense parameter regime, this algorithm becomes the perceptron-like \emph{\GLMtron} algorithm of the earlier work, \cite{kakade2011efficient}, which learns GLMs with (possibly nonstrictly) monotonically increasing, Lipschitz transfer functions.
For concise nomenclature, we will borrow the name of ``\GLMtronX'' to refer to the original \GLMtron algorithm with the addition of a sparse projection, as analyzed in \cite{bahmani2016learning}.
%
\par 
%
BIHT and \GLMtronX have a few key differences.
While other covariate designs are possible, the analysis in \cite{bahmani2016learning} assumes that the norm of each covariate is almost surely at most \(  1  \), in contrast to the Gaussian covariate design considered in this present work.
As another distinction
between BIHT and \GLMtronX, BIHT requires that the GLM has binary outcomes with a mild condition on the link function, \(  \linkFn  \), but otherwise need not know the specific choice of link function, while \GLMtron can learn a larger class of GLMs that only necessitates that the transfer function, \(  \linkFn^{-1}  \), satisfies a certain derivative condition (which indeed holds when the transfer function is nondecreasing and Lipschitz).
However, unlike BIHT, \GLMtronX requires knowledge of the specific choice of link function.
(Note that \cite{kakade2011efficient} proposes a second algorithm for learning single-index models which estimates an unknown link function, but this is outside the scope of this work.)
%
%
Most significantly, BIHT and \GLMtronX ``try to'' minimize different objective functions.
Whereas BIHT performs gradient descent on the (negative) ReLU loss,
\begin{gather*}
  \Jbiht( \thetaX )
  =
  \sum_{\iIx=1}^{\m}
  | [ \RespV*_{\iIx} \langle \CovV\VIx{\iIx}, \thetaX \rangle ]_{-} |
,\end{gather*}
by taking gradient steps in the negated direction of
\begin{gather*}
  \nabla_{\thetaX}
  \Jbiht( \thetaX )
  \ni
  -\CovM^{\T}
  \sep
  \frac{1}{2}
  \left(
    \RespV - \Sign{} \big( \CovM \thetaX \big)
  \right)
,\end{gather*}
\GLMtronX is a gradient descent procedure on the loss
\begin{gather*}
  \Jglmtron( \thetaX )
  =
  \sum_{\iIx=1}^{\m}
  \intlinkFn( \langle \CovV\VIx{\iIx}, \thetaX \rangle )
  -
  \RespV*_{\iIx}
  \langle \CovV\VIx{\iIx}, \thetaX \rangle
\end{gather*}
with gradient steps in the negated direction of
\begin{gather*}
  \nabla_{\thetaX}
  \Jglmtron( \thetaX )
  =
  -\CovM^{\T}
  \left(
    \RespV - \linkFn^{-1} \big( \CovM \thetaX \big)
  \right)
,\end{gather*}
where the function, \(  \intlinkFn  \), is defined such that
\(  \linkFn^{-1} = \partial \intlinkFn  \).
When the responses,
\(  \RespV*_{\iIx} \Mid| \CovV\VIx{\iIx}  \),
\(  \iIx \in [\m]  \),
follow an exponential distribution, \(  \Jglmtron  \) becomes the negative log-likelihood function, and hence, roughly speaking, \GLMtronX essentially ``tries to'' compute the MLE in this case.
The difference in objective functions is fundamental:
it precludes the application of the analysis for BIHT in this work to \GLMtronX.
Conversely, adapting the approach in \cite{bahmani2016learning} is insufficient to achieve the sample complexity established for BIHT here.
%
\par 
%
Assuming that the responses, \(  \RespV*_{\iIx}  \), \(  \iIx \in [\m]  \), are bounded, as is the case for binary GLMs, \cite{bahmani2016learning} shows that \GLMtronX achieves an \errorrate of \(  \epsilonX  \) provided the number of covariates, \(  \m  \), is at least
\begin{gather*}
  \m = \BigO'( \max \left\{ \frac{1}{\epsilonX^{4}}, \k \log \left( \frac{\n}{\k} \right) \right\} )
,\end{gather*}
where this hides some terms.
Notice that the dependency on the \errorrate, \(  \epsilonX  \), is \(  \epsilonX^{-4}  \) compared to \(  \epsilonX^{-2}  \) obtained in this work for BIHT, though in fairness, neither result should be considered ``superior'' to the other since, although this work obtains a smaller dependence on \(  \epsilonX  \) and need not know the link function, the analysis \cite{bahmani2016learning} applies to a larger class of GLMs.
%
\par 
%
A generalized version of the popular ``LASSO'' algorithm has been proposed for GLMs in \cite{plan2016generalized}.
Our results for BIHT are analogous to their results for parameter estimation with LASSO; in particular their sample complexity is \(  \BigO'( \frac{\k {\log( \frac{\n}{\k} )}}{\epsilonX^{2}} ) \); and their results also depends on properties of the link function, namely a scaling factor and noise variance, the former being same as the quantity
 \(  \gammaX  \) as defined in Equation \eqref{eqn:notations:gamma:def}. Subsequently, a very precise error-analysis for generalized LASSO has been performed in \cite{thrampoulidis2015lasso}. To  compare with \THEOREM \ref{thm:approx-error:sparse}, their sample complexity is \(  \BigO'( \frac{( 1-\gammaX^{2} ) \k {\log( \frac{\n}{\k} )}}{\epsilonX^{2}} ) \) (see, \cite[Equation (8)]{thrampoulidis2015lasso}); however, due to the differences in assumption and applicability as explained above, one should exercise caution in such comparisons.
%
\par 
%
A few other prior works are worth remarking on.
In the following discussion, the parameter, \(  \thetaStar  \), is assumed to have unit norm, but the models incorporate SNR denoted by \(  \betaX \GTR 0  \).
Formulating the (sparse) estimation problem as a convex program, \cite{plan2012robust} shows that the estimation of \(  \thetaStar  \) from binary responses is possible with
\(  \BigO'( \frac{\k {\log( \frac{\n}{\k} )}}{\min \{ \betaX^{2}, 1 \} \epsilonX^{4}} ) \)
samples under the Gaussian covariate design.
\cite{plan2017high} improves this sample complexity to
\( \BigO'( \frac{\k {\log( \frac{\n}{\k} )}}{\min \{ \betaX^{2}, 1 \} \epsilonX^{2}} )\)
using a method that effectively amounts to the ``Average'' algorithm of \cite{servedio1999pac}.
%
\par 
%
Subsequently, in the case of logistic regression with Gaussian covariates, maximum likelihood estimators and their regularized versions have  recently received renewed attention. In this regard, \cite{sur2019modern} and \cite{salehi2019impact} are notable; however their precise asymptotic results are  given in terms of solutions of a system of equations, and are not directly comparable to our sample complexity results. On the other hand, recently
\cite{hsu2024sample} established a lower bound on sample complexity for this case via a variant of Fano's inequality that is \orderwise tight (up to logarithmic factors) when \(  \betaX \leq 1  \).
\cite{hsu2024sample} additionally obtains \orderwise tight (again, up to logarithmic factors) bounds on the sample complexity in logistic regression with the dense parameter space (when \(  \k=\n  \)) for any \(  \betaX  \), summarized in Equation~\eqref{eqn:corollary:approx-error:logistic-regression:m:sparse}.
Very recently, ~\cite{chardon2024finite} show that for the $\k= \n$ case MLE achieves optimal sample complexity for $\betaX = \Omega(1).$
While \cite{plan2012robust,plan2017high}
pair
their sample complexity bounds with efficient algorithms for the Gaussian design, \polytime algorithms achieving the optimal sample complexity for logistic regression in the \(  \betaX > 1  \) regimes were not known in general. 
This work settles this question by proving that BIHT is a computationally efficient algorithm that in fact simultaneously achieves the \orderwise optimal sample complexity (up to logarithmic factors) for all choices of \(  \betaX  \) even with the sparsity constraint.
Here, it is worth remarking that, although we are not aware of a lower bound on the sample complexity for probit regression in the literature, our result for probit model shares the same order sample complexity (which we believe to be tight) as our result for logistic regression.
It should be noted that, for the probit model in the the non-sparse $k=d$ case, when restricted to \(  \betaX > 1  \),
the same sample complexity (up to log factors)  is also achieved by \cite{kuchelmeister2024finite}.

\subsection{Comparison to \cite{matsumoto2022binary,matsumoto2024robust}}
\label{outline:intro|>contributions|>comparison-biht}

Although numerous prior works have studied BIHT, \eg \cite{friedlander2021nbiht,jacques2013quantized,jacques2013robust,liu2019one,plan2017high}, the works  most closely aligned with the analysis in this manuscript are \cite{matsumoto2022binary,matsumoto2024robust}, and indeed, some elements of the approach in this work are analogous to components of the analyses in \cite{matsumoto2022binary,matsumoto2024robust}.
However, handling the GLM's randomness---introduced into the model through the function \(  \fFn  \)---requires a novel approach.
It turns out that the normalization step in each iteration of BIHT is
crucial
to obtain our bound on the approximation error, which distinguishes this work from \cite{matsumoto2022binary,matsumoto2024robust}, even despite the fact that \cite{matsumoto2024robust} considers an alternative (adversarially) noisy setting.
As discussed in Section \ref{outline:intro|>contributions|>techniques}, the analysis in this work largely centers around an invertibility condition that uniformly bounds an expression of the form
\begin{gather}
\label{eqn:contributions:3}
  \left\|
    \thetaStar
    -
    \frac{\thetaXX + \hfFn[\JCoords]( \thetaStar, \thetaXX )}{\| \thetaXX + \hfFn[\JCoords]( \thetaStar, \thetaXX ) \|_{2}}
  \right\|_{2}
\end{gather}
for all \(  \thetaXX \in \ParamSpace  \) and all \(  \JCoords \subseteq [\n]  \), \(  | \JCoords | \leq \k  \).
In contrast, \cite{matsumoto2022binary,matsumoto2024robust} consider invertibility conditions that bounds expressions of the respective forms
\begin{gather}
\label{eqn:contributions:4}
  \|
    \thetaStar
    -
    \thetaXX - \hFn[\Sign;\JCoords]( \thetaStar, \thetaXX )
  \|_{2}
  ,\qquad
  \|
    \thetaStar
    -
    \thetaXX - \hFn[f_{\mathrm{adv}};\JCoords]( \thetaStar, \thetaXX )
  \|_{2}
,\end{gather}
where the parameterizations by the \(  \Sign  \) and \(  f_{\mathrm{adv}}  \) functions can be thought of as the noiseless and adversarially noisy analogs, respectively, to \(  \fFn  \) in our setting for GLMs.
Notice that both expressions in \eqref{eqn:contributions:4} omit the sort of normalization that appears in \eqref{eqn:contributions:3}.
But following \cite{matsumoto2022binary,matsumoto2024robust} in this way turns out to be problematic when applying the analysis to GLMs:
in ``lower'' SNR regimes, it would effectively lead to an \(  \Omega(1)  \) additive term in the bound, meaning that the bound on the \errorrate for BIHT would become \(  \Omega(1)  \), rather than \(  \epsilonX  \), regardless of the number of covariates (\ie the sample complexity).
However, accounting for the normalization mitigates this issue to give the desired \(  \epsilonX  \)-\errorrate.
%
\par 
%
The intuition behind this is the following.
In expectation, the vector
\(  \thetaXX + \hfFn[\JCoords]( \thetaStar, \thetaXX )  \)
is aligned with \(  \thetaStar  \), as is the noise introduced by the GLM's randomness.
Therefore, the normalization essentially eliminates (or at least reduces) this noise, leading to the desired \errorrate.
Note that, on the other hand, if the noise was instead adversarial, as in \cite{matsumoto2024robust}, it is unlikely that accounting for such normalization in this way would help as the noise can be (adversarially) chosen to be in more or less any direction.
%
\par 
%
In fact, an empirical study with logistic regression (\see \FIGURE \ref{fig:error-decay-nbiht-biht-comparison-beta=1-plot:logistic-regression}) corroborates these observations and suggests that BIHT may exhibit different convergence and stability behaviors when it does or does not normalize its approximations, at least at ``higher'' noise levels, which is a notable distinction from observations made in the ``noiseless'' setting, where BIHT has been empirically seen to converge well (and potentially even less brittlely) when the algorithm's approximations are not normalized (\see \FIGURE \ref{fig:error-decay-nbiht-biht-comparison-beta=1-plot:sign}).
Similar empirical behavior is exhibited with probit regression, as well, but such empirical results have been omitted to avoid redundancy.
%
\par 
%
All in all, this key observation and distinguishing approach are essential for our analysis of and convergence guarantees for learning GLMs with BIHT, and may potentially even be less an artifact of our analysis and
more an inherent feature of
the algorithm itself.
\subsection{Other Related Work}
Stochastic gradient descent (SGD) \citep{robbins1951stochastic,sakrison1965efficient} offers an alternative gradient-based method for parameter estimation in GLMs.
\cite{toulis2014statistical} studies the statistical properties of SGD estimates in GLMs when updates are both explicit and implicit.
However, such SGD estimates are asymptotically sub-optimal compared to the maximum likelihood estimates, as noted by \cite{toulis2014statistical}.
As another common approach, approximate message passing (AMP) and its extensions have also been heavily applied to parameter estimation in GLMs, \eg \cite{mondelli2021approximate,venkataramanan2022estimation,zhang2024spectral,barbier2019optimal,zhu2018amp,schniter2016vector,zhao2024vector}.
This includes generalized approximate message passing (GAMP), an algorithm first proposed by \cite{rangan2011generalized}.
While the \errorrate of GAMP is \informationtheoretically optimal for some GLM's, it falls short of
the \informationtheoretical optimum for GLMs in some paradigms \citep{barbier2019optimal}.
In fact,
\cite{barbier2019optimal} characterizes the regions of the parameter space in which GAMP achieves the \informationtheoretical optimal \errorrate or is \informationtheoretically sub-optimal for GLMs.
As a relative to GAMP, another variant of approximation passing called vector approximate message passing (VAMP), introduced by \cite{rangan2019vector}, has been used for estimation in GLMs, initially by \cite{schniter2016vector} and subsequently by, \eg \cite{zhao2024vector}.

\begin{figure}
%
%
\includegraphics[width=\textwidth]{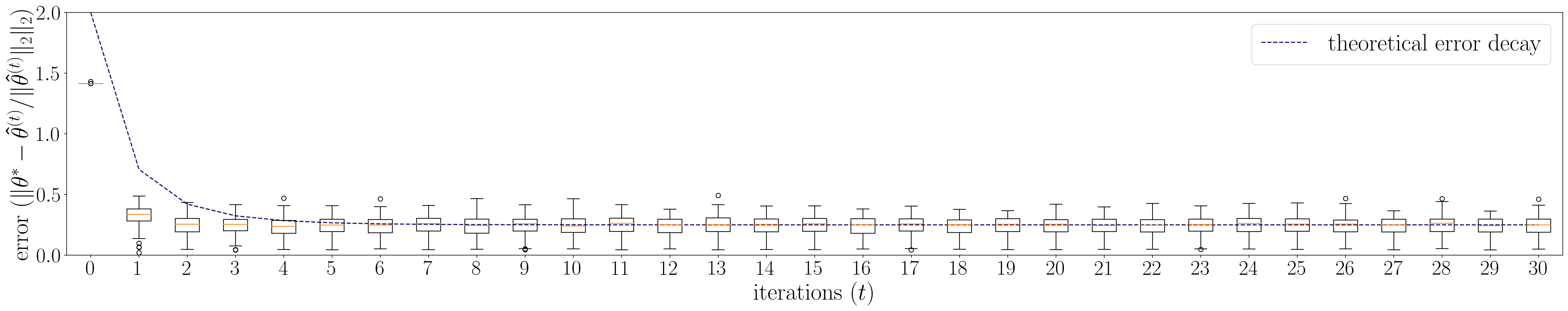}
\includegraphics[width=\textwidth]{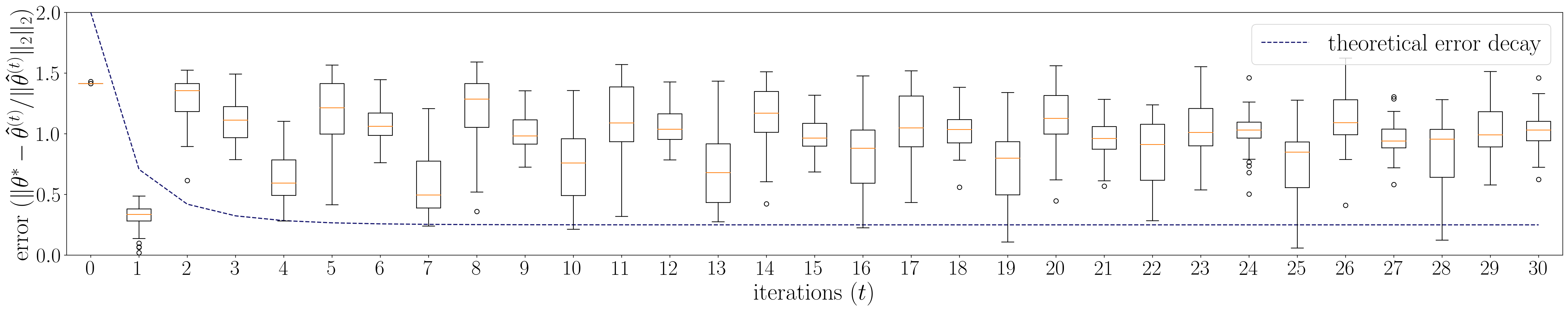}
\caption{\label{fig:error-decay-nbiht-biht-comparison-beta=1-plot:logistic-regression}
This experiment compares the iterative approximation errors for BIHT with (top plot) and without (bottom plot) the normalization step of the algorithm under logistic regression with \betaXnamelr \(  \betaX = 1  \).
The error is the \(  \lnorm{2}  \)-distance between the normalized approximation and the true parameter.
In both plots, the theoretical error decay for the normalized version of BIHT with logistic regression is displayed for reference.
The experiment ran \(  100  \) trials of recovery for \(  30  \) iterations with parameters: \(  d = 2000  \), \(  k = 5  \), \(  n = 3000  \), \(  \epsilonX = 0.25  \), and \(  \rhoX = 0.25  \).%
}
%
\end{figure}

\begin{figure}
%
%
\includegraphics[width=\textwidth]{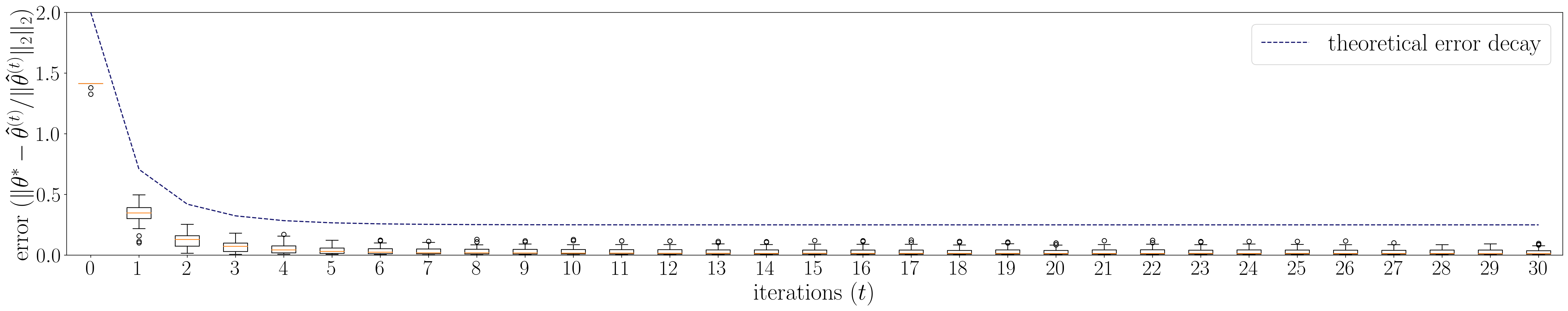}
\includegraphics[width=\textwidth]{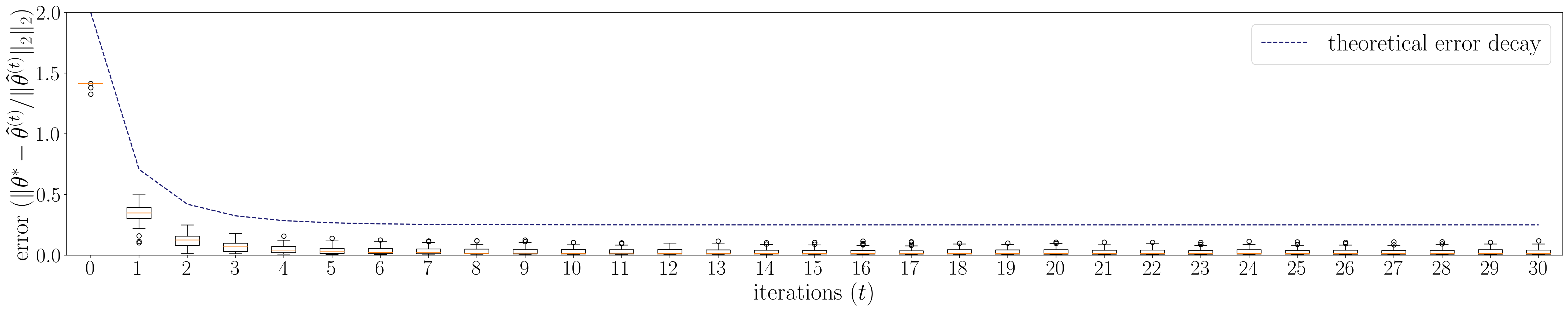}
\caption{\label{fig:error-decay-nbiht-biht-comparison-beta=1-plot:sign}
This experiment compares the iterative approximation errors for BIHT with (top plot) and without (bottom plot) the normalization step of the algorithm under the noiseless model.
The error is the \(  \lnorm{2}  \)-distance between the normalized approximation and the true parameter.
In both plots, the theoretical error decay for the normalized version of BIHT in the noiseless setting---established by \cite{matsumoto2022binary}---is displayed for reference.
The experiment ran \(  100  \) trials of recovery for \(  30  \) iterations with parameters: \(  d = 2000  \), \(  k = 5  \), \(  n = 700  \), \(  \epsilonX = 0.25  \), and \(  \rhoX = 0.25  \).%
}
%
\end{figure}
\section{Proof of the Main Results}
\label{outline:pf-main-result}

In this section, the main results---\THEOREM \ref{thm:approx-error:sparse} and \COROLLARY \ref{corollary:approx-error:logistic-regression}
---are proved, contingent on the correctness of the main technical results---\THEOREM \ref{thm:main-technical:sparse} and \COROLLARY \ref{corollary:main-technical:logistic-regression}
---and some auxiliary results, whose proofs are deferred to \SECTIONS \ref{outline:pf-main-technical-result} and \ref{outline:concentration-ineq}.


\subsection{Intermediate Results}
\label{outline:pf-main-result|intermediate}

Before \THEOREM \ref{thm:approx-error:sparse} can be proved, two auxiliary results, stated below as \LEMMA \ref{lemma:error:deterministic} and \FACT \ref{fact:recurrence}, are needed.
The first of these intermediate results---whose proof is deferred to \SECTION \ref{outline:pf-main-result|pf-intermediate}---will allow the main technical result, \THEOREM \ref{thm:main-technical:sparse}, as well as its corollaries, to be related to the error of the approximations iteratively produced by BIHT (\ALGORITHM \ref{alg:biht:normalized}).

\begin{lemma}
\label{lemma:error:deterministic}
Let
\(  \Vec{\uV} \in \SparseRealSubspace{\k}{\n}  \) and
\(  \Vec{\vV} \in \R^{\n}  \),
and let
\(  \JCoords, \JCoordsu, \JCoordsv \subseteq [\n]  \),
where
\(  | \JCoords | \leq \k  \),
\(  \JCoordsu \defeq \Supp( \Vec{\uV} )  \), and
\(  \JCoordsv \defeq \Supp( \Threshold{\k}( \Vec{\vV} ) )  \).
Then,
\begin{gather}
  \left\|
    \Vec{\uV}
    -
    \frac
    {\Threshold{\k}(\Vec{\vV})}
    {\| \Threshold{\k}(\Vec{\vV}) \|_{2}}
  \right\|_{2}
  \leq
  3
  \left\|
    \Vec{\uV}
    -
    \frac
    {\ThresholdSet{\JCoords \cup \JCoordsu \cup \JCoordsv}(\Vec{\vV})}
    {\| \ThresholdSet{\JCoords \cup \JCoordsu \cup \JCoordsv}(\Vec{\vV}) \|_{2}}
  \right\|_{2}
.\end{gather}
\end{lemma}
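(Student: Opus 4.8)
The plan is to reduce the inequality to two elementary properties of top-$\k$ thresholding. Write $\Vec{w} := \Threshold{\k}(\Vec{\vV})$ and, abbreviating $S := \JCoords \cup \JCoordsu \cup \JCoordsv$, write $\Vec{w}' := \ThresholdSet{S}(\Vec{\vV})$; the statement is only meaningful when $\Vec{w} \neq \Vec{0}$, so assume this (it also forces $\Vec{w}' \neq \Vec{0}$, since $\JCoordsv \subseteq S$ means $\Vec{w}'$ retains every nonzero entry of $\Vec{w}$). The two facts I would invoke are: (i) $\Vec{w}$ is a best $\k$-sparse $\lnorm{2}$-approximation not only of $\Vec{\vV}$ but also of $\Vec{w}'$; and (ii) $\Vec{w}$ is exactly the restriction of $\Vec{w}'$ to the coordinate set $\JCoordsv$, so $\Vec{w}$ and $\Vec{w}'-\Vec{w}$ have disjoint supports and hence, by Pythagoras, $\| \Vec{w}' \|_{2}^{2} = \| \Vec{w} \|_{2}^{2} + \| \Vec{w}' - \Vec{w} \|_{2}^{2}$. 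Put $a := \| \Vec{w}' \|_{2}$, $b := \| \Vec{w} \|_{2}$, so $0 < b \leq a$; set $s := b/a \in (0,1]$, $\hat{\Vec{w}} := \Vec{w}/b$, and $\hat{\Vec{w}}' := \Vec{w}'/a$ (the two normalized vectors appearing in the lemma); then (ii) reads $\| \Vec{w}' - \Vec{w} \|_{2} = a\sqrt{1-s^{2}}$.

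With this set-up, the proof is three short steps. First, a lower bound on the right-hand side: because $a\Vec{\uV}$ is $\k$-sparse, fact (i) gives $\| \Vec{w}' - \Vec{w} \|_{2} \leq \| \Vec{w}' - a\Vec{\uV} \|_{2}$, and dividing by $a$ yields $\| \Vec{\uV} - \hat{\Vec{w}}' \|_{2} \geq \sqrt{1-s^{2}}$. Second, a bound on the distance between the two normalized vectors, routing the triangle inequality through $\Vec{w}/a$:
\[
  \| \hat{\Vec{w}}' - \hat{\Vec{w}} \|_{2}
  \leq
  \tfrac{1}{a}\| \Vec{w}' - \Vec{w} \|_{2} + \| \Vec{w} \|_{2}\bigl( \tfrac{1}{b} - \tfrac{1}{a} \bigr)
  = \sqrt{1-s^{2}} + (1-s)
  \leq 2\sqrt{1-s^{2}},
\]
where the final estimate is just $1-s \leq \sqrt{1-s^{2}}$ for $s \in [0,1]$ (equivalently $(1-s)^{2} \leq (1-s)(1+s)$). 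Third, combining the two bounds gives $\| \hat{\Vec{w}}' - \hat{\Vec{w}} \|_{2} \leq 2\| \Vec{\uV} - \hat{\Vec{w}}' \|_{2}$, and one more triangle inequality, $\| \Vec{\uV} - \hat{\Vec{w}} \|_{2} \leq \| \Vec{\uV} - \hat{\Vec{w}}' \|_{2} + \| \hat{\Vec{w}}' - \hat{\Vec{w}} \|_{2}$, delivers the claimed factor $3$.

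The only point that needs more than a line — the step I would flag as the main (though still mild) obstacle — is fact (i): that $\Vec{w} = \Threshold{\k}(\Vec{\vV})$ is a best $\k$-sparse approximation of the truncated vector $\Vec{w}' = \ThresholdSet{S}(\Vec{\vV})$ and not merely of $\Vec{\vV}$. This is where the hypotheses $\JCoordsv = \Supp(\Threshold{\k}(\Vec{\vV}))$ and $\JCoordsv \subseteq S$ are used, and ties within the thresholding are harmless because the comparison is purely between sums of squared coordinates of $\Vec{\vV}$: for any $\k$-sparse $\Vec{z}$ with support $T$,
\[
  \| \Vec{w}' - \Vec{z} \|_{2}^{2}
  \geq \sum_{j \in S \setminus T} \Vec*{\vV}_{j}^{2}
  = \sum_{j \in S} \Vec*{\vV}_{j}^{2} - \sum_{j \in S \cap T} \Vec*{\vV}_{j}^{2}
  \geq \sum_{j \in S} \Vec*{\vV}_{j}^{2} - \sum_{j \in \JCoordsv} \Vec*{\vV}_{j}^{2}
  = \| \Vec{w}' - \Vec{w} \|_{2}^{2},
\]
using that $\sum_{j \in \JCoordsv}\Vec*{\vV}_{j}^{2} = \max_{|U| \leq \k}\sum_{j \in U}\Vec*{\vV}_{j}^{2}$ and $|S \cap T| \leq \k$. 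Note that the argument uses nothing about $\JCoords$ or $\JCoordsu$ beyond $\JCoordsv \subseteq S$ and $\Vec{\uV}$ being $\k$-sparse (in particular it does not require $\Vec{\uV}$ to have unit norm), so it applies as stated — in the application to \THEOREM \ref{thm:approx-error:sparse}, $\Vec{\uV} = \thetaStar$.
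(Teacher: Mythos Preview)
Your proof is correct and follows the same structure as the paper's: both split via the triangle inequality through $\hat{\Vec{w}}'$ and reduce to the key intermediate bound $\|\hat{\Vec{w}}' - \hat{\Vec{w}}\|_{2} \leq 2\|\Vec{\uV} - \hat{\Vec{w}}'\|_{2}$, which in each case rests on the top-$\k$ optimality of $\Vec{w}$ within $S$. The only difference is packaging: the paper factors out the normalized-difference estimate as a separate \FACT~\ref{fact:dist-btw-normalized-vectors} (your inequality $\sqrt{1-s^{2}} + (1-s) \leq 2\sqrt{1-s^{2}}$ is its specialization under the orthogonality of $\Vec{w}$ and $\Vec{w}'-\Vec{w}$) and obtains the lower bound on $\|\Vec{\uV} - \hat{\Vec{w}}'\|_{2}$ by first swapping $\|\ThresholdSet{S\setminus \JCoordsv}(\Vec{\vV})\|_{2} \leq \|\ThresholdSet{S\setminus \JCoordsu}(\Vec{\vV})\|_{2}$ and then applying Pythagoras with $\Vec{\uV}$ supported on $\JCoordsu$, whereas your direct comparison of $a\Vec{\uV}$ and $\Vec{w}$ as $\k$-sparse approximants of $\Vec{w}'$ reaches the same inequality $\|\Vec{\uV} - \hat{\Vec{w}}'\|_{2} \geq \|\Vec{w}'-\Vec{w}\|_{2}/a = \sqrt{1-s^{2}}$ in one step.
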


The proof of the main theorem will additionally utilize the following fact from \cite{matsumoto2022binary}.
The iterative approximation errors will turn out to be upper bounded by the functions in this fact, and thus, this fact will facilitate the calculation of a close-form bound on the iterative approximation errors, much like the approach in \cite{matsumoto2022binary}.

\begin{fact}[{\cite[{\FACT 4.1}]{matsumoto2022binary}}]
\label{fact:recurrence}
Let
\(  \ux, \vx, \wx \in \R_{+}  \), where
\(  \ux \defeq \frac{1}{2} ( 1 + \sqrt{1 + 4\wx} )  \) and
\(  1 \leq \ux \leq \sqrt{\frac{2}{\vx}}  \).
Let
\(  \fx, \fxx : \Z_{\geq 0} \to \R  \)
be functions given by
\begin{gather*}
  \fx( 0 ) = 2
  ,\\
  \fx( \tx ) = \sqrt{\vx \fx( \tx-1 )} + \vx \wx
  ,\quad \tx \in \Z_{+}
  ,\\
  \fxx( \tx ) = 2^{2^{-\tx}} ( \ux^{2} \vx )^{1-2^{-\tx}}
  ,\quad \tx \in \Z_{\geq 0}
.\end{gather*}
Then,
\begin{gather*}
  \fx( \tx ) > \fx( \tx' )
  ,\quad \tx < \tx' \in \Z_{\geq 0}
  ,\\
  \fxx( \tx ) > \fxx( \tx' )
  ,\quad \tx < \tx' \in \Z_{\geq 0}
  ,\\
  \fx( \tx ) \leq \fxx( \tx )
  ,\quad \tx \in \Z_{\geq 0}
  ,\\
  \lim_{\tx \to \infty} \fx( \tx ) \leq \lim_{\tx \to \infty} \fxx( \tx ) = \ux^{2} \vx
.\end{gather*}
\end{fact}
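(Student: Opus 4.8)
The plan is to organize everything around the common fixed point \(  \ux^{2}\vx  \) of the two iterations. First I would record the identity that makes \(  \ux  \) special: being \(  \ux = \tfrac{1}{2}( 1 + \sqrt{1+4\wx} )  \), the positive root of \(  s^{2} - s - \wx = 0  \), it satisfies \(  \ux^{2} = \ux + \wx  \), equivalently \(  \wx = \ux( \ux - 1 )  \). Set \(  \phi( x ) \defeq \sqrt{\vx x} + \vx\wx  \), so that \(  \fx( \tx ) = \phi( \fx( \tx-1 ) )  \). Then \(  \phi( \ux^{2}\vx ) = \sqrt{\ux^{2}\vx^{2}} + \vx\wx = \ux\vx + \vx\wx = \vx( \ux + \wx ) = \ux^{2}\vx  \), so \(  \ux^{2}\vx  \) is a fixed point; and writing \(  \phi( x ) - x  \) as a downward-opening quadratic in \(  \sqrt{x}  \) with unique positive root \(  \sqrt{\vx}\,\ux  \), it is positive for \(  0 \leq x < \ux^{2}\vx  \) and negative for \(  x > \ux^{2}\vx  \), while \(  \phi  \) is strictly increasing on \(  \R_{\geq 0}  \). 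Since the hypothesis \(  \ux \leq \sqrt{2/\vx}  \) says exactly \(  \ux^{2}\vx \leq 2 = \fx( 0 )  \), a one-line induction gives \(  \ux^{2}\vx \leq \fx( \tx ) \leq \fx( \tx-1 )  \) for every \(  \tx  \) (strictly when \(  \ux^{2}\vx < 2  \)); hence \(  \fx  \) is decreasing, and being monotone and bounded below with every subsequential limit a fixed point of the continuous \(  \phi  \), it converges to \(  \ux^{2}\vx  \).

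Next I would handle \(  \fxx  \). Rewriting \(  \fxx( \tx ) = ( \ux^{2}\vx ) \cdot \bigl( 2 / ( \ux^{2}\vx ) \bigr)^{2^{-\tx}}  \) and using \(  2^{-\tx} \downarrow 0  \) together with \(  2/( \ux^{2}\vx ) \geq 1  \) shows \(  \fxx  \) is decreasing (strictly when \(  \ux^{2}\vx < 2  \)) with \(  \lim_{\tx\to\infty}\fxx( \tx ) = \ux^{2}\vx  \). The one structural observation I will need later is that \(  \fxx  \) obeys the clean recursion \(  \fxx( \tx ) = \sqrt{\ux^{2}\vx \cdot \fxx( \tx-1 )}  \), which drops out of comparing the exponents of \(  2  \) and of \(  \ux^{2}\vx  \) on the two sides; informally, \(  \fxx  \) is the pure square-root iteration toward the same fixed point that \(  \fx  \) approaches.

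Then I would prove \(  \fx( \tx ) \leq \fxx( \tx )  \) by induction on \(  \tx  \), which yields both the pointwise comparison and (by passing to limits using the first paragraph) the limit statement \(  \lim_{\tx\to\infty}\fx( \tx ) \leq \lim_{\tx\to\infty}\fxx( \tx ) = \ux^{2}\vx  \). The base case is \(  \fx( 0 ) = \fxx( 0 ) = 2  \). For the step, monotonicity of \(  s \mapsto \sqrt{\vx s}  \) and the inductive hypothesis give \(  \fx( \tx ) = \sqrt{\vx\fx( \tx-1 )} + \vx\wx \leq \sqrt{\vx\fxx( \tx-1 )} + \vx\wx  \), so it suffices to verify \(  \sqrt{\vx y} + \vx\wx \leq \sqrt{\ux^{2}\vx\, y} = \ux\sqrt{\vx y}  \) at \(  y = \fxx( \tx-1 )  \). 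This is equivalent to \(  \vx\wx \leq ( \ux-1 )\sqrt{\vx y}  \), and substituting \(  \wx = \ux( \ux-1 )  \) and squaring reduces it to \(  y \geq \ux^{2}\vx  \) — which holds because \(  \fxx  \) decreases to \(  \ux^{2}\vx  \), so \(  \fxx( \tx-1 ) \geq \ux^{2}\vx  \). This closes the induction.

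I expect no genuine obstacle: the argument is essentially bookkeeping once one spots that \(  \ux^{2}\vx  \) is the shared fixed point and uses \(  \ux^{2} = \ux + \wx  \) in the two places where it matters — identifying the common limit, and collapsing the induction step for \(  \fx \leq \fxx  \) to the tautology \(  \fxx( \tx-1 ) \geq \ux^{2}\vx  \). The only point deserving a word of care is the strictness of the two monotonicity claims, since both sequences degenerate to the constant value \(  2  \) exactly when \(  \ux = \sqrt{2/\vx}  \); I would either read that hypothesis as strict or state those two claims as ``non-increasing'', with everything else going through verbatim.
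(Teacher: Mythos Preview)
Your proof is correct. The paper does not actually prove this fact; it is quoted verbatim from \cite{matsumoto2022binary} and used as a black box in the proof of \THEOREM \ref{thm:approx-error:sparse}. So there is no ``paper's own proof'' to compare against here.

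That said, your argument is clean and self-contained: identifying \(  \ux^{2}\vx  \) as the common fixed point via \(  \ux^{2} = \ux + \wx  \), establishing monotonicity of \(  \fx  \) through the sign of \(  \phi(x) - x  \), recognizing the closed-form recursion \(  \fxx(\tx) = \sqrt{\ux^{2}\vx \cdot \fxx(\tx-1)}  \), and then collapsing the inductive comparison \(  \fx \leq \fxx  \) to the already-established \(  \fxx(\tx-1) \geq \ux^{2}\vx  \) is exactly the right structure. Your observation about the degenerate case \(  \ux = \sqrt{2/\vx}  \) (where both sequences are constant and the strict monotonicity claims fail) is also apt; the paper's statement of the fact has the same issue, and in its application (proof of \THEOREM \ref{thm:approx-error:sparse}) the hypothesis \(  \ux \leq \sqrt{2/\vx}  \) is verified with a strict inequality anyway, so the edge case never arises in practice.
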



\subsection{Proof of \THEOREM \ref{thm:approx-error:sparse}}
\label{outline:pf-main-result|pf-main}

With the above results in \SECTION \ref{outline:pf-main-result|intermediate}, the convergence of the BIHT approximations, as stated in the main theorem, can now be proved.

\begin{proof}
{\THEOREM \ref{thm:approx-error:sparse}}
\checkoff%
Setting
\begin{gather}
  \deltaX
  =
  \frac{\epsilonX}{\frac{3}{2} ( 5+\sqrt{21} )}
  =
  \frac{\epsilonX}{9 \left( \frac{1}{2} ( 1+\sqrt{\frac{7}{3}} ) \right)^{2}}
,\end{gather}
and taking
\begin{gather}
  \m \geq \mEXPR[s][,]
\end{gather}
the following bound holds for all \(  \thetaXX \in \ParamSpace  \) and all \(  \JCoords \subseteq[\n]  \), \(  | \JCoords | \leq \k  \), uniformly with probability at least \(  1-\rhoX  \) due to \THEOREM \ref{thm:main-technical:sparse}:
\begin{gather}
\label{eqn:pf:thm:approx-error:sparse:ic}
  \left\|
    \thetaStar
    -
    \frac
    {\thetaXX + \hfFn[\JCoords]( \thetaStar, \thetaXX )}
    {\| \thetaXX + \hfFn[\JCoords]( \thetaStar, \thetaXX ) \|_{2}}
  \right\|_{2}
  \leq
  \sqrt{\deltaX \| \thetaStar - \thetaXX \|_{2}} + \deltaX
.\end{gather}
The remainder of the proof will assume that the inequality in \EQUATION \eqref{eqn:pf:thm:approx-error:sparse:ic} holds uniformly, which occurs with bounded probability, as just stated.
Additionally, using the notations in \FACT \ref{fact:recurrence}---wherein the variables are set as
\(  \ux \defeq \frac{1}{2} ( 1+\sqrt{\frac{7}{3}} )  \),
\(  \vx \defeq 9 \deltaX  \), and
\(  \wx \defeq \frac{1}{3}  \)
and satisfy the fact's requirement, 
\(
  \sqrt{\frac{2}{\vx}}
  =
  \sqrt{\frac{2}{9 \deltaX}}
  =
  \sqrt{\frac{2 \cdot 9 \ux^{2}}{9 \epsilonX}}
  =
  \ux \sqrt{\frac{2}{\epsilonX}}
  >
  \ux 
\)
---define the functions
\(  \fx, \fxx : \Z_{\geq 0} \to \R  \)
by
\begin{gather}
  \label{eqn:pf:thm:approx-error:sparse:f1(0)}
  \fx( 0 ) = 2
  ,\\ \label{eqn:pf:thm:approx-error:sparse:f1(t)}
  \fx( \tx )
  = \sqrt{\vx \fx( \tx-1 )} + \wx \vx
  = \sqrt{9 \deltaX \fx( \tx-1 )} + 3 \deltaX
  ,\quad \tx \in \Z_{+}
  ,\\ \label{eqn:pf:thm:approx-error:sparse:f2(t)}
  \fxx( \tx )
  = 2^{2^{-\tx}} ( \ux^{2} \vx )^{1-2^{-\tx}}
  = 2^{2^{-\tx}} \left( \frac{3}{2} ( 5+\sqrt{21} ) \delta \right)^{1-2^{-\tx}}
  = 2^{2^{-\tx}} \epsilonX^{1-2^{-\tx}}
  ,\quad \tx \in \Z_{\geq 0}
.\end{gather}
Then, by \FACT \ref{fact:recurrence}, for all \(  \tx \in \Z_{\geq 0}  \),
\begin{gather}
\label{eqn:pf:thm:approx-error:sparse:1}
  \fx( \tx )
  \leq \fxx( \tx )
  = 2^{2^{-\tx}} \epsilonX^{1-2^{-\tx}}
,\end{gather}
and asymptotically,
\begin{gather}
\label{eqn:pf:thm:approx-error:sparse:2}
  \lim_{\tx \to \infty} \fx( \tx )
  \leq
  \lim_{\tx \to \infty} \fxx( \tx )
  =
  \epsilonX
.\end{gather}
%
\par 
%
With these preliminaries laid out, we are ready to verify \EQUATIONS \eqref{eqn:approx-error:sparse:asymptotic} and \eqref{eqn:approx-error:sparse:iterative} in \THEOREM \ref{thm:approx-error:sparse}, which can be argued inductively.
Inducting on the iterations, \(  \Iter = 0,1,2,3,\dots  \), the following inductive claim will be shown:
\begin{gather}
\label{eqn:pf:thm:approx-error:sparse:inductive-claim}
  \InductiveClaim{\Iter}
  \defeq
  \text{``}
  \| \thetaStar-\thetaHat[\Iter] \|_{2}
  \leq
  \fx( \Iter )
  .\text{''}
\end{gather}
The base case, when \(  \Iter=0  \), is trivial:
since there is the membership of
\(  \thetaStar, \thetaHat[0] \in \SparseSphereSubspace{\k}{\n} \subseteq \Sphere{\n}  \),
the Euclidean distance between \(  \thetaStar  \) and \(  \thetaHat[0]  \) cannot exceed the diameter of the unit sphere (distance \(  2  \)), i.e.,
\begin{gather*}
  \| \thetaStar - \thetaHat[0] \|_{2} \leq 2 = \fx( 0 )
,\end{gather*}
where the rightmost equality is due to the definition of \(  \fx  \) in \EQUATION \eqref{eqn:pf:thm:approx-error:sparse:f1(0)}.
Next, consider some arbitrary choice of \(  \Iter \in \Z_{+}  \), and suppose that for every \(  \IterX < \Iter  \), the \(  \IterX\Th  \) inductive claim, \(  \InductiveClaim{\IterX}  \), holds.
Then, under this inductive assumption, the \(  \Iter\Th  \) inductive claim, \(  \InductiveClaim{\Iter}  \), needs to be verified.
Recall that for \(  \Iter > 0  \), \ALGORITHM \ref{alg:biht:normalized} sets
\begin{gather}
\label{eqn:pf:thm:approx-error:sparse:3}
  \thetaHatX[\Iter]
  =
  \thetaHat[\Iter-1]
  +
  \frac{\sqrt{2\pi}}{\m}
  \sep
  \CovM^{\T}
  \sep
  \frac{1}{2}
  \left( \fFn( \CovM \thetaStar ) - \Sign*( \CovM \thetaHat[\Iter-1] ) \right)
  =
  \thetaHat[\Iter-1] + \hfFn( \thetaStar, \thetaHat[\Iter-1] )
  ,\\
\label{eqn:pf:thm:approx-error:sparse:4}
  \thetaHat[\Iter]
  =
  \frac{\Threshold*{\k}( \thetaHatX[\Iter] )}{\| \Threshold*{\k}( \thetaHatX[\Iter] ) \|_{2}}
.\end{gather}
\renewcommand{\TS}{\Supp( \thetaStar ) \cup \Supp( \thetaHat[\Iter-1] ) \cup \Supp( \thetaHat[\Iter] )}%
Additionally, due to \LEMMA \ref{lemma:error:deterministic}---where, in the context of this proof, the sets \(  \JCoords, \JCoordsX, \JCoordsXX \subseteq [\n]  \) in the lemma are taken to be
\(  \JCoords \defeq \Supp( \thetaHat[\Iter-1] )  \),
\(  \JCoordsX \defeq \Supp( \thetaStar )  \), and
\(  \JCoordsXX \defeq \Supp( \thetaHat[\Iter] )  \)%
---the following holds:
\begin{gather}
\label{eqn:pf:thm:approx-error:sparse:5}
  \| \thetaStar - \thetaHat[\Iter] \|_{2}
  =
  \left\|
    \thetaStar
    -
    \frac
    {\Threshold*{\k}( \thetaHatX[\Iter] )}
    {\| \Threshold*{\k}( \thetaHatX[\Iter] ) \|_{2}}
  \right\|_{2}
  \leq
  3
  \left\|
    \thetaStar
    -
    \frac
    {\ThresholdSet*{\TS}( \thetaHatX[\Iter] )}
    {\| \ThresholdSet*{\TS}( \thetaHatX[\Iter] ) \|_{2}}
  \right\|_{2}
.\end{gather}
Then, the \(  \Iter\Th  \) inductive claim, \(  \InductiveClaim{\Iter}  \), can now be established:
\begin{align*}
  \| \thetaStar - \thetaHat[\Iter] \|_{2}
  &\leq
  3
  \left\|
    \thetaStar
    -
    \frac
    {\ThresholdSet*{\TS}( \thetaHatX[\Iter] )}
    {\| \ThresholdSet*{\TS}( \thetaHatX[\Iter] ) \|_{2}}
  \right\|_{2}
  \\
  &\dCmt{by \EQUATION \eqref{eqn:pf:thm:approx-error:sparse:5}}
  \\
  &=
  3
  \left\|
    \thetaStar
    -
    \frac
    {\ThresholdSet*{\TS}( \thetaHat[\Iter-1] + \hfFn( \thetaStar, \thetaHat[\Iter-1] ) )}
    {\| \ThresholdSet*{\TS}( \thetaHat[\Iter-1] + \hfFn( \thetaStar, \thetaHat[\Iter-1] ) ) \|_{2}}
  \right\|_{2}
  \\
  &\dCmt{by \EQUATION \eqref{eqn:pf:thm:approx-error:sparse:3}}
  \\
  &=
  3
  \left\|
    \thetaStar
    -
    \frac
    {\thetaHat[\Iter-1] + \hfFn[{\Supp( \thetaHat[\Iter] )}]( \thetaStar, \thetaHat[\Iter-1] )}
    {\| \thetaHat[\Iter-1] + \hfFn[{\Supp( \thetaHat[\Iter] )}]( \thetaStar, \thetaHat[\Iter-1] ) \|_{2}}
  \right\|_{2}
  \\
  &\dCmt{by the definitions of the subset thresholding operation and \(  \hfFn[\JCoords]  \) (\(  \JCoords \subseteq [\n]  \))}
  \\
  &\leq
  3
  \left( \sqrt{\deltaX \| \thetaStar - \thetaHat[\Iter-1] \|_{2}} + \deltaX \right)
  \\
  &\dCmt{by \EQUATION \eqref{eqn:pf:thm:approx-error:sparse:ic}}
  \\
  &=
  \sqrt{9 \deltaX \| \thetaStar - \thetaHat[\Iter-1] \|_{2}} + 3 \deltaX
  \\
  &\leq
  \sqrt{9 \deltaX \fx( \Iter-1 )} + 3 \deltaX
  \\
  &\dCmt{by the inductive hypothesis, i.e., the assumed correctness of \(  \InductiveClaim{\Iter-1}  \)}
  \\
  &=
  \fx( \Iter )
  ,\\
  &\dCmt{by the definition of \(  \fx  \) in \EQUATION \eqref{eqn:pf:thm:approx-error:sparse:f1(t)}}
\end{align*}
as desired.
%
\par 
%
Having verified the \(  \Iter\Th  \) inductive claim, \(  \InductiveClaim{\Iter}  \), under the inductive assumption, it follows by induction that for all \(  \Iter \in \Z_{\geq 0}  \), the \(  \Iter\Th  \) inductive claim, \(  \InductiveClaim{\Iter}  \), holds:
\begin{gather*}
  \| \thetaStar - \thetaHat[\Iter] \|_{2}
  \leq
  \fx( \Iter )
.\end{gather*}
Therefore, the assumption that \EQUATION \eqref{eqn:pf:thm:approx-error:sparse:ic} holds uniformly---which occurs with probability at least \(  1-\rhoX  \)---and \EQUATIONS \eqref{eqn:pf:thm:approx-error:sparse:1} and \eqref{eqn:pf:thm:approx-error:sparse:2} together imply that
\begin{gather*}
  \| \thetaStar - \thetaHat[\Iter] \|_{2}
  \leq
  \fx( \Iter )
  \leq \fxx( \Iter )
  = 2^{2^{-\Iter}} \epsilonX^{1-2^{-\Iter}}
\end{gather*}
for every \(  \Iter \in \Z_{\geq 0}  \) and that
\begin{gather*}
  \lim_{\Iter \to \infty} \| \thetaStar - \thetaHat[\Iter] \|_{2}
  \leq
  \lim_{\Iter \to \infty} \fx( \Iter )
  \leq
  \lim_{\Iter \to \infty} \fxx( \Iter )
  =
  \epsilonX
,\end{gather*}
concluding the theorem's proof.
\end{proof}


\subsection{Proof of \COROLLARY \ref{corollary:approx-error:logistic-regression}} 
\label{outline:pf-main-result|pf-main-corollaries}

\begin{proof}
{\COROLLARY \ref{corollary:approx-error:logistic-regression}} 
%
%
Under the presumed correctness of the main technical corollary, \COROLLARY \ref{corollary:main-technical:logistic-regression}, 
\COROLLARY \ref{corollary:approx-error:logistic-regression} 
for the convergence of BIHT (\ALGORITHM \ref{alg:biht:normalized}) in logistic and probit regressions, 
now follow along the same arguments as in the proof of \THEOREM \ref{thm:approx-error:sparse}. 
In this analogous proof, the use of \COROLLARY \ref{corollary:main-technical:logistic-regression} replaces \THEOREM \ref{thm:main-technical:sparse} in order to establish the logistic and probit cases of \COROLLARY \ref{corollary:approx-error:logistic-regression}. 
\end{proof}


\subsection{Proof of the Intermediate Result, \LEMMA \ref{lemma:error:deterministic}}
\label{outline:pf-main-result|pf-intermediate}

This section verifies the intermediate result, \LEMMA \ref{lemma:error:deterministic}, which was introduced in \SECTION \ref{outline:pf-main-result|intermediate}.
The proof of \LEMMA \ref{lemma:error:deterministic} will use the following fact.

\begin{fact}
\label{fact:dist-btw-normalized-vectors}
Let
\(  \Vec{\uV}, \Vec{\vV} \in \R^{\n}  \).
Then,
\begin{gather}
  \left\| \frac{\Vec{\uV}}{\| \Vec{\uV} \|_{2}} - \frac{\Vec{\vV}}{\| \Vec{\vV} \|_{2}} \right\|_{2}
  \leq
  2
  \min \left\{
    \frac{\| \Vec{\uV} - \Vec{\vV} \|_{2}}{\| \Vec{\uV} \|_{2}},
    \frac{\| \Vec{\uV} - \Vec{\vV} \|_{2}}{\| \Vec{\vV} \|_{2}}
  \right\}
.\end{gather}
\end{fact}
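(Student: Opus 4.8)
The plan is to prove the (one-sided, asymmetric) estimate
\[
  \left\| \frac{\Vec{\uV}}{\| \Vec{\uV} \|_{2}} - \frac{\Vec{\vV}}{\| \Vec{\vV} \|_{2}} \right\|_{2}
  \leq
  \frac{2 \| \Vec{\uV} - \Vec{\vV} \|_{2}}{\| \Vec{\uV} \|_{2}}
\]
and then obtain the full statement by interchanging the roles of $\Vec{\uV}$ and $\Vec{\vV}$ — which leaves the left-hand side unchanged — and taking the smaller of the two resulting bounds. Throughout I would assume $\Vec{\uV}, \Vec{\vV} \neq \Vec{0}$, since otherwise the normalized vectors appearing on the left are undefined and the claim is vacuous.

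The key algebraic step is to insert the auxiliary vector $\Vec{\vV} / \| \Vec{\uV} \|_{2}$ and rewrite the difference as
\[
  \frac{\Vec{\uV}}{\| \Vec{\uV} \|_{2}} - \frac{\Vec{\vV}}{\| \Vec{\vV} \|_{2}}
  =
  \frac{\Vec{\uV} - \Vec{\vV}}{\| \Vec{\uV} \|_{2}}
  +
  \Vec{\vV}\left( \frac{1}{\| \Vec{\uV} \|_{2}} - \frac{1}{\| \Vec{\vV} \|_{2}} \right).
\]
Applying the triangle inequality and homogeneity of the norm, together with the identity $\| \Vec{\vV} \|_{2} \cdot \bigl| \| \Vec{\uV} \|_{2}^{-1} - \| \Vec{\vV} \|_{2}^{-1} \bigr| = \bigl| \| \Vec{\vV} \|_{2} - \| \Vec{\uV} \|_{2} \bigr| / \| \Vec{\uV} \|_{2}$, yields
\[
  \left\| \frac{\Vec{\uV}}{\| \Vec{\uV} \|_{2}} - \frac{\Vec{\vV}}{\| \Vec{\vV} \|_{2}} \right\|_{2}
  \leq
  \frac{\| \Vec{\uV} - \Vec{\vV} \|_{2}}{\| \Vec{\uV} \|_{2}}
  +
  \frac{\bigl| \| \Vec{\vV} \|_{2} - \| \Vec{\uV} \|_{2} \bigr|}{\| \Vec{\uV} \|_{2}}.
\]

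To finish, I would invoke the reverse triangle inequality, $\bigl| \| \Vec{\vV} \|_{2} - \| \Vec{\uV} \|_{2} \bigr| \leq \| \Vec{\uV} - \Vec{\vV} \|_{2}$, which bounds the right-hand side by $2 \| \Vec{\uV} - \Vec{\vV} \|_{2} / \| \Vec{\uV} \|_{2}$. Swapping $\Vec{\uV}$ and $\Vec{\vV}$ in the whole argument gives $2 \| \Vec{\uV} - \Vec{\vV} \|_{2} / \| \Vec{\vV} \|_{2}$ as well, and since both bounds hold simultaneously, so does their minimum, which is exactly the claimed inequality. The argument is entirely elementary — there is no substantive obstacle; the only things to be mildly careful about are orienting the reverse triangle inequality in the correct direction and dispatching the degenerate zero-vector cases up front.
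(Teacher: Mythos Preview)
Your proof is correct and follows essentially the same approach as the paper: both insert the auxiliary vector $\Vec{\vV}/\|\Vec{\uV}\|_2$, apply the triangle inequality, and then bound the second term by $\|\Vec{\uV}-\Vec{\vV}\|_2/\|\Vec{\uV}\|_2$. Your handling of that second term via the reverse triangle inequality is in fact more direct than the paper's, which routes through an auxiliary claim ($|1-|1-z||\leq z$ for $z\geq 0$) and a two-case max to reach the same conclusion.
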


\begin{proof}
{\FACT \ref{fact:dist-btw-normalized-vectors}}
\checkoff%
Before the fact is verified, the following easily verifiable claim is derived. 
%
\begin{claim}
\label{claim:pf:fact:dist-btw-normalized-vectors:1}
Let
\(  \zx \geq 0  \).
Then,
\(  | 1 - | 1 - \zx | | \leq \zx  \).
\end{claim}
Now, returning to the proof of \FACT \ref{fact:dist-btw-normalized-vectors}, fix
\(  \Vec{\uV}, \Vec{\vV} \in \R^{\n}  \)
arbitrarily.
Observe:
\begin{align*}
  \left\| \frac{\Vec{\uV}}{\| \Vec{\uV} \|_{2}} - \frac{\Vec{\vV}}{\| \Vec{\vV} \|_{2}} \right\|_{2}
  &=
  \left\|
    \left(
      \frac{\Vec{\uV}}{\| \Vec{\uV} \|_{2}}
      -
      \frac{\Vec{\vV}}{\| \Vec{\uV} \|_{2}}
    \right)
    +
    \left(
      \frac{\Vec{\vV}}{\| \Vec{\uV} \|_{2}}
      -
      \frac{\Vec{\vV}}{\| \Vec{\vV} \|_{2}}
    \right)
  \right\|_{2}
  \\
  &\leq
  \left\|
    \frac{\Vec{\uV}}{\| \Vec{\uV} \|_{2}}
    -
    \frac{\Vec{\vV}}{\| \Vec{\uV} \|_{2}}
  \right\|_{2}
  +
  \left\|
    \frac{\Vec{\vV}}{\| \Vec{\uV} \|_{2}}
    -
    \frac{\Vec{\vV}}{\| \Vec{\vV} \|_{2}}
  \right\|_{2}
  \\
  &\dCmt{by the triangle inequality}
  \\
  &=
  \frac{\| \Vec{\uV} - \Vec{\vV} \|_{2}}{\| \Vec{\uV} \|_{2}}
  +
  \left|
    1 - \frac{\| \Vec{\uV} - ( \Vec{\uV} - \Vec{\vV} ) \|_{2}}{\| \Vec{\uV} \|_{2}}
  \right|
  \\
  &\leq
  \frac{\| \Vec{\uV} - \Vec{\vV} \|_{2}}{\| \Vec{\uV} \|_{2}}
  +
  \max \left\{
    \left|
      1 - \frac{\| \Vec{\uV} \|_{2} + \|  \Vec{\uV} - \Vec{\vV} \|_{2}}{\| \Vec{\uV} \|_{2}}
    \right|
    ,~
    \left|
      1 -
      \left| \frac{\| \Vec{\uV} \|_{2} - \|  \Vec{\uV} - \Vec{\vV} \|_{2}}{\| \Vec{\uV} \|_{2}} \right|
    \right|
  \right\}
  \\
  &\dCmt{since \(  | \| \Vec{\uV} \|_{2} - \| \Vec{\uV} - \Vec{\vV} \|_{2} | \leq \| \Vec{\uV} - ( \Vec{\uV} - \Vec{\vV} ) \|_{2} \leq \| \Vec{\uV} \|_{2} + \| \Vec{\uV} - \Vec{\vV} \|_{2}  \)}
  \\
  &\dCmtIndent\text{by the triangle inequality}
  \\
  &=
  \frac{\| \Vec{\uV} - \Vec{\vV} \|_{2}}{\| \Vec{\uV} \|_{2}}
  +
  \max \left\{
    \left|
      1 - \left( 1 + \frac{\|  \Vec{\uV} - \Vec{\vV} \|_{2}}{\| \Vec{\uV} \|_{2}} \right)
    \right|
    ,~
    \left|
      1 - \left| 1 - \frac{\|  \Vec{\uV} - \Vec{\vV} \|_{2}}{\| \Vec{\uV} \|_{2}} \right|
    \right|
  \right\}
  \\
  &=
  \frac{\| \Vec{\uV} - \Vec{\vV} \|_{2}}{\| \Vec{\uV} \|_{2}}
  +
  \max \left\{
    \frac{\|  \Vec{\uV} - \Vec{\vV} \|_{2}}{\| \Vec{\uV} \|_{2}}
    ,~
    \frac{\|  \Vec{\uV} - \Vec{\vV} \|_{2}}{\| \Vec{\uV} \|_{2}}
  \right\}
  \\
  &\dCmt{by \CLAIM \ref{claim:pf:fact:dist-btw-normalized-vectors:1}}
  \\
  &=
  \frac{2 \|  \Vec{\uV} - \Vec{\vV} \|_{2}}{\| \Vec{\uV} \|_{2}}
.\end{align*}
A nearly identical derivation obtains
\begin{align*}
  \left\| \frac{\Vec{\uV}}{\| \Vec{\uV} \|_{2}} - \frac{\Vec{\vV}}{\| \Vec{\vV} \|_{2}} \right\|_{2}
  \leq
  \frac{2 \|  \Vec{\uV} - \Vec{\vV} \|_{2}}{\| \Vec{\vV} \|_{2}}
.\end{align*}
Combining the two acquired bounds implies the fact:
\begin{align*}
  \left\| \frac{\Vec{\uV}}{\| \Vec{\uV} \|_{2}} - \frac{\Vec{\vV}}{\| \Vec{\vV} \|_{2}} \right\|_{2}
  \leq
  2
  \min \left\{
    \frac{\|  \Vec{\uV} - \Vec{\vV} \|_{2}}{\| \Vec{\uV} \|_{2}},
    \frac{\|  \Vec{\uV} - \Vec{\vV} \|_{2}}{\| \Vec{\vV} \|_{2}}
  \right\}
,\end{align*}
as desired.
\end{proof}

We now proceed to the proof of \LEMMA \ref{lemma:error:deterministic}.

\begin{proof}
{\LEMMA \ref{lemma:error:deterministic}}
\checkoff%
Consider any \(  \Vec{\uV} \in \SparseRealSubspace{\k}{\n}  \), \(  \Vec{\vV} \in \R^{\n}  \), and \(  \JCoords \subseteq [\n]  \), \(  | \JCoords | \leq \k  \).
Recall the notations of the coordinate subsets
\(  \JCoordsu, \JCoordsv \subseteq [\n]  \),
where
\(  \JCoordsu \defeq \Supp( \Vec{\uV} )  \) and
\(  \JCoordsv \defeq \Supp( \Threshold{\k}( \Vec{\vV} ) )  \).
Due to the definition of \(  \JCoordsv  \),
\begin{align*}
  \left\|
    \Vec{\uV}
    -
    \frac
    {\Threshold{\k}(\Vec{\vV})}
    {\| \Threshold{\k}(\Vec{\vV}) \|_{2}}
  \right\|_{2}
  =
  \left\|
    \Vec{\uV}
    -
    \frac
    {\ThresholdSet{\JCoordsv}(\Vec{\vV})}
    {\| \ThresholdSet{\JCoordsv}(\Vec{\vV}) \|_{2}}
  \right\|_{2}
.\end{align*}
Then,
\begin{align*}
  \left\|
    \Vec{\uV}
    -
    \frac
    {\Threshold{\k}(\Vec{\vV})}
    {\| \Threshold{\k}(\Vec{\vV}) \|_{2}}
  \right\|_{2}
  &=
  \left\|
    \Vec{\uV}
    -
    \frac
    {\ThresholdSet{\JCoordsv}(\Vec{\vV})}
    {\| \ThresholdSet{\JCoordsv}(\Vec{\vV}) \|_{2}}
  \right\|_{2}
  \\
  &=
  \left\|
    \left(
    \Vec{\uV}
    -
    \frac
    {\ThresholdSet{\JCoords \cup \JCoordsu \cup \JCoordsv}(\Vec{\vV})}
    {\| \ThresholdSet{\JCoords \cup \JCoordsu \cup \JCoordsv}(\Vec{\vV}) \|_{2}}
    \right)
    +
    \left(
    \frac
    {\ThresholdSet{\JCoords \cup \JCoordsu \cup \JCoordsv}(\Vec{\vV})}
    {\| \ThresholdSet{\JCoords \cup \JCoordsu \cup \JCoordsv}(\Vec{\vV}) \|_{2}}
    -
    \frac
    {\ThresholdSet{\JCoordsv}(\Vec{\vV})}
    {\| \ThresholdSet{\JCoordsv}(\Vec{\vV}) \|_{2}}
    \right)
  \right\|_{2}
  \\
  &\leq
  \left\|
    \Vec{\uV}
    -
    \frac
    {\ThresholdSet{\JCoords \cup \JCoordsu \cup \JCoordsv}(\Vec{\vV})}
    {\| \ThresholdSet{\JCoords \cup \JCoordsu \cup \JCoordsv}(\Vec{\vV}) \|_{2}}
  \right\|_{2}
  +
  \left\|
    \frac
    {\ThresholdSet{\JCoords \cup \JCoordsu \cup \JCoordsv}(\Vec{\vV})}
    {\| \ThresholdSet{\JCoords \cup \JCoordsu \cup \JCoordsv}(\Vec{\vV}) \|_{2}}
    -
    \frac
    {\ThresholdSet{\JCoordsv}(\Vec{\vV})}
    {\| \ThresholdSet{\JCoordsv}(\Vec{\vV}) \|_{2}}
  \right\|_{2}
\TagEqn{\label{eqn:pf:lemma:error:deterministic:5}}
,\end{align*}
where the last line follows from the triangle inequality.
Focusing in on the second term in the last line above, it follows from \FACT \ref{fact:dist-btw-normalized-vectors} that
\begin{align*}
  \left\|
    \frac
    {\ThresholdSet{\JCoords \cup \JCoordsu \cup \JCoordsv}(\Vec{\vV})}
    {\| \ThresholdSet{\JCoords \cup \JCoordsu \cup \JCoordsv}(\Vec{\vV}) \|_{2}}
    -
    \frac
    {\ThresholdSet{\JCoordsv}(\Vec{\vV})}
    {\| \ThresholdSet{\JCoordsv}(\Vec{\vV}) \|_{2}}
  \right\|_{2}
  \leq
  \frac
  {2 \| \ThresholdSet{\JCoords \cup \JCoordsu \cup \JCoordsv}(\Vec{\vV}) - \ThresholdSet{\JCoordsv}(\Vec{\vV}) \|_{2}}
  {\| \ThresholdSet{\JCoords \cup \JCoordsu \cup \JCoordsv}(\Vec{\vV}) \|_{2}}
.\end{align*}
Note that
\(  \ThresholdSet{\JCoords \cup \JCoordsu \cup \JCoordsv}(\Vec{\vV}) - \ThresholdSet{\JCoordsv}(\Vec{\vV}) = \ThresholdSet{( \JCoords \cup \JCoordsu ) \setminus \JCoordsv}(\Vec{\vV})  \),
and hence,
\begin{align}
\label{eqn:pf:lemma:error:deterministic:1}
  \left\|
    \frac
    {\ThresholdSet{\JCoords \cup \JCoordsu \cup \JCoordsv}(\Vec{\vV})}
    {\| \ThresholdSet{\JCoords \cup \JCoordsu \cup \JCoordsv}(\Vec{\vV}) \|_{2}}
    -
    \frac
    {\ThresholdSet{\JCoordsv}(\Vec{\vV})}
    {\| \ThresholdSet{\JCoordsv}(\Vec{\vV}) \|_{2}}
  \right\|_{2}
  \leq
  \frac
  {2 \| \ThresholdSet{\JCoords \cup \JCoordsu \cup \JCoordsv}(\Vec{\vV}) - \ThresholdSet{\JCoordsv}(\Vec{\vV}) \|_{2}}
  {\| \ThresholdSet{\JCoords \cup \JCoordsu \cup \JCoordsv}(\Vec{\vV}) \|_{2}}
  =
  \frac
  {2 \| \ThresholdSet{( \JCoords \cup \JCoordsu ) \setminus \JCoordsv}(\Vec{\vV}) \|_{2}}
  {\| \ThresholdSet{\JCoords \cup \JCoordsu \cup \JCoordsv}(\Vec{\vV}) \|_{2}}
.\end{align}
Since
\(  | \JCoordsu | = | \Supp( \Vec{\uV} ) | \leq \k  \),
the definitions of \(  \JCoordsXX  \) and the top-\(  \k  \) thresholding operation imply that
\(  | \Supp( \ThresholdSet{\JCoordsu}( \Vec{\vV} ) ) | \leq | \Supp( \ThresholdSet{\JCoordsv}( \Vec{\vV} ) ) |  \),
as well as that
\begin{gather}
\label{eqn:pf:lemma:error:deterministic:2}
  \| \ThresholdSet{( \JCoords \cup \JCoordsu ) \setminus \JCoordsv}(\Vec{\vV}) \|_{2}
  =
  \| \ThresholdSet{( \JCoords \cup \JCoordsu \cup \JCoordsv ) \setminus \JCoordsv}(\Vec{\vV}) \|_{2}
  \leq
  \| \ThresholdSet{( \JCoords \cup \JCoordsu \cup \JCoordsv ) \setminus \JCoordsu}(\Vec{\vV}) \|_{2}
  =
  \| \ThresholdSet{( \JCoords \cup \JCoordsv ) \setminus \JCoordsu}(\Vec{\vV}) \|_{2}
.\end{gather}
Additionally, observe:
\begin{align*}
  \left\|
    \Vec{\uV}
    -
    \frac
    {\ThresholdSet{\JCoords \cup \JCoordsu \cup \JCoordsv}(\Vec{\vV})}
    {\| \ThresholdSet{\JCoords \cup \JCoordsu \cup \JCoordsv}(\Vec{\vV}) \|_{2}}
  \right\|_{2}^{2}
  &=
  \left\|
    \Vec{\uV}
    -
    \frac
    {\ThresholdSet{\JCoordsu}(\Vec{\vV})}
    {\| \ThresholdSet{\JCoords \cup \JCoordsu \cup \JCoordsv}(\Vec{\vV}) \|_{2}}
    -
    \frac
    {\ThresholdSet{(\JCoords \cup \JCoordsv ) \setminus \JCoordsu}(\Vec{\vV})}
    {\| \ThresholdSet{\JCoords \cup \JCoordsu \cup \JCoordsv}(\Vec{\vV}) \|_{2}}
  \right\|_{2}^{2}
  \\
  &=
  \left\|
    \Vec{\uV}
    -
    \frac
    {\ThresholdSet{\JCoordsu}(\Vec{\vV})}
    {\| \ThresholdSet{\JCoords \cup \JCoordsu \cup \JCoordsv}(\Vec{\vV}) \|_{2}}
  \right\|_{2}^{2}
  +
  \left\|
    \frac
    {\ThresholdSet{(\JCoords \cup \JCoordsv ) \setminus \JCoordsu}(\Vec{\vV})}
    {\| \ThresholdSet{\JCoords \cup \JCoordsu \cup \JCoordsv}(\Vec{\vV}) \|_{2}}
  \right\|_{2}^{2}
\TagEqn{\label{eqn:pf:lemma:error:deterministic:3}}
,\end{align*}
where the first equality holds since
\(  \ThresholdSet{\JCoords \cup \JCoordsu \cup \JCoordsv}(\Vec{\vV}) = \ThresholdSet{\JCoordsu}(\Vec{\vV}) + \ThresholdSet{( \JCoords \cup \JCoordsu \cup \JCoordsv ) \setminus \JCoordsu}(\Vec{\vV}) = \ThresholdSet{\JCoordsu}(\Vec{\vV}) + \ThresholdSet{( \JCoords \cup \JCoordsu ) \setminus \JCoordsu}(\Vec{\vV})  \),
and where the second equality is due to the orthogonality of
\(  \Vec{\uV} + w \ThresholdSet{\JCoordsu}(\Vec{\vV})  \)
and
\(  \ThresholdSet{( \JCoords \cup \JCoordsv ) \setminus \JCoordsu}(\Vec{\vV})  \)
for any scalar \(  w \in \R  \).
This orthogonality is the result of disjoint support sets:
\(  \Supp( \Vec{\uV} + w \ThresholdSet{\JCoordsu}(\Vec{\vV}) ) \cap \Supp( \ThresholdSet{( \JCoords \cup \JCoordsv ) \setminus \JCoordsu}(\Vec{\vV}) ) \subseteq {\JCoordsu \cap ( ( \JCoords \cup \JCoordsv ) \setminus \JCoordsu )} = \emptyset  \).
Rearranging the terms in \EQUATION \eqref{eqn:pf:lemma:error:deterministic:3} and taking the square root yields:
\begin{align}
\label{eqn:pf:lemma:error:deterministic:3b}
  \left\|
    \frac
    {\ThresholdSet{(\JCoords \cup \JCoordsv ) \setminus \JCoordsu}(\Vec{\vV})}
    {\| \ThresholdSet{\JCoords \cup \JCoordsu \cup \JCoordsv}(\Vec{\vV}) \|_{2}}
  \right\|_{2}
  =
  \sqrt{
  \left\|
    \Vec{\uV}
    -
    \frac
    {\ThresholdSet{\JCoords \cup \JCoordsu \cup \JCoordsv}(\Vec{\vV})}
    {\| \ThresholdSet{\JCoords \cup \JCoordsu \cup \JCoordsv}(\Vec{\vV}) \|_{2}}
  \right\|_{2}^{2}
  -
  \left\|
    \Vec{\uV}
    -
    \frac
    {\ThresholdSet{\JCoordsu}(\Vec{\vV})}
    {\| \ThresholdSet{\JCoords \cup \JCoordsu \cup \JCoordsv}(\Vec{\vV}) \|_{2}}
  \right\|_{2}^{2}
  }
.\end{align}
From \EQUATION \eqref{eqn:pf:lemma:error:deterministic:3b}, it follows that
\begin{align}
  \frac
  {\| \ThresholdSet{( \JCoords \cup \JCoordsv ) \setminus \JCoordsu}(\Vec{\vV}) \|_{2}}
  {\| \ThresholdSet{\JCoords \cup \JCoordsu \cup \JCoordsv}(\Vec{\vV}) \|_{2}}
  &=
  \sqrt{
  \left\|
    \Vec{\uV}
    -
    \frac
    {\ThresholdSet{\JCoords \cup \JCoordsu \cup \JCoordsv}(\Vec{\vV})}
    {\| \ThresholdSet{\JCoords \cup \JCoordsu \cup \JCoordsv}(\Vec{\vV}) \|_{2}}
  \right\|_{2}^{2}
  -
  \left\|
    \Vec{\uV}
    -
    \frac
    {\ThresholdSet{\JCoordsu}(\Vec{\vV})}
    {\| \ThresholdSet{\JCoords \cup \JCoordsu \cup \JCoordsv}(\Vec{\vV}) \|_{2}}
  \right\|_{2}^{2}
  }
  \\
  &\leq
  \left\|
    \Vec{\uV}
    -
    \frac
    {\ThresholdSet{\JCoords \cup \JCoordsu \cup \JCoordsv}(\Vec{\vV})}
    {\| \ThresholdSet{\JCoords \cup \JCoordsu \cup \JCoordsv}(\Vec{\vV}) \|_{2}}
  \right\|_{2}
\label{eqn:pf:lemma:error:deterministic:4}
.\end{align}
Combining \EQUATIONS \eqref{eqn:pf:lemma:error:deterministic:1}, \eqref{eqn:pf:lemma:error:deterministic:2}, and \eqref{eqn:pf:lemma:error:deterministic:4},
\begin{align*}
  \left\|
    \frac
    {\ThresholdSet{\JCoords \cup \JCoordsu \cup \JCoordsv}(\Vec{\vV})}
    {\| \ThresholdSet{\JCoords \cup \JCoordsu \cup \JCoordsv}(\Vec{\vV}) \|_{2}}
    -
    \frac
    {\ThresholdSet{\JCoordsv}(\Vec{\vV})}
    {\| \ThresholdSet{\JCoordsv}(\Vec{\vV}) \|_{2}}
  \right\|_{2}
  &\leq
  \frac
  {2 \| \ThresholdSet{( \JCoords \cup \JCoordsu ) \setminus \JCoordsv}(\Vec{\vV}) \|_{2}}
  {\| \ThresholdSet{\JCoords \cup \JCoordsu \cup \JCoordsv}(\Vec{\vV}) \|_{2}}
  \\
  &\dCmt{by \EQUATION \eqref{eqn:pf:lemma:error:deterministic:1}}
  \\
  &\leq
  \frac
  {2 \| \ThresholdSet{( \JCoords \cup \JCoordsv ) \setminus \JCoordsu}(\Vec{\vV}) \|_{2}}
  {\| \ThresholdSet{\JCoords \cup \JCoordsu \cup \JCoordsv}(\Vec{\vV}) \|_{2}}
  \\
  &\dCmt{by \EQUATION \eqref{eqn:pf:lemma:error:deterministic:2}}
  \\
  &\leq
  2
  \left\|
    \Vec{\uV}
    -
    \frac
    {\ThresholdSet{\JCoords \cup \JCoordsu \cup \JCoordsv}(\Vec{\vV})}
    {\| \ThresholdSet{\JCoords \cup \JCoordsu \cup \JCoordsv}(\Vec{\vV}) \|_{2}}
  \right\|_{2}
  . \TagEqn{\label{eqn:pf:lemma:error:deterministic:6}} \\
  &\dCmt{by \EQUATION \eqref{eqn:pf:lemma:error:deterministic:4}}
\end{align*}
Now, returning to \EQUATION \eqref{eqn:pf:lemma:error:deterministic:5}, the proof is completed as follows:
\begin{align*}
  \left\|
    \Vec{\uV}
    -
    \frac
    {\Threshold{\k}(\Vec{\vV})}
    {\| \Threshold{\k}(\Vec{\vV}) \|_{2}}
  \right\|_{2}
  &\leq
  \left\|
    \Vec{\uV}
    -
    \frac
    {\ThresholdSet{\JCoords \cup \JCoordsu \cup \JCoordsv}(\Vec{\vV})}
    {\| \ThresholdSet{\JCoords \cup \JCoordsu \cup \JCoordsv}(\Vec{\vV}) \|_{2}}
  \right\|_{2}
  +
  \left\|
    \frac
    {\ThresholdSet{\JCoords \cup \JCoordsu \cup \JCoordsv}(\Vec{\vV})}
    {\| \ThresholdSet{\JCoords \cup \JCoordsu \cup \JCoordsv}(\Vec{\vV}) \|_{2}}
    -
    \frac
    {\ThresholdSet{\JCoordsv}(\Vec{\vV})}
    {\| \ThresholdSet{\JCoordsv}(\Vec{\vV}) \|_{2}}
  \right\|_{2}
  \\
  &\dCmt{by \EQUATION \eqref{eqn:pf:lemma:error:deterministic:5}}
  \\
  &\leq
  \left\|
    \Vec{\uV}
    -
    \frac
    {\ThresholdSet{\JCoords \cup \JCoordsu \cup \JCoordsv}(\Vec{\vV})}
    {\| \ThresholdSet{\JCoords \cup \JCoordsu \cup \JCoordsv}(\Vec{\vV}) \|_{2}}
  \right\|_{2}
  +
  2
  \left\|
    \Vec{\uV}
    -
    \frac
    {\ThresholdSet{\JCoords \cup \JCoordsu \cup \JCoordsv}(\Vec{\vV})}
    {\| \ThresholdSet{\JCoords \cup \JCoordsu \cup \JCoordsv}(\Vec{\vV}) \|_{2}}
  \right\|_{2}
  \\
  &\dCmt{by \EQUATION \eqref{eqn:pf:lemma:error:deterministic:6}}
  \\
  &=
  3
  \left\|
    \Vec{\uV}
    -
    \frac
    {\ThresholdSet{\JCoords \cup \JCoordsu \cup \JCoordsv}(\Vec{\vV})}
    {\| \ThresholdSet{\JCoords \cup \JCoordsu \cup \JCoordsv}(\Vec{\vV}) \|_{2}}
  \right\|_{2}
,\end{align*}
as desired.
\end{proof}


\section{Proof of the Main Technical Results}
\label{outline:pf-main-technical-result}

\subsection{Overview of the Proof of \THEOREM \ref{thm:main-technical:sparse}}
\label{outline:main-technical-result|outline-of-pf}

\checkoffbutmayberecheck%
The proof of the main technical theorem, \THEOREM \ref{thm:main-technical:sparse}, takes up the majority of the work in this manuscript.
This section provides an overview of the proof.
The proof in full is located in \APPENDIX \ref{outline:pf-main-technical-result} with some auxiliary results therein proved in \APPENDIX \ref{outline:concentration-ineq}.
Before outlining the arguments, recall the  definitions of Equations \eqref{eqn:notations:h:def}--\eqref{eqn:notations:hfJ:def} from \SECTION \ref{outline:main-result|outline-of-pf}. 
Additionally, define the following related notations for \(  \Vec{u}, \Vec{v} \in \R^{\n}  \) and \(  \JCoords \subseteq [\n]  \):
\begin{gather}
  \label{eqn:notations:g:def}
  \gFn( \Vec{u}, \Vec{v} )
  \defeq
  \hFn( \Vec{u}, \Vec{v} )
  -
  \left\langle \hFn( \Vec{u}, \Vec{v} ), \frac{\Vec{u}-\Vec{v}}{\| \Vec{u}-\Vec{v} \|_{2}} \right\rangle \frac{\Vec{u}-\Vec{v}}{\| \Vec{u}-\Vec{v} \|_{2}}
  -
  \left\langle \hFn( \Vec{u}, \Vec{v} ), \frac{\Vec{u}+\Vec{v}}{\| \Vec{u}+\Vec{v} \|_{2}} \right\rangle \frac{\Vec{u}+\Vec{v}}{\| \Vec{u}+\Vec{v} \|_{2}}
  ,\\ \label{eqn:notations:gJ:def}
  \gFn[\JCoords]( \Vec{u}, \Vec{v} )
  \defeq
  \ThresholdSet{\Supp( \Vec{u} ) \cup \Supp( \Vec{v} ) \cup \JCoords}( \gFn( \Vec{u}, \Vec{v} ) )
  ,\\ \label{eqn:notations:gf:def}
  \gfFn( \Vec{u}, \Vec{u} )
  \defeq
  \hfFn( \Vec{u}, \Vec{u} ) - \left\langle \hfFn( \Vec{u}, \Vec{u} ), \Vec{u} \right\rangle \Vec{u}
  ,\\ \label{eqn:notations:gfJ:def}
  \gfFn[\JCoords]( \Vec{u}, \Vec{u} )
  \defeq
  \ThresholdSet{\Supp( \Vec{u} ) \cup \Supp( \Vec{v} ) \cup \JCoords}( \gfFn( \Vec{u}, \Vec{u} ) )
.\end{gather}
Note that
\begin{gather*}
  \gFn[\JCoords]( \Vec{u}, \Vec{v} )
  =
  \hFn[\JCoords]( \Vec{u}, \Vec{v} )
  -
  \left\langle \hFn[\JCoords]( \Vec{u}, \Vec{v} ), \frac{\Vec{u}-\Vec{v}}{\| \Vec{u}-\Vec{v} \|_{2}} \right\rangle \frac{\Vec{u}-\Vec{v}}{\| \Vec{u}-\Vec{v} \|_{2}}
  -
  \left\langle \hFn[\JCoords]( \Vec{u}, \Vec{v} ), \frac{\Vec{u}+\Vec{v}}{\| \Vec{u}+\Vec{v} \|_{2}} \right\rangle \frac{\Vec{u}+\Vec{v}}{\| \Vec{u}+\Vec{v} \|_{2}}
\end{gather*}
and that
\begin{gather*}
  \gfFn[\JCoords]( \Vec{u}, \Vec{u} )
  =
  \hfFn[\JCoords]( \Vec{u}, \Vec{u} )
  -
  \left\langle \hfFn[\JCoords]( \Vec{u}, \Vec{u} ), \Vec{u} \right\rangle \Vec{u}
.\end{gather*}


\subsubsection{Key Steps of the Proof}
\label{outline:main-technical-result|outline-of-pf|outline}

The proof of \THEOREM \ref{thm:main-technical:sparse} are sketched as follows.
\checkoff%

\begin{enumerate}
\item \label{enum:outline-pf-main-technical:1}
Recall that the aim is to bound
\begin{gather}
\label{eqn:enum:outline-pf-main-technical:1}
  \left\|
    \thetaStar
    -
    \frac
    {\thetaXX + \hfFn[\JCoords]( \thetaStar, \thetaXX )}
    {\| \thetaXX + \hfFn[\JCoords]( \thetaStar, \thetaXX ) \|_{2}}
  \right\|_{2}
\end{gather}
from above with high probability uniformly for all \(  \thetaXX \in \ParamSpace  \) and all \(  \JCoords \subseteq [\n]  \), \(  | \JCoords | \leq \k  \).
\item \label{enum:outline-pf-main-technical:2}
To obtain a uniform result, a \(  \tauX  \)-net, \(  \ParamCover \subset \ParamSpace  \), over the parameter space, \(  \ParamSpace  \), is constructed with a particular design, the details of which are left to the formal proof of \THEOREM \ref{thm:main-technical:sparse}.
For the purpose of this overview, its suffices to say that, crucially, the design of \(  \ParamCover  \) ensures that for each \(  \thetaXX \in \ParamSpace  \), there exists an element, \(  \thetaX \in \ParamCover  \), such that both
\(  \| \thetaX - \thetaXX \|_{2} \leq \tauX  \) and
\(  \Supp( \thetaX ) = \Supp( \thetaXX )  \).
This cover, \(  \ParamCover  \), will allow the establishment of a global result for points, \(  \thetaX \in \ParamCover  \), within it, which can subsequently be extended to arbitrary points, \(  \thetaXX \in \ParamSpace  \), in the entire parameter space via a local analysis.
\item \label{enum:outline-pf-main-technical:3}
As another preliminary step, it will be shown that for any \(  \thetaXX \in \ParamSpace  \) and \(  \JCoords \subseteq [\n]  \),
\begin{gather*}
  \frac
  {\E[ \thetaXX + \hfFn[\JCoords]( \thetaStar, \thetaXX ) ]}
  {\| \E[ \thetaXX + \hfFn[\JCoords]( \thetaStar, \thetaXX ) ] \|_{2}}
  =
  \thetaStar
.\end{gather*}
In other words, the quantity in \eqref{eqn:enum:outline-pf-main-technical:1}---which we seek to bound---describes a notion of deviation of
\(  \thetaXX + \hfFn[\JCoords]( \thetaStar, \thetaXX )  \)
from its mean (after normalization):
\begin{gather*}
  \left\|
    \thetaStar
    -
    \frac
    {\thetaXX + \hfFn[\JCoords]( \thetaStar, \thetaXX )}
    {\| \thetaXX + \hfFn[\JCoords]( \thetaStar, \thetaXX ) \|_{2}}
  \right\|_{2}
  =
  \left\|
    \frac
    {\thetaXX + \hfFn[\JCoords]( \thetaStar, \thetaXX )}
    {\| \thetaXX + \hfFn[\JCoords]( \thetaStar, \thetaXX ) \|_{2}}
    -
    \frac
    {\E[ \thetaXX + \hfFn[\JCoords]( \thetaStar, \thetaXX ) ]}
    {\| \E[ \thetaXX + \hfFn[\JCoords]( \thetaStar, \thetaXX ) ] \|_{2}}
  \right\|_{2}
.\end{gather*}
In fact, this deviation turns out to roughly scale with the deviation of the random function \(  \hfFn[\JCoords]  \) around its mean:
\begin{gather}
\label{eqn:enum:outline-pf-main-technical:1b}
  \left\|
    \frac
    {\thetaXX + \hfFn[\JCoords]( \thetaStar, \thetaXX )}
    {\| \thetaXX + \hfFn[\JCoords]( \thetaStar, \thetaXX ) \|_{2}}
    -
    \frac
    {\E[ \thetaXX + \hfFn[\JCoords]( \thetaStar, \thetaXX ) ]}
    {\| \E[ \thetaXX + \hfFn[\JCoords]( \thetaStar, \thetaXX ) ] \|_{2}}
  \right\|_{2}
  \propto
  \| \hfFn[\JCoords]( \thetaStar, \thetaXX ) - \E[ \hfFn[\JCoords]( \thetaStar, \thetaXX ) ] \|_{2}
.\end{gather}
Analyzing (a decomposition of) the deviation of \(  \hfFn[\JCoords]  \) will be at the core of the proof.
\item \label{enum:outline-pf-main-technical:4}
Letting
\(  \thetaStar, \thetaXX \in \ParamSpace  \)
be arbitrary, and using the observations in \STEP \ref{enum:outline-pf-main-technical:3}, the triangle inequality, algebraic manipulations, and other standard techniques, the expression in \eqref{eqn:enum:outline-pf-main-technical:1}
is bounded by the sum of
three terms which will admit an easier analysis than directly handling \eqref{eqn:enum:outline-pf-main-technical:1}:
\begin{subequations}
\label{eqn:enum:outline-pf-main-technical:2}
\begin{align}
  \left\| \thetaStar - \frac{\thetaXX+\hfFn[\JCoords]( \thetaStar, \thetaXX )}{\| \thetaXX+\hfFn[\JCoords]( \thetaStar, \thetaXX ) \|_{2}} \right\|_{2}
  &\leq
  \label{enum:outline-pf-main-technical:5:i}
  \frac
  {2 \| \hFn[\JCoords]( \thetaStar, \thetaX ) - \E[ \hFn[\JCoords]( \thetaStar, \thetaX ) ] \|_{2}}
  {\DENOM}
  \\ \label{enum:outline-pf-main-technical:5:iii}
  &\AlignSp+
  \frac
  {2 \| \hFn[\Supp( \thetaStar ) \cup \JCoords]( \thetaX, \thetaXX ) - \E[ \hFn[\Supp( \thetaStar ) \cup \JCoords]( \thetaX, \thetaXX ) ] \|_{2}}
  {\DENOM}
  \\ \label{enum:outline-pf-main-technical:5:ii}
  &\AlignSp+
  \frac
  {2 \| \hfFn[\Supp( \thetaX ) \cup \JCoords]( \thetaStar, \thetaStar ) - \E[ \hfFn[\Supp( \thetaX ) \cup \JCoords]( \thetaStar, \thetaStar ) ] \|_{2}}
  {\DENOM}
,\end{align}
\end{subequations}
where \(  \JCoords \subseteq [\n]  \), \(  | \JCoords | \leq \k  \), is arbitrary,
and where
\(  \thetaX \in \ParamCover \setminus \Ball{\tauX}( \thetaStar )  \)
such that
\(  \| \thetaX - \thetaXX \|_{2} \leq 2\tauX  \) and
\(  \Supp( \thetaX ) \cup \JCoords = \Supp( \thetaXX ) \cup \JCoords  \)
(\see \LEMMA \ref{lemma:combine}).
Per the design of the \(  \tauX  \)-net, \(  \ParamCover \subset \ParamSpace  \), in \STEP \ref{enum:outline-pf-main-technical:2}, such a point \(  \thetaX \in \ParamCover  \) exists for any choice of \(  \thetaXX \in \ParamSpace  \).
\item \label{enum:outline-pf-main-technical:5}
The three terms on the \RHS of \EQUATION \eqref{eqn:enum:outline-pf-main-technical:2} can be viewed as bounding \eqref{eqn:enum:outline-pf-main-technical:1} by relating it (with appropriate scaling) to the deviation of \(  \hfFn  \) specified on the \RHS of \eqref{eqn:enum:outline-pf-main-technical:1b}, and then controlling the \RHS of \eqref{eqn:enum:outline-pf-main-technical:1b} by
decomposing the deviation of \(  \hfFn  \) into three components of deviation,
in order:
\eqref{enum:outline-pf-main-technical:5:i},
a component handling points in the cover, \(  \ParamCover  \), over \(  \ParamSpace  \) that are sufficiently far from \(  \thetaStar  \)---a ``global'' result;
\eqref{enum:outline-pf-main-technical:5:iii},
a component reconciling the discrepancy between the original point, \(  \thetaXX \in \ParamSpace  \), which may be outside the cover, and a nearby neighbor in the cover, \(  \thetaX \in \ParamCover \setminus \Ball{\tauX}( \thetaStar )  \)---a ``local'' result; and
\eqref{enum:outline-pf-main-technical:5:ii},
a component handling the ``noise'' introduced into the GLM through the randomness of \(  \fFn  \).
\item \label{enum:outline-pf-main-technical:6}
The technical work in this manuscript then lies largely with bounding the three terms on the \RHS of \EQUATION \eqref{eqn:enum:outline-pf-main-technical:2}.
While most of the details of this analysis are left to the formal proofs (\see \APPENDICES \ref{outline:pf-main-technical-result|pf-intermediate}--\ref{outline:concentration-ineq}), a few salient ideas in the approach are mentioned here.
\item \label{enum:outline-pf-main-technical:7}
For the three terms, \eqref{enum:outline-pf-main-technical:5:i}--\eqref{enum:outline-pf-main-technical:5:ii}, the (shared) denominator can be calculated directly.
\item \label{enum:outline-pf-main-technical:8}
On the other hand, the numerators in \eqref{enum:outline-pf-main-technical:5:i}--\eqref{enum:outline-pf-main-technical:5:ii} are upper bounded with bounded probability
through concentration inequalities derived with standard techniques.
To do so, each numerator is orthogonally decomposed into two to three components for which derivations of concentration inequalities are easier.
Subsequently, for each numerator, the concentration inequalities for its associated components are combined via the triangle inequality.
Then, these are extended into uniform results by appropriate union bounds.
\item \label{enum:outline-pf-main-technical:9}
For the first and last terms, \eqref{enum:outline-pf-main-technical:5:i} and \eqref{enum:outline-pf-main-technical:5:ii}, the union bounds are straightforward: simply taken over the coordinate subsets of cardinality at most \(  \k  \), as well as, in the case of \eqref{enum:outline-pf-main-technical:5:i}, over the cover, \(  \ParamCover  \).
\item \label{enum:outline-pf-main-technical:10}
In contrast, the second term, \eqref{enum:outline-pf-main-technical:5:iii},  requires a more careful---and somewhat indirect---argument.
In this case, the union bound is taken over the set
\(  \{ \hFn[\Supp( \thetaStar ) \cup \JCoords]( \thetaX, \thetaXX ) : \thetaXX \in \BallX{2\tauX}( \thetaX ) \}  \),
which has a sufficiently small cardinality due to the local binary embeddings of \cite[{\COROLLARY 3.3}]{oymak2015near}.
\item \label{enum:outline-pf-main-technical:11}
Using the uniform results obtained in \STEPS \ref{enum:outline-pf-main-technical:7}--\ref{enum:outline-pf-main-technical:10}, the number of covariates, \(  \m  \), can then be determined such that desired bounds on the terms \eqref{enum:outline-pf-main-technical:5:i}--\eqref{enum:outline-pf-main-technical:5:ii},
and hence also the desired bound on \eqref{eqn:enum:outline-pf-main-technical:1}, hold uniformly with high probability.
This will establish the invertibility condition for Gaussian covariate matrices claimed in \THEOREM \ref{thm:main-technical:sparse}.
\end{enumerate}

\subsection{Detailed Proofs}
Several constants will  appear throughout this section.
For convenient reference later, they are specified in the following definition.
\begin{definition}
\label{def:univ-const}
Let
\(  \ConstC, \Constb, \Constc, \ConstbLD, \ConstbSD > 0  \)
be (absolute) constants such that
\(  \ConstC \defeq \frac{\Constb}{\Constc^{2}}  \geq \frac{1}{50}  \),
\(  \sqrt{8} \Constb < \frac{\ConstbSD}{2}  \),
\(  \Constc \defeq \frac{\ConstbSD}{2}-\sqrt{8} \Constb = \BigOmega(1)  \),
\(  \ConstbLD < 1 - \sqrt{\frac{2\Constb}{\ConstdSD}}  \), and
\(  \ConstbSD \leq 1 - \ConstbLD - \sqrt{\frac{2\Constb}{\ConstdSD}}  \).
Additionally, let
\(  \ConstA, \ConstB, \ConstCTwo, \ConstCThree, \ConstCFour, \ConstCFive > 0  \)
be the (absolute) constants given by:
\(  \ConstA = \ConstAValue  \),
\(  \ConstB = \ConstBValue  \),
\(  \ConstCTwo = \ConstCTwoValue  \),
\(  \ConstCThree = \ConstCThreeValue  \),
\(  \ConstCFour = \ConstCFourValue  \), and
\(  \ConstCFive = \ConstCFiveValue  \).
\end{definition}


Two additional notations will be used in this manuscript, which are introduced in \DEFINITION \ref{def:nu-and-tau}, below.
%
\begin{definition}
\label{def:nu-and-tau}
For \(  \deltaX > 0  \), let \(  \nuX( \deltaX ), \tauX( \deltaX ) > 0  \) be given by
\begin{gather}
\label{eqn:def:nu-and-tau:nu}
  \nuX( \deltaX )
  =
  \nuXEXPR[\deltaX]
,\end{gather}
and
\begin{gather}
\label{eqn:def:nu-and-tau:tau}
  \tauX( \deltaX ) \defeq \tauXEXPR[\deltaX]
,\end{gather}
where \(  \ConstCFive > 0  \) is given in \DEFINITION \ref{def:univ-const}.
To condense notation, the explicit parameterization by \(  \deltaX  \) will in general be dropped and left implicit in this manuscript, \ie
\(  \nuX = \nuX( \deltaX )  \) and \(  \tauX = \tauX( \deltaX )  \),
where the specific choice of \(  \deltaX  \) may vary but will be clear from the context.
\end{definition}
%

We restate \THEOREM \ref{thm:main-technical:sparse} below with the sample complexity more specific with the above-defined constants.

\begin{theorem}[Restatement of \THEOREM \ref{thm:main-technical:sparse}]
\label{thm:main-technical:sparse:detail}
Let
\(  \ConstA, \ConstB, \ConstCOne, \ConstCTwo, \ConstCThree, \ConstCFour, \ConstCFive > 0  \)
be absolute constants as specified in \DEFINITION \ref{def:univ-const}, and fix
\(  \n, \k, \m \in \Z_{+}  \), \(  \k \leq \n  \), and \(  \rhoX, \deltaX \in (0,1)  \) where
\begin{gather}
  \deltaX \defeq \frac{\epsilonX}{\frac{3}{2} ( 5+\sqrt{21} )}
.\end{gather}
Set \(  \nuX = \nuX( \deltaX ) > 0  \)
and let \(  \tauX = \tauX( \deltaX ) > 0  \)
as in \DEFINITION \ref{def:nu-and-tau}.
Write
\(  \alphaO = \alphaO( \deltaX ) \defeq \alphaOExpr[\deltaX]  \)
as in \EQUATION \eqref{eqn:notations:alpha_0:def}.
Let
\(  \ParamSpace = \SparseSphereSubspace{\k}{\n}  \),
and fix
\(  \thetaStar \in \ParamSpace  \).
Under \ASSUMPTION \ref{assumption:p}, if
\begin{align}
\nonumber
  \m
  &\geq
  \mEXPR[s]
  \\ \nonumber
  &=
  \mOEXPRS{\deltaX}[,]
  \\
\end{align}
then with probability at least \(  1-\rhoX  \), uniformly for all \(  \thetaXX \in \ParamSpace  \) and all \(  \JCoords \subseteq [\n]  \), \(  | \JCoords | \leq \k  \),
\begin{gather}
  \left\|
    \thetaStar
    -
    \frac
    {\thetaXX + \hfFn[\JCoords]( \thetaStar, \thetaXX )}
    {\| \thetaXX + \hfFn[\JCoords]( \thetaStar, \thetaXX ) \|_{2}}
  \right\|_{2}
  \leq
  \sqrt{\deltaX \| \thetaStar-\thetaXX \|_{2}}
  +
  \deltaX
.\end{gather}
\end{theorem}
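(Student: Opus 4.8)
The plan is to follow the outline of \SECTION \ref{outline:main-technical-result|outline-of-pf|outline}, reducing the uniform bound to a statement about the deviation of the random vector $\hfFn[\JCoords]( \thetaStar, \thetaXX )$ about its mean, decomposing that deviation into a global, a local, and a noise part, and controlling each via Gaussian (and binary‑embedding) concentration. The first step is to compute the mean: by rotation invariance of the Gaussian covariates together with Stein's lemma in the form $\gammaX = 2\E[ \pFn'(Z) ]$, one has $\E[ \CovV \fFn( \langle \CovV, \thetaStar \rangle ) ] = \gammaX \thetaStar$ and $\E[ \CovV \Sign( \langle \CovV, \thetaXX \rangle ) ] = \sqrt{2/\pi}\, \thetaXX$, so that (the thresholding being harmless, since $\Supp( \thetaStar ) \cup \Supp( \thetaXX ) \cup \JCoords$ contains both supports)
\begin{gather*}
  \E[ \thetaXX + \hfFn[\JCoords]( \thetaStar, \thetaXX ) ]
  = \thetaXX + \tfrac{\sqrt{2\pi}}{2}\bigl( \gammaX \thetaStar - \sqrt{\tfrac{2}{\pi}}\, \thetaXX \bigr)
  = \tfrac{\sqrt{2\pi}}{2}\, \gammaX\, \thetaStar .
\end{gather*}
This is exactly why the $\sqrt{2\pi}$ scaling in \ALGORITHM \ref{alg:biht:normalized} is the right one, and it gives $\DENOM = \tfrac{\sqrt{2\pi}}{2}\gammaX$. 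Since $( \thetaXX + \hfFn[\JCoords] ) - \E[ \thetaXX + \hfFn[\JCoords] ] = \hfFn[\JCoords]( \thetaStar, \thetaXX ) - \E[ \hfFn[\JCoords]( \thetaStar, \thetaXX ) ]$ and $\E[ \thetaXX + \hfFn[\JCoords] ] / \| \E[ \cdot ] \|_2 = \thetaStar$, \FACT \ref{fact:dist-btw-normalized-vectors} gives that the quantity to be bounded is at most $2\, \| \hfFn[\JCoords]( \thetaStar, \thetaXX ) - \E[ \hfFn[\JCoords]( \thetaStar, \thetaXX ) ] \|_2 / \DENOM$.

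Next I would construct the $\tauX$-net $\ParamCover \subset \ParamSpace$ of \STEP \ref{enum:outline-pf-main-technical:2} by placing a $\tauX$-net on the unit sphere of each coordinate subspace of dimension at most $\k$ and taking the union; this yields the support-preserving approximation property (for every $\thetaXX \in \ParamSpace$ some $\thetaX \in \ParamCover$ has $\| \thetaX - \thetaXX \|_2 \leq \tauX$ and $\Supp( \thetaX ) = \Supp( \thetaXX )$) and cardinality $| \ParamCover | \lesssim \JSPCSIZES$. Using the linearity of the subset-thresholding operator together with the elementary identities $\hfFn( \Vec{u}, \Vec{v} ) = \hfFn( \Vec{u}, \Vec{u} ) + \hFn( \Vec{u}, \Vec{v} )$ and $\hFn( \Vec{u}, \Vec{v} ) = \hFn( \Vec{u}, \Vec{w} ) + \hFn( \Vec{w}, \Vec{v} )$ (immediate from \eqref{eqn:notations:h:def}--\eqref{eqn:notations:hfJ:def}), applied with $\Vec{w}$ a suitable net point $\thetaX \in \ParamCover$ having $\| \thetaX - \thetaXX \|_2 \leq 2\tauX$ and $\Supp( \thetaX ) \cup \JCoords = \Supp( \thetaXX ) \cup \JCoords$, I would invoke \LEMMA \ref{lemma:combine} to arrive at \EQUATION \eqref{eqn:enum:outline-pf-main-technical:2}: the target is bounded (up to constants and division by $\DENOM$) by the sum of the deviations of $\hFn[\JCoords]( \thetaStar, \thetaX )$ (a ``global'' term over net points far from $\thetaStar$), of $\hFn[\Supp( \thetaStar ) \cup \JCoords]( \thetaX, \thetaXX )$ (a ``local'' term reconciling $\thetaXX$ with its net neighbour), and of $\hfFn[\Supp( \thetaX ) \cup \JCoords]( \thetaStar, \thetaStar )$ (a ``noise'' term isolating the randomness of $\fFn$).

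For the global term I would condition on the high-probability event that, for $\thetaStar$ and every net point $\thetaX$, the number of covariate rows lying in the wedge between the two associated hyperplanes is $\Theta( \m\, \arccos \langle \thetaStar, \thetaX \rangle )$ --- the angular uniformity of i.i.d.\ Gaussian vectors --- so that $\hFn[\JCoords]( \thetaStar, \thetaX )$ is $1/\m$ times a sum of $\Theta( \m \theta )$ vectors of norm $O(1)$, where $\theta = \arccos \langle \thetaStar, \thetaX \rangle$; a Bernstein/subgaussian bound on its (restricted) norm followed by a union bound over $\ParamCover$ and over coordinate subsets of size at most $\k$ yields $\| \hFn[\JCoords]( \thetaStar, \thetaX ) - \E[ \cdot ] \|_2 = \BigO( \sqrt{\theta/\m}\, \sqrt{\log( | \ParamCover |\, \JSSIZES / \rhoX )} )$. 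Because $\theta \asymp \| \thetaStar - \thetaX \|_2 \asymp \| \thetaStar - \thetaXX \|_2$ (using $\| \thetaX - \thetaXX \|_2 \leq 2\tauX$, $\thetaX \notin \Ball{\tauX}( \thetaStar )$, and $\tauX \ll \deltaX$, with the case $\| \thetaStar - \thetaXX \|_2 = O( \tauX )$ absorbed into the $\deltaX$ floor), this term contributes the $\sqrt{\deltaX \| \thetaStar - \thetaXX \|_2}$ summand once $\m$ meets the first two lines of the prescribed lower bound. For the noise term, the definition $\alphaX = \Pr( \fFn(Z) \neq \Sign(Z) )$ makes the number of rows with $\fFn \neq \Sign$ concentrate about $\alphaX \m$, so that $\| \hfFn[\Supp( \thetaX ) \cup \JCoords]( \thetaStar, \thetaStar ) - \E[ \cdot ] \|_2 = \BigO( \sqrt{\alphaO/\m}\, \sqrt{\log( \JSSIZES / \rhoX )} )$ after a union bound over coordinate subsets, with $\alphaO = \max\{ \alphaX, \deltaX \}$ absorbing the residual floor from the covariate randomness; requiring this to be a small fraction of $\deltaX$ produces the $\alphaO / ( \gammaX^2 \deltaX^2 )$ factor.

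The genuinely delicate piece --- and the main obstacle --- is the local term, for which a union bound over the uncountable set $\BallX{2\tauX}( \thetaX )$ is impossible; instead I would union-bound over the \emph{finite} image set $\{ \hFn[\Supp( \thetaStar ) \cup \JCoords]( \thetaX, \thetaXX ) : \thetaXX \in \BallX{2\tauX}( \thetaX ) \}$, whose cardinality is controlled by the local binary embedding estimate \cite[{\COROLLARY 3.3}]{oymak2015near} (the number of distinct sign patterns $\Sign( \CovM \thetaXX )$ over a small ball around $\thetaX$ is small). Since $\tauX$ and $\nuX$ are chosen exactly as in \DEFINITION \ref{def:nu-and-tau} so that this count stays tame relative to $\m$ and the relevant angular distances stay $O( \tauX )$, the local deviation is at most a constant fraction of $\deltaX$. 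Finally I would collect the three bounds, divide by $\DENOM = \tfrac{\sqrt{2\pi}}{2}\gammaX$, and check that the prescribed lower bound on $\m$ makes the sum at most $\sqrt{\deltaX \| \thetaStar - \thetaXX \|_2} + \deltaX$ with probability at least $1 - \rhoX$ after all union bounds, which establishes the claimed restricted approximate invertibility condition. The recurring difficulty throughout is keeping every deviation estimate proportional to the relevant angular distance, so that the ``$\sqrt{\deltaX \| \thetaStar - \thetaXX \|_2}$'' part truly shrinks for points near $\thetaStar$ (this is what drives the recurrence in the proof of \THEOREM \ref{thm:approx-error:sparse}), while the local-embedding argument is what keeps the bound from degrading once points become too close.
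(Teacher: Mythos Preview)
Your proposal is correct and follows essentially the same approach as the paper: you compute the mean to reduce to a deviation bound via \FACT \ref{fact:dist-btw-normalized-vectors}, invoke the three-term decomposition of \LEMMA \ref{lemma:combine} with a support-preserving $\tauX$-net, and then control the global, local, and noise terms exactly as the paper does in \LEMMAS \ref{lemma:large-dist:1}, \ref{lemma:small-dist}, and \ref{lemma:large-dist:2} respectively (angular uniformity plus Bernstein for the global term, the finite-image argument via \cite{oymak2015near} for the local term, and concentration of the $\alphaX\m$ ``flipped'' rows for the noise term). Your identification of the key difficulty---keeping each deviation proportional to the angular distance so the recurrence contracts, with the local embedding taking over once points are too close---matches the paper's discussion precisely.
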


\subsection{Intermediate Results for the Proof of \THEOREM \ref{thm:main-technical:sparse}}
\label{outline:pf-main-technical-result|intermediate}

\checkoff

Lemmas  \ref{lemma:combine}--\ref{lemma:large-dist:2}, stated below in this section, lay the groundwork for proving the main technical theorem, \THEOREM \ref{thm:main-technical:sparse}.
The proofs of these intermediate results can be found in \SECTION \ref{outline:pf-main-technical-result|pf-intermediate}.
Recall that the ultimate goal is to uniformly bound
\begin{gather}
\label{eqn:pf-main-technical-result:intermediate:discussion:1}
  \left\| \thetaStar - \frac{\thetaXX+\hfFn[\JCoords]( \thetaStar, \thetaXX )}{\| \thetaXX+\hfFn[\JCoords]( \thetaStar, \thetaXX ) \|_{2}} \right\|_{2}
\end{gather}
from above.
\LEMMA \ref{lemma:combine} starts off by upper bounding \eqref{eqn:pf-main-technical-result:intermediate:discussion:1} by the sum of three terms (with some scaling), each of which describes how much
the functions \(  \hFn  \) and \(  \hfFn  \) (with thresholding)
deviate from their means.
Subsequently, \LEMMAS \ref{lemma:large-dist:1}--\ref{lemma:large-dist:2} provide bounds on these deviations.

\begin{lemma}
\label{lemma:combine}
%
Let
\(  \JCoords \subseteq [\n]  \),
and fix
\(  \thetaStar, \thetaX, \thetaXX \in \ParamSpace  \)
such that
\(  \Supp( \thetaXX ) \cup \JCoords = \Supp( \thetaX ) \cup \JCoords  \).
Then,
\begin{align}
  \left\| \thetaStar - \frac{\thetaXX+\hfFn[\JCoords]( \thetaStar, \thetaXX )}{\| \thetaXX+\hfFn[\JCoords]( \thetaStar, \thetaXX ) \|_{2}} \right\|_{2}
  &\leq
  \frac
  {2 \| \hFn[\JCoords]( \thetaStar, \thetaX ) - \E[ \hFn[\JCoords]( \thetaStar, \thetaX ) ] \|_{2}}
  {\DENOM}
  \nonumber \\
  &\AlignSp+
  \frac
  {2 \| \hFn[\Supp( \thetaStar ) \cup \JCoords]( \thetaX, \thetaXX ) - \E[ \hFn[\Supp( \thetaStar ) \cup \JCoords]( \thetaX, \thetaXX ) ] \|_{2}}
  {\DENOM}
  \nonumber \\
  &\AlignSp+
  \frac
  {2 \| \hfFn[\Supp( \thetaX ) \cup \JCoords]( \thetaStar, \thetaStar ) - \E[ \hfFn[\Supp( \thetaX ) \cup \JCoords]( \thetaStar, \thetaStar ) ] \|_{2}}
  {\DENOM}
\label{eqn:lemma:combine:1}
.\end{align}
\end{lemma}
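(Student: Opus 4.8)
The plan is to recognize the quantity on the left of \eqref{eqn:lemma:combine:1} as the normalized distance between the random vector $\thetaXX + \hfFn[\JCoords]( \thetaStar, \thetaXX )$ and its mean, and then to split that deviation into the three pieces on the right by means of two elementary additive identities for $\hFn$ and $\hfFn$.

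First I would compute the expectation $\E[ \hfFn( \thetaStar, \Vec{v} ) ]$ for a unit vector $\Vec{v}$. Writing each row of $\CovM$ as its component along $\thetaStar$ plus an orthogonal part and using that these are independent (and that the orthogonal part is also independent of the internal randomness of $\fFn$), a short Gaussian calculation gives $\frac{1}{\m}\E[ \CovM^{\T} \fFn( \CovM \thetaStar ) ] = \gammaX \thetaStar$, with $\gammaX$ as in \eqref{eqn:notations:gamma:def}, and $\frac{1}{\m}\E[ \CovM^{\T} \Sign( \CovM \Vec{v} ) ] = \sqrt{2/\pi}\,\Vec{v}$, hence
\[
  \E[ \hfFn( \thetaStar, \Vec{v} ) ]
  =
  \frac{\sqrt{2\pi}}{2}\left( \gammaX \thetaStar - \sqrt{\tfrac{2}{\pi}}\,\Vec{v} \right)
  =
  \frac{\sqrt{2\pi}\,\gammaX}{2}\,\thetaStar - \Vec{v}.
\]
Set $\mathcal{S} \defeq \Supp( \thetaStar ) \cup \Supp( \thetaX ) \cup \JCoords$; the hypothesis $\Supp( \thetaXX ) \cup \JCoords = \Supp( \thetaX ) \cup \JCoords$ forces $\Supp( \thetaStar ) \cup \Supp( \thetaXX ) \cup \JCoords = \mathcal{S}$ and also $\Supp( \thetaXX ) \subseteq \mathcal{S}$, so the linear operator $\ThresholdSet{\mathcal{S}}$ fixes both $\thetaStar$ and $\thetaXX$. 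Since $\hfFn[\JCoords]( \thetaStar, \thetaXX ) = \ThresholdSet{\mathcal{S}}( \hfFn( \thetaStar, \thetaXX ) )$, linearity of expectation gives $\E[ \thetaXX + \hfFn[\JCoords]( \thetaStar, \thetaXX ) ] = \frac{\sqrt{2\pi}\,\gammaX}{2}\thetaStar$, and the identical computation at $\thetaX$ gives $\E[ \thetaX + \hfFn( \thetaStar, \thetaX ) ] = \frac{\sqrt{2\pi}\,\gammaX}{2}\thetaStar$; in particular both are the same positive multiple of the unit vector $\thetaStar$, so $\thetaStar$ is the normalization of $\E[ \thetaXX + \hfFn[\JCoords]( \thetaStar, \thetaXX ) ]$ and $\| \E[ \thetaXX + \hfFn[\JCoords]( \thetaStar, \thetaXX ) ] \|_{2} = \DENOM$.

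With that in hand I would invoke \FACT \ref{fact:dist-btw-normalized-vectors} with $\Vec{u} = \E[ \thetaXX + \hfFn[\JCoords]( \thetaStar, \thetaXX ) ]$ and $\Vec{v} = \thetaXX + \hfFn[\JCoords]( \thetaStar, \thetaXX )$, keeping the bound with $\| \Vec{u} \|_{2}$ in the denominator; since $\thetaXX$ is deterministic it cancels in $\Vec{u} - \Vec{v}$, so
\[
  \left\| \thetaStar - \frac{\thetaXX + \hfFn[\JCoords]( \thetaStar, \thetaXX )}{\| \thetaXX + \hfFn[\JCoords]( \thetaStar, \thetaXX ) \|_{2}} \right\|_{2}
  \leq
  \frac{2\,\| \hfFn[\JCoords]( \thetaStar, \thetaXX ) - \E[ \hfFn[\JCoords]( \thetaStar, \thetaXX ) ] \|_{2}}{\DENOM}.
\]
Then I would use the identities $\hFn( \Vec{u}, \Vec{v} ) = \hFn( \Vec{u}, \Vec{w} ) + \hFn( \Vec{w}, \Vec{v} )$ and $\hfFn( \Vec{u}, \Vec{v} ) = \hFn( \Vec{u}, \Vec{v} ) + \hfFn( \Vec{u}, \Vec{u} )$, both immediate from the definitions \eqref{eqn:notations:h:def} and \eqref{eqn:notations:hf:def} because the $\Sign( \CovM \cdot )$ terms telescope. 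Taking $\Vec{w} = \thetaX$ yields $\hfFn( \thetaStar, \thetaXX ) = \hFn( \thetaStar, \thetaX ) + \hFn( \thetaX, \thetaXX ) + \hfFn( \thetaStar, \thetaStar )$, and applying $\ThresholdSet{\mathcal{S}}$ to both sides — together with the observations, again via $\Supp( \thetaXX ) \cup \JCoords = \Supp( \thetaX ) \cup \JCoords$, that $\ThresholdSet{\mathcal{S}}( \hFn( \thetaStar, \thetaX ) ) = \hFn[\JCoords]( \thetaStar, \thetaX )$, $\ThresholdSet{\mathcal{S}}( \hFn( \thetaX, \thetaXX ) ) = \hFn[\Supp( \thetaStar ) \cup \JCoords]( \thetaX, \thetaXX )$, and $\ThresholdSet{\mathcal{S}}( \hfFn( \thetaStar, \thetaStar ) ) = \hfFn[\Supp( \thetaX ) \cup \JCoords]( \thetaStar, \thetaStar )$ — decomposes $\hfFn[\JCoords]( \thetaStar, \thetaXX )$ into exactly the three thresholded functions appearing in \eqref{eqn:lemma:combine:1}. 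Pulling the expectation through the sum and applying the triangle inequality to the numerator then yields \eqref{eqn:lemma:combine:1}.

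The hard part will be purely bookkeeping of two kinds: (i) verifying that all three thresholding index sets collapse to the single set $\mathcal{S}$ under the hypothesis relating $\Supp( \thetaX )$ and $\Supp( \thetaXX )$ modulo $\JCoords$; and (ii) carrying out the Gaussian computation of $\E[ \hfFn( \thetaStar, \Vec{v} ) ]$ precisely enough to land on $\frac{\sqrt{2\pi}\,\gammaX}{2}\thetaStar - \Vec{v}$, which is what legitimizes the common denominator $\DENOM$ for all three terms. Everything else — the single application of \FACT \ref{fact:dist-btw-normalized-vectors}, the telescoping identities, and the final triangle inequality — is routine.
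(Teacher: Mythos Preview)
Your proposal is correct and follows essentially the same approach as the paper: compute the mean of $\thetaXX + \hfFn[\JCoords](\thetaStar,\thetaXX)$ to identify its normalization as $\thetaStar$, apply \FACT~\ref{fact:dist-btw-normalized-vectors}, then split the numerator using the telescoping identities $\hfFn = \hFn + \hfFn(\cdot,\cdot)$ and $\hFn(\thetaStar,\thetaXX) = \hFn(\thetaStar,\thetaX) + \hFn(\thetaX,\thetaXX)$ together with the support bookkeeping, finishing with the triangle inequality. The only cosmetic differences are that the paper cites \LEMMA~\ref{lemma:concentration-ineq} for the expectations rather than computing them directly, and manipulates the thresholded identities from the start rather than thresholding after telescoping.
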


\LEMMA \ref{lemma:combine} motivates three additional results, presented next in \LEMMAS \ref{lemma:large-dist:1}--\ref{lemma:large-dist:2}.
Note that whereas \LEMMA \ref{lemma:combine} holds deterministically, \LEMMAS \ref{lemma:large-dist:1}--\ref{lemma:large-dist:2} are probabilistic results.

\begin{lemma}
\label{lemma:large-dist:1}
Let
\(  \rhoLDX, \deltaX \in (0,1)  \),
and define
\(  \tauX = \tauX( \deltaX )  \)
according to \DEFINITION \ref{def:nu-and-tau}.
Fix
\(  \thetaStar \in \ParamSpace  \),
and let \(  \JS \subseteq 2^{[\n]}  \) and \(  \ParamCover \subset \ParamSpace  \) be finite sets.
Define \(  \kO \defeq \kOExpr  \).
If
\begin{gather}
\label{eqn:lemma:large-dist:1:m}
  \m
  \geq
  \frac{16}{\GAMMAX^{2} \deltaX}
  \max \left\{
    27\pi \log \left( \frac{12}{\rhoLDX} | \JS | | \ParamCover | \right)
    ,
    4 ( \kO-2 )
  \right\}
%
%
%
,\end{gather}
then with probability at least \(  1-\rhoLDX  \), uniformly for all \(  \JCoords \in \JS  \) and all \(  \thetaX \in \ParamCover \setminus \Ball{\tauX}( \thetaStar )  \),
\begin{gather}
\label{eqn:lemma:large-dist:1:ub}
  \frac
  {2 \| \hFn[\JCoords]( \thetaStar, \thetaX ) - \E[ \hFn[\JCoords]( \thetaStar, \thetaX ) ] \|_{2}}
  {\DENOM}
  \leq
  \sqrt{\deltaX \EDIST} 
.\end{gather}
\end{lemma}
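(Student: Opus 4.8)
The plan is to peel off the (constant) denominator, reduce the claim to a uniform deviation estimate for an i.i.d.\ average of ``sign-disagreement'' vectors living in a $\kO$-dimensional coordinate subspace, and then establish that estimate by a Bernstein-type tail bound together with a sphere-net and a union bound over $\JS$ and $\ParamCover$.

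\emph{Step 1 (the denominator).} First I would evaluate $\DENOM$. Taking expectations over the Gaussian rows and the randomness of $\fFn$ in $\hfFn( \thetaStar, \thetaX ) = \frac{\sqrt{2\pi}}{\m} \CovM^{\T} \frac{1}{2}( \fFn( \CovM \thetaStar ) - \Sign( \CovM \thetaX ) )$, and using the orthogonal split $\CovV = \langle \CovV, \thetaStar \rangle \thetaStar + \CovV_{\perp}$ together with $\E[ \ZRV \fFn( \ZRV ) ] = \GAMMAX$ for $\ZRV \sim \N(0,1)$ (Equation~\eqref{eqn:notations:gamma:def}), one gets $\E[ \fFn( \langle \CovV, \thetaStar \rangle ) \CovV ] = \GAMMAX \thetaStar$ and $\E[ \Sign( \langle \CovV, \thetaX \rangle ) \CovV ] = \sqrt{2/\pi}\, \thetaX$, hence $\E[ \thetaX + \hfFn( \thetaStar, \thetaX ) ] = \frac{\sqrt{2\pi}}{2} \GAMMAX \thetaStar$ and $\DENOM = \frac{\sqrt{2\pi}}{2}\GAMMAX$, a positive constant independent of $\thetaX$ and $\JCoords$. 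So \eqref{eqn:lemma:large-dist:1:ub} is equivalent to showing $\| \hFn[\JCoords]( \thetaStar, \thetaX ) - \E[ \hFn[\JCoords]( \thetaStar, \thetaX ) ] \|_{2} \leq \frac{\sqrt{2\pi}\,\GAMMAX}{4}\sqrt{\deltaX\,\EDIST}$ uniformly over $\JCoords \in \JS$ and $\thetaX \in \ParamCover \setminus \Ball{\tauX}( \thetaStar )$.

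\emph{Step 2 (structure and variance).} Fix such $\JCoords, \thetaX$ and set $S \defeq \Supp( \thetaStar ) \cup \Supp( \thetaX ) \cup \JCoords$, so $| S | \leq \kO$. Since the subset-thresholding $\ThresholdSet{S}$ is an orthogonal projection, $\hFn[\JCoords]( \thetaStar, \thetaX ) - \E[ \hFn[\JCoords]( \thetaStar, \thetaX ) ] = \ThresholdSet{S}\bigl( \frac{1}{\m}\sum_{\iIx=1}^{\m}( \Zi - \E[ \Zi ] ) \bigr)$ with $\Zi \defeq \frac{\sqrt{2\pi}}{2}( \Sign( \langle \CovV\VIx{\iIx}, \thetaStar \rangle ) - \Sign( \langle \CovV\VIx{\iIx}, \thetaX \rangle ) ) \CovV\VIx{\iIx}$ i.i.d.\ and $\E[ \Zi ] = \thetaStar - \thetaX$. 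Each $\Zi$ is nonzero only on the sign-disagreement event, whose probability is $\ADIST/\pi \leq \tfrac12 \EDIST$; for any unit $\wS$, $\langle \Zi, \wS \rangle$ is a $[-\sqrt{2\pi},\sqrt{2\pi}]$-bounded factor times a standard Gaussian, hence subexponential with parameter $O(1)$, and reducing to the $2$-plane $\Span\{ \thetaStar, \thetaX \}$ yields $\E[ \langle \Zi, \wS \rangle^{2} ] \leq 4\,\ADIST \leq 2\pi\, \EDIST$ for \emph{every} unit $\wS$ --- the quantitative incarnation of ``points closer to $\thetaStar$ give stronger control''. (It is cleaner to split $\ThresholdSet{S}(\cdot)$ orthogonally into its components along $\thetaStar - \thetaX$ and $\thetaStar + \thetaX$ and the remainder, matching $\gFn$ of Equation~\eqref{eqn:notations:g:def}, handling the two rank-one parts by a scalar Bernstein inequality and the remainder as below, and recombining by the triangle inequality.)

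\emph{Step 3 (net, union bound, obstacle).} For the $\kO$-dimensional piece I would take a $\tfrac12$-net $\Set{W}$ of the unit sphere of $\R^{S}$ ($| \Set{W} | \leq 5^{\kO}$), apply the Bernstein-type concentration inequality proved in \APPENDIX \ref{outline:concentration-ineq} to $\langle \frac{1}{\m}\sum_{\iIx}( \Zi - \E[ \Zi ] ), \wS \rangle$ for each $\wS \in \Set{W}$ with variance proxy $O( \EDIST )$ and subexponential scale $O(1)$, and convert to a norm bound via $\| \Vec{v} \|_{2} \leq 2\max_{\wS \in \Set{W}} \langle \Vec{v}, \wS \rangle$; then I would union-bound over $\Set{W}$, over $\JCoords \in \JS$, and over $\thetaX \in \ParamCover$, so the effective log-cardinality is $t \asymp \kO + \log( | \JS | | \ParamCover | / \rhoLDX )$. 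Because $\thetaX \notin \Ball{\tauX}( \thetaStar )$ forces $\EDIST > \tauX$ (hence $\ADIST > \tfrac{2}{\pi}\tauX$), the sample count in \eqref{eqn:lemma:large-dist:1:m} both keeps Bernstein's sub-Gaussian regime active, where the deviation is $O( \sqrt{ \EDIST\, t / \m } )$, and controls the residual heavy-tail term; plugging the stated $\m \geq \frac{16}{\GAMMAX^{2}\deltaX}\max\{ 27\pi\log( \tfrac{12}{\rhoLDX} | \JS | | \ParamCover | ), 4( \kO-2 ) \}$ makes the deviation at most $\frac{\sqrt{2\pi}\,\GAMMAX}{4}\sqrt{\deltaX\,\EDIST}$ uniformly with probability at least $1 - \rhoLDX$, which by Step~1 is exactly \eqref{eqn:lemma:large-dist:1:ub}. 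The main obstacle is precisely Steps~2--3: obtaining the direction-uniform variance bound $\E[ \langle \Zi, \wS \rangle^{2} ] = O( \EDIST )$ for \emph{all} $\wS$ (not merely those in $\Span\{ \thetaStar, \thetaX \}$), and threading it through the subexponential Bernstein inequality so that both its regimes are dominated by $\sqrt{\deltaX \EDIST}$ --- which is where the hypothesis $\thetaX \notin \Ball{\tauX}( \thetaStar )$ and the precise definition of $\tauX( \deltaX )$ are used --- while keeping the net cost $O( \kO )$ and the union over $\JS \times \ParamCover$ compatible with the claimed absolute constants; tracking those constants through the orthogonal split and the net-to-norm passage is the bookkeeping-heavy remainder.
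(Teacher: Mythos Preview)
Your proposal is correct and tracks the paper's proof closely: Step~1 matches exactly (the paper computes $\DENOM = \sqrt{\pi/2}\,\gammaX$ via \LEMMA~\ref{lemma:concentration-ineq}), and the orthogonal split you describe into the $(\thetaStar-\thetaX)$, $(\thetaStar+\thetaX)$, and remainder components is precisely the decomposition the paper uses (\LEMMA~\ref{lemma:concentration-ineq:noiseless}), with scalar Bernstein on the two rank-one pieces. The one tactical difference is in the remainder $\gFn[\JCoords]$: rather than a $\tfrac12$-net on $\Sphere{|S|}$ plus per-direction Bernstein, the paper conditions on the mismatch count $\LRV$, observes that $\gFn[\JCoords] \mid \LRV=\ell$ is an exact $(k'-2)$-dimensional Gaussian with variance $\ell/\m^{2}$ (by independence of $\langle \CovV, \Vec{\vV}_j \rangle$ from $\langle \CovV, \thetaStar \rangle, \langle \CovV, \thetaX \rangle$ for $\Vec{\vV}_j \perp \thetaStar, \thetaX$), applies the Lipschitz-Gaussian norm concentration $\Pr(\|\Vec{X}\| > \sqrt{d}\sigma + t) \leq e^{-t^2/(2\sigma^2)}$ directly, and then controls $\LRV$ by a binomial tail. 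This sidesteps the net's $5^{\kO}$ union-bound factor, so the $\kO$-dependence enters as $\sqrt{(\kO-2)\cdot\arccos/\m}$ from the expected norm rather than from a log-cardinality --- which is exactly how the $4(\kO-2)$ term in \eqref{eqn:lemma:large-dist:1:m} arises, separate from the $27\pi\log(\cdot)$ term. Your net approach would work and yield the same order, but the constants fall out more naturally from the conditioning route; the use of $\thetaX \notin \Ball{\tauX}(\thetaStar)$ you flag is, in the paper, specifically what guarantees the auxiliary parameter $\sXX < 1$.
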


\begin{lemma}
\label{lemma:small-dist}
Let
\(  \ConstbSD, \ConstdSD > 0  \)
be constants specified in \DEFINITION \ref{def:univ-const}.
Let
\(  \rhoSD, \deltaX \in (0,1)  \),
and define
\(  \nuX = \nuX( \deltaX )  \) and \(  \tauX = \tauX( \deltaX )  \)
according to \DEFINITION \ref{def:nu-and-tau}.
Let
\(  \ParamCover \subset \ParamSpace  \)
be a finite set, 
and fix
\(  \thetaStar \in \ParamSpace  \).
Let \(  \JS, \JSXX \subseteq 2^{[\n]}  \), where
\(  \JSXX \defeq \{ \Supp( \thetaStar ) \cup \JCoords : \JCoords \in \JS \}  \).
Set
\(  \kOXX \defeq \kOXXExpr  \).
If
\begin{gather}
\label{eqn:lemma:small-dist:m}
  \m
  \geq
  \max \left\{
  \frac{200 \nuX \log \left( \frac{6}{\rhoSD} | \JS | | \ParamCover | \right)}{\left( \sqrt{\frac{\pi}{8}} \GAMMAX \ConstbSD \deltaX - \nuX \sqrt{8 \log \left( \frac{e}{\nuX} \right)} \right)^{2}}
  ,
  \frac{200 \nuX \kOXX}{\pi \GAMMAX^{2} \ConstbSD^{2} \deltaX^{2}}
  ,
  \frac{64}{\nuX} \log \left( \frac{6}{\rhoSD} \binom{\n}{\nO} \right)
  ,
  \frac{\ConstdSD \nO}{\nuX} \log \left( \frac{1}{\nuX} \right)
  \right\}
,\end{gather}
then with probability at least \(  1-\rhoSD  \), uniformly for all
\(  \JCoordsXX \in \JSXX  \),
\(  \thetaX \in \ParamCover \setminus \Ball{\tauX}( \thetaStar )  \), and
\(  \thetaXX \in \BallX{2\tauX}( \thetaX )  \),
\begin{gather}
\label{eqn:lemma:small-dist:ub}
  \frac
  {2 \| \hFn[\JCoordsXX]( \thetaX, \thetaXX ) - \E[ \hFn[\JCoordsXX]( \thetaX, \thetaXX ) ] \|_{2}}
  {\DENOM}
  \leq
  \ConstbSD \deltaX
.\end{gather}
\end{lemma}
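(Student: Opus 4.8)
\textbf{Proof proposal for \LEMMA \ref{lemma:small-dist}.}
The plan is to first reduce the claimed inequality to a uniform upper bound on the numerator $\| \hFn[\JCoordsXX]( \thetaX, \thetaXX ) - \E[ \hFn[\JCoordsXX]( \thetaX, \thetaXX ) ] \|_{2}$, and then to control that numerator by combining a \emph{local binary embedding} (which discretizes the continuum $\BallX{2\tauX}( \thetaX )$ into a controllably small deterministic family) with a routine Gaussian concentration bound for each member of that family. Since the covariates are standard Gaussian, $\E[ \CovV\VIx{1} \Sign( \langle \CovV\VIx{1}, v \rangle ) ] = \sqrt{2/\pi}\, v$ for every unit vector $v$, and $\E[ \CovV\VIx{1} \fFn( \langle \CovV\VIx{1}, \thetaStar \rangle ) ] = \GAMMAX\, \thetaStar$ by the definition $\GAMMAX = \E[ Z \fFn( Z ) ]$ together with spherical symmetry. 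Hence $\E[ \thetaX + \hfFn( \thetaStar, \thetaX ) ] = \tfrac{\sqrt{2\pi}}{2} \GAMMAX \thetaStar$, so the (shared) denominator equals the fixed positive constant $\DENOM = \sqrt{\pi/2}\, \GAMMAX$, and it suffices to show that, with probability at least $1 - \rhoSD$ and uniformly over the stated ranges, $\| \hFn[\JCoordsXX]( \thetaX, \thetaXX ) - \E[ \hFn[\JCoordsXX]( \thetaX, \thetaXX ) ] \|_{2} \leq \sqrt{\pi/8}\, \GAMMAX \ConstbSD \deltaX$. Likewise $\E[ \hFn( \thetaX, \thetaXX ) ] = \thetaX - \thetaXX$, and because $\Supp( \thetaXX ) = \Supp( \thetaX )$ this mean survives the thresholding unchanged, so $\E[ \hFn[\JCoordsXX]( \thetaX, \thetaXX ) ] = \thetaX - \thetaXX$ has norm at most $2\tauX$; as $\tauX = \tauX(\deltaX) \ll \nuX(\deltaX) \ll \GAMMAX \ConstbSD \deltaX$ by \DEFINITION \ref{def:nu-and-tau}, this contribution is negligible and it is enough to bound $\| \hFn[\JCoordsXX]( \thetaX, \thetaXX ) \|_{2}$.

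Next, write $\hFn( \thetaX, \thetaXX ) = \tfrac{\sqrt{2\pi}}{\m} \sum_{i=1}^{\m} \delta_{i} \CovV\VIx{i}$ with $\delta_{i} = \tfrac12 ( \Sign( \langle \CovV\VIx{i}, \thetaX \rangle ) - \Sign( \langle \CovV\VIx{i}, \thetaXX \rangle ) ) \in \{ -1, 0, 1 \}$, which is nonzero exactly for the covariates falling in the wedge between the hyperplanes normal to $\thetaX$ and $\thetaXX$. Thresholding onto the fixed set $T \defeq \Supp( \thetaX ) \cup \Supp( \thetaXX ) \cup \JCoordsXX$, of size at most $\kOXX$, gives $\hFn[\JCoordsXX]( \thetaX, \thetaXX ) = \tfrac{\sqrt{2\pi}}{\m} \ThresholdSet{T}( \textstyle\sum_{i \in S} \delta_{i} \CovV\VIx{i} )$ where $S \defeq \{ i : \delta_{i} \neq 0 \}$. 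Invoking the local binary embedding of \cite[{\COROLLARY 3.3}]{oymak2015near}, union bounded over the $\binom{\n}{\nO}$ coordinate subspaces and over the $| \ParamCover |$ anchor points, one obtains that whenever $\m \gtrsim \tfrac{1}{\nuX} ( \nO \log \tfrac{1}{\nuX} + \log ( \tfrac{1}{\rhoSD} \binom{\n}{\nO} ) )$ --- which is implied by the third and fourth terms of the hypothesized lower bound on $\m$ --- with probability at least $1 - \rhoSD/2$ the bound $| S | \leq \nuX \m$ holds simultaneously for all $\thetaX \in \ParamCover$ and all $\thetaXX \in \BallX{2\tauX}( \thetaX )$; here one uses that the angular separation $\tfrac{\arccos \langle \thetaX, \thetaXX \rangle}{\pi}$ is of order $\tauX \ll \nuX$.

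On the event just described, $\hFn[\JCoordsXX]( \thetaX, \thetaXX )$ depends on the data only through $S$, the signs $( \delta_{i} )_{i \in S}$, and the set $T$, so we may bound $\| \hFn[\JCoordsXX]( \thetaX, \thetaXX ) \|_{2}$ by the supremum of $\tfrac{\sqrt{2\pi}}{\m} \| \ThresholdSet{T'}( \textstyle\sum_{i \in A} \sigma_{i} \CovV\VIx{i} ) \|_{2}$ over all \emph{deterministic} index sets $A \subseteq [\m]$ with $| A | \leq \nuX \m$, all sign patterns $\sigma \in \{ -1, 1 \}^{A}$, and all of the at most $| \JS | | \ParamCover |$ relevant sets $T'$. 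For fixed $A, \sigma, T'$ the vector $\ThresholdSet{T'}( \sum_{i \in A} \sigma_{i} \CovV\VIx{i} )$ is $\N( \Vec{0}, | A |\, \Id{| T' |} )$, so a $\chi^{2}$ tail bound gives $\| \cdot \|_{2} \leq \sqrt{| A |}\,( \sqrt{| T' |} + \sqrt{2 \log ( 1 / \rho' )} )$ with probability at least $1 - \rho'$; a union bound over the at most $( \sum_{\ell = 0}^{\nuX \m} \binom{\m}{\ell} 2^{\ell} ) | \JS | | \ParamCover |$ signed subsets and sets $T'$, using $\log \sum_{\ell = 0}^{\nuX \m} \binom{\m}{\ell} 2^{\ell} \lesssim \nuX \m \log \tfrac{1}{\nuX}$ and $\sqrt{a+b+c} \leq \sqrt{a} + \sqrt{b} + \sqrt{c}$, then yields with probability at least $1 - \rhoSD/2$ that, uniformly, $\| \hFn[\JCoordsXX]( \thetaX, \thetaXX ) \|_{2} \lesssim \nuX \sqrt{\log ( 1 / \nuX )} + \sqrt{ \nuX \kOXX / \m } + \sqrt{ \nuX \log( | \JS | | \ParamCover | / \rhoSD ) / \m }$. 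The choice $\nuX = \nuX(\deltaX)$ is made precisely so that $\nuX \sqrt{\log(1/\nuX)} \asymp \GAMMAX \Constb \deltaX$, and the constant inequalities of \DEFINITION \ref{def:univ-const} (in particular $\sqrt{8}\,\Constb < \ConstbSD/2$, so $\Constb \ll \ConstbSD$) force this first term strictly below $\sqrt{\pi/8}\, \GAMMAX \ConstbSD \deltaX$; the two remaining terms, together with the $2\tauX$ mean contribution, are driven under the residual gap exactly by the first two terms of the hypothesized lower bound on $\m$ (the $\kOXX$-term and the $\log( \tfrac{1}{\rhoSD} | \JS | | \ParamCover | )$-term). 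Combining the two high-probability events and dividing by $\DENOM = \sqrt{\pi/2}\, \GAMMAX$ gives \eqref{eqn:lemma:small-dist:ub} with probability at least $1 - \rhoSD$ (this lemma then supplies the middle term in the decomposition of \LEMMA \ref{lemma:combine}).

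\emph{Main obstacle.} The deterministic reductions are routine Gaussian integrals. The crux is the discretization step: one must invoke the local binary embedding uniformly over the \emph{data-dependent} wedge sets $S$, with the net radius $\tauX$ chosen small enough relative to $\nuX$ that, after discretizing, the only surviving ``bias'' term $\nuX\sqrt{\log(1/\nuX)}$ is the lone one comparable to $\deltaX$ while the fluctuation terms carry the $\m$-dependence. A secondary but genuinely delicate point is the bookkeeping of the logarithmic factors and absolute constants in the concentration step, which is exactly where the precise forms of $\nuX(\deltaX)$, $\tauX(\deltaX)$, and the chain of inequalities among $\ConstC, \Constb, \Constc, \ConstbLD, \ConstbSD, \ConstdSD$ from \DEFINITION \ref{def:univ-const} must be used to fit all three terms under $\sqrt{\pi/8}\,\GAMMAX\ConstbSD\deltaX$ at the stated sample size.
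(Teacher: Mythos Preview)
Your approach is correct and follows the same high-level architecture as the paper (compute $\DENOM = \sqrt{\pi/2}\,\GAMMAX$, invoke the Oymak--Recht local binary embedding to cap the number of sign mismatches at $\nuX\m$, then do Gaussian concentration plus a union bound of size roughly $(e/\nuX)^{\nuX\m}|\JS||\ParamCover|$), but the concentration step is executed differently. The paper does not take a brute supremum over all deterministic subsets $A$ and sign patterns $\sigma$; instead it observes that for fixed $\thetaX$ the vector $\Sign(\CovMX\thetaX)$ is already determined by the data, so $\hFn[\JCoordsXX](\thetaX,\cdot)$ depends on $\thetaXX$ only through the $\{0,1\}$-valued mismatch indicator, giving a cover of size $q=\sum_{\ell\le\nuX\m}\binom{\m}{\ell}$ without the extra $2^{\nuX\m}$ sign factor. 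It then handles the residual data-dependence of the signs via an orthogonal decomposition of $\hFn[\JCoordsXX](\thetaX,\thetaXX)$ along $\frac{\thetaX\pm\thetaXX}{\|\thetaX\pm\thetaXX\|_2}$ and the orthogonal complement $\gFn[\JCoordsXX]$, and applies a conditional concentration lemma (imported from \cite[Lemma~A.1]{matsumoto2022binary}) to each of the three pieces given $\LRVXX{\thetaX}{\thetaXX}\le\nuX\m$; the triangle inequality then yields $\|\hFn[\JCoordsXX]-\E[\hFn[\JCoordsXX]]\|_2\le 2\sqrt{\kOXX\nuX/\m}+3\nuX t$.

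Your route is arguably more elementary---no orthogonal decomposition, no appeal to the earlier lemma, just a $\chi^2$ tail bound on a genuinely Gaussian vector---at the cost of a union bound that is larger by the sign-pattern factor $2^{\nuX\m}$. Since $\log 2^{\nuX\m}=\nuX\m\log 2$ is already dominated by the $\nuX\m\log(e/\nuX)$ term coming from the subset count, this costs nothing in the final bound, and you land on the same three-term estimate. One small point worth tightening: you dispatch the mean by bounding $\|\E[\hFn[\JCoordsXX]]\|_2=\|\thetaX-\thetaXX\|_2\le 2\tauX$ and then control $\|\hFn[\JCoordsXX]\|_2$ itself, whereas the paper works directly with the centered quantity; both are fine, but make sure the $2\tauX$ slack is actually absorbed by the stated constants (it is, via $\tauX\le\nuX/(\ConstdSD\log(1/\nuX))$ and the chain $\sqrt{8}\Constb<\ConstbSD/2$), since the proof of the main technical theorem later needs the specific form $\ConstbSD\deltaX$ with no additive slop.
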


\begin{lemma}
\label{lemma:large-dist:2}
Let
\(  \ConstbLD > 0  \)
be a constant specified in \DEFINITION \ref{def:univ-const}.
Let
\(  \rhoLDXX, \deltaX \in (0,1)  \),
and define
\(  \alphaO = \alphaO( \deltaX ) \defeq \alphaOExpr  \).
Fix
\(  \thetaStar \in \ParamSpace  \).
Let \(  \JS \subseteq 2^{[\n]}  \) and \(  \ParamCover \subset \ParamSpace  \) be finite sets,
and let
\(  \JSX \defeq \{ \Supp( \thetaX ) \cup \JCoords : \thetaX \in \ParamCover, \JCoords \in \JS \}  \).
Define \(  \kOX \defeq \kOXExpr  \).
If
\begin{gather}
\label{eqn:lemma:large-dist:2:m}
  \m
  \geq
%
  \max \left\{
  \frac{64 \alphaO}{\GAMMAX^{2} \ConstbLD^{2} \deltaX^{2}}
  \max \left\{
    3 \log \left( \frac{6}{\rhoLDXX} | \JS | | \ParamCover | \right)
    ,
    2 ( \kOX-1 )
  \right\}
  ,
  \frac{4}{\alphaO} \log \left( \frac{6}{\rhoLDXX} | \JS | | \ParamCover | \right)
  \right\}
%
,\end{gather}
then with probability at least \(  1-\rhoLDXX  \), uniformly for all \(  \JCoordsX \in \JSX  \),
\begin{gather}
\label{eqn:lemma:large-dist:2:ub}
  \frac
  {2 \| \hfFn[\JCoordsX]( \thetaStar, \thetaStar ) - \E[ \hfFn[\JCoordsX]( \thetaStar, \thetaStar ) ] \|_{2}}
  {\DENOM}
  \leq
  \ConstbLD \deltaX
,\end{gather}
for every \(  \thetaX \in \ParamSpace  \).
\end{lemma}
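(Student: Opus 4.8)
The plan is to prove \LEMMA \ref{lemma:large-dist:2} by evaluating the denominator exactly, rewriting the numerator as the norm of a centred sum of i.i.d.\ random vectors supported on a coordinate set of size at most $\kOX$, and then controlling that norm with a concentration argument that is made uniform over $\JSX$ by a union bound. \emph{The denominator.} Since the covariates are standard Gaussian, rotational invariance gives $\E[ \CovV\VIx{1}\, \Sign( \langle \CovV\VIx{1}, \thetaX \rangle ) ] = \E[|Z|]\, \thetaX = \sqrt{2/\pi}\, \thetaX$ for any unit vector $\thetaX$, while $\fFn( \langle \CovV\VIx{1}, \thetaStar \rangle )$ depends on $\CovV\VIx{1}$ only through $\langle \CovV\VIx{1}, \thetaStar \rangle$, so $\E[ \CovV\VIx{1}\, \fFn( \langle \CovV\VIx{1}, \thetaStar \rangle ) ] = \E[ Z \fFn(Z) ]\, \thetaStar = \gammaX \thetaStar$, with $Z \sim \N(0,1)$. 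Hence $\E[ \thetaX + \hfFn( \thetaStar, \thetaX ) ] = \thetaX + \tfrac{\sqrt{2\pi}}{2}( \gammaX \thetaStar - \sqrt{2/\pi}\, \thetaX ) = \tfrac{\sqrt{2\pi}\gammaX}{2}\thetaStar$, so $\DENOM = \tfrac{\sqrt{2\pi}\gammaX}{2}$ — in particular independent of $\thetaX$, which makes the ``for every $\thetaX$'' clause in the statement automatic. Thus \eqref{eqn:lemma:large-dist:2:ub} is equivalent to $\| \hfFn[\JCoordsX]( \thetaStar, \thetaStar ) - \E[ \hfFn[\JCoordsX]( \thetaStar, \thetaStar ) ] \|_{2} \le \tfrac{\sqrt{2\pi}\, \ConstbLD \gammaX \deltaX}{4}$.

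\emph{Reduction.} Put $S := \Supp( \thetaStar ) \cup \JCoordsX$ (so $|S| \le \kOX$) and $T_\iIx := \tfrac12\bigl( \fFn( \langle \CovV\VIx{\iIx}, \thetaStar \rangle ) - \Sign( \langle \CovV\VIx{\iIx}, \thetaStar \rangle ) \bigr) \CovV\VIx{\iIx}$, which are i.i.d.\ with $\hfFn( \thetaStar, \thetaStar ) = \tfrac{\sqrt{2\pi}}{\m}\sum_\iIx T_\iIx$. Since $\ThresholdSet{S}$ is linear, commutes with $\E$, and fixes $\E[T_1]$ (a multiple of $\thetaStar$, whose support lies in $S$),
\begin{gather*}
  \hfFn[\JCoordsX]( \thetaStar, \thetaStar ) - \E[ \hfFn[\JCoordsX]( \thetaStar, \thetaStar ) ]
  =
  \frac{\sqrt{2\pi}}{\m}\, \ThresholdSet{S}\Bigl( \sum_{\iIx=1}^{\m} \bigl( T_\iIx - \E[T_\iIx] \bigr) \Bigr),
\end{gather*}
so it suffices to show $\bigl\| \ThresholdSet{S}\bigl( \sum_\iIx ( T_\iIx - \E[T_\iIx] ) \bigr) \bigr\|_{2} \le \tfrac{\ConstbLD \gammaX \deltaX}{4}\, \m$ for a fixed $S$ with $|S| \le \kOX$, with failure probability at most $\rhoLDXX / |\JSX|$, and then union bound over $\JSX$ (which has at most $|\JS|\,|\ParamCover|$ elements, since every $\JCoordsX \in \JSX$ has the form $\Supp( \thetaX ) \cup \JCoords$). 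On the disagreement event $E_\iIx := \{ \fFn( \langle \CovV\VIx{\iIx}, \thetaStar \rangle ) \neq \Sign( \langle \CovV\VIx{\iIx}, \thetaStar \rangle ) \}$ the scalar factor equals $-\Sign( \langle \CovV\VIx{\iIx}, \thetaStar \rangle )$, and off it the factor is $0$; thus $T_\iIx = -\Sign( \langle \CovV\VIx{\iIx}, \thetaStar \rangle )\, \I( E_\iIx )\, \CovV\VIx{\iIx}$ and $\Pr( E_\iIx ) = \alphaX \le \alphaO$. Writing $z_\iIx := \langle \CovV\VIx{\iIx}, \thetaStar \rangle$ and decomposing $\ThresholdSet{S}( \CovV\VIx{\iIx} ) = z_\iIx \thetaStar + P_\iIx$ with $P_\iIx$ the part orthogonal to $\thetaStar$ (legitimate since $\Supp( \thetaStar ) \subseteq S$), the pair $( z_\iIx, P_\iIx )$ is independent, $z_\iIx \sim \N(0,1)$, and $P_\iIx$ is standard Gaussian on a subspace of dimension $|S| - 1 \le \kOX - 1$ and independent of $E_\iIx$. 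The triangle inequality splits $\ThresholdSet{S}\bigl( \sum_\iIx ( T_\iIx - \E[T_\iIx] ) \bigr)$ into a scalar term along $\thetaStar$, $\bigl( \sum_\iIx ( |z_\iIx|\, \I( E_\iIx ) - \E[ |Z|\, \I( E_1 ) ] ) \bigr)\thetaStar$, and a mean-zero orthogonal term $-\sum_\iIx \Sign( z_\iIx )\, \I( E_\iIx )\, P_\iIx$.

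\emph{Concentration for fixed $S$, and the main obstacle.} The delicate point is that each summand $\ThresholdSet{S}( T_\iIx )$ is simultaneously large (typical norm of order $\sqrt{\kOX}$) and rare (nonzero with probability $\alphaO$), so a crude bound would yield the wrong scaling $\sqrt{\m\kOX}$ instead of $\sqrt{\m\, \alphaO\, \kOX}$. To recover the $\alphaO$ I would condition on the disagreement set $\mathcal{E} := \{ \iIx : E_\iIx \text{ holds} \}$: its size $N := |\mathcal{E}| \sim \Binomial( \m, \alphaX )$ satisfies $N \le 2\alphaO \m$ with probability at least $1 - \rhoLDXX/(3|\JSX|)$ provided $\m\, \alphaO \gtrsim \log( |\JSX|/\rhoLDXX )$ (Chernoff, using $\alphaX \le \alphaO$) — exactly the second term of \eqref{eqn:lemma:large-dist:2:m}. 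Conditioned on $\mathcal{E}$, the orthogonal term is distributed as a sum of $N$ i.i.d.\ standard Gaussian vectors in dimension $|S| - 1 < \kOX$ (the signs $\Sign( z_\iIx )$ being irrelevant by symmetry of the Gaussian), so its squared norm is $N \cdot \chi^{2}_{|S|-1}$, hence $\lesssim N\kOX + N\log( |\JSX|/\rhoLDXX )$ with probability $1 - \rhoLDXX/(3|\JSX|)$ by $\chi^{2}$-concentration; and the scalar term is a sum of $N$ i.i.d.\ centred variables $|z_\iIx| - \E[|Z| \mid E_1]$ which — because monotonicity of $\pFn$ (\CONDITION \ref{condition:assumption:p:i} of \ASSUMPTION \ref{assumption:p}) makes $\nu(z) = \pExpr{z}$ non-increasing on $[0,\infty)$, whence $\E[ Z^{2}\, \I( E_1 ) ] = \int_0^\infty z^2 \nu(z)\phi(z)\,dz \le \int_0^\infty \nu(z)\phi(z)\,dz = \alphaX$ and all moments of $|Z|$ given $E_1$ are absolute constants — are subgaussian with $O(1)$ norm, contributing at most $O\bigl( N\log( |\JSX|/\rhoLDXX ) + \log^{2}( |\JSX|/\rhoLDXX ) \bigr)$ to the squared norm. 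Combining (and using $N \le 2\alphaO\m$ together with $\m\alphaO \gtrsim \log$), $\bigl\| \ThresholdSet{S}\bigl( \sum_\iIx ( T_\iIx - \E[T_\iIx] ) \bigr) \bigr\|_{2}^{2} \lesssim \alphaO\, \m\, \max\{ \kOX, \log( |\JSX|/\rhoLDXX ) \}$, which is $\le \bigl( \tfrac{\ConstbLD \gammaX \deltaX}{4}\, \m \bigr)^{2}$ as soon as $\m \gtrsim \tfrac{\alphaO}{\gammaX^{2}\ConstbLD^{2}\deltaX^{2}}\max\{ \kOX, \log( |\JSX|/\rhoLDXX ) \}$ — the first term of \eqref{eqn:lemma:large-dist:2:m}, after bounding $|\JSX| \le |\JS|\,|\ParamCover|$. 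A union bound over the three events (the bound on $N$, the $\chi^{2}$-deviation, and the scalar deviation) and over the at most $|\JS|\,|\ParamCover|$ choices of $\JCoordsX \in \JSX$ yields the claim with probability at least $1 - \rhoLDXX$; tracing the absolute constants through this argument (and through the concentration inequalities of \APPENDIX \ref{outline:concentration-ineq}) produces the explicit constants in \eqref{eqn:lemma:large-dist:2:m}. The hard part is precisely this fixed-$S$ estimate: separating the variance proxy $\alphaO\kOX$ (controlling $\E\|\cdot\|_{2}^{2}$) from the tail-controlling quantity $\alphaO$ (entering only through the number $N$ of active samples) is what produces the sharp additive form $\max\{\kOX,\log\}$ rather than the product $\kOX\log$, and the monotonicity-based estimate $\E[Z^{2}\I(E_1)]\le\alphaX$ is what keeps the $\thetaStar$-direction contribution inside the same $\alphaO$-scaled bound instead of leaving a crude $O(1)$ term.
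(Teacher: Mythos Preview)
Your proof follows essentially the same structure as the paper's: compute the denominator as $\sqrt{\pi/2}\,\gammaX$ (the paper cites this as \EQUATION \eqref{eqn:lemma:concentration-ineq:ev:4} from \LEMMA \ref{lemma:concentration-ineq}), decompose orthogonally along $\thetaStar$ and its complement, condition on the disagreement indicators, and apply concentration plus a union bound over $\JSX$; the paper simply packages the concentration step as an invocation of \EQUATION \eqref{eqn:lemma:concentration-ineq:pr:2}. Two remarks are worth making. First, there is a small gap in your scalar-direction computation: after conditioning on $\mathcal{E}$, the term $\sum_\iIx(|z_\iIx|\,\I(E_\iIx)-\E[|Z|\,\I(E_1)])$ equals $\sum_{\iIx\in\mathcal{E}}(|z_\iIx|-\muX[1])+(N-\m\alphaX)\muX[1]$, and you dropped the Binomial-fluctuation piece $(N-\m\alphaX)\muX[1]$; this is harmless since your Chernoff bound on $N$ already controls it and $\muX[1]=O(1)$. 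Second, and more interestingly, your scalar-direction estimate uses only \CONDITION \ref{condition:assumption:p:i} of \ASSUMPTION \ref{assumption:p}: monotonicity of $\pFn$ makes $\nu$ nonincreasing, so by a covariance/likelihood-ratio argument $|Z|\mid E_1$ is stochastically dominated by a half-normal and hence sub-Gaussian with $O(1)$ norm. The paper instead invokes \CONDITION \ref{condition:assumption:p:ii} (via \LEMMA \ref{lemma:pf:lemma:concentration-ineq:noisy:f1,f2}) to obtain the sharp MGF bound $e^{s^2/2}$ for the centered conditional variable. Your route is more elementary for this particular lemma, though recovering the explicit numerical constants in \eqref{eqn:lemma:large-dist:2:m} would require tracking your sub-Gaussian constant more carefully than the phrase ``tracing the absolute constants'' suggests.
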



\subsection{Proof of \THEOREM  \ref{thm:main-technical:sparse}}
\label{outline:pf-main-technical-result|pf}



An important (and standard) construct for the analysis in this work is a \(  \tauX  \)-net. 
%
Fix \(  \tauX > 0  \).
Let \(  ( \SX, \dSX )  \) be a metric space.
A subset,
\(  \SXX \subseteq \SX  \),
is a \emph{\(  \tauX  \)-net} over \(  \SX  \) if
\(  \inf_{\sSXX \in \SXX} \dSX( \sSX, \sSXX ) \leq \tauX  \)
for all \(  \sSX \in \SX  \).
%
We will use the following upper bound on the minimal cardinality of a \(  \tauX  \)-net of a sphere.
%
\begin{lemma}[{\see \eg \cite{vershynin2018high}}]
\label{lemma:tau-net-cardinality}
Fix \(  \tauX > 0  \), and
let \(  \dDim \in \Z_{+}  \).
There exists an $\ell_2$ \(  \tauX  \)-net, \(  \SXX \subset \Sphere{\dDim}  \), over \(  \Sphere{\dDim}  \) of cardinality not exceeding
\(  | \SXX | \leq ( \frac{3}{\tauX} )^{\dDim}  \).
\end{lemma}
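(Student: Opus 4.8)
The plan is to use the classical packing/volume‑comparison argument; this is a textbook fact (see \cite{vershynin2018high}), and the one subtlety worth flagging is that the stated bound $(3/\tauX)^{\dDim}$ is the correct and relevant one precisely in the regime $\tauX \le 1$, which is the only regime that matters elsewhere in this paper since $\tauX = \tauX(\deltaX)$ is a small quantity. So I would restrict attention to $\tauX \le 1$ from the outset; for $\tauX \ge 2$ a single point already forms a $\tauX$-net because $\Sphere{\dDim}$ has diameter $2$.

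First I would construct a greedily chosen maximal $\tauX$-separated subset $\SXX = \{ x_1, \dots, x_N \} \subset \Sphere{\dDim}$, i.e.\ $\| x_i - x_j \|_2 \ge \tauX$ for all $i \ne j$, with $\SXX$ maximal (with respect to inclusion) among sets with this property. Finiteness of any such set follows from compactness of the sphere, and existence of a maximal one from iteratively adding points (or Zorn's lemma). The key observation is that maximality forces $\SXX$ to be a $\tauX$-net: given any $x \in \Sphere{\dDim}$, if $\min_i \| x - x_i \|_2 \ge \tauX$ then $\SXX \cup \{ x \}$ would still be $\tauX$-separated, contradicting maximality; hence $\min_i \| x - x_i \|_2 < \tauX$, which is (more than) the defining property of a $\tauX$-net.

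Next I would bound $N$ by a volume argument. Since the centers are pairwise at distance at least $\tauX$, the open Euclidean balls $B(x_i, \tauX/2)$, $i = 1, \dots, N$, are pairwise disjoint; and since $\| x_i \|_2 = 1$, each is contained in $B(0, 1 + \tauX/2)$. Comparing Lebesgue volumes in $\R^{\dDim}$ (a ball of radius $r$ has volume $r^{\dDim}$ times that of the unit ball) gives $N\, (\tauX/2)^{\dDim} \le (1 + \tauX/2)^{\dDim}$, hence $N \le (1 + 2/\tauX)^{\dDim}$; finally $1 + 2/\tauX \le 3/\tauX$ whenever $\tauX \le 1$, yielding $| \SXX | = N \le (3/\tauX)^{\dDim}$, as asserted.

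There is essentially no hard step here; the argument is routine. The only places to be slightly careful are: using the separation condition with "$\ge \tauX$" (rather than "$> \tauX$") so that the net property and the disjointness of the radius-$\tauX/2$ open balls hold simultaneously, and recording explicitly that the estimate $1 + 2/\tauX \le 3/\tauX$ is where the (harmless) restriction $\tauX \le 1$ is used.
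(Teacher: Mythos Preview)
Your proof is correct and is precisely the standard packing/volume argument that the cited reference \cite{vershynin2018high} gives; the paper itself does not supply a proof but simply cites this result. Your explicit note that the bound $(3/\tauX)^{\dDim}$ requires $\tauX \le 1$ (which holds throughout the paper since $\tauX = \tauX(\deltaX)$ is small) is a useful clarification.
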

\begin{proof}{\THEOREM \ref{thm:main-technical:sparse}}
\mostlycheckoff%
Fix
\(  \thetaStar \in \ParamSpace  \).
Let
\(  \JS \subseteq 2^{[\n]}  \)
be the set of coordinate subsets with cardinality at most \(  \k  \)---that is, the set given by
\(
  \JS
  \defeq
  {\{ \JCoords \subseteq [\n] : | \JCoords | \leq \k \}}
\).
Construct a \(  \tauX  \)-net, \(  \ParamCover \subset \ParamSpace  \), over \(  \ParamSpace  \) with the following design.
\checkthis[Done]%
For each \(  \JCoords \in \JS  \), let \(  \ParamCoverJ \subset \ParamSpace  \) be a \(  \tauX  \)-net over the set of points in \(  \ParamSpace  \) whose support is a subset of \(  \JCoords  \)---formally, over the set
\(  \{ \Vec{v} \in \ParamSpace : \Supp( \Vec{v} ) \subseteq  \JCoords \}  \)%
---such that each vector in the cover, \(  \ParamCoverJ  \), has support exactly \(  \JCoords  \), \ie
\(  \Supp( \Vec{v} ) = \JCoords  \) for all \(  \Vec{v} \in \ParamCoverJ  \).
(This last condition on the support of elements in the \(  \tauX  \)-net is possible through a rotation.)
Then, let
\(  \ParamCover \defeq \bigcup_{\JCoords \in \JS} \ParamCoverJ  \).
Note that this construction ensures that for every point in the parameter space, \(  \ParamSpace  \), the cover, \(  \ParamCover  \), contains at least one point within distance \(  \tauX  \) of it and with precisely the same support.
Additionally, the cardinalities of \(  \JS  \) and \(  \ParamCover  \) satisfy
\(  | \JS | = \JSSIZES  \) and
\(  | \ParamCover | \leq \sum_{\JCoords \in \JS} ( \frac{3}{\tauX} )^{| \JCoords |} = \PCSIZES  \),
where the bound on \(  | \ParamCover |  \) is due to \LEMMA \ref{lemma:tau-net-cardinality} combined with a union bound.
%
\par 
%
Consider an arbitrary choice of
\(  \thetaXX \in \ParamSpace  \),
to later be varied over the entire parameter space, \(  \ParamSpace  \), and let
\(  \thetaX \in \ParamCover  \)
satisfy
\(  \thetaX \notin \Ball{\tauX}( \thetaStar )  \) and
\(  \thetaXX \in \BallXX{2\tauX}( \thetaX )  \),
where such a point, \(  \thetaX  \), exists in the \(  \tauX  \)-net, \(  \ParamCover  \), by its design.
Note that this ensures that
\(  \Supp( \thetaX ) \cup \JCoords = \Supp( \thetaXX ) \cup \JCoords  \)
for all \(  \JCoords \in \JS  \), and hence,
by \LEMMA \ref{lemma:combine},
\begin{align*}
  \left\| \thetaStar - \frac{\thetaXX+\hfFn[\JCoords]( \thetaStar, \thetaXX )}{\| \thetaXX+\hfFn[\JCoords]( \thetaStar, \thetaXX ) \|_{2}} \right\|_{2}
  &\leq
  \frac
  {2 \| \hFn[\JCoords]( \thetaStar, \thetaX ) - \E[ \hFn[\JCoords]( \thetaStar, \thetaX ) ] \|_{2}}
  {\DENOM}
  \\
  &\AlignSp+
  \frac
  {2 \| \hFn[\Supp( \thetaStar ) \cup \JCoords]( \thetaX, \thetaXX ) - \E[ \hFn[\Supp( \thetaStar ) \cup \JCoords]( \thetaX, \thetaXX ) ] \|_{2}}
  {\DENOM}
  \\
  &\AlignSp+
  \frac
  {2 \| \hfFn[\Supp( \thetaX ) \cup \JCoords]( \thetaStar, \thetaStar ) - \E[ \hfFn[\Supp( \thetaX ) \cup \JCoords]( \thetaStar, \thetaStar ) ] \|_{2}}
  {\DENOM}
\TagEqn{\label{eqn:pf:thm:main-technical:1}}
,\end{align*}
where this bound holds deterministically.
Now, suppose
\begin{align*}
  \m
  &\geq
  \mEXPR[s][.]
\end{align*}
This choice of \(  \m  \) is sufficiently large so that taking
\(  \rhoLDX = \frac{\rhoX}{2}  \),
\(  \rhoSD = \frac{\rhoX}{4}  \), and
\(  \rhoLDXX = \frac{\rhoX}{4}  \)%
---such that
\(  \rhoLDX + \rhoSD + \rhoLDXX = \rhoX  \)%
---in \LEMMAS \ref{lemma:large-dist:1}--\ref{lemma:large-dist:2}, respectively, and then combining the bounds in these lemmas with a union bound, the three terms on the \RHS of the inequality in \EQUATION \eqref{eqn:pf:thm:main-technical:1} are simultaneously bounded from above with probability at least
\(  1 - \rhoLDX - \rhoSD - \rhoLDXX = 1 - \rhoX  \)
by
first,
\begin{align*}
  \sup_{\substack{\JCoords \in \JS ,\\
                  \thetaX \in \ParamCover \setminus \Ball{\tauX}( \thetaStar )}}
  \frac
  {2 \| \hFn[\JCoords]( \thetaStar, \thetaX ) - \E[ \hFn[\JCoords]( \thetaStar, \thetaX ) ] \|_{2}}
  {\DENOM}
  \leq
  \sqrt{\deltaX \EDIST}
,\end{align*}
second,
\begin{align*}
  &\negphantom{\AlignSp}
  \sup_{\substack{\JCoords \in \JS ,\\
                  \thetaX \in \ParamCover \setminus \Ball{\tauX}( \thetaStar ) ,\\
                  \thetaXX \in \BallXX{2\tauX}( \thetaX )}}
  \frac
  {2 \| \hFn[\Supp( \thetaStar ) \cup \JCoords]( \thetaX, \thetaXX ) - \E[ \hFn[\Supp( \thetaStar ) \cup \JCoords]( \thetaX, \thetaXX ) ] \|_{2}}
  {\DENOM}
  \\
  &\leq
  \sup_{\substack{\JCoordsXX \in \JSXX ,\\
                  \thetaX \in \ParamCover \setminus \Ball{\tauX}( \thetaStar ) ,\\
                  \thetaXX \in \BallXX{2\tauX}( \thetaX )}}
  \frac
  {2 \| \hFn[\JCoordsXX]( \thetaX, \thetaXX ) - \E[ \hFn[\JCoordsXX]( \thetaX, \thetaXX ) ] \|_{2}}
  {\DENOM}
  \\
  &\leq
  \ConstbSD \deltaX
  \\
  &\leq
  \left( 1-\ConstbLD-\sqrt{\frac{2\Constb}{\ConstdSD}} \right) \deltaX
  \\
  &\dCmt{by \DEFINITION \ref{def:univ-const}}
  \\
  &\leq
  \left( 1-\ConstbLD-\sqrt{\frac{2\tauX}{\deltaX}} \right) \deltaX
  ,\\
  &\dCmt{by the definitions of \(  \deltaX, \tauX  \)}
\end{align*}
and
third,
\begin{align*}
  \sup_{\substack{\JCoords \in \JS ,\\
                  \thetaX \in \ParamCover \setminus \Ball{\tauX}( \thetaStar )}}
  \frac
  {2 \| \hfFn[\Supp( \thetaX ) \cup \JCoords]( \thetaStar, \thetaStar ) - \E[ \hfFn[\Supp( \thetaX ) \cup \JCoords]( \thetaStar, \thetaStar ) ] \|_{2}}
  {\DENOM}
  &\leq
  \sup_{\JCoordsX \in \JSX}
  \frac
  {2 \| \hfFn[\JCoordsX]( \thetaStar, \thetaStar ) - \E[ \hfFn[\JCoordsX]( \thetaStar, \thetaStar ) ] \|_{2}}
  {\DENOM}
  \leq
  \ConstbLD \deltaX
,\end{align*}
where
\(  \JSXX \defeq \{ \Supp( \thetaStar ) \cup \JCoords : \JCoords \in \JS \}  \)
and
\(  \JSX \defeq \{ \Supp( \thetaX ) \cup \JCoords : \thetaX \in \ParamCover, \JCoords \in \JS \}  \).
It follows that under the stated condition on \(  \m  \), with probability at least \(  1 - \rhoX  \), for all \(  \thetaXX \in \ParamSpace  \) and \(  \JCoords \in \JS  \),
\begin{align*}
  \left\| \thetaStar - \frac{\thetaXX+\hfFn[\JCoords]( \thetaStar, \thetaXX )}{\| \thetaXX+\hfFn[\JCoords]( \thetaStar, \thetaXX ) \|_{2}} \right\|_{2}
  &\leq
  \sqrt{\deltaX \EDIST}
  +
  \left( 1-\ConstbLD-\sqrt{\frac{2\tauX}{\deltaX}} \right) \deltaX
  +
  \ConstbLD \deltaX
  \\
  &
  \dCmt{for some \(  \thetaX \in ( \ParamCover \cap \BallXX{2\tauX}( \thetaXX ) ) \setminus \Ball{\tauX}( \thetaStar )  \)}
  \\
  &=
  \sqrt{\deltaX \| ( \thetaStar-\thetaXX ) - ( \thetaX-\thetaXX ) \|_{2}}
  +
  \left( 1-\ConstbLD-\sqrt{\frac{2\tauX}{\deltaX}} \right) \deltaX
  +
  \ConstbLD \deltaX
  \\
  &\leq
  \sqrt{\deltaX \| \thetaStar-\thetaXX \|_{2}}
  +
  \sqrt{\deltaX \| \thetaX-\thetaXX \|_{2}}
  +
  \left( 1-\ConstbLD-\sqrt{\frac{2\tauX}{\deltaX}} \right) \deltaX
  +
  \ConstbLD \deltaX
  \\
  &\dCmt{by the triangle inequality}
  \\
  &\leq
  \sqrt{\deltaX \| \thetaStar-\thetaXX \|_{2}}
  +
  \sqrt{2 \deltaX \tauX}
  +
  \left( 1-\ConstbLD-\sqrt{\frac{2\tauX}{\deltaX}} \right) \deltaX
  +
  \ConstbLD \deltaX
  \\
  &\dCmt{\(  \because \thetaX \in \BallXX{2\tauX}( \thetaXX )  \)}
  \\
  &=
  \sqrt{\deltaX \| \thetaStar-\thetaXX \|_{2}}
  +
  \sqrt{\frac{2\tauX}{\deltaX}} \delta
  +
  \left( 1-\ConstbLD-\sqrt{\frac{2\tauX}{\deltaX}} \right) \deltaX
  +
  \ConstbLD \deltaX
  \\
  &=
  \sqrt{\deltaX \| \thetaStar-\thetaXX \|_{2}}
  +
  \deltaX
,\end{align*}
as claimed.
\end{proof}


\subsection{Proof of \COROLLARY \ref{corollary:main-technical:logistic-regression}}
\label{outline:pf-main-technical-result|pf-main-corollaries}

%

\begin{proof}
{\COROLLARY \ref{corollary:main-technical:logistic-regression}}
\mostlycheckoff%
The specialization of the main technical result to logistic regression in \COROLLARY \ref{corollary:main-technical:logistic-regression} requires two arguments:
\Enum[{\label{enum:pf:corollary:main-technical:logistic-regression:a}}]{a}
\ASSUMPTION \ref{assumption:p} needs to be shown to hold for logistic regression, i.e., when \(  \pFn  \) is the logistic function with \betaXnamelr \(  \betaX \GTR 0  \), as in \DEFINITION \ref{def:p:logistic-regression};
and
\Enum[{\label{enum:pf:corollary:main-technical:logistic-regression:b}}]{b}
explicit forms for the variables \(  \alphaX  \) (and \(  \alphaO  \)) and \(  \gammaX  \) need specification.
Once these are achieved, the corollary will follow from combining the bounds on \(  \alphaX  \) and \(  \gammaX  \) obtained from \TASK \ref{enum:pf:corollary:main-technical:logistic-regression:b} with \THEOREM \ref{thm:main-technical:sparse}.
Throughout this proof, \(  \pFn  \) is taken to be the logistic function, parameterized by the \betaXnamelr, \(  \betaX \GTR 0  \), per \DEFINITION \ref{def:p:logistic-regression}, which is recalled for convenience:
\begin{gather*}
  \pbetaFn{z}
  =
  \frac{1}{1+e^{-\betaX z}}
.\end{gather*}
%
\par 
%
For \TASK \ref{enum:pf:corollary:main-technical:logistic-regression:a}, recall that \ASSUMPTION \ref{assumption:p} imposes two conditions: \ref{condition:assumption:p:i} that \(  \pFn  \) is nondecreasing over the entire real line, and \ref{condition:assumption:p:ii} that
\(  \frac{\partial}{\partial \zX} \frac{\pExpr*{\zX+\wX \betaXParam}}{\pExpr{\zX \betaXParam}} \leq 0  \).
Let
\(  \zX < \zXX \in \R  \).
Then, clearly,
\begin{gather*}
  \pbetaFn{ \zX }
  =
  \frac{1}{1+e^{-\betaX \zX}}
  <
  \frac{1}{1+e^{-\betaX \zXX}}
  =
  \pbetaFn{ \zXX }
,\end{gather*}
and thus, \CONDITION \ref{condition:assumption:p:i} holds.
On the other hand, \CONDITION \ref{condition:assumption:p:ii} can be established via basis calculus.
First, note that for any \(  \zX \in \R  \),
\begin{align}
\label{eqn:pf:corollary:main-technical:logistic-regression:7}
  1-\pbetaFn{ \zX }
  =
  1 - \frac{1}{1+e^{-\betaX \zX}}
  =
  \frac{e^{-\betaX \zX}}{1+e^{-\betaX \zX}}
  =
  \frac{1}{1+e^{\betaX \zX}}
  =
  \pbetaFn{ -\zX }
,\end{align}
and hence, for \(  \wX, \zX \in \R  \), \(  \wX > 0  \),
\begin{align*}
  \frac{\pExpr*{\zX+\wX \betaXParam}}{\pExpr{\zX \betaXParam}}
  =
  \frac{2 \pbetaFn{ -( \zX+\wX } ) }{2 \pbetaFn{ -\zX }}
  =
  \frac{\pbetaFn{ -( \zX+\wX } ) }{\pbetaFn{ -\zX }}
  =
  \frac{1+e^{\betaX \zX}}{1+e^{\betaX ( \zX+\wX )}}
.\end{align*}
Then,
\begin{align*}
  \frac{\partial}{\partial \zX} \frac{\pExpr*{\zX+\wX}}{\pExpr{\zX}}
  &=
  \frac{\partial}{\partial \zX} \frac{1+e^{\betaX \zX}}{1+e^{\betaX ( \zX+\wX )}}
  =
  -\frac
  {\betaX e^{\betaX \zX} ( e^{\betaX \wX}-1 )}
  {( 1+e^{\betaX ( \zX+\wX )} )^{2}}
  \leq 0
,\end{align*}
as desired.
Thus, \CONDITION \ref{condition:assumption:p:ii} also holds when \(  \pFn  \) is the logistic function.
This complete \TASK \ref{enum:pf:corollary:main-technical:logistic-regression:a}.
%
\par 
%
Proceeding to \TASK \ref{enum:pf:corollary:main-technical:logistic-regression:b}, the aim now is to derive closed-form bounds on \(  \alphaX   \) and \(  \gammaX  \).
Looking first at \(  \alphaX  \), recall its definition from \EQUATION \eqref{eqn:notations:alpha:def}:
\begin{gather*}
  \alphaX
  \defeq
  \Pr_{Z \sim \N(0,1)} (
    \fFn( Z ) \neq \Sign( Z )
  )
  =
  \frac{1}{\sqrt{2\pi}}
  \int_{\zX=0}^{\zX=\infty}
  e^{-\frac{1}{2} \zX^{2}}
  (\pExpr{\zX})
  d\zX
.\end{gather*}
By the earlier observation in \eqref{eqn:pf:corollary:main-technical:logistic-regression:7}, when \(  \pFn  \) is the logistic function,
\begin{align*}
  \alphaX
  &=
  \frac{1}{\sqrt{2\pi}}
  \int_{\zX=0}^{\zX=\infty}
  e^{-\frac{1}{2} \zX^{2}}
  (\pExpr{\zX})
  d\zX
  \\
  &\dCmt{by \EQUATION \eqref{eqn:notations:alpha:def}}
  \\
  &=
  \sqrt{\frac{2}{\pi}}
  \int_{\zX=0}^{\zX=\infty}
  e^{-\frac{1}{2} \zX^{2}}
  \pbetaFn{ -\zX }
  d\zX
  \\
  &\dCmt{by \EQUATION \eqref{eqn:pf:corollary:main-technical:logistic-regression:7}}
  \\
  &=
  \E \left[ \pbetaFn{ -| \ZRV | } \right]
  \\
  &\dCmt{by the law of the lazy statistician and the density function}
  \\
  &\dCmtIndent \text{for standard half-normal random variables}
  \\
  &=
  \E \left[ \frac{1}{1 + e^{\betaX | \ZRV |}} \right]
\TagEqn{\label{eqn:pf:corollary:main-technical:logistic-regression:1}}
,\end{align*}
where \(  \ZRV \sim \N(0,1)  \) is a standard univariate Gaussian random variable.
Note that
\begin{gather}
\label{eqn:pf:corollary:main-technical:logistic-regression:3}
  \pbetaFn{ 0 }
  =
  \frac{1}{1+e^{0}}
  =
  \frac{1}{2}
.\end{gather}
Hence, when \(  \betaX=0  \), \EQUATION \eqref{eqn:pf:corollary:main-technical:logistic-regression:1} trivially evaluates to
\(  \alphaX = \frac{1}{2}  \).
To bound \EQUATION \eqref{eqn:pf:corollary:main-technical:logistic-regression:1} when \(  \betaX > 0  \), we can directly apply the following result from \cite{hsu2024sample}.
%
\begin{lemma}[{\cite[{\LEMMA 13}]{hsu2024sample}}]
\label{lemma:gaussian-integral:E[p]}
Fix \(  \betaXX > 0  \), and let \(  \ZRV \sim \N(0,1)  \) be a standard univariate Gaussian random variable.
Then,
\begin{gather}
  \E \left[ \frac{1}{1 + e^{\betaXX | \ZRV |}} \right]
  \leq
  \min
  \left\{
    \frac{1}{2},
    \frac{1}{2} - \sqrt{\frac{2}{\pi}} \left( 1-\frac{\betaXX^{2}}{6} \right) \frac{\betaXX}{4},
    \sqrt{\frac{2}{\pi}} \frac{1}{\betaXX}
  \right\}
.\end{gather}
\end{lemma}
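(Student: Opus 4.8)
The plan is to re-derive this bound (which is quoted verbatim from \cite{hsu2024sample}) from scratch. The key observation is the elementary identity $\frac{1}{1+e^{\betaXX z}} = \frac{1}{2}\bigl( 1 - \tanh( \betaXX z / 2 ) \bigr)$. Since $\ZRV \sim \N(0,1)$ makes $| \ZRV |$ a half-normal variable with density $\sqrt{2/\pi}\, e^{-z^{2}/2}$ on $z \geq 0$, this identity gives $\E\bigl[ \frac{1}{1+e^{\betaXX | \ZRV |}} \bigr] = \frac{1}{2} - \frac{1}{2} \E\bigl[ \tanh( \betaXX | \ZRV | / 2 ) \bigr]$, so it suffices to establish three separate lower bounds on $\E[ \tanh( \betaXX | \ZRV | / 2 ) ]$, one matching each entry of the claimed minimum.

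The first two bounds are immediate. For the bound $\frac{1}{2}$: since $\betaXX, | \ZRV | \geq 0$ we have $\tanh( \betaXX | \ZRV | / 2 ) \geq 0$ pointwise, so the expectation is nonnegative. For the bound $\sqrt{2/\pi} / \betaXX$: rather than passing through $\tanh$, I would bound $\frac{1}{1+e^{\betaXX z}} \leq e^{-\betaXX z}$ and $e^{-z^{2}/2} \leq 1$ inside $\sqrt{2/\pi} \int_{0}^{\infty} \frac{e^{-z^{2}/2}}{1+e^{\betaXX z}}\, dz$, leaving $\sqrt{2/\pi} \int_{0}^{\infty} e^{-\betaXX z}\, dz = \sqrt{2/\pi}\, \betaXX^{-1}$. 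The third, quantitatively sharp bound is the only one that needs a genuine (though routine) inequality: the polynomial lower bound $\tanh x \geq x - x^{3}/3$ for $x \geq 0$, which I would verify by checking that $h(x) \defeq \tanh x - x + x^{3}/3$ has $h(0) = 0$ and $h'(x) = ( 1 - \tanh^{2} x ) - 1 + x^{2} = x^{2} - \tanh^{2} x \geq 0$ (using $| \tanh x | \leq | x |$). Substituting $x = \betaXX | \ZRV | / 2$, taking expectations, and inserting the half-normal moments $\E[ | \ZRV | ] = \sqrt{2/\pi}$ and $\E[ | \ZRV |^{3} ] = 2 \sqrt{2/\pi}$ gives $\E[ \tanh( \betaXX | \ZRV | / 2 ) ] \geq \sqrt{2/\pi}\, \bigl( \frac{\betaXX}{2} - \frac{\betaXX^{3}}{12} \bigr) = \sqrt{2/\pi}\, \bigl( 1 - \betaXX^{2}/6 \bigr)\, \betaXX / 2$, which after the $\frac{1}{2} - \frac{1}{2}(\cdot)$ rescaling is exactly the middle term.

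I do not anticipate a real obstacle here: the only points that need care are getting the $\tanh$ identity and the cubic lower bound right — in particular that one uses $x - x^{3}/3$, whose error term has the correct sign on $[0,\infty)$ — and collecting the correct half-normal moment constants. Taking the minimum of the three bounds then finishes the lemma. For later use it is worth recording that the three regimes are genuinely complementary: the middle term is informative only for small $\betaXX$ (it already exceeds $\frac{1}{2}$ once $\betaXX^{2} > 6$), the term $\sqrt{2/\pi}/\betaXX$ dominates for large $\betaXX$, and the constant $\frac{1}{2}$ is the crude always-valid fallback — which is precisely the trichotomy on $\betaX$ underlying the three-regime sample complexities in \COROLLARY \ref{corollary:approx-error:logistic-regression}.
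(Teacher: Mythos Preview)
Your proof is correct. The $\tanh$ identity, the cubic lower bound $\tanh x \geq x - x^{3}/3$ on $[0,\infty)$ with its derivative check, the half-normal moments $\E[|Z|] = \sqrt{2/\pi}$ and $\E[|Z|^{3}] = 2\sqrt{2/\pi}$, and the crude bounds for the other two terms all go through exactly as you say.

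The paper itself does not prove this lemma: it is quoted directly from \cite{hsu2024sample} as a black box and applied without further argument. So there is no in-paper proof to compare your route against; your derivation simply fills in what the paper outsources to the reference. Your closing remark about the three regimes being complementary is also accurate and matches how the paper subsequently uses the lemma to split the analysis of $\alphaX$ and $\gammaX$ according to the size of $\betaX$.
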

%
It immediately follows from \EQUATION \eqref{eqn:pf:corollary:main-technical:logistic-regression:1} and \LEMMA \ref{lemma:gaussian-integral:E[p]} that for \(  \betaX > 0  \),
\begin{align}
\label{eqn:pf:corollary:main-technical:logistic-regression:alpha:ub}
  \alphaX
  &=
  \E \left[ \frac{1}{1 + e^{\betaX | \ZRV |}} \right]
  \leq
  \min
  \left\{
    \frac{1}{2},
    \frac{1}{2} - \sqrt{\frac{2}{\pi}} \left( 1-\frac{\betaX^{2}}{6} \right) \frac{\betaX}{4},
    \sqrt{\frac{2}{\pi}} \frac{1}{\betaX}
  \right\}
.\end{align}
\checkthis%
Using the above bound on \(  \alphaX  \) in \EQUATION \eqref{eqn:pf:corollary:main-technical:logistic-regression:alpha:ub}, an upper bound on \(  \alphaO  \) can also be obtained.
Noting that
\(  \deltaX \leq \frac{1}{2}  \),
and letting
\(  \ConstbetaXThrsholdLR \defeq \ConstbetaXThrsholdValueLR  \),
if
\(  \betaX < \betaXThresholdLRTwo = \ConstbetaXThrsholdValuedeltaLR \frac{1}{\deltaX}  \),
then
\(  \alphaO = \max \{ \alphaX, \deltaX \} \leq \min \{ \frac{1}{2}, \sqrt{\frac{2}{\pi}} \frac{1}{\betaX} \}  \),
whereas if
\(  \betaX \geq \betaXThresholdLRTwo = \ConstbetaXThrsholdValuedeltaLR \frac{1}{\deltaX}  \),
then
\(  \alphaO = \max \{ \alphaX, \deltaX \} = \deltaX  \).
%
\par 
%
Next, an explicit form for an lower  bound on \(  \gammaX  \) will be derived.
This will largely hinge on showing that
\(  \gammaX \geq \sqrt{\frac{2}{\pi}} ( 1-2 \alphaX )  \),
from where the above bound on \(  \alphaX  \) can subsequently provide a closed-form bound on \(  \gammaX  \).
Towards this, define
\(  \zetaX \defeq 1 - \sqrt{\frac{\pi}{2}} \gammaX  \).
Then, the inequality
\(  \gammaX \geq \sqrt{\frac{2}{\pi}} ( 1-2 \alphaX )  \)
is equivalent to
\(  \zetaX \leq 2\alphaX  \),
the latter of which will be our focus.
Note that \(  \zetaX  \) can be calculated by the following expression:
\begin{align*}
  \zetaX
  =
  \int_{\zX=0}^{\zX=\infty}
  \zX
  e^{-\frac{1}{2} \zX^{2}}
  (\pExpr{\zX})
  d\zX
.\end{align*}
It is convenient to view \(  \alphaX  \) and \(  \zetaX  \) as being parameterized by \(  \betaX  \), and hence, the following argument will use the notations: \(  \alphaX( \betaX )  \) and \(  \zetaX( \betaX )  \).
Note that the \betaXnamelr, \(  \betaX \GTR 0  \), is left as 
implicit in \(  \pFn  \) to simplify the notation.
Then, when \(  \pFn  \) is taken as in \DEFINITION \ref{def:p:logistic-regression} for logistic regression, \(  \zetaX( \betaX )  \) has the form:
\begin{align*}
  \zetaX( \betaX )
  %
  %
  %
  %
  =
  \int_{\zX=0}^{\zX=\infty}
  \zX
  e^{-\frac{1}{2} \zX^{2}}
  (\pExpr{\zX})
  d\zX
  =
  2
  \int_{\zX=0}^{\zX=\infty}
  \zX
  e^{-\frac{1}{2} \zX^{2}}
  \pbetaFn{ -\zX }
  d\zX
,\end{align*}
where the second equality applies \EQUATION \eqref{eqn:pf:corollary:main-technical:logistic-regression:7}.
Notice that due to \EQUATION \eqref{eqn:pf:corollary:main-technical:logistic-regression:3}, when \(  \betaX = 0  \),
\begin{gather*}
  \zetaX( 0 )
  =
  2
  \int_{\zX=0}^{\zX=\infty}
  \zX
  e^{-\frac{1}{2} \zX^{2}}
  \pbetaFn{ 0 }
  d\zX
  =
  \int_{\zX=0}^{\zX=\infty}
  \zX
  e^{-\frac{1}{2} \zX^{2}}
  d\zX
  =
  1
,\end{gather*}
where the last equality is obtained by scaling the expected value of a standard half-normal random variable by \(  \sqrt{\frac{\pi}{2}}  \).
Thus, in this scenario,
\(  \zetaX(0) = 1 = 2 \cdot \frac{1}{2} = 2 \alphaX(0)  \),
where the last equality follows from the earlier observation
that \(  \alphaX(0) = \frac{1}{2}  \).
Given this, it suffices to show that the ratio \(  \frac{\zetaX( \betaX )}{\alphaX( \betaX )}  \) is maximized when \(  \betaX=0  \), i.e., that
\(  \sup_{\betaX \GTR 0} \frac{\zetaX( \betaX )}{\alphaX( \betaX )} = \frac{\zetaX( 0 )}{\alphaX( 0 )} = 2  \).
This would indeed be true if
\(  \frac{\partial}{\partial \betaX} \frac{\zetaX( \betaX )}{\alphaX( \betaX )} \leq 0  \)
for all \(  \betaX \GTR 0  \).
As scaling this by a positive constant will not affect the inequality, it will be more convenient to establish that
\(  \frac{\partial}{\partial \betaX} \frac{\zetaX( \betaX )}{\sqrt{2\pi} \alphaX( \betaX )} \leq 0  \),
which will be done next.
%
\par 
%
To begin, note the following partial derivatives.
\begin{gather}
\label{eqn:pf:corollary:main-technical:logistic-regression:4:alpha}
  \frac{\partial}{\partial \betaX} \sqrt{2\pi} \alphaX( \betaX )
  =
  \frac{\partial}{\partial \betaX}
  2
  \int_{\zX=0}^{\zX=\infty}
  e^{-\frac{1}{2} \zX^{2}}
  \pbetaFn{ -\zX }
  d\zX
  =
  2
  \int_{\zX=0}^{\zX=\infty}
  e^{-\frac{1}{2} \zX^{2}}
  \frac{\partial}{\partial \betaX} \pbetaFn{ -\zX }
  d\zX
,\\ \label{eqn:pf:corollary:main-technical:logistic-regression:4:gamma}
  \frac{\partial}{\partial \betaX} \zetaX( \betaX )
  =
  \frac{\partial}{\partial \betaX}
  2
  \int_{\zX=0}^{\zX=\infty}
  \zX
  e^{-\frac{1}{2} \zX^{2}}
  \pbetaFn{ -\zX }
  d\zX
  =
  2
  \int_{\zX=0}^{\zX=\infty}
  \zX
  e^{-\frac{1}{2} \zX^{2}}
  \frac{\partial}{\partial \betaX} \pbetaFn{ -\zX }
  d\zX
,\end{gather}
where due to the quotient rule,
\begin{align*}
  \frac{\partial}{\partial \betaX} \pbetaFn{ -\zX }
  &=
  \frac{\partial}{\partial \betaX} \frac{1}{1+e^{\betaX \zX}}
  =
  -\frac{\zX e^{\betaX \zX}}{( 1+e^{\betaX \zX} )^{2}}
  =
  -\frac{\zX}{( 1+e^{-\betaX \zX} ) ( 1+e^{\betaX \zX} )}
  =
  -\zX \pbetaFn{ \zX } \pbetaFn{ -\zX }
\TagEqn{\label{eqn:pf:corollary:main-technical:logistic-regression:4:p}}
.\end{align*}
Plugging \eqref{eqn:pf:corollary:main-technical:logistic-regression:4:p} into \eqref{eqn:pf:corollary:main-technical:logistic-regression:4:alpha} and \eqref{eqn:pf:corollary:main-technical:logistic-regression:4:gamma} and scaling by a factor of \(  \frac{1}{2}  \) yields
\begin{gather}
\label{eqn:pf:corollary:main-technical:logistic-regression:4:alpha:b}
  \frac{1}{2}
  \frac{\partial}{\partial \betaX} \sqrt{2\pi} \alphaX( \betaX )
  =
  \int_{\zX=0}^{\zX=\infty}
  e^{-\frac{1}{2} \zX^{2}}
  \frac{\partial}{\partial \betaX} \pbetaFn{ -\zX }
  d\zX
  =
  -
  \int_{\zX=0}^{\zX=\infty}
  \zX
  e^{-\frac{1}{2} \zX^{2}}
  \pbetaFn{ \zX } \pbetaFn{ -\zX }
  d\zX
,\\ \label{eqn:pf:corollary:main-technical:logistic-regression:4:gamma:b}
  \frac{1}{2}
  \frac{\partial}{\partial \betaX} \zetaX( \betaX )
  =
  \int_{\zX=0}^{\zX=\infty}
  \zX
  e^{-\frac{1}{2} \zX^{2}}
  \frac{\partial}{\partial \betaX} \pbetaFn{ -\zX }
  d\zX
  =
  -
  \int_{\zX=0}^{\zX=\infty}
  \zX^{2}
  e^{-\frac{1}{2} \zX^{2}}
  \pbetaFn{ \zX } \pbetaFn{ -\zX }
  d\zX
.\end{gather}
Then, by applying the quotient rule and plugging in \eqref{eqn:pf:corollary:main-technical:logistic-regression:4:alpha:b} and \eqref{eqn:pf:corollary:main-technical:logistic-regression:4:gamma:b},
\begin{align*}
  &
  \frac{\partial}{\partial \betaX} \frac{\zetaX( \betaX )}{\sqrt{2\pi} \alphaX( \betaX )}
  \\
  &=
  \frac
  {
    \sqrt{2\pi} \alphaX( \betaX ) \frac{\partial}{\partial \betaX} \zetaX( \betaX )
    -
    \zetaX( \betaX ) \frac{\partial}{\partial \betaX} \sqrt{2\pi} \alphaX( \betaX )
  }
  {2\pi \alphaX( \betaX )^{2}}
  \\
  &=
  \frac
  {
    ( -\zetaX( \betaX ) \frac{\partial}{\partial \betaX} \sqrt{2\pi} \alphaX( \betaX ) )
    -
    ( -\sqrt{2\pi} \alphaX( \betaX ) \frac{\partial}{\partial \betaX} \zetaX( \betaX ) )
  }
  {2\pi \alphaX( \betaX )^{2}}
  \\
  &=
  \frac
  {
    ( -\frac{1}{2} \zetaX( \betaX ) \frac{1}{2} \frac{\partial}{\partial \betaX} \sqrt{2\pi} \alphaX( \betaX ) )
    -
    ( -\frac{1}{2} \sqrt{2\pi} \alphaX( \betaX ) \frac{1}{2} \frac{\partial}{\partial \betaX} \zetaX( \betaX ) )
  }
  {2\pi ( \frac{1}{2} \alphaX( \betaX ) )^{2}}
  \\
  &=
  \tfrac
  {
    \left( \int_{\zX=0}^{\zX=\infty} \zX e^{-\frac{1}{2} \zX^{2}} \pbetaFn{ -\zX } d\zX \right)
    \left( \int_{\zX=0}^{\zX=\infty} \zX e^{-\frac{1}{2} \zX^{2}} \pbetaFn{ \zX } \pbetaFn{ -\zX } d\zX \right)
    -
    \left( \int_{\zX=0}^{\zX=\infty} e^{-\frac{1}{2} \zX^{2}} \pbetaFn{ -\zX } d\zX \right)
    \left( \int_{\zX=0}^{\zX=\infty} \zX^{2} e^{-\frac{1}{2} \zX^{2}} \pbetaFn{ \zX } \pbetaFn{ -\zX } d\zX \right)
  }
  {\left( \int_{\zX=0}^{\zX=\infty} e^{-\frac{1}{2} \zX^{2}} \pbetaFn{ -\zX } d\zX \right)^{2}}
.\end{align*}
In the last line, the numerator clearly determines the sign of \(  \frac{\partial}{\partial \betaX} \frac{\zetaX( \betaX )}{\sqrt{2\pi} \alphaX( \betaX )}  \).
Focusing in on this expression, the following claim provides an upper bound.
Its verification is deferred to the end of this proof of the corollary.
%
\begin{claim}
\label{claim:pf:corollary:main-technical:logistic-regression:1}
Using the notations of this proof,
\begin{align*}
  &
  \left( \int_{\zX=0}^{\zX=\infty} \zX e^{-\frac{1}{2} \zX^{2}} \pbetaFn{ -\zX } d\zX \right)
  \left( \int_{\zX=0}^{\zX=\infty} \zX e^{-\frac{1}{2} \zX^{2}} \pbetaFn{ \zX } \pbetaFn{ -\zX } d\zX \right)
  \\
  &-
  \left( \int_{\zX=0}^{\zX=\infty} e^{-\frac{1}{2} \zX^{2}} \pbetaFn{ -\zX } d\zX \right)
  \left( \int_{\zX=0}^{\zX=\infty} \zX^{2} e^{-\frac{1}{2} \zX^{2}} \pbetaFn{ \zX } \pbetaFn{ -\zX } d\zX \right)
  \\
  &\AlignSp \leq
  \left( \int_{\zX=0}^{\zX=\infty} e^{-\frac{1}{2} \zX^{2}} \pbetaFn{ -\zX } d\zX \right)
  \left( \int_{\zX=0}^{\zX=\infty} e^{-\frac{1}{2} \zX^{2}} \pbetaFn{ \zX } \pbetaFn{ -\zX } d\zX \right)
  \\
  &\AlignSp\AlignSp
  \left(
    \left(
      \frac
      {\int_{\zX=0}^{\zX=\infty} \zX e^{-\frac{1}{2} \zX^{2}} \pbetaFn{ \zX } \pbetaFn{ -\zX } d\zX}
      {\int_{\zX=0}^{\zX=\infty} e^{-\frac{1}{2} \zX^{2}} \pbetaFn{ \zX } \pbetaFn{ -\zX } d\zX}
    \right)^{2}
    -
    \frac
    {\int_{\zX=0}^{\zX=\infty} \zX^{2} e^{-\frac{1}{2} \zX^{2}} \pbetaFn{ \zX } \pbetaFn{ -\zX } d\zX}
    {\int_{\zX=0}^{\zX=\infty} e^{-\frac{1}{2} \zX^{2}} \pbetaFn{ \zX } \pbetaFn{ -\zX } d\zX}
  \right)
\TagEqn{\label{eqn:claim:pf:corollary:main-technical:logistic-regression:1}}
.\end{align*}
\end{claim}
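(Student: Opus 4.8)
The plan is to reduce the claimed bound to a one-line consequence of the correlation inequality for comonotone functions (Chebyshev's sum inequality / a one-dimensional FKG inequality). First I would abbreviate the five Gaussian integrals appearing in \eqref{eqn:claim:pf:corollary:main-technical:logistic-regression:1} as
\[
A_0 = \int_{\zX=0}^{\infty} e^{-\zX^2/2}\pbetaFn{-\zX}\,d\zX,\qquad
A_1 = \int_{\zX=0}^{\infty} \zX\, e^{-\zX^2/2}\pbetaFn{-\zX}\,d\zX,
\]
\[
B_j = \int_{\zX=0}^{\infty} \zX^{j}\, e^{-\zX^2/2}\pbetaFn{\zX}\pbetaFn{-\zX}\,d\zX,\qquad j=0,1,2 .
\]
Each integrand has Gaussian tails and is strictly positive on a set of positive measure (since $\pbetaFn{\cdot}\in(0,1)$ everywhere), so all five quantities are finite and strictly positive; in particular $A_0,B_0,B_1>0$, which legitimizes every division in \eqref{eqn:claim:pf:corollary:main-technical:logistic-regression:1}. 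In this notation the claimed inequality reads $A_1B_1-A_0B_2\le A_0B_0\bigl((B_1/B_0)^2-B_2/B_0\bigr)=A_0B_1^2/B_0-A_0B_2$. The two copies of $-A_0B_2$ cancel, and dividing the remainder by $B_1/B_0>0$ shows that \eqref{eqn:claim:pf:corollary:main-technical:logistic-regression:1} is \emph{equivalent} to the clean statement $A_1B_0\le A_0B_1$, i.e. $A_1/A_0\le B_1/B_0$.

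Next I would interpret both ratios as first moments of $\zX$ under nested tilted measures. Let $\bar\mu$ be the probability measure on $[0,\infty)$ with density proportional to $e^{-\zX^2/2}\pbetaFn{-\zX}$. Then $A_1/A_0=\E_{\bar\mu}[\zX]$, and, factoring the extra $\pbetaFn{\zX}$ out of $B_1$ and $B_0$, one gets $B_1/B_0=\E_{\bar\mu}[\zX\,\pbetaFn{\zX}]/\E_{\bar\mu}[\pbetaFn{\zX}]$; that is, $B_1/B_0$ is the $\bar\mu$-mean of $\zX$ after re-weighting $\bar\mu$ by the factor $\zX\mapsto\pbetaFn{\zX}$. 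The crucial structural observation is that for $\betaX\ge 0$ the logistic function $\zX\mapsto\pbetaFn{\zX}=(1+e^{-\betaX\zX})^{-1}$ is nondecreasing (its derivative is $\betaX e^{-\betaX\zX}/(1+e^{-\betaX\zX})^2\ge 0$), so re-weighting by it can only push the mean of the (also nondecreasing) function $\zX$ upward. (The identity $\pbetaFn{-\zX}=1-\pbetaFn{\zX}$, already recorded in \eqref{eqn:pf:corollary:main-technical:logistic-regression:7}, is exactly what puts $A_j,B_j$ in the above symmetric form; it and monotonicity are the only properties of $\pbetaFn{\cdot}$ used, so the same argument applies verbatim to probit.)

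The final step makes the monotonicity argument precise and is where the only genuine inequality enters. Taking two independent draws $\zX,\zX'\sim\bar\mu$ and symmetrizing,
\[
\E_{\bar\mu}[\zX\,\pbetaFn{\zX}]-\E_{\bar\mu}[\zX]\,\E_{\bar\mu}[\pbetaFn{\zX}]
=\tfrac12\,\E_{\bar\mu\otimes\bar\mu}\!\left[(\zX-\zX')\bigl(\pbetaFn{\zX}-\pbetaFn{\zX'}\bigr)\right]\ge 0,
\]
because the integrand is pointwise nonnegative ($\zX\mapsto\zX$ and $\zX\mapsto\pbetaFn{\zX}$ are both nondecreasing). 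Equivalently, $\Cov_{\bar\mu}\bigl(\zX,\pbetaFn{\zX}\bigr)\ge 0$. Dividing by $\E_{\bar\mu}[\pbetaFn{\zX}]>0$ gives $\E_{\bar\mu}[\zX]\le B_1/B_0$, i.e. $A_1/A_0\le B_1/B_0$, which by the reduction above is exactly \eqref{eqn:claim:pf:corollary:main-technical:logistic-regression:1}.

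I do not expect a serious obstacle: the mathematical content is just the comonotone correlation inequality in one variable, and the work is essentially bookkeeping — verifying that the $-A_0B_2$ terms cancel, that $A_0,B_0,B_1$ are strictly positive so the sign of the multiplier $B_0/B_1$ and all divisions are controlled, and that $\pbetaFn{\cdot}$ is genuinely monotone on the half-line. If one prefers to avoid fractions, the identity $A_1B_0-A_0B_1=\tfrac12\iint_{[0,\infty)^2}(\zX-\zX')\bigl(\pbetaFn{\zX'}-\pbetaFn{\zX}\bigr)e^{-\zX^2/2}e^{-\zX'^2/2}\pbetaFn{-\zX}\pbetaFn{-\zX'}\,d\zX\,d\zX'\le 0$ can be checked directly by swapping $\zX\leftrightarrow\zX'$, but the measure-theoretic phrasing is cleaner. (As a remark, combining the resulting bound with Cauchy--Schwarz, $B_1^2\le B_0B_2$, immediately yields the bound's intended downstream use, namely $A_1B_1-A_0B_2\le 0$, which forces $\tfrac{\partial}{\partial\betaX}\tfrac{\zetaX(\betaX)}{\sqrt{2\pi}\,\alphaX(\betaX)}\le 0$.)
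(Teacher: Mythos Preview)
Your proposal is correct and follows essentially the same approach as the paper: both reduce the claim to the inequality $A_1/A_0 \le B_1/B_0$ (the paper via factoring out $A_0 B_0$ and replacing $A_1/A_0$ by the larger $B_1/B_0$, you via cancelling the shared $-A_0 B_2$ term), and both establish that inequality by observing that the measure with density $\propto e^{-z^2/2}p(z)p(-z)$ is the measure with density $\propto e^{-z^2/2}p(-z)$ reweighted by the nondecreasing factor $p(z)$, which can only increase the mean of $z$. Your symmetrization argument for the comonotone correlation inequality is more explicit than the paper's one-line appeal to monotonicity, but the mathematical content is the same.
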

%
Under the assumed correctness of \CLAIM \ref{claim:pf:corollary:main-technical:logistic-regression:1}, the proof of \COROLLARY \ref{corollary:main-technical:logistic-regression} can be completed.
The \RHS of the inequality in \EQUATION \eqref{eqn:claim:pf:corollary:main-technical:logistic-regression:1} has the same sign as
\newcommand{\EXPRVARIABLE}{y}
\begin{gather}
  \EXPRVARIABLE
  \defeq
  \left(
    \frac
    {\int_{\zX=0}^{\zX=\infty} \zX e^{-\frac{1}{2} \zX^{2}} \pbetaFn{ \zX } \pbetaFn{ -\zX } d\zX}
    {\int_{\zX=0}^{\zX=\infty} e^{-\frac{1}{2} \zX^{2}} \pbetaFn{ \zX } \pbetaFn{ -\zX } d\zX}
  \right)^{2}
  -
  \frac
  {\int_{\zX=0}^{\zX=\infty} \zX^{2} e^{-\frac{1}{2} \zX^{2}} \pbetaFn{ \zX } \pbetaFn{ -\zX } d\zX}
  {\int_{\zX=0}^{\zX=\infty} e^{-\frac{1}{2} \zX^{2}} \pbetaFn{ \zX } \pbetaFn{ -\zX } d\zX}
\end{gather}
since the product of the first two integrals is positive, \ie
\begin{gather*}
  \left( \int_{\zX=0}^{\zX=\infty} e^{-\frac{1}{2} \zX^{2}} \pbetaFn{ -\zX } d\zX \right)
  \left( \int_{\zX=0}^{\zX=\infty} e^{-\frac{1}{2} \zX^{2}} \pbetaFn{ \zX } \pbetaFn{ -\zX } d\zX \right)
  >
  0
.\end{gather*}
Hence, if \(  \EXPRVARIABLE \leq 0  \), then due to \CLAIM \ref{claim:pf:corollary:main-technical:logistic-regression:1} and the earlier discussion, it must also happen that \(  \frac{\partial}{\partial \zetaX} \frac{\zetaX( \betaX )}{\sqrt{2\pi} \alphaX( \betaX )} \leq 0  \).
%
\par 
%
\let\oldwX\wX%
\let\wX\zX%
\let\oldWRV\WRV%
\let\WRV\ZRV%
To establish the nonpositivity of \(  \EXPRVARIABLE  \), consider a univariate standard Gaussian random variable, \(  \WRV \sim \N(0,1)  \), and a random variable \(  \URV  \) which takes values in \(  \{ 0,1 \}  \) such that for \(  \wX \geq 0  \),
\begin{gather*}
  ( \URV \Mid| | \WRV |=\wX )
  =
  \begin{cases}
    0 ,& \cWP  1 - \pbetaFn{ \wX } \pbetaFn{ -\wX }, \\
    1 ,& \cWP \pbetaFn{ \wX } \pbetaFn{ -\wX }.
  \end{cases}
\end{gather*}
The mass function of this conditioned random variable, \(  \URV \Mid| | \WRV |  \), is given for \(  \wX \geq 0  \) by
\begin{gather*}
  \pdf{\URV \Mid| | \WRV |}( \uX \Mid| \wX )
  =
  \begin{cases}
  1 - \pbetaFn{ \wX } \pbetaFn{ -\wX } ,& \cIf \uX = 0 ,\\
  \pbetaFn{ \wX } \pbetaFn{ -\wX }     ,& \cIf \uX = 1 .
  \end{cases}
\end{gather*}
In addition, by the law of the total probability and the definition of conditional probabilities,
\begin{gather*}
  \pdf{\URV}( 1 )
  =
  \int_{\zX=-\infty}^{\zX=\infty} \pdf{\URV \Mid| | \WRV |}( 1 \Mid| \zX ) \pdf{| \WRV |}( \zX ) d\zX
  =
  \sqrt{\frac{2}{\pi}} \int_{\zX=0}^{\zX=\infty} e^{-\frac{1}{2} \zX^{2}} \pbetaFn{ \zX } \pbetaFn{ -\zX } d\zX
.\end{gather*}
Then, by Bayes' theorem, the density function of the conditioned random variable \(  | \WRV | \Mid| \URV = 1  \) is given for \(  \wX \geq 0  \) by
\begin{gather*}
  \pdf{| \WRV | \Mid| \URV} ( \wX \Mid| 1 )
  =
  \frac
  {\pdf{\URV \Mid| | \WRV |}( 1 \Mid| \wX ) \pdf{| \WRV |}( \wX )}
  {\pdf{\URV}( 1 )}
  =
  \frac
  {\sqrt{\frac{2}{\pi}} e^{-\frac{1}{2} \zX^{2}} \pbetaFn{ \zX } \pbetaFn{ -\zX } d\zX}
  {\sqrt{\frac{2}{\pi}} \int_{\zXX=0}^{\zXX=\infty} e^{-\frac{1}{2} \zXX^{2}} \pbetaFn{ \zXX } \pbetaFn{ -\zXX } d\zXX}
,\end{gather*}
while for \(  \wX < 0  \),
\(  \pdf{| \WRV | \Mid| \URV} ( \wX \Mid| 1 ) = 0  \).
The variance of \(  | \WRV | \Mid| \URV=1  \) is therefore:
\begin{align*}
  \Var( | \WRV | \Mid| \URV=1 )
  &=
  \E[ | \WRV |^{2} \Mid| \URV=1 ]
  -
  \E[ | \WRV | \Mid| \URV=1 ]^{2}
  \\
  &=
  \frac
  {\sqrt{\frac{2}{\pi}} \int_{\wX=0}^{\wX \infty} \zX^{2} e^{-\frac{1}{2} \zX^{2}} \pbetaFn{ \zX } \pbetaFn{ -\zX } d\zX}
  {\sqrt{\frac{2}{\pi}} \int_{\zX=0}^{\zX=\infty} e^{-\frac{1}{2} \zX^{2}} \pbetaFn{ \zX } \pbetaFn{ -\zX } d\zX}
  -
  \left(
    \frac
    {\sqrt{\frac{2}{\pi}} \int_{\wX=0}^{\wX \infty} \zX e^{-\frac{1}{2} \zX^{2}} \pbetaFn{ \zX } \pbetaFn{ -\zX } d\zX}
    {\sqrt{\frac{2}{\pi}} \int_{\zX=0}^{\zX=\infty} e^{-\frac{1}{2} \zX^{2}} \pbetaFn{ \zX } \pbetaFn{ -\zX } d\zX}
  \right)^{2}
  \\
  &=
  \frac
  {\int_{\wX=0}^{\wX \infty} \zX^{2} e^{-\frac{1}{2} \zX^{2}} \pbetaFn{ \zX } \pbetaFn{ -\zX } d\zX}
  {\int_{\zX=0}^{\zX=\infty} e^{-\frac{1}{2} \zX^{2}} \pbetaFn{ \zX } \pbetaFn{ -\zX } d\zX}
  -
  \left(
    \frac
    {\int_{\wX=0}^{\wX \infty} \zX e^{-\frac{1}{2} \zX^{2}} \pbetaFn{ \zX } \pbetaFn{ -\zX } d\zX}
    {\int_{\zX=0}^{\zX=\infty} e^{-\frac{1}{2} \zX^{2}} \pbetaFn{ \zX } \pbetaFn{ -\zX } d\zX}
  \right)^{2}
  \\
  &=
  -\EXPRVARIABLE
.\end{align*}
The variance of a random variable is always nonnegative, which implies that
\begin{gather*}
  \EXPRVARIABLE = -\Var( | \WRV | \Mid| \URV=1 ) \leq 0
,\end{gather*}
and thus, combining this with some previous remarks, it follows that \(  \frac{\partial}{\partial \betaX} \frac{\zetaX( \betaX )}{\sqrt{2\pi} \alphaX( \betaX )} \leq 0  \) when \(  \betaX \GTR 0  \), as claimed.
%
\par 
%
As noted earlier, this further implies that
\begin{gather*}
  \sup_{\betaX \GTR 0} \frac{\zetaX( \betaX )}{\alphaX( \betaX )}
  =
  \frac{\zetaX( 0 )}{\alphaX( 0 )}
  =
  2
.\end{gather*}
Then, by \LEMMA \ref{lemma:gaussian-integral:E[p]},
\begin{gather}
\label{eqn:pf:corollary:main-technical:logistic-regression:zeta:ub}
  \zetaX
  \leq
  2\alphaX
  \leq
  \min
  \left\{
    1,
    1 - \sqrt{\frac{2}{\pi}} \left( 1-\frac{\betaX^{2}}{6} \right) \frac{\betaX}{2},
    \sqrt{\frac{2}{\pi}} \frac{2}{\betaX}
  \right\}
.\end{gather}
This upper bound on \(  \zetaX  \) now gives the following lower bound on \(  \gammaX  \):
\begin{gather}
\label{eqn:pf:corollary:main-technical:logistic-regression:gamma:ub}
  \gammaX
  =
  \sqrt{\frac{2}{\pi}} ( 1-\zetaX )
  \geq
  \sqrt{\frac{2}{\pi}} ( 1-2\alphaX )
  \geq
  \sqrt{\frac{2}{\pi}}
  \left(
  1 -
  \min
  \left\{
    1,
    1 - \sqrt{\frac{2}{\pi}} \left( 1-\frac{\betaX^{2}}{6} \right) \frac{\betaX}{2},
    \sqrt{\frac{2}{\pi}} \frac{2}{\betaX}
  \right\}
  \right)
.\end{gather}
This completes \TASK \ref{enum:pf:corollary:main-technical:logistic-regression:b}.
%
\par 
%
The above work sets up the realization of the corollary's proof.
With the explicit bounds on \(  \alphaX  \) and \(  \gammaX  \), note the following:
\begin{gather}
\label{eqn:pf:corollary:main-technical:logistic-regression:5:a}
  \gammaX
  \geq
  \begin{cases}
  \left( 1 - \frac{\betaX^{2}}{6} \right) \frac{\betaX}{{\pi}}
  ,& \cIf \betaX \in (0, \betaXThresholdLROne) ,\\
  \sqrt{\frac{2}{\pi}} \left( 1 - \sqrt{\frac{2}{\pi}} \frac{2}{\betaX} \right)
  ,& \cIf \betaX \in [\betaXThresholdLROne, \betaXThresholdLRTwo) ,\\
  \sqrt{\frac{2}{\pi}} \left( 1 - \sqrt{\frac{2}{\pi}} \frac{2}{\betaX} \right)
  ,& \cIf \betaX \in [\betaXThresholdLRTwo, \infty) ,
  \end{cases}
  \geq
  \begin{cases}
  \left( 1 - \frac{\cO^{2}}{6} \right) \frac{\betaX}{{\pi}}
  ,& \cIf \betaX \in (0, \betaXThresholdLROne) ,\\
  \bO
  ,& \cIf \betaX \in [\betaXThresholdLROne, \betaXThresholdLRTwo) ,\\
  \bO
  ,& \cIf \betaX \in [\betaXThresholdLRTwo, \infty) .
  \end{cases}
\end{gather}
where
\(  \bO, \cO, \ConstbetaXThrsholdLR > 0  \)
are constants such that
\(  \cO = \frac{\sqrt{\hfrac{8}{\pi}}}{1+\bO} \geq 1  \) and
\(  \ConstbetaXThrsholdLR \defeq \ConstbetaXThrsholdValueLR  \).
Recall from an earlier discussion that
\begin{gather*}
  \alphaO
  \leq
  \begin{cases}
  \min \left\{ \frac{1}{2}, \sqrt{\frac{2}{\pi}} \frac{1}{\betaX} \right\} ,&\cIf \betaX < \frac{\ConstbetaXThrsholdLR}{\deltaX} ,\\
  \deltaX ,&\cIf \betaX \geq \frac{\ConstbetaXThrsholdLR}{\deltaX}.
  \end{cases}
\end{gather*}
As a result,
\begin{gather}
\label{eqn:pf:corollary:main-technical:logistic-regression:6}
  \frac{\alphaO}{\GAMMAX^{2}}
  \leq
  \begin{cases}
  \frac{\pi^{2}}{2 \bigl( 1 - \frac{\betaX^{2}}{6} \bigr)^{2} \betaX^{2}}
  ,& \cIf \betaX \in (0, \betaXThresholdLROne) ,\\
  \frac{\sqrt{\hfrac{\pi}{2}}}{\bigl( 1 - \sqrt{\frac{2}{\pi}} \frac{2}{\betaX} \bigr)^{2} \betaX}
  ,& \cIf \betaX \in [\betaXThresholdLROne, \betaXThresholdLRTwo) ,\\
  \frac{\pi \deltaX}{2 \bigl( 1 - \sqrt{\frac{2}{\pi}} \frac{2}{\betaX} \bigr)^{2}}
  ,& \cIf \betaX \in [\betaXThresholdLRTwo, \infty) ,
  \end{cases}
  \leq
  \begin{cases}
  \frac{\pi^{2}}{2 \bigl( 1 - \frac{\cO^{2}}{6} \bigr)^{2} \betaX^{2}}
  ,& \cIf \betaX \in (0, \betaXThresholdLROne) ,\\
  \frac{\sqrt{\hfrac{\pi}{2}}}{\bO^{2} \betaX}
  ,& \cIf \betaX \in [\betaXThresholdLROne, \betaXThresholdLRTwo) ,\\
  \frac{\pi \deltaX}{2 \bO^{2}}
  ,& \cIf \betaX \in [\betaXThresholdLRTwo, \infty) .
  \end{cases}
\end{gather}
Now, the corollary's result for logistic regression can be established by substituting the bounds in \EQUATIONS \eqref{eqn:pf:corollary:main-technical:logistic-regression:5:a}--\eqref{eqn:pf:corollary:main-technical:logistic-regression:6} into \EQUATION \eqref{eqn:main-technical:sparse:m} of \THEOREM \ref{thm:main-technical:sparse} and the definitions of \(  \nuX( \deltaX )  \) and \(  \tauX( \deltaX )  \).
Take \(  \m  \) to be at least
\begin{align*}
  \m
  &\geq
  \max \{ \mOneS, \mTwoS, \mThreeS, \mFourS, \mFiveS \}
  =
  \mOEXPRLR[s]{\epsilonX}[,]
\end{align*}
where
\checkthis[Done]%
\begin{gather*}
  \mOneS = \mOneEXPR[s]
  \\
  \mTwoS = \mTwoEXPR[s]
  \\
  \mThreeS = \mThreeEXPR[s]
  \\
  \mFourS = \mFourEXPR[s]
  \\
  \mFiveS = \mFiveEXPR[s]
\end{gather*}
and where, as a result of \EQUATION \eqref{eqn:pf:corollary:main-technical:logistic-regression:5:a}, \(  \nuX  \) is bounded from below by
\checkthis%
\begin{gather*}
  \nuX
  =
  \nuXEXPR
  \geq
  \begin{cases}
  \nuXEXPRLROne
  ,& \cIf \betaX \in (0, \betaXThresholdLROne) ,\\
  \nuXEXPRLRTwo
  ,& \cIf \betaX \in [\betaXThresholdLROne, \betaXThresholdLRTwo) ,\\
  \nuXEXPRLRThree
  ,& \cIf \betaX \in [\betaXThresholdLRTwo, \infty) .
  \end{cases}
\end{gather*}
Then, due to \EQUATIONS \eqref{eqn:pf:corollary:main-technical:logistic-regression:5:a} and \eqref{eqn:pf:corollary:main-technical:logistic-regression:6},
\begin{gather*}
  \m \geq \mEXPR[s][.]
\end{gather*}
Therefore, by \THEOREM \ref{thm:main-technical:sparse} and this choice of \(  \m  \), with probability at least \(  1-\rho  \), uniformly for all \(  \thetaXX \in \ParamSpace  \) and \(  \JCoords \subseteq [\n]  \), \(  | \JCoords | \leq \k  \), \EQUATION \eqref{eqn:main-technical:sparse:1} holds:
\begin{gather*}
  \left\|
    \thetaStar
    -
    \frac
    {\thetaXX + \hfFn[\JCoords]( \thetaStar, \thetaXX )}
    {\| \thetaXX + \hfFn[\JCoords]( \thetaStar, \thetaXX ) \|_{2}}
  \right\|_{2}
  \leq
  \sqrt{\deltaX \| \thetaStar-\thetaXX \|_{2}}
  +
  \deltaX
.\end{gather*}
%
\par 
%
Barring the verification of \CLAIM \ref{claim:pf:corollary:main-technical:logistic-regression:1}, this concludes
the proof of \COROLLARY \ref{corollary:main-technical:logistic-regression} for logistic regression.
The last remaining task is returning to and proving \CLAIM \ref{claim:pf:corollary:main-technical:logistic-regression:1}.
%
\begin{subproof}
{\CLAIM \ref{claim:pf:corollary:main-technical:logistic-regression:1}}
Looking at the \LHS of \EQUATION \eqref{eqn:claim:pf:corollary:main-technical:logistic-regression:1}, observe:
\begin{align*}
  &
  \left( \int_{\zX=0}^{\zX=\infty} \zX e^{-\frac{1}{2} \zX^{2}} \pbetaFn{ -\zX } d\zX \right)
  \left( \int_{\zX=0}^{\zX=\infty} \zX e^{-\frac{1}{2} \zX^{2}} \pbetaFn{ \zX } \pbetaFn{ -\zX } d\zX \right)
  \\
  &-
  \left( \int_{\zX=0}^{\zX=\infty} e^{-\frac{1}{2} \zX^{2}} \pbetaFn{ -\zX } d\zX \right)
  \left( \int_{\zX=0}^{\zX=\infty} \zX^{2} e^{-\frac{1}{2} \zX^{2}} \pbetaFn{ \zX } \pbetaFn{ -\zX } d\zX \right)
  \\
  &\AlignSp=
  \left( \int_{\zX=0}^{\zX=\infty} e^{-\frac{1}{2} \zX^{2}} \pbetaFn{ -\zX } d\zX \right)
  \left( \int_{\zX=0}^{\zX=\infty} e^{-\frac{1}{2} \zX^{2}} \pbetaFn{ \zX } \pbetaFn{ -\zX } d\zX \right)
  \\
  &\AlignSp\AlignSp \left(
  \left( \tfrac{\int_{\zX=0}^{\zX=\infty} \zX e^{-\frac{1}{2} \zX^{2}} \pbetaFn{ -\zX } d\zX}{\int_{\zX=0}^{\zX=\infty} e^{-\frac{1}{2} \zX^{2}} \pbetaFn{ -\zX } d\zX} \right)
  \left( \tfrac{\int_{\zX=0}^{\zX=\infty} \zX e^{-\frac{1}{2} \zX^{2}} \pbetaFn{ \zX } \pbetaFn{ -\zX } d\zX}{\int_{\zX=0}^{\zX=\infty} e^{-\frac{1}{2} \zX^{2}} \pbetaFn{ \zX } \pbetaFn{ -\zX } d\zX} \right)
  -
  \left( \tfrac{\int_{\zX=0}^{\zX=\infty} \zX^{2} e^{-\frac{1}{2} \zX^{2}} \pbetaFn{ \zX } \pbetaFn{ -\zX } d\zX}{\int_{\zX=0}^{\zX=\infty} e^{-\frac{1}{2} \zX^{2}} \pbetaFn{ \zX } \pbetaFn{ -\zX } d\zX} \right)
  \right)
\TagEqn{\label{eqn:pf:claim:pf:corollary:main-technical:logistic-regression:1:1}}
.\end{align*}
In the above equation, \eqref{eqn:pf:claim:pf:corollary:main-technical:logistic-regression:1:1}, the term
\begin{gather*}
  \frac
  {\int_{\zX=0}^{\zX=\infty} \zX e^{-\frac{1}{2} \zX^{2}} \pbetaFn{ -\zX } d\zX}
  {\int_{\zX=0}^{\zX=\infty} e^{-\frac{1}{2} \zX^{2}} \pbetaFn{ -\zX } d\zX}
\end{gather*}
can be bounded from above as follows.
Similarly to earlier in the proof, let \(  \WRV \sim \N(0,1)  \) be a univariate standard Gaussian random variable such that \(  | \WRV |  \) is a standard half-normal random variable with density
\begin{gather}
\label{eqn:pf:claim:pf:corollary:main-technical:logistic-regression:2}
  \pdf{\WRV}( \wX )
  =
  \begin{cases}
  0 ,& \cIf \wX < 0 ,\\
  \sqrt{\frac{2}{\pi}} e^{-\frac{1}{2} \wX^{2}} ,& \cIf \wX \geq 0,
  \end{cases}
\end{gather}
and let \(  \URV  \) and \(  \VRV  \) be random variables taking values in \(  \{ 0,1 \}  \), where for \(  \wX \geq 0  \),
\begin{gather*}
  ( \URV \Mid| | \WRV |=\wX )
  =
  \begin{cases}
    0 ,& \cWP  1 - \pbetaFn{ \wX } \pbetaFn{ -\wX }, \\
    1 ,& \cWP \pbetaFn{ \wX } \pbetaFn{ -\wX },
  \end{cases}
  \\
  ( \VRV \Mid| | \WRV |=\wX )
  =
  \begin{cases}
    0 ,& \cWP  1 - \pbetaFn{ -\wX }, \\
    1 ,& \cWP \pbetaFn{ -\wX }.
  \end{cases}
\end{gather*}
These conditioned random variables, \(  \URV \Mid| | \WRV |  \) and \(  \VRV \Mid| | \WRV |  \), have mass functions:
\begin{gather}
\label{eqn:pf:claim:pf:corollary:main-technical:logistic-regression:3}
  \pdf{\URV \Mid| | \WRV |}( \uX \Mid| \wX )
  =
  \begin{cases}
  1 - \pbetaFn{ \wX } \pbetaFn{ -\wX } ,& \cIf \uX=0 ,\\
  \pbetaFn{ \wX } \pbetaFn{ -\wX }     ,& \cIf \uX=1 ,
  \end{cases}
  \\
\label{eqn:pf:claim:pf:corollary:main-technical:logistic-regression:4}
  \pdf{\VRV \Mid| | \WRV |}( \vX \Mid| \wX )
  =
  \begin{cases}
  1 - \pbetaFn{ -\wX } ,& \cIf \vX=0 ,\\
  \pbetaFn{ -\wX }     ,& \cIf \vX=1 ,
  \end{cases}
\end{gather}
for \(  \uX, \vX \in \{ 0,1 \}  \) and \(  \wX \geq 0  \).
Applying, in order twice, the law of total probability, the definition of conditional probabilities, and \EQUATIONS \eqref{eqn:pf:claim:pf:corollary:main-technical:logistic-regression:2}-\eqref{eqn:pf:claim:pf:corollary:main-technical:logistic-regression:4} obtains:
\begin{gather*}
  \pdf{\URV}( 1 )
  =
  \int_{\zX=-\infty}^{\zX=\infty} \pdf{\URV, | \WRV |}( 1, \wX ) d\zX
  =
  \int_{\zX=0}^{\zX=\infty} \pdf{\URV \Mid| | \WRV |}( 1 \Mid| \wX ) \pdf{| \WRV |}( \wX ) d\zX
  =
  \sqrt{\frac{2}{\pi}} \int_{\zX=0}^{\zX=\infty} e^{-\frac{1}{2} \zX^{2}} \pbetaFn{ \zX } \pbetaFn{ -\zX } d\zX
  ,\\
  \pdf{\VRV}( 1 )
  =
  \int_{\zX=-\infty}^{\zX=\infty} \pdf{\VRV, | \WRV |}( 1, \wX ) d\zX
  =
  \int_{\zX=0}^{\zX=\infty} \pdf{\VRV \Mid| | \WRV |}( 1 \Mid| \wX ) \pdf{| \WRV |}( \wX ) d\zX
  =
  \sqrt{\frac{2}{\pi}} \int_{\zX=0}^{\zX=\infty} e^{-\frac{1}{2} \zX^{2}} \pbetaFn{ -\wX } d\zX
.\end{gather*}
Using Bayes' theorem, for \(  \wX \geq 0  \), the density functions of the conditioned random variables \(  | \WRV | \Mid| \URV = 1  \) and \(  | \WRV | \Mid| \VRV = 1  \) are respectively given by
\begin{gather*}
  \pdf{| \WRV | \Mid| \URV} ( \wX \Mid| 1 )
  =
  \frac
  {\pdf{\URV \Mid| | \WRV |}( 1 \Mid| \wX ) \pdf{| \WRV |}( \wX )}
  {\pdf{\URV}( 1 )}
  =
  \frac
  {\sqrt{\frac{2}{\pi}} e^{-\frac{1}{2} \wX^{2}} \pbetaFn{ \wX } \pbetaFn{ -\wX } d\zX}
  {\sqrt{\frac{2}{\pi}} \int_{\zXX=0}^{\zXX=\infty} e^{-\frac{1}{2} \zXX^{2}} \pbetaFn{ \zXX } \pbetaFn{ -\zXX } d\zXX}
  ,\\
  \pdf{| \WRV | \Mid| \VRV} ( \wX \Mid| 1 )
  =
  \frac
  {\pdf{\VRV \Mid| | \WRV |}( 1 \Mid| \wX ) \pdf{| \WRV |}( \wX )}
  {\pdf{\VRV}( 1 )}
  =
  \frac
  {\sqrt{\frac{2}{\pi}} e^{-\frac{1}{2} \wX^{2}} \pbetaFn{ -\wX } d\zX}
  {\sqrt{\frac{2}{\pi}} \int_{\zXX=0}^{\zXX=\infty} e^{-\frac{1}{2} \zXX^{2}} \pbetaFn{ -\zXX } d\zXX}
,\end{gather*}
while for \(  \wX < 0  \),
\(  \pdf{| \WRV | \Mid| \URV} ( \wX \Mid| 1 ) = 0  \) and
\(  \pdf{| \WRV | \Mid| \VRV} ( \wX \Mid| 1 ) = 0  \).
Then, in expectation,
\begin{gather*}
  \E[ | \WRV | \Mid| \URV=1 ]
  =
  \frac
  {\sqrt{\frac{2}{\pi}} \int_{\zX=0}^{\zX=\infty} \zX e^{-\frac{1}{2} \zX^{2}} \pbetaFn{ \zX } \pbetaFn{ -\zX } d\zX}
  {\sqrt{\frac{2}{\pi}} \int_{\zX=0}^{\zX=\infty} e^{-\frac{1}{2} \zX^{2}} \pbetaFn{ \zX } \pbetaFn{ -\zX } d\zX}
  =
  \frac
  {\int_{\zX=0}^{\zX=\infty} \zX e^{-\frac{1}{2} \zX^{2}} \pbetaFn{ \zX } \pbetaFn{ -\zX } d\zX}
  {\int_{\zX=0}^{\zX=\infty} e^{-\frac{1}{2} \zX^{2}} \pbetaFn{ \zX } \pbetaFn{ -\zX } d\zX}
  ,\\
  \E[ | \WRV | \Mid| \VRV=1 ]
  =
  \frac
  {\sqrt{\frac{2}{\pi}} \int_{\zX=0}^{\zX=\infty} \zX e^{-\frac{1}{2} \zX^{2}} \pbetaFn{ -\zX } d\zX}
  {\sqrt{\frac{2}{\pi}} \int_{\zX=0}^{\zX=\infty} e^{-\frac{1}{2} \zX^{2}} \pbetaFn{ -\zX } d\zX}
  =
  \frac
  {\int_{\zX=0}^{\zX=\infty} \zX e^{-\frac{1}{2} \zX^{2}} \pbetaFn{ -\zX } d\zX}
  {\int_{\zX=0}^{\zX=\infty} e^{-\frac{1}{2} \zX^{2}} \pbetaFn{ -\zX } d\zX}
.\end{gather*}
Since \(  \pbetaFn{ \zX } = \frac{1}{1+e^{-\betaX \zX}}  \) is a nondecreasing function and the support of \(  \pdf{| \WRV | \Mid| \URV=1}  \) and \(  \pdf{| \WRV | \Mid| \VRV=1}  \) lies on the nonnegative real line, it follows that
\(  \E[ | \WRV | \Mid| \URV=1 ] \geq \E[ | \WRV | \Mid| \VRV=1 ]  \),
and hence, by the above pair of equations,
\begin{gather}
\label{eqn:pf:claim:pf:corollary:main-technical:logistic-regression:1:2}
  \frac
  {\int_{\zX=0}^{\zX=\infty} \zX e^{-\frac{1}{2} \zX^{2}} \pbetaFn{ -\zX } d\zX}
  {\int_{\zX=0}^{\zX=\infty} e^{-\frac{1}{2} \zX^{2}} \pbetaFn{ -\zX } d\zX}
  =
  \E[ | \WRV | \Mid| \VRV=1 ]
  \leq
  \E[ | \WRV | \Mid| \URV=1 ]
  =
  \frac
  {\int_{\zX=0}^{\zX=\infty} \zX e^{-\frac{1}{2} \zX^{2}} \pbetaFn{ \zX } \pbetaFn{ -\zX } d\zX}
  {\int_{\zX=0}^{\zX=\infty} e^{-\frac{1}{2} \zX^{2}} \pbetaFn{ \zX } \pbetaFn{ -\zX } d\zX}
.\end{gather}
Therefore, returning to \EQUATION \eqref{eqn:pf:claim:pf:corollary:main-technical:logistic-regression:1:1} and applying the above inequality in \EQUATION \eqref{eqn:pf:claim:pf:corollary:main-technical:logistic-regression:1:2}, \CLAIM \ref{claim:pf:corollary:main-technical:logistic-regression:1} follows:
\begin{align*}
  &
  \left( \int_{\zX=0}^{\zX=\infty} \zX e^{-\frac{1}{2} \zX^{2}} \pbetaFn{ -\zX } d\zX \right)
  \left( \int_{\zX=0}^{\zX=\infty} \zX e^{-\frac{1}{2} \zX^{2}} \pbetaFn{ \zX } \pbetaFn{ -\zX } d\zX \right)
  \\
  &-
  \left( \int_{\zX=0}^{\zX=\infty} e^{-\frac{1}{2} \zX^{2}} \pbetaFn{ -\zX } d\zX \right)
  \left( \int_{\zX=0}^{\zX=\infty} \zX^{2} e^{-\frac{1}{2} \zX^{2}} \pbetaFn{ \zX } \pbetaFn{ -\zX } d\zX \right)
  \\
  &\AlignSp=
  \left( \int_{\zX=0}^{\zX=\infty} e^{-\frac{1}{2} \zX^{2}} \pbetaFn{ -\zX } d\zX \right)
  \left( \int_{\zX=0}^{\zX=\infty} e^{-\frac{1}{2} \zX^{2}} \pbetaFn{ \zX } \pbetaFn{ -\zX } d\zX \right)
  \\
  &\AlignSp\AlignSp \left(
  \left( \tfrac{\int_{\zX=0}^{\zX=\infty} \zX e^{-\frac{1}{2} \zX^{2}} \pbetaFn{ -\zX } d\zX}{\int_{\zX=0}^{\zX=\infty} e^{-\frac{1}{2} \zX^{2}} \pbetaFn{ -\zX } d\zX} \right)
  \left( \tfrac{\int_{\zX=0}^{\zX=\infty} \zX e^{-\frac{1}{2} \zX^{2}} \pbetaFn{ \zX } \pbetaFn{ -\zX } d\zX}{\int_{\zX=0}^{\zX=\infty} e^{-\frac{1}{2} \zX^{2}} \pbetaFn{ \zX } \pbetaFn{ -\zX } d\zX} \right)
  -
  \left( \tfrac{\int_{\zX=0}^{\zX=\infty} \zX^{2} e^{-\frac{1}{2} \zX^{2}} \pbetaFn{ \zX } \pbetaFn{ -\zX } d\zX}{\int_{\zX=0}^{\zX=\infty} e^{-\frac{1}{2} \zX^{2}} \pbetaFn{ \zX } \pbetaFn{ -\zX } d\zX} \right)
  \right)
  \\
  &\AlignSp \leq
  \left( \int_{\zX=0}^{\zX=\infty} e^{-\frac{1}{2} \zX^{2}} \pbetaFn{ -\zX } d\zX \right)
  \left( \int_{\zX=0}^{\zX=\infty} e^{-\frac{1}{2} \zX^{2}} \pbetaFn{ \zX } \pbetaFn{ -\zX } d\zX \right)
  \\
  &\AlignSp\AlignSp
  \left(
    \left(
      \tfrac
      {\int_{\zX=0}^{\zX=\infty} \zX e^{-\frac{1}{2} \zX^{2}} \pbetaFn{ \zX } \pbetaFn{ -\zX } d\zX}
      {\int_{\zX=0}^{\zX=\infty} e^{-\frac{1}{2} \zX^{2}} \pbetaFn{ \zX } \pbetaFn{ -\zX } d\zX}
    \right)^{2}
    -
    \tfrac
    {\int_{\zX=0}^{\zX=\infty} \zX^{2} e^{-\frac{1}{2} \zX^{2}} \pbetaFn{ \zX } \pbetaFn{ -\zX } d\zX}
    {\int_{\zX=0}^{\zX=\infty} e^{-\frac{1}{2} \zX^{2}} \pbetaFn{ \zX } \pbetaFn{ -\zX } d\zX}
  \right)
,\end{align*}
as desired.
\end{subproof}
Having established \CLAIM \ref{claim:pf:corollary:main-technical:logistic-regression:1}, \COROLLARY \ref{corollary:main-technical:logistic-regression} for logistic regression is thus proved.
\let\wX\oldwX%
\let\WRV\oldWRV%

Next, the specialization to \textbf{probit regression} in \COROLLARY \ref{corollary:main-technical:logistic-regression} is addressed.

%
\mostlycheckoff%
As in the proof of 
the logistic case, the probit case follows from a two-step argument---the first being \STEP \Enum[{\label{enum:pf:corollary:main-technical:probit:a}}]{a}, verifying that \ASSUMPTION \ref{assumption:p} holds when \(  \pFn  \) is parameterized by the \betaXnamepr, \(  \betaX \GTR 0  \), and defined at \(  \zX \in \R  \) as
\begin{gather}
\label{eqn:pf:corollary:main-technical:probit:p}
  \pbetaFn{ \zX } = \frac{1}{\sqrt{2\pi}} \int_{\uX=-\infty}^{\uX=\betaX \zX} e^{-\frac{1}{2} \uX^{2}} d\uX
\end{gather}
for probit regression, as per \DEFINITION \ref{def:p:probit}; and the second being \STEP \Enum[{\label{enum:pf:corollary:main-technical:probit:b}}]{b}, deriving \(  \alphaX  \) and \(  \gammaX  \).
%
\par 
%
For  \ref{enum:pf:corollary:main-technical:probit:a},
beginning with \CONDITION \ref{condition:assumption:p:i} of \ASSUMPTION \ref{assumption:p}---that \(  \pFn  \) nondecreasing over the real line---notice that when \(  \betaX = 0  \), the function \(  \pFn  \) is constant:
\(  \pFn( \zX ) = \frac{1}{2}  \)
for all \(  \zX \in \R  \), and therefore, in this scenario, \(  \pFn  \) is nondecreasing.
Otherwise, when \(  \betaX > 0  \), \(  \pFn  \) can be rewritten as follows for \(  \zX \in \R  \):
\begin{gather*}
  \pbetaFn{ \zX }
  =
  \frac{1}{\sqrt{2\pi}} \int_{\uX=-\infty}^{\uX=\betaX \zX} e^{-\frac{1}{2} \uX^{2}} d\uX
  =
  \frac{\betaX}{\sqrt{2\pi}} \int_{\uX=-\infty}^{\uX=\zX} e^{-\frac{1}{2} \betaX^{2} \uX^{2}} d\uX
,\end{gather*}
where the second equality applies a change of variables.
Hence, \(  \pFn  \) is the distribution function of a mean\nobreakdash-\(  0  \), variance\nobreakdash-\(  \frac{1}{\betaX^{2}}  \) Gaussian random variable.
Since distribution functions are nondecreasing, clearly the first condition of \ASSUMPTION \ref{assumption:p} holds under this model.
Proceeding to the second requirement, \CONDITION \ref{condition:assumption:p:ii}, of \ASSUMPTION \ref{assumption:p}---that
\begin{gather}
\label{eqn:pf:corollary:main-technical:probit:1}
  \frac{\partial}{\partial \zX}
  \frac{\pExpr*{\zX+\wX \betaXParam}}{\pExpr{\zX \betaXParam}}
  \leq 0
\end{gather}
for all
\(  \zX \in \R  \) and \(  \wX > 0  \)%
---note the following properties of \(  \pFn  \):
\begin{gather}
\label{eqn:pf:corollary:main-technical:probit:6}
  1 - \pbetaFn{ \zX }
  =
  1 - \frac{1}{\sqrt{2\pi}} \int_{\uX=-\infty}^{\uX=\betaX \zX} e^{-\frac{1}{2} \uX^{2}} d\uX
  =
  \frac{1}{\sqrt{2\pi}} \int_{\uX=\betaX \zX}^{\uX=\infty} e^{-\frac{1}{2} \uX^{2}} d\uX
  =
  \frac{1}{\sqrt{2\pi}} \int_{\uX=-\infty}^{\uX=-\betaX \zX} e^{-\frac{1}{2} \uX^{2}} d\uX
  =
  \pbetaFn{ -\zX }
  ,\\
\label{eqn:pf:corollary:main-technical:probit:7}
  1 - \pbetaFn{ \zX }
  =
  1 - \frac{1}{\sqrt{2\pi}} \int_{\uX=-\infty}^{\uX=\betaX \zX} e^{-\frac{1}{2} \uX^{2}} d\uX
  =
  \frac{1}{\sqrt{2\pi}} \int_{\uX=\betaX \zX}^{\uX=\infty} e^{-\frac{1}{2} \uX^{2}} d\uX
  =
  \frac{1}{\sqrt{2\pi}} \int_{\uX=0}^{\uX=\infty} e^{-\frac{1}{2} ( \uX+\betaX \zX )^{2}} d\uX
,\end{gather}
where
\(  \zX \in \R  \).
Additionally, it suffices to verify \ASSUMPTION \ref{condition:assumption:p:ii} for \(  \betaX = 1  \).
Thus, for \(  \wX > 0  \),
\begin{align*}
  \frac{\pExpr*{\zX+\wX}}{\pExpr{\zX}}
  &=
  \frac{2( 1-\pbetaFn{ \zX+\wX })}{2( 1-\pbetaFn{ \zX } )}
  \\
  &=
  \frac{1-\pbetaFn{ \zX+\wX }}{1-\pbetaFn{ \zX }}
  \\
  &=
  \frac
  {\frac{1}{\sqrt{2\pi}} \int_{\uX=0}^{\uX=\infty} e^{-\frac{1}{2} ( \uX + \zX+\wX )^{2}} d\uX}
  {\frac{1}{\sqrt{2\pi}} \int_{\uX=0}^{\uX=\infty} e^{-\frac{1}{2} ( \uX + \zX )^{2}} d\uX}
  \\
  &=
  \frac
  {\int_{\uX=0}^{\uX=\infty} e^{-\frac{1}{2} ( \uX + \zX+\wX )^{2}} d\uX}
  {\int_{\uX=0}^{\uX=\infty} e^{-\frac{1}{2} ( \uX + \zX )^{2}} d\uX}
,\end{align*}
and hence, the desired inequality in \EQUATION \eqref{eqn:pf:corollary:main-technical:probit:1} is equivalently stated as:
\begin{align*}
  \frac{\partial}{\partial \zX}
  \frac{\pExpr*{\zX+\wX}}{\pExpr{\zX}}
  &=
  \frac{\partial}{\partial \zX}
  \frac
  {\int_{\uX=0}^{\uX=\infty} e^{-\frac{1}{2} ( \uX + \zX+\wX )^{2}} d\uX}
  {\int_{\uX=0}^{\uX=\infty} e^{-\frac{1}{2} ( \uX + \zX )^{2}} d\uX}
  \leq 0
.\end{align*}
To evaluate this partial derivative, observe:
\begin{align*}
  &\AlignSp
  \frac{\partial}{\partial \zX}
  \frac{\pExpr*{\zX+\wX}}{\pExpr{\zX}}
  \\
  &\AlignSp=
  \frac{\partial}{\partial \zX}
  \frac
  {\int_{\uX=0}^{\uX=\infty} e^{-\frac{1}{2} ( \uX + \zX+\wX )^{2}} d\uX}
  {\int_{\uX=0}^{\uX=\infty} e^{-\frac{1}{2} ( \uX + \zX )^{2}} d\uX}
  \\
  &\AlignSp=
  \frac
  {\left( \int_{\uX=0}^{\uX=\infty} e^{-\frac{1}{2} ( \uX + \zX )^{2}} d\uX \right)
   \left( \frac{\partial}{\partial \zX} \int_{\uX=0}^{\uX=\infty} e^{-\frac{1}{2} ( \uX + \zX+\wX )^{2}} d\uX \right)}
   {\left( \int_{\uX=0}^{\uX=\infty} e^{-\frac{1}{2} ( \uX + \zX )^{2}} d\uX \right)^{2}}
   \\
   &\AlignSp\AlignSp-
   \frac{
   \left( \int_{\uX=0}^{\uX=\infty} e^{-\frac{1}{2} ( \uX + \zX+\wX )^{2}} d\uX \right)
   \left( \frac{\partial}{\partial \zX} \int_{\uX=0}^{\uX=\infty} e^{-\frac{1}{2} ( \uX + \zX )^{2}} d\uX \right)}
  {\left( \int_{\uX=0}^{\uX=\infty} e^{-\frac{1}{2} ( \uX + \zX )^{2}} d\uX \right)^{2}}
  \\
  &\dCmt{by the quotient rule}
  \\
  &\AlignSp=
  \frac{1}{\left( \int_{\uX=0}^{\uX=\infty} e^{-\frac{1}{2} ( \uX + \zX )^{2}} d\uX \right)^{2}}
  \left(
  \left( \int_{\uX=0}^{\uX=\infty} e^{-\frac{1}{2} ( \uX + \zX )^{2}} d\uX \right)
   \left( \frac{\partial}{\partial \zX} \int_{\uX=0}^{\uX=\infty} e^{-\frac{1}{2} ( \uX + \zX+\wX )^{2}} d\uX \right)
   \right.
   \\
   &\phantom{\AlignSp \displaystyle \frac{1}{\displaystyle \left( \int_{\uX=0}^{\uX=\infty} e^{-\frac{1}{2} ( \uX + \zX )^{2}} d\uX \right)^{2}} \Biggl(} \AlignSp
   \left.
   -
   \left( \int_{\uX=0}^{\uX=\infty} e^{-\frac{1}{2} ( \uX + \zX+\wX )^{2}} d\uX \right)
   \left( \frac{\partial}{\partial \zX} \int_{\uX=0}^{\uX=\infty} e^{-\frac{1}{2} ( \uX + \zX )^{2}} d\uX \right)
   \right)
  \\
  &\AlignSp=
  \frac{1}{\left( \int_{\uX=0}^{\uX=\infty} e^{-\frac{1}{2} ( \uX + \zX )^{2}} d\uX \right)^{2}}
  \left(
  \left( \int_{\uX=0}^{\uX=\infty} e^{-\frac{1}{2} ( \uX + \zX )^{2}} d\uX \right)
  \left( \int_{\uX=0}^{\uX=\infty} \frac{\partial}{\partial \zX} e^{-\frac{1}{2} ( \uX + \zX+\wX )^{2}} d\uX \right)
  \right.
  \\
  &\phantom{\AlignSp \displaystyle \frac{1}{\displaystyle \left( \int_{\uX=0}^{\uX=\infty} e^{-\frac{1}{2} ( \uX + \zX )^{2}} d\uX \right)^{2}} \Biggl(} \AlignSp
  \left.
  -
  \left( \int_{\uX=0}^{\uX=\infty} e^{-\frac{1}{2} ( \uX + \zX+\wX )^{2}} d\uX \right)
  \left( \int_{\uX=0}^{\uX=\infty} \frac{\partial}{\partial \zX} e^{-\frac{1}{2} ( \uX + \zX )^{2}} d\uX \right)
  \right)
  \\
  &\dCmt{\(  \zX  \) does not depend on the variable of integration}
  \\
  &\AlignSp=
  \frac{1}{\left( \int_{\uX=0}^{\uX=\infty} e^{-\frac{1}{2} ( \uX + \zX )^{2}} d\uX \right)^{2}}
  \left(
  \left( \int_{\uX=0}^{\uX=\infty} e^{-\frac{1}{2} ( \uX + \zX )^{2}} d\uX \right)
  \left( -\int_{\uX=0}^{\uX=\infty} ( \uX + \zX+\wX ) e^{-\frac{1}{2} ( \uX + \zX+\wX )^{2}} d\uX \right)
  \right.
  \\
  &\phantom{\AlignSp \displaystyle \frac{1}{\displaystyle \left( \int_{\uX=0}^{\uX=\infty} e^{-\frac{1}{2} ( \uX + \zX )^{2}} d\uX \right)^{2}} \Biggl(} \AlignSp
  \left.
  -
  \left( \int_{\uX=0}^{\uX=\infty} e^{-\frac{1}{2} ( \uX + \zX+\wX )^{2}} d\uX \right)
  \left( -\int_{\uX=0}^{\uX=\infty} ( \uX + \zX ) e^{-\frac{1}{2} ( \uX + \zX )^{2}} d\uX \right)
  \right)
  \\
  &\dCmt{via the chain rule}
  \\
  &\AlignSp=
  \frac
  {
   \left( \int_{\uX=0}^{\uX=\infty} e^{-\frac{1}{2} ( \uX + \zX )^{2}} d\uX \right)
   \left( \int_{\uX=0}^{\uX=\infty} e^{-\frac{1}{2} ( \uX + \zX+\wX )^{2}} d\uX \right)}
  {\left( \int_{\uX=0}^{\uX=\infty} e^{-\frac{1}{2} ( \uX + \zX )^{2}} d\uX \right)^{2}}
  \\
  &\AlignSp\AlignSp
  \left(
  \frac
  {\int_{\uX=0}^{\uX=\infty} ( \uX + \zX ) e^{-\frac{1}{2} ( \uX + \zX )^{2}} d\uX}
  {\int_{\uX=0}^{\uX=\infty} e^{-\frac{1}{2} ( \uX + \zX )^{2}} d\uX}
  -
  \frac
  {\int_{\uX=0}^{\uX=\infty} ( \uX + \zX+\wX ) e^{-\frac{1}{2} ( \uX + \zX+\wX )^{2}} d\uX}
  {\int_{\uX=0}^{\uX=\infty} e^{-\frac{1}{2} ( \uX + \zX+\wX )^{2}} d\uX}
  \right)
  \\
  &\dCmt{by distributivity and commutativity}
  \\
  &\AlignSp=
  \frac
  {
   \left( \int_{\uX=\zX}^{\uX=\infty} e^{-\frac{1}{2} \uX^{2}} d\uX \right)
   \left( \int_{\uX=\zX+\wX}^{\uX=\infty} e^{-\frac{1}{2} \uX^{2}} d\uX \right)}
  {\left( \int_{\uX=\zX}^{\uX=\infty} e^{-\frac{1}{2} \uX^{2}} d\uX \right)^{2}}
  \left(
  \frac
  {\int_{\uX=\zX}^{\uX=\infty} \uX e^{-\frac{1}{2} \uX^{2}} d\uX}
  {\int_{\uX=\zX}^{\uX=\infty} e^{-\frac{1}{2} \uX^{2}} d\uX}
  -
  \frac
  {\int_{\uX=\zX+\wX}^{\uX=\infty} \uX e^{-\frac{1}{2} \uX^{2}} d\uX}
  {\int_{\uX=\zX+\wX}^{\uX=\infty} e^{-\frac{1}{2} \uX^{2}} d\uX}
  \right)
  .\\
  &\dCmt{by a change of variables (applied to each of the four integrals)}
\end{align*}
Notice in the last line that the sign of
\(  \frac{\partial}{\partial \zX} \frac{\pExpr*{\zX+\wX}}{\pExpr{\zX}}  \)
is entirely determined by the sign of the rightmost multiplicand,
\begin{gather*}
  \frac
  {\int_{\uX=\zX}^{\uX=\infty} \uX e^{-\frac{1}{2} \uX^{2}} d\uX}
  {\int_{\uX=\zX}^{\uX=\infty} e^{-\frac{1}{2} \uX^{2}} d\uX}
  -
  \frac
  {\int_{\uX=\zX+\wX}^{\uX=\infty} \uX e^{-\frac{1}{2} \uX^{2}} d\uX}
  {\int_{\uX=\zX+\wX}^{\uX=\infty} e^{-\frac{1}{2} \uX^{2}} d\uX}
,\end{gather*}
and hence, it suffices to show nonpositivity of this term, \ie that
\begin{gather}
\label{eqn:pf:corollary:main-technical:probit:2}
  \frac
  {\int_{\uX=\zX}^{\uX=\infty} \uX e^{-\frac{1}{2} \uX^{2}} d\uX}
  {\int_{\uX=\zX}^{\uX=\infty} e^{-\frac{1}{2} \uX^{2}} d\uX}
  -
  \frac
  {\int_{\uX=\zX+\wX}^{\uX=\infty} \uX e^{-\frac{1}{2} \uX^{2}} d\uX}
  {\int_{\uX=\zX+\wX}^{\uX=\infty} e^{-\frac{1}{2} \uX^{2}} d\uX}
  \leq 0
.\end{gather}
Towards verifying this last inequality, \eqref{eqn:pf:corollary:main-technical:probit:2}, let
\(  \URV \sim \N(0,1)  \)
be a standard univariate Gaussian random variable, and let \(  \VRV  \) and \(  \WRV  \) be indicator random variables given by
\(  \VRV \defeq \I( | \URV | \geq \zX )  \) and
\(  \WRV \defeq \I( | \URV | \geq \zX+\wX )  \),
respectively.
The random variable \(  | \URV |  \) is standard half-normal with density
\begin{gather}
\label{eqn:pf:corollary:main-technical:probit:5}
  \pdf{| \URV |}( \uX )
  =
  \sqrt{\frac{2}{\pi}} e^{-\frac{1}{2} \uX^{2}}
.\end{gather}
The masses of the conditioned random variables \(  \VRV=1 \Mid| | \URV |  \) and \(  \WRV=1 \Mid| | \URV |  \) are given by
\begin{gather}
\label{eqn:pf:corollary:main-technical:probit:3}
  \pdf{\VRV \Mid| | \URV |}( 1 \Mid| \uX )
  =
  \begin{cases}
  0 ,& \cIf \uX    < \zX,\\
  1 ,& \cIf \uX \geq \zX,
  \end{cases}
  \\
\label{eqn:pf:corollary:main-technical:probit:4}
  \pdf{\WRV \Mid| | \URV |}( 1 \Mid| \uX )
  =
  \begin{cases}
  0 ,& \cIf \uX    < \zX+\wX,\\
  1 ,& \cIf \uX \geq \zX+\wX.
  \end{cases}
\end{gather}
Observe:
\begin{align*}
  \pdf{\VRV}( 1 )
  &=
  \int_{\uX=0}^{\uX=\infty}
  \pdf{| \URV |}( \uX )
  \pdf{\VRV \Mid| | \URV |}( 1 \Mid| \uX )
  d\uX
  \\
  &\dCmt{by the law of total probability, definition of}
  \\
  &\dCmtx{conditional probabilities, and support of \(  \pdf{| \URV |}  \)}
  \\
  &=
  \int_{\uX=0}^{\uX=\zX}
  \pdf{| \URV |}( \uX )
  \cdot 0
  d\uX
  +
  \int_{\uX=\zX}^{\uX=\infty}
  \pdf{| \URV |}( \uX )
  \cdot 1
  d\uX
  \\
  &\dCmt{by \EQUATION \eqref{eqn:pf:corollary:main-technical:probit:3}}
  \\
  &=
  \sqrt{\frac{2}{\pi}}
  \int_{\uX=\zX}^{\uX=\infty}
  e^{-\frac{1}{2} \uX^{2}}
  d\uX
  ,\\
  &\dCmt{by \EQUATION \eqref{eqn:pf:corollary:main-technical:probit:5}}
\end{align*}
and likewise,
\begin{align*}
  \pdf{\WRV}( 1 )
  &=
  \int_{\uX=0}^{\uX=\infty}
  \pdf{| \URV |}( \uX )
  \pdf{\WRV \Mid| | \URV |}( 1 \Mid| \uX )
  d\uX
  \\
  &\dCmt{by the law of total probability, definition of}
  \\
  &\dCmtx{conditional probabilities, and support of \(  \pdf{| \URV |}  \)}
  \\
  &=
  \int_{\uX=0}^{\uX=\zX+\wX}
  \pdf{| \URV |}( \uX )
  \cdot 0
  d\uX
  +
  \int_{\uX=\zX+\wX}^{\uX=\infty}
  \pdf{| \URV |}( \uX )
  \cdot 1
  d\uX
  \\
  &\dCmt{by \EQUATION \eqref{eqn:pf:corollary:main-technical:probit:4}}
  \\
  &=
  \sqrt{\frac{2}{\pi}}
  \int_{\uX=\zX+\wX}^{\uX=\infty}
  e^{-\frac{1}{2} \uX^{2}}
  d\uX
  ,\\
  &\dCmt{by \EQUATION \eqref{eqn:pf:corollary:main-technical:probit:5}}
\end{align*}
By Bayes' theorem,
\begin{align*}
  \pdf{| \URV | \Mid| \VRV}( \uX \Mid| 1 )
  &=
  \frac
  {\pdf{| \URV |}( \uX ) \pdf{\VRV \Mid| | \URV |}( 1 \Mid| \uX )}
  {\pdf{\VRV}( 1 )}
  \\
  &=
  \begin{cases}
  0 ,& \cIf \uX < \zX ,\\
  \frac
  {\sqrt{\frac{2}{\pi}} e^{-\frac{1}{2} \uX^{2}} d\uX}
  {\sqrt{\frac{2}{\pi}} \int_{\yX=\zX}^{\yX=\infty} e^{-\frac{1}{2} \yX^{2}} d\yX}
  ,& \cIf \uX \geq \zX,
  \end{cases}
  \\
  &=
  \begin{cases}
  0 ,& \cIf \uX < \zX ,\\
  \frac
  {e^{-\frac{1}{2} \uX^{2}} d\uX}
  {\int_{\yX=\zX}^{\yX=\infty} e^{-\frac{1}{2} \yX^{2}} d\yX}
  ,& \cIf \uX \geq \zX,
  \end{cases}
\end{align*}
and
\begin{align*}
  \pdf{| \URV | \Mid| \WRV}( \uX \Mid| 1 )
  &=
  \frac
  {\pdf{| \URV |}( \uX ) \pdf{\WRV \Mid| | \URV |}( 1 \Mid| \uX )}
  {\pdf{\WRV}( 1 )}
  \\
  &=
  \begin{cases}
  0 ,& \cIf \uX < \zX+\wX ,\\
  \frac
  {\sqrt{\frac{2}{\pi}} e^{-\frac{1}{2} \uX^{2}} d\uX}
  {\sqrt{\frac{2}{\pi}} \int_{\yX=\zX+\wX}^{\yX=\infty} e^{-\frac{1}{2} \yX^{2}} d\yX}
  ,& \cIf \uX \geq \zX+\wX,
  \end{cases}
  \\
  &=
  \begin{cases}
  0 ,& \cIf \uX < \zX+\wX ,\\
  \frac
  {e^{-\frac{1}{2} \uX^{2}} d\uX}
  {\int_{\yX=\zX+\wX}^{\yX=\infty} e^{-\frac{1}{2} \yX^{2}} d\yX}
  ,& \cIf \uX \geq \zX+\wX.
  \end{cases}
\end{align*}
Therefore, in expectation,
\begin{gather*}
  \E[ | \URV | \Mid| | \URV | \geq \zX ]
  =
  \E[ | \URV | \Mid| \VRV=1 ]
  =
  \frac
  {\int_{\uX=\zX}^{\uX=\infty} \uX e^{-\frac{1}{2} \uX^{2}} d\uX}
  {\int_{\uX=\zX}^{\uX=\infty} e^{-\frac{1}{2} \uX^{2}} d\uX}
  ,\\
  \E[ | \URV | \Mid| | \URV | \geq \zX+\wX ]
  =
  \E[ | \URV | \Mid| \WRV=1 ]
  =
  \frac
  {\int_{\uX=\zX+\wX}^{\uX=\infty} \uX e^{-\frac{1}{2} \uX^{2}} d\uX}
  {\int_{\uX=\zX+\wX}^{\uX=\infty} e^{-\frac{1}{2} \uX^{2}} d\uX}
.\end{gather*}
Note that
\(  \E[ | \URV | \Mid| | \URV | \geq \zX ] \leq \E[ | \URV | \Mid| | \URV | \geq \zX+\wX ]  \)
when \(  \wX > 0  \), implying that
\begin{gather*}
  \frac
  {\int_{\uX=\zX}^{\uX=\infty} \uX e^{-\frac{1}{2} \uX^{2}} d\uX}
  {\int_{\uX=\zX}^{\uX=\infty} e^{-\frac{1}{2} \uX^{2}} d\uX}
  =
  \E[ | \URV | \Mid| | \URV | \geq \zX ]
  \leq
  \E[ | \URV | \Mid| | \URV | \geq \zX+\wX ]
  =
  \frac
  {\int_{\uX=\zX+\wX}^{\uX=\infty} \uX e^{-\frac{1}{2} \uX^{2}} d\uX}
  {\int_{\uX=\zX+\wX}^{\uX=\infty} e^{-\frac{1}{2} \uX^{2}} d\uX}
,\end{gather*}
and hence also that
\begin{gather*}
  \frac
  {\int_{\uX=\zX}^{\uX=\infty} \uX e^{-\frac{1}{2} \uX^{2}} d\uX}
  {\int_{\uX=\zX}^{\uX=\infty} e^{-\frac{1}{2} \uX^{2}} d\uX}
  -
  \frac
  {\int_{\uX=\zX+\wX}^{\uX=\infty} \uX e^{-\frac{1}{2} \uX^{2}} d\uX}
  {\int_{\uX=\zX+\wX}^{\uX=\infty} e^{-\frac{1}{2} \uX^{2}} d\uX}
  \leq 0
,\end{gather*}
as desired.
It follows from this and the earlier discussion that
\begin{align*}
  \frac{\partial}{\partial \zX}
  \frac{\pExpr*{\zX+\wX}}{\pExpr{\zX}}
  &=
  \frac{\partial}{\partial \zX}
  \frac
  {\int_{\uX=0}^{\uX=\infty} e^{-\frac{1}{2} ( \uX + \zX+\wX )^{2}} d\uX}
  {\int_{\uX=0}^{\uX=\infty} e^{-\frac{1}{2} ( \uX + \zX )^{2}} d\uX}
  \leq 0
,\end{align*}
which verifies that \CONDITION \ref{condition:assumption:p:ii} of \ASSUMPTION \ref{assumption:p} is upheld when \(  \pFn  \) is as defined for probit regression.
Having shown that both conditions of \ASSUMPTION \ref{assumption:p} are satisfied for probit regression, \STEP \ref{enum:pf:corollary:main-technical:probit:a} is completed.
%
\par 
%
%
Moving ahead with \STEP \ref{enum:pf:corollary:main-technical:probit:b}, recall that the aim here is to derive \(  \alphaX  \) and \(  \gammaX  \).
For \(  \alphaX  \), observe:
\begin{align*}
  \alphaX
  &=
  \frac{1}{\sqrt{2\pi}}
  \int_{\zX=0}^{\zX=\infty}
  e^{-\frac{1}{2} \zX^{2}}
  ( 1 - \pFn( \zX ) + \pFn( -\zX ) )
  d\zX
  \\
  &=
  \frac{2}{\sqrt{2\pi}}
  \int_{\zX=0}^{\zX=\infty}
  e^{-\frac{1}{2} \zX^{2}}
  \pFn( -\zX )
  d\zX
  \\
  &=
  \frac{2}{\sqrt{2\pi}}
  \int_{\zX=0}^{\zX=\infty}
  e^{-\frac{1}{2} \zX^{2}}
  \frac{1}{\sqrt{2\pi}}
  \int_{\uX=-\infty}^{\uX=-\betaX \zX}
  e^{-\frac{1}{2} \uX^{2}}
  d\uX
  d\zX
  \\
  &=
  \frac{2}{\sqrt{2\pi}}
  \int_{\zX=0}^{\zX=\infty}
  e^{-\frac{1}{2} \zX^{2}}
  \left(
    \frac{1}{\sqrt{2\pi}}
    \int_{\uX=-\infty}^{\uX=0}
    e^{-\frac{1}{2} \uX^{2}}
    d\uX
    -
    \frac{1}{\sqrt{2\pi}}
    \int_{\uX=-\betaX \zX}^{\uX=0}
    e^{-\frac{1}{2} \uX^{2}}
    d\uX
  \right)
  d\zX
  \\
  &=
  \frac{2}{\sqrt{2\pi}}
  \int_{\zX=0}^{\zX=\infty}
  e^{-\frac{1}{2} \zX^{2}}
  \left(
    \frac{1}{2}
    -
    \frac{1}{\sqrt{2\pi}}
    \int_{\uX=-\betaX \zX}^{\uX=0}
    e^{-\frac{1}{2} \uX^{2}}
    d\uX
  \right)
  d\zX
  \\
  &=
  \frac{1}{\sqrt{2\pi}}
  \int_{\zX=0}^{\zX=\infty}
  e^{-\frac{1}{2} \zX^{2}}
  \left(
    1
    -
    \frac{1}{\sqrt{2\pi}}
    \int_{\uX=-\betaX \zX}^{\uX=\betaX \zX}
    e^{-\frac{1}{2} \uX^{2}}
    d\uX
  \right)
  d\zX
  \\
  &=
  \frac{1}{\sqrt{2\pi}}
  \int_{\zX=0}^{\zX=\infty}
  e^{-\frac{1}{2} \zX^{2}}
  d\zX
  -
  \frac{1}{\sqrt{2\pi}}
  \int_{\zX=0}^{\zX=\infty}
  e^{-\frac{1}{2} \zX^{2}}
  \frac{1}{\sqrt{2\pi}}
  \int_{\uX=-\betaX \zX}^{\uX=\betaX \zX}
  e^{-\frac{1}{2} \uX^{2}}
  d\uX
  d\zX
  \\
  &=
  \frac{1}{2}
  -
  \frac{1}{\pi}
  \arctan( \betaX )
  \\
  &=
  \frac{1}{\pi}
  \arctan \left( \frac{1}{\betaX} \right)
,\end{align*}
where the last equality holds for \(  \betaX > 0  \).
Towards deriving a closed form expression for \(  \gammaX  \), define \(  \zetaX = 1 - \sqrt{\frac{\pi}{2}} \gammaX  \).
Then, \(  \zetaX  \) is similarly obtained as follows:
\begin{align*}
  \zetaX
  &=
  \int_{\zX=0}^{\zX=\infty}
  \zX
  e^{-\frac{1}{2} \zX^{2}}
  ( 1 - \pFn( \zX ) + \pFn( -\zX ) )
  d\zX
  \\
  &=
  2
  \int_{\zX=0}^{\zX=\infty}
  \zX
  e^{-\frac{1}{2} \zX^{2}}
  \pFn( -\zX )
  d\zX
  \\
  &=
  2
  \int_{\zX=0}^{\zX=\infty}
  \zX
  e^{-\frac{1}{2} \zX^{2}}
  \frac{1}{\sqrt{2\pi}}
  \int_{\uX=-\infty}^{\uX=-\betaX \zX}
  e^{-\frac{1}{2} \uX^{2}}
  d\uX
  d\zX
  \\
  &=
  2
  \int_{\zX=0}^{\zX=\infty}
  \zX
  e^{-\frac{1}{2} \zX^{2}}
  \left(
    \frac{1}{\sqrt{2\pi}}
    \int_{\uX=-\infty}^{\uX=0}
    e^{-\frac{1}{2} \uX^{2}}
    d\uX
    -
    \frac{1}{\sqrt{2\pi}}
    \int_{\uX=-\betaX \zX}^{\uX=0}
    e^{-\frac{1}{2} \uX^{2}}
    d\uX
  \right)
  d\zX
  \\
  &=
  2
  \int_{\zX=0}^{\zX=\infty}
  \zX
  e^{-\frac{1}{2} \zX^{2}}
  \left(
    \frac{1}{2}
    -
    \frac{1}{\sqrt{2\pi}}
    \int_{\uX=-\betaX \zX}^{\uX=0}
    e^{-\frac{1}{2} \uX^{2}}
    d\uX
  \right)
  d\zX
  \\
  &=
  \int_{\zX=0}^{\zX=\infty}
  \zX
  e^{-\frac{1}{2} \zX^{2}}
  \left(
    1
    -
    \frac{1}{\sqrt{2\pi}}
    \int_{\uX=-\betaX \zX}^{\uX=\betaX \zX}
    e^{-\frac{1}{2} \uX^{2}}
    d\uX
  \right)
  d\zX
  \\
  &=
  \int_{\zX=0}^{\zX=\infty}
  \zX
  e^{-\frac{1}{2} \zX^{2}}
  d\zX
  -
  \int_{\zX=0}^{\zX=\infty}
  \zX
  e^{-\frac{1}{2} \zX^{2}}
  \frac{1}{\sqrt{2\pi}}
  \int_{\uX=-\betaX \zX}^{\uX=\betaX \zX}
  e^{-\frac{1}{2} \uX^{2}}
  d\uX
  d\zX
  \\
  &=
  1
  -
  \sin( \arctan( \betaX ) )
  \\
  &=
  1
  -
  \frac{\betaX}{\sqrt{\betaX^{2} + 1}}
.\end{align*}
It follows that
\(
  1-\zetaX = \frac{\betaX}{\sqrt{\betaX^{2} + 1}}
,\)
and hence,
\(
  \gammaX = \frac{\sqrt{2/\pi}\betaX}{\sqrt{\betaX^{2} + 1}}
.\)
Thus,
\begin{gather*}
  \frac{1}{\GAMMAX}
  =
  \sqrt{\frac{\pi}{2} \left( 1 + \frac{1}{\betaX^{2}} \right)}
  =
  \begin{cases}
  \displaystyle
  \BigO( \frac{1}{\betaX} ) ,&\cIf \betaX \in (0,\betaXThresholdPROne),\\
  \displaystyle
  \BigO( 1 )                ,&\cIf \betaX \in [\betaXThresholdPROne, \infty) ,\\
  \end{cases}
  ,\\
  \frac{1}{\GAMMAX^{2}}
  =
  \frac{\pi}{2}
  \left( 1 + \frac{1}{\betaX^{2}} \right)
  =
  \begin{cases}
  \displaystyle
  \BigO( \frac{1}{\betaX^{2}} ) ,&\cIf \betaX \in (0,\betaXThresholdPROne) ,\\
  \displaystyle
  \BigO( 1 )                    ,&\cIf \betaX \in [\betaXThresholdPROne, \infty) ,
  \end{cases}
\end{gather*}
where
\(  \ConstbetaXThresholdPROne \defeq \ConstbetaXThresholdPROneValue  \).
Additionally, notice that
\(
  \alphaX
  =
  \frac{1}{\pi}
  \arctan \left( \frac{1}{\betaX} \right)
  \leq
  \min \left\{ \frac{1}{2}, \frac{1}{\pi \betaX} \right\}
,\)
and therefore,
\begin{align*}
  \alphaO
  \leq
  \max \left\{ \min \left\{ \frac{1}{2}, \frac{1}{\pi \betaX} \right\}, \deltaX \right\}
  =
  \begin{cases}
  \displaystyle
  \BigO( 1 )                ,&\cIf \betaX \in (0,\betaXThresholdPROne) ,\\
  \BigO( \frac{1}{\betaX} ) ,&\cIf \betaX \in [\betaXThresholdPROne, \betaXThresholdPRTwo) ,\\
  \BigO( \epsilonX )          ,&\cIf \betaX \in [\betaXThresholdPRTwo, \infty) ,
  \end{cases}
\end{align*}
where
\(  \ConstbetaXThresholdPRTwo \defeq \ConstbetaXThresholdPRTwoValue  \).
Incorporating these expressions for \(  \alphaO  \) and \(  \gammaX  \) into the sample complexity in \EQUATION \eqref{eqn:main-technical:sparse:m} of \THEOREM \ref{thm:main-technical:sparse}, and into the definitions of \(  \nuX=\nuX( \deltaX )  \) and \(  \tauX=\tauX( \deltaX )  \),
the corollary's bound holds due to \THEOREM \ref{thm:main-technical:sparse}:
\begin{gather*}
  \left\|
    \thetaStar
    -
    \frac
    {\thetaXX + \hfFn[\JCoords]( \thetaStar, \thetaXX )}
    {\| \thetaXX + \hfFn[\JCoords]( \thetaStar, \thetaXX ) \|_{2}}
  \right\|_{2}
  \leq
  \sqrt{\deltaX \| \thetaStar-\thetaXX \|_{2}}
  +
  \deltaX
,\end{gather*}
for every \(  \thetaXX \in \ParamSpace  \) and every \(  \JCoords \subseteq [\n]  \), \(  | \JCoords | \leq \k  \), uniformly with probability at least \(  1-\rho  \) as long as
\checkthis%
\begin{align*}
  \m
  &\geq
  \max \!\!\begin{array}[t]{l} \displaystyle \Biggl\{
    \frac{108\pi^{3} ( 1+\frac{1}{\betaX^{2}} )}{\deltaX}
    \log \left( \frac{24}{\rhoX} \JSPCSIZES \right)
    ,\\ \displaystyle \phantom{\Bigg\{}
    \frac{48\pi^{2} \alphaO ( 1+\frac{1}{\betaX^{2}} )}{\ConstbLD^{2} \deltaX^{2}}
    \log \left( \frac{24}{\rhoX} \JSPCSIZES \right)
    ,\\ \displaystyle \phantom{\Bigg\{}
    \frac{200 \Constb \sqrt{\frac{\pi}{2} \left( 1+\frac{1}{\betaX^{2}} \right)}}{\Constc^{2} \deltaX}
    \frac{\log \left( \frac{24}{\rhoX} \JSPCSIZES \right)}{\sqrt{\log \left( \frac{4e}{\nuX} \right)}}
    ,\\ \displaystyle \phantom{\Bigg\{}
    \frac{64}{\Constb \deltaX} \sqrt{\frac{\pi}{2}  \left(1 + \frac{1}{\betaX^{2}} \right) \log \left( \frac{4e}{\nuX} \right)} \log \left( \frac{24}{\rhoX} \right)
    ,\\ \displaystyle \phantom{\Bigg\{}
    \frac{\ConstdSD \k}{\Constb \deltaX} \sqrt{\frac{\pi}{2} \left(1 + \frac{1}{\betaX^{2}} \right) \log \left( \frac{4e}{\nuX} \right)} \log \left( \frac{1}{\nuX} \right)
  \Biggr\}  \end{array}
  \\
  &=
  \begin{cases}
  \displaystyle
  \BigO'( \frac{\k}{\betaX^{2} \epsilonX^{2}} ) ,&\cIf \betaX \in (0,\betaXThresholdPROne) ,\\
  \displaystyle
  \BigO'( \frac{\k}{\betaX \epsilonX^{2}} )     ,&\cIf \betaX \in [\betaXThresholdPROne,\betaXThresholdPRTwo) ,\\
  \displaystyle
  \BigO'( \frac{\k}{\epsilonX} )     ,&\cIf \betaX \in [\betaXThresholdPRTwo, \infty) ,
  \end{cases}
\end{align*}
where
\(
  \nuX = \nuX( \deltaX ) = \nuXEXPRPR
.\)
\end{proof}


\subsection{Proof of the Intermediate Results}
\label{outline:pf-main-technical-result|pf-intermediate}

Having completed the proofs of the main technical results, \THEOREM \ref{thm:main-technical:sparse} and \COROLLARY \ref{corollary:main-technical:logistic-regression},
the auxiliary results used therein, \LEMMAS \ref{lemma:combine}--\ref{lemma:large-dist:2}---which were introduced in \SECTION \ref{outline:pf-main-technical-result|intermediate}---are proved next.


\subsubsection{Concentration Inequalities, Expectations, and a Deterministic Bound}
\label{outline:pf-main-technical-result|pf-intermediate|concentration-ineq}

The concentration inequalities and expectations in \LEMMA \ref{lemma:concentration-ineq}, below, will be crucial to the proofs of the intermediate results, \LEMMAS \ref{lemma:combine}--\ref{lemma:large-dist:2}.
The proof this lemma is deferred to \SECTION \ref{outline:concentration-ineq}.

\begin{lemma}
\label{lemma:concentration-ineq}
Fix
\(  \sXX, \sXXX, \tX, \tXX, \deltaX \in (0,1)  \).
Fix
\(  \thetaStar \in \ParamSpace  \),
and let
\(  \JS, \JSX \subseteq 2^{[\n]}  \) and
\(  \ParamCover \subset \ParamSpace  \)
be finite sets, and define
\(  \ParamCoverX \defeq \ParamCover \setminus \Ball{\tauX}( \thetaStar )  \).
Let
\(  \kO \defeq \kOExpr  \) and
\(  \kOX \defeq \kOXExpr  \),
and let
\(  \alphaO = \alphaO( \deltaX ) = \alphaOExpr  \).
Then,
\begin{gather}
  \Pr \left( {
    \Forall{\JCoords \in \JS, \thetaX \in \ParamCoverX}{
    \left\| \frac{\hFn[\JCoords]( \thetaStar, \thetaX )}{\sqrt{2\pi}} - \E \left[ \frac{\hFn[\JCoords]( \thetaStar, \thetaX )}{\sqrt{2\pi}} \right] \right\|_{2}
    \leq
    \sqrt{\frac{( 1+\sXX )( \kO-2 ) \ADIST}{\pi \m}}
    +
    \frac{\tX \ADIST}{\pi}
    }
  } \right)
  \nonumber \\
  \geq
  1
  -
  4 | \JS | | \ParamCoverX | e^{-\frac{1}{27\pi} \m \tX^{2} \ADIST}
  -
  | \JS | | \ParamCoverX | e^{-\frac{1}{18\pi ( 1+\sXX )} \m \tX^{2} \ADIST}
  -
  | \ParamCoverX | e^{-\frac{1}{3\pi} \m \sXX^{2} \ADIST}
  \label{eqn:lemma:concentration-ineq:pr:1}
  ,\\
  \Pr \left( {
    \Forall{\JCoordsX \in \JSX}{
    \left\| \frac{\hfFn[\JCoordsX]( \thetaStar, \thetaStar )}{\sqrt{2\pi}} - \E \left[ \frac{\hfFn[\JCoordsX]( \thetaStar, \thetaStar )}{\sqrt{2\pi}} \right] \right\|_{2}
    \leq
    \sqrt{\frac{\alphaO ( 1+\sXXX )( \kOX-1 )}{\m}}
    +
    \alphaO \tXX
    }
  } \right)
  \nonumber \\
  \geq
  1
  -
  2 | \JSX | e^{-\frac{1}{12} \alphaO \m \tXX^{2}}
  -
  | \JSX | e^{-\frac{1}{8} \frac{\alphaO \m \tXX^{2}}{1+\sXXX}}
  -
  e^{-\frac{1}{3} \alphaO \m \sXXX^{2}}
  \label{eqn:lemma:concentration-ineq:pr:2}
,\end{gather}
where in expectation, for any \(  \JCoords \subseteq [\n]  \) and \(  \thetaX \in \ParamSpace  \),
\begin{gather}
  \label{eqn:lemma:concentration-ineq:ev:1}
  \E \left[ \hFn[\JCoords]( \thetaStar, \thetaX ) \right]
  =
  \thetaStar - \thetaX
  ,\\ \label{eqn:lemma:concentration-ineq:ev:2}
  \E \left[ \hfFn[\JCoords]( \thetaStar, \thetaStar ) \right]
  =
  \E \left[ \langle \hfFn[\JCoords]( \thetaStar, \thetaStar ), \thetaStar \rangle \right] \thetaStar
  =
  -\left( 1 - \sqrt{\frac{\pi}{2}} \gammaX \right) \thetaStar
  ,\\ \label{eqn:lemma:concentration-ineq:ev:4}
  \DENOM
  =
  \| \E \left[ \thetaX + \hfFn[\JCoords]( \thetaStar, \thetaX ) \right] \|_{2}
  =
  \sqrt{\frac{\pi}{2}}\gammaX
.\end{gather}
\end{lemma}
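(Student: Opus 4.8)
The plan is to dispose of the mean identities \eqref{eqn:lemma:concentration-ineq:ev:1}--\eqref{eqn:lemma:concentration-ineq:ev:4} first and then to prove the two tail bounds \eqref{eqn:lemma:concentration-ineq:pr:1} and \eqref{eqn:lemma:concentration-ineq:pr:2} by one common argument. For the expectations, by linearity of $\E$ and of the subset-thresholding map it suffices to compute the mean of the per-sample summands in \eqref{eqn:notations:h:def} and \eqref{eqn:notations:hf:def}. Decomposing $\CovV \sim \N( \Vec{0}, \Id{\n} )$ into its component along a unit vector $\Vec{w}$ and its orthogonal component — which has mean $\Vec{0}$ and is independent both of $\langle \CovV, \Vec{w} \rangle$ and of the internal randomness of $\fFn$ — gives $\E[ \CovV \Sign( \langle \CovV, \Vec{w} \rangle ) ] = \sqrt{2/\pi}\, \Vec{w}$ and $\E[ \CovV \fFn( \langle \CovV, \thetaStar \rangle ) ] = \E[ Z \fFn( Z ) ]\, \thetaStar = \gammaX \thetaStar$ for $Z \sim \N(0,1)$, the last equality being the definition \eqref{eqn:notations:gamma:def} of $\gammaX$. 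Since $\| \thetaStar \|_2 = \| \thetaX \|_2 = 1$, summing over the $\m$ samples and scaling by $\tfrac{\sqrt{2\pi}}{\m}$ yields $\E[ \hFn( \thetaStar, \thetaX ) ] = \thetaStar - \thetaX$, $\E[ \hfFn( \thetaStar, \thetaX ) ] = \sqrt{\pi/2}\, \gammaX\, \thetaStar - \thetaX$, and in particular $\E[ \hfFn( \thetaStar, \thetaStar ) ] = -( 1 - \sqrt{\pi/2}\, \gammaX ) \thetaStar$ and $\E[ \thetaX + \hfFn( \thetaStar, \thetaX ) ] = \sqrt{\pi/2}\, \gammaX\, \thetaStar$, of norm $\sqrt{\pi/2}\, \gammaX$. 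Each of these mean vectors is supported inside $\Supp( \thetaStar ) \cup \Supp( \thetaX ) \cup \JCoords$, so the relevant thresholding operator fixes it, giving \eqref{eqn:lemma:concentration-ineq:ev:1}--\eqref{eqn:lemma:concentration-ineq:ev:4}; the middle equality in \eqref{eqn:lemma:concentration-ineq:ev:2} merely records that a vector parallel to $\thetaStar$ equals $\langle \cdot, \thetaStar \rangle \thetaStar$.

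For the concentration bounds, fix $\JCoords \in \JS$ and $\thetaX \in \ParamCoverX$ (resp.\ $\JCoordsX \in \JSX$), let $S$ be the corresponding thresholding coordinate set, so $| S | \le \kO$ (resp.\ $| S | \le \kOX$), and write the quantity of interest as $\tfrac{1}{\m} \sum_{\iIx} W_{\iIx}$, where the i.i.d.\ vectors $W_{\iIx}$ equal $\tfrac{1}{2} ( \Sign( \langle \CovV\VIx{\iIx}, \thetaStar \rangle ) - \Sign( \langle \CovV\VIx{\iIx}, \thetaX \rangle ) )\, \ThresholdSet{S}( \CovV\VIx{\iIx} )$ (resp.\ the same with $\fFn( \langle \CovV\VIx{\iIx}, \thetaStar \rangle )$ in place of the first sign and $\thetaStar$ in place of $\thetaX$), with means as just computed. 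The structural point that drives everything is that $W_{\iIx} = \Vec{0}$ unless an \emph{activity event} $E_{\iIx}$ occurs — $\CovV\VIx{\iIx}$ lying in the wedge between $\thetaStar^{\perp}$ and $\thetaX^{\perp}$ (resp.\ $\fFn$ disagreeing with $\Sign$ at $\langle \CovV\VIx{\iIx}, \thetaStar \rangle$) — so that the number $N$ of active indices is $\Binomial( \m, \ADIST/\pi )$-distributed (resp.\ $\Binomial( \m, \alphaX )$-distributed, with $\alphaX \le \alphaO$), and that conditionally on $E_{\iIx}$ the scalar prefactor is $\pm 1$ while the component of $\ThresholdSet{S}( \CovV\VIx{\iIx} )$ orthogonal to $\Span( \thetaStar, \thetaX )$ (resp.\ to $\Span( \thetaStar )$) is still an unrestricted standard Gaussian in at most $\kO - 2$ (resp.\ $\kOX - 1$) coordinates, with the component inside that span of norm $O( \ADIST )$ (resp.\ $O(1)$). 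Excluding $\Ball{\tauX}( \thetaStar )$ from $\ParamCover$ keeps $\ADIST$, hence $\E[ N ]$, bounded below, which is what makes the bound on $N$ below usable.

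For fixed $\JCoords, \thetaX$ I would then bound $\| \tfrac{1}{\m} \sum_{\iIx} ( W_{\iIx} - \E[ W_{\iIx} ] ) \|_2$ by a Bernstein-type tail inequality for sums of independent, coordinate-sparse subgaussian/subexponential random vectors — the inequality developed in \APPENDIX \ref{outline:concentration-ineq}, which already internalizes a net over the $| S |$-dimensional sphere. By the description above, the per-sample second-moment scale is $O\big( \tfrac{\ADIST}{\pi} ( \kO - 2 ) \big)$ (resp.\ $O( \alphaO ( \kOX - 1 ) )$), producing the leading square-root terms $\sqrt{ \tfrac{( 1 + \sXX )( \kO - 2 ) \ADIST}{\pi \m} }$ (resp.\ $\sqrt{ \tfrac{\alphaO ( 1 + \sXXX )( \kOX - 1 )}{\m} }$), while the in-span directions and the subexponential tail yield the additive terms $\tfrac{\tX \ADIST}{\pi}$ (resp.\ $\alphaO \tXX$). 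The three summands of each probability estimate match three steps: a Chernoff bound placing $N$ below $( 1 + \sXX )\m\ADIST/\pi$ (resp.\ $( 1 + \sXXX )\m\alphaO$) — this piece is independent of $\JCoords$, hence costs only $| \ParamCoverX |$, resp.\ $1$ — and, conditionally on that, the Gaussian and exponential regimes of the vector-Bernstein bound on the centered sum. A union bound over $\JCoords \in \JS$ and $\thetaX \in \ParamCoverX$ (resp.\ over $\JCoordsX \in \JSX$) supplies the $| \JS | | \ParamCoverX |$ (resp.\ $| \JSX |$) prefactors.

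The Gaussian mean computations are routine; the real work — and the main obstacle — is this last step. Three things need care: peeling off the $\Span( \thetaStar, \thetaX )$ (resp.\ $\Span( \thetaStar )$) directions cleanly enough to obtain the sharp dimension counts $\kO - 2$ and $\kOX - 1$ rather than $\kO$ and $\kOX$; accommodating the unboundedness of the Gaussian factors through a subexponential rather than a Hoeffding-type inequality; and calibrating the target deviations to scale with $\ADIST$ and with $\alphaO$, so that the exponential bounds can be written in terms of $\ADIST$ and $\alphaO$ — crucially using $\alphaO / \alphaX \ge 1$ — with exactly the stated constants $\tfrac{1}{27\pi}$, $\tfrac{1}{18\pi}$, $\tfrac{1}{3\pi}$, $\tfrac{1}{12}$, $\tfrac{1}{8}$, $\tfrac{1}{3}$. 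The $\tauX$-net construction of the main proof plays no role here; $\JS$, $\JSX$, and $\ParamCover$ enter only as abstract finite index sets.
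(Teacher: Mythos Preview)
Your plan is essentially the paper's: compute the means directly (your argument is actually cleaner than the paper's, which routes through the orthogonal decomposition and intermediate lemmas), then for the tails decompose orthogonally into the span of $\{\thetaStar,\thetaX\}$ (resp.\ $\{\thetaStar\}$) and its complement, condition on the activity count $N$, and union-bound. One clarification on the anatomy of the three probability terms: they are \emph{not} ``Chernoff on $N$ plus the Gaussian and exponential regimes of a single vector-Bernstein bound.'' In the paper the $4|\JS||\ParamCoverX|e^{-\tfrac{1}{27\pi}\cdots}$ term comes from \emph{two} separate one-dimensional sub-exponential tail bounds on the projections onto $\tfrac{\thetaStar-\thetaX}{\|\thetaStar-\thetaX\|_2}$ and $\tfrac{\thetaStar+\thetaX}{\|\thetaStar+\thetaX\|_2}$ (each two-sided, and each allotted $t/3$, which is where $27 = 3\cdot 3^2$ comes from); the $|\JS||\ParamCoverX|e^{-\tfrac{1}{18\pi(1+s)}\cdots}$ term is the Lipschitz/Gaussian norm concentration of the $(\kO-2)$-dimensional orthogonal-complement piece conditional on $N\le(1+s)\E[N]$; and the $|\ParamCoverX|$ term is the Chernoff bound on $N$, which is indeed $\JCoords$-independent. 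The analogous breakdown holds for \eqref{eqn:lemma:concentration-ineq:pr:2} with one in-span direction instead of two. So the ``peeling off'' you flag as delicate is not just a way to sharpen a single vector inequality to dimension $\kO-2$; the in-span scalar projections receive their own MGF analysis (and their own union-bound cost), separate from the orthogonal-complement norm bound.
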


In addition to the above lemma, the following fact will facilitate the analysis in this section.

\begin{fact}
\label{fact:d_angular-d_S}
Let
\(  \Vec{\uV}, \Vec{\vV} \in \Sphere{\n}  \).
Then,
\begin{gather}
\label{eqn:fact:d_angular-d_S:1}
  \| \Vec{\uV} - \Vec{\vV} \|_{2}
  \leq
  \arccos( \langle \Vec{\uV}, \Vec{\vV} \rangle )
  \leq
  \frac{\pi}{2} \| \Vec{\uV} - \Vec{\vV} \|_{2}
.\end{gather}
\end{fact}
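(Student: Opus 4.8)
Looking at the final statement, it is \textbf{Fact \ref{fact:d_angular-d_S}}, which asserts the elementary two-sided comparison between Euclidean chord distance and angular distance on the unit sphere: for $\Vec{\uV}, \Vec{\vV} \in \Sphere{\n}$,
\[
  \| \Vec{\uV} - \Vec{\vV} \|_{2}
  \leq
  \arccos( \langle \Vec{\uV}, \Vec{\vV} \rangle )
  \leq
  \frac{\pi}{2} \| \Vec{\uV} - \Vec{\vV} \|_{2}.
\]

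The plan is to reduce everything to the angle $\theta \defeq \arccos(\langle \Vec{\uV}, \Vec{\vV} \rangle) \in [0,\pi]$ between the two unit vectors. First I would record the exact expression for the chord length: since both vectors have unit norm, $\| \Vec{\uV} - \Vec{\vV} \|_{2}^{2} = 2 - 2\langle \Vec{\uV}, \Vec{\vV} \rangle = 2 - 2\cos\theta = 4\sin^{2}(\theta/2)$, so $\| \Vec{\uV} - \Vec{\vV} \|_{2} = 2\sin(\theta/2)$. With this identity in hand, the fact becomes the purely scalar double inequality $2\sin(\theta/2) \le \theta \le \pi \sin(\theta/2)$ for $\theta \in [0,\pi]$.

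For the left inequality $2\sin(\theta/2) \le \theta$, substitute $\phi = \theta/2 \in [0,\pi/2]$ and use the standard bound $\sin\phi \le \phi$, valid for all $\phi \ge 0$ (immediate from, e.g., the concavity of $\sin$ on $[0,\pi/2]$ or the derivative comparison $\frac{d}{d\phi}(\phi - \sin\phi) = 1 - \cos\phi \ge 0$ with equality at $\phi=0$). For the right inequality $\theta \le \pi\sin(\theta/2)$, again write $\phi = \theta/2 \in [0,\pi/2]$, so we need $2\phi \le \pi\sin\phi$, i.e. $\sin\phi \ge \frac{2}{\pi}\phi$; this is Jordan's inequality, which holds on $[0,\pi/2]$ because $\sin$ is concave there, hence lies above the chord joining $(0,0)$ and $(\pi/2, 1)$, which is exactly the line $\phi \mapsto \frac{2}{\pi}\phi$. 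Chaining the two scalar inequalities back through $\|\Vec{\uV}-\Vec{\vV}\|_2 = 2\sin(\theta/2)$ yields the claimed bounds.

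This proof has no real obstacle --- it is a textbook fact --- so the only thing to be careful about is making sure the range of $\theta$ is correctly handled: $\langle \Vec{\uV}, \Vec{\vV}\rangle \in [-1,1]$ forces $\theta \in [0,\pi]$ and hence $\theta/2 \in [0,\pi/2]$, which is precisely the interval on which both $\sin\phi \le \phi$ and Jordan's inequality are being applied, so the concavity arguments are valid throughout. (One could alternatively cite this directly from \cite{vershynin2018high} or any standard reference, but the two-line trigonometric argument above is self-contained.)
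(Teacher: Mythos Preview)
Your proof is correct and follows essentially the same route as the paper: both reduce to the identity $\|\Vec{\uV}-\Vec{\vV}\|_2 = 2\sin(\theta/2)$ and then verify the two scalar inequalities $2\sin(\theta/2)\le\theta$ and $\theta\le\pi\sin(\theta/2)$. The only cosmetic difference is in the justifications---the paper uses the Taylor bound $\cos x \ge 1 - x^2/2$ for the left inequality and a Taylor-series analysis of $\frac{d}{dx}\frac{\sin x}{x}$ to show $\sin(x)/x$ is decreasing on $(0,\pi/2]$ for the right one, whereas you invoke $\sin\phi\le\phi$ and Jordan's inequality via concavity directly; your version is slightly more streamlined but not a different argument.
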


\begin{proof}
{\FACT \ref{fact:d_angular-d_S}}
\checkoff%
Fix \(  \Vec{\uV}, \Vec{\vV} \in \Sphere{\n}  \) arbitrarily.
To verify the first inequality in \EQUATION \eqref{eqn:fact:d_angular-d_S:1}---that
\({  \| \Vec{\uV} - \Vec{\vV} \|_{2} \leq \arccos( \langle \Vec{\uV}, \Vec{\vV} \rangle )  }\)%
---observe:
\begin{align*}
  \| \Vec{\uV} - \Vec{\vV} \|_{2}
  &=
  \sqrt{2 - 2 \langle \Vec{\uV}, \Vec{\vV} \rangle}
  \\
  &\leq
  \sqrt{2 \left( 1 - \left( 1 - \frac{\arccos^{2}( \langle \Vec{\uV}, \Vec{\vV} \rangle )}{2} \right) \right)}
  \\
  &\dCmt{by the Taylor series for the cosine function, \(  \cos(x) \geq 1 - \frac{x^{2}}{2}  \), \(  x \in \R  \)}
  \\
  &=
  \arccos( \langle \Vec{\uV}, \Vec{\vV} \rangle )
,\end{align*}
as desired.
For the second inequality in \EQUATION \eqref{eqn:fact:d_angular-d_S:1}---that
\(  \arccos( \langle \Vec{\uV}, \Vec{\vV} \rangle ) \leq \frac{\pi}{2} \| \Vec{\uV} - \Vec{\vV} \|_{2}  \)%
---note that by standard trigonometric properties,
\(  \| \Vec{\uV} - \Vec{\vV} \|_{2} = 2 \sin( \frac{\arccos( \langle \Vec{\uV}, \Vec{\vV} \rangle )}{2} )  \).
To proceed, some basic calculus is needed to examine the function \(  \frac{\sin(x)}{x}  \) on the interval \(  x \in (0,\frac{\pi}{2}]  \).
Using the quotient rule,
\begin{align*}
  \frac{d}{dx} \frac{\sin(x)}{x} = \frac{x \cos(x) - \sin(x)}{x^{2}}
,\end{align*}
where the numerator determines the sign of the above expression and has a
Taylor
series given by
\begin{align*}
  x \cos(x) - \sin(x)
  &=
  \sum_{z=0}^{\infty}
  \frac{(-1)^{z} x^{2z+1}}{(2z)!}
  -
  \sum_{z=0}^{\infty}
  \frac{(-1)^{z} x^{2z+1}}{(2z+1)!}
  =
  \sum_{z=1}^{\infty}
  \frac{(-1)^{z} x^{2z+1}}{(2z)!}
  \left( 1 - \frac{1}{2z+1} \right)
.\end{align*}
Now, it can be seen that for \(  x \in (0, \frac{\pi}{2}]  \),
\begin{align*}
  \frac{d}{dx} \frac{\sin(x)}{x}
  =
  \frac{x \cos(x) - \sin(x)}{x^{2}}
  =
  \frac{1}{x^{2}}
  \sum_{z=1}^{\infty}
  \frac{(-1)^{z} x^{2z+1}}{(2z)!}
  \left( 1 - \frac{1}{2z+1} \right)
  <
  0
,\end{align*}
which implies that \(  \frac{\sin(x)}{x}  \) decreases over the interval \(  x \in (0, \frac{\pi}{2}]  \).
Hence,
\begin{gather*}
  \inf_{x \in (0,\frac{\pi}{2}]} \frac{\sin(x)}{x}
  =
  \left. \frac{\sin(x)}{x} \right|_{x = \frac{\pi}{2}}
  =
  \frac{\sin \left( \frac{\pi}{2} \right)}{\frac{\pi}{2}}
  =
  \frac{2}{\pi}
.\end{gather*}
Then,
\begin{align*}
  \frac{\| \Vec{\uV} - \Vec{\vV} \|_{2}}{\arccos( \langle \Vec{\uV}, \Vec{\vV} \rangle )}
  =
  \frac{2 \sin \left( \frac{\arccos( \langle \Vec{\uV}, \Vec{\vV} \rangle )}{2} \right)}{\arccos( \langle \Vec{\uV}, \Vec{\vV} \rangle )}
  \geq
  \frac{2}{\pi}
,\end{align*}
implying that
\begin{gather*}
  \arccos( \langle \Vec{\uV}, \Vec{\vV} \rangle )
  \leq
  \frac{\pi}{2} \| \Vec{\uV} - \Vec{\vV} \|_{2}
,\end{gather*}
as claimed.
\end{proof}


\subsubsection{Proofs of \LEMMAS \ref{lemma:combine}--\ref{lemma:large-dist:2}}
\label{outline:pf-main-technical-result|pf-intermediate|pf}

With the above auxiliary results, \LEMMAS \ref{lemma:combine}--\ref{lemma:large-dist:2} can now be established.
We begin with the proof of \LEMMA \ref{lemma:combine}.

\begin{proof}
{\LEMMA \ref{lemma:combine}}
\mostlycheckoff%
The first step towards proving the lemma will be showing that
\begin{align*}
  \frac
  {\E [ \thetaXX+\hfFn[\JCoords]( \thetaStar, \thetaXX ) ]}
  {\| \E [ \thetaXX+\hfFn[\JCoords]( \thetaStar, \thetaXX ) ] \|_{2}}
  =
  \thetaStar
,\end{align*}
where
\(  \thetaStar, \thetaXX \in \ParamSpace  \) and \(  \JCoords \subseteq [\n]  \)
are arbitrary.
Notice that for any \(  \Vec{\uV}, \Vec{\vV}, \Vec{\wV} \in \R^{\n}  \) such that
\(  \Supp( \Vec{\wV}  ) \cup \JCoords = \Supp( \Vec{\vV} ) \cup \JCoords  \),
the following pair of equations holds:
\begin{gather}
  \label{eqn:pf:lemma:combine:1a}
  \hfFn[\JCoords]( \Vec{\uV}, \Vec{\vV} )
  =
  \hFn[\JCoords]( \Vec{\uV}, \Vec{\vV} )
  +
  \hfFn[\Supp( \Vec{\vV} ) \cup \JCoords]( \Vec{\uV}, \Vec{\uV} )
  =
  \hFn[\JCoords]( \Vec{\uV}, \Vec{\vV} )
  +
  \hfFn[\Supp( \Vec{\wV} ) \cup \JCoords]( \Vec{\uV}, \Vec{\uV} )
  ,\\ \label{eqn:pf:lemma:combine:1b}
  \hFn[\JCoords]( \Vec{\uV}, \Vec{\vV} )
  =
  \hFn[\JCoords]( \Vec{\uV}, \Vec{\wV} )
  +
  \hFn[\Supp( \Vec{\uV} ) \cup \JCoords]( \Vec{\wV}, \Vec{\vV} )
.\end{gather}
To justify these equations, observe:
\begin{align*}
  \hfFn[\JCoords]( \Vec{\uV}, \Vec{\vV} )
  &=
  \ThresholdSet{\Supp( \Vec{\uV} ) \cup \Supp( \Vec{\vV} ) \cup \JCoords} (
  \frac{\sqrt{2\pi}}{\m}
  \sep
  \CovM^{\T}
  \sep
  \frac{1}{2}
  \left( \fFn( \CovM \Vec{\uV} ) - \Sign( \CovM \Vec{\vV} ) \right)
  )
  \\
  &=
  \ThresholdSet{\Supp( \Vec{\uV} ) \cup \Supp( \Vec{\vV} ) \cup \JCoords} (
  \frac{\sqrt{2\pi}}{\m}
  \sep
  \CovM^{\T}
  \sep
  \frac{1}{2}
  \left( \Sign( \CovM \Vec{\uV} ) - \Sign( \CovM \Vec{\vV} ) \right)
  )
  \\ &\AlignSp
  +
  \ThresholdSet{\Supp( \Vec{\uV} ) \cup ( \Supp( \Vec{\vV} ) \cup \JCoords )} (
  \frac{\sqrt{2\pi}}{\m}
  \sep
  \CovM^{\T}
  \sep
  \frac{1}{2}
  \left( \fFn( \CovM \Vec{\uV} ) - \Sign( \CovM \Vec{\uV} ) \right)
  )
  \\
  &=
  \hFn[\JCoords]( \Vec{\uV}, \Vec{\vV} )
  +
  \hfFn[\Supp( \Vec{\vV} ) \cup \JCoords]( \Vec{\uV}, \Vec{\uV} )
  \\
  &=
  \hFn[\JCoords]( \Vec{\uV}, \Vec{\vV} )
  +
  \hfFn[\Supp( \Vec{\wV} ) \cup \JCoords]( \Vec{\uV}, \Vec{\uV} )
\end{align*}
and
\begin{align*}
  \hFn[\JCoords]( \Vec{\uV}, \Vec{\vV} )
  &=
  \ThresholdSet{\Supp( \Vec{\uV} ) \cup \Supp( \Vec{\vV} ) \cup \JCoords} (
  \frac{\sqrt{2\pi}}{\m}
  \sep
  \CovM^{\T}
  \sep
  \frac{1}{2}
  \left( \Sign( \CovM \Vec{\uV} ) - \Sign( \CovM \Vec{\vV} ) \right)
  )
  \\
  &=
  \ThresholdSet{\Supp( \Vec{\uV} ) \cup \Supp( \Vec{\vV} ) \cup \JCoords} (
  \frac{\sqrt{2\pi}}{\m}
  \sep
  \CovM^{\T}
  \sep
  \frac{1}{2}
  \left( \Sign( \CovM \Vec{\uV} ) - \Sign( \CovM \Vec{\wV} ) \right)
  )
  \\ &\AlignSp
  +
  \ThresholdSet{\Supp( \Vec{\uV} ) \cup \Supp( \Vec{\vV} ) \cup \JCoords} (
  \frac{\sqrt{2\pi}}{\m}
  \sep
  \CovM^{\T}
  \sep
  \frac{1}{2}
  \left( \Sign( \CovM \Vec{\wV} ) - \Sign( \CovM \Vec{\vV} ) \right)
  )
  \\
  &=
  \ThresholdSet{\Supp( \Vec{\uV} ) \cup \Supp( \Vec{\wV} ) \cup \JCoords} (
  \frac{\sqrt{2\pi}}{\m}
  \sep
  \CovM^{\T}
  \sep
  \frac{1}{2}
  \left( \Sign( \CovM \Vec{\uV} ) - \Sign( \CovM \Vec{\wV} ) \right)
  )
  \\ &\AlignSp
  +
  \ThresholdSet{\Supp( \Vec{\wV} ) \cup \Supp( \Vec{\vV} ) \cup ( \Supp( \Vec{\uV} ) \cup \JCoords )} (
  \frac{\sqrt{2\pi}}{\m}
  \sep
  \CovM^{\T}
  \sep
  \frac{1}{2}
  \left( \Sign( \CovM \Vec{\wV} ) - \Sign( \CovM \Vec{\vV} ) \right)
  )
  \\
  &=
  \hFn[\JCoords]( \Vec{\uV}, \Vec{\wV} )
  +
  \hFn[\Supp( \Vec{\uV} ) \cup \JCoords]( \Vec{\wV}, \Vec{\vV} )
.\end{align*}
Additionally, by \LEMMA \ref{lemma:concentration-ineq},
\begin{gather}
  \label{eqn:pf:lemma:combine:2}
  \E[ \hFn[\JCoords]( \Vec{\uV}, \Vec{\vV} ) ]
  =
  \Vec{\uV} - \Vec{\vV}
  ,\\ \label{eqn:pf:lemma:combine:3}
  \E[ \hfFn[\JCoords]( \Vec{\uV}, \Vec{\uV} ) ]
  =
  -\left( 1 - \sqrt{\frac{\pi}{2}} \gammaX \right) \Vec{\uV}
.\end{gather}
Thus,
\begin{align*}
  \E [ \thetaXX+\hfFn[\JCoords]( \thetaStar, \thetaXX ) ]
  &=
  \thetaXX + \E [ \hfFn[\JCoords]( \thetaStar, \thetaXX ) ]
  \\
  &=
  \thetaXX + \E[ \hFn[\JCoords]( \thetaStar, \thetaXX ) ] + \E[ \hfFn[\JCoords]( \thetaStar, \thetaStar ) ]
  \\
  &\dCmt{by \EQUATION \eqref{eqn:pf:lemma:combine:1a} and the linearity of expectation}
  \\
  &=
  \thetaXX + ( \thetaStar-\thetaXX ) - \left( 1 - \sqrt{\frac{\pi}{2}} \gammaX \right) \thetaStar
  \\
  &\dCmt{by \EQUATIONS \eqref{eqn:pf:lemma:combine:2} and \eqref{eqn:pf:lemma:combine:3}}
  \\
  &=
  \sqrt{\frac{\pi}{2}} \gammaX \thetaStar
\TagEqn{\label{eqn:pf:lemma:combine:5}}
.\end{align*}
It follows that
\begin{align}
\label{eqn:pf:lemma:combine:4}
  \frac
  {\E [ \thetaXX+\hfFn[\JCoords]( \thetaStar, \thetaXX ) ]}
  {\| \E [ \thetaXX+\hfFn[\JCoords]( \thetaStar, \thetaXX ) ] \|_{2}}
  =
  \frac{\sqrt{\hfrac{\pi}{2}} \gammaX \thetaStar}{\sqrt{\hfrac{\pi}{2}} \gammaX}
  =
  \thetaStar
,\end{align}
as claimed.
%
Having achieved the first task, the \LHS of the inequality in \EQUATION \eqref{eqn:pf:thm:main-technical:1} can now be bounded from above as follows:
\begin{align*}
  &\negphantom{\AlignSp}
  \left\| \thetaStar - \frac{\thetaXX+\hfFn[\JCoords]( \thetaStar, \thetaXX )}{\| \thetaXX+\hfFn[\JCoords]( \thetaStar, \thetaXX ) \|_{2}} \right\|_{2}
  \\
  &=
  \left\|
    \frac{\thetaXX+\hfFn[\JCoords]( \thetaStar, \thetaXX )}{\| \thetaXX+\hfFn[\JCoords]( \thetaStar, \thetaXX ) \|_{2}}
    -
    \frac{\E[ \thetaXX+\hfFn[\JCoords]( \thetaStar, \thetaXX ) ]}{\E[ \| \thetaXX+\hfFn[\JCoords]( \thetaStar, \thetaXX ) \|_{2} ]}
  \right\|_{2}
  \\
  &\dCmt{by \EQUATION \eqref{eqn:pf:lemma:combine:4}}
  \\
  &\leq
  \frac{2 \| \thetaXX+\hfFn[\JCoords]( \thetaStar, \thetaXX ) - \E[ \thetaXX+\hfFn[\JCoords]( \thetaStar, \thetaXX ) ] \|_{2}}{\| \E[ \thetaXX+\hfFn[\JCoords]( \thetaStar, \thetaXX ) ] \|_{2}}
  \\
  &\dCmt{by \FACT \ref{fact:dist-btw-normalized-vectors}}
  \\
  &=
  \frac{2 \| \hfFn[\JCoords]( \thetaStar, \thetaXX ) - \E[ \hfFn[\JCoords]( \thetaStar, \thetaXX ) ] \|_{2}}{\| \E[ \thetaXX+\hfFn[\JCoords]( \thetaStar, \thetaXX ) ] \|_{2}}
  \\
  &=
  \frac
  {2 \| \hFn[\JCoords]( \thetaStar, \thetaXX ) + \hfFn[\Supp( \thetaX ) \cup \JCoords]( \thetaStar, \thetaStar ) - \E[ \hFn[\JCoords]( \thetaStar, \thetaXX ) + \hfFn[\Supp( \thetaX ) \cup \JCoords]( \thetaStar, \thetaStar ) ] \|_{2}}
  {\| \E[ \thetaXX+\hfFn[\JCoords]( \thetaStar, \thetaXX ) ] \|_{2}}
  \\
  &\dCmt{by \EQUATION \eqref{eqn:pf:lemma:combine:1a} and the lemma's condition} 
  \\
  &=
  \frac
  {2 \| \hFn[\JCoords]( \thetaStar, \thetaX ) - \E[ \hFn[\JCoords]( \thetaStar, \thetaX ) ] \|_{2}}
  {\| \E[ \thetaXX+\hfFn[\JCoords]( \thetaStar, \thetaXX ) ] \|_{2}}
  +
  \frac
  {2\| \hFn[\Supp( \thetaStar ) \cup \JCoords]( \thetaX, \thetaXX ) - \E[ \hFn[\Supp( \thetaStar ) \cup \JCoords]( \thetaX, \thetaXX ) ] \|_{2}}
  {\| \E[ \thetaXX+\hfFn[\JCoords]( \thetaStar, \thetaXX ) ] \|_{2}}
  \\
  &\AlignSp+
  \frac
  {2 \| \hfFn[\Supp( \thetaX ) \cup \JCoords]( \thetaStar, \thetaStar ) - \E[ \hfFn[\Supp( \thetaX ) \cup \JCoords]( \thetaStar, \thetaStar ) ] \|_{2}}
  {\| \E[ \thetaXX+\hfFn[\JCoords]( \thetaStar, \thetaXX ) ] \|_{2}}
  \\
  &\dCmt{by the linearity of expectation and the triangle inequality}
  \\
  &=
  \frac
  {2 \| \hFn[\JCoords]( \thetaStar, \thetaX ) - \E[ \hFn[\JCoords]( \thetaStar, \thetaX ) ] \|_{2}}
  {\DENOM}
  +
  \frac
  {2 \| \hFn[\Supp( \thetaStar ) \cup \JCoords]( \thetaX, \thetaXX ) - \E[ \hFn[\Supp( \thetaStar ) \cup \JCoords]( \thetaX, \thetaXX ) ] \|_{2}}
  {\DENOM}
  \\
  &\AlignSp+
  \frac
  {2 \| \hfFn[\Supp( \thetaX ) \cup \JCoords]( \thetaStar, \thetaStar ) - \E[ \hfFn[\Supp( \thetaX ) \cup \JCoords]( \thetaStar, \thetaStar ) ] \|_{2}}
  {\DENOM}
  .\\
  &\dCmt{by the first equality in \EQUATION \eqref{eqn:lemma:concentration-ineq:ev:4}}
\end{align*}
This completes the proof of \LEMMA \ref{lemma:combine}.
\end{proof}

\begin{proof}
{\LEMMA \ref{lemma:large-dist:1}}
\checkoff%
Let
\(  \thetaStar \in \ParamSpace  \)
be arbitrary.
Write
\(  \ParamCoverX \defeq \ParamCover \setminus \Ball{\tauX}( \thetaStar )  \).
For the time being, fix
\(  \JCoords \in \JS  \) and \(  \thetaX \in \ParamCoverX  \)
arbitrarily---to be varied over all possible choices later---and let
\(  \JCoordsX \defeq \Supp( \thetaX ) \cup \JCoords  \).
By \EQUATION \eqref{eqn:lemma:concentration-ineq:ev:4} in \LEMMA \ref{lemma:concentration-ineq},
\begin{gather}
\label{eqn:pf:lemma:large-dist:1:2}
  \DENOM
  =
  \sqrt{\frac{\pi}{2}} \gammaX
,\end{gather}
and thus, substituting \EQUATION \eqref{eqn:pf:lemma:large-dist:1:2} into the \RHS of \EQUATION \eqref{eqn:lemma:large-dist:1:ub} in \LEMMA \ref{lemma:large-dist:1} yields
\begin{gather}
\label{eqn:pf:lemma:large-dist:1:3}
  \frac
  {2 \| \hFn[\JCoords]( \thetaStar, \thetaX ) - \E[ \hFn[\JCoords]( \thetaStar, \thetaX ) ] \|_{2}}
  {\DENOM}
  =
  \frac
  {2 \| \hFn[\JCoords]( \thetaStar, \thetaX ) - \E[ \hFn[\JCoords]( \thetaStar, \thetaX ) ] \|_{2}}
  {\sqrt{\hfrac{\pi}{2}} \gammaX}
.\end{gather}
Towards bounding the term
\({  \| \hFn[\JCoords]( \thetaStar, \thetaX ) - \E[ \hFn[\JCoords]( \thetaStar, \thetaX ) ] \|_{2}  }\)
in the numerator on the \RHS of \EQUATION \eqref{eqn:pf:lemma:large-dist:1:3},
consider \EQUATION \eqref{eqn:lemma:concentration-ineq:pr:1} in \LEMMA \ref{lemma:concentration-ineq}, where \(  \sXX, \tX \in (0,1)  \) are taken to be
\begin{gather}
\label{eqn:pf:lemma:large-dist:1:s}
  \sXX
  \defeq
  \sqrt{
    \frac{
      3\pi \log \left( \frac{3}{\rhoLDX} | \ParamCover | \right)
    }{
      \m \EDIST
    }
  }
,\end{gather}
and
\begin{align*}
  \tX
  &\defeq
  \sqrt{
    \frac{27\pi \log \left( \frac{12}{\rhoLDX} | \JS | | \ParamCover | \right)}
         {\m \EDIST}}
\TagEqn{\label{eqn:pf:lemma:large-dist:1:t}}
,\end{align*}
and where \(  \m  \) is at least
\begin{align}
\label{eqn:pf:lemma:large-dist:1:m}
  \m
  \geq
  \frac{16}{\GAMMAX^{2} \deltaX}
  \max \left\{
    27\pi \log \left( \frac{12}{\rhoLDX} | \JS | | \ParamCover | \right)
    ,
    4 ( \kO-2 )
  \right\}
.\end{align}
This bound on \(  \m  \) suffices to ensure that under the lemma's condition that
\(  \thetaX \in \ParamCoverX = \ParamCover \setminus \Ball{\tauX}( \thetaStar )  \),
the variable \(  \sXX  \) satisfies the requirement that
\(  \sXX < 1  \),
which further implies that
\(  1+\sXX < 2  \).
Hence also,
\begin{align*}
  \tX
  &=
  \sqrt{
    \frac{27\pi \log \left( \frac{12}{\rhoLDX} | \JS | | \ParamCover | \right)}
         {\m \EDIST}}
  \\
  &=
  \max \left\{
  \sqrt{
    \frac{27\pi \log \left( \frac{12}{\rhoLDX} | \JS | | \ParamCover | \right)}
         {\m \EDIST}}
  ,
  \sqrt{
    \frac{4\pi \log \left( \frac{3}{\rhoLDX} | \JS | | \ParamCover | \right)}
         {\m \EDIST}
  }
  \right\}
  \\
  &=
  \max \left\{
  \sqrt{
    \frac{27\pi \log \left( \frac{12}{\rhoLDX} | \JS | | \ParamCover | \right)}
         {\m \EDIST}}
  ,
  \sqrt{
    \frac{2\pi ( 1+\sXX ) \log \left( \frac{3}{\rhoLDX} | \JS | | \ParamCover | \right)}
         {\m \EDIST}
  }
  \right\}
\TagEqn{\label{eqn:pf:lemma:large-dist:1:t:b}}
.\end{align*}
Moreover, with these choices,
\begin{align*}
  & \negphantom{\AlignSp}
  \sqrt{\frac{\pi ( 1+\sXX )( \kO-2 ) \EDIST}{\m}}
  +
  \sqrt{\frac{\pi}{2}} \tX \EDIST
  \\
  &\leq
  \frac{1}{2} \cdot \sqrt{\frac{\pi}{8}} \gammaX \sqrt{\deltaX \EDIST}
  +
  \frac{1}{2} \cdot \sqrt{\frac{\pi}{8}} \gammaX \sqrt{\deltaX \EDIST}
  \\
  &=
  \sqrt{\frac{\pi}{8}} \gammaX \sqrt{\deltaX \EDIST}
\TagEqn{\label{eqn:pf:lemma:large-dist:1:4}}
.\end{align*}
%
\par 
%
For any random variable \(  U  \) taking values in \(  \Set{S} \subseteq \R  \), and for values \(  u \leq u' \in \Set{S}  \), the event that \(  U \leq u  \) implies \(  U \leq u'  \), and therefore,
\begin{gather}
\label{eqn:pf:lemma:large-dist:1:5}
  \Pr( U \leq u ) \leq \Pr( U \leq u' )
  ,\quad
  u \leq u'
.\end{gather}
Combining these observations with \EQUATION \eqref{eqn:lemma:concentration-ineq:pr:1} in \LEMMA \ref{lemma:concentration-ineq} yields:
\begin{align*}
  & \textstyle
  \Pr \left(
    \Forall{\JCoords \in \JS, \thetaX \in \ParamCoverX}{
    \left\| \hFn[\JCoords]( \thetaStar, \thetaX ) - \E \left[ \hFn[\JCoords]( \thetaStar, \thetaX ) \right] \right\|_{2}
    \leq
    \sqrt{\frac{\pi}{8}} \gammaX \sqrt{\deltaX \EDIST}
    }
  \right)
  \\
  & \textstyle \geq
  \Pr \left(
    \Forall{\JCoords \in \JS, \thetaX \in \ParamCoverX}{
    \left\| \hFn[\JCoords]( \thetaStar, \thetaX ) - \E \left[ \hFn[\JCoords]( \thetaStar, \thetaX ) \right] \right\|_{2}
    \leq
    \sqrt{\frac{\pi ( 1+\sXX )( \kO-2 ) \EDIST}{\m}}
    +
    \sqrt{\frac{\pi}{2}} \tX \EDIST
    }
  \right)
  \\
  &\dCmt{by \EQUATIONS \eqref{eqn:pf:lemma:large-dist:1:4} and \eqref{eqn:pf:lemma:large-dist:1:5}}
  \\
  & \textstyle \geq
  \Pr \left(
    \Forall{\JCoords \in \JS, \thetaX \in \ParamCoverX}{
    \left\| \hFn[\JCoords]( \thetaStar, \thetaX ) - \E \left[ \hFn[\JCoords]( \thetaStar, \thetaX ) \right] \right\|_{2}
    \leq
    \sqrt{\frac{2 ( 1+\sXX )( \kO-2 ) \ADIST}{\m}}
    +
    \sqrt{\frac{2}{\pi}} \tX \ADIST
    }
  \right)
  \\
  &\dCmt{\(  \ADIST \leq \frac{\pi}{2} \EDIST  \) by \FACT \ref{fact:d_angular-d_S}}
  \\
  &\geq
  1
  -
  4 | \JS | | \ParamCoverX | e^{-\frac{1}{27\pi} \m \tX^{2} \ADIST}
  -
  | \JS | | \ParamCoverX | e^{-\frac{1}{2\pi ( 1+\sXX )} \m \tX^{2} \ADIST}
  -
  | \ParamCoverX | e^{-\frac{1}{3\pi} \m \sXX^{2} \ADIST}
  \\
  &\dCmt{by \EQUATION \eqref{eqn:lemma:concentration-ineq:pr:1}}
  \\
  &\geq
  1
  -
  4 | \JS | | \ParamCoverX | e^{-\frac{1}{27\pi} \m \tX^{2} \EDIST}
  -
  | \JS | | \ParamCoverX | e^{-\frac{1}{2\pi ( 1+\sXX )} \m \tX^{2} \EDIST}
  -
  | \ParamCoverX | e^{-\frac{1}{3\pi} \m \sXX^{2} \EDIST}
  \\
  &\dCmt{\(  \ADIST \geq \EDIST  \) due to \FACT \ref{fact:d_angular-d_S}}
  \\
  &\geq
  1 - \frac{\rhoLDX}{3} - \frac{\rhoLDX}{3} - \frac{\rhoLDX}{3}
  \\
  &\dCmt{by the choice of \(  \sX, \tX  \) in \EQUATIONS \eqref{eqn:pf:lemma:large-dist:1:s} and \eqref{eqn:pf:lemma:large-dist:1:t}, respectively, and by \EQUATION \eqref{eqn:pf:lemma:large-dist:1:t:b}}
  \\
  &=
  1 - \rhoLDX
\TagEqn{\label{eqn:pf:lemma:large-dist:1:7}}
.\end{align*}
Returning to \EQUATION \eqref{eqn:pf:lemma:large-dist:1:3} and inserting \EQUATION \eqref{eqn:pf:lemma:large-dist:1:7}, it follows that if \(  \m  \) satisfies \EQUATION \eqref{eqn:pf:lemma:large-dist:1:m}, then with probability at least \(  1-\rhoLDX  \), for all \(  \JCoords \in \JS  \) and \(  \thetaX \in \ParamCoverX  \),
\begin{align*}
  \frac
  {2 \| \hFn[\JCoords]( \thetaStar, \thetaX ) - \E[ \hFn[\JCoords]( \thetaStar, \thetaX ) ] \|_{2}}
  {\DENOM}
  &=
  \frac
  {2 \| \hFn[\JCoords]( \thetaStar, \thetaX ) - \E[ \hFn[\JCoords]( \thetaStar, \thetaX ) ] \|_{2}}
  {\sqrt{\hfrac{\pi}{2}} \gammaX}
  \\
  &\leq
  \sqrt{\frac{8}{\pi}} \frac{1}{\gammaX}
  \sqrt{\frac{\pi}{8}} \gammaX
  \sqrt{\deltaX \EDIST}
  \\
  &=
  \sqrt{\deltaX \EDIST}
,\end{align*}
as desired.
\end{proof}
\begin{proof}
{\LEMMA \ref{lemma:small-dist}}
\mostlycheckoff%
Take any \(  \thetaStar \in \ParamSpace  \), and write
\(  \ParamCoverX \defeq \ParamCover \setminus \Ball{\tauX}( \thetaStar )  \) and
\(  \JSXX \defeq \{ \Supp( \thetaStar ) \cup \JCoords : \JCoords \in \JS \}  \),
where
\(  \JS \subseteq 2^{[\n]}  \)
is arbitrary.
Let
\(  \thetaX \in \ParamCoverX  \),
\(  \thetaXX \in \BallXX{2\tauX}( \thetaX )  \), and
\(  \JCoordsXX \in \JSXX  \)
be arbitrary.
Write
\(  \CovVX\VIx{\iIx} \defeq
    \ThresholdSet{\Supp( \thetaX ) \cup \JCoordsXX}( \CovV\VIx{\iIx} ) \in \R^{\n}  \),
\(  \iIx \in [\m]  \), and let
\(  \CovMX \defeq ( \CovVX\VIx{1} \,\cdots\, \CovVX\VIx{\m} )^{\T} \in \R^{\m \times \n}  \).
Using these notations, \(  \frac{1}{\sqrt{2\pi}} \hFn[\JCoordsXX]( \thetaX, \thetaXX )  \) can be expressed as follows:
\begin{align*}
  \frac{1}{\sqrt{2\pi}} \hFn[\JCoordsXX]( \thetaX, \thetaXX )
  &=
  \ThresholdSet{\Supp( \thetaX ) \cup \JCoordsXX} \left(
    \frac{1}{m}
    \sum_{\iIx=1}^{\m}
    \CovV\VIx{\iIx}
    \sep \frac{1}{2} \left( \Sign*( \langle \CovV\VIx{\iIx}, \thetaX \rangle ) - \Sign*( \langle \CovV\VIx{\iIx}, \thetaXX \rangle ) \right)
  \right)
  \\
  &=
  \frac{1}{m}
  \sum_{\iIx=1}^{\m}
  \CovVX\VIx{\iIx}
  \sep \frac{1}{2} \left( \Sign*( \langle \CovVX\VIx{\iIx}, \thetaX \rangle ) - \Sign*( \langle \CovVX\VIx{\iIx}, \thetaXX \rangle ) \right)
  \\
  &=
  \frac{1}{m}
  \sum_{\iIx=1}^{\m}
  \CovVX\VIx{\iIx}
  \sep \Sign*( \langle \CovVX\VIx{\iIx}, \thetaX \rangle )
  \sep \I( \Sign*( \langle \CovVX\VIx{\iIx}, \thetaX \rangle ) \neq \Sign*( \langle \CovVX\VIx{\iIx}, \thetaXX \rangle ) )
  \\
  &=
  \frac{1}{m}
  \CovMX^{\T}
  \Diag( \Sign*( \CovMX \thetaX ) )
  \sep \I( \Sign*( \CovMX \thetaX ) \neq \Sign*( \CovMX \thetaXX ) )
.\end{align*}
It is clear from the last line that after fixing the covariates, \(  \CovVX\VIx{\iIx}  \), \(  \iIx \in [\m]  \), the function \(  \hFn[\JCoordsXX]  \) can only take finitely many values.
Moreover, upon additionally fixing \(  \thetaX \in \ParamSpace  \), the finitely many values that can be taken by the function \(  \hFn[\JCoordsXX]( \thetaX, \cdot )  \) is determined by the number of values that can be taken by
\(  \I( \Sign*( \CovMX \thetaX ) \neq \Sign*( \CovMX \thetaXX ) ) \in \{ 0,1 \}^{\m}  \)
over all choices of \(  \thetaXX \in \BallXX{2\tauX}( \thetaX )  \).
As such, write
\(  \WS{\thetaX} \defeq \{ \I( \Sign*( \CovMX \thetaX ) \neq \Sign*( \CovMX \thetaXX ) ) : \thetaXX \in \BallXX{2\tauX}( \thetaX ) \}  \)
for \(  \thetaX \in \ParamSpace  \).
In addition, for \(  \thetaX, \thetaXX \in \ParamSpace  \), define
\(  \LRVXX{\thetaX}{\thetaXX} \defeq \| \I( \Sign( \CovM \thetaX ) \neq \Sign( \CovM \thetaXX ) ) \|_{0}  \),
and let
\(  \LRVX{\thetaX} \defeq \sup_{\thetaX' \in \BallXX{2\tauX}( \thetaX )} \LRVXX{\thetaX}{\thetaX'}  \).
While there is a na\"{i}ve upper bound of
\(  | \WS{\thetaX} | \leq 2^{\m}  \),
it turns out that a little more nuance admits a tighter bound on the cardinality of \(  \WS{\thetaX}  \) by means of the random variable \(  \LRVX{\thetaX}  \) and the following lemma.
\begin{lemma}[{\COROLLARY to \cite[{\COROLLARY 3.3}]{oymak2015near}}]
\label{lemma:pf:lemma:concentration-ineq:noiseless:small:mismatches}
Let
\(  \ConstdSD > 0  \)
be the constant defined in \DEFINITION \ref{def:univ-const}.
If
\begin{gather}
  \m
  \geq
  \frac{\ConstdSD \nO}{\nuX} \log \left( \frac{1}{\nuX} \right)
,\end{gather}
then with probability at least \(  1 - \binom{\n}{\nO} e^{-\frac{1}{64} \nuX \m}  \), the random variable \(  \LRVX{\thetaX}  \) is bounded from above by
\(  \LRVX{\thetaX} \leq \nuX \m  \)
uniformly for all
\(  \thetaX \in \SparseSphereSubspace{\nO}{\n}  \).
\end{lemma}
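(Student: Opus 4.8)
The plan is to strip away the sparsity by arguing support by support, after which the statement reduces to a single low\hyphen dimensional instance of the ``local binary embedding'' phenomenon, which can be read off from \cite{oymak2015near}. Note first that $\LRVX{\thetaX}$ only involves vectors $\thetaX' \in \BallXX{2\tauX}( \thetaX )$, each of which satisfies $\Supp( \thetaX' ) = \Supp( \thetaX )$, and $|\Supp( \thetaX )| \leq \nO = \k$; so it suffices, for each coordinate set $\ISet \subseteq [\n]$ with $|\ISet| = \k$, to bound $\LRVX{\thetaX}$ uniformly over all unit vectors $\thetaX$ supported inside $\ISet$. For such a vector $\Vec{u}$ the pattern $\Sign( \CovM \Vec{u} )$ depends only on the restrictions $( \CovV\VIx{\iIx} )_{\ISet}$, which are i.i.d.\ standard Gaussian vectors in $\R^{\ISet} \cong \R^{\k}$; hence on this restricted family we are precisely looking at $\m$ i.i.d.\ Gaussian hyperplanes acting on the unit sphere $\Sphere{\k}$ of $\R^{\ISet}$, and for a fixed $\Vec{u}$ the quantity $\LRVX{\Vec{u}}$ is the largest number of these hyperplanes separating $\Vec{u}$ from one of its $\ell_{2}$\hyphen neighbors at distance at most $2\tauX$. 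Since $\Pr( \Sign\langle \Vec{g}, \Vec{u} \rangle \neq \Sign\langle \Vec{g}, \Vec{v} \rangle ) = \arccos( \langle \Vec{u}, \Vec{v} \rangle )/\pi$ for $\Vec{g} \sim \N( \Vec{0}, \Id{\k} )$, each such count has expectation $\m \arccos( \langle \Vec{u}, \Vec{v} \rangle )/\pi$, which is exactly the regime governed by the binary\hyphen embedding theory of \cite{oymak2015near}.

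Next I would convert the $\ell_{2}$\hyphen neighborhood into an angular one via \FACT \ref{fact:d_angular-d_S}: $\Vec{v} \in \BallXX{2\tauX}( \Vec{u} )$ forces $\arccos( \langle \Vec{u}, \Vec{v} \rangle ) \leq \pi \tauX$, and with $\tauX = \tauX( \deltaX ) = \nuX/( \ConstdSD \log( 2e/\nuX ) )$ this geodesic radius is a small fixed multiple of $\nuX/\log( 1/\nuX )$. Then I would apply \COROLLARY 3.3 of \cite{oymak2015near} to the set $\Sphere{\k}$ with its accuracy parameter set to $\nuX$: the Gaussian width of $\Sphere{\k}$ is $\BigO( \sqrt{\k} )$, so the hypothesis of that corollary is satisfied once $\m \geq \ConstdSD \nO \, \nuX^{-1} \log( 1/\nuX )$, provided the absolute constant $\ConstdSD$ is fixed large enough to absorb the numerical constants of \cite{oymak2015near} together with the $\pi/2$ loss from \FACT \ref{fact:d_angular-d_S}. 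The conclusion is that, with probability at least $1 - e^{-\frac{1}{64} \nuX \m}$, every pair of points of $\Sphere{\k}$ at geodesic distance at most $\pi \tauX$ is separated by at most $\nuX \m$ of the $\m$ hyperplanes; on this event $\LRVX{\thetaX} \leq \nuX \m$ for every unit vector $\thetaX$ supported inside $\ISet$.

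To finish I would union\hyphen bound over the $\binom{\n}{\nO}$ coordinate sets $\ISet$ of size $\k$: every $\thetaX \in \SparseSphereSubspace{\nO}{\n}$ has support contained in some such $\ISet$, and $\BallXX{2\tauX}( \thetaX )$ lies entirely in the unit sphere of $\R^{\ISet}$, so the per\hyphen support bound applies; hence $\LRVX{\thetaX} \leq \nuX \m$ holds simultaneously for all $\thetaX \in \SparseSphereSubspace{\nO}{\n}$ with probability at least $1 - \binom{\n}{\nO} e^{-\frac{1}{64} \nuX \m}$, which is exactly the assertion. I expect the only genuine effort to be the calibration of constants: \COROLLARY 3.3 of \cite{oymak2015near} is stated for a general subset of the sphere in terms of a width/covering parameter and a free accuracy $\delta$, so one must specialize it with care---taking $\delta = \nuX$, folding in the $\ell_{2}$\hyphen to\hyphen geodesic conversion, and choosing $\ConstdSD$ and the exponent $\tfrac{1}{64}$ compatibly (this is what \DEFINITIONS \ref{def:univ-const} and \ref{def:nu-and-tau} are set up to allow)---so that the sample complexity and failure probability come out in precisely the stated form; no new idea is needed beyond this bookkeeping.
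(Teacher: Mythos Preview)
Your proposal is correct and follows essentially the same route as the paper: restrict to a fixed support set of size \(\k\), invoke \cite[Corollary~3.3]{oymak2015near} on the resulting \(\k\)-dimensional sphere to get the per-support bound with failure probability \(e^{-\frac{1}{64}\nuX\m}\), then union-bound over the \(\binom{\n}{\nO}\) supports. The paper states the subspace specialization of Oymak--Recht's corollary directly (with the \(\ell_{2}\)-distance threshold \(\nuX/(\ConstdSD\sqrt{\log(1/\nuX)})\) built in) rather than going through the Gaussian-width formulation and the angular conversion, but this is purely a matter of presentation; your explicit check that \(2\tauX\) falls below the required distance threshold is, if anything, a bit more careful than the paper's own write-up.
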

%
\begin{subproof}
{\LEMMA \ref{lemma:pf:lemma:concentration-ineq:noiseless:small:mismatches}}
\LEMMA \ref{lemma:pf:lemma:concentration-ineq:noiseless:small:mismatches} is a corollary to \cite[{\COROLLARY 3.3}]{oymak2015near}, which is presented below as \LEMMA \ref{lemma:oymak2015near:corollary3.3}.
%
\begin{lemma}[{(part of) \cite[{\COROLLARY 3.3}]{oymak2015near}}]
\label{lemma:oymak2015near:corollary3.3}
Let \(  \Set{U} \subseteq \R^{\n}  \).
If the set
\(  \Set{\hat{U}} \defeq \{ w \Vec{\uV} : \Vec{\uV} \in \Set{U}, w \in \R \}  \)
is a subspace with dimension
\(  \Dim \, \Set{\hat{U}} = t  \)
and
\begin{gather}
  \m
  \geq
  \frac{\ConstdSD t}{\nuX} \log \left( \frac{1}{\nuX} \right)
,\end{gather}
then
\(  \LRVXX{\Vec{\uV}}{\Vec{\vV}} \leq \nuX \m  \)
for each pair
\(  \Vec{\uV}, \Vec{\vV} \in \Set{U}  \)
such that
\(  \| \Vec{\uV} - \Vec{\vV} \|_{2} \leq \smash[b]{\frac{\nuX}{\ConstdSD \sqrt{\log \left( \frac{1}{\nuX} \right)}}}  \),
uniformly with probability at least
\(  1 - e^{-\frac{1}{64} \nuX \m}  \).
\end{lemma}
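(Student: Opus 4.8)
\LEMMA \ref{lemma:oymak2015near:corollary3.3} is labelled as part of \cite[{\COROLLARY 3.3}]{oymak2015near}, so the plan is not to reprove the underlying random hyperplane tessellation estimate but to quote it in the form proved there and check that the present statement is an immediate specialization. Concretely, \cite{oymak2015near} shows that if $\Set{\hat U} \subseteq \R^{\n}$ is a linear subspace of dimension $t$ and $\CovM$ has $\m$ i.i.d.\ standard Gaussian rows with $\m \geq \frac{\ConstdSD t}{\nuX} \log ( \frac{1}{\nuX} )$, then with probability at least $1 - e^{-\frac{1}{64} \nuX \m}$ the sign map $\Vec{x} \mapsto \Sign( \CovM \Vec{x} )$ is a binary embedding of $\Set{\hat U} \cap \Sphere{\n}$ whose distortion is at most an absolute-constant multiple of $\nuX$: the normalized Hamming distance between $\Sign( \CovM \Vec{x} )$ and $\Sign( \CovM \Vec{y} )$ agrees with the normalized geodesic distance $\frac{1}{\pi} \arccos \langle \Vec{x}, \Vec{y} \rangle$ up to that distortion, uniformly over $\Vec{x}, \Vec{y} \in \Set{\hat U} \cap \Sphere{\n}$. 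My first step would be to transcribe this statement with the precise constants---$\ConstdSD$ (fixed to $\ConstCFiveValue$ in \DEFINITION \ref{def:univ-const}) and the $\tfrac{1}{64}$ in the exponent---so that what remains is pure bookkeeping.

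The reduction is then short. The number of sign mismatches $\LRVXX{\Vec{\uV}}{\Vec{\vV}}$, i.e.\ the number of coordinates on which $\Sign( \CovM \Vec{\uV} )$ and $\Sign( \CovM \Vec{\vV} )$ disagree, is invariant under rescaling either argument by a positive factor, so $\Vec{\uV}, \Vec{\vV} \in \Set{U} \subseteq \Set{\hat U}$ may be replaced by their normalizations, which lie in $\Set{\hat U} \cap \Sphere{\n}$---exactly the set to which the cited embedding applies, and a subspace whose dimension $t = \Dim \Set{\hat U}$ matches the sample-complexity hypothesis (in the regime where \LEMMA \ref{lemma:oymak2015near:corollary3.3} is actually invoked the vectors already lie on the unit sphere, so this step is vacuous). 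By \FACT \ref{fact:d_angular-d_S} the geodesic distance $\arccos \langle \Vec{x}, \Vec{y} \rangle$ between unit vectors is at most $\tfrac{\pi}{2} \| \Vec{x} - \Vec{y} \|_{2}$, so a Euclidean gap of size $\nuX / ( \ConstdSD \sqrt{\log ( 1/\nuX )} )$ pins the normalized geodesic distance below a level small enough that, after adding the embedding distortion, the total normalized Hamming distance is at most $\nuX$; multiplying by $\m$ yields $\LRVXX{\Vec{\uV}}{\Vec{\vV}} \leq \nuX \m$. The last step is to split the budget $\nuX$ between the geodesic-distance contribution and the distortion contribution and to check that $\ConstdSD = \ConstCFiveValue$ as fixed in \DEFINITION \ref{def:univ-const} is large enough to absorb the $\tfrac{\pi}{2}$ factor together with the absolute constant appearing in \cite[{\COROLLARY 3.3}]{oymak2015near}; since \DEFINITION \ref{def:univ-const} only constrains $\ConstdSD$ from below, any mild discrepancy in the reference's constant is harmless.

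The main obstacle is the faithful reconciliation of the precise statement of \cite[{\COROLLARY 3.3}]{oymak2015near} with the form used here: in particular, verifying that the somewhat unusual threshold $\nuX / ( \ConstdSD \sqrt{\log ( 1/\nuX )} )$ is exactly the scale at which their chaining/covering estimate becomes tight (the $\sqrt{\log(1/\nuX)}$ factor being what makes the sample complexity collapse to the clean form $\tfrac{\ConstdSD t}{\nuX} \log ( \tfrac{1}{\nuX} )$), and that the failure-probability exponent is their $\tfrac{1}{64}\nuX\m$ and not some other constant multiple of $\nuX\m$. Beyond that there is no new probabilistic content: once the cited corollary is quoted correctly, all the randomness lives in it, and the remainder is the scale-invariance and triangle-inequality bookkeeping sketched above.
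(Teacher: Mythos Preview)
The paper provides no proof of this lemma at all: it is stated verbatim as ``(part of) \cite[{\COROLLARY 3.3}]{oymak2015near}'' and then immediately used as a black box to derive \LEMMA \ref{lemma:pf:lemma:concentration-ineq:noiseless:small:mismatches}. Your proposal correctly identifies this as a cited result and goes further than the paper by sketching how to reconcile the present formulation with the original statement in \cite{oymak2015near}; the paper itself does none of this bookkeeping and simply trusts the citation.
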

%
Resuming the verification of \LEMMA \ref{lemma:pf:lemma:concentration-ineq:noiseless:small:mismatches},
let
\(  \JSXXX \defeq \{ \JCoordsXXX \subseteq [\n] : | \JCoordsXXX | = \nO \}  \),
where
\(  | \JSXXX | = \binom{\n}{\nO}  \).
Note that
\(  \bigcup_{\JCoordsXXX \in \JSXXX} \{ \Vec{\uV} \in \Sphere{\n} : \Supp( \Vec{\uV} ) \subseteq \JCoordsXXX \} = \SparseSphereSubspace{\k}{\n}  \).
Fixing
\(  \JCoordsXXX \in \JSXXX  \)
arbitrarily, and writing
\(  \Set{U} \defeq \{ \Vec{\uV} \in \Sphere{\n} : \Supp( \Vec{\uV} ) \subseteq \JCoordsXXX \}  \) and
\(  \Set{\hat{U}} \defeq \{ w \Vec{\uV} : \Vec{\uV} \in \Set{U}, w \in \R \}  \)%
---where \(  \Set{\hat{U}}  \) has dimension \(  \Dim \, \Set{\hat{U}} = \nO  \)---consider any
\(  \thetaX \in \Set{U}  \).
Notice that because
\(  \BallXX{2\tauX}( \thetaX ) \subseteq \Set{U}  \),
it happens that
\(  \LRVX{\thetaX} = \sup_{\thetaXX \in \BallXX{2\tauX}( \thetaX )} \LRVXX{\thetaX}{\thetaXX} \leq \sup_{\thetaXX \in \Set{U}} \LRVXX{\thetaX}{\thetaXX} \leq \sup_{\thetaX', \thetaXX \in \Set{U}} \LRVXX{\thetaX'}{\thetaXX}  \).
Hence, it immediately follows from \LEMMA \ref{lemma:oymak2015near:corollary3.3} that with probability at least
\(  1 - e^{-\frac{1}{64} \nuX \m}  \),
the desired upper bound holds:
\(  \LRVX{\thetaX} \leq \sup_{\thetaX', \thetaXX \in \Set{U}} \LRVXX{\thetaX'}{\thetaXX} \leq \nuX \m  \).
By a union bound over \(  \JSXXX  \), this bound on \(  \LRVX{\thetaX}  \) holds uniformly over all
\(  \thetaX \in \SparseSphereSubspace{\nO}{\n}  \)
with probability at least
\(  1 - | \JSXXX | e^{-\frac{1}{64} \nuX \m} = 1 - \binom{\n}{\nO} e^{-\frac{1}{64} \nuX \m}  \),
as claimed.
\end{subproof}
%
%
Returning to the proof of \LEMMA \ref{lemma:small-dist}, recall that
\(  \ParamSpace = \SparseSphereSubspace{\nO}{\n}  \).
Thus, due to \LEMMA \ref{lemma:pf:lemma:concentration-ineq:noiseless:small:mismatches} and the sufficiently large choice of
\begin{gather}
\label{eqn:pf:lemma:concentration-ineq:noiseless:small:m}
  \m
  \geq
  \frac{\ConstdSD \nO}{\nuX} \log \left( \frac{1}{\nuX} \right)
\end{gather}
in \LEMMA \ref{lemma:small-dist},
with probability no less than
\(  1 - \binom{\n}{\nO} e^{-\frac{1}{64} \nuX \m}  \),
for every
\(  \thetaX \in \SparseSphereSubspace{\nO}{\n} = \ParamSpace  \) and every \(  \thetaXX \in \BallXX{2\tauX}( \thetaX )  \),
the indicator random vector
\(  \I( \Sign*( \CovMX \thetaX ) \neq \Sign*( \CovMX \thetaXX ) ) \in \{ 0,1 \}^{\m}  \)
contains at most \(  \nuX \m  \)-many nonzero entries.
Therefore, with probability at least
\(  1 - \binom{\n}{\nO} e^{-\frac{1}{64} \nuX \m}  \),
for every \(  \thetaX \in \ParamSpace  \),
\begin{gather}
\label{eqn:pf:lemma:concentration-ineq:noiseless:small:1}
  | \WS{\thetaX} |
  \leq
  \sum_{\lIx=0}^{\nuX \m}
  \binom{\m}{\lIx}
  \leq
  \left( \frac{e \m}{\nuX \m} \right)^{\nuX \m}
  =
  \left( \frac{e}{\nuX} \right)^{\nuX \m}
,\end{gather}
where the second inequality is due to a well-known bound for sums of binomial coefficients.
In light of this, for each \(  \thetaX \in \ParamSpace  \), construct the following cover, \(  \BallXCover{\thetaX} \subset \BallXX{2\tauX}( \thetaX )  \), over \(  \BallXX{2\tauX}( \thetaX )  \):
for each \(  \wS \in \WS{\thetaX}  \), insert into \(  \BallXCover{\thetaX}  \) exactly one \(  \thetaXX \in \BallXX{2\tauX}( \thetaX )  \) for which
\(  \I( \Sign*( \CovMX \thetaX ) \neq \Sign*( \CovMX \thetaXX ) ) = \wS  \).
Note that
\(  | \BallXCover{\thetaX} | = | \WS{\thetaX} |  \).
Define the random variable
\(  \MaxBallXCoverSize \defeq \max_{\thetaX' \in \ParamSpace} | \BallXCover{\thetaX'} | \equiv \max_{\thetaX' \in \ParamSpace} | \WS{\thetaX'} |  \),
and write
\(  \qX \defeq \sum_{\lIx=0}^{\nuX \m} \binom{\m}{\lIx} \leq ( \frac{e}{\nuX} )^{\nuX \m}  \).
Due to \LEMMA \ref{lemma:pf:lemma:concentration-ineq:noiseless:small:mismatches} and \EQUATION \eqref{eqn:pf:lemma:concentration-ineq:noiseless:small:1}, as well as the sufficient choice of \(  \m  \) in \EQUATION \eqref{eqn:pf:lemma:concentration-ineq:noiseless:small:m},
the random variable \(  \MaxBallXCoverSize  \) is bounded from above by
\(  \MaxBallXCoverSize \leq \qX  \)
with, once again, probability at least
\(  1 - \binom{\n}{\nO} e^{-\frac{1}{64} \nuX \m}  \).
%
\par 
%
An additional helpful technique is an orthogonal decomposition of \(  \hFn[\JCoordsXX]  \)---in this case:
\begin{align}
\label{eqn:pf:lemma:concentration-ineq:noiseless:small:6}
  \hFn[\JCoordsXX]( \thetaX, \thetaXX )
  &=
  \left\langle \hFn[\JCoordsXX]( \thetaX, \thetaXX ), \frac{\thetaX-\thetaXX}{\| \thetaX-\thetaXX \|_{2}} \right\rangle \frac{\thetaX-\thetaXX}{\| \thetaX-\thetaXX \|_{2}}
  +
  \left\langle \hFn[\JCoordsXX]( \thetaX, \thetaXX ), \frac{\thetaX+\thetaXX}{\| \thetaX+\thetaXX \|_{2}} \right\rangle \frac{\thetaX+\thetaXX}{\| \thetaX+\thetaXX \|_{2}}
  +
  \gFn[\JCoordsXX]( \thetaX, \thetaXX )
,\end{align}
where
\begin{align*}
  \gFn[\JCoordsXX]( \thetaX, \thetaXX )
  =
  \hFn[\JCoordsXX]( \thetaX, \thetaXX )
  -
  \left\langle \hFn[\JCoordsXX]( \thetaX, \thetaXX ), \frac{\thetaX-\thetaXX}{\| \thetaX-\thetaXX \|_{2}} \right\rangle \frac{\thetaX-\thetaXX}{\| \thetaX-\thetaXX \|_{2}}
  -
  \left\langle \hFn[\JCoordsXX]( \thetaX, \thetaXX ), \frac{\thetaX+\thetaXX}{\| \thetaX+\thetaXX \|_{2}} \right\rangle \frac{\thetaX+\thetaXX}{\| \thetaX+\thetaXX \|_{2}}
\end{align*}
per \EQUATION \eqref{eqn:notations:gJ:def}.
Note that similar orthogonal decompositions appear in, \eg \cite{plan2017high,friedlander2021nbiht,matsumoto2022binary,matsumoto2024robust}.
Due to \EQUATION \eqref{eqn:pf:lemma:concentration-ineq:noiseless:small:6} in combination with the linearity of expectation,
\begin{align*}
  & \negphantom{\AlignSp}
  \hFn[\JCoordsXX]( \thetaX, \thetaXX ) - \E[ \hFn[\JCoordsXX]( \thetaX, \thetaXX ) ]
  \\
  &=
  \left(
    \left\langle \hFn[\JCoordsXX]( \thetaX, \thetaXX ), \frac{\thetaX-\thetaXX}{\| \thetaX-\thetaXX \|_{2}} \right\rangle
    -
    \E \left[ \left\langle \hFn[\JCoordsXX]( \thetaX, \thetaXX ), \frac{\thetaX-\thetaXX}{\| \thetaX-\thetaXX \|_{2}} \right\rangle \right]
  \right)
  \frac{\thetaX-\thetaXX}{\| \thetaX-\thetaXX \|_{2}}
  \\
  &\AlignSp+
  \left(
    \left\langle \hFn[\JCoordsXX]( \thetaX, \thetaXX ), \frac{\thetaX+\thetaXX}{\| \thetaX+\thetaXX \|_{2}} \right\rangle
    -
    \E \left[ \left\langle \hFn[\JCoordsXX]( \thetaX, \thetaXX ), \frac{\thetaX+\thetaXX}{\| \thetaX+\thetaXX \|_{2}} \right\rangle \right]
  \right)
  \frac{\thetaX+\thetaXX}{\| \thetaX+\thetaXX \|_{2}}
  \\
  &\AlignSp+
  (
    \gFn[\JCoordsXX]( \thetaX, \thetaXX )
    -
    \E[ \gFn[\JCoordsXX]( \thetaX, \thetaXX ) ]
  )
.\end{align*}
Combining this orthogonal decomposition with the triangle inequality yields the following upper bound on the \(  \lnorm{2}  \)-distance of \(  \hFn[\JCoordsXX]( \thetaX, \thetaXX )  \) from its mean:
\begin{align*}
  \| \hFn[\JCoordsXX]( \thetaX, \thetaXX ) - \E[ \hFn[\JCoordsXX]( \thetaX, \thetaXX ) ] \|_{2}
  &\leq
  \left|
    \left\langle \hFn[\JCoordsXX]( \thetaX, \thetaXX ), \frac{\thetaX-\thetaXX}{\| \thetaX-\thetaXX \|_{2}} \right\rangle
    -
    \E \left[ \left\langle \hFn[\JCoordsXX]( \thetaX, \thetaXX ), \frac{\thetaX-\thetaXX}{\| \thetaX-\thetaXX \|_{2}} \right\rangle \right]
  \right|
  \\
  &\AlignSp+
  \left|
    \left\langle \hFn[\JCoordsXX]( \thetaX, \thetaXX ), \frac{\thetaX+\thetaXX}{\| \thetaX+\thetaXX \|_{2}} \right\rangle
    -
    \E \left[ \left\langle \hFn[\JCoordsXX]( \thetaX, \thetaXX ), \frac{\thetaX+\thetaXX}{\| \thetaX+\thetaXX \|_{2}} \right\rangle \right]
  \right|
  \\
  &\AlignSp+
  \|
    \gFn[\JCoordsXX]( \thetaX, \thetaXX )
    -
    \E[ \gFn[\JCoordsXX]( \thetaX, \thetaXX ) ]
  \|_{2}
\TagEqn{\label{eqn:pf:lemma:concentration-ineq:noiseless:small:2:1}}
.\end{align*}
%
\par 
%
Most of the remaining arguments in this proof are towards bounding the three terms on the \RHS of \EQUATION \eqref{eqn:pf:lemma:concentration-ineq:noiseless:small:2:1}.
The following lemma from \cite{matsumoto2022binary} will facilitate the bound on \EQUATION \eqref{eqn:pf:lemma:concentration-ineq:noiseless:small:2:1}.
Note that \LEMMA \ref{lemma:pf:lemma:concentration-ineq:noiseless:small:condition-L} is not an exact restatement of, but is implied by, \cite[{\LEMMA A.1}]{matsumoto2022binary} and its proof.
%
\begin{lemma}[{\dueto \cite[{\LEMMA A.1}]{matsumoto2022binary}}]
\label{lemma:pf:lemma:concentration-ineq:noiseless:small:condition-L}
Let
\(  \tX > 0  \) and \(  \uX \in (0,1)  \),
and let
\(  \thetaX, \thetaXX \in \SparseSphereSubspace{\k}{\n}  \) and \(  \JCoordsXX \in \JSXX  \).
Write
\(  \kOXX \defeq \kOXXExpr \geq \kOXXExprX  \).
Then,
\begin{gather}
  \label{eqn:lemma:pf:lemma:concentration-ineq:noiseless:small:condition-L:1}
  \Pr \left( {\textstyle
    \left| \left\langle \frac{\hFn[\JCoordsXX]( \thetaX, \thetaXX )}{\sqrt{2\pi}} , \frac{\thetaX-\thetaXX}{\| \thetaX-\thetaXX \|_{2}} \right\rangle - \E \left[ \left\langle \frac{\hFn[\JCoordsXX]( \thetaX, \thetaXX )}{\sqrt{2\pi}} , \frac{\thetaX-\thetaXX}{\| \thetaX-\thetaXX \|_{2}} \right\rangle \right] \right|
    >
    \ux \tX
  } \middle| {\textstyle
    \LRVXX{\thetaX}{\thetaXX} \leq \uX \m
  } \right)
  \leq
  2 e^{-\frac{1}{2} \uX \m \tX^{2}}
  ,\\
  \label{eqn:lemma:pf:lemma:concentration-ineq:noiseless:small:condition-L:2}
  \textstyle{ \Pr \left(
    \left| \left\langle \frac{\hFn[\JCoordsXX]( \thetaX, \thetaXX )}{\sqrt{2\pi}} , \frac{\thetaX+\thetaXX}{\| \thetaX+\thetaXX \|_{2}} \right\rangle - \E \left[ \left\langle \frac{\hFn[\JCoordsXX]( \thetaX, \thetaXX )}{\sqrt{2\pi}} , \frac{\thetaX+\thetaXX}{\| \thetaX+\thetaXX \|_{2}} \right\rangle \right] \right|
    >
    \ux \tX
  \middle|
    \LRVXX{\thetaX}{\thetaXX} \leq \uX \m
  \right) }
  \leq
  2 e^{-\frac{1}{2} \uX \m \tX^{2}}
  ,\\
  \label{eqn:lemma:pf:lemma:concentration-ineq:noiseless:small:condition-L:3}
  {\textstyle \Pr \left(
    \left\| \frac{\gFn[\JCoordsXX]( \thetaX, \thetaXX )}{\sqrt{2\pi}}  - \E \left[ \frac{\gFn[\JCoordsXX]( \thetaX, \thetaXX )}{\sqrt{2\pi}}  \right] \right\|_{2}
    >
    2 \sqrt{\frac{\kOXX \uX}{\m}}
    +
    \ux \tX
  \middle|
    \LRVXX{\thetaX}{\thetaXX} \leq \uX \m
  \right) }
  \leq
  e^{-\frac{1}{8} \uX \m \tX^{2}}
.\end{gather}
\end{lemma}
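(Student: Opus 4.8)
The plan is to reduce all three inequalities to one‑dimensional sub‑exponential concentration in the spirit of \cite[Lemma A.1]{matsumoto2022binary}, with the quantity $\kOXX$ here playing the role that the sparsity played there. First I would rewrite $\frac{1}{\sqrt{2\pi}}\hFn[\JCoordsXX](\thetaX,\thetaXX)$, exactly as in the proof of \LEMMA \ref{lemma:small-dist}, as $\frac{1}{\m}\sum_{\iIx=1}^{\m}\CovVX\VIx{\iIx}\,\Sign(\langle\CovVX\VIx{\iIx},\thetaX\rangle)\,\I\!\left(\Sign(\langle\CovVX\VIx{\iIx},\thetaX\rangle)\neq\Sign(\langle\CovVX\VIx{\iIx},\thetaXX\rangle)\right)$, where $\CovVX\VIx{\iIx}$ is the restriction of $\CovV\VIx{\iIx}$ to a coordinate set containing $\Supp(\thetaX)\cup\Supp(\thetaXX)\cup\JCoordsXX$ of cardinality at most $\kOXX$; in particular $\CovVX\VIx{\iIx}$ is a standard Gaussian on that subspace. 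Since $\thetaX,\thetaXX$ lie in this subspace, both $\frac{\thetaX-\thetaXX}{\|\thetaX-\thetaXX\|_2}$ and $\frac{\thetaX+\thetaXX}{\|\thetaX+\thetaXX\|_2}$ belong to $\Span\{\thetaX,\thetaXX\}$, while $\gFn[\JCoordsXX](\thetaX,\thetaXX)$ lies in the orthogonal complement of $\Span\{\thetaX,\thetaXX\}$ inside that $\le\kOXX$‑dimensional subspace, which has dimension at most $\kOXX-2$.

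For \eqref{eqn:lemma:pf:lemma:concentration-ineq:noiseless:small:condition-L:1} and \eqref{eqn:lemma:pf:lemma:concentration-ineq:noiseless:small:condition-L:2} I would take the inner product of the above sum with the relevant fixed unit vector $\Vec{w}\in\{\frac{\thetaX-\thetaXX}{\|\cdot\|_2},\frac{\thetaX+\thetaXX}{\|\cdot\|_2}\}$; each summand, when the sign‑mismatch indicator equals $1$, reduces to the scalar $\Sign(\langle\CovVX\VIx{\iIx},\thetaX\rangle)\langle\CovVX\VIx{\iIx},\Vec{w}\rangle$ (and is $0$ otherwise), a one‑dimensional Gaussian functional with an $O(1)$ sub‑exponential moment‑generating bound. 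Conditioning on $\LRVXX{\thetaX}{\thetaXX}\le\uX\m$ forces at most $\uX\m$ of the $\m$ summands to be nonzero, so the centred sum is a sum of at most $\uX\m$ sub‑exponential contributions with variance proxy on the order of $\uX\m$; a Bernstein‑type bound then yields the two‑sided tail $2e^{-\frac12\uX\m\tX^2}$. For \eqref{eqn:lemma:pf:lemma:concentration-ineq:noiseless:small:condition-L:3} I would bound $\|\gFn[\JCoordsXX](\thetaX,\thetaXX)-\E[\gFn[\JCoordsXX](\thetaX,\thetaXX)]\|_2$ by a $\tfrac12$‑net over the unit sphere of the orthogonal complement of $\Span\{\thetaX,\thetaXX\}$, of cardinality at most $6^{\kOXX}$; for each net direction the associated inner product is controlled as in the previous paragraph, the union bound over the net producing the additive offset $2\sqrt{\kOXX\uX/\m}$, and the standard net‑to‑norm comparison giving the tail $e^{-\frac18\uX\m\tX^2}$, with the weaker constant $\tfrac18$ absorbing the net union bound.

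The step I expect to be the main obstacle is making the conditioning on $\LRVXX{\thetaX}{\thetaXX}\le\uX\m$ rigorous, since once one conditions on that event the mismatch indicators across indices are exchangeable but no longer independent, so a product‑form Chernoff bound is unavailable. The route I would follow is the one implicit in \cite{matsumoto2022binary}: condition additionally on the random active set $A=\{\iIx:\Sign(\langle\CovVX\VIx{\iIx},\thetaX\rangle)\neq\Sign(\langle\CovVX\VIx{\iIx},\thetaXX\rangle)\}$; given $A$, the components of the covariates $\CovVX\VIx{\iIx}$, $\iIx\in A$, transverse to $\Span\{\thetaX,\thetaXX\}$ are still i.i.d.\ standard Gaussians and their components in $\Span\{\thetaX,\thetaXX\}$ are Gaussians conditioned to a wedge of angular width $O(\tauX)$, hence retain uniformly bounded moments; apply the fixed‑cardinality Bernstein/net bounds above with $|A|\le\uX\m$, note the resulting tails do not depend on $A$, and average over the conditional law of $A$. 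The remaining work—bounding the one‑dimensional moment‑generating functions, tracking the centring bias, and fixing the constants $\tfrac12$, $\tfrac18$ and the $\sqrt{\kOXX\uX/\m}$ term—is routine and identical to the computation in \cite[Lemma A.1]{matsumoto2022binary}, so in the final write‑up I would state the reduction explicitly and cite that lemma for the arithmetic.
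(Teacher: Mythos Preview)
Your proposal is correct and matches the approach of the cited reference. The paper itself does not prove this lemma; it states explicitly that the result ``is not an exact restatement of, but is implied by, \cite[Lemma A.1]{matsumoto2022binary} and its proof,'' and simply invokes it. Your reconstruction of that argument is accurate, and in particular the device you single out as the main obstacle---conditioning on the random active set $A=\{\iIx:\Sign(\langle\CovVX\VIx{\iIx},\thetaX\rangle)\neq\Sign(\langle\CovVX\VIx{\iIx},\thetaXX\rangle)\}$ rather than merely on $\{\LRVXX{\thetaX}{\thetaXX}\le\uX\m\}$, so that the transverse Gaussian components remain i.i.d.\ while the in-span components are wedge-conditioned Gaussians with uniformly bounded MGFs---is exactly what makes the argument work, and is the same mechanism used in the paper's analogous large-distance proofs (see the derivation of \EQUATION \eqref{eqn:lemma:concentration-ineq:noiseless:pr:3}).

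One small methodological difference worth noting: for \eqref{eqn:lemma:pf:lemma:concentration-ineq:noiseless:small:condition-L:3} you propose a $\tfrac12$-net over the unit sphere of the orthogonal complement of $\Span\{\thetaX,\thetaXX\}$, whereas the paper's parallel arguments (cf.\ \LEMMA \ref{lemma:norm-subgaussian:1} and its use in the proof of \EQUATION \eqref{eqn:lemma:concentration-ineq:noiseless:pr:3}) instead apply Lipschitz concentration of the $\ell_2$-norm directly to the conditional Gaussian vector, which yields the offset term as $\E[\|\cdot\|_2]\le\sqrt{\kOXX\uX/\m}$ and the tail $e^{-\frac{1}{2}\uX\m\tX^2}$ in one step without a union bound. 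Both routes land on the same bound; the Lipschitz route is marginally cleaner for tracking the explicit constants $2$ and $\tfrac18$, while your net route is more elementary and makes the role of the ambient dimension $\kOXX$ explicit.
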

%
\newcommand{\COND}{\Forall{\thetaX' \in \ParamSpace}{\LRVX{\thetaX'} \leq \uX \m}}
\newcommand{\NEGCOND}{\ExistsST{\thetaX' \in \ParamSpace}{\LRVX{\thetaX'} > \uX \m}}
By symmetry, this implies that for any
\(  \thetaX \in \ParamSpace  \) and \(  \thetaXX \in \BallXX{2\tauX}( \thetaX )  \),
\begin{gather}
  {\textstyle
  \Pr \left(
    \left| \left\langle \frac{\hFn[\JCoordsXX]( \thetaX, \thetaXX )}{\sqrt{2\pi}}, \frac{\thetaX-\thetaXX}{\| \thetaX-\thetaXX \|_{2}} \right\rangle - \E \left[ \left\langle \frac{\hFn[\JCoordsXX]( \thetaX, \thetaXX )}{\sqrt{2\pi}}, \frac{\thetaX-\thetaXX}{\| \thetaX-\thetaXX \|_{2}} \right\rangle \right] \right|
    >
    \ux \tX
  \middle|
    \COND
  \right) }
  \leq
  2 e^{-\frac{1}{2} \uX \m \tX^{2}}
\label{eqn:lemma:pf:lemma:concentration-ineq:noiseless:small:condition-L:1:b}
  ,\\
  {\textstyle
  \Pr \left(
    \left| \left\langle \frac{\hFn[\JCoordsXX]( \thetaX, \thetaXX )}{\sqrt{2\pi}}, \frac{\thetaX+\thetaXX}{\| \thetaX+\thetaXX \|_{2}} \right\rangle - \E \left[ \left\langle \frac{\hFn[\JCoordsXX]( \thetaX, \thetaXX )}{\sqrt{2\pi}}, \frac{\thetaX+\thetaXX}{\| \thetaX+\thetaXX \|_{2}} \right\rangle \right] \right|
    >
    \ux \tX
  \middle|
    \COND
  \right) }
  \leq
  2 e^{-\frac{1}{2} \uX \m \tX^{2}}
\label{eqn:lemma:pf:lemma:concentration-ineq:noiseless:small:condition-L:2:b}
  ,\\
  {\textstyle
  \Pr \left(
    \left\| \frac{\gFn[\JCoordsXX]( \thetaX, \thetaXX )}{\sqrt{2\pi}} - \E \left[ \frac{\gFn[\JCoordsXX]( \thetaX, \thetaXX )}{\sqrt{2\pi}} \right] \right\|_{2}
    >
    2 \sqrt{\frac{\kOXX \uX}{\m}}
    +
    \ux \tX
  \middle|
    \COND
  \right) }
  \leq
  e^{-\frac{1}{8} \uX \m \tX^{2}}
\label{eqn:lemma:pf:lemma:concentration-ineq:noiseless:small:condition-L:3:b}
,\end{gather}
where notations are taken from the above lemma.
%
\par 
%
To help condense notations in the upcoming analysis, define the following indicator random variables:
\begin{gather*}
  \textstyle
  \ARV{1}{\thetaX}{\thetaXX}
  \defeq
  \I \left(
    \left| \left\langle \frac{\hFn[\JCoordsXX]( \thetaX, \thetaXX )}{\sqrt{2\pi}}, \frac{\thetaX-\thetaXX}{\| \thetaX-\thetaXX \|_{2}} \right\rangle - \E \left[ \left\langle \frac{\hFn[\JCoordsXX]( \thetaX, \thetaXX )}{\sqrt{2\pi}}, \frac{\thetaX-\thetaXX}{\| \thetaX-\thetaXX \|_{2}} \right\rangle \right] \right|
    >
    \nuX \tX
  \right)
  ,\\ \textstyle
  \ARV{2}{\thetaX}{\thetaXX}
  \defeq
  \I \left(
    \left| \left\langle \frac{\hFn[\JCoordsXX]( \thetaX, \thetaXX )}{\sqrt{2\pi}}, \frac{\thetaX+\thetaXX}{\| \thetaX+\thetaXX \|_{2}} \right\rangle - \E \left[ \left\langle \frac{\hFn[\JCoordsXX]( \thetaX, \thetaXX )}{\sqrt{2\pi}}, \frac{\thetaX+\thetaXX}{\| \thetaX+\thetaXX \|_{2}} \right\rangle \right] \right|
    >
    \nuX \tX
  \right)
  ,\\ \textstyle
  \ARV{3}{\thetaX}{\thetaXX}
  \defeq
  \I \left(
    \left\| \frac{\gFn[\JCoordsXX]( \thetaX, \thetaXX )}{\sqrt{2\pi}} - \E \left[ \frac{\gFn[\JCoordsXX]( \thetaX, \thetaXX )}{\sqrt{2\pi}} \right] \right\|_{2}
    >
    2 \sqrt{\frac{\kOXX \nuX}{\m}}
    +
    \nuX \tX
  \right)
.\end{gather*}
\renewcommand{\COND}{\Forall{\thetaX' \in \ParamSpace}{\LRVX{\thetaX'} \leq \nuX \m}}%
\renewcommand{\NEGCOND}{\ExistsST{\thetaX' \in \ParamSpace}{\LRVX{\thetaX'} > \nuX \m}}%
Now, observe:
\begin{align*}
  &
  \Pr( \ARV{1}{\thetaX}{\thetaXX}=1 \VEE \ARV{2}{\thetaX}{\thetaXX}=1 \VEE \ARV{3}{\thetaX}{\thetaXX}=1 \Mid| \COND )
  \\
  &\AlignIndent \leq
  \Pr( \ARV{1}{\thetaX}{\thetaXX}=1 \Mid| \COND )
  \\
  &\AlignIndent \AlignSp
  +
  \Pr( \ARV{2}{\thetaX}{\thetaXX}=1 \Mid| \COND )
  \\
  &\AlignIndent \AlignSp
  +
  \Pr( \ARV{3}{\thetaX}{\thetaXX}=1 \Mid| \COND )
  \\
  &\AlignIndent \dCmt{by a union bound}
  \\
  &\AlignIndent \leq
  2 e^{-\frac{1}{2} \nuX \m \tX^{2}}
  +
  2 e^{-\frac{1}{2} \nuX \m \tX^{2}}
  +
  e^{-\frac{1}{8} \nuX \m \tX^{2}}
  \\
  &\AlignIndent \dCmt{by \EQUATIONS \eqref{eqn:lemma:pf:lemma:concentration-ineq:noiseless:small:condition-L:1:b}--\eqref{eqn:lemma:pf:lemma:concentration-ineq:noiseless:small:condition-L:3:b}}
  \\
  &\AlignIndent \leq
  5 e^{-\frac{1}{8} \nuX \m \tX^{2}}
\TagEqn{\label{eqn:pf:lemma:concentration-ineq:noiseless:small:5}}
.\end{align*}
A uniform bound over all \(  \JCoordsXX \in \JSXX  \), \(  \thetaX \in \ParamCoverX  \), and \(  \thetaXX \in \BallXCover{\thetaX}  \) is then obtained as follows:
\begin{align*}
  &
  \Pr( \ExistsST{\JCoordsXX \in \JSXX, \thetaX \in \ParamCoverX, \thetaXX \in \BallXCover{\thetaX}}{\ARV{1}{\thetaX}{\thetaXX}=1 \VEE \ARV{2}{\thetaX}{\thetaXX}=1 \VEE \ARV{3}{\thetaX}{\thetaXX}=1} )
  \\
  &\AlignIndent
  =
  \Pr \!\!\!\! \begin{array}[t]{l} \displaystyle (
    \ExistsST
    {\JCoordsXX \in \JSXX, \thetaX \in \ParamCoverX, \thetaXX \in \BallXCover{\thetaX}}
    {\ARV{1}{\thetaX}{\thetaXX}=1 \VEE \ARV{2}{\thetaX}{\thetaXX}=1 \VEE \ARV{3}{\thetaX}{\thetaXX}=1}
    \\ \displaystyle \phantom{(}
    \Mid|
    \COND
  ) \end{array}
  \\
  &\AlignIndent \AlignSp
  \cdot
  \Pr(
    \COND
  )
  \\
  &\AlignIndent \AlignSp
  +
  \Pr \!\!\!\! \begin{array}[t]{l} \displaystyle (
    \ExistsST
    {\JCoordsXX \in \JSXX, \thetaX \in \ParamCoverX, \thetaXX \in \BallXCover{\thetaX}}
    {\ARV{1}{\thetaX}{\thetaXX}=1 \VEE \ARV{2}{\thetaX}{\thetaXX}=1 \VEE \ARV{3}{\thetaX}{\thetaXX}=1}
    \\ \displaystyle \phantom{(}
    \Mid|
    \NEGCOND
  ) \end{array}
  \\
  &\AlignIndent \AlignSp \phantom{+}
  \cdot
  \Pr(
    \NEGCOND
  )
  \\
  &\AlignIndent
  \dCmt{by the law of total probability and the definition of conditional probabilities}
  \\
  &\AlignIndent
  \leq
  \Pr \!\!\!\! \begin{array}[t]{l} \displaystyle (
    \ExistsST
    {\JCoordsXX \in \JSXX, \thetaX \in \ParamCoverX, \thetaXX \in \BallXCover{\thetaX}}
    {\ARV{1}{\thetaX}{\thetaXX}=1 \VEE \ARV{2}{\thetaX}{\thetaXX}=1 \VEE \ARV{3}{\thetaX}{\thetaXX}=1}
    \\ \displaystyle \phantom{(}
    \Mid|
    \COND
  ) \end{array}
  \\
  &\AlignIndent \AlignSp
  +
  \Pr(
    \NEGCOND
  )
  \\
  &\AlignIndent
  \leq
  | \JSXX | | \ParamCoverX | \qX
  \Pr (
    \ARV{1}{\thetaX}{\thetaXX}=1 \VEE \ARV{2}{\thetaX}{\thetaXX}=1 \VEE \ARV{3}{\thetaX}{\thetaXX}=1
    \Mid| \COND
  )
  \\
  &\AlignIndent \AlignSp
  +
  \Pr(
    \NEGCOND
  )
  \\
  &\AlignIndent
  \dCmt{for an arbitrary choice of \(  \JCoordsXX \in \JSXX, \thetaX \in \ParamCoverX, \thetaXX \in \BallXCover{\thetaX}  \);}
  \\
  &\AlignIndent
  \dCmt{by a union bound and an earlier discussion about the cardinality of \(  \BallXCover{\cdot}  \)}
  \\
  &\AlignIndent
  \leq
  5 | \JSXX | | \ParamCoverX | \qX e^{-\frac{1}{8} \nuX \m \tX^{2}}
  +
  \binom{\n}{\nO} e^{-\frac{1}{64} \nuX \m}
  \\
  &\AlignIndent
  \dCmt{by \EQUATION \eqref{eqn:pf:lemma:concentration-ineq:noiseless:small:5} and \LEMMA \ref{lemma:pf:lemma:concentration-ineq:noiseless:small:mismatches}}
  \\
  &\AlignIndent
  \leq
  5 | \JS | | \ParamCover | \qX e^{-\frac{1}{8} \nuX \m \tX^{2}}
  +
  \binom{\n}{\nO} e^{-\frac{1}{64} \nuX \m}
  .\\
  &\AlignIndent
  \dCmt{\(  \because | \JSXX | \leq | \JS |  \) and \(  | \ParamCoverX | \leq | \ParamCover |  \)}
\end{align*}
Note that the last line follows from recalling the definition of \(  \JSXX  \):
\(  \JSXX \defeq \{ \JCoords \cup \Supp( \thetaStar ) : \JCoords \in \JS \}  \),
which inserts at most one coordinate subset into \(  \JSXX  \) for each coordinate subset in \(  \JS  \).
%
\par 
%
Returning to \EQUATION \eqref{eqn:pf:lemma:concentration-ineq:noiseless:small:2:1} and now applying the results just derived above, the following bound holds for all \(  \JCoordsXX \in \JSXX  \), \(  \thetaX \in \ParamCoverX  \), and \(  \thetaXX \in \BallXCover{\thetaX}  \) with probability at least
\(  1 -  5 | \JS | | \ParamCover | \qX e^{-\frac{1}{8} \nuX \m \tX^{2}} - \binom{\n}{\nO} e^{-\frac{1}{64} \nuX \m}  \):
\begin{align*}
  \| \hFn[\JCoordsXX]( \thetaX, \thetaXX ) - \E[ \hFn[\JCoordsXX]( \thetaX, \thetaXX ) ] \|_{2}
  &\leq
  \left|
    \left\langle \hFn[\JCoordsXX]( \thetaX, \thetaXX ), \frac{\thetaX-\thetaXX}{\| \thetaX-\thetaXX \|_{2}} \right\rangle
    -
    \E \left[ \left\langle \hFn[\JCoordsXX]( \thetaX, \thetaXX ), \frac{\thetaX-\thetaXX}{\| \thetaX-\thetaXX \|_{2}} \right\rangle \right]
  \right|
  \\
  &\AlignSp+
  \left|
    \left\langle \hFn[\JCoordsXX]( \thetaX, \thetaXX ), \frac{\thetaX+\thetaXX}{\| \thetaX+\thetaXX \|_{2}} \right\rangle
    -
    \E \left[ \left\langle \hFn[\JCoordsXX]( \thetaX, \thetaXX ), \frac{\thetaX+\thetaXX}{\| \thetaX+\thetaXX \|_{2}} \right\rangle \right]
  \right|
  \\
  &\AlignSp+
  \|
    \gFn[\JCoordsXX]( \thetaX, \thetaXX )
    -
    \E[ \gFn[\JCoordsXX]( \thetaX, \thetaXX ) ]
  \|_{2}
  \\
  &\leq
  2\sqrt{\frac{\kOXX \nuX}{\m}} + 3 \nuX \tX
  \\
  &\leq
  5 \max \left\{
    \sqrt{\frac{\kOXX \nuX}{\m}},
    \nuX \tX
  \right\}
\TagEqn{\label{eqn:pf:lemma:concentration-ineq:noiseless:small:2}}
.\end{align*}
Set
\begin{align*}
  \tX
  &=
  \sqrt{\frac{8 \log \left( \frac{6}{\rhoSD} | \JS | | \ParamCover | \qX \right)}{\nuX \m}}
  \\
  &=
  \sqrt{
    \frac{8 \log \left( \frac{6}{\rhoSD} | \JS | | \ParamCover | \right)}{\nuX \m}
    +
    \frac{8 \log \left( \qXExpr \right)}{\nuX \m}
  }
  \\
  &\leq
  \sqrt{
    \frac{8 \log \left( \frac{6}{\rhoSD} | \JS | | \ParamCover | \right)}{\nuX \m}
    +
    8 \log \left( \frac{e}{\nuX} \right)
  }
  \\
  &\dCmt{by an earlier remark}
  \\
  &\leq
  \sqrt{\frac{8 \log \left( \frac{6}{\rhoSD} | \JS | | \ParamCover | \right)}{\nuX \m}}
  +
  \sqrt{8 \log \left( \frac{e}{\nuX} \right)}
\TagEqn{\label{eqn:pf:lemma:concentration-ineq:noiseless:small:3}}
  .\\
  &\dCmt{by the triangle inequality (in one dimension)}
\end{align*}
In accordance with \EQUATION \eqref{eqn:lemma:small-dist:m} of \LEMMA \ref{lemma:small-dist}, let
\begin{align*}
  \m
  &\geq
  \max \left\{
  \frac{200 \nuX \log \left( \frac{6}{\rhoSD} | \JS | | \ParamCover | \right)}{\left( \sqrt{\frac{\pi}{8}} \GAMMAX \ConstbSD \deltaX - \nuX \sqrt{8 \log \left( \frac{e}{\nuX} \right)} \right)^{2}}
  ,
  \frac{200 \nuX \kOXX}{\pi \GAMMAX^{2} \ConstbSD^{2} \deltaX^{2}}
  ,
  \frac{64}{\nuX} \log \left( \frac{6}{\rhoSD} \binom{\n}{\nO} \right)
  ,
  \frac{\ConstdSD \nO}{\nuX} \log \left( \frac{1}{\nuX} \right)
  \right\}
\TagEqn{\label{eqn:pf:lemma:concentration-ineq:noiseless:small:4}}
,\end{align*}
where \(  \ConstbSD, \ConstdSD > 0  \) are constants as per \DEFINITION \ref{def:univ-const} and \(  \nuX  \) is as defined in \DEFINITION \ref{def:nu-and-tau}.
Then, with probability at least
\begin{align*}
  &
  1
  -
  5 | \JS | | \ParamCover | \qX e^{-\frac{1}{8} \nuX \m \tX^{2}}
  -
  \binom{\n}{\nO} e^{-\frac{1}{64} \nuX \m}
  \\
  &\geq
  1
  -
  5 | \JS | | \ParamCover | \qX
  e^{-\frac{1}{8} \nuX \m \cdot \frac{8}{\nuX \m} \log ( \frac{6}{\rhoSD} | \JS | | \ParamCover | \qX )}
  -
  \binom{\n}{\nO}
  e^{-\frac{1}{64} \nuX \cdot \frac{64}{\nuX} \log ( \frac{6}{\rhoSD} \binom{\n}{\nO} )}
  \\
  &\dCmt{by the choices of \(  \tX, \m  \) in \EQUATIONS \eqref{eqn:pf:lemma:concentration-ineq:noiseless:small:3} and \eqref{eqn:pf:lemma:concentration-ineq:noiseless:small:4}}
  \\
  &\geq
  1 - \rhoSD
,\end{align*}
for all \(  \JCoordsXX \in \JSXX  \), \(  \thetaX \in \ParamCoverX  \), and \(  \thetaXX \in \BallXCover{\thetaX}  \),
\begin{align*}
  \| \hFn[\JCoordsXX]( \thetaX, \thetaXX ) - \E[ \hFn[\JCoordsXX]( \thetaX, \thetaXX ) ] \|_{2}
  &\leq
  5 \max \left\{
    \sqrt{\frac{\kOXX \nuX}{\m}},
    \nuX \tX
  \right\}
  \\
  &\dCmt{by \EQUATION \eqref{eqn:pf:lemma:concentration-ineq:noiseless:small:2}}
  \\
  &=
  5 \max \left\{
    \sqrt{\frac{\kOXX \nuX}{\m}},
    \sqrt{\frac{8 \nuX \log \left( \frac{6}{\rhoSD} | \JS | | \ParamCover | \right)}{\m}}
    +
    \nuX \sqrt{8 \log \left( \frac{e}{\nuX} \right)}
  \right\}
  \\
  &\dCmt{by the choice of \(  \tX  \) specified in \EQUATION \eqref{eqn:pf:lemma:concentration-ineq:noiseless:small:3}}
  \\
  &\leq
  \sqrt{\frac{\pi}{8}} \gammaX \ConstbSD \deltaX
  .\\
  &\dCmt{by the choice of \(  \m  \) specified in \EQUATION \eqref{eqn:pf:lemma:concentration-ineq:noiseless:small:4}}
  \\
  &\dCmtx{and the definition of \(  \nuX  \) in \EQUATION \eqref{eqn:def:nu-and-tau:nu}}
\end{align*}
The bound in \EQUATION \eqref{eqn:lemma:small-dist:ub} of \LEMMA \ref{lemma:small-dist} now follows: with probability at least \(  1-\rhoSD  \), uniformly for every \(  \JCoordsXX \in \JSXX  \), \(  \thetaX \in \ParamCoverX  \), and \(  \thetaXX \in \BallXCover{\thetaX}  \),
\begin{align*}
  \frac
  {2 \| \hFn[\JCoordsXX]( \thetaX, \thetaXX ) - \E[ \hFn[\JCoordsXX]( \thetaX, \thetaXX ) ] \|_{2}}
  {\DENOM}
  \leq
  \sqrt{\frac{8}{\pi}}\frac{1}{\gammaX} \sqrt{\frac{\pi}{8}} \gammaX \ConstbSD \deltaX
  =
  \ConstbSD \deltaX
.\end{align*}
\end{proof}
\begin{proof}
{\LEMMA \ref{lemma:large-dist:2}}
\checkoffbutmayberecheck%
Fix any \(  \thetaStar \in \ParamSpace  \).
Let
\(  \JCoordsX \in \JSX  \)
be arbitrary.
%
Once again, due to \EQUATION \eqref{eqn:lemma:concentration-ineq:ev:4} in \LEMMA \ref{lemma:concentration-ineq},
\begin{gather}
\label{eqn:pf:lemma:large-dist:2:2}
  \| \E[ \thetaX + \hfFn[\JCoordsX]( \thetaStar, \thetaX ) ] \|_{2}
  =
  \sqrt{\frac{\pi}{2}} \gammaX
.\end{gather}
Then, inserting \eqref{eqn:pf:lemma:large-dist:2:2} into the \RHS \EQUATION \eqref{eqn:lemma:large-dist:2:ub} in \LEMMA \ref{lemma:large-dist:2},
\begin{gather}
\label{eqn:pf:lemma:large-dist:2:3}
  \frac
  {2 \| \hfFn[\JCoordsX]( \thetaStar, \thetaStar ) - \E[ \hfFn[\JCoordsX]( \thetaStar, \thetaStar ) ] \|_{2}}
  {\DENOM}
  =
  \frac
  {2 \| \hfFn[\JCoordsX]( \thetaStar, \thetaStar ) - \E[ \hfFn[\JCoordsX]( \thetaStar, \thetaStar ) ] \|_{2}}
  {\sqrt{\hfrac{\pi}{2}} \gammaX}
.\end{gather}
The \(  \lnorm{2}  \)-distance between \(  \hfFn[\JCoordsX]( \thetaStar, \thetaStar )  \) and its mean, \ie the term
\(  \| \hfFn[\JCoordsX]( \thetaStar, \thetaStar ) - \E[ \hfFn[\JCoordsX]( \thetaStar, \thetaStar ) ] \|_{2}  \)
on the \RHS of \EQUATION \eqref{eqn:pf:lemma:large-dist:2:3},
is bound from above as follows.
Take \(  \sXXX, \tXX > 0  \) in \EQUATION \eqref{eqn:lemma:concentration-ineq:pr:2} in \LEMMA \ref{lemma:concentration-ineq} to be
\begin{gather}
\label{eqn:pf:lemma:large-dist:2:s'}
  \sXXX \defeq \sqrt{\frac{3 \log \left( \frac{3}{\rhoLDXX} \right)}{\alphaO \m}}
\end{gather}
and
\begin{align}
\label{eqn:pf:lemma:large-dist:2:t'}
  \tXX
  &\defeq
  \max \left\{
    \sqrt{\frac{3 \log \left( \frac{6}{\rhoLDXX} | \JSX | \right)}{\alphaO \m}}
    ,
    \sqrt{\frac{2 ( 1+\sXXX ) \log \left( \frac{3}{\rhoLDXX} | \JSX | \right)}{\alphaO \m}}
  \right\}
,\end{align}
and take \(  \m  \) to be bounded from below by
\begin{align}
\label{eqn:pf:lemma:large-dist:2:m:2}
  \m
  &\geq
  \max \left\{
  \frac{64 \alphaO}{\GAMMAX^{2} \ConstbLD^{2} \deltaX^{2}}
  \max \left\{
    3 \log \left( \frac{6}{\rhoLDXX} | \JS | | \ParamCover | \right)
    ,
    2 ( \kOX-1 )
  \right\}
  ,
  \frac{4}{\alphaO} \log \left( \frac{6}{\rhoLDXX} | \JS | | \ParamCover | \right)
  \right\}
%
,\end{align}
where this choice of \(  \m  \) satisfies
\begin{align*}
  \m
  &\geq
  \max \left\{
  \frac{64 \alphaO}{\GAMMAX^{2} \ConstbLD^{2} \deltaX^{2}}
  \max \left\{
    3 \log \left( \frac{6}{\rhoLDXX} | \JSX | \right)
    ,
    ( 1+\sXXX )( \kOX-1 )
  \right\}
  ,
  \frac{4}{\alphaO} \log \left( \frac{6}{\rhoLDXX} | \JSX | \right)
  \right\}
%
\end{align*}
because \(  | \JSX | \leq | \JS | | \ParamCover |  \).
This condition on \(  \m  \) also ensures that \(  \sXXX, \tXX < 1  \), as required to utilize \LEMMA \ref{lemma:concentration-ineq}.
Then, observe:
\begin{align*}
  \sqrt{\frac{2\pi \alphaO ( 1+\sXXX )( \kOX-1 )}{\m}}
  +
  \sqrt{2\pi} \alphaO \tXX
  &\leq
  \sqrt{\frac{2\pi \alphaO ( 1+\sXXX )( \kOX-1 )}{\m}}
  +
  \sqrt{\frac{6\pi \alphaO \log \left( \frac{6}{\rhoLDXX} | \JSX | \right)}{\m}}
  \\
  &\leq
  \frac{1}{2} \cdot \sqrt{\frac{\pi}{8}} \gammaX \ConstbLD \delta
  +
  \frac{1}{2} \cdot \sqrt{\frac{\pi}{8}} \gammaX \ConstbLD \delta
  \\
  &=
  \sqrt{\frac{\pi}{8}} \gammaX \ConstbLD \delta
\TagEqn{\label{eqn:pf:lemma:large-dist:2:6}}
,\end{align*}
where \(  \ConstbLD > 0  \) is a constant as per \DEFINITION \ref{def:univ-const}.
Together with \EQUATION \eqref{eqn:pf:lemma:large-dist:1:5} from the proof of \LEMMA \ref{lemma:large-dist:1}, \EQUATION \eqref{eqn:pf:lemma:large-dist:2:6} gives way to the following bound:
\begin{align*}
  &
  \Pr \left(
    \Forall{\JCoordsX \in \JSX}{
    \left\| \hFn[\JCoordsX]( \thetaStar, \thetaStar ) - \E \left[ \hFn[\JCoordsX]( \thetaStar, \thetaStar ) \right] \right\|_{2}
    \leq
    \sqrt{\frac{\pi}{8}} \gammaX \ConstbLD \deltaX
    }
  \right)
  \\
  &\geq
  \Pr \left(
    \Forall{\JCoordsX \in \JSX}{
    \left\| \hFn[\JCoordsX]( \thetaStar, \thetaStar ) - \E \left[ \hFn[\JCoordsX]( \thetaStar, \thetaStar ) \right] \right\|_{2}
    \leq
  \sqrt{\frac{2\pi \alphaO ( 1+\sXXX )( \kOX-1 )}{\m}}
  +
  \sqrt{2\pi} \alphaO \tXX
    }
  \right)
  \\
  &\dCmt{due to \EQUATIONS \eqref{eqn:pf:lemma:large-dist:1:5} and \eqref{eqn:pf:lemma:large-dist:2:6}}
  \\
  &\geq
  1
  -
  2 | \JSX | e^{-\frac{1}{3} \alphaO \m \tXX^{2}}
  -
  | \JSX | e^{-\frac{1}{2} \frac{\alphaO \m \tXX^{2}}{1+\sXXX}}
  -
  e^{-\frac{1}{3} \alphaO \m \sXXX^{2}}
  \\
  &\dCmt{by \EQUATION \eqref{eqn:lemma:concentration-ineq:pr:2} in \LEMMA \ref{lemma:concentration-ineq}}
  \\
  &\geq
  1 - \frac{\rhoLDXX}{3} - \frac{\rhoLDXX}{3} - \frac{\rhoLDXX}{3}
  \\
  &\dCmt{by the choice of \(  \sXXX, \tXX  \) in \EQUATIONS \eqref{eqn:pf:lemma:large-dist:2:s'} and \eqref{eqn:pf:lemma:large-dist:2:t'}}
  \\
  &=
  1 - \rhoLDXX
\TagEqn{\label{eqn:pf:lemma:large-dist:2:8}}
.\end{align*}
Therefore, taken together, \EQUATIONS \eqref{eqn:pf:lemma:large-dist:2:2} and \eqref{eqn:pf:lemma:large-dist:2:8} imply that if \(  \m  \) is bounded from below as in \EQUATION \eqref{eqn:pf:lemma:large-dist:2:m:2}, then with probability at least \(  1-\rhoLDXX  \), for all \(  \JCoordsX \in \JSX  \),
\begin{align*}
  \frac
  {2 \| \hfFn[\JCoordsX]( \thetaStar, \thetaStar ) - \E[ \hfFn[\JCoordsX]( \thetaStar, \thetaStar ) ] \|_{2}}
  {\DENOM}
  &=
  \frac
  {2 \| \hfFn[\JCoordsX]( \thetaStar, \thetaStar ) - \E[ \hfFn[\JCoordsX]( \thetaStar, \thetaStar ) ] \|_{2}}
  {\sqrt{\hfrac{\pi}{2}} \gammaX}
  \\
  &\leq
  \sqrt{\frac{8}{\pi}} \frac{1}{\gammaX}
  \sqrt{\frac{\pi}{8}} \gammaX
  \ConstbLD \deltaX
  \\
  &=
  \ConstbLD \deltaX
,\end{align*}
thus establishing \LEMMA \ref{lemma:large-dist:2}.
\end{proof}

\section{Proof of the Concentration Inequalities, \LEMMA \ref{lemma:concentration-ineq}}
\label{outline:concentration-ineq}


\subsection{Intermediate Results}
\label{outline:concentration-ineq|intermediate}

We return to \LEMMA \ref{lemma:concentration-ineq} to present its proof.
Towards this, the following pair of intermediate lemmas, whose proofs can be found in \SECTIONS \ref{outline:concentration-ineq|pf-noiseless} and \ref{outline:concentration-ineq|pf-noisy}, are provided below.

\begin{lemma}
\label{lemma:concentration-ineq:noiseless}
Fix
\(  \sXX, \tX, \tauX \in (0,1)  \),
and let
\(  \thetaStar \in \ParamSpace  \),
\(  \JS \subseteq 2^{[\n]}  \), and
\(  \ParamCover, \ParamCoverX \subset \ParamSpace  \),
where \(  \JS  \), \(  \ParamCover  \), and \(  \ParamCoverX  \) are finite sets, and where
\(  \ParamCoverX \defeq \ParamCover \setminus \Ball{\tauX}( \thetaStar )  \).
Let
\(  \kO \defeq \kOExpr  \).
Then,
\begin{gather}
  {
  \Pr \left(
    \ExistsST{\JCoords \in \JS, \thetaX \in \ParamCoverX}{
    \left| \left\langle \frac{\hFn[\JCoords]( \thetaStar, \thetaX )}{\sqrt{2\pi}}, \frac{\thetaStar-\thetaX}{\| \thetaStar-\thetaX \|_{2}} \right\rangle - \E \left[ \left\langle \frac{\hFn[\JCoords]( \thetaStar, \thetaX )}{\sqrt{2\pi}}, \frac{\thetaStar-\thetaX}{\| \thetaStar-\thetaX \|_{2}} \right\rangle \right] \right|
    >
    \frac{\tX \ADIST}{\pi}
    }
  \right) }
  \nonumber \\
  \leq
  2 | \JS | | \ParamCoverX | e^{-\frac{1}{3\pi} \m \tX^{2} \ADIST}
  \label{eqn:lemma:concentration-ineq:noiseless:pr:1}
  ,\\
  {
  \Pr \left(
    \ExistsST{\JCoords \in \JS, \thetaX \in \ParamCoverX}{
    \left| \left\langle \frac{\hFn[\JCoords]( \thetaStar, \thetaX )}{\sqrt{2\pi}}, \frac{\thetaStar+\thetaX}{\| \thetaStar+\thetaX \|_{2}} \right\rangle - \E \left[ \left\langle \frac{\hFn[\JCoords]( \thetaStar, \thetaX )}{\sqrt{2\pi}}, \frac{\thetaStar+\thetaX}{\| \thetaStar+\thetaX \|_{2}} \right\rangle \right] \right|
    >
    \frac{\tX \ADIST}{\pi}
    }
  \right) }
  \nonumber \\
  \leq
  2 | \JS | | \ParamCoverX | e^{-\frac{1}{3\pi} \m \tX^{2} \ADIST}
  \label{eqn:lemma:concentration-ineq:noiseless:pr:2}
  ,\\
  {
  \Pr \left(
    \ExistsST{\JCoords \in \JS, \thetaX \in \ParamCoverX}{
    \left\| \frac{\gFn[\JCoords]( \thetaStar, \thetaX )}{\sqrt{2\pi}} - \E \left[ \frac{\gFn[\JCoords]( \thetaStar, \thetaX )}{\sqrt{2\pi}} \right] \right\|_{2}
    >
    \sqrt{\frac{2 ( 1+\sXX )( \kO-2 ) \ADIST}{\m}}
    +
    \frac{\tX \ADIST}{\pi}
    }
  \right) }
  \nonumber \\
  \leq
  | \JS | | \ParamCoverX | e^{-\frac{1}{2\pi ( 1+\sXX )} \m \tX^{2} \ADIST}
  +
  | \ParamCoverX | e^{-\frac{1}{3\pi} \m \sXX^{2} \ADIST}
  \label{eqn:lemma:concentration-ineq:noiseless:pr:3}
,\end{gather}
where for any \(  \JCoords \subseteq [\n]  \) and \(  \thetaX \in \ParamSpace  \),
\begin{gather}
  \label{eqn:lemma:concentration-ineq:noiseless:ev:1}
  \E \left[ \left\langle \hFn( \thetaStar, \thetaX ), \frac{\thetaStar-\thetaX}{\| \thetaStar-\thetaX \|_{2}} \right\rangle \right]
  =
  \E \left[ \left\langle \hFn[\JCoords]( \thetaStar, \thetaX ), \frac{\thetaStar-\thetaX}{\| \thetaStar-\thetaX \|_{2}} \right\rangle \right]
  =
  \| \thetaStar-\thetaX \|_{2}
  ,\\ \label{eqn:lemma:concentration-ineq:noiseless:ev:2}
  \E \left[ \left\langle \hFn( \thetaStar, \thetaX ), \frac{\thetaStar+\thetaX}{\| \thetaStar+\thetaX \|_{2}} \right\rangle \right]
  =
  \E \left[ \left\langle \hFn[\JCoords]( \thetaStar, \thetaX ), \frac{\thetaStar+\thetaX}{\| \thetaStar+\thetaX \|_{2}} \right\rangle \right]
  = 0
  ,\\ \label{eqn:lemma:concentration-ineq:noiseless:ev:3}
  \E[ \gFn( \thetaStar, \thetaX ) ]
  =
  \E[ \gFn[\JCoords]( \thetaStar, \thetaX ) ]
  = \Vec{0}
.\end{gather}
\end{lemma}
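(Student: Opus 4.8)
\textbf{Proof proposal for \LEMMA \ref{lemma:concentration-ineq:noiseless}.}
The plan is to first compute the three expectations in \eqref{eqn:lemma:concentration-ineq:noiseless:ev:1}--\eqref{eqn:lemma:concentration-ineq:noiseless:ev:3}, and then derive the three concentration bounds \eqref{eqn:lemma:concentration-ineq:noiseless:pr:1}--\eqref{eqn:lemma:concentration-ineq:noiseless:pr:3} by controlling, for each fixed pair $(\JCoords, \thetaX)$, the deviation of the relevant scalar or vector around its mean, and then applying a union bound over the finite index sets $\JS$ and $\ParamCoverX$. The key structural observation is that $\frac{1}{\sqrt{2\pi}}\hFn[\JCoords](\thetaStar,\thetaX) = \frac{1}{\m}\ThresholdSet{\Supp(\thetaStar)\cup\Supp(\thetaX)\cup\JCoords}(\CovM^{\T}\cdot\frac12(\Sign(\CovM\thetaStar)-\Sign(\CovM\thetaX)))$, so that each summand is nonzero only on the $\BigO(\ADIST)$-fraction of covariate indices $\iIx$ on which $\Sign(\langle\CovV\VIx{\iIx},\thetaStar\rangle)\neq\Sign(\langle\CovV\VIx{\iIx},\thetaX\rangle)$; this ``angular sparsity'' of the contributing terms is what produces the $\ADIST$ scaling in the exponents. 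First I would fix $(\JCoords,\thetaX)$ and restrict attention to the at-most-$\kO$-dimensional coordinate subspace $\Supp(\thetaStar)\cup\Supp(\thetaX)\cup\JCoords$, within which the relevant covariate entries are still i.i.d.\ standard Gaussian.

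For the expectations: projecting $\hFn[\JCoords]$ onto the unit vectors $\frac{\thetaStar-\thetaX}{\|\thetaStar-\thetaX\|_2}$ and $\frac{\thetaStar+\thetaX}{\|\thetaStar+\thetaX\|_2}$ reduces everything to two-dimensional Gaussian integrals in the span of $\thetaStar,\thetaX$ (since the thresholding does not remove coordinates in this span). A direct computation of $\E[\frac12(\Sign(\langle\CovV,\thetaStar\rangle)-\Sign(\langle\CovV,\thetaX\rangle))\langle\CovV,w\rangle]$ for $w$ either parallel or orthogonal to $\thetaStar-\thetaX$ gives $\frac{1}{\sqrt{2\pi}}\|\thetaStar-\thetaX\|_2$ and $0$ respectively after the $\sqrt{2\pi}$ normalization — these are standard ``one-bit'' Gaussian moment identities (the same ones underlying \EQUATION \eqref{eqn:notations:grad-objective-function}), and also yield $\E[\hFn[\JCoords](\thetaStar,\thetaX)] = \thetaStar-\thetaX$, which is consistent with \eqref{eqn:lemma:concentration-ineq:ev:1}. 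The orthogonal-complement component $\gFn[\JCoords]$ then has mean $\Vec 0$ by symmetry: conditioned on the angles $\langle\CovV,\thetaStar\rangle,\langle\CovV,\thetaX\rangle$, the residual Gaussian component is mean-zero and isotropic.

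For the concentration: the two inner-product bounds \eqref{eqn:lemma:concentration-ineq:noiseless:pr:1}--\eqref{eqn:lemma:concentration-ineq:noiseless:pr:2} follow by writing the projected quantity as an average of $\m$ i.i.d.\ terms, each a product of a Bernoulli-type indicator (probability $\approx \ADIST/\pi$ of being nonzero) with a sub-Gaussian Gaussian factor; a Bernstein/sub-exponential tail bound then gives the $e^{-\Omega(\m\tX^2\ADIST)}$ decay, and a union bound over $\JS\times\ParamCoverX$ supplies the $|\JS||\ParamCoverX|$ prefactor. The bound \eqref{eqn:lemma:concentration-ineq:noiseless:pr:3} on $\|\gFn[\JCoords]-\E\gFn[\JCoords]\|_2$ is the more delicate one and is the main obstacle: here one must control the norm of a random vector living in a $(\kO-2)$-dimensional subspace, conditioned on the (random) number $L$ of ``mismatch'' indices. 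The approach is to condition on the event that $L \leq (1+\sXX)\frac{\ADIST}{\pi}\m$ — which holds except with probability $|\ParamCoverX|e^{-\Omega(\m\sXX^2\ADIST)}$ by a binomial tail bound, explaining the second term in \eqref{eqn:lemma:concentration-ineq:noiseless:pr:3} — and then, on that event, bound the $\lnorm 2$-norm of the sum of $L$ isotropic Gaussian vectors in $\kO-2$ dimensions via a standard $\chi^2$/$\epsilon$-net concentration argument, which produces the $\sqrt{(1+\sXX)(\kO-2)\ADIST/\m}$ expectation term plus the $e^{-\Omega(\m\tX^2\ADIST/(1+\sXX))}$ fluctuation term. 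The only real subtlety is making the conditioning on $L$ rigorous (the residual directions are independent of the indicator pattern given the angles, so the conditional law of $\gFn[\JCoords]$ is a clean mixture of scaled isotropic Gaussians), after which the union bound over $\JS\times\ParamCoverX$ completes the argument. I expect the bookkeeping of which terms survive thresholding, and the clean separation of the ``count of mismatches'' randomness from the ``residual direction'' randomness, to be where the care is needed; everything else is routine sub-exponential tail estimation.
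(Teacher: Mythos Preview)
Your proposal is correct and follows essentially the same approach as the paper: the expectations are computed via two-dimensional Gaussian calculations in $\Span(\thetaStar,\thetaX)$ (the paper routes this through conditional expectations given the mismatch count $L$ and cites \cite{matsumoto2022binary}, but the content is identical), and the three concentration bounds are obtained exactly as you describe—MGF/Bernstein-type tails exploiting the Bernoulli($\tfrac{1}{\pi}\ADIST$)$\times$sub-Gaussian structure for \eqref{eqn:lemma:concentration-ineq:noiseless:pr:1}--\eqref{eqn:lemma:concentration-ineq:noiseless:pr:2}, and for \eqref{eqn:lemma:concentration-ineq:noiseless:pr:3} conditioning on $L\le(1+\sXX)\tfrac{\ADIST}{\pi}\m$ (binomial tail) followed by a norm-concentration bound on a $(\kO-2)$-dimensional Gaussian vector (the paper uses a Lipschitz-concentration lemma rather than a $\chi^2$/net argument, but either works). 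The separation of the mismatch-count randomness from the residual-direction randomness that you flag as the key subtlety is precisely what the paper carries out.
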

\begin{lemma}
\label{lemma:concentration-ineq:noisy}
Fix
\(  \tX > 0  \), \(  \sXXX \in (0,1)  \), and \(  \deltaX \in (0,1)  \),
and let
\(  \JSX \subseteq 2^{[\n]}  \).
Let
\(  \kOX \defeq \kOXExpr  \),
and define
\(  \alphaO = \alphaO( \deltaX ) = \alphaOExpr  \).
Then, for \(  \thetaStar \in \ParamSpace  \),
\begin{gather}
  \label{eqn:lemma:concentration-ineq:noisy:pr:1}
  \Pr \left(
    \ExistsST{\JCoordsX \in \JSX}
    {\left| \left\langle \frac{\hfFn[\JCoordsX]( \thetaStar, \thetaStar )}{\sqrt{2\pi}}, \thetaStar \right\rangle - \E \left[ \left\langle \frac{\hfFn[\JCoordsX]( \thetaStar, \thetaStar )}{\sqrt{2\pi}}, \thetaStar \right\rangle \right] \right|
    >
    \alphaX \tX}
  \right)
  \leq
  2 | \JSX | e^{-\frac{1}{3} \alphaX \m \tX^{2}}
  ,\\ \nonumber
  \Pr \left(
    \ExistsST{\JCoordsX \in \JSX}
    {\left\| \frac{\gfFn[\JCoordsX]( \thetaStar, \thetaStar )}{\sqrt{2\pi}} - \E \left[ \frac{\gfFn[\JCoordsX]( \thetaStar, \thetaStar )}{\sqrt{2\pi}} \right]  \right\|_{2}
    >
    \sqrt{\frac{\alphaO ( 1+\sXXX )( \kOX-1 )}{\m}}
    +
    \alphaO \tX}
  \right)
  \\ \label{eqn:lemma:concentration-ineq:noisy:pr:2}
  \leq
  | \JSX | e^{-\frac{1}{2 ( 1+\sXXX )} \alphaO \m \tX^{2}}
  +
  e^{-\frac{1}{3} \alphaO \m \sXXX^{2}}
,\end{gather}
where for any \(  \JCoordsX \subseteq [\n]  \),
\begin{gather}
  \label{eqn:lemma:concentration-ineq:noisy:ev:1}
  \E \left[ \left\langle \hfFn( \thetaStar, \thetaStar ), \thetaStar \right\rangle \right]
  =
  \E \left[ \left\langle \hfFn[\JCoordsX]( \thetaStar, \thetaStar ), \thetaStar \right\rangle \right]
  = -\left( 1 - \sqrt{\frac{\pi}{2}} \gammaX \right)
  ,\\ \label{eqn:lemma:concentration-ineq:noisy:ev:2}
  \E[ \gFn( \thetaStar, \thetaStar ) ]
  =
  \E[ \gFn[\JCoordsX]( \thetaStar, \thetaStar ) ]
  = \Vec{0}
.\end{gather}
\end{lemma}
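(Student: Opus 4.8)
The plan is to exploit the special structure of $\hfFn$ at the diagonal point $(\thetaStar,\thetaStar)$ and reduce everything to the two kinds of concentration already used for the noiseless function in \LEMMA \ref{lemma:concentration-ineq:noiseless}. Write $\Zi\defeq\langle\CovV\VIx{\iIx},\thetaStar\rangle$, so the $\Zi$ are i.i.d.\ $\N(0,1)$, and $\hfFn(\thetaStar,\thetaStar)=\frac{\sqrt{2\pi}}{\m}\CovM^{\T}\Vec{w}$ with $\Vec*{w}_{\iIx}\defeq\tfrac12(\fFn(\Zi)-\Sign(\Zi))$; each $\Vec*{w}_{\iIx}$ equals $-\Sign(\Zi)\in\{-1,+1\}$ on the ``mismatch'' event $\I(\fFn(\Zi)\neq\Sign(\Zi))$, of probability $\alphaX$, and is $0$ otherwise. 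Decomposing $\CovV\VIx{\iIx}=\Zi\thetaStar+\Vec{g}_{\iIx}$ with $\Vec{g}_{\iIx}$ the component of $\CovV\VIx{\iIx}$ orthogonal to $\thetaStar$ (a standard Gaussian in the orthogonal complement of $\thetaStar$, independent of $\Zi$, hence of $\Vec{w}$), one gets $\hfFn(\thetaStar,\thetaStar)=\langle\hfFn(\thetaStar,\thetaStar),\thetaStar\rangle\,\thetaStar+\frac{\sqrt{2\pi}}{\m}\sum_{\iIx}\Vec*{w}_{\iIx}\Vec{g}_{\iIx}$, so $\gfFn(\thetaStar,\thetaStar)=\frac{\sqrt{2\pi}}{\m}\sum_{\iIx}\Vec*{w}_{\iIx}\Vec{g}_{\iIx}$ is exactly the part of $\hfFn(\thetaStar,\thetaStar)$ orthogonal to $\thetaStar$, while $\langle\hfFn(\thetaStar,\thetaStar),\thetaStar\rangle=\frac{\sqrt{2\pi}}{\m}\sum_{\iIx}\Zi\Vec*{w}_{\iIx}$. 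Since $\ThresholdSet{\Supp(\thetaStar)\cup\JCoordsX}$ is linear and fixes $\thetaStar$ (as $\Supp(\thetaStar)\subseteq\Supp(\thetaStar)\cup\JCoordsX$), the projection $\langle\hfFn[\JCoordsX](\thetaStar,\thetaStar),\thetaStar\rangle=\langle\hfFn(\thetaStar,\thetaStar),\thetaStar\rangle$ is independent of $\JCoordsX$, and $\gfFn[\JCoordsX](\thetaStar,\thetaStar)$ is the restriction of $\gfFn(\thetaStar,\thetaStar)$ to the coordinate subspace indexed by $\Supp(\thetaStar)\cup\JCoordsX$, of dimension $\le\kOX$, whose intersection with the orthogonal complement of $\thetaStar$ has dimension $\le\kOX-1$.

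The expectations \eqref{eqn:lemma:concentration-ineq:noisy:ev:1}--\eqref{eqn:lemma:concentration-ineq:noisy:ev:2} then follow by rotational invariance: $\E[\hfFn(\thetaStar,\thetaStar)]$ is parallel to $\thetaStar$ with constant $c=\sqrt{2\pi}\,\E[Z\cdot\tfrac12(\fFn(Z)-\Sign(Z))]=\tfrac{\sqrt{2\pi}}{2}(\gammaX-\E|Z|)$ for $Z\sim\N(0,1)$; using $\gammaX=\E[Z\fFn(Z)]$, $\E|Z|=\sqrt{2/\pi}$, and $\tfrac{\sqrt{2\pi}}{2}\sqrt{2/\pi}=1$, one gets $c=-(1-\sqrt{\pi/2}\,\gammaX)$, hence $\E[\langle\hfFn[\JCoordsX](\thetaStar,\thetaStar),\thetaStar\rangle]=c$, and $\E[\gfFn[\JCoordsX](\thetaStar,\thetaStar)]=\ThresholdSet{\Supp(\thetaStar)\cup\JCoordsX}(c\,\thetaStar-c\,\thetaStar)=\Vec{0}$.

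For \eqref{eqn:lemma:concentration-ineq:noisy:pr:1}, the quantity to control is $\frac1\m\sum_{\iIx}X_{\iIx}$ with $X_{\iIx}\defeq\sqrt{2\pi}\,\Zi\cdot\tfrac12(\fFn(\Zi)-\Sign(\Zi))$ i.i.d.\ (equal to $-\sqrt{2\pi}\,|\Zi|$ on the mismatch event and $0$ otherwise), which does not depend on $\JCoordsX$. Each $X_{\iIx}$ is a product of an $\alphaX$-Bernoulli and a sub-gaussian, hence sub-exponential with second moment $\BigO(\alphaX)$, so a Bernstein-type inequality (the same one underlying \LEMMA \ref{lemma:concentration-ineq:noiseless}) yields the tail $2e^{-\frac{1}{3}\alphaX\m\tX^{2}}$ at deviation $\alphaX\tX$ for $\tX<1$, and a union bound over $\JSX$ --- vacuous but harmless, the event being common to all $\JCoordsX$ --- gives \eqref{eqn:lemma:concentration-ineq:noisy:pr:1}. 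For \eqref{eqn:lemma:concentration-ineq:noisy:pr:2}, condition on $\Vec{w}$ and set $L\defeq\|\Vec{w}\|_{0}$; conditionally, $\ThresholdSet{\Supp(\thetaStar)\cup\JCoordsX}(\sum_{\iIx}\Vec*{w}_{\iIx}\Vec{g}_{\iIx})$ has the law of $\sqrt{L}\,\Vec{h}$ for a standard Gaussian $\Vec{h}$ of dimension $\le\kOX-1$, so $\frac{1}{\sqrt{2\pi}}\|\gfFn[\JCoordsX](\thetaStar,\thetaStar)\|_{2}=\frac{\sqrt{L}}{\m}\|\Vec{h}\|_{2}$, and Gaussian norm concentration, $\Pr(\|\Vec{h}\|_{2}\ge\sqrt{\kOX-1}+u)\le e^{-u^{2}/2}$, with $u=\tX\sqrt{\alphaO\m/(1+\sXXX)}$ on the event $\{L\le(1+\sXXX)\alphaO\m\}$, gives $\frac{1}{\sqrt{2\pi}}\|\gfFn[\JCoordsX](\thetaStar,\thetaStar)\|_{2}\le\sqrt{\frac{\alphaO(1+\sXXX)(\kOX-1)}{\m}}+\alphaO\tX$ with conditional failure probability $e^{-\frac{\alphaO\m\tX^{2}}{2(1+\sXXX)}}$ per $\JCoordsX$. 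A union bound over $\JSX$, combined through the law of total probability (as in the proof of \LEMMA \ref{lemma:small-dist}) with a Chernoff bound $\Pr(L>(1+\sXXX)\alphaO\m)\le e^{-\frac{1}{3}\alphaO\m\sXXX^{2}}$ for the random variable $L$, a sum of $\m$ independent $\Bernoulli(\alphaX)$ variables (using $\E L=\alphaX\m\le\alphaO\m$), produces the stated probability and finishes \eqref{eqn:lemma:concentration-ineq:noisy:pr:2}.

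The main obstacle I anticipate is the bookkeeping in \eqref{eqn:lemma:concentration-ineq:noisy:pr:2}: pinning the effective Gaussian dimension to exactly $\kOX-1$ (the ``$-1$'' because $\thetaStar$ lies in the span of the threshold support), carrying the uniformity over $\JCoordsX\in\JSX$ through the conditioning on $\Vec{w}$, and matching the exact exponents $\tfrac{1}{2(1+\sXXX)}$ and $\tfrac13$ --- in particular, getting the Chernoff constant to work when $\alphaO=\deltaX>\alphaX$ needs the sharper binomial tail rather than the naïve multiplicative bound. A secondary point is confirming that the second-moment estimate $\E[X_{\iIx}^{2}]=\BigO(\alphaX)$ genuinely carries the factor $\alphaX$ (not an absolute constant), which requires controlling $\E[Z^{2}\,\I(\fFn(Z)\neq\Sign(Z))]$ via \ASSUMPTION \ref{assumption:p} on $\pFn$, so that the Bernstein exponent is of order $\alphaX\m\tX^{2}$. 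Everything else is routine and parallels the noiseless argument in \LEMMA \ref{lemma:concentration-ineq:noiseless}.
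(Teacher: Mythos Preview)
Your decomposition and overall strategy match the paper's exactly: split $\hfFn[\JCoordsX](\thetaStar,\thetaStar)$ into its $\thetaStar$-component and the orthogonal piece $\gfFn[\JCoordsX]$, observe the $\thetaStar$-component is independent of $\JCoordsX$, compute the expectation directly from $\gammaX=\E[Z\fFn(Z)]$, and for \eqref{eqn:lemma:concentration-ineq:noisy:pr:2} condition on the mismatch count $L$, use Gaussian norm concentration in dimension $\kOX-1$, and couple $L\sim\Binomial(\m,\alphaX)$ to $\Binomial(\m,\alphaO)$ for the Chernoff step. All of that is right and lines up with the paper almost step for step.

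The one place you underestimate the work is \eqref{eqn:lemma:concentration-ineq:noisy:pr:1}. The assertion ``product of an $\alphaX$-Bernoulli and a sub-gaussian, hence sub-exponential with second moment $\BigO(\alphaX)$, so Bernstein gives $e^{-\frac13\alphaX\m\tX^2}$'' hides the actual crux. Second-moment control alone does not yield that exponent for \emph{unbounded} summands; what the paper establishes (and what is genuinely needed) is the conditional sub-Gaussian MGF bound
\[
\E\bigl[e^{s(|Z|-\mu_1)}\,\big|\,\fFn(Z)\neq\Sign(Z)\bigr]\le e^{s^2/2},
\]
which then feeds into $\E[e^{s(X_i-\E X_i)}]\le 1+\alphaX(e^{s^2/2}-1)$ and the Bernstein argument you describe. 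This conditional bound is the content of the paper's \LEMMA \ref{lemma:pf:lemma:concentration-ineq:noisy:f1,f2}, and its proof is where \CONDITION \ref{condition:assumption:p:ii} of \ASSUMPTION \ref{assumption:p} (the log-concavity-type condition on $\nu(z)=1-\pFn(z)+\pFn(-z)$) is actually used---not merely for the second moment, but to show that the functions $f_1(s),f_2(s)$ controlling the conditional MGF are maximized at $s=0$. Your ``secondary point'' is in fact the primary technical step; monotonicity of $\pFn$ alone (\CONDITION \ref{condition:assumption:p:i}) would not suffice, since the conditional density $\propto e^{-z^2/2}\nu(z)$ need not be $1$-sub-Gaussian without the ratio condition on $\nu$.
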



\subsection{Proof of \LEMMA \ref{lemma:concentration-ineq}}
\label{outline:concentration-ineq|pf}

We are ready to prove \LEMMA \ref{lemma:concentration-ineq} by means of the intermediate lemmas in \SECTION \ref{outline:concentration-ineq|intermediate}.

\begin{proof}
{\LEMMA \ref{lemma:concentration-ineq}}
\checkoff%
The proof of the lemma is split across \SECTIONS \ref{outline:concentration-ineq|pf|pr} and \ref{outline:concentration-ineq|pf|ev}, where the former derives \EQUATIONS \eqref{eqn:lemma:concentration-ineq:pr:1} and \eqref{eqn:lemma:concentration-ineq:pr:2} and the latter establishes \EQUATIONS \eqref{eqn:lemma:concentration-ineq:ev:1}--\eqref{eqn:lemma:concentration-ineq:ev:4}.


\subsubsection{Proof of \EQUATIONS \eqref{eqn:lemma:concentration-ineq:pr:1} and \eqref{eqn:lemma:concentration-ineq:pr:2}}
\label{outline:concentration-ineq|pf|pr}

\paragraph{Verification of \EQUATION \eqref{eqn:lemma:concentration-ineq:pr:1}} 
%
Fix
\(  \thetaStar \in \ParamSpace  \),
\(  \thetaX \in \ParamCoverX  \), and
\(  \JCoords \in \JS  \)
arbitrarily.
Towards bounding the concentration of \(  \hFn[\JCoords]( \thetaStar, \thetaX )  \) around its mean, consider the following orthogonal decomposition:
\begin{align}
\nonumber
  \hFn[\JCoords]( \thetaStar, \thetaX )
  &=
  \left\langle
    \hFn[\JCoords]( \thetaStar, \thetaX ),
    \frac
    {\thetaStar-\thetaX}
    {\| \thetaStar-\thetaX \|_{2}}
  \right\rangle
  \frac
  {\thetaStar-\thetaX}
  {\| \thetaStar-\thetaX \|_{2}}
  \\
  &\AlignSp+
  \left\langle
    \hFn[\JCoords]( \thetaStar, \thetaX ),
    \frac
    {\thetaStar+\thetaX}
    {\| \thetaStar+\thetaX \|_{2}}
  \right\rangle
  \frac
  {\thetaStar+\thetaX}
  {\| \thetaStar+\thetaX \|_{2}}
  +
  \gFn[\JCoords]( \thetaStar, \thetaX )
\label{eqn:pf:lemma:concentration-ineq:1}
,\end{align}
where, recalling \EQUATION \eqref{eqn:notations:gJ:def},
\begin{align}
\nonumber
  \gFn[\JCoords]( \thetaStar, \thetaX )
  &=
  \hFn[\JCoords]( \thetaStar, \thetaX )
  -
  \left\langle
    \hFn[\JCoords]( \thetaStar, \thetaX ),
    \frac
    {\thetaStar-\thetaX}
    {\| \thetaStar-\thetaX \|_{2}}
  \right\rangle
  \frac
  {\thetaStar-\thetaX}
  {\| \thetaStar-\thetaX \|_{2}}
  \\
  &\AlignSp-
  \left\langle
    \hFn[\JCoords]( \thetaStar, \thetaX ),
    \frac
    {\thetaStar+\thetaX}
    {\| \thetaStar+\thetaX \|_{2}}
  \right\rangle
  \frac
  {\thetaStar+\thetaX}
  {\| \thetaStar+\thetaX \|_{2}}
\label{eqn:pf:lemma:concentration-ineq:1:b}
.\end{align}
Due to \EQUATION \eqref{eqn:pf:lemma:concentration-ineq:1} and the linearity of expectation, the centered random vector
\(  \hFn[\JCoords]( \thetaStar, \thetaX ) - \E[ \hFn[\JCoords]( \thetaStar, \thetaX ) ]  \)
has the following orthogonal decomposition:
\begin{align*}
  &
  \hFn[\JCoords]( \thetaStar, \thetaX ) - \E[ \hFn[\JCoords]( \thetaStar, \thetaX ) ]
  \\
  &\AlignIndent =
  \left(
    \left\langle
      \hFn[\JCoords]( \thetaStar, \thetaX ),
      \frac
      {\thetaStar-\thetaX}
      {\| \thetaStar-\thetaX \|_{2}}
    \right\rangle
    -
    \E \left[
      \left\langle
        \hFn[\JCoords]( \thetaStar, \thetaX ),
        \frac
        {\thetaStar-\thetaX}
        {\| \thetaStar-\thetaX \|_{2}}
      \right\rangle
    \right]
  \right)
  \frac
  {\thetaStar-\thetaX}
  {\| \thetaStar-\thetaX \|_{2}}
  \\
  &\AlignIndent \AlignSp+
  \left(
    \left\langle
      \hFn[\JCoords]( \thetaStar, \thetaX ),
      \frac
      {\thetaStar+\thetaX}
      {\| \thetaStar+\thetaX \|_{2}}
    \right\rangle
    -
    \E \left[
    \left\langle
      \hFn[\JCoords]( \thetaStar, \thetaX ),
      \frac
      {\thetaStar+\thetaX}
      {\| \thetaStar+\thetaX \|_{2}}
      \right\rangle
    \right]
  \right)
  \frac
  {\thetaStar+\thetaX}
  {\| \thetaStar+\thetaX \|_{2}}
  \\
  &\AlignIndent \AlignSp+
  \left(
    \gFn[\JCoords]( \thetaStar, \thetaX )
    -
    \E[ \gFn[\JCoords]( \thetaStar, \thetaX ) ]
  \right)
\TagEqn{\label{lemma:concentration-ineq:pr:1}}
\end{align*}
due to \EQUATION \eqref{eqn:pf:lemma:concentration-ineq:1} and the linearity of expectation.
Applying the triangle inequality to the \(  \lnorm{2}  \)-norm of the orthogonal decomposition in \EQUATION \eqref{lemma:concentration-ineq:pr:1} and scaling it by a factor of \(  \frac{1}{\sqrt{2\pi}}  \) yields:
\begin{align*}
  &
  \left\| \frac{1}{\sqrt{2\pi}} \hFn[\JCoords]( \thetaStar, \thetaX ) - \E \left[ \frac{1}{\sqrt{2\pi}} \hFn[\JCoords]( \thetaStar, \thetaX ) \right] \right\|_{2}
  \\
  &\AlignIndent \leq
  \left|
    \left\langle
      \frac{1}{\sqrt{2\pi}}
      \hFn[\JCoords]( \thetaStar, \thetaX ),
      \frac
      {\thetaStar-\thetaX}
      {\| \thetaStar-\thetaX \|_{2}}
    \right\rangle
    -
    \E \left[
      \left\langle
        \frac{1}{\sqrt{2\pi}}
        \hFn[\JCoords]( \thetaStar, \thetaX ),
        \frac
        {\thetaStar-\thetaX}
        {\| \thetaStar-\thetaX \|_{2}}
      \right\rangle
    \right]
  \right|
  \\
  &\AlignIndent \AlignSp+
  \left|
    \left\langle
      \frac{1}{\sqrt{2\pi}}
      \hFn[\JCoords]( \thetaStar, \thetaX ),
      \frac
      {\thetaStar+\thetaX}
      {\| \thetaStar+\thetaX \|_{2}}
    \right\rangle
    -
    \E \left[
    \left\langle
      \frac{1}{\sqrt{2\pi}}
      \hFn[\JCoords]( \thetaStar, \thetaX ),
      \frac
      {\thetaStar+\thetaX}
      {\| \thetaStar+\thetaX \|_{2}}
      \right\rangle
    \right]
  \right|
  \\
  &\AlignIndent \AlignSp+
  \left\|
    \frac{1}{\sqrt{2\pi}}
    \gFn[\JCoords]( \thetaStar, \thetaX )
    -
    \E \left[ \frac{1}{\sqrt{2\pi}} \gFn[\JCoords]( \thetaStar, \thetaX ) \right]
  \right\|_{2}
\TagEqn{\label{lemma:concentration-ineq:pr:2}}
.\end{align*}
Due to \LEMMA \ref{lemma:concentration-ineq:noiseless}, the three terms in the last expression in \EQUATION \eqref{lemma:concentration-ineq:pr:2} are individually controlled with bounded probability as follows:
\begin{gather*}
  {
  \Pr \left(
    \ExistsST{\JCoords \in \JS, \thetaX \in \ParamCoverX}{
    \left| \left\langle \frac{\hFn[\JCoords]( \thetaStar, \thetaX )}{\sqrt{2\pi}}, \frac{\thetaStar-\thetaX}{\| \thetaStar-\thetaX \|_{2}} \right\rangle - \E \left[ \left\langle \frac{\hFn[\JCoords]( \thetaStar, \thetaX )}{\sqrt{2\pi}}, \frac{\thetaStar-\thetaX}{\| \thetaStar-\thetaX \|_{2}} \right\rangle \right] \right|
    >
    \frac{\tX \ADIST}{3\pi}
    }
  \right) }
  \\
  \leq
  2 | \JS | | \ParamCoverX | e^{-\frac{1}{27\pi} \m \tX^{2} \ADIST}
  ,\\
  {
  \Pr \left(
    \ExistsST{\JCoords \in \JS, \thetaX \in \ParamCoverX}{
    \left| \left\langle \frac{\hFn[\JCoords]( \thetaStar, \thetaX )}{\sqrt{2\pi}}, \frac{\thetaStar+\thetaX}{\| \thetaStar+\thetaX \|_{2}} \right\rangle - \E \left[ \left\langle \frac{\hFn[\JCoords]( \thetaStar, \thetaX )}{\sqrt{2\pi}}, \frac{\thetaStar+\thetaX}{\| \thetaStar+\thetaX \|_{2}} \right\rangle \right] \right|
    >
    \frac{\tX \ADIST}{3\pi}
    }
  \right) }
  \\
  \leq
  2 | \JS | | \ParamCoverX | e^{-\frac{1}{27\pi} \m \tX^{2} \ADIST}
  ,\\
  {
  \Pr \Bigl(
    \ExistsST{\JCoords \in \JS, \thetaX \in \ParamCoverX}{
    \left\| \frac{\gFn[\JCoords]( \thetaStar, \thetaX )}{\sqrt{2\pi}} - \E \left[ \frac{\gFn[\JCoords]( \thetaStar, \thetaX )}{\sqrt{2\pi}} \right]  \right\|_{2}
    >
    \sqrt{\frac{2 ( 1+\sXX )( \kO-2 ) \ADIST}{\m}}
    +
    \frac{\tX \ADIST}{3\pi}
    }
  \Bigr) }
  \\
  \leq
  | \JS | | \ParamCoverX | e^{-\frac{1}{18\pi ( 1+\sXX )} \m \tX^{2} \ADIST}
  +
  | \ParamCoverX | e^{-\frac{1}{3\pi} \m \sXX^{2} \ADIST}
.\end{gather*}
Combining the three above concentration inequalities via a union bound and complementing, it follows that with probability at least
\begin{align*}
  1
  &-
  2 | \JS | | \ParamCoverX | e^{-\frac{1}{27\pi} \m \tX^{2} \ADIST}
  -
  2 | \JS | | \ParamCoverX | e^{-\frac{1}{27\pi} \m \tX^{2} \ADIST}
  -
  | \JS | | \ParamCoverX | e^{-\frac{1}{18\pi ( 1+\sXX )} \m \tX^{2} \ADIST}
  \\
  &\AlignSp-
  | \ParamCoverX | e^{-\frac{1}{3\pi} \m \sXX^{2} \ADIST}
  \\
  &=
  1
  -
  4 | \JS | | \ParamCoverX | e^{-\frac{1}{27\pi} \m \tX^{2} \ADIST}
  -
  | \JS | | \ParamCoverX | e^{-\frac{1}{18\pi ( 1+\sXX )} \m \tX^{2} \ADIST}
  -
  | \ParamCoverX | e^{-\frac{1}{3\pi} \m \sXX^{2} \ADIST}
,\end{align*}
for all \(  \JCoords \in \JS  \) and all \(  \thetaX \in \ParamCoverX  \), the following three inequalities hold simultaneously:
\begin{gather*}
  \left|
    \left\langle
      \frac{\hFn[\JCoords]( \thetaStar, \thetaX )}{\sqrt{2\pi}}
      ,
      \frac
      {\thetaStar-\thetaX}
      {\| \thetaStar-\thetaX \|_{2}}
    \right\rangle
    -
    \E \left[
      \left\langle
        \frac{\hFn[\JCoords]( \thetaStar, \thetaX )}{\sqrt{2\pi}}
        ,
        \frac
        {\thetaStar-\thetaX}
        {\| \thetaStar-\thetaX \|_{2}}
      \right\rangle
    \right]
  \right|
  \leq
  \frac{\tX \ADIST}{3\pi}
  ,\\ 
  \left|
    \left\langle
      \frac{\hFn[\JCoords]( \thetaStar, \thetaX )}{\sqrt{2\pi}}
      ,
      \frac
      {\thetaStar+\thetaX}
      {\| \thetaStar+\thetaX \|_{2}}
    \right\rangle
    -
    \E \left[
    \left\langle
      \frac{\hFn[\JCoords]( \thetaStar, \thetaX )}{\sqrt{2\pi}}
      ,
      \frac
      {\thetaStar+\thetaX}
      {\| \thetaStar+\thetaX \|_{2}}
      \right\rangle
    \right]
  \right|
  \leq
  \frac{\tX \ADIST}{3\pi}
  ,\\ 
  \left\|
    \frac{\gFn[\JCoords]( \thetaStar, \thetaX )}{\sqrt{2\pi}}
    -
    \E \left[ \frac{\gFn[\JCoords]( \thetaStar, \thetaX )}{\sqrt{2\pi}} \right]
  \right\|_{2}
  \leq
  \sqrt{\frac{2 ( 1+\sXX )( \kO-2 ) \ADIST}{\m}}
  +
  \frac{\tX \ADIST}{3\pi}
.\end{gather*}
Then, taking this with \EQUATION \eqref{lemma:concentration-ineq:pr:2}, this implies that with probability at least
\begin{align*}
  1
  -
  4 | \JS | | \ParamCoverX | e^{-\frac{1}{27\pi} \m \tX^{2} \ADIST}
  -
  | \JS | | \ParamCoverX | e^{-\frac{1}{18\pi ( 1+\sXX )} \m \tX^{2} \ADIST}
  -
  | \ParamCoverX | e^{-\frac{1}{3\pi} \m \sXX^{2} \ADIST}
,\end{align*}
uniformly for all \(  \JCoords \in \JS  \) and all \(  \thetaX \in \ParamCoverX  \),
\begin{align*}
  \left\| \frac{\hFn[\JCoords]( \thetaStar, \thetaX )}{\sqrt{2\pi}} - \E \left[ \frac{\hFn[\JCoords]( \thetaStar, \thetaX )}{\sqrt{2\pi}} \right] \right\|_{2}
  \leq
  \sqrt{\frac{2 ( 1+\sXX )( \kO-2 ) \ADIST}{\m} }
  +
  \frac{\tX \ADIST}{\pi}
,\end{align*}
as desired.
This establishes \EQUATION \eqref{eqn:lemma:concentration-ineq:pr:1}.
%
\paragraph{Verification of \EQUATION \eqref{eqn:lemma:concentration-ineq:pr:2}} 
%
Next, \EQUATION \eqref{eqn:lemma:concentration-ineq:pr:2} is derived via an analogous technique.
Again, an orthogonal decomposition will facilitate the proof, this time into just two components:
\begin{align}
  \hfFn[\JCoords]( \thetaStar, \thetaStar )
  =
  \langle \hfFn[\JCoords]( \thetaStar, \thetaStar ), \thetaStar \rangle
  \thetaStar
  +
  \gfFn[\JCoords]( \thetaStar, \thetaStar )
\label{eqn:pf:lemma:concentration-ineq:2}
,\end{align}
where, as defined in \EQUATION \eqref{eqn:notations:gfJ:def},
\begin{align}
  \gfFn[\JCoords]( \thetaStar, \thetaStar )
  =
  \hfFn[\JCoords]( \thetaStar, \thetaStar )
  -
  \langle \hfFn[\JCoords]( \thetaStar, \thetaStar ), \thetaStar \rangle
  \thetaStar
\label{eqn:pf:lemma:concentration-ineq:2:b}
.\end{align}
By the above orthogonal decomposition in \eqref{eqn:pf:lemma:concentration-ineq:2} and the linearity of expectation,
\begin{align*}
  &
  \hfFn[\JCoords]( \thetaStar, \thetaStar ) - \E[ \hfFn[\JCoords]( \thetaStar, \thetaStar ) ]
  \\
  &\AlignIndent=
  \langle \hfFn[\JCoords]( \thetaStar, \thetaStar ), \thetaStar \rangle
  \thetaStar
  +
  \gfFn[\JCoords]( \thetaStar, \thetaStar )
  -
  \E[
    \langle \hfFn[\JCoords]( \thetaStar, \thetaStar ), \thetaStar \rangle
    \thetaStar
    +
    \gfFn[\JCoords]( \thetaStar, \thetaStar )
  ]
  \\
  &\AlignIndent=
  \left(
    \langle \hfFn[\JCoords]( \thetaStar, \thetaStar ), \thetaStar \rangle
    \thetaStar
    -
    \E[
      \langle \hfFn[\JCoords]( \thetaStar, \thetaStar ), \thetaStar \rangle
      \thetaStar
    ]
  \right)
  +
  \left(
    \gfFn[\JCoords]( \thetaStar, \thetaStar )
    -
    \E[ \gfFn[\JCoords]( \thetaStar, \thetaStar ) ]
  \right)
  \\
  &\AlignIndent=
  \left(
    \langle \hfFn[\JCoords]( \thetaStar, \thetaStar ), \thetaStar \rangle
    -
    \E[ \langle \hfFn[\JCoords]( \thetaStar, \thetaStar ), \thetaStar \rangle ]
  \right)
  \thetaStar
  +
  \left(
    \gfFn[\JCoords]( \thetaStar, \thetaStar )
    -
    \E[ \gfFn[\JCoords]( \thetaStar, \thetaStar ) ]
  \right)
.\end{align*}
Then, taking the norm of the above expressions and applying the triangle inequality to the last line,
\begin{align*}
  &
  \| \hfFn[\JCoords]( \thetaStar, \thetaStar ) - \E[ \hfFn[\JCoords]( \thetaStar, \thetaStar ) ] \|_{2}
  \\
  &\AlignIndent\leq
  \left\|
  \left(
    \langle \hfFn[\JCoords]( \thetaStar, \thetaStar ), \thetaStar \rangle
    -
    \E[
      \langle \hfFn[\JCoords]( \thetaStar, \thetaStar ), \thetaStar \rangle
    ]
  \right)
  \thetaStar
  \right\|_{2}
  +
  \left\|
    \gfFn[\JCoords]( \thetaStar, \thetaStar )
    -
    \E[ \gfFn[\JCoords]( \thetaStar, \thetaStar ) ]
  \right\|_{2}
  \\
  &\AlignIndent=
  \left|
    \langle \hfFn[\JCoords]( \thetaStar, \thetaStar ), \thetaStar \rangle
    -
    \E[
      \langle \hfFn[\JCoords]( \thetaStar, \thetaStar ), \thetaStar \rangle
    ]
  \right|
  +
  \left\|
    \gfFn[\JCoords]( \thetaStar, \thetaStar )
    -
    \E[ \gfFn[\JCoords]( \thetaStar, \thetaStar ) ]
  \right\|_{2}
.\end{align*}
Scaling this by a factor of \(  \frac{1}{\sqrt{2\pi}}  \) yields:
\begin{align*}
  &
  \left\| \frac{\hfFn[\JCoords]( \thetaStar, \thetaStar )}{\sqrt{2\pi}} - \E \left[ \frac{\hfFn[\JCoords]( \thetaStar, \thetaStar )}{\sqrt{2\pi}} \right] \right\|_{2}
  \\
  &\AlignIndent\leq
  \left|
    \left\langle \frac{\hfFn[\JCoords]( \thetaStar, \thetaStar )}{\sqrt{2\pi}} , \thetaStar \right\rangle
    -
    \E \left[
      \left\langle \frac{\hfFn[\JCoords]( \thetaStar, \thetaStar )}{\sqrt{2\pi}} , \thetaStar \right\rangle
    \right]
  \right|
  \\
  &\AlignIndent\AlignIndent+
  \left\|
    \frac{\gfFn[\JCoords]( \thetaStar, \thetaStar )}{\sqrt{2\pi}}
    -
    \E \left[ \frac{\gfFn[\JCoords]( \thetaStar, \thetaStar )}{\sqrt{2\pi}}  \right]
  \right\|_{2}
.\end{align*}
By \LEMMA \ref{lemma:concentration-ineq:noisy}, for \(  \sXXX, \tXX \in (0,1)  \),
\begin{align*}
  &\Pr \left(
    \ExistsST{\JCoordsX \in \JSX}
    {\left| \left\langle \frac{\hfFn[\JCoordsX]( \thetaStar, \thetaStar )}{\sqrt{2\pi}} , \thetaStar \right\rangle - \E \left[ \left\langle \frac{\hfFn[\JCoordsX]( \thetaStar, \thetaStar )}{\sqrt{2\pi}} , \thetaStar \right\rangle \right] \right|
    >
    \frac{\alphaO \tXX}{2}}
  \right)
  \\
  &\AlignIndent =
  \Pr \left(
    \ExistsST{\JCoordsX \in \JSX}
    {\left| \left\langle \frac{\hfFn[\JCoordsX]( \thetaStar, \thetaStar )}{\sqrt{2\pi}} , \thetaStar \right\rangle - \E \left[ \left\langle \frac{\hfFn[\JCoordsX]( \thetaStar, \thetaStar )}{\sqrt{2\pi}} , \thetaStar \right\rangle \right] \right|
    >
    \alphaX \left( \frac{\alphaO \tXX}{2 \alphaX} \right)}
  \right)
  \\
  &\AlignIndent =
  2 | \JSX | e^{-\frac{1}{12} \frac{\alphaO}{\alphaX} \alphaO \m \tXX^{2}}
  \\
  &\AlignIndent
  \dCmt{due to \LEMMA \ref{lemma:concentration-ineq:noisy}}
  \\
  &\AlignIndent \leq
  2 | \JSX | e^{-\frac{1}{12} \alphaO \m \tXX^{2}}
  ,\\
  &\AlignIndent
  \dCmt{\(  \alphaO = \alphaOExpr \geq \alphaX  \) implies \(  \tfrac{\alphaO}{\alphaX} \geq 1  \)}
\end{align*}
and
\begin{gather*}
  \Pr \left(
    \ExistsST{\JCoordsX \in \JSX}
    {\left\| \frac{\gfFn[\JCoordsX]( \thetaStar, \thetaStar )}{\sqrt{2\pi}} - \E \left[ \frac{\gfFn[\JCoordsX]( \thetaStar, \thetaStar )}{\sqrt{2\pi}} \right]  \right\|_{2}
    >
    \sqrt{\frac{\alphaO ( 1+\sXXX )( \kOX-1 )}{\m} }
    +
    \frac{\alphaO \tXX}{2} }
  \right)
  \\
  \leq
  | \JSX | e^{-\frac{1}{8 ( 1+\sXXX )} \alphaO \m \tXX^{2}}
  +
  e^{-\frac{1}{3} \alphaO \m \sXXX^{2}}
,\end{gather*}
and hence, by a union bound over the above two probabilities, with probability at least
\begin{gather*}
  1
  -
  2 | \JSX | e^{-\frac{1}{12} \alphaO \m \tXX^{2}}
  -
  | \JSX | e^{-\frac{1}{8 ( 1+\sXXX )} \alphaO \m \tXX^{2}}
  -
  e^{-\frac{1}{3} \alphaO \m \sXXX^{2}}
,\end{gather*}
the norm of the centered random vector
\(  \frac{1}{\sqrt{2\pi}} \hfFn[\JCoords]( \thetaStar, \thetaStar ) - \E[ \frac{1}{\sqrt{2\pi}} \hfFn[\JCoords]( \thetaStar, \thetaStar ) ]  \)
is bounded from above by
\begin{align*}
  \left\| \frac{\hfFn[\JCoords]( \thetaStar, \thetaStar )}{\sqrt{2\pi}} - \E \left[ \frac{\hfFn[\JCoords]( \thetaStar, \thetaStar )}{\sqrt{2\pi}} \right] \right\|_{2}
  &\leq
  \left|
    \left\langle \frac{\hfFn[\JCoords]( \thetaStar, \thetaStar )}{\sqrt{2\pi}} , \thetaStar \right\rangle
    -
    \E \left[
      \left\langle \frac{\hfFn[\JCoords]( \thetaStar, \thetaStar )}{\sqrt{2\pi}} , \thetaStar \right\rangle
    \right]
  \right|
  \\
  &\AlignIndent\AlignIndent+
  \left\|
    \frac{\gfFn[\JCoords]( \thetaStar, \thetaStar )}{\sqrt{2\pi}}
    -
    \E \left[ \frac{\gfFn[\JCoords]( \thetaStar, \thetaStar )}{\sqrt{2\pi}}  \right]
  \right\|_{2}
  \\
  &\AlignIndent\leq
  \frac{\alphaO \tXX}{2} 
  +
  \sqrt{\frac{\alphaO ( 1+\sXXX )( \kOX-1 )}{\m} }
  +
  \frac{\alphaO \tXX}{2} 
  \\
  &\AlignIndent=
  \sqrt{\frac{\alphaO ( 1+\sXXX )( \kOX-1 )}{\m} }
  +
  \alphaO \tXX
.\end{align*}
Thus, \EQUATION \eqref{eqn:lemma:concentration-ineq:pr:2} holds.


\subsubsection{Proof of \EQUATIONS \eqref{eqn:lemma:concentration-ineq:ev:1}--\eqref{eqn:lemma:concentration-ineq:ev:4}}
\label{outline:concentration-ineq|pf|ev}

Next, the four expectations, \EQUATIONS \eqref{eqn:lemma:concentration-ineq:ev:1}--\eqref{eqn:lemma:concentration-ineq:ev:4}, in \LEMMA \ref{lemma:concentration-ineq:noisy} are verified.
Let
\(  \JCoords \subseteq [\n]  \)
be an arbitrary coordinate subset.
Note that it suffices to establish the results for \(  \hFn[\JCoords]  \) as those for \(  \hFn  \) immediately follow by taking
\(  \JCoords = [\n]  \).
%
\paragraph{Verification of \EQUATION \eqref{eqn:lemma:concentration-ineq:ev:1}} 
%
Towards establishing the first expectation, \EQUATION \eqref{eqn:lemma:concentration-ineq:ev:1}, recall the orthogonal decomposition in \EQUATION \eqref{eqn:pf:lemma:concentration-ineq:2} in the proof of \EQUATION \eqref{eqn:lemma:concentration-ineq:pr:1}:
\begin{align}
  \hFn[\JCoords]( \thetaStar, \thetaX )
  &=
  \left\langle
    \hFn[\JCoords]( \thetaStar, \thetaX ),
    \frac
    {\thetaStar-\thetaX}
    {\| \thetaStar-\thetaX \|_{2}}
  \right\rangle
  \frac
  {\thetaStar-\thetaX}
  {\| \thetaStar-\thetaX \|_{2}}
  +
  \left\langle
    \hFn[\JCoords]( \thetaStar, \thetaX ),
    \frac
    {\thetaStar+\thetaX}
    {\| \thetaStar+\thetaX \|_{2}}
  \right\rangle
  \frac
  {\thetaStar+\thetaX}
  {\| \thetaStar+\thetaX \|_{2}}
  \nonumber\\
  &\AlignSp\AlignSp+
  \gFn[\JCoords]( \thetaStar, \thetaX )
\label{eqn:pf:eqn:lemma:concentration-ineq:ev:1:1}
,\end{align}
where \(  \gFn[\JCoords]( \thetaStar, \thetaX )  \) is given in \EQUATION \eqref{eqn:notations:gJ:def} or \eqref{eqn:pf:lemma:concentration-ineq:1:b}.
Hence, in expectation,
\begin{align*}
  \E[ \hFn[\JCoords]( \thetaStar, \thetaX ) ]
  &=
  \E \left[
    \left\langle
      \hFn[\JCoords]( \thetaStar, \thetaX ),
      \frac
      {\thetaStar-\thetaX}
      {\| \thetaStar-\thetaX \|_{2}}
    \right\rangle
  \right]
  \frac
  {\thetaStar-\thetaX}
  {\| \thetaStar-\thetaX \|_{2}}
  \\
  &\AlignSp\AlignSp+
  \E \left[
    \left\langle
      \hFn[\JCoords]( \thetaStar, \thetaX ),
      \frac
      {\thetaStar+\thetaX}
      {\| \thetaStar+\thetaX \|_{2}}
    \right\rangle
  \right]
  \frac
  {\thetaStar+\thetaX}
  {\| \thetaStar+\thetaX \|_{2}}
  +
  \E \left[
    \gFn[\JCoords]( \thetaStar, \thetaX )
  \right]
\TagEqn{\label{eqn:pf:eqn:lemma:concentration-ineq:ev:1:2}}
\end{align*}
by \EQUATION \eqref{eqn:pf:eqn:lemma:concentration-ineq:ev:1:1} and the linearity of expectation.
Due to \LEMMA \ref{lemma:concentration-ineq:noiseless},
\begin{gather}
  \label{eqn:pf:eqn:lemma:concentration-ineq:ev:1:3:1}
  \E \left[ \left\langle \hFn[\JCoords]( \thetaStar, \thetaX ), \frac{\thetaStar-\thetaX}{\| \thetaStar-\thetaX \|_{2}} \right\rangle \right]
  \frac{\thetaStar-\thetaX}{\| \thetaStar-\thetaX \|_{2}}
  =
  \| \thetaStar-\thetaX \|_{2}
  \frac{\thetaStar-\thetaX}{\| \thetaStar-\thetaX \|_{2}}
  =
  \thetaStar-\thetaX
  ,\\ \label{eqn:pf:eqn:lemma:concentration-ineq:ev:1:3:2}
  \E \left[ \left\langle \hFn[\JCoords]( \thetaStar, \thetaX ), \frac{\thetaStar+\thetaX}{\| \thetaStar+\thetaX \|_{2}} \right\rangle \right]
  \frac{\thetaStar+\thetaX}{\| \thetaStar+\thetaX \|_{2}}
  = 0 \cdot \frac{\thetaStar+\thetaX}{\| \thetaStar+\thetaX \|_{2}}
  =
  \Vec{0}
  ,\\ \label{eqn:pf:eqn:lemma:concentration-ineq:ev:1:3:3}
  \E[ \gFn[\JCoords]( \thetaStar, \thetaX ) ]
  = \Vec{0}
.\end{gather}
Then, by \EQUATIONS \eqref{eqn:pf:eqn:lemma:concentration-ineq:ev:1:2}--\eqref{eqn:pf:eqn:lemma:concentration-ineq:ev:1:3:3},
\begin{align*}
  &
  \E[ \hFn[\JCoords]( \thetaStar, \thetaX ) ]
  \\
  &\AlignIndent=
  \E \left[
    \left\langle
      \hFn[\JCoords]( \thetaStar, \thetaX ),
      \frac
      {\thetaStar-\thetaX}
      {\| \thetaStar-\thetaX \|_{2}}
    \right\rangle
  \right]
  \frac
  {\thetaStar-\thetaX}
  {\| \thetaStar-\thetaX \|_{2}}
  +
  \E \left[
    \left\langle
      \hFn[\JCoords]( \thetaStar, \thetaX ),
      \frac
      {\thetaStar+\thetaX}
      {\| \thetaStar+\thetaX \|_{2}}
    \right\rangle
  \right]
  \frac
  {\thetaStar+\thetaX}
  {\| \thetaStar+\thetaX \|_{2}}
  \\
  &\AlignSp\AlignSp
  +
  \E \left[
    \gFn[\JCoords]( \thetaStar, \thetaX )
  \right]
  \\
  &\AlignIndent\dCmt{by \EQUATION \eqref{eqn:pf:eqn:lemma:concentration-ineq:ev:1:2}}
  \\
  &\AlignIndent=
  \thetaStar-\thetaX
  +
  \Vec{0}
  +
  \Vec{0}
  \\
  &\AlignIndent\dCmt{by \EQUATIONS \eqref{eqn:pf:eqn:lemma:concentration-ineq:ev:1:3:1}--\eqref{eqn:pf:eqn:lemma:concentration-ineq:ev:1:3:3}}
  \\
  &\AlignIndent=
  \thetaStar-\thetaX
,\end{align*}
as claimed.
%
\paragraph{Verification of \EQUATION \eqref{eqn:lemma:concentration-ineq:ev:2}} 
%
To verify \EQUATION \eqref{eqn:lemma:concentration-ineq:ev:2}, we turn to the orthogonal decomposition in \EQUATION \eqref{eqn:pf:lemma:concentration-ineq:2} used to prove \EQUATION \eqref{eqn:lemma:concentration-ineq:pr:2}:
\begin{align*}
  \hfFn[\JCoords]( \thetaStar, \thetaStar )
  =
  \langle \hfFn[\JCoords]( \thetaStar, \thetaStar ), \thetaStar \rangle
  \thetaStar
  +
  \gfFn[\JCoords]( \thetaStar, \thetaStar )
,\end{align*}
where \(  \hfFn[\JCoords]( \thetaStar, \thetaStar )  \) is as stated in \EQUATION \eqref{eqn:notations:gfJ:def} or \eqref{eqn:pf:lemma:concentration-ineq:2:b}.
With this decomposition, due to the linearity of expectation,
\begin{align*}
  \E[ \hfFn[\JCoords]( \thetaStar, \thetaStar ) ]
\XXX{
  &=
  \E [
    \langle \hfFn[\JCoords]( \thetaStar, \thetaStar ), \thetaStar \rangle
    \thetaStar
    +
    \gfFn[\JCoords]( \thetaStar, \thetaStar )
  ]
  \\
  &=
  \E [ \langle \hfFn[\JCoords]( \thetaStar, \thetaStar ), \thetaStar \rangle \thetaStar ]
  +
  \E[ \gfFn[\JCoords]( \thetaStar, \thetaStar ) ]
  \\
}
  &=
  \E [ \langle \hfFn[\JCoords]( \thetaStar, \thetaStar ), \thetaStar \rangle ]
  \thetaStar
  +
  \E[ \gfFn[\JCoords]( \thetaStar, \thetaStar ) ]
\TagEqn{\label{eqn:pf:eqn:lemma:concentration-ineq:ev:2:1}}
,\end{align*}
where the last line uses the fact that \(  \thetaStar  \) is nonrandom.
Recall from \LEMMA \ref{lemma:concentration-ineq:noisy} that
\begin{gather*}
  \E [ \langle \hfFn[\JCoords]( \thetaStar, \thetaStar ), \thetaStar \rangle ]
  =
  - \left( 1 - \sqrt{\frac{\pi}{2}} \gammaX \right)
  ,\\
  \E[ \gfFn[\JCoords]( \thetaStar, \thetaStar ) ]
  =
  \Vec{0}
.\end{gather*}
Thus, continuing the above derivation in \eqref{eqn:pf:eqn:lemma:concentration-ineq:ev:2:1}, \EQUATION \eqref{eqn:lemma:concentration-ineq:ev:2} follows:
\begin{align*}
  \E[ \hfFn[\JCoords]( \thetaStar, \thetaStar ) ]
\XXX{
  &=
  \E [ \langle \hfFn[\JCoords]( \thetaStar, \thetaStar ), \thetaStar \rangle ]
  \thetaStar
  +
  \E[ \gfFn[\JCoords]( \thetaStar, \thetaStar ) ]
  \\
  &=
  -\left( 1 - \sqrt{\frac{\pi}{2}} \gammaX \right) \thetaStar + \Vec{0}
  \\
}
  &=
  -\left( 1 - \sqrt{\frac{\pi}{2}} \gammaX \right) \thetaStar
.\end{align*}
%
\paragraph{Verification of \EQUATION \eqref{eqn:lemma:concentration-ineq:ev:4}} 
%
Observe:
\begin{align*}
  &
  \hfFn[\JCoords]( \thetaStar, \thetaX )
  \\
  &=
  \ThresholdSet{\Supp( \thetaStar ) \cup \Supp( \thetaX ) \cup \JCoords}(
    \frac{\sqrt{2\pi}}{\m}
    \sep \CovM^{\T}
    \sep \frac{1}{2}
    ( \fFn( \CovM \thetaStar ) - \Sign( \CovM \thetaX ) )
  )
  \\
  &\dCmt{by the definition of \(  \hfFn[\JCoords]  \) in \EQUATION \eqref{eqn:notations:hfJ:def}}
  \\
  &=
  \ThresholdSet{\Supp( \thetaStar ) \cup \Supp( \thetaX ) \cup \JCoords}(
    \frac{\sqrt{2\pi}}{\m}
    \sep \CovM^{\T}
    \sep \frac{1}{2}
    ( \Sign( \CovM \thetaStar ) - \Sign( \CovM \thetaX ) )
  )
  \\
  &\AlignSp+
  \ThresholdSet{\Supp( \thetaStar ) \cup \Supp( \thetaX ) \cup \JCoords}(
    \frac{\sqrt{2\pi}}{\m}
    \sep \CovM^{\T}
    \sep \frac{1}{2}
    ( \fFn( \CovM \thetaStar ) - \Sign( \CovM \thetaStar ) )
  )
  \\
  &\dCmt{the subset thresholding operation is a linear transformation (\see \SECTIONREF \ref{outline:notations})}
  \\
  &=
  \hFn[\JCoords]( \thetaStar, \thetaX )
  +
  \hfFn[\Supp( \thetaX ) \cup \JCoords]( \thetaStar, \thetaStar )
  . \TagEqn{\label{eqn:pf:lemma:concentration-ineq:6}} \\
  &\dCmt{by the definitions of \(  \hFn[\JCoords]  \) and \(  \hfFn[\JCoords]  \) in \EQUATIONS \eqref{eqn:notations:hJ:def} and \eqref{eqn:notations:hfJ:def}, respectively}
\end{align*}
Thus,
\begin{align}
  \E[ \hfFn[\JCoords]( \thetaStar, \thetaX ) ]
  &=
  \E[
    \hFn[\JCoords]( \thetaStar, \thetaX )
    +
    \hfFn[\Supp( \thetaX ) \cup \JCoords]( \thetaStar, \thetaStar )
  ]
  =
  \E[ \hFn[\JCoords]( \thetaStar, \thetaX ) ]
  +
  \E[ \hfFn[\Supp( \thetaX ) \cup \JCoords]( \thetaStar, \thetaStar ) ]
\label{eqn:pf:lemma:concentration-ineq:3}
,\end{align}
where the first equality applies \EQUATION \eqref{eqn:pf:lemma:concentration-ineq:6} and the second equality follows from the linearity of expectation.
By \EQUATIONS \eqref{eqn:lemma:concentration-ineq:ev:1} and \eqref{eqn:lemma:concentration-ineq:ev:2}, respectively,
\begin{gather}
  \label{eqn:pf:lemma:concentration-ineq:4}
  \E[ \hFn[\JCoords]( \thetaStar, \thetaX ) ]
  =
  \thetaStar-\thetaX
  ,\\ \label{eqn:pf:lemma:concentration-ineq:5}
  \E[ \hfFn[\Supp( \thetaX ) \cup \JCoords]( \thetaStar, \thetaStar ) ]
  =
  -\left( 1 - \sqrt{\frac{\pi}{2}} \gammaX \right) \thetaStar
,\end{gather}
and therefore,
\begin{align*}
  \E[ \thetaX + \hfFn[\JCoords]( \thetaStar, \thetaX ) ]
  &=
  \thetaX
  +
  \E[ \hFn[\JCoords]( \thetaStar, \thetaX ) ]
  +
  \E[ \hfFn[\Supp( \thetaX ) \cup \JCoords]( \thetaStar, \thetaStar ) ]
  \\
  &\dCmt{by \EQUATION \eqref{eqn:pf:lemma:concentration-ineq:3}}
  \\
  &=
  \thetaX
  +
  \thetaStar - \thetaX
  -
  \left( 1 - \sqrt{\frac{\pi}{2}} \gammaX \right) \thetaStar
  \\
  &\dCmt{by \EQUATIONS \eqref{eqn:pf:lemma:concentration-ineq:4} and \eqref{eqn:pf:lemma:concentration-ineq:5}}
\XXX{
  \\
  &=
  \thetaStar - \left( 1 - \sqrt{\frac{\pi}{2}} \gammaX \right) \thetaStar
  \\
  &\dCmt{by canceling terms}
}
  \\
  &=
  \sqrt{\frac{\pi}{2}} \gammaX \thetaStar
  \TagEqn{\label{eqn:pf:lemma:concentration-ineq:7}}
.\end{align*}
Then,
\begin{gather*}
  \| \E \left[ \thetaX + \hfFn[\JCoords]( \thetaStar, \thetaX ) \right] \|_{2}
  =
  \sqrt{\frac{\pi}{2}} \gammaX \| \thetaStar \|_{2}
  =
  \sqrt{\frac{\pi}{2}} \gammaX
,\end{gather*}
where the first equality applies \EQUATION \eqref{eqn:pf:lemma:concentration-ineq:7}, the second follows from the homogeneity of the (\(  \lnorm{2}  \)-)norm, and the third equality recalls that \(  \thetaStar  \) has unit \(  \lnorm{2}  \)-norm.
This completes the proof of \LEMMA \ref{lemma:concentration-ineq}.
\end{proof}


\subsection{Proof of \LEMMA \ref{lemma:concentration-ineq:noiseless}}
\label{outline:concentration-ineq|pf-noiseless}

\begin{proof}
{\LEMMA \ref{lemma:concentration-ineq:noiseless}}
\mostlycheckoff%
The proof of the expectations, \EQUATIONS \eqref{eqn:lemma:concentration-ineq:noiseless:ev:1}--\eqref{eqn:lemma:concentration-ineq:noiseless:ev:3}, in \LEMMA \ref{lemma:concentration-ineq:noiseless} are presented in \SECTION \ref{outline:concentration-ineq|pf-noiseless|ev}, while the concentration inequalities in \EQUATIONS \eqref{eqn:lemma:concentration-ineq:noiseless:pr:1}--\eqref{eqn:lemma:concentration-ineq:noiseless:pr:3} are proved in \SECTION \ref{outline:concentration-ineq|pf-noiseless|pr}.


\subsubsection{Proof the Expectations, \EQUATIONS \eqref{eqn:lemma:concentration-ineq:noiseless:ev:1}--\eqref{eqn:lemma:concentration-ineq:noiseless:ev:3}}
\label{outline:concentration-ineq|pf-noiseless|ev}

The expectations, \EQUATIONS \eqref{eqn:lemma:concentration-ineq:noiseless:ev:1}--\eqref{eqn:lemma:concentration-ineq:noiseless:ev:3}, follow largely from work already done by \cite{matsumoto2022binary}, which is summarized below as \LEMMA \ref{lemma:pf:lemma:concentration-ineq:noiseless:ev:cond-ev}.
%
\begin{lemma}[{\dueto \cite[{\APPENDIX B}]{matsumoto2022binary}}]
\label{lemma:pf:lemma:concentration-ineq:noiseless:ev:cond-ev}
Fix
\(  \thetaStar, \thetaX \in \ParamSpace  \)
and
\(  \lX \in \ZeroTo{\m}  \).
Let
\(  \LRV \defeq \| \I( \Sign( \CovM \thetaStar ) \neq \Sign( \CovM \thetaX ) ) \|_{0}  \).
Then,
\begin{gather}
  \label{eqn:lemma:pf:lemma:concentration-ineq:noiseless:ev:cond-ev:1}
  \E_{\CovM \Mid| \LRV} \left[ \left\langle \hFn[\JCoords]( \thetaStar, \thetaX ), \frac{\thetaStar-\thetaX}{\| \thetaStar-\thetaX \|_{2}} \right\rangle \middle| \LRV=\lX \right]
  =
  \frac{\pi \lX \| \thetaStar-\thetaX \|_{2}}{\m \ADIST}
  ,\\ \label{eqn:lemma:pf:lemma:concentration-ineq:noiseless:ev:cond-ev:2}
  \E_{\CovM \Mid| \LRV} \left[ \left\langle \hFn[\JCoords]( \thetaStar, \thetaX ), \frac{\thetaStar+\thetaX}{\| \thetaStar+\thetaX \|_{2}} \right\rangle \middle| \LRV=\lX \right]
  = 0
  ,\\ \label{eqn:lemma:pf:lemma:concentration-ineq:noiseless:ev:cond-ev:3}
  \E_{\CovM \Mid| \LRV} [ \gFn[\JCoords]( \thetaStar, \thetaX ) \Mid| \LRV=\lX ]
  = \Vec{0}
.\end{gather}
\end{lemma}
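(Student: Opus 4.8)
The plan is to follow \cite[{\APPENDIX B}]{matsumoto2022binary} and reduce all three identities to a single rotation‑invariance computation for one Gaussian vector conditioned on a ``wedge''. Write $\ISet \defeq \Supp( \thetaStar ) \cup \Supp( \thetaX ) \cup \JCoords$. Since the subset‑thresholding map $\ThresholdSet{\ISet}$ is a linear coordinate projection onto $\R^{\ISet}$ and $\hFn[\JCoords] = \ThresholdSet{\ISet}( \hFn )$,
\[
  \hFn[\JCoords]( \thetaStar, \thetaX )
  =
  \frac{\sqrt{2\pi}}{\m} \sum_{\iIx=1}^{\m} \sigma_{\iIx} \, \ThresholdSet{\ISet}( \CovV\VIx{\iIx} ),
  \quad
  \sigma_{\iIx} \defeq \tfrac12 \bigl( \Sign( \langle \CovV\VIx{\iIx}, \thetaStar \rangle ) - \Sign( \langle \CovV\VIx{\iIx}, \thetaX \rangle ) \bigr) \in \{ -1, 0, 1 \},
\]
with $\sigma_{\iIx} \neq 0$ exactly when $\CovV\VIx{\iIx}$ lies in $E \defeq \{ \Vec{v} \in \R^{\n} : \Sign( \langle \Vec{v}, \thetaStar \rangle ) \neq \Sign( \langle \Vec{v}, \thetaX \rangle ) \}$, an event of standard‑Gaussian probability $\ADIST / \pi$, so $\LRV \sim \Binomial( \m, \ADIST / \pi )$. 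If $\thetaX \in \{ \thetaStar, -\thetaStar \}$ then $E$ is null, $\LRV = 0$ a.s., $\hFn[\JCoords]( \thetaStar, \thetaX ) = \Vec{0}$, and all three claims hold trivially, so I would assume $\thetaX \notin \{ \pm\thetaStar \}$.

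First I would use exchangeability of $\CovV\VIx{1}, \dots, \CovV\VIx{\m}$: conditioned on $\LRV = \lX$, the active indices form a uniformly random $\lX$‑subset, the $\lX$ active covariates are i.i.d.\ copies of a standard Gaussian $\Vec{g}$ conditioned on $\{ \Vec{g} \in E \}$, and the inactive ones contribute $\Vec{0}$, whence
\[
  \E_{\CovM \mid \LRV} \bigl[ \hFn[\JCoords]( \thetaStar, \thetaX ) \bigm| \LRV = \lX \bigr]
  =
  \frac{\sqrt{2\pi}\, \lX}{\m} \, \E \bigl[ \sigma( \Vec{g} ) \, \ThresholdSet{\ISet}( \Vec{g} ) \bigm| \Vec{g} \in E \bigr],
  \quad
  \sigma( \Vec{g} ) \defeq \tfrac12 \bigl( \Sign( \langle \Vec{g}, \thetaStar \rangle ) - \Sign( \langle \Vec{g}, \thetaX \rangle ) \bigr).
\]
The three quantities in the lemma are then read off by taking inner products of this vector with $\tfrac{\thetaStar - \thetaX}{\| \thetaStar - \thetaX \|_{2}}$, with $\tfrac{\thetaStar + \thetaX}{\| \thetaStar + \thetaX \|_{2}}$, and (for $\gFn[\JCoords]$) by removing those two components. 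Next I would split $\ThresholdSet{\ISet}( \Vec{g} ) = \Vec{g}_{\parallel} + \Vec{g}_{\perp}$, where $\Vec{g}_{\parallel}$ is the projection of $\Vec{g}$ onto $\Span\{ \thetaStar, \thetaX \}$ (which lies in $\R^{\ISet}$ because $\Supp( \thetaStar ), \Supp( \thetaX ) \subseteq \ISet$) and $\Vec{g}_{\perp}$ is its projection onto the orthogonal complement of $\Span\{ \thetaStar, \thetaX \}$ inside $\R^{\ISet}$. Both $E$ and $\sigma( \Vec{g} )$ depend on $\Vec{g}$ only through $\Vec{g}_{\parallel}$, while $\Vec{g}_{\perp}$ is independent of $\Vec{g}_{\parallel}$ with mean $\Vec{0}$; hence $\E[ \sigma( \Vec{g} ) \ThresholdSet{\ISet}( \Vec{g} ) \mid \Vec{g} \in E ] \in \Span\{ \thetaStar, \thetaX \} = \Span\{ \thetaStar - \thetaX, \thetaStar + \thetaX \}$. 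This gives \eqref{eqn:lemma:pf:lemma:concentration-ineq:noiseless:ev:cond-ev:3} directly, since $\gFn[\JCoords]( \thetaStar, \thetaX )$ is $\hFn[\JCoords]( \thetaStar, \thetaX )$ with its two in‑plane components subtracted, so its conditional mean is the orthogonal‑complement projection of a vector already in $\Span\{ \thetaStar, \thetaX \}$, namely $\Vec{0}$; and it reduces the other two identities to a two‑dimensional Gaussian integral in the plane $\Span\{ \thetaStar, \thetaX \}$, using $\langle \ThresholdSet{\ISet}( \Vec{g} ), \thetaStar \pm \thetaX \rangle = \langle \Vec{g}, \thetaStar \pm \thetaX \rangle$.

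Finally, in the orthonormal frame $e_{1} = \tfrac{\thetaStar + \thetaX}{\| \thetaStar + \thetaX \|_{2}}$, $e_{2} = \tfrac{\thetaStar - \thetaX}{\| \thetaStar - \thetaX \|_{2}}$ and with $\phi \defeq \ADIST$, the set $E$ meets this plane in two antipodal angular sectors of half‑width $\phi/2$ about $\pm e_{2}$, on which $\sigma$ is constantly $+1$ and $-1$ respectively (the sign fixed by testing $\psi = \pi/2$, where $\langle \Vec{g}, \thetaStar \rangle > 0 > \langle \Vec{g}, \thetaX \rangle$). For the second identity the orthogonal map exchanging $\thetaStar \leftrightarrow \thetaX$ fixes the Gaussian law and $E$, fixes $\langle \cdot, \thetaStar + \thetaX \rangle$, and negates $\sigma$, so the relevant conditional expectation equals its own negative and vanishes. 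For the first, polar coordinates separate a radial factor $\int_{0}^{\infty} r^{2} e^{-r^{2}/2}\, dr = \sqrt{\pi/2}$ from an angular factor $\int_{E} \sin\psi \, \sigma(\psi)\, d\psi = 4 \sin( \phi/2 )$; dividing by $\Pr( E ) = \phi / \pi$, multiplying by $\tfrac{\sqrt{2\pi}\, \lX}{\m}$, and using $\| \thetaStar - \thetaX \|_{2} = 2 \sin( \phi/2 )$ collapses the constants ($\sqrt{2\pi}\cdot\sqrt{\pi/2} = \pi$) to exactly $\tfrac{\pi \lX \| \thetaStar - \thetaX \|_{2}}{\m\, \ADIST}$. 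The main obstacle is precisely this bookkeeping — locating the two sectors, pinning down the sign of $\sigma$ on each, and tracking the $\tfrac12$ in $\sigma$, the $\sqrt{2\pi}$ and $\sqrt{\pi/2}$, and the factor $2$ in the chord length so the scalar comes out exactly right rather than off by a constant; everything else is rotational invariance of the Gaussian, and one may alternatively cite the conditional‑expectation computations of \cite[{\APPENDIX B}]{matsumoto2022binary} directly.
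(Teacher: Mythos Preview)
Your proposal is correct and follows the same rotation-invariance strategy as \cite[{\APPENDIX B}]{matsumoto2022binary}, which the paper simply cites rather than reproves: reduce to a single Gaussian conditioned on the sign-disagreement wedge, split into the in-plane and out-of-plane components relative to $\Span\{\thetaStar,\thetaX\}$, and compute the two-dimensional integral in polar coordinates. The symmetry argument for \eqref{eqn:lemma:pf:lemma:concentration-ineq:noiseless:ev:cond-ev:2} and the in-plane observation for \eqref{eqn:lemma:pf:lemma:concentration-ineq:noiseless:ev:cond-ev:3} are clean, and your constant-tracking for \eqref{eqn:lemma:pf:lemma:concentration-ineq:noiseless:ev:cond-ev:1} checks out once the $\tfrac{1}{2\pi}$ from the planar Gaussian density is folded into the ratio with $\Pr(E)=\phi/\pi$.
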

%
Taking
\(  \thetaStar, \thetaX \in \ParamSpace  \)
arbitrarily, via the law of total expectation, \EQUATIONS \eqref{eqn:lemma:concentration-ineq:noiseless:ev:2} and \eqref{eqn:lemma:concentration-ineq:noiseless:ev:3} follow from \EQUATIONS \eqref{eqn:lemma:pf:lemma:concentration-ineq:noiseless:ev:cond-ev:2} and \eqref{eqn:lemma:pf:lemma:concentration-ineq:noiseless:ev:cond-ev:3}, respectively:
\begin{gather}
  \label{eqn:pf:lemma:concentration-ineq:noiseless:1:1}
  \E_{\CovM} \left[ \left\langle \hFn[\JCoords]( \thetaStar, \thetaX ), \frac{\thetaStar+\thetaX}{\| \thetaStar+\thetaX \|_{2}} \right\rangle \right]
  =
  \E_{\LRV} \left[ \E_{\CovM \Mid| \LRV} \left[ \left\langle \hFn[\JCoords]( \thetaStar, \thetaX ), \frac{\thetaStar+\thetaX}{\| \thetaStar+\thetaX \|_{2}} \right\rangle \middle| \LRV \right] \right]
  =
  \E_{\LRV} [ 0 ]
  =
  0
  ,\\ \label{eqn:pf:lemma:concentration-ineq:noiseless:1:2}
  \E_{\CovM} [ \gFn[\JCoords]( \thetaStar, \thetaX ) ]
  =
  \E_{\LRV} \left[ \E_{\CovM \Mid| \LRV} \left[ \gFn[\JCoords]( \thetaStar, \thetaX ) \middle| \LRV \right] \right]
  =
  \E_{\LRV} [ \Vec{0} ]
  =
  \Vec{0}
.\end{gather}
Note that because \(  \JCoords \subseteq [\n]  \) is arbitrary, \EQUATIONS \eqref{eqn:pf:lemma:concentration-ineq:noiseless:1:1} and \eqref{eqn:pf:lemma:concentration-ineq:noiseless:1:2} further imply that
\begin{gather*}
  \E_{\CovM} \left[ \left\langle \hFn( \thetaStar, \thetaX ), \frac{\thetaStar+\thetaX}{\| \thetaStar+\thetaX \|_{2}} \right\rangle \right]
  =
  0
  ,\\
  \E_{\CovM} [ \gFn( \thetaStar, \thetaX ) ]
  =
  \Vec{0}
\end{gather*}
by taking \(  \JCoords = [\n]  \).
%
\par 
%
Proceeding to \EQUATION \eqref{eqn:lemma:concentration-ineq:noiseless:ev:1}, define the random variable
\(  \LRV \defeq \| \I( \Sign( \CovM \thetaStar ) \neq \Sign( \CovM \thetaX ) ) \|_{0}  \)
as in \LEMMA \ref{lemma:pf:lemma:concentration-ineq:noiseless:ev:cond-ev}.
To derive
the result,
first note that \(  \LRV  \) follows a binomial distribution:
\(  \LRV \sim \Binomial( \m, \frac{1}{\pi} \ADIST )  \),
where the characterization of this random variable, \(  \LRV  \), is folklore (\seeeg \cite{charikar2002similarity}).
Then, observe:
\begin{align*}
  &
  \E \left[ \left\langle \hFn[\JCoords]( \thetaStar, \thetaX ), \frac{\thetaStar-\thetaX}{\| \thetaStar-\thetaX \|_{2}} \right\rangle \right]
  \\
  &\AlignIndent=
  \E_{\LRV} \left[ \E_{\CovM \Mid| \LRV} \left[ \left\langle \hFn[\JCoords]( \thetaStar, \thetaX ), \frac{\thetaStar-\thetaX}{\| \thetaStar-\thetaX \|_{2}} \right\rangle \middle| \LRV \right] \right]
  \\
  &\AlignIndent\dCmt{by the law of total expectation}
  \\
  &\AlignIndent=
  \sum_{\lX=0}^{\m}
  \binom{\m}{\lX}
  \left( \frac{\ADIST}{\pi} \right)^{\lX}
  \left( 1-\frac{\ADIST}{\pi} \right)^{\m-\lX}
  \E_{\CovM \Mid| \LRV} \left[ \left\langle \hFn[\JCoords]( \thetaStar, \thetaX ), \frac{\thetaStar-\thetaX}{\| \thetaStar-\thetaX \|_{2}} \right\rangle \middle| \LRV=\lX \right]
  \\
  &\AlignIndent\dCmt{by the law of the lazy statistician and the mass function of \(  {\textstyle \LRV \sim \Binomial( \m, \frac{1}{\pi} \ADIST )}    \)}
  \\
  &\AlignIndent=
  \frac{\pi \| \thetaStar-\thetaX \|_{2}}{\m \ADIST}
  \sum_{\lX=0}^{\m}
  \binom{\m}{\lX}
  \left( \frac{\ADIST}{\pi} \right)^{\lX}
  \left( 1-\frac{\ADIST}{\pi} \right)^{\m-\lX}
  \lX
  \\
  &\AlignIndent\dCmt{by \EQUATION \eqref{eqn:lemma:pf:lemma:concentration-ineq:noiseless:ev:cond-ev:1}}
  \\
  &\AlignIndent=
  \frac{\pi \| \thetaStar-\thetaX \|_{2}}{\m \ADIST}
  \E[ \LRV ]
  \\
  &\AlignIndent\dCmt{by the definition of expectation and the mass function of a binomial random variable}
  \\
  &\AlignIndent=
  \| \thetaStar-\thetaX \|_{2}
  .\\
  &\AlignIndent\dCmt{by the expectation of a binomial random variable, \(  \E[ \LRV ] = \tfrac{1}{\pi} \m \ADIST  \)}
\end{align*}
Again, because \(  \JCoords \subseteq [\n]  \) is arbitrary, it directly follows from the above derivation that
\begin{gather*}
  \E \left[ \left\langle \hFn( \thetaStar, \thetaX ), \frac{\thetaStar-\thetaX}{\| \thetaStar-\thetaX \|_{2}} \right\rangle \right]
  =
  \| \thetaStar-\thetaX \|_{2}
.\end{gather*}
This verifies \EQUATION \eqref{eqn:lemma:concentration-ineq:noiseless:ev:1}.


\subsubsection{Proof of the Concentration Inequalities, \EQUATIONS \eqref{eqn:lemma:concentration-ineq:noiseless:pr:1}--\eqref{eqn:lemma:concentration-ineq:noiseless:pr:3}}
\label{outline:concentration-ineq|pf-noiseless|pr}

Next, we turn our attention to \EQUATIONS \eqref{eqn:lemma:concentration-ineq:noiseless:pr:1}--\eqref{eqn:lemma:concentration-ineq:noiseless:pr:3}.
We begin with some preliminary analysis that will facilitate the derivations of these equations.
Initially, fix
\(  \thetaX \in \ParamCoverX  \) and \(  \JCoords \in \JS  \)
arbitrarily, where later \(  \thetaX  \) and \(  \JCoords  \) will be varied over the entire sets \(  \ParamCoverX  \) and \(  \JS  \), respectively, in union bounds.
Write
\(  \CovVX\VIx{\iIx} \defeq
    \ThresholdSet{\Supp( \thetaStar ) \cup \Supp( \thetaX ) \cup \JCoords}( \CovV\VIx{\iIx} )
    \in \R^{\n}  \),
\(  \iIx \in [\m]  \).
The definition of \(  \frac{1}{\sqrt{2\pi}} \hFn[\JCoords]( \thetaStar, \thetaX )  \) in \EQUATION \eqref{eqn:notations:hJ:def} can be rewritten as follows:
\begin{align*}
  \frac{\hFn[\JCoords]( \thetaStar, \thetaX )}{\sqrt{2\pi}}
  &=
  \ThresholdSet{\Supp( \thetaStar ) \cup \Supp( \thetaX ) \cup \JCoords} \left(
    \frac{1}{\m}
    \sum_{\iIx=1}^{\m}
    \CovV\VIx{\iIx}
   \sep
    \frac{1}{2} \left( \Sign( \langle \CovV, \thetaStar \rangle ) - \Sign( \langle \CovV, \thetaX \rangle ) \right)
  \right)
  \\
  &\dCmt{by the definition of \(  \hFn[\JCoords]  \) in \EQUATION \eqref{eqn:notations:hJ:def}}
  \\
  &=
  \frac{1}{\m}
  \sum_{\iIx=1}^{\m}
  \CovVX\VIx{\iIx}
 \sep
  \frac{1}{2} \left( \Sign( \langle \CovV\VIx{\iIx}, \thetaStar \rangle ) - \Sign( \langle \CovV\VIx{\iIx}, \thetaX \rangle ) \right)
  \\
  &\dCmt{by the linearity of the map \(  \ThresholdSet{\Supp( \thetaStar ) \cup \Supp( \thetaX ) \cup \JCoords}  \)}
  \\
  &\dCmtx{(\see \SECTIONREF \ref{outline:notations}), and by the definition of \(  \CovVX\VIx{\iIx}  \), \(  \iIx \in [\m]  \)}
  \\
  &=
  \frac{1}{\m}
  \sum_{\iIx=1}^{\m}
  \CovVX\VIx{\iIx}
 \sep
  \frac{1}{2} \left( \Sign( \langle \CovVX\VIx{\iIx}, \thetaStar \rangle ) - \Sign( \langle \CovVX\VIx{\iIx}, \thetaX \rangle ) \right)
  \\
  &\dCmt{\(  \Supp( \thetaStar ), \Supp( \thetaX ) \subseteq \Supp( \thetaStar ) \cup \Supp( \thetaX ) \cup \JCoords  \)}
  \\
  &=
  \frac{1}{\m}
  \sum_{\iIx=1}^{\m}
  \CovVX\VIx{\iIx}
 \sep
  \Sign( \langle \CovVX\VIx{\iIx}, \thetaStar \rangle )
 \sep
  \I( \Sign( \langle \CovVX\VIx{\iIx}, \thetaStar \rangle ) \neq \Sign( \langle \CovVX\VIx{\iIx}, \thetaX \rangle ) )
\TagEqn{\label{eqn:pf:eqn:lemma:concentration-ineq:noiseless:pr:5}}
  .\\
  &\dCmt{\see justification below}
\end{align*}
The last line can be verified by checking the value taken by
\(  \frac{1}{2} ( \Sign( u ) - \Sign( v ) )  \)
at \(  u, v \in \R  \) for each possible pair values of
\(  \Sign( u ), \Sign( v ) \in \{ -1,1 \}  \):
\begin{align*}
  \frac{1}{2} ( \Sign( u ) - \Sign( v ) )
  &=
  \begin{cases}
  \+ 0 ,&\cIf \Sign( u )=\Sign( v )=1,\\
  \+ 0 ,&\cIf \Sign( u )=\Sign( v )=-1 ,\\
  \+ 1 ,&\cIf \Sign( u ) = 1 \neq 1 = \Sign( v ) ,\\
  -1 ,&\cIf \Sign( u ) = -1 \neq 1 = \Sign( v ) ,\\
  \end{cases}
  \\
  &=
  \Sign( u ) \sep \I( \Sign( u ) \neq \Sign( v ) )
.\end{align*}
Therefore,
\begin{align*}
  &
  \left\langle \frac{1}{\sqrt{2\pi}} \hFn[\JCoords]( \thetaStar, \thetaX ), \frac{\thetaStar-\thetaX}{\| \thetaStar-\thetaX \|_{2}} \right\rangle
  \\
  &\AlignIndent=
  \frac{1}{\m}
  \sum_{\iIx=1}^{\m}
  \left\langle \CovVX\VIx{\iIx}, \frac{\thetaStar-\thetaX}{\| \thetaStar-\thetaX \|_{2}} \right\rangle
 \sep
  \Sign( \langle \CovVX\VIx{\iIx}, \thetaStar \rangle )
 \sep
  \I( \Sign( \langle \CovVX\VIx{\iIx}, \thetaStar \rangle ) \neq \Sign( \langle \CovVX\VIx{\iIx}, \thetaX \rangle ) )
  \\
  &\AlignIndent\dCmt{by \EQUATION \eqref{eqn:pf:eqn:lemma:concentration-ineq:noiseless:pr:5} and the linearity of inner products}
  \\ \TagEqn{\label{eqn:pf:eqn:lemma:concentration-ineq:noiseless:pr:4}}
  &\AlignIndent=
  \frac{1}{\m}
  \sum_{\iIx=1}^{\m}
  \left\langle \CovVX\VIx{\iIx}, \frac{\thetaStar-\thetaX}{\| \thetaStar-\thetaX \|_{2}} \right\rangle
 \sep
  \Sign( \left\langle \CovVX\VIx{\iIx}, \frac{\thetaStar-\thetaX}{\| \thetaStar-\thetaX \|_{2}} \right\rangle )
 \sep
  \I( \Sign( \langle \CovVX\VIx{\iIx}, \thetaStar \rangle ) \neq \Sign( \langle \CovVX\VIx{\iIx}, \thetaX \rangle ) )
  \\
  &\AlignIndent\dCmt{\see justification below}
  \\
  &\AlignIndent=
  \frac{1}{\m}
  \sum_{\iIx=1}^{\m}
  \left| \left\langle \CovVX\VIx{\iIx}, \frac{\thetaStar-\thetaX}{\| \thetaStar-\thetaX \|_{2}} \right\rangle \right|
 \sep
  \I( \Sign( \langle \CovVX\VIx{\iIx}, \thetaStar \rangle ) \neq \Sign( \langle \CovVX\VIx{\iIx}, \thetaX \rangle ) )
\TagEqn{\label{eqn:pf:eqn:lemma:concentration-ineq:noiseless:pr:6}}
  ,\\
  &\AlignIndent\dCmt{\(  u \sep \Sign( u ) = | u |  \) for any \(  u \in \R  \)}
\end{align*}
where the second to last equality, \eqref{eqn:pf:eqn:lemma:concentration-ineq:noiseless:pr:4}, follows from the observation that either the indicator term takes the value \(  0  \), or otherwise, if
\(  \Sign( \langle \CovVX\VIx{\iIx}, \thetaStar \rangle ) \neq \Sign( \langle \CovVX\VIx{\iIx}, \thetaX \rangle )  \),
then
\(  \Sign( \langle \CovVX\VIx{\iIx}, \thetaX \rangle ) = -\Sign( \langle \CovVX\VIx{\iIx}, \thetaStar \rangle )  \),
and hence,
\begin{align*}
  \Sign( \left\langle \CovVX\VIx{\iIx}, \frac{\thetaStar-\thetaX}{\| \thetaStar-\thetaX \|_{2}} \right\rangle )
  &=
  \Sign( \left\langle \CovVX\VIx{\iIx}, \thetaStar \right\rangle - \left\langle \CovVX\VIx{\iIx}, \thetaX \right\rangle )
  \\
  &=
  \Sign( \Sign( \left\langle \CovVX\VIx{\iIx}, \thetaStar \right\rangle ) \left|\left\langle \CovVX\VIx{\iIx}, \thetaStar \right\rangle\right| - \Sign( \left\langle \CovVX\VIx{\iIx}, \thetaX \right\rangle ) \left| \left\langle \CovVX\VIx{\iIx}, \thetaX \right\rangle \right| )
  \\
  &=
  \Sign( \Sign( \left\langle \CovVX\VIx{\iIx}, \thetaStar \right\rangle ) \left|\left\langle \CovVX\VIx{\iIx}, \thetaStar \right\rangle\right| + \Sign( \left\langle \CovVX\VIx{\iIx}, \thetaStar \right\rangle ) \left| \left\langle \CovVX\VIx{\iIx}, \thetaX \right\rangle \right| )
  \\
  &=
  \Sign( \Sign( \left\langle \CovVX\VIx{\iIx}, \thetaStar \right\rangle ) \left( \left|\left\langle \CovVX\VIx{\iIx}, \thetaStar \right\rangle\right| + \left| \left\langle \CovVX\VIx{\iIx}, \thetaX \right\rangle \right| \right) )
  \\
  &=
  \Sign( \left\langle \CovVX\VIx{\iIx}, \thetaStar \right\rangle )
.\end{align*}
With the above work out of the way, we are ready to derive \EQUATIONS \eqref{eqn:lemma:concentration-ineq:noiseless:pr:1}--\eqref{eqn:lemma:concentration-ineq:noiseless:pr:3}.
%
\paragraph{Verification of \EQUATION \eqref{eqn:lemma:concentration-ineq:noiseless:pr:1}} 
%
For \(  \iIx \in [\m]  \), let
\begin{gather*}
  \Ui \defeq | \langle \CovVX\VIx{\iIx}, \frac{\thetaStar-\thetaX}{\| \thetaStar-\thetaX \|_{2}} \rangle |
 \sep\I( \Sign( \langle \CovVX\VIx{\iIx}, \thetaStar \rangle ) \neq \Sign( \langle \CovVX\VIx{\iIx}, \thetaX \rangle ) )
,\end{gather*}
and let
\begin{gather*}
  \Ri \defeq \I( \Sign( \langle \CovVX\VIx{\iIx}, \thetaStar \rangle ) \neq \Sign( \langle \CovVX\VIx{\iIx}, \thetaX \rangle ) )
.\end{gather*}
Note that although this definition of \(  \Ri  \) differs slightly from a similar random variable analyzed in \cite[\APPENDIX B]{matsumoto2022binary}, nearly the same arguments as those in \cite{matsumoto2022binary} apply here, and hence we omit the
analysis here.
Due to the analysis in \cite[\APPENDIX B.1.1]{matsumoto2022binary}, the mass function of the random variable \(  \Ri  \) is given by
\begin{gather}
\label{eqn:pf:eqn:lemma:concentration-ineq:noiseless:pr:1:f_Ri}
  \pdf{\Ri}( \rX )
  =
  \begin{cases}
  1-\frac{1}{\pi} \ADIST ,& \cIf \rX=0, \\
  \frac{1}{\pi} \ADIST   ,& \cIf \rX=1,
  \end{cases}
\end{gather}
for \(  \rX \in \{ 0,1 \}  \).
For \(  \zX \in \R  \) and \(  \rX \in \{ 0,1 \}  \), the density function of the conditioned random variable \(  \Ui \Mid| \Ri  \) is given by
\begin{gather}
\label{eqn:pf:eqn:lemma:concentration-ineq:noiseless:pr:1:f_Ui|Ri}
  \pdf{\Ui \Mid| \Ri}( \zX \Mid| \rX )
  =
  \begin{cases}
  0 ,& \cIf \rX=0, \zX \neq 0, \\
  1 ,& \cIf \rX=0, \zX=0, \\
  0 ,& \cIf \rX=1, \zX<0, \\
  \frac{\pi}{\ADIST}
  \sqrt{\frac{2}{\pi}}
  e^{-\frac{1}{2} \zX^{2}}
  \frac{1}{\sqrt{2\pi}}
  \int_{\yX=-\zX \tan( \frac{1}{2} \ADIST )}^{\yX=\zX \tan( \frac{1}{2} \ADIST )}
  e^{-\frac{1}{2} \yX^{2}}
  d\yX
  ,& \cIf \rX=1, \zX \geq 0.
  \end{cases}
\end{gather}
%
\par 
%
Having specified the density function of the conditioned random variable \(  \Ui \Mid| \Ri  \), the next step is obtaining the \MGFs of the centered random variables \(  ( \Ui \Mid| \Ri ) - \E[ \Ui \Mid| \Ri ]  \) and \(  ( -\Ui \Mid| \Ri ) - \E[ -\Ui \Mid| \Ri ]  \).
To simplify notation, write \(  \muX[1] \defeq \E[ \Ui \Mid| \Ri=1 ]  \) and \(  \muX[0] \defeq \E[ \Ui \Mid| \Ri=0 ]  \), where in the latter case, 
\begin{align}
  \muX[0]
  &= \E[ \Ui \Mid| \Ri=0 ]
  = \int_{\zX=-\infty}^{\zX=\infty} \zX \pdf{\Ui \Mid| \Ri}( \zX \Mid| 0 ) d\zX
  = 0 \pdf{\Ui \Mid| \Ri}( 0 \Mid| 0 )
  = 0
\label{eqn:pf:eqn:lemma:concentration-ineq:noiseless:pr:7}
.\end{align}
Due to \cite[{\APPENDIX B.1}]{matsumoto2022binary}, the \MGF of \(  (\Ui \Mid| \Ri=1)-\E[\Ui \Mid| \Ri=1]  \), denoted by \(  \mgf{(\Ui \Mid| \Ri=1)-\E[\Ui \Mid| \Ri=1]}  \), is given and upper bounded at \(  \sX \in [0,\infty)  \) by
\begin{align*}
  &\AlignIndent
  \mgf{(\Ui \Mid| \Ri=1)-\E[\Ui \Mid| \Ri=1]}( \sX )
  \\
  &\AlignIndent=
  \E \left[ e^{\sX ( \Ui - \E[\Ui] )} \middle| \Ri=1 \right]
  \\
  &\AlignIndent=
  e^{\frac{1}{2} \sX^{2}}
  e^{-\sX \muX[1]}
  \frac{\pi}{\ADIST}
  \sqrt{\frac{2}{\pi}}
  \int_{\zX=0}^{\zX=\infty}
  e^{-\frac{1}{2} (\zX-\sX)^{2}}
  \frac{1}{\sqrt{2\pi}}
  \int_{\yX=-\zX \tan( \frac{1}{2} \ADIST )}^{\yX=\zX \tan( \frac{1}{2} \ADIST )}
  e^{-\frac{1}{2} \yX^{2}}
  d\yX\,
  d\zX
  \\
  &\AlignIndent\dCmt{by the law of the lazy statistician and \EQUATION \eqref{eqn:pf:eqn:lemma:concentration-ineq:noiseless:pr:1:f_Ui|Ri}}
  \\
  &\AlignIndent\leq
  e^{\frac{1}{2} \sX^{2}}
\TagEqn{\label{eqn:pf:eqn:lemma:concentration-ineq:noiseless:pr:2:a}}
.\end{align*}
%
Likewise, the \MGF of \(  (-\Ui \Mid| \Ri=1)-\E[-\Ui \Mid| \Ri=1]  \), denoted by \(  \mgf{(-\Ui \Mid| \Ri=1)-\E[-\Ui \Mid| \Ri=1]}  \), is \XXX{given and} upper bounded at \(  \sX \in [0,\infty)  \) by
\begin{align*}
  &
  \mgf{(-\Ui \Mid| \Ri=1)-\E[-\Ui \Mid| \Ri=1]}( \sX )
\XXX{
  \\
  &\AlignIndent=
  \E \left[ e^{\sX ( -\Ui - \E[-\Ui] )} \middle| \Ri=1 \right]
  \\
  &\AlignIndent=
  \E \left[ e^{-\sX ( \Ui - \E[\Ui] )} \middle| \Ri=1 \right]
  \\
  &\AlignIndent=
  e^{\frac{1}{2} \sX^{2}}
  e^{\sX \muX[1]}
  \frac{\pi}{\ADIST}
  \sqrt{\frac{2}{\pi}}
  \int_{\zX=0}^{\zX=\infty}
  e^{-\frac{1}{2} (\zX+\sX)^{2}}
  \frac{1}{\sqrt{2\pi}}
  \int_{\yX=-\zX \tan( \frac{1}{2} \ADIST )}^{\yX=\zX \tan( \frac{1}{2} \ADIST )}
  e^{-\frac{1}{2} \yX^{2}}
  d\yX\,
  d\zX
  \\
  &\AlignIndent\dCmt{by the law of the lazy statistician and \EQUATION \eqref{eqn:pf:eqn:lemma:concentration-ineq:noiseless:pr:1:f_Ui|Ri}}
  \\
  &\AlignIndent
}
  \leq
  e^{\frac{1}{2} \sX^{2}}
\TagEqn{\label{eqn:pf:eqn:lemma:concentration-ineq:noiseless:pr:2:b}}
.\end{align*}
On the other hand, when conditioning on \(  \Ri=0  \), the \MGF of the centered conditioned random variable \(  (\Ui \Mid| \Ri=0)-\E[\Ui \Mid| \Ri=0]  \), written \(  \mgf{(\Ui \Mid| \Ri=0)-\E[\Ui \Mid| \Ri=0]}  \), is given at \(  \sX \in [0,\infty)  \) by
\begin{align*}
  \mgf{(\Ui \Mid| \Ri=0)-\E[\Ui \Mid| \Ri=0]}( \sX )
  &=
\XXX{
  \E \left[ e^{\sX ( \Ui - \E[\Ui] )} \middle| \Ri=0 \right]
  \\
  &=
}
  \E \left[ e^{\sX \Ui} \middle| \Ri=0 \right]
  \\
  &\dCmt{using \EQUATION \eqref{eqn:pf:eqn:lemma:concentration-ineq:noiseless:pr:7}}
\XXX{
  \\
  &=
  \int_{\zX=-\infty}^{\zX=\infty} e^{\sX \zX} \pdf{\Ui \Mid| \Ri}( \zX \Mid| 0 ) d\zX
  \\
  &\dCmt{by the law of the lazy statistician and \EQUATION \eqref{eqn:pf:eqn:lemma:concentration-ineq:noiseless:pr:1:f_Ui|Ri}}
}
  \\
  &=
  e^{\sX \cdot 0} \pdf{\Ui \Mid| \Ri}( 0 \Mid| 0 )
  \\
  &\dCmt{by the law of the lazy statistician and \EQUATION \eqref{eqn:pf:eqn:lemma:concentration-ineq:noiseless:pr:1:f_Ui|Ri}, and}
  \\
  &\dCmtx{since the mass of \(  \Ui \Mid| \Ri=0  \) is entirely concentrated at \(  0   \)}
  \\
  &=
  1
\TagEqn{\label{eqn:pf:eqn:lemma:concentration-ineq:noiseless:pr:3:a}}
,\end{align*}
and the \MGF of the centered conditioned random variable \(  (-\Ui \Mid| \Ri=0)-\E[-\Ui \Mid| \Ri=0]  \), written \(  \mgf{(-\Ui \Mid| \Ri=0)-\E[-\Ui \Mid| \Ri=0]}  \), is similarly given at \(  \sX \in [0,\infty)  \) by
\begin{align*}
  \mgf{(-\Ui \Mid| \Ri=0)-\E[-\Ui \Mid| \Ri=0]}( \sX )
  =
  1
\TagEqn{\label{eqn:pf:eqn:lemma:concentration-ineq:noiseless:pr:3:b}}
.\end{align*}
%
\XXX{and the \MGF of the centered conditioned random variable \(  (-\Ui \Mid| \Ri=0)-\E[-\Ui \Mid| \Ri=0]  \), written \(  \mgf{(-\Ui \Mid| \Ri=0)-\E[-\Ui \Mid| \Ri=0]}  \), is similarly given at \(  \sX \in [0,\infty)  \) by
\begin{align*}
  \mgf{(-\Ui \Mid| \Ri=0)-\E[-\Ui \Mid| \Ri=0]}( \sX )
  &=
  \E \left[ e^{\sX ( -\Ui - \E[-\Ui] )} \middle| \Ri=0 \right]
  \\
  &=
  \E \left[ e^{-\sX \Ui} \middle| \Ri=0 \right]
  \\
  &\dCmt{by \EQUATION \eqref{eqn:pf:eqn:lemma:concentration-ineq:noiseless:pr:7}}
  \\
  &=
  \int_{\zX=-\infty}^{\zX=\infty} e^{-\sX \zX} \pdf{\Ui \Mid| \Ri}( \zX \Mid| 0 ) d\zX
  \\
  &\dCmt{by the law of the lazy statistician and \EQUATION \eqref{eqn:pf:eqn:lemma:concentration-ineq:noiseless:pr:1:f_Ui|Ri}}
  \\
  &=
  e^{-\sX \cdot 0} \pdf{\Ui \Mid| \Ri}( 0 \Mid| 0 )
  \\
  &=
  1
\TagEqn{\label{eqn:pf:eqn:lemma:concentration-ineq:noiseless:pr:3:b}}
.\end{align*}
}
Taking together the two cases for \(  \Ri=1  \) and \(  \Ri=0  \), the \MGF of \(  \Ui-\E[ \Ui ]  \), written \(  \mgf{\Ui-\E[\Ui]}  \), is given and bounded from above at \(  \sX \in [0,\infty)  \) by
\begin{align*}
  \mgf{\Ui-\E[\Ui]}( \sX )
  &=
  \E \left[ e^{\sX ( \Ui - \E[ \Ui ] )} \right]
  \\
  &=
  \pdf{\Ri}( 1 )
  \E \left[ e^{\sX ( \Ui - \E[ \Ui ] )} \middle| \Ri=1 \right]
  +
  \pdf{\Ri}( 0 )
  \E \left[ e^{\sX ( \Ui - \E[ \Ui ] )} \middle| \Ri=0 \right]
  \\
  &\dCmt{by the law of total expectation}
  \\
  &=
  \pdf{\Ri}( 1 )
  \mgf{( \Ui \Mid| \Ri=1 )-\E[ \Ui \Mid| \Ri=1 ]}( \sX )
  +
  \pdf{\Ri}( 0 )
  \mgf{( \Ui \Mid| \Ri=0 )-\E[ \Ui \Mid| \Ri=0 ]}( \sX )
  \\
  &\dCmt{by the definition of \MGFs}
  \\
  &=
  \frac{1}{\pi} \ADIST
  \mgf{( \Ui \Mid| \Ri=1 )-\E[ \Ui \Mid| \Ri=1 ]}( \sX )
  \\
  &\AlignSp+
  \left( 1-\frac{1}{\pi} \ADIST \right)
  \mgf{( \Ui \Mid| \Ri=0 )-\E[ \Ui \Mid| \Ri=0 ]}( \sX )
  \\
  &\dCmt{by \EQUATION \eqref{eqn:pf:eqn:lemma:concentration-ineq:noiseless:pr:1:f_Ri}}
  \\
  &\leq
  \frac{1}{\pi} \ADIST
  e^{\frac{1}{2} \sX^{2}}
  +
  \left( 1-\frac{1}{\pi} \ADIST \right)
  \\
  &\dCmt{by \EQUATIONS \eqref{eqn:pf:eqn:lemma:concentration-ineq:noiseless:pr:1:f_Ri}, \eqref{eqn:pf:eqn:lemma:concentration-ineq:noiseless:pr:2:a}, and \eqref{eqn:pf:eqn:lemma:concentration-ineq:noiseless:pr:3:a}}
  \\
  &=
  1 + \frac{1}{\pi} \ADIST \left( e^{\frac{1}{2} \sX^{2}} - 1 \right)
\TagEqn{\label{eqn:pf:eqn:lemma:concentration-ineq:noiseless:pr:1:1a}}
,\end{align*}
The \MGF of \(  -\Ui-\E[-\Ui]  \), denoted by \(  \mgf{-\Ui-\E[-\Ui]}  \), is similarly bounded from above by
\begin{align*}
  \mgf{-\Ui-\E[-\Ui]}( \sX )
  \leq
  1 + \frac{1}{\pi} \ADIST \left( e^{\frac{1}{2} \sX^{2}} - 1 \right)
\TagEqn{\label{eqn:pf:eqn:lemma:concentration-ineq:noiseless:pr:1:1b}}
.\end{align*}
%
\XXX{and similarly, the \MGF of \(  -\Ui-\E[-\Ui]  \), denoted by \(  \mgf{-\Ui-\E[-\Ui]}  \), is given and bounded from above by
\begin{align*}
  \mgf{-\Ui-\E[-\Ui]}( \sX )
  &=
  \E \left[ e^{\sX ( -\Ui - \E[ -\Ui ] )} \right]
  \\
  &=
  \pdf{\Ri}( 1 )
  \E \left[ e^{\sX ( -\Ui - \E[ -\Ui ] )} \middle| \Ri=1 \right]
  +
  \pdf{\Ri}( 0 )
  \E \left[ e^{\sX ( -\Ui - \E[ -\Ui ] )} \middle| \Ri=0 \right]
  \\
  &\dCmt{by the law of total expectation}
  \\
  &=
  \pdf{\Ri}( 1 )
  \mgf{( -\Ui \Mid| \Ri=1 )-\E[ -\Ui \Mid| \Ri=1 ]}( \sX )
  +
  \pdf{\Ri}( 0 )
  \mgf{( -\Ui \Mid| \Ri=0 )-\E[ -\Ui \Mid| \Ri=0 ]}( \sX )
  \\
  &\dCmt{by the definition of \MGFs}
\XXX{
  \\
  &=
  \frac{1}{\pi} \ADIST
  \mgf{( -\Ui \Mid| \Ri=1 )-\E[ -\Ui \Mid| \Ri=1 ]}( \sX )
  \\
  &\AlignIndent+
  \left( 1-\frac{1}{\pi} \ADIST \right)
  \mgf{( -\Ui \Mid| \Ri=0 )-\E[ -\Ui \Mid| \Ri=0 ]}( \sX )
  \\
  &\dCmt{by \EQUATION \eqref{eqn:pf:eqn:lemma:concentration-ineq:noiseless:pr:1:f_Ri}}
}
  \\
  &\leq
  \frac{1}{\pi} \ADIST
  e^{\frac{1}{2} \sX^{2}}
  +
  \left( 1-\frac{1}{\pi} \ADIST \right)
  \\
  &\dCmt{by \EQUATIONS \eqref{eqn:pf:eqn:lemma:concentration-ineq:noiseless:pr:2:b} and \eqref{eqn:pf:eqn:lemma:concentration-ineq:noiseless:pr:3:b}}
  \\
  &=
  1 + \frac{1}{\pi} \ADIST \left( e^{\frac{1}{2} \sX^{2}} - 1 \right)
\TagEqn{\label{eqn:pf:eqn:lemma:concentration-ineq:noiseless:pr:1:1b}}
.\end{align*}
}
%
\par 
%
Let
\(  \URV \defeq \sum_{\iIx=1}^{\m} \Ui  \).
Using the above bound on the \MGFs of \(  \Ui-\E[\Ui]  \), \(  \iIx \in [\m]  \), in \EQUATION \eqref{eqn:pf:eqn:lemma:concentration-ineq:noiseless:pr:1:1a}, the \MGF of \(  \URV-\E[\URV]  \), written \(  \mgf{\URV-\E[\URV]}  \), is given and upper bounded at \(  \sX \in [0,\infty)  \) as follows:
\begin{align*}
  \mgf{\URV-\E[\URV]}( \sX )
  &=
  \E \left[ e^{\sX ( \URV - \E[ \URV ] )} \right]
  \\
  &\dCmt{by the definition of the \MGF \(  \mgf{\URV-\E[\URV]}  \)}
  \\
  &=
  \E \left[ e^{\sX \sum_{\iIx=1}^{\m} \Ui - \E[ \Ui ]} \right]
  \\
  &\dCmt{by the definition of \(  \URV  \)}
\XXX{
  \\
  &=
  \E \left[ \prod_{\iIx=1}^{\m} e^{\sX ( \Ui - \E[ \Ui ] )} \right]
  \\
  &\dCmt{by standard facts about exponents}
}
  \\
  &=
  \prod_{\iIx=1}^{\m} \E \left[ e^{\sX ( \Ui - \E[ \Ui ] )} \right]
  \\
  &\dCmt{since \(  \Ui[1], \dots, \Ui[\m]  \) are mutually independent}
\XXX{
  \\
  &=
  \prod_{\iIx=1}^{\m} \mgf{\Ui-\E[\Ui]}( \sX )
  \\
  &\dCmt{by the definition of \(  \mgf{\Ui-\E[\Ui]}  \), \(  \iIx \in [\m]  \)}
}
  \\
  &=
  ( \mgf{\Ui-\E[\Ui]}( \sX ) )^{\m}
  \\
  &\dCmt{for any \(  \iIx \in [\m]  \);}
  \\
  &\dCmt{by the definition of \(  \mgf{\Ui-\E[\Ui]}  \), \(  \iIx \in [\m]  \), and}
  \\
  &\dCmtx{since \(  \Ui[1], \dots, \Ui[\m]  \) are identically distributed}
  \\
  &\leq
  \left( 1 + \frac{1}{\pi} \ADIST \left( e^{\frac{1}{2} \sX^{2}} - 1 \right) \right)^{\m}
  \\
  &\dCmt{by \EQUATION \eqref{eqn:pf:eqn:lemma:concentration-ineq:noiseless:pr:1:1a}}
  \\ \TagEqn{\label{eqn:pf:eqn:lemma:concentration-ineq:noiseless:pr:1:3a}}
  &\leq
  e^{\frac{1}{\pi} \m \ADIST ( e^{\frac{1}{2} \sX^{2}} - 1 )}
  ,\\
  &\dCmt{by a well-known inequality, \(  \log( 1+u ) \leq u  \) for \(  u > -1  \)}
\end{align*}
Likewise, applying \EQUATION \eqref{eqn:pf:eqn:lemma:concentration-ineq:noiseless:pr:1:1b} for the negated random variable, \(  -\URV-\E[-\URV]  \), obtains:
\begin{align*}
  \mgf{-\URV-\E[-\URV]}( \sX )
  &\leq
  e^{\frac{1}{\pi} \m \ADIST ( e^{\frac{1}{2} \sX^{2}} - 1 )}
\TagEqn{\label{eqn:pf:eqn:lemma:concentration-ineq:noiseless:pr:1:3b}}
.\end{align*}
%
\XXX{and likewise, applying \EQUATION \eqref{eqn:pf:eqn:lemma:concentration-ineq:noiseless:pr:1:1b} for the negated random variable, \(  -\URV-\E[-\URV]  \):
\begin{align*}
  \mgf{-\URV-\E[-\URV]}( \sX )
  &=
  \E \left[ e^{\sX ( -\URV - \E[ -\URV ] )} \right]
  \\
  &\dCmt{by the definition of the \MGF \(  \mgf{-\URV-\E[-\URV]}  \)}
  \\
  &=
  \E \left[ e^{\sX \sum_{\iIx=1}^{\m} -\Ui - \E[ -\Ui ]} \right]
  \\
  &\dCmt{by the definition of \(  \URV  \)}
\XXX{
  \\
  &=
  \E \left[ \prod_{\iIx=1}^{\m} e^{\sX ( -\Ui - \E[ -\Ui ] )} \right]
  \\
  &\dCmt{by standard facts about exponents}
}
  \\
  &=
  \prod_{\iIx=1}^{\m} \E \left[ e^{\sX ( -\Ui - \E[ -\Ui ] )} \right]
  \\
  &\dCmt{\(  -\Ui[1], \dots, -\Ui[\m]  \) are mutually independent}
  \\
  &=
  \prod_{\iIx=1}^{\m} \mgf{-\Ui-\E[-\Ui]}( \sX )
  \\
  &\dCmt{by the definition of \(  \mgf{-\Ui-\E[-\Ui]}  \), \(  \iIx \in [\m]  \)}
  \\
  &=
  ( \mgf{-\Ui-\E[-\Ui]}( \sX ) )^{\m}
  \\
  &\dCmt{for any \(  \iIx \in [\m]  \);}
  \\
  &\dCmt{since \(  -\Ui[1], \dots, -\Ui[\m]  \) are identically distributed}
  \\
  &\leq
  \left( 1 + \frac{1}{\pi} \ADIST \left( e^{\frac{1}{2} \sX^{2}} - 1 \right) \right)^{\m}
  \\
  &\dCmt{by \EQUATION \eqref{eqn:pf:eqn:lemma:concentration-ineq:noiseless:pr:1:1b}}
  \\ \TagEqn{\label{eqn:pf:eqn:lemma:concentration-ineq:noiseless:pr:1:3b}}
  &\leq
  e^{\frac{1}{\pi} \m \ADIST ( e^{\frac{1}{2} \sX^{2}} - 1 )}
  .\\
  &\dCmt{by a well-known inequality, \(  \log( 1+u ) \leq u  \) for \(  u > -1  \)}
\end{align*}
}
Then, due to Bernstein (\see e.g., \cite{vershynin2018high}),
\begin{align*}
  \Pr \left( \frac{\URV}{\m} - \E \left[ \frac{\URV}{\m} \right] > \frac{1}{\pi} \tX \ADIST \right)
  &\leq
  \inf_{\sX \geq 0}
  e^{-\frac{1}{\pi} \m \sX \tX \ADIST}
  \mgf{\URV-\E[\URV]}( \sX )
  \\
  &\dCmt{due to Bernstein}
  \\
  &\leq
  \inf_{\sX \geq 0}
  e^{-\frac{1}{\pi} \m \sX \tX \ADIST}
  e^{\frac{1}{\pi} \m \ADIST ( e^{\frac{1}{2} \sX^{2}} - 1 )}
  \\
  &\dCmt{by \EQUATION \eqref{eqn:pf:eqn:lemma:concentration-ineq:noiseless:pr:1:3a}}
  \\
  &=
  \inf_{\sX \geq 0}
  e^{-\frac{1}{\pi} \m \ADIST ( \sX \tX - e^{\frac{1}{2} \sX^{2}} + 1 )}
\TagEqn{\label{eqn:pf:eqn:lemma:concentration-ineq:noiseless:pr:1:2a}}
,\end{align*}
and for the concentration on the other side, an analogous derivation gives
\begin{align*}
  \Pr \left( \frac{\URV}{\m} - \E \left[ \frac{\URV}{\m} \right] < -\frac{1}{\pi} \tX \ADIST \right)
  \leq
  \inf_{\sX \geq 0}
  e^{-\frac{1}{\pi} \m \ADIST ( \sX \tX - e^{\frac{1}{2} \sX^{2}} + 1 )}
\TagEqn{\label{eqn:pf:eqn:lemma:concentration-ineq:noiseless:pr:1:2b}}
.\end{align*}
%
\XXX{and for the concentration on the other side,
\begin{align*}
  \Pr \left( \frac{\URV}{\m} - \E \left[ \frac{\URV}{\m} \right] < -\frac{1}{\pi} \tX \ADIST \right)
  &=
  \Pr \left( -\frac{\URV}{\m} - \E \left[ -\frac{\URV}{\m} \right] > \frac{1}{\pi} \tX \ADIST \right)
  \\
  &\dCmt{by negating both sides of the inequality}
  \\
  &\leq
  \inf_{\sX \geq 0}
  e^{-\frac{1}{\pi} \m \sX \tX \ADIST}
  \mgf{-\URV-\E[-\URV]}( \sX )
  \\
  &\dCmt{due to Bernstein}
  \\
  &\leq
  \inf_{\sX \geq 0}
  e^{-\frac{1}{\pi} \m \sX \tX \ADIST}
  e^{\frac{1}{\pi} \m \ADIST ( e^{\frac{1}{2} \sX^{2}} - 1 )}
  \\
  &\dCmt{by \EQUATION \eqref{eqn:pf:eqn:lemma:concentration-ineq:noiseless:pr:1:3b}}
  \\
  &=
  \inf_{\sX \geq 0}
  e^{-\frac{1}{\pi} \m \ADIST ( \sX \tX - e^{\frac{1}{2} \sX^{2}} + 1 )}
\TagEqn{\label{eqn:pf:eqn:lemma:concentration-ineq:noiseless:pr:1:2b}}
.\end{align*}
}
To minimize the last expressions in \EQUATIONS \eqref{eqn:pf:eqn:lemma:concentration-ineq:noiseless:pr:1:2a} and \eqref{eqn:pf:eqn:lemma:concentration-ineq:noiseless:pr:1:2b}, observe that when \(  \sX, \tX > 0  \) are small,
\begin{align*}
  \left. \frac{\partial}{\partial \sX} \sX \tX - e^{\frac{1}{2} \sX^{2}} + 1 \right|_{\sX=\tX}
  =
  \left. \tX - \sX e^{\frac{1}{2} \sX^{2}} \right|_{\sX=\tX}
  \approx
  0
,\end{align*}
where
\begin{align*}
  \frac{\partial^{2}}{\partial \sX^{2}} \sX \tX - e^{\frac{1}{2} \sX^{2}} + 1
  =
  -( 1+\sX^{2} ) e^{\frac{1}{2} \sX^{2}}
  <
  0
\end{align*}
for any \(  \sX \in \R  \).
Hence, for small \(  \sX, \tX \in (0,1]  \), the expression \(  \sX \tX - e^{\frac{1}{2} \sX^{2}} + 1  \) is maximized with respect to \(  \sX  \) at approximately \(  \sX \approx \tX  \) (which minimizes \EQUATIONS \eqref{eqn:pf:eqn:lemma:concentration-ineq:noiseless:pr:1:2a} and \eqref{eqn:pf:eqn:lemma:concentration-ineq:noiseless:pr:1:2b}),
and moreover, when \(  \tX \in (0,1]  \),
\begin{align*}
  \left. \sX \tX - e^{\frac{1}{2} \sX^{2}} + 1 \right|_{\sX=\tX}
  =
  \tX^{2} - e^{\frac{1}{2} \tX^{2}} + 1
  \geq
  \frac{\tX^{2}}{3}
.\end{align*}
Therefore, taking \(  \sX=\tX  \) in \EQUATIONS \eqref{eqn:pf:eqn:lemma:concentration-ineq:noiseless:pr:1:2a} and \eqref{eqn:pf:eqn:lemma:concentration-ineq:noiseless:pr:1:2b} yields the following concentration inequalities for \(  \tX \in (0,1]  \):
\begin{gather*}
  \Pr \left( \frac{\URV}{\m} - \E \left[ \frac{\URV}{\m} \right] > \frac{1}{\pi} \tX \ADIST \right)
  \leq
  e^{-\frac{1}{3\pi} \m \tX^{2} \ADIST}
  ,\\
  \Pr \left( \frac{\URV}{\m} - \E \left[ \frac{\URV}{\m} \right] < -\frac{1}{\pi} \tX \ADIST \right)
  \leq
  e^{-\frac{1}{3\pi} \m \tX^{2} \ADIST}
,\end{gather*}
and combining these via a union bound subsequently obtains:
\begin{gather*}
  \Pr \left( \left| \frac{\URV}{\m} - \E \left[ \frac{\URV}{\m} \right] \right| > \frac{1}{\pi} \tX \ADIST \right)
  \leq
  2 e^{-\frac{1}{3\pi} \m \tX^{2} \ADIST}
.\end{gather*}
This, along with \EQUATION \eqref{eqn:pf:eqn:lemma:concentration-ineq:noiseless:pr:6} and the definition of the random variable \(  \URV  \), immediately implies that
\begin{gather*}
  \Pr \left(
    \left| \left\langle \frac{\hFn[\JCoords]( \thetaStar, \thetaX )}{\sqrt{2\pi}}, \frac{\thetaStar-\thetaX}{\| \thetaStar-\thetaX \|_{2}} \right\rangle - \E \left[ \left\langle \frac{\hFn[\JCoords]( \thetaStar, \thetaX )}{\sqrt{2\pi}}, \frac{\thetaStar-\thetaX}{\| \thetaStar-\thetaX \|_{2}} \right\rangle \right] \right|
    >
    \frac{\tX \ADIST}{\pi}
  \right)
  \nonumber \\
  \leq
  2 e^{-\frac{1}{3\pi} \m \tX^{2} \ADIST}
.\end{gather*}
Then, \EQUATION \eqref{eqn:lemma:concentration-ineq:noiseless:pr:1} follows from union bounds over all \(  \JCoords \in \JS  \) and \(  \thetaX \in \ParamCoverX  \):
\begin{gather*}
  \Pr \left(
    \ExistsST{\JCoords \in \JS, \thetaX \in \ParamCoverX}{
    \left| \left\langle \frac{\hFn[\JCoords]( \thetaStar, \thetaX )}{\sqrt{2\pi}}, \frac{\thetaStar-\thetaX}{\| \thetaStar-\thetaX \|_{2}} \right\rangle - \E \left[ \left\langle \frac{\hFn[\JCoords]( \thetaStar, \thetaX )}{\sqrt{2\pi}}, \frac{\thetaStar-\thetaX}{\| \thetaStar-\thetaX \|_{2}} \right\rangle \right] \right|
    >
    \frac{\tX \ADIST}{\pi}
    }
  \right)
  \nonumber \\
  \leq
  2 | \JS | | \ParamCoverX | e^{-\frac{1}{3\pi} \m \tX^{2} \ADIST}
,\end{gather*}
as desired.
%
\paragraph{Verification of \EQUATION \eqref{eqn:lemma:concentration-ineq:noiseless:pr:2}} 
%
\let\oldvX\vX%
\let\vX\zX%
Next, the concentration inequality in \EQUATION \eqref{eqn:lemma:concentration-ineq:noiseless:pr:2} will be verified.
Again, take any \(  \JCoords \in \JS  \) and \(  \thetaX \in \ParamCoverX  \).
Write the random variables
\(  \Vi \defeq \langle \CovVX\VIx{\iIx}, \frac{\thetaStar+\thetaX}{\| \thetaStar+\thetaX \|_{2}} \rangle  \), \(  \iIx \in [\m]  \),
and carry over the notation of the random variables
\(  \Ri \defeq \I( \Sign( \langle \CovVX\VIx{\iIx}, \thetaStar \rangle ) \neq \Sign( \langle \CovVX\VIx{\iIx}, \thetaX \rangle ) )  \), \(  \iIx \in [\m]  \),
from the verification of \EQUATION \eqref{eqn:lemma:concentration-ineq:noiseless:pr:1}, whose mass functions are given in \EQUATION \eqref{eqn:pf:eqn:lemma:concentration-ineq:noiseless:pr:1:f_Ri}.
Due to \cite[\LEMMA B.6]{matsumoto2022binary}, the conditioned random variable \(  \Vi \Mid| \Ri=1  \) is standard Gaussian, \ie
\(  ( \Vi \Mid| \Ri=1 ) \sim \N(0,1)  \),
and thus, the density function of \(  \Vi \Mid| \Ri  \) is given for \(  \zX \in \R  \) and \(  \rX \in \{ 0,1 \}  \) by:
\begin{gather*}
  \pdf{\Vi \Mid| \Ri}( \zX \Mid| \rX )
  =
  \begin{cases}
  0                                             ,& \cIf \rX=0, \zX \neq 0 ,\\
  1                                             ,& \cIf \rX=0, \zX = 0    ,\\
  \frac{1}{\sqrt{2\pi}} e^{-\frac{1}{2} \zX^{2}} ,& \cIf \rX=1.
  \end{cases}
\end{gather*}
Since \(  \Vi \Mid| \Ri=1  \) is standard Gaussian, its expectation is
\(  \E[ \Vi \Mid| \Ri=1 ] = 0  \),
while also,
\begin{gather*}
  \E[ \Vi \Mid| \Ri=0 ]
  = \int_{\zX=-\infty}^{\zX=\infty} \zX \pdf{\Vi \Mid| \Ri}( \zX \Mid| 0 ) d\zX
  = 0 \pdf{\Vi \Mid| \Ri}( 0 \Mid| 0 )
  = 0
.\end{gather*}
With this, the \MGFs of the (centered) conditioned random variables \(  \Vi \Mid| \Ri  \) and \(  -\Vi \Mid| \Ri  \) are obtained for \(  \sX \in [0,\infty)  \) as follows:
\begin{align*}
  \mgf{\Vi \Mid| \Ri=1}( \sX )
  &=
  \E \left[ e^{\sX \Vi} \middle| \Ri=1 \right]
  \\
  &=
  \frac{1}{\sqrt{2\pi}}
  \int_{\zX=-\infty}^{\zX=\infty}
  e^{\sX \zX}
  e^{-\frac{1}{2} \zX^{2}}
  d\zX
  \\
  &=
  e^{\frac{1}{2} \sX^{2}}
  \frac{1}{\sqrt{2\pi}}
  \int_{\zX=-\infty}^{\zX=\infty}
  e^{-\frac{1}{2} (\zX-\sX)^{2}}
  d\zX
  \\ \TagEqn{\label{eqn:pf:eqn:lemma:concentration-ineq:noiseless:pr:2:1a}}
  &=
  e^{\frac{1}{2} \sX^{2}}
  ,\\
  &\dCmt{by evaluating the density of a mean-\(  \sX  \), variance-\(  1  \)}
  \\
  &\dCmtIndent \text{Gaussian random variable over its entire support}
\end{align*}
and by an analogous derivation,
\begin{align*}
  \mgf{-\Vi \Mid| \Ri=1}( \sX )
  &=
  e^{\frac{1}{2} \sX^{2}}
\TagEqn{\label{eqn:pf:eqn:lemma:concentration-ineq:noiseless:pr:2:1b}}
.\end{align*}
%
\XXX{and likewise,
\begin{align*}
  \mgf{-\Vi \Mid| \Ri=1}( \sX )
  &=
  \E \left[ e^{-\sX \Vi} \middle| \Ri=1 \right]
  \\
  &=
  \frac{1}{\sqrt{2\pi}}
  \int_{\zX=-\infty}^{\zX=\infty}
  e^{-\sX \zX}
  e^{-\frac{1}{2} \zX^{2}}
  d\zX
  \\
  &=
  e^{\frac{1}{2} \sX^{2}}
  \frac{1}{\sqrt{2\pi}}
  \int_{\zX=-\infty}^{\zX=\infty}
  e^{-\frac{1}{2} (\zX+\sX)^{2}}
  d\zX
  \\ \TagEqn{\label{eqn:pf:eqn:lemma:concentration-ineq:noiseless:pr:2:1b}}
  &=
  e^{\frac{1}{2} \sX^{2}}
  .\\
  &\dCmt{by evaluating the density of a mean-\(  ( -\sX )  \), variance-\(  1  \)}
  \\
  &\dCmtIndent \text{Gaussian random variable over its entire support}
\end{align*}
}
Additionally,
\begin{gather}
  \label{eqn:pf:eqn:lemma:concentration-ineq:noiseless:pr:2:2a}
  \mgf{\Vi \Mid| \Ri=0}( \sX )
  =
  \E \left[ e^{\sX \Vi} \middle| \Ri=0 \right]
  =
  \int_{\vX=-\infty}^{\vX=\infty} e^{\sX \vX} \pdf{\Vi \Mid| \Ri}( \vX \Mid| 0 ) d\vX
  =
  e^{\sX \cdot 0}
  =
  1
  ,\\ \label{eqn:pf:eqn:lemma:concentration-ineq:noiseless:pr:2:2b}
  \mgf{-\Vi \Mid| \Ri=0}( \sX )
  =
  \E \left[ e^{-\sX \Vi} \middle| \Ri=0 \right]
  =
  \int_{\vX=-\infty}^{\vX=\infty} e^{-\sX \vX} \pdf{\Vi \Mid| \Ri}( \vX \Mid| 0 ) d\vX
  =
  e^{-\sX \cdot 0}
  =
  1
.\end{gather}
It follows that the \MGF of \(  \Vi  \) is bounded from above by
\begin{align*}
  \mgf{\Vi}( \sX )
  &=
  \E \left[ e^{\sX \Vi} \right]
  \\
  &=
  \pdf{\Ri}(1)
  \E \left[ e^{\sX \Vi} \middle| \Ri=1 \right]
  +
  \pdf{\Ri}(0)
  \E \left[ e^{\sX \Vi} \middle| \Ri=0 \right]
  \\
  &\dCmt{by the law of total expectation}
  \\
  &=
  \pdf{\Ri}(1)
  \mgf{\Vi \Mid| \Ri=1}( \sX )
  +
  \pdf{\Ri}(0)
  \mgf{\Vi \Mid| \Ri=0}( \sX )
  \\
  &\dCmt{by the definitions of \(  \mgf{\Vi \Mid| \Ri=1}, \mgf{\Vi \Mid| \Ri=0}  \)}
\XXX{
  \\
  &=
  \frac{1}{\pi} \ADIST
  \mgf{\Vi \Mid| \Ri=1}( \sX )
  +
  \left( 1 - \frac{1}{\pi} \ADIST \right)
  \mgf{\Vi \Mid| \Ri=0}( \sX )
  \\
  &\dCmt{by \EQUATION \eqref{eqn:pf:eqn:lemma:concentration-ineq:noiseless:pr:1:f_Ri}}
}
  \\
  &\leq
  \frac{1}{\pi} \ADIST
  e^{\frac{1}{2} \sX^{2}}
  +
  \left( 1 - \frac{1}{\pi} \ADIST \right)
  \\
  &\dCmt{by \EQUATIONS \eqref{eqn:pf:eqn:lemma:concentration-ineq:noiseless:pr:1:f_Ri}, \eqref{eqn:pf:eqn:lemma:concentration-ineq:noiseless:pr:2:1a}, and \eqref{eqn:pf:eqn:lemma:concentration-ineq:noiseless:pr:2:2a}}
  \\
  &=
  1 + \frac{1}{\pi} \ADIST \left( e^{\frac{1}{2} \sX^{2}} - 1 \right)
\TagEqn{\label{eqn:pf:eqn:lemma:concentration-ineq:noiseless:pr:2:3a}}
,\end{align*}
and likewise, the \MGF of \(  -\Vi  \) is bounded by 
\begin{align*}
  \mgf{-\Vi}( \sX )
  &\leq
  1 + \frac{1}{\pi} \ADIST \left( e^{\frac{1}{2} \sX^{2}} - 1 \right)
\TagEqn{\label{eqn:pf:eqn:lemma:concentration-ineq:noiseless:pr:2:3b}}
.\end{align*}
%
\XXX{and
\begin{align*}
  \mgf{-\Vi}( \sX )
  &=
  \E \left[ e^{-\sX \Vi} \right]
  \\
  &=
  \pdf{\Ri}(1)
  \E \left[ e^{-\sX \Vi} \middle| \Ri=1 \right]
  +
  \pdf{\Ri}(0)
  \E \left[ e^{-\sX \Vi} \middle| \Ri=0 \right]
  \\
  &\dCmt{by the law of total expectation}
  \\
  &=
  \pdf{\Ri}(1)
  \mgf{-\Vi \Mid| \Ri=1}( \sX )
  +
  \pdf{\Ri}(0)
  \mgf{-\Vi \Mid| \Ri=0}( \sX )
  \\
  &\dCmt{by the definitions of \(  \mgf{-\Vi \Mid| \Ri=1}, \mgf{-\Vi \Mid| \Ri=0}  \)}
  \\
  &=
  \frac{1}{\pi} \ADIST
  \mgf{-\Vi \Mid| \Ri=1}( \sX )
  +
  \left( 1 - \frac{1}{\pi} \ADIST \right)
  \mgf{-\Vi \Mid| \Ri=0}( \sX )
  \\
  &\dCmt{by \EQUATION \eqref{eqn:pf:eqn:lemma:concentration-ineq:noiseless:pr:1:f_Ri}}
  \\
  &\leq
  \frac{1}{\pi} \ADIST
  e^{\frac{1}{2} \sX^{2}}
  +
  \left( 1 - \frac{1}{\pi} \ADIST \right)
  \\
  &\dCmt{by \EQUATIONS \eqref{eqn:pf:eqn:lemma:concentration-ineq:noiseless:pr:2:1b} and \eqref{eqn:pf:eqn:lemma:concentration-ineq:noiseless:pr:2:2b}}
  \\
  &=
  1 + \frac{1}{\pi} \ADIST \left( e^{\frac{1}{2} \sX^{2}} - 1 \right)
\TagEqn{\label{eqn:pf:eqn:lemma:concentration-ineq:noiseless:pr:2:3b}}
.\end{align*}
}
Let \(  \VRV \defeq \sum_{\iIx=1}^{\m} \Vi  \), where in expectation,
\begin{align*}
  \E[\VRV]
\XXX{
  &=
  \E \left[ \sum_{\iIx=1}^{\m} \Vi \right]
  \\
  &\dCmt{by the definition of \(  \VRV  \)}
  \\
}
  &=
  \sum_{\iIx=1}^{\m} \E[\Vi]
  \\
  &\dCmt{by by the definition of \(  \VRV  \) and the linearity of expectation}
  \\
  &=
  \sum_{\iIx=1}^{\m} \pdf{\Ri}(0) \E[ \Vi \Mid| \Ri=0 ] + \pdf{\Ri}(1) \E[ \Vi \Mid| \Ri=1 ]
  \\
  &\dCmt{by the law of total expectation}
  \\
  &=
  \sum_{\iIx=1}^{\m} \pdf{\Ri}(0) \cdot 0 + \pdf{\Ri}(1) \cdot 0
  \\
  &\dCmt{as argued earlier}
  \\
  &=
  0
.\end{align*}
Then, the \MGFs of the (centered) random variables \(  \VRV  \) and \(  -\VRV  \) are given and upper bounded at \(  \sX \in [0,\infty)  \) as follows:
\begin{align*}
  \mgf{\VRV}( \sX )
  &=
  \left( \mgf{\Vi}( \sX ) \right)^{\m}
  \\
  &\dCmt{for any \(  \iIx \in [\m]  \);}
  \\
  &\dCmt{since \(  \Vi[1], \dots, \Vi[\m]  \) are identically distributed}
  \\
  &\leq
  \left( 1 + \frac{1}{\pi} \ADIST \left( e^{\frac{1}{2} \sX^{2}} - 1 \right) \right)^{\m}
  \\
  &\dCmt{by \EQUATION \eqref{eqn:pf:eqn:lemma:concentration-ineq:noiseless:pr:2:3a}}
  \\ \TagEqn{\label{eqn:pf:eqn:lemma:concentration-ineq:noiseless:pr:2:4a}}
  &\leq
  e^{\frac{1}{\pi} \m \ADIST ( e^{\frac{1}{2} \sX^{2}} - 1 )}
  ,\\
  &\dCmt{by a well-known inequality, \(  \log( 1+u ) \leq u  \) for \(  u > -1  \)}
\end{align*}
and likewise,
\begin{align*}
  \mgf{-\VRV}( \sX )
\XXX{
  &=
  \left( \mgf{-\Vi}( \sX ) \right)^{\m}
  \\
  &\dCmt{for any \(  \iIx \in [\m]  \);}
  \\
  &\dCmt{since \(  -\Vi[1], \dots, -\Vi[\m]  \) are identically distributed}
  \\
  &\leq
  \left( 1 + \frac{1}{\pi} \ADIST \left( e^{\frac{1}{2} \sX^{2}} - 1 \right) \right)^{\m}
  \\
  &\dCmt{by \EQUATION \eqref{eqn:pf:eqn:lemma:concentration-ineq:noiseless:pr:2:3b}}
  \\
}
  &\leq
  e^{\frac{1}{\pi} \m \ADIST ( e^{\frac{1}{2} \sX^{2}} - 1 )}
  \TagEqn{\label{eqn:pf:eqn:lemma:concentration-ineq:noiseless:pr:2:4b}}
\XXX{  .\\
  &\dCmt{\(  {\textstyle \log ( 1 + \frac{1}{\pi} \ADIST ( e^{\frac{1}{2} \sX^{2}} - 1 ) ) \leq \frac{1}{\pi} \ADIST ( e^{\frac{1}{2} \sX^{2}} - 1 )}  \)}
  \\
  &\dCmtIndent\text{(by a well-known inequality)}
}
.\end{align*}
Now, observe:
\begin{align*}
  &
  \Pr \left(
    \frac{\VRV}{\m} - \E \left[ \frac{\VRV}{\m} \right] > \frac{1}{\pi} \tX \ADIST
  \middle|
    \Ri=1
  \right)
  \\
  &\AlignSp\leq
  \inf_{\sX \geq 0}
  e^{-\frac{1}{\pi} \m \sX \tX \ADIST}
  \mgf{\VRV}( \sX )
  \\
  &\AlignSp\dCmt{due to Bernstein (\see e.g., \cite{vershynin2018high})}
  \\
  &\AlignSp\leq
  \inf_{\sX \geq 0}
  e^{-\frac{1}{\pi} \m \sX \tX \ADIST}
  e^{\frac{1}{\pi} \m \ADIST ( e^{\frac{1}{2} \sX^{2}} - 1 )}
  \\
  &\AlignSp\dCmt{by \EQUATION \eqref{eqn:pf:eqn:lemma:concentration-ineq:noiseless:pr:2:4a}}
  \\ \TagEqn{\label{eqn:pf:eqn:lemma:concentration-ineq:noiseless:pr:2:5a}}
  &\AlignSp\leq
  e^{\frac{1}{3\pi} \m \tX^{2} \ADIST}
  ,\\
  &\AlignSp\dCmt{as argued earlier in the proof of \EQUATION \eqref{eqn:lemma:concentration-ineq:noiseless:pr:1}}
\end{align*}
and on the other side:
\begin{align*}
  &
  \Pr \left(
    \frac{\VRV}{\m} - \E \left[ \frac{\VRV}{\m} \right] < -\frac{1}{\pi} \tX \ADIST
  \middle|
    \Ri=1
  \right)
\XXX{
  \\
  &\AlignSp=
  \Pr \left(
    -\frac{\VRV}{\m} - \E \left[ -\frac{\VRV}{\m} \right] > \frac{1}{\pi} \tX \ADIST
  \middle|
    \Ri=1
  \right)
  \\
  &\AlignSp\leq
  \inf_{\sX \geq 0}
  e^{-\frac{1}{\pi} \m \sX \tX \ADIST}
  \mgf{-\VRV}( \sX )
  \\
  &\AlignSp\dCmt{due to Bernstein (\see e.g., \cite{vershynin2018high})}
  \\
  &\AlignSp\leq
  \inf_{\sX \geq 0}
  e^{-\frac{1}{\pi} \m \sX \tX \ADIST}
  e^{\frac{1}{\pi} \m \ADIST ( e^{\frac{1}{2} \sX^{2}} - 1 )}
  \\
  &\AlignSp\dCmt{by \EQUATION \eqref{eqn:pf:eqn:lemma:concentration-ineq:noiseless:pr:2:4b}}
  \\
  &\AlignSp\leq
}
  \leq
  e^{\frac{1}{3\pi} \m \tX^{2} \ADIST}
  \TagEqn{\label{eqn:pf:eqn:lemma:concentration-ineq:noiseless:pr:2:5b}}
\XXX{
  .\\
  &\AlignSp\dCmt{as argued earlier in the proof of \EQUATION \eqref{eqn:lemma:concentration-ineq:noiseless:pr:1}}
}
.\end{align*}
By a union bound over the above pair of inequalities in \EQUATIONS \eqref{eqn:pf:eqn:lemma:concentration-ineq:noiseless:pr:2:5a} and \eqref{eqn:pf:eqn:lemma:concentration-ineq:noiseless:pr:2:5b},
\begin{gather*}
  \Pr \left( \left| \frac{\VRV}{\m} - \E \left[ \frac{\VRV}{\m} \right] \right| > \frac{1}{\pi} \tX \ADIST \right)
  \leq
  2 e^{-\frac{1}{3\pi} \m \tX^{2} \ADIST}
,\end{gather*}
and therefore, recalling the definitions of the random variables \(  \Vi  \), \(  \iIx \in [\m]  \), and their relationship to \(  \hFn[\JCoords]( \thetaStar, \thetaX )  \), the above concentration inequality further implies that
\begin{gather*}
  \textstyle
  \Pr \left(
    \left| \left\langle \frac{\hFn[\JCoords]( \thetaStar, \thetaX )}{\sqrt{2\pi}}, \frac{\thetaStar+\thetaX}{\| \thetaStar+\thetaX \|_{2}} \right\rangle - \E \left[ \left\langle \frac{\hFn[\JCoords]( \thetaStar, \thetaX )}{\sqrt{2\pi}}, \frac{\thetaStar+\thetaX}{\| \thetaStar+\thetaX \|_{2}} \right\rangle \right] \right|
    >
    \frac{1}{\pi} \tX \ADIST
  \right)
  \nonumber \\
  \leq
  2 e^{-\frac{1}{3\pi} \m \tX^{2} \ADIST}
.\end{gather*}
Lastly, by union bounding over all \(  \JCoords \in \JS  \) and \(  \thetaX \in \ParamCoverX  \), \EQUATION \eqref{eqn:lemma:concentration-ineq:noiseless:pr:2} follows:
\begin{gather*}
  \textstyle
  \Pr \left(
    \ExistsST{\JCoords \in \JS, \thetaX \in \ParamCoverX}{
    \left| \left\langle \frac{\hFn[\JCoords]( \thetaStar, \thetaX )}{\sqrt{2\pi}}, \frac{\thetaStar+\thetaX}{\| \thetaStar+\thetaX \|_{2}} \right\rangle - \E \left[ \left\langle \frac{\hFn[\JCoords]( \thetaStar, \thetaX )}{\sqrt{2\pi}}, \frac{\thetaStar+\thetaX}{\| \thetaStar+\thetaX \|_{2}} \right\rangle \right] \right|
    >
    \frac{1}{\pi} \tX \ADIST
    }
  \right)
  \nonumber \\
  \leq
  2 | \JS | | \ParamCoverX | e^{-\frac{1}{3\pi} \m \tX^{2} \ADIST}
.\end{gather*}
%
\let\vX\oldvX%
%
\paragraph{Verification of \EQUATION \eqref{eqn:lemma:concentration-ineq:noiseless:pr:3}} 
%
Towards deriving the third concentration inequality, \EQUATION \eqref{eqn:lemma:concentration-ineq:noiseless:pr:3}, consider an orthonormal basis,
\(  \{ \Vec{\vV}\VIx{1}, \dots, \Vec{\vV}\VIx{\kX} \} \subset \R^{\n}  \),
for the subspace
\(  \Set{V} \defeq \{ \Vec{v} \in \R^{\n} : \Supp( \Vec{v} ) \subseteq \Supp( \thetaStar ) \cup \Supp( \thetaX ) \cup \JCoords \}  \),
where
\(  \kX \defeq | \Supp( \thetaStar ) \cup \Supp( \thetaX ) \cup \JCoords |  \),
and where
\(  \Vec{\vV}\VIx{\kX-1} \defeq \frac{\thetaStar-\thetaX}{\| \thetaStar-\thetaX \|_{2}}  \) and
\(  \Vec{\vV}\VIx{\kX}   \defeq \frac{\thetaStar+\thetaX}{\| \thetaStar+\thetaX \|_{2}}  \).
Then, the orthogonal decomposition of \(  \frac{1}{\sqrt{2\pi}} \gFn[\JCoords]( \thetaStar, \thetaX )  \) using this basis is given and subsequently rewritten as follows:
\begin{align*}
  \gFn[\JCoords]( \thetaStar, \thetaX )
  &=
  \sum_{\jIx=1}^{\kX}
  \left\langle \frac{\gFn[\JCoords]( \thetaStar, \thetaX )}{\sqrt{2\pi}} , \Vec{\vV}\VIx{\jIx} \right\rangle \Vec{\vV}\VIx{\jIx}
  \\
  &=
  \sum_{\jIx=1}^{\kX}
  \left\langle
    \frac{\hFn[\JCoords]( \thetaStar, \thetaX )}{\sqrt{2\pi}} 
    -
    \left\langle \frac{\hFn[\JCoords]( \thetaStar, \thetaX )}{\sqrt{2\pi}} , \Vec{\vV}\VIx{\kX-1} \right\rangle
    \Vec{\vV}\VIx{\kX-1}
    -
    \left\langle \frac{\hFn[\JCoords]( \thetaStar, \thetaX )}{\sqrt{2\pi}} , \Vec{\vV}\VIx{\kX} \right\rangle
    \Vec{\vV}\VIx{\kX}
    ,
    \Vec{\vV}\VIx{\jIx}
  \right\rangle
  \Vec{\vV}\VIx{\jIx}
  \\
  &\dCmt{by the choice of \(  \Vec{\vV}\VIx{\kX-1} = \tfrac{\thetaStar-\thetaX}{\| \thetaStar-\thetaX \|_{2}}, \Vec{\vV}\VIx{\kX}   = \tfrac{\thetaStar+\thetaX}{\| \thetaStar+\thetaX \|_{2}}  \)}
  \\
  &=
  \sum_{\jIx=1}^{\kX}
  \left\langle
    \frac{\hFn[\JCoords]( \thetaStar, \thetaX )}{\sqrt{2\pi}} ,
    \Vec{\vV}\VIx{\jIx}
  \right\rangle
  \Vec{\vV}\VIx{\jIx}
  -
  \left\langle \frac{\hFn[\JCoords]( \thetaStar, \thetaX )}{\sqrt{2\pi}} , \Vec{\vV}\VIx{\kX-1} \right\rangle
  \Vec{\vV}\VIx{\kX-1}
  -
  \left\langle \frac{\hFn[\JCoords]( \thetaStar, \thetaX )}{\sqrt{2\pi}} , \Vec{\vV}\VIx{\kX} \right\rangle
  \Vec{\vV}\VIx{\kX}
  \\
  &\dCmt{due to the orthogonality of \(  \Vec{\vV}\VIx{1}, \dots, \Vec{\vV}\VIx{\kX}  \)}
  \\
  &=
  \sum_{\jIx=1}^{\kX-2}
  \left\langle
    \frac{\hFn[\JCoords]( \thetaStar, \thetaX )}{\sqrt{2\pi}} ,
    \Vec{\vV}\VIx{\jIx}
  \right\rangle
  \Vec{\vV}\VIx{\jIx}
  \\
  &=
  \frac{1}{\m}
  \sum_{\iIx=1}^{\m}
  \sum_{\jIx=1}^{\kX-2}
  \langle \CovVX\VIx{\iIx}, \Vec{\vV}\VIx{\jIx} \rangle
  \Vec{\vV}\VIx{\jIx}
 \sep
  \Sign( \langle \CovVX\VIx{\iIx}, \thetaStar \rangle )
 \sep
  \I( \Sign( \langle \CovVX\VIx{\iIx}, \thetaStar \rangle ) \neq \Sign( \langle \CovVX\VIx{\iIx}, \thetaX \rangle ) )
  .\\
  &\dCmt{by \EQUATION \eqref{eqn:pf:eqn:lemma:concentration-ineq:noiseless:pr:5}}
\end{align*}
%
\par 
%
Note that
\(  \thetaStar, \thetaX \in \Span( \{ \Vec{\vV}\VIx{\kX-1}, \Vec{\vV}\VIx{\kX} \} )  \),
which implies by the orthogonality of the set
\(  \{ \Vec{\vV}\VIx{1}, \dots, \Vec{\vV}\VIx{\kX} \}  \)
that
\(   \thetaStar, \thetaX \perp \Vec{\vV}\VIx{\jIx}  \)
for every \(  \jIx \in [\kX-2]  \).
Thus, applying standard facts about Gaussians, for each \(  \iIx \in [\m]  \), \(  \jIx \in [\kX-2]  \), there is an equivalence in distribution:
\begin{gather*}
  \langle \CovVX\VIx{\iIx}, \Vec{\vV}\VIx{\jIx} \rangle
  \sep
  \Sign( \langle \CovVX\VIx{\iIx}, \thetaStar \rangle )
  \sep
  \I( \Sign( \langle \CovVX\VIx{\iIx}, \thetaStar \rangle ) \neq \Sign( \langle \CovVX\VIx{\iIx}, \thetaX \rangle ) )
  \sim
  \Wij
  \defeq
  \Zij \Yi
,\end{gather*}
where for each \(  \iIx \in [\m]  \), \(  \jIx \in [\kX]  \), the random variable
\(  \Zij \sim \N(0,1)  \)
is standard Gaussian and
\begin{gather*}
  \Yi \defeq \Sign( \Zij[\iIx][\kX-1] ) \sep \I( \Sign( \Zij[\iIx][\kX-1] ) \neq \Sign( \Zij[\iIx][\kX] ) )
.\end{gather*}
Notice that the random variables
\(  \{ \Zij \}_{\iIx \in [\m], \jIx \in [\kX-2]}  \) 
are \iid and also independent of
\(  \Zij[\iIx][\kX-1]  \), \(  \Zij[\iIx][\kX]  \), and \(  \Yi  \), \(  \iIx \in [\m]  \).
Moreover,
\begin{gather*}
  \frac{1}{\m}
  \sum_{\iIx=1}^{\m}
  \sum_{\jIx=1}^{\kX-2}
  \langle \CovVX\VIx{\iIx}, \Vec{\vV}\VIx{\jIx} \rangle
  \Vec{\vV}\VIx{\jIx}
  \sep
  \Sign( \langle \CovVX\VIx{\iIx}, \thetaStar \rangle )
  \sep
  \I( \Sign( \langle \CovVX\VIx{\iIx}, \thetaStar \rangle ) \neq \Sign( \langle \CovVX\VIx{\iIx}, \thetaX \rangle ) )
  \sim
  \frac{1}{\m}
  \sum_{\iIx=1}^{\m}
  \sum_{\jIx=1}^{\kX-2}
  \Wij
  \Vec{\vV}\VIx{\jIx}
.\end{gather*}
Due to the rotational invariance of Gaussians,
\begin{align*}
  &
  \langle \CovVX\VIx{\iIx}, \Vec{\vV}\VIx{\jIx} \rangle
  \sep
  \Sign( \langle \CovVX\VIx{\iIx}, \thetaStar \rangle )
  \sep
  \I( \Sign( \langle \CovVX\VIx{\iIx}, \thetaStar \rangle ) \neq \Sign( \langle \CovVX\VIx{\iIx}, \thetaX \rangle ) )
  \\
  &\AlignIndent
  \sim
  \langle \CovVX\VIx{\iIx}, \ej \rangle
  \sep
  \Sign( \langle \CovVX\VIx{\iIx}, \thetaStar \rangle )
  \sep
  \I( \Sign( \langle \CovVX\VIx{\iIx}, \thetaStar \rangle ) \neq \Sign( \langle \CovVX\VIx{\iIx}, \thetaX \rangle ) )
,\end{align*}
where
\(  \ej \defeq \BVec{\{ \jIx \}} \in \R^{\n}  \)
is the \(  \jIx\Th  \) standard basis vector for \(  \R^{\n}  \) in which the \(  \jIx\Th  \) entry is set to \(  1  \) and all other entries are set to \(  0  \).
Hence, \WLOG, the analysis will proceed under the assumption that the first \(  ( \kX-2 )  \)-many \(  \jIx\Th  \) basis vectors are \(  \Vec{\vV}\VIx{\jIx} = \ej  \), \(  \jIx \in [\kX-2]  \).
Under this assumption, the random vector, \(  \Vec{\URV}  \), which is given by
\begin{gather*}
  \Vec{\URV}
  \defeq
  \frac{1}{\m}
  \sum_{\iIx=1}^{\m}
  \sum_{\jIx=1}^{\kX-2}
  \Wij \Vec{\vV}\VIx{\jIx}
  =
  \frac{1}{\m}
  \sum_{\iIx=1}^{\m}
  \sum_{\jIx=1}^{\kX-2}
  \Wij \ej
,\end{gather*}
has \(  \jIx\Th  \) entries, \(  \jIx \in [\n]  \),
\begin{gather*}
  \Vec*{\URV}\VIx{\jIx}
  =
  \begin{cases}
  0                                    ,& \cIf \jIx \in [\n] \setminus [\kX-2] ,\\
  \frac{1}{\m} \sum_{\iIx=1}^{\m} \Wij ,& \cIf \jIx \in [\kX-2]                .
  \end{cases}
\end{gather*}
%
\par 
%
For \(  \iIx \in [\m]  \), define the random variable
\(  \Ri \defeq \I( \Sign( \Zij[\iIx][\kX-1] ) \neq \Sign( \Zij[\iIx][\kX] ) )  \),
whose a mass function given at \(  \rX \in \{ 0,1 \}  \) by
\begin{gather*}
  \pdf{\Ri}( \rX )
  =
  \begin{cases}
  1-\frac{1}{\pi} \ADIST ,& \cIf \rX=0, \\
  \frac{1}{\pi} \ADIST   ,& \cIf \rX=1.
  \end{cases}
\end{gather*}
As in the verification of \EQUATION \eqref{eqn:lemma:concentration-ineq:noiseless:pr:1}, this mass function can be derived by way of an approach similar to that which appears in \cite[{\APPENDIX B.1.1}]{matsumoto2022binary}.
Additionally, write the random vector
\(  \Vec{\RRV} \defeq ( \Ri[1], \dots, \Ri[\m] )  \),
whose entries are \iid[,] and let
\(  \LRV \defeq \| \Vec{\RRV} \|_{0}  \).
Because each random variable \(  \Zij  \), \(  \jIx \in [\kX-2]  \), is independent of \(  \Zij[\iIx][\kX-1]  \) and \(  \Zij[\iIx][\kX]  \), it is also independent of \(  \Sign( \Zij[\iIx][\kX-1] )  \) and \(  \Ri  \),
where \(  \Sign( \Zij[\iIx][\kX-1] )  \) follows a Rademacher distribution.
Since mean-\(  0  \) Gaussian random variables have the same distribution as their negations, there are the following equivalences in distribution: \(  -\Zij \sim \Zij \sim \N(0,1)  \) and \(  \Zij \sep \Sign( \Zij[\iIx][\kX-1] ) \sim \Zij \sim \N(0,1)  \) (\see e.g., \cite[{\APPENDIX B}]{matsumoto2022binary} for a formal argument).
Hence,
\(  ( \Wij \Mid| \Ri=1 ) \sim \N(0,1)  \).
Since the random variables \(  \Wij[1], \dots, \Wij[\m]  \) are \iid[,] it follows that
\(  ( \Vec*{\URV}\VIx{\jIx} \Mid| \LRV=\lX ) \sim ( \Vec*{\URV}\VIx{\jIx} \Mid| \Vec{\RRV}=\Vec{\rX} ) \sim \N( 0, \frac{\lX}{\m^{2}} )  \)
for each \(  \jIx \in [\kX-2]  \) and an arbitrary choice of \(  \Vec{\rX} \in \{ 0,1 \}^{\m}  \), and where \(  \lX \defeq \| \Vec{\rX} \|_{0}  \).
(A more rigorous analysis can employ the law of total probability.)
Therefore, \(  \Vec{\URV}  \) is a \(  \frac{\sqrt{\lX}}{\m}  \)-\subgaussian random vector with support of cardinality \(  \| \Vec{\URV} \|_{0} = \kX-2  \).
%
\par 
%
Before proceeding, two results are introduced to facilitate the proof.
%
\begin{lemma}[{\LEMMA \cite[\LEMMA A.2]{matsumoto2022binary}}]
\label{lemma:count-mismatch}
Fix
\(  \sXX \in (0,1)  \).
Let
\(  \Vec{\ZRVX}\VIx{1}, \dots, \Vec{\ZRVX}\VIx{\m} \sim \N( \Vec{0}, \Id{\n} )  \),
and let
\(  \Vec{\uV}, \Vec{\vV} \in \Sphere{\n}  \).
Define the random variable
\(  \LRV \defeq | \{ \iIx \in [\m] : \Sign( \langle \Vec{\ZRVX}\VIx{\iIx}, \Vec{\uV} \rangle ) \neq \Sign( \langle \Vec{\ZRVX}\VIx{\iIx}, \Vec{\vV} \rangle ) \} |  \).
Then,
\begin{gather}
\label{eqn:lemma:count-mismatch:ev}
  \mu_{\LRV} \defeq \E[ \LRV ] = \frac{\m \arccos( \langle \Vec{\uV}, \Vec{\vV} \rangle )}{\pi}
\end{gather}
and
\begin{gather}
\label{eqn:lemma:count-mismatch:pr}
  \Pr \left( \LRV > ( 1+\sXX ) \mu_{\LRV} \right) \leq e^{-\frac{1}{3\pi} \m \sXX^{2} \arccos( \langle \Vec{\uV}, \Vec{\vV} \rangle )}
.\end{gather}
\end{lemma}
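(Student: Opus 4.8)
The plan is to identify $\LRV$ as a binomial random variable and then apply a standard multiplicative Chernoff bound. First I would fix an index $\iIx \in [\m]$ and show that the per-sample collision indicator $R_\iIx \defeq \I( \Sign( \langle \Vec{\ZRVX}\VIx{\iIx}, \Vec{\uV} \rangle ) \neq \Sign( \langle \Vec{\ZRVX}\VIx{\iIx}, \Vec{\vV} \rangle ) )$ is a Bernoulli random variable with parameter $p \defeq \frac{1}{\pi} \arccos( \langle \Vec{\uV}, \Vec{\vV} \rangle )$. This is the classical random-hyperplane fact underlying SimHash (see, e.g., \cite{charikar2002similarity}): the orthogonal projection of $\Vec{\ZRVX}\VIx{\iIx}$ onto $\Span( \{ \Vec{\uV}, \Vec{\vV} \} )$ is, up to an isometry, a two-dimensional standard Gaussian, whose direction is uniform on the unit circle by rotational invariance; the two signs disagree exactly when that direction lands in one of two antipodal arcs of total angular measure $2 \arccos( \langle \Vec{\uV}, \Vec{\vV} \rangle )$, which occurs with probability $\frac{1}{\pi} \arccos( \langle \Vec{\uV}, \Vec{\vV} \rangle )$. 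The degenerate cases $\Vec{\uV} = \pm \Vec{\vV}$ and the probability-zero event $\langle \Vec{\ZRVX}\VIx{\iIx}, \Vec{\uV} \rangle = 0$ are handled directly and are consistent with the formula.

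Next, since $\Vec{\ZRVX}\VIx{1}, \dots, \Vec{\ZRVX}\VIx{\m}$ are i.i.d., the indicators $R_1, \dots, R_\m$ are i.i.d.\ Bernoulli$(p)$, and hence $\LRV = \sum_{\iIx=1}^{\m} R_\iIx \sim \Binomial( \m, p )$. Equation \eqref{eqn:lemma:count-mismatch:ev} then follows immediately from the mean of a binomial: $\mu_{\LRV} = \E[ \LRV ] = \m p = \frac{\m}{\pi} \arccos( \langle \Vec{\uV}, \Vec{\vV} \rangle )$. For the tail bound \eqref{eqn:lemma:count-mismatch:pr}, I would invoke the multiplicative Chernoff inequality for a sum of independent $[0,1]$-valued random variables (see, e.g., \cite{vershynin2018high}): since $\sXX \in (0,1)$, $\Pr( \LRV \geq ( 1+\sXX ) \mu_{\LRV} ) \leq e^{-\frac{1}{3} \sXX^{2} \mu_{\LRV}}$. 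Substituting the value of $\mu_{\LRV}$ computed above gives exactly $\Pr( \LRV > ( 1+\sXX ) \mu_{\LRV} ) \leq e^{-\frac{1}{3\pi} \m \sXX^{2} \arccos( \langle \Vec{\uV}, \Vec{\vV} \rangle )}$, which is \eqref{eqn:lemma:count-mismatch:pr}.

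I do not expect any real obstacle: the claim is essentially a textbook Chernoff bound applied to a SimHash-type collision count, with all randomness coming from i.i.d.\ Gaussian vectors. The only step that requires a little care is the derivation of the per-sample collision probability $p = \frac{1}{\pi} \arccos( \langle \Vec{\uV}, \Vec{\vV} \rangle )$, which is a routine two-dimensional computation exploiting the rotational symmetry of the Gaussian; moreover, the paper already relies on this exact characterization elsewhere (citing \cite{charikar2002similarity}), so in the write-up it can simply be cited rather than re-proved.
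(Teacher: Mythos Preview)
Your proposal is correct and matches the intended approach: the paper does not re-prove this lemma but cites it from \cite{matsumoto2022binary}, and elsewhere explicitly notes that $\LRV \sim \Binomial(\m, \tfrac{1}{\pi}\arccos(\langle \Vec{\uV}, \Vec{\vV}\rangle))$ is folklore (citing \cite{charikar2002similarity}) and that the tail follows from a standard binomial concentration inequality. Your binomial-plus-Chernoff argument is exactly this.
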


\begin{lemma}
\label{lemma:norm-subgaussian:1}
Fix \(  \tXXX, \sigma  > 0  \) and \(  0 < \dX \leq \n  \).
Let
\(  \ICoords \subseteq [\n]  \), \(  | \ICoords | = \dX  \),
and
\(  \Vec{\XRV} \sim \N( \Vec{0}, \sigma^{2} \sum_{\jIx \in \ICoords} \ej \ej^{\T} )  \).
Then,
\begin{gather}
\label{eqn:lemma:norm-subgaussian:1:1}
  \Pr \left(
    \| \Vec{\XRV} - \E[ \Vec{\XRV} ] \|_{2}
    >
    \sqrt{\dX} \sigma
    +
    \tXXX
  \right)
  \leq
  \Pr \left(
    \| \Vec{\XRV} - \E[ \Vec{\XRV} ] \|_{2}
    >
    \E[ \| \Vec{\XRV} \|_{2} ]
    +
    \tXXX
  \right)
  \leq
  e^{-\frac{1}{2 \sigma^{2}} \tXXX^{2}}
.\end{gather}
\end{lemma}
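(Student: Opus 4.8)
The plan is to reduce \LEMMA \ref{lemma:norm-subgaussian:1} to two standard facts about Gaussian vectors: Jensen's inequality for the first bound, and concentration of Lipschitz functions of a standard Gaussian for the second. First I would observe that $\E[ \Vec{\XRV} ] = \Vec{0}$, so $\| \Vec{\XRV} - \E[ \Vec{\XRV} ] \|_{2} = \| \Vec{\XRV} \|_{2}$, and that the only nonzero coordinates of $\Vec{\XRV}$ are those indexed by $\ICoords$; identifying that index set with $[\dX]$, the restriction of $\Vec{\XRV}$ to those coordinates has the law of $\sigma \Vec{\ZRV}$ with $\Vec{\ZRV} \sim \N( \Vec{0}, \Id{\dX} )$, so that $\| \Vec{\XRV} \|_{2} \sim \sigma \| \Vec{\ZRV} \|_{2}$.

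For the first inequality in \eqref{eqn:lemma:norm-subgaussian:1:1}, I would bound the mean via Jensen's inequality applied to the concave square-root function: since $\E[ \| \Vec{\XRV} \|_{2}^{2} ] = \sum_{\jIx \in \ICoords} \E[ \XRV_{\jIx}^{2} ] = \dX \sigma^{2}$, it follows that $\E[ \| \Vec{\XRV} \|_{2} ] \leq \sqrt{ \E[ \| \Vec{\XRV} \|_{2}^{2} ] } = \sqrt{\dX} \sigma$. Consequently the event $\{ \| \Vec{\XRV} \|_{2} > \sqrt{\dX} \sigma + \tXXX \}$ is contained in $\{ \| \Vec{\XRV} \|_{2} > \E[ \| \Vec{\XRV} \|_{2} ] + \tXXX \}$, which gives the first $\leq$ by monotonicity of probability.

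For the second inequality, I would invoke the Gaussian concentration inequality for Lipschitz functions (\see \eg \cite{vershynin2018high}): if $g : \R^{\dX} \to \R$ is $1$-Lipschitz with respect to the Euclidean norm and $\Vec{\ZRV} \sim \N( \Vec{0}, \Id{\dX} )$, then $\Pr( g( \Vec{\ZRV} ) - \E[ g( \Vec{\ZRV} ) ] > s ) \leq e^{-s^{2}/2}$ for every $s > 0$. Applying this with $g( \Vec{z} ) = \| \Vec{z} \|_{2}$ and rescaling by $\sigma$ (equivalently, applying it to the $\sigma$-Lipschitz function $\Vec{z} \mapsto \sigma \| \Vec{z} \|_{2}$ with $s = \tXXX / \sigma$ absorbed appropriately), together with the distributional identity $\| \Vec{\XRV} \|_{2} \sim \sigma \| \Vec{\ZRV} \|_{2}$ from the first paragraph, yields $\Pr( \| \Vec{\XRV} \|_{2} - \E[ \| \Vec{\XRV} \|_{2} ] > \tXXX ) \leq e^{-\tXXX^{2}/(2\sigma^{2})}$, which is the second $\leq$.

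I do not expect a genuine obstacle here — the lemma is essentially a repackaging of textbook facts. The only point requiring a moment of care is that $\Vec{\XRV}$ is supported on a proper coordinate subspace, so one must note that the ambient dimension $\n$ plays no role: the coordinates outside $\ICoords$ contribute nothing to $\| \Vec{\XRV} \|_{2}$, and the Gaussian Lipschitz concentration bound depends only on the Lipschitz constant and on $\sigma$, not on the dimension of the underlying space. With that remark both displayed inequalities follow immediately.
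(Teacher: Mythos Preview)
Your proposal is correct and matches the paper's own proof essentially step for step: the paper also notes $\E[\Vec{\XRV}]=\Vec{0}$, bounds $\E[\|\Vec{\XRV}\|_2]\le\sqrt{\dX}\,\sigma$ via the same Jensen-type argument (stated there as ``standard properties of Gaussians''), and then invokes the Lipschitz concentration inequality for the $\ell_2$-norm. Your explicit reduction to $\sigma\Vec{\ZRV}$ on the support coordinates and the remark that the ambient dimension plays no role are helpful clarifications, but the underlying argument is identical.
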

%
\begin{subproof}
{\LEMMA \ref{lemma:norm-subgaussian:1}}
Note that
\(  \| \Vec{\XRV} - \E[ \Vec{\XRV} ] \|_{2} = \| \Vec{\XRV} \|_{2}  \)
due to the lemma's condition that \(  \Vec{\XRV}  \) is zero-mean.
By standard properties of Gaussians,
the expected \(  \lnorm{2}  \)-norm of \(  \Vec{\XRV}  \) is bound from above by
\(  \E[ \| \Vec{\XRV} \|_{2} ] \leq \sqrt{\dX} \sigma  \).
Due to a well-known concentration inequality for Lipschitz functions of \subgaussian random vectors (\seeeg \cite{wainwright2019high}), and noting that the \(  \lnorm{2}  \)-norm is \(  1  \)-Lipschitz, the claimed inequality holds:
\begin{align*}
  \Pr \left( \| \Vec{\XRV} - \E[ \Vec{\XRV} ] \|_{2} > \sqrt{\dX} \sigma + \tXXX \right)
  &=
  \Pr( \| \Vec{\XRV} \|_{2} > \sqrt{\dX} \sigma + \tXXX )
  \\
  &\leq
  \Pr( \| \Vec{\XRV} \|_{2} > \E[ \| \Vec{\XRV} \|_{2} ] + \tXXX )
  \\
  &\leq
  e^{-\frac{1}{2 \sigma^{2}} \tXXX^{2}}
,\end{align*}
as desired.
\end{subproof}
%
Fixing \(  \sXX \in (0,1)  \), the random variable \(  \LRV  \) exceeds
\(  \LRV > ( 1+\sXX ) \frac{1}{\pi} \m \ADIST  \)
with probability at most
\(  e^{-\frac{1}{3\pi} \m \sXX^{2} \ADIST}  \)
by \LEMMA \ref{lemma:count-mismatch}.
Additionally, by an earlier observation,
\(  \E[ \Vec{\URV} \Mid| \LRV=\lX ] = \Vec{0}  \),
and thus, due to \LEMMA \ref{lemma:norm-subgaussian:1}, for \(  \tXXX > 0  \),
\begin{align}
  \nonumber
  \Pr \left( \| \Vec{\URV} - \E[ \Vec{\URV} ] \|_{2} > \frac{\sqrt{( \kX-2 ) \lX}}{\m} + \tXXX \middle| \LRV \leq \lX \right)
  &\leq
  \Pr \left( \| \Vec{\URV} - \E[ \Vec{\URV} ] \|_{2} > \frac{\sqrt{( \kX-2 ) \lX}}{\m} + \tXXX \middle| \LRV = \lX \right)
  \\ \label{eqn:pf:eqn:lemma:concentration-ineq:noiseless:pr:3:cond-on-L}
  &\leq
  e^{-\frac{\m^{2} \tXXX^{2}}{2 \lX}}
,\end{align}
where in particular, taking
\(  \lX = ( 1+\sXX ) \frac{1}{\pi} \m \ADIST  \)
and
\(  \tXXX = \frac{1}{\pi} \tX \ADIST  \)
in \EQUATION \eqref{eqn:pf:eqn:lemma:concentration-ineq:noiseless:pr:3:cond-on-L} and noting that
\begin{align}
\label{eqn:pf:eqn:lemma:concentration-ineq:noiseless:pr:3:1}
  \kX
  &= | \Supp( \thetaStar ) \cup \Supp( \thetaX ) \cup \JCoords | \leq \min \{ | \Supp( \thetaStar ) | + | \Supp( \thetaX ) | + | \JCoords |, \n \}
  \nonumber\\
  &\leq \min \{ 2\k + \max_{\JCoords' \in \JS} | \JCoords' |, \n \} = \kO
\end{align}
for any \(  \JCoords \in \JS  \),
the following holds:
\begin{align*}
  &
  \textstyle
  \Pr \Bigl(
    \| \Vec{\URV} - \E[ \Vec{\URV} ] \|_{2}
    >
    \sqrt{\frac{1}{\pi \m} ( 1+\sXX )( \kO-2 ) \ADIST}
    +
    \frac{1}{\pi} \tX \ADIST
    \\
    &\AlignIndent\AlignIndent \textstyle
    \Bigl| \LRV \leq ( 1+\sXX ) \frac{1}{\pi} \m \ADIST
  \Bigr)
  \\
  & \textstyle \AlignIndent \leq
  \Pr \Bigl(
    \| \Vec{\URV} - \E[ \Vec{\URV} ] \|_{2}
    >
    \sqrt{\frac{1}{\pi \m} ( 1+\sXX )( \kX-2 ) \ADIST}
    +
    \frac{1}{\pi} \tX \ADIST
    \\ & \textstyle \AlignIndent\AlignIndent\AlignIndent\AlignIndent
    \Bigl|
    \LRV \leq ( 1+\sXX ) \frac{1}{\pi} \m \ADIST
  \Bigr)
  \\
  & \AlignIndent \dCmt{by \EQUATION \eqref{eqn:pf:eqn:lemma:concentration-ineq:noiseless:pr:3:1}, \(  \kX \leq \kO  \)}
  \\
  & \AlignIndent \leq
  e^{-\frac{1}{2\pi ( 1+\sXX )} \m \tX^{2} \ADIST}
  .\\
  &\AlignIndent\dCmt{by \EQUATION \eqref{eqn:pf:eqn:lemma:concentration-ineq:noiseless:pr:3:cond-on-L}}
  \\ \TagEqn{\label{eqn:pf:eqn:lemma:concentration-ineq:noiseless:pr:3:2}}
\end{align*}
Combining the above arguments obtains:
\begin{align*}
  &
  \textstyle
  \Pr \Bigl(
    \| \Vec{\URV} - \E[ \Vec{\URV} ] \|_{2}
    >
    \sqrt{\frac{1}{\pi \m} ( 1+\sXX )( \kO-2 ) \ADIST}
    +
    \frac{1}{\pi} \tX \ADIST
  \Bigr)
  \\
  & \textstyle
  \AlignIndent \leq
  \Pr \Bigl( \| \Vec{\URV} - \E[ \Vec{\URV} ] \|_{2} > \sqrt{\frac{1}{\pi \m} ( 1+\sXX )( \kO-2 ) \ADIST} + \frac{1}{\pi} \tX \ADIST
  \\
  & \AlignIndent\AlignIndent\AlignIndent\AlignIndent \textstyle
  \Bigl| \LRV \leq ( 1+\sXX ) \frac{1}{\pi} \m \ADIST \Bigr)
  \\
  &\AlignIndent\AlignIndent+
  \Pr \left( \LRV > ( 1+\sXX ) \frac{1}{\pi} \m \ADIST \right)
  \\
  & \AlignIndent \leq
  e^{-\frac{1}{2\pi ( 1+\sXX )} \m \tX^{2} \ADIST}
  +
  e^{-\frac{1}{3\pi} \m \sXX^{2} \ADIST}
  .\\
  & \AlignIndent \dCmt{by \EQUATION \eqref{eqn:pf:eqn:lemma:concentration-ineq:noiseless:pr:3:2} and \LEMMA
  \ref{lemma:count-mismatch}}
\end{align*}
Recalling the equivalences in distribution described earlier in the proofs, it directly follows that
\begin{gather*}
  \textstyle
  \Pr \left( \left\| \frac{\gFn[\JCoords]( \thetaStar, \thetaX )}{\sqrt{2\pi}} - \E \left[ \frac{\gFn[\JCoords]( \thetaStar, \thetaX )}{\sqrt{2\pi}} \right] \right\|_{2} > \sqrt{\frac{1}{\pi \m} ( 1+\sXX )( \kO-2 ) \ADIST} + \frac{1}{\pi} \tX \ADIST \right)
  \\
  \leq
  e^{-\frac{1}{2\pi ( 1+\sXX )} \m \tX^{2} \ADIST}
  +
  e^{-\frac{1}{3\pi} \m \sXX^{2} \ADIST}
\end{gather*}
for any single choice of
\(  \JCoords \in \JS  \) and \(  \thetaX \in \ParamCoverX  \).
Then, union bounds over \(  \JS  \) and \(  \ParamCoverX  \) yields the concentration inequality in \EQUATION \eqref{eqn:lemma:concentration-ineq:noiseless:pr:3}:
\begin{gather*}
  \textstyle
  \Pr \left(
    \ExistsST{\JCoords \in \JS, \thetaX \in \ParamCoverX}{
    \left\| \frac{\gFn[\JCoords]( \thetaStar, \thetaX )}{\sqrt{2\pi}} - \E \left[ \frac{\gFn[\JCoords]( \thetaStar, \thetaX )}{\sqrt{2\pi}} \right] \right\|_{2} > \sqrt{\frac{( 1+\sXX )( \kO-2 ) \ADIST}{\pi \m} } + \frac{\tX \ADIST}{\pi}
    }
  \right)
  \\
  \leq
  | \JS | | \ParamCoverX | e^{-\frac{1}{2\pi ( 1+\sXX )} \m \tX^{2} \ADIST}
  +
  | \ParamCoverX | e^{-\frac{1}{3\pi} \m \sXX^{2} \ADIST}
,\end{gather*}
as claimed.
\end{proof}



\subsection{Proof of \LEMMA \ref{lemma:concentration-ineq:noisy}}
\label{outline:concentration-ineq|pf-noisy}

\begin{proof}
{\LEMMA \ref{lemma:concentration-ineq:noisy}}
\checkoff%
The proof of the lemma is split across a few subsections within this section, \SECTION \ref{outline:concentration-ineq|pf-noisy}:
\SECTION \ref{outline:concentration-ineq|pf-noisy|1} is devoted to \EQUATIONS \eqref{eqn:lemma:concentration-ineq:noisy:ev:1} and \eqref{eqn:lemma:concentration-ineq:noisy:pr:1}, while \SECTION \ref{outline:concentration-ineq|pf-noisy|2} derives \EQUATIONS \eqref{eqn:lemma:concentration-ineq:noisy:ev:2} and \eqref{eqn:lemma:concentration-ineq:noisy:pr:2}.
Lastly, \SECTION \ref{outline:concentration-ineq|pf-noisy|pf-f1,f2} proves an intermediate result.


\subsubsection{Proof of \EQUATIONS \eqref{eqn:lemma:concentration-ineq:noisy:pr:1} and \eqref{eqn:lemma:concentration-ineq:noisy:ev:1}}
\label{outline:concentration-ineq|pf-noisy|1}

%
Fix
\(  \thetaStar \in \ParamSpace  \) and \(  \JCoordsX \in \JSX  \)
arbitrarily.
Write
\(  \CovVX\VIx{\iIx} \defeq \ThresholdSet{\Supp( \thetaStar ) \cup \JCoordsX}( \CovV\VIx{\iIx} )  \),
\(  \iIx \in [\m]  \).
As similarly seen earlier, \(  \frac{1}{\sqrt{2\pi}} \hfFn[\JCoordsX]( \thetaStar, \thetaStar )  \) can be written as follows:
\begin{align}
  \frac{1}{\sqrt{2\pi}} \hfFn[\JCoordsX]( \thetaStar, \thetaStar )
  &=
  \ThresholdSet{\Supp( \thetaStar ) \cup \JCoordsX} \left(
    \frac{1}{\m}
    \sum_{\iIx=1}^{\m}
    \CovV\VIx{\iIx}
    \sep \frac{1}{2} \left( \fFn( \langle \CovV, \thetaStar \rangle ) - \Sign( \langle \CovV, \thetaStar \rangle ) \right)
  \right)
  \\
  &\dCmt{by the definition of \(  \hfFn[\JCoords]  \) in \EQUATION \eqref{eqn:notations:hfJ:def}}
  \\
  &=
  \frac{1}{\m}
  \sum_{\iIx=1}^{\m}
  \ThresholdSet{\Supp( \thetaStar ) \cup \JCoordsX}( \CovV\VIx{\iIx} )
  \sep \frac{1}{2} \left( \fFn( \langle \CovV, \thetaStar \rangle ) - \Sign( \langle \CovV, \thetaStar \rangle ) \right)
  \\
  &\dCmt{by the linearity of the subset thresholding operation (\see \SECTIONREF \ref{outline:notations})}
  \\
  &=
  \frac{1}{\m}
  \sum_{\iIx=1}^{\m}
  \CovVX\VIx{\iIx}
  \sep \frac{1}{2} \left( \fFn( \langle \CovVX\VIx{\iIx}, \thetaStar \rangle ) - \Sign( \langle \CovVX\VIx{\iIx}, \thetaStar \rangle ) \right)
  \\
  &\dCmt{by the definition of \(  \CovVX\VIx{\iIx}  \), \(  \iIx \in [\m]  \)}
  \\
  &=
  -\frac{1}{\m}
  \sum_{\iIx=1}^{\m}
  \CovVX\VIx{\iIx}
  \sep \Sign( \langle \CovVX\VIx{\iIx}, \thetaStar \rangle )
  \sep \I( \fFn( \langle \CovVX\VIx{\iIx}, \thetaStar \rangle ) \neq \Sign( \langle \CovVX\VIx{\iIx}, \thetaStar \rangle ) )
\TagEqn{\label{eqn:pf:lemma:concentration-ineq:noisy:16}}
,\end{align}
and thus,
\begin{align*}
  \left\langle \frac{1}{\sqrt{2\pi}} \hfFn[\JCoordsX]( \thetaStar, \thetaStar ), \thetaStar \right\rangle
  &=
  -\frac{1}{\m}
  \sum_{\iIx=1}^{\m}
  \langle \CovVX\VIx{\iIx}, \thetaStar \rangle
  \sep
  \Sign( \langle \CovVX\VIx{\iIx}, \thetaStar \rangle )
  \sep
  \I( \fFn( \langle \CovVX\VIx{\iIx}, \thetaStar \rangle ) \neq \Sign( \langle \CovVX\VIx{\iIx}, \thetaStar \rangle ) )
  \\
  &=
  -\frac{1}{\m}
  \sum_{\iIx=1}^{\m}
  | \langle \CovVX\VIx{\iIx}, \thetaStar \rangle |
  \sep
  \I( \fFn( \langle \CovVX\VIx{\iIx}, \thetaStar \rangle ) \neq \Sign( \langle \CovVX\VIx{\iIx}, \thetaStar \rangle ) )
\TagEqn{\label{eqn:pf:lemma:concentration-ineq:noisy:1}}
.\end{align*}
Note that justifications for some of the steps taken above can be obtained by extending those appearing in the proof of \LEMMA \ref{lemma:concentration-ineq:noiseless}.
%
\par 
%
The first step towards deriving \EQUATIONS \eqref{eqn:lemma:concentration-ineq:noisy:pr:1} and \eqref{eqn:lemma:concentration-ineq:noisy:ev:1} is characterizing the distribution of each \(  \iIx\Th  \) summand, \(  \iIx \in [\m]  \), in \EQUATION \eqref{eqn:pf:lemma:concentration-ineq:noisy:1}.
Let
\(  \Zi \sim \N(0,1)  \) and \(  \Ri \defeq \I( \fFn( \Zi ) \neq \Sign( \Zi ) )  \),
\(  \iIx \in [\m]  \).
Then, each \(  \iIx\Th  \) summand, \(  \iIx \in [\m]  \), follows the same distribution as
\begin{align*}
  | \langle \CovVX\VIx{\iIx}, \thetaStar \rangle |
  \sep \I( \fFn( \langle \CovVX\VIx{\iIx}, \thetaStar \rangle ) \neq \Sign( \langle \CovVX\VIx{\iIx}, \thetaStar \rangle ) )
  \sim
  \Wi
  \defeq
  | \Zi | \sep \I( \fFn( \Zi ) \neq \Sign( \Zi ) )
  =
  | \Zi | \Ri
.\end{align*}
The density and mass functions of \(  | \Zi |  \) and \(  \Ri  \), respectively, are given by
\begin{gather}
\label{eqn:pf:lemma:concentration-ineq:noisy:4}
  \pdf{| \Zi |}( \zX )
  =
  \begin{cases}
  \sqrt{\frac{2}{\pi}} e^{-\frac{1}{2} \zX^{2}} ,& \cIf \zX \geq 0, \\
  0                                             ,& \cIf \zX = 0,
  \end{cases}
  \\
\label{eqn:pf:lemma:concentration-ineq:noisy:5}
  \pdf{\Ri}( \rX )
  =
  \begin{cases}
  1-\alphaX ,& \cIf \rX = 0, \\
  \alphaX   ,& \cIf \rX = 1.
  \end{cases}
\end{gather}
Additionally, the mass function of the conditioned random variable \(  \Ri=1 \Mid| \Zi  \) is given by
\begin{gather}
\label{eqn:pf:lemma:concentration-ineq:noisy:2}
  \pdf{\Ri \Mid| \Zi}( 1 \Mid| \zX )
  =
  \begin{cases}
  \pFn( \zX )   ,& \cIf \zX < 0,   \\
  1-\pFn( \zX ) ,& \cIf \zX \geq 0,
  \end{cases}
\end{gather}
and the mass function of the conditioned random variable \(  \Ri=1 \Mid| |\Zi|   \) is given by
\begin{gather}
\label{eqn:pf:lemma:concentration-ineq:noisy:8}
  \pdf{\Ri \Mid| |\Zi|}( 1 \Mid| \zX )
  =
  \begin{cases}
  0                         ,& \cIf \zX < 0,   \\
  \frac{1}{2} (\pExpr{\zX}) ,& \cIf \zX \geq 0,
  \end{cases}
\end{gather}
where the latter case---when \(  \zX \geq 0  \)---is obtained as follows: 
\begin{align*}
  \pdf{\Ri \Mid| |\Zi|}( 1 \Mid| \zX )
  &=
  \pdf{\Ri \Mid| |\Zi|, \Zi}( 1 \Mid| \zX, \zX )
  \pdf{\Zi \Mid| |\Zi|}( \zX | \zX )
  +
  \pdf{\Ri \Mid| |\Zi|, \Zi}( 1 \Mid| \zX, -\zX )
  \pdf{\Zi \Mid| |\Zi|}( \zX | -\zX )
  \\
  &\dCmt{by the law of total probability and the definition of conditional probabilities}
  \\
  &\dCmtx{(and the observation that \(  \pdf{\Zi \Mid| |\Zi|}( \zX' | \zX ) = 0  \) whenever \(  | \zX' | \neq \zX  \), \(  \zX' \in \R, \zX \geq 0  \))}
  \\
  &=
  \frac{1}{2}
  \pdf{\Ri \Mid| |\Zi|, \Zi}( 1 \Mid| \zX, \zX )
  +
  \frac{1}{2}
  \pdf{\Ri \Mid| |\Zi|, \Zi}( 1 \Mid| \zX, -\zX )
  \\
  &\dCmt{by symmetry}
  \\
  &=
  \frac{1}{2}
  \pdf{\Ri \Mid| \Zi}( 1 \Mid| \zX )
  +
  \frac{1}{2}
  \pdf{\Ri \Mid| \Zi}( 1 \Mid| -\zX )
  \\
  &\dCmt{because \(  \Zi  \) completely determines \(  |\Zi|  \), which implies \(  ( \Ri \Mid| |\Zi|, \Zi ) \sim ( \Ri \Mid| \Zi )  \)}
  \\
  &=
  \frac{1}{2}
  ( 1-\pFn( \zX ) )
  +
  \frac{1}{2}
  \pFn( -\zX )
  \\
  &\dCmt{by \EQUATION \eqref{eqn:pf:lemma:concentration-ineq:noisy:2}}
  \\
  &=
  \frac{1}{2} (\pExpr{\zX})
.\end{align*}
Note that
\(  ( \Wi \Mid| \Ri=1 ) \sim ( |\Zi|\Ri \Mid| \Ri=1 ) \sim ( |\Zi| \Mid| \Ri=1 )  \).
Thus, via Bayes' theorem, for \(  \zX \in \R  \),
\begin{align*}
  \pdf{\Wi \Mid| \Ri}( \zX \Mid| 1 )
  &=
  \pdf{|\Zi| \Mid| \Ri}( \zX \Mid| 1 )
  \\
  &\dCmt{by the above remark}
  \\
  &=
  \frac{\pdf{|\Zi|}( \zX ) \pdf{\Ri \Mid| |\Zi|}( 1 \Mid| \zX )}{\pdf{\Ri}(1)}
  \\
  &\dCmt{by Bayes' theorem}
  \\
  &=
  \frac{\sqrt{\frac{2}{\pi}} e^{-\frac{1}{2} \zX^{2}} \frac{1}{2} (\pExpr{\zX})}{\alphaX}
  \\
  &\dCmt{by \EQUATIONS \eqref{eqn:pf:lemma:concentration-ineq:noisy:4}, \eqref{eqn:pf:lemma:concentration-ineq:noisy:5}, and \eqref{eqn:pf:lemma:concentration-ineq:noisy:8}}
  \\
  &=
  \frac{1}{\sqrt{2\pi} \alphaX} e^{-\frac{1}{2} \zX^{2}} (\pExpr{\zX})
,\end{align*}
and therefore, taking together the above work, the density of the conditioned random variable \(  \Wi \Mid| \Ri  \) is given for \(  \rX \in \{ 0,1 \}  \) and \(  \zX \in \R  \) by
\begin{gather}
\label{eqn:pf:lemma:concentration-ineq:noisy:3}
  \pdf{\Wi \Mid| \Ri}( \zX \Mid| \rX )
  =
  \begin{cases}
  0                                                                    ,& \cIf \rX=0, \zX \neq 0, \\
  1                                                                    ,& \cIf \rX=0, \zX = 0, \\
  0                                                                    ,& \cIf \rX=1, \zX < 0, \\
  \frac{1}{\sqrt{2\pi} \alphaX} e^{-\frac{1}{2} \zX^{2}} (\pExpr{\zX}) ,& \cIf \rX=1, \zX \geq 0.
  \end{cases}
\end{gather}
In expectation, when conditioning on \(  \Ri=0  \),
\begin{align*}
  \E[ \Wi \Mid| \Ri=0 ]
  &=
  \int_{\zX=-\infty}^{\zX=\infty} \zX \pdf{\Wi \Mid| \Ri}( \zX \Mid| 0 ) d\zX
  \\
  &=
  0 \pdf{\Wi \Mid| \Ri}( 0 \Mid| 0 )
  \\
  &\dCmt{due to \EQUATION \eqref{eqn:pf:lemma:concentration-ineq:noisy:3}}
  \\
  &=
  0
\TagEqn{\label{eqn:pf:lemma:concentration-ineq:noisy:6}}
,\end{align*}
and when conditioning on \(  \Ri=1  \),
\begin{align*}
  \E[ \Wi \Mid| \Ri=1 ]
  &=
  \int_{\zX=-\infty}^{\zX=\infty} \zX \pdf{\Wi \Mid| \Ri}( \zX \Mid| 1 ) d\zX
  \\
  &=
  \int_{\zX=0}^{\zX=\infty}
  \frac{1}{\sqrt{2\pi} \alphaX} \zX e^{-\frac{1}{2} \zX^{2}} (\pExpr{\zX})
  d\zX
  \\
  &\dCmt{by \EQUATION \eqref{eqn:pf:lemma:concentration-ineq:noisy:3}}
  \\
  &=
  \int_{\zX=0}^{\zX=\infty}
  \frac{1}{\sqrt{2\pi} \alphaX} \zX e^{-\frac{1}{2} \zX^{2}}
  d\zX
  -
  \int_{\zX=0}^{\zX=\infty}
  \frac{1}{\sqrt{2\pi} \alphaX} \zX e^{-\frac{1}{2} \zX^{2}} (\pFn( \zX ) - \pFn( -\zX ))
  d\zX
  \\
  &=
  \frac{1}{\sqrt{2\pi} \alphaX}
  -
  \frac{\gammaX}{2\alphaX}
  \\
  &\dCmt{by the definition of \(  \gammaX  \)}
  \\ \TagEqn{\label{eqn:pf:lemma:concentration-ineq:noisy:7}}
  &=
  \EWRValue
.\end{align*}

With this preliminary work completed, we now proceed to the derivations of \EQUATIONS \eqref{eqn:lemma:concentration-ineq:noisy:pr:1} and \eqref{eqn:lemma:concentration-ineq:noisy:ev:1}, starting with the former.
%
\paragraph{Verification of \EQUATION \eqref{eqn:lemma:concentration-ineq:noisy:pr:1}} 
%
Having obtained the density function and expectations for the conditioned random variable \(  \Wi \Mid| \Ri  \), the expectation of the random variable \(  | \langle \CovVX\VIx{\iIx}, \thetaStar \rangle | \sep \I( \fFn( \langle \CovVX\VIx{\iIx}, \thetaStar \rangle ) \neq \Sign( \langle \CovVX\VIx{\iIx}, \thetaStar \rangle ) )  \) is now calculated as follows: 
\begin{align*}
  \E[ | \langle \CovVX\VIx{\iIx}, \thetaStar \rangle | \sep \I( \fFn( \langle \CovVX\VIx{\iIx}, \thetaStar \rangle ) \neq \Sign( \langle \CovVX\VIx{\iIx}, \thetaStar \rangle ) ) ]
  &=
  \E[ \Wi ]
  \\
  &=
  \pdf{\Ri}( 0 ) \E[ \Wi \Mid| \Ri=0 ]
  +
  \pdf{\Ri}( 1 ) \E[ \Wi \Mid| \Ri=1 ]
  \\
  &\dCmt{by the law of total expectation}
  \\
  &=
  ( 1-\alphaX )
  0
  +
  \alphaX
  \EWRValue
  \\
  &\dCmt{by \EQUATIONS \eqref{eqn:pf:lemma:concentration-ineq:noisy:5}, \eqref{eqn:pf:lemma:concentration-ineq:noisy:6}, and \eqref{eqn:pf:lemma:concentration-ineq:noisy:7}}
  \\
  &=
  \frac{\sqrt{\hfrac{2}{\pi}}-\gammaX}{2}
.\end{align*}
By the linearity of expectation, it follows that
\begin{align*}
  \E \left[ \left\langle \hfFn[\JCoordsX]( \thetaStar, \thetaStar ), \thetaStar \right\rangle \right]
  &=
  \E \left[
    -\frac{\sqrt{2\pi}}{\m}
    \sum_{\iIx=1}^{\m}
    | \langle \CovVX\VIx{\iIx}, \thetaStar \rangle |
    \sep \I( \fFn( \langle \CovVX\VIx{\iIx}, \thetaStar \rangle ) \neq \Sign( \langle \CovVX\VIx{\iIx}, \thetaStar \rangle ) )
  \right]
  \\
  &=
  -\frac{\sqrt{2\pi}}{\m}
  \sum_{\iIx=1}^{\m}
  \E \left[
    | \langle \CovVX\VIx{\iIx}, \thetaStar \rangle |
    \sep \I( \fFn( \langle \CovVX\VIx{\iIx}, \thetaStar \rangle ) \neq \Sign( \langle \CovVX\VIx{\iIx}, \thetaStar \rangle ) )
  \right]
  \\
  &=
  -\frac{\sqrt{2\pi}}{\m}
  \sum_{\iIx=1}^{\m}
  \frac{\sqrt{\hfrac{2}{\pi}}-\gammaX}{2}
  \\
  &=
  -\left( 1 - \sqrt{\frac{\pi}{2}} \gammaX \right)
,\end{align*}
as claimed.
This completes the derivation of \EQUATION \eqref{eqn:lemma:concentration-ineq:noisy:pr:1}.
%
\paragraph{Verification of \EQUATION \eqref{eqn:lemma:concentration-ineq:noisy:ev:1}} 
%
Next, \EQUATION \eqref{eqn:lemma:concentration-ineq:noisy:ev:1} is derived.
This derivation is based on the \MGFs of the centered conditioned random variables \(  ( \Wi \Mid| \Ri ) - \E[ \Wi \Mid| \Ri ]  \) and \(  ( -\Wi \Mid| \Ri ) - \E[ -\Wi \Mid| \Ri ]  \), which are denoted by \(  \mgf{( \Wi \Mid| \Ri ) - \E[ \Wi \Mid| \Ri ]}  \) and \(  \mgf{( -\Wi \Mid| \Ri ) - \E[ -\Wi \Mid| \Ri ]}  \), respectively.
Write
\(  \muX[0] \defeq \E[ \Wi \Mid| \Ri=0 ] = 0  \) and
\(  \muX[1] \defeq \E[ \Wi \Mid| \Ri=1 ] = \frac{\sqrt{\hfrac{2}{\pi}}-\gammaX}{2\alphaX} \),
where these expectations were calculated previously in \EQUATIONS \eqref{eqn:pf:lemma:concentration-ineq:noisy:6} and \eqref{eqn:pf:lemma:concentration-ineq:noisy:7}.
Conditioned on \(  \Ri=1  \), the \MGFs are given at \(  \sX \in [0,\infty)  \) by
\begin{align*}
  \mgf{(\Wi \Mid| \Ri=1)-\E[\Wi \Mid| \Ri=1]}(\sX)
  &=
  \E \left[
    e^{\sX ( \Wi-\E[\Wi] )}
  \middle|
    \Ri=1
  \right]
  \\
  &=
  \int_{\zX=0}^{\zX=\infty}
  \frac{1}{\sqrt{2\pi} \alphaX} e^{\sX( \zX-\muX[1] )} e^{-\frac{1}{2} \zX^{2}} (\pExpr{\zX})
  d\zX
  \\
  &=
  \frac{1}{\alphaX}
  e^{\frac{1}{2} \sX^{2}}
  \frac{1}{\sqrt{2\pi}}
  e^{-\sX \muX[1]}
  \int_{\zX=0}^{\zX=\infty}
  e^{-\frac{1}{2} (\zX-\sX)^{2}} (\pExpr{\zX})
  d\zX
  \\
  &=
  \frac{1}{\alphaX}
  e^{\frac{1}{2} \sX^{2}}
  \fFnX( \sX )
,\end{align*}
and
\begin{align*}
  \mgf{(-\Wi \Mid| \Ri=1)-\E[-\Wi \Mid| \Ri=1]}(\sX)
  &=
  \E \left[
    e^{\sX ( -\Wi-\E[-\Wi] )}
  \middle|
    \Ri=1
  \right]
  \\
  &=
  \int_{\zX=0}^{\zX=\infty}
  \frac{1}{\sqrt{2\pi} \alphaX} e^{\sX( \zX+\muX[1] )} e^{-\frac{1}{2} \zX^{2}} (\pExpr{\zX})
  d\zX
  \\
  &=
  \frac{1}{\alphaX}
  e^{\frac{1}{2} \sX^{2}}
  \frac{1}{\sqrt{2\pi}}
  e^{\sX \muX[1]}
  \int_{\zX=0}^{\zX=\infty}
  e^{-\frac{1}{2} (\zX+\sX)^{2}} (\pExpr{\zX})
  d\zX
  \\
  &=
  \frac{1}{\alphaX}
  e^{\frac{1}{2} \sX^{2}}
  \fFnXX( \sX )
,\end{align*}
where
\begin{gather}
\label{eqn:pf:lemma:concentration-ineq:noisy:f1}
  \fFnX( \sX )
  \defeq
  \frac{1}{\sqrt{2\pi}}
  e^{-\sX \muX[1]}
  \int_{\zX=0}^{\zX=\infty}
  e^{-\frac{1}{2} (\zX-\sX)^{2}} (\pExpr{\zX})
  d\zX
  ,\\
\label{eqn:pf:lemma:concentration-ineq:noisy:f2}
  \fFnXX( \sX )
  \defeq
  \frac{1}{\sqrt{2\pi}}
  e^{\sX \muX[1]}
  \int_{\zX=0}^{\zX=\infty}
  e^{-\frac{1}{2} (\zX+\sX)^{2}} (\pExpr{\zX})
  d\zX
.\end{gather}
%
Before proceeding, the following lemma is introduced to facilitate upper bounds on the \MGFs, \(  \mgf{(\Wi \Mid| \Ri=1)-\E[\Wi \Mid| \Ri=1]}  \) and \(  \mgf{(-\Wi \Mid| \Ri=1)-\E[-\Wi \Mid| \Ri=1]}  \).
Its proof is deferred to \SECTION \ref{outline:concentration-ineq|pf-noisy|pf-f1,f2}.
%
\begin{lemma}
\label{lemma:pf:lemma:concentration-ineq:noisy:f1,f2}
Let \(  \fFnX, \fFnXX : \R \to \R  \) be the functions defined in \EQUATIONS \eqref{eqn:pf:lemma:concentration-ineq:noisy:f1} and \eqref{eqn:pf:lemma:concentration-ineq:noisy:f2}.
Then,
\begin{gather}
\label{eqn:pf:lemma:concentration-ineq:noisy:f1:ub}
  \sup_{\sX \geq 0} \fFnX( \sX ) = \fFnX( 0 )
  ,\\
\label{eqn:pf:lemma:concentration-ineq:noisy:f2:ub}
  \sup_{\sX \geq 0} \fFnXX( \sX ) = \fFnXX( 0 )
,\end{gather}
where
\begin{gather}
\label{eqn:pf:lemma:concentration-ineq:noisy:f1(0)}
  \fFnX( 0 ) = \alphaX
  ,\\
\label{eqn:pf:lemma:concentration-ineq:noisy:f2(0)}
  \fFnXX( 0 ) = \alphaX
.\end{gather}
\end{lemma}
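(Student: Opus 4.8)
The plan is to show that $\fFnX$ and $\fFnXX$ are non-increasing on $[0,\infty)$ and that $\fFnX(0)=\fFnXX(0)=\alphaX$, which together give \EQUATIONS \eqref{eqn:pf:lemma:concentration-ineq:noisy:f1:ub}--\eqref{eqn:pf:lemma:concentration-ineq:noisy:f2(0)}. Write $\nu(\zX)\defeq\pExpr{\zX}$. Two elementary consequences of \ASSUMPTION \ref{assumption:p} will be used: by \CONDITION \ref{condition:assumption:p:i}, $\nu$ is non-increasing on $[0,\infty)$ (its derivative is $-\pFn'(\zX)-\pFn'(-\zX)\leq 0$), so $0\leq\nu(\zX)\leq\nu(0)=1$ there; and by \CONDITION \ref{condition:assumption:p:ii}, the ratio $\nu(\zX+\wX)/\nu(\zX)$ being non-increasing in $\zX$ for every $\wX>0$ is exactly the statement that $\log\nu$ is concave on the support of $\nu$. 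Let $p_0$ denote the probability density $p_0(\zX)=\frac{1}{\sqrt{2\pi}\alphaX}e^{-\zX^2/2}\nu(\zX)$ on $[0,\infty)$, which is the conditional density of $W_i$ given $R_i=1$ appearing in the proof of \LEMMA \ref{lemma:concentration-ineq:noisy}; its normalization gives $\int_0^\infty e^{-\zX^2/2}\nu(\zX)\,d\zX=\sqrt{2\pi}\alphaX$, hence $\fFnX(0)=\fFnXX(0)=\frac{1}{\sqrt{2\pi}}\int_0^\infty e^{-\zX^2/2}\nu(\zX)\,d\zX=\alphaX$, and moreover $\muX[1]=\E[W_i\mid R_i=1]=\int_0^\infty\zX\,p_0(\zX)\,d\zX$ is the mean of $p_0$.

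Next I would reduce the monotonicity claims to a single uniform variance bound. Using $e^{-\frac12(\zX\mp\sX)^2}=e^{-\zX^2/2}e^{\pm\sX\zX}e^{-\sX^2/2}$ in \EQUATIONS \eqref{eqn:pf:lemma:concentration-ineq:noisy:f1} and \eqref{eqn:pf:lemma:concentration-ineq:noisy:f2} and recognizing the resulting integrals as exponential moments of $p_0$, one gets $\fFnX(\sX)/\alphaX=e^{-\sX\muX[1]-\sX^2/2}\,\E_{p_0}[e^{\sX Z}]$ and $\fFnXX(\sX)/\alphaX=e^{\sX\muX[1]-\sX^2/2}\,\E_{p_0}[e^{-\sX Z}]$, where $Z\sim p_0$. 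Letting $\varphi(\sX)\defeq\log\E_{p_0}[e^{\sX Z}]$ be the cumulant generating function (finite on all of $\R$ since $\nu\leq 1$), so that $\varphi(0)=0$ and $\varphi'(0)=\muX[1]$, these read $\log(\fFnX(\sX)/\alphaX)=\varphi(\sX)-\varphi(0)-\sX\varphi'(0)-\sX^2/2$ and $\log(\fFnXX(\sX)/\alphaX)=\varphi(-\sX)-\varphi(0)-(-\sX)\varphi'(0)-\sX^2/2$. Thus both $\fFnX(\sX)\leq\alphaX$ and $\fFnXX(\sX)\leq\alphaX$ for $\sX\geq 0$ follow once we show $\varphi(u)-\varphi(0)-u\varphi'(0)\leq u^2/2$ for every $u\in\R$; by Taylor's theorem with integral remainder this reduces to $\varphi''(v)\leq 1$ for all $v$. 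Since $\varphi''(v)=\Var_{p_v}(Z)$, where $p_v$ is the tilted density $p_v(\zX)\propto e^{-\zX^2/2}\nu(\zX)e^{v\zX}=e^{-\frac12(\zX-v)^2}\nu(\zX)$ on $[0,\infty)$, it remains to prove $\Var_{p_v}(Z)\leq 1$ for every $v\in\R$.

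This variance bound is the crux, and it is where both parts of \ASSUMPTION \ref{assumption:p} enter. Each $p_v$ is a log-concave probability density on the convex set $[0,\infty)$, of the form $e^{-V_v}$ with $V_v(\zX)=\frac12(\zX-v)^2-\log\nu(\zX)$; since $-\log\nu$ is convex (log-concavity of $\nu$) and $\frac12(\zX-v)^2$ has second derivative $1$, one has $V_v''\geq 1$ on the interior of the support. The Brascamp--Lieb variance inequality for log-concave measures on a convex domain, applied to the identity function, then yields $\Var_{p_v}(Z)\leq\E_{p_v}[1/V_v'']\leq 1$, completing the argument. I expect this variance estimate (rather than any of the bookkeeping) to be the main obstacle; everything else is routine, including the justification for differentiating under the integral sign (the Gaussian factors dominate uniformly on compact sets since $0\leq\nu\leq 1$) and the degenerate case in which $\nu$ vanishes beyond some point (the support is then a bounded interval and the same convexity/Brascamp--Lieb argument applies; for logistic and probit regression $\nu>0$ everywhere). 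Combining the three steps, $\fFnX$ and $\fFnXX$ are non-increasing on $[0,\infty)$ with common value $\alphaX$ at $\sX=0$, which is \EQUATIONS \eqref{eqn:pf:lemma:concentration-ineq:noisy:f1:ub}--\eqref{eqn:pf:lemma:concentration-ineq:noisy:f2(0)}.
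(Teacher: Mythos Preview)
Your proof is correct and takes a genuinely different route from the paper. The paper proceeds by directly differentiating $\fFnX$ and $\fFnXX$ and showing each derivative is nonpositive: at $\sX=0$ the derivative is computed to vanish exactly (using the definition of $\muX[1]$), and for $\sX>0$ the paper splits the integral according to the sign of the integrand and uses \ASSUMPTION \ref{assumption:p} to compare termwise with the $\sX=0$ case. Notably, the paper's argument for $\fFnXX$ uses only \CONDITION \ref{condition:assumption:p:i} (that $\nu$ is non-increasing), while \CONDITION \ref{condition:assumption:p:ii} enters only in the $\fFnX$ bound. Your approach is more structural: you recognize $\fFnX(\sX)/\alphaX$ and $\fFnXX(\sX)/\alphaX$ as $\exp(\varphi(\pm\sX)\mp\sX\varphi'(0)-\sX^{2}/2)$ for the cumulant generating function $\varphi$ of $p_{0}$, reduce both claims simultaneously to $\varphi''\leq 1$, and then obtain this uniform variance bound via Brascamp--Lieb, using that \CONDITION \ref{condition:assumption:p:ii} is precisely log-concavity of $\nu$ on $[0,\infty)$ so that $V_v''\geq 1$. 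Your route is cleaner and treats $\fFnX,\fFnXX$ symmetrically, at the cost of invoking a less elementary inequality and of using the stronger \CONDITION \ref{condition:assumption:p:ii} for both functions; the paper's hands-on splitting is more self-contained and reveals that $\fFnXX$ requires only monotonicity of $\nu$. One small remark: since the paper does not assume $\pFn$ (hence $\nu$) is $C^{2}$, you should phrase the variance bound via the decomposition $V_v=\tfrac{1}{2}(z-v)^{2}+(-\log\nu)$ with the second summand merely convex, invoking the version of Brascamp--Lieb (equivalently, the Bakry--\'Emery criterion) that gives $\Var_{p_v}(Z)\leq 1$ whenever the potential is a $1$-strongly convex function plus a convex function; this is standard and your parenthetical remarks already anticipate it.
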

%
Due to \EQUATIONS \eqref{eqn:pf:lemma:concentration-ineq:noisy:f1:ub}--\eqref{eqn:pf:lemma:concentration-ineq:noisy:f2(0)} in \LEMMA \ref{lemma:pf:lemma:concentration-ineq:noisy:f1,f2}, the \MGFs of \(  {( \Wi \Mid| \Ri=1 )} - {\E[ \Wi \Mid| \Ri=1 ]}  \) and \(  {( -\Wi \Mid| \Ri=1 )} - {\E[ -\Wi \Mid| \Ri=1 ]}  \) are now upper bounded by
\begin{gather}
\label{eqn:pf:lemma:concentration-ineq:noisy:11}
  \mgf{(\Wi \Mid| \Ri=1)-\E[\Wi \Mid| \Ri=1]}(\sX)
  =
  \frac{1}{\alphaX}
  e^{\frac{1}{2} \sX^{2}}
  \fFnX( \sX )
  \leq
  e^{\frac{1}{2} \sX^{2}}
  ,\\
\label{eqn:pf:lemma:concentration-ineq:noisy:12}
  \mgf{(-\Wi \Mid| \Ri=1)-\E[-\Wi \Mid| \Ri=1]}(\sX)
  =
  \frac{1}{\alphaX}
  e^{\frac{1}{2} \sX^{2}}
  \fFnXX( \sX )
  \leq
  e^{\frac{1}{2} \sX^{2}}
.\end{gather}
%
\par 
%
On the other hand, when conditioned on \(  \Ri=0  \), the \MGFs of these random variables \(  {( \Wi \Mid| \Ri=0 )} - {\E[ \Wi \Mid| \Ri=0 ]}  \) and \(  ( -\Wi \Mid| \Ri=0 ) - \E[ -\Wi \Mid| \Ri=0 ]  \), written as \(  \mgf{(\Wi \Mid| \Ri=0)-\E[\Wi \Mid| \Ri=0]}  \) and \(  \mgf{(-\Wi \Mid| \Ri=0)-\E[-\Wi \Mid| \Ri=0]}  \), are obtained as follows for \(  \sX \in [0,\infty)  \):
\begin{align*}
  \mgf{(\Wi \Mid| \Ri=0)-\E[\Wi \Mid| \Ri=0]}(\sX)
  &=
  \E \left[
    e^{\sX( \Wi-\muX[0] )}
  \middle|
    \Ri=0
  \right]
  \\
  &=
  \E \left[
    e^{\sX\Wi}
  \middle|
    \Ri=0
  \right]
  \\
  &=
  e^{\sX \cdot 0} \pdf{\Wi \Mid| \Ri}( 0 \Mid| 0 )
  \\
  &=
  1
\TagEqn{\label{eqn:pf:lemma:concentration-ineq:noisy:13}}
,\end{align*}
and likewise,
\begin{align*}
  \mgf{(-\Wi \Mid| \Ri=0)-\E[-\Wi \Mid| \Ri=0]}(\sX)
\XXX{
  &=
  \E \left[
    e^{\sX( -\Wi+\muX[0] )}
  \middle|
    \Ri=0
  \right]
  \\
  &=
  \E \left[
    e^{-\sX\Wi}
  \middle|
    \Ri=0
  \right]
  \\
  &=
  e^{-\sX \cdot 0} \pdf{\Wi \Mid| \Ri}( 0 \Mid| 0 )
  \\
}
  &=
  1
\TagEqn{\label{eqn:pf:lemma:concentration-ineq:noisy:14}}
.\end{align*}
%
\par 
%
Now, consider the sum of the random variables \(  \Wi  \), \(  \iIx \in [\m]  \).
Write
\(  \WRV \defeq \sum_{\iIx=1}^{\m} \Wi  \),
and let
\(  \Vec{\RRV} \defeq ( \Ri[1], \dots, \Ri[\m] )  \).
Fixing
\(  \Vec{\rX} \in \{ 0,1 \}^{\m}  \),
the \MGF of \(  ( \WRV \Mid| \Vec{\RRV}=\Vec{\rX} ) - \E[ \WRV \Mid| \Vec{\RRV}=\Vec{\rX} ]  \), written \(  \mgf{( \WRV \Mid| \Vec{\RRV}=\Vec{\rX} ) - \E[ \WRV \Mid| \Vec{\RRV}=\Vec{\rX} ]}( \sX )  \), is given and upper bounded at \(  \sX \in [0,\infty)  \) by
\begin{align*}
  \mgf{( \WRV \Mid| \Vec{\RRV}=\Vec{\rX} ) - \E[ \WRV \Mid| \Vec{\RRV}=\Vec{\rX} ]}( \sX )
\XXX{
  &=
  \E \left[
    e^{\sX( \WRV - \E[ \WRV ] )}
  \middle|
    \Vec{\RRV}=\Vec{\rX}
  \right]
  \\
  &\dCmt{by the definition of \MGFs}
  \\
}
  &=
  \E \left[
    e^{\sX \sum_{\iIx=1}^{\m} \Wi - \E[ \Wi ]}
  \middle|
    \Vec{\RRV}=\Vec{\rX}
  \right]
  \\
  &\dCmt{by the definition of \MGFs and by the definition of \(  \WRV  \)} 
  \\
\XXX{  &=
  \E \left[
    \prod_{\iIx=1}^{\m}
    \left( e^{\sX( \Wi - \E[ \Wi ] )} \middle| \Ri=\rX[\iIx] \right)
  \right]
  \\
  &\dCmt{each random variable \(  \Wi  \), \(  \iIx \in [\m]  \), is independent of}
  \\
  &\dCmtIndent\text{the random variables \(  \{ \Ri[\iIx'] \}_{\iIx' \neq \iIx}  \)}
  \\
}
  &=
  \prod_{\iIx=1}^{\m}
  \E \left[
    e^{\sX( \Wi - \E[ \Wi ] )}
  \middle|
    \Ri=\rX[\iIx]
  \right]
  \\
  &\dCmt{each \(  \Wi  \), \(  \iIx \in [\m]  \), is independent of \(  \{ \Ri[\iIx'] \}_{\iIx' \neq \iIx}  \); and}
  \\
  &\dCmt{\(  \{ ( \Wi[1] \Mid| \Ri[1] ), \dots, ( \Wi[\m] \Mid| \Ri[\m] ) \}  \) are mutually independent}
\XXX{
  \\
  &=
  \prod_{\substack{\iIx=1 :\\ \rX[\iIx]=1}}^{\m}
  \E \left[
    e^{\sX( \Wi - \E[ \Wi ] )}
  \middle|
    \Ri=1
  \right]
  \prod_{\substack{\iIx=1 :\\ \rX[\iIx]=0}}^{\m}
  \E \left[
    e^{\sX( \Wi - \E[ \Wi ] )}
  \middle|
    \Ri=0
  \right]
  \\
  &\dCmt{by partitioning the values of the index of multiplication}
}
  \\
  &=
  \prod_{\substack{\iIx=1 :\\ \rX[\iIx]=1}}^{\m}
  \mgf{( \Wi \Mid| \Ri=1 ) - \E[ \Wi \Mid| \Ri=1 ]}( \sX )
  \prod_{\substack{\iIx=1 :\\ \rX[\iIx]=0}}^{\m}
  \mgf{( \Wi \Mid| \Ri=0 ) - \E[ \Wi \Mid| \Ri=0 ]}( \sX )
  \\
  &\dCmt{by partitioning the values of the index of multiplication}
  \\
  &\dCmtx{and by the definition of \(  \mgf{( \Wi \Mid| \Ri ) - \E[ \Wi \Mid| \Ri ]}  \), \(  \iIx \in [\m]  \)}
  \\
  &\leq
  \prod_{\substack{\iIx=1 :\\ \rX[\iIx]=1}}^{\m}
  e^{\frac{1}{2} \sX^{2}}
  \prod_{\substack{\iIx=1 :\\ \rX[\iIx]=0}}^{\m}
  1
  \\
  &\dCmt{by \EQUATIONS \eqref{eqn:pf:lemma:concentration-ineq:noisy:11} and \eqref{eqn:pf:lemma:concentration-ineq:noisy:13}}
  \\
  &=
  e^{\frac{1}{2} \| \Vec{\rX} \|_{0} \sX^{2}}
\TagEqn{\label{eqn:pf:lemma:concentration-ineq:noisy:15}}
.\end{align*}
It follows that
\begin{align*}
  \Pr \left(
    \frac{\WRV}{\m} - \E \left[ \frac{\WRV}{\m} \right] > \alphaX \tX
  \middle|
    \Vec{\RRV}=\Vec{\rX}
  \right)
  &\leq
  \inf_{\sX \geq 0}
  e^{-\alphaX \m \sX \tX}
  \mgf{( \WRV \Mid| \Vec{\RRV}=\Vec{\rX} ) - \E[ \WRV \Mid| \Vec{\RRV}=\Vec{\rX} ]}( \sX )
  \\
  &\dCmt{due to Bernstein (\seeeg \cite{vershynin2018high})}
  \\
  \TagEqn{\label{eqn:pf:lemma:concentration-ineq:noisy:9}}
  &\leq
  \inf_{\sX \geq 0}
  e^{-\alphaX \m \sX \tX}
  e^{\frac{1}{2} \| \Vec{\rX} \|_{0} \sX^{2}}
  .\\
  &\dCmt{by \EQUATION \eqref{eqn:pf:lemma:concentration-ineq:noisy:15}}
\end{align*}
Additionally, note that
\begin{align}
\label{eqn:pf:lemma:concentration-ineq:noisy:10}
  \pdf{\Vec{\RRV}}( \Vec{\rX} ) = \alphaX^{\| \Vec{\rX} \|_{0}} (1-\alphaX)^{\m-\| \Vec{\rX} \|_{0}}
.\end{align}
Then,
\begin{align*}
  \Pr \left(
    \frac{\WRV}{\m} - \E \left[ \frac{\WRV}{\m} \right] > \alphaX \tX
  \right)
  &=
  \sum_{\Vec{\rX} \in \{ 0,1 \}^{\m}}
  \pdf{\Vec{\RRV}}( \Vec{\rX} )
  \Pr \left(
    \frac{\WRV}{\m} - \E \left[ \frac{\WRV}{\m} \right] > \alphaX \tX
  \middle|
    \Vec{\RRV}=\Vec{\rX}
  \right)
  \\
  &\dCmt{by the law of total expectation}
  \\
  &\leq
  \sum_{\Vec{\rX} \in \{ 0,1 \}^{\m}}
  \alphaX^{\| \Vec{\rX} \|_{0}} ( 1-\alphaX )^{\m-\| \Vec{\rX} \|_{0}}
  \inf_{\sX \geq 0}
  e^{-\alphaX \m \sX \tX}
  e^{\frac{1}{2} \| \Vec{\rX} \|_{0} \sX^{2}}
  \\
  &\dCmt{by \EQUATIONS \eqref{eqn:pf:lemma:concentration-ineq:noisy:9} and \eqref{eqn:pf:lemma:concentration-ineq:noisy:10}}
  \\
  &\leq
  \inf_{\sX \geq 0}
  e^{-\alphaX \m \sX \tX}
  \sum_{\Vec{\rX} \in \{ 0,1 \}^{\m}}
  ( \alphaX e^{\frac{1}{2} \sX^{2}} )^{\| \Vec{\rX} \|_{0}}
  ( 1-\alphaX )^{\m-\| \Vec{\rX} \|_{0}}
\XXX{  \\
  &=
  \inf_{\sX \geq 0}
  e^{-\alphaX \m \sX \tX}
  \sum_{\Ell=0}^{\m}
  \sum_{\substack{\Vec{\rX} \in \{ 0,1 \}^{\m} :\\ \| \Vec{\rX} \|_{0} = \Ell}}
  ( \alphaX e^{\frac{1}{2} \sX^{2}} )^{\Ell}
  ( 1-\alphaX )^{\m-\Ell}
  \\
  &\dCmt{by partitioning the values of the index of summation}
}
  \\
  &=
  \inf_{\sX \geq 0}
  e^{-\alphaX \m \sX \tX}
  \sum_{\Ell=0}^{\m}
  \binom{\m}{\Ell}
  ( \alphaX e^{\frac{1}{2} \sX^{2}} )^{\Ell}
  ( 1-\alphaX )^{\m-\Ell}
  \\
  &\dCmt{by partitioning the values of the index of}
  \\
  &\dCmtx{summation according to \(  \Ell = \| \Vec{\rX} \|_{0}  \)}
\XXX{
  \\
  &=
  \inf_{\sX \geq 0}
  e^{-\alphaX \m \sX \tX}
  \left( \alphaX e^{\frac{1}{2} \sX^{2}} + 1 - \alphaX \right)^{\m}
  \\
  &\dCmt{by the binomial theorem}
}
  \\
  &=
  \inf_{\sX \geq 0}
  e^{-\alphaX \m \sX \tX}
  \left( 1 + \alphaX ( e^{\frac{1}{2} \sX^{2}} - 1 ) \right)^{\m}
  \\
  &\dCmt{by the binomial theorem}
\XXX{
  \\
  &\leq
  \inf_{\sX \geq 0}
  e^{-\alphaX \m \sX \tX}
  e^{\alphaX \m ( e^{\frac{1}{2} \sX^{2}} - 1 )}
  \\
  &\dCmt{\(  {\textstyle 1 + \alphaX ( e^{\frac{1}{2} \sX^{2}} - 1 ) = e^{\log( 1 + \alphaX ( e^{\frac{1}{2} \sX^{2}} - 1 ) )} \leq e^{\alphaX ( e^{\frac{1}{2} \sX^{2}} - 1 )}}  \),}
  \\
  &\dCmtIndent\text{where the inequality is due to a well-known result}
}
  \\
  &\leq
  \inf_{\sX \geq 0}
  e^{-\alphaX \m ( \sX \tX - e^{\frac{1}{2} \sX^{2}} + 1 )}
  .\\
  &\dCmt{by a well-known inequality, \(  \log( 1+u ) \leq u  \) for \(  u > -1  \)} 
\end{align*}
As discussed earlier in the proof of \LEMMA \ref{lemma:concentration-ineq:noiseless}, \(  \sX \tX - e^{\frac{1}{2} \sX^{2}} + 1  \) is maximized with respect to \(  \sX  \) for \(  \sX, \tX \in (0,1)  \) when roughly \(  \sX \approx \tX  \) because
\begin{gather*}
  \left. \frac{\partial}{\partial \sX} \sX \tX - e^{\frac{1}{2} \sX^{2}} + 1 \right|_{\sX=\tX}
  =
  \left. \tX - \sX e^{\frac{1}{2} \sX^{2}} \right|_{\sX=\tX}
  \approx
  0
\end{gather*}
for small \(  \sX, \tX > 0  \), and because for all \(  \sX \in \R  \),
\begin{align*}
  \frac{\partial^{2}}{\partial \sX^{2}} \sX \tX - e^{\frac{1}{2} \sX^{2}} + 1
  =
  -( 1+\sX^{2} ) e^{\frac{1}{2} \sX^{2}}
  <
  0
.\end{align*}
Hence, we will take \(  \sX=\tX  \).
In addition, recall that
\begin{gather*}
  \left. \sX \tX - e^{\frac{1}{2} \sX^{2}} + 1 \right|_{\sX=\tX}
  \geq
  \frac{\tX^{2}}{3}
.\end{gather*}
It follows that
\begin{gather*}
  \Pr \left(
    \WRV - \E[ \WRV ] > \alphaX \tX
  \right)
  \leq
  e^{-\frac{1}{3} \alphaX \m \tX^{2}}
,\end{gather*}
and therefore, due to the design of \(  \WRV  \),
\begin{gather*}
  \Pr \left(
    \left\langle \frac{\hfFn[\JCoordsX]( \thetaStar, \thetaStar )}{\sqrt{2\pi}} , \thetaStar \right\rangle
    -
    \E \left[ \left\langle \frac{\hfFn[\JCoordsX]( \thetaStar, \thetaStar )}{\sqrt{2\pi}} , \thetaStar \right\rangle \right]
    >
    \alphaX \tX
  \right)
  \leq
  e^{-\frac{1}{3} \alphaX \m \tX^{2}}
.\end{gather*}
Moreover, by a nearly identical argument (omitted here), the other side of the bound is obtained:
\begin{gather*}
  \Pr \left(
    \WRV - \E[ \WRV ] < -\alphaX \tX
  \right)
  =
  \Pr \left(
    -\WRV - \E[ -\WRV ] > \alphaX \tX
  \right)
  \leq
  e^{-\frac{1}{3} \alphaX \m \tX^{2}}
,\end{gather*}
and thus,
\begin{gather*}
  \Pr \left(
    \left\langle \frac{\hfFn[\JCoordsX]( \thetaStar, \thetaStar )}{\sqrt{2\pi}} , \thetaStar \right\rangle
    -
    \E \left[ \left\langle \frac{\hfFn[\JCoordsX]( \thetaStar, \thetaStar )}{\sqrt{2\pi}} , \thetaStar \right\rangle \right]
    <
    -\alphaX \tX
  \right)
  \leq
  e^{-\frac{1}{3} \alphaX \m \tX^{2}}
.\end{gather*}
Combining the above inequalities into a two-sided bound via a union bound yields:
\begin{gather*}
  \Pr \left(
    \left|
    \left\langle \frac{\hfFn[\JCoordsX]( \thetaStar, \thetaStar )}{\sqrt{2\pi}} , \thetaStar \right\rangle
    -
    \E \left[ \left\langle \frac{\hfFn[\JCoordsX]( \thetaStar, \thetaStar )}{\sqrt{2\pi}} , \thetaStar \right\rangle \right]
    \right|
    >
    \alphaX \tX
  \right)
  \leq
  2 e^{-\frac{1}{3} \alphaX \m \tX^{2}}
.\end{gather*}
Lastly, union bounding over all \(  \JCoordsX \in \JSX  \), the desired uniform concentration inequality follows:
\begin{gather*}
  \Pr \left(
    \ExistsST{\JCoordsX \in \JSX}
    {\left|
    \left\langle \frac{\hfFn[\JCoordsX]( \thetaStar, \thetaStar )}{\sqrt{2\pi}} , \thetaStar \right\rangle
    -
    \E \left[ \left\langle \frac{\hfFn[\JCoordsX]( \thetaStar, \thetaStar )}{\sqrt{2\pi}} , \thetaStar \right\rangle \right]
    \right|
    >
    \alphaX \tX}
  \right)
  \leq
  2 | \JSX | e^{-\frac{1}{3} \alphaX \m \tX^{2}}
.\end{gather*}


\subsubsection{Proof of the \EQUATIONS \eqref{eqn:lemma:concentration-ineq:noisy:pr:2} and \eqref{eqn:lemma:concentration-ineq:noisy:ev:2}}
\label{outline:concentration-ineq|pf-noisy|2}

We begin with some preliminary analysis to characterize a few random variables of interest.
As in the derivations of \EQUATIONS \eqref{eqn:lemma:concentration-ineq:noisy:pr:1} and \eqref{eqn:lemma:concentration-ineq:noisy:ev:1} in \SECTION \ref{outline:concentration-ineq|pf-noisy|1}, consider an arbitrary coordinate subset
\(  \JCoordsX \in \JSX  \),
recall \EQUATION \eqref{eqn:pf:lemma:concentration-ineq:noisy:16} and \eqref{eqn:pf:lemma:concentration-ineq:noisy:1}:
\begin{gather*}
  \frac{1}{\sqrt{2\pi}} \hfFn[\JCoordsX]( \thetaStar, \thetaStar )
  =
  -\frac{1}{\m}
  \sum_{\iIx=1}^{\m}
  \CovVX\VIx{\iIx}
  \sep \Sign( \langle \CovVX\VIx{\iIx}, \thetaStar \rangle )
  \sep \I( \fFn( \langle \CovVX\VIx{\iIx}, \thetaStar \rangle ) \neq \Sign( \langle \CovVX\VIx{\iIx}, \thetaStar \rangle ) )
  ,\\
  \left\langle \frac{1}{\sqrt{2\pi}} \hfFn[\JCoordsX]( \thetaStar, \thetaStar ), \thetaStar \right\rangle
  =
  -\frac{1}{\m}
  \sum_{\iIx=1}^{\m}
  \langle \CovVX\VIx{\iIx}, \thetaStar \rangle
  \sep \Sign( \langle \CovVX\VIx{\iIx}, \thetaStar \rangle )
  \sep \I( \fFn( \langle \CovVX\VIx{\iIx}, \thetaStar \rangle ) \neq \Sign( \langle \CovVX\VIx{\iIx}, \thetaStar \rangle ) )
,\end{gather*}
where
\(  \CovVX\VIx{\iIx} \defeq \ThresholdSet{\Supp( \thetaStar ) \cup \JCoordsX}( \CovV\VIx{\iIx} )  \).
Thus,
\begin{align*}
  &
  \frac{1}{\sqrt{2\pi}} \hfFn[\JCoordsX]( \thetaStar, \thetaStar )
  -
  \left\langle \frac{1}{\sqrt{2\pi}} \hfFn[\JCoordsX]( \thetaStar, \thetaStar ), \thetaStar \right\rangle
  \thetaStar
  \\
  &\AlignIndent=
  -\frac{1}{\m}
  \sum_{\iIx=1}^{\m}
  \left( \CovVX\VIx{\iIx} - \langle \CovVX\VIx{\iIx}, \thetaStar \rangle \thetaStar \right)
  \sep \Sign( \langle \CovVX\VIx{\iIx}, \thetaStar \rangle )
  \sep \I( \fFn( \langle \CovVX\VIx{\iIx}, \thetaStar \rangle ) \neq \Sign( \langle \CovVX\VIx{\iIx}, \thetaStar \rangle ) )
.\end{align*}
Let
\(  \kX \defeq | \Supp( \thetaStar ) \cup \JCoordsX |  \),
and denote the \(  \kX  \)-dimensional subspace of vectors whose support is a (possibly improper) subset of \(  \Supp( \thetaStar ) \cup \JCoordsX  \) by
\(  \Set{V} \defeq \{ \Vec{\vV} \in \R^{\n} : \Supp( \Vec{\vV} ) \subseteq \Supp( \thetaStar ) \cup \JCoordsX \}  \).
Let
\(  \{ \Vec{\vV}\VIx{1}, \dots, \Vec{\vV}\VIx{\kX} \} \subset \Set{V}  \)
be an orthonormal basis of \(  \Set{V}  \), where
\(  \Vec{\vV}\VIx{\kX} = \thetaStar  \).
Then, for each \(  \iIx \in [\m]  \), since the vector \(  \CovVX\VIx{\iIx}  \) is contained in \(  \Set{V}  \), it is orthogonally decomposed with this bases as:
\begin{gather*}
  \CovVX\VIx{\iIx}
  =
  \sum_{\jIx=1}^{\kX}
  \langle \CovVX\VIx{\iIx}, \Vec{\vV}\VIx{\jIx} \rangle \Vec{\vV}\VIx{\jIx}
,\end{gather*}
while \(  \CovVX\VIx{\iIx} - \langle \CovVX\VIx{\iIx}, \thetaStar \rangle \thetaStar  \)---which is likewise an element in the vector subspace \(  \Set{V}  \)---is orthogonally decomposed as:
\begin{align*}
  \CovVX\VIx{\iIx} - \langle \CovVX\VIx{\iIx}, \thetaStar \rangle \thetaStar
  &=
  \sum_{\jIx=1}^{\kX}
  \bigl(
    \langle \CovVX\VIx{\iIx}, \Vec{\vV}\VIx{\jIx} \rangle
    -
    \langle \CovVX\VIx{\iIx}, \thetaStar \rangle
    \langle \thetaStar, \Vec{\vV}\VIx{\jIx} \rangle
  \bigr)
  \Vec{\vV}\VIx{\jIx}
\XXX{
  \\
  &=
  \langle \CovVX\VIx{\iIx} - \langle \CovVX\VIx{\iIx}, \thetaStar \rangle \thetaStar, \Vec{\vV}\VIx{\kX} \rangle \Vec{\vV}\VIx{\kX}
  +
  \sum_{\jIx=1}^{\kX-1}
  \langle \CovVX\VIx{\iIx} - \langle \CovVX\VIx{\iIx}, \thetaStar \rangle \thetaStar, \Vec{\vV}\VIx{\jIx} \rangle \Vec{\vV}\VIx{\jIx}
  \\
  &\dCmt{by separating out the last term from the summation}
}
  \\
  &=
  \bigl(
    \langle \CovVX\VIx{\iIx}, \thetaStar \rangle
    -
    \langle \CovVX\VIx{\iIx}, \thetaStar \rangle
    \langle \thetaStar, \thetaStar \rangle
  \bigr)
  \thetaStar
  +
  \sum_{\jIx=1}^{\kX-1}
  \bigl(
    \langle \CovVX\VIx{\iIx}, \Vec{\vV}\VIx{\jIx} \rangle
    -
    \langle \CovVX\VIx{\iIx}, \thetaStar \rangle
    \langle \thetaStar, \Vec{\vV}\VIx{\jIx} \rangle
  \bigr)
  \Vec{\vV}\VIx{\jIx}
  \\
  &\dCmt{by separating out the \(  \kX\Th  \) term from the summation,}
  \\
  &\dCmtx{and since \(  \Vec{\vV}\VIx{\kX} = \thetaStar  \) by design}
  \\
\XXX{
  &=
  \bigl(
    \langle \CovVX\VIx{\iIx}, \thetaStar \rangle
    -
    \langle \CovVX\VIx{\iIx}, \thetaStar \rangle
  \bigr)
  \thetaStar
  +
  \sum_{\jIx=1}^{\kX-1}
  \langle \CovVX\VIx{\iIx} - \langle \CovVX\VIx{\iIx}, \thetaStar \rangle \thetaStar, \Vec{\vV}\VIx{\jIx} \rangle \Vec{\vV}\VIx{\jIx}
  \\
  &\dCmt{by distributivity and recalling that \(  \langle \thetaStar, \thetaStar \rangle = \| \thetaStar \|_{2}^{2} = 1  \)}
  \\
}
  &=
  \sum_{\jIx=1}^{\kX-1}
  \bigl(
    \langle \CovVX\VIx{\iIx}, \Vec{\vV}\VIx{\jIx} \rangle
    -
    \langle \CovVX\VIx{\iIx}, \thetaStar \rangle
    \langle \thetaStar, \Vec{\vV}\VIx{\jIx} \rangle
  \bigr)
  \Vec{\vV}\VIx{\jIx}
  \\
  &\dCmt{recalling that \(  \langle \thetaStar, \thetaStar \rangle = \| \thetaStar \|_{2}^{2} = 1  \)}
   \\
  &=
  \sum_{\jIx=1}^{\kX-1}
  \langle \CovVX\VIx{\iIx}, \Vec{\vV}\VIx{\jIx} \rangle \Vec{\vV}\VIx{\jIx}
  .\\
  &\dCmt{for \(  \jIx \neq \kX  \), \(  \langle \thetaStar, \Vec{\vV}\VIx{\jIx} \rangle = \langle \Vec{\vV}\VIx{\kX}, \Vec{\vV}\VIx{\jIx} \rangle = 0  \) since \(  \Vec{\vV}\VIx{\jIx} \perp \Vec{\vV}\VIx{\kX}  \) by design}
\end{align*}
Using this orthogonal decomposition,
\begin{align*}
  &
  \frac{1}{\sqrt{2\pi}} \hfFn[\JCoordsX]( \thetaStar, \thetaStar )
  -
  \left\langle \frac{1}{\sqrt{2\pi}} \hfFn[\JCoordsX]( \thetaStar, \thetaStar ), \thetaStar \right\rangle
  \thetaStar
  \\
  &\AlignIndent=
  -\frac{1}{\m}
  \sum_{\iIx=1}^{\m}
  \left( \CovVX\VIx{\iIx} - \langle \CovVX\VIx{\iIx}, \thetaStar \rangle \thetaStar \right)
  \sep
  \Sign( \langle \CovVX\VIx{\iIx}, \thetaStar \rangle )
  \sep
  \I( \fFn( \langle \CovVX\VIx{\iIx}, \thetaStar \rangle ) \neq \Sign( \langle \CovVX\VIx{\iIx}, \thetaStar \rangle ) )
  \\
  &\AlignIndent=
  -\frac{1}{\m}
  \sum_{\iIx=1}^{\m}
  \sum_{\jIx=1}^{\kX-1}
  \langle \CovVX\VIx{\iIx}, \Vec{\vV}\VIx{\jIx} \rangle \Vec{\vV}\VIx{\jIx}
  \sep
  \Sign( \langle \CovVX\VIx{\iIx}, \thetaStar \rangle )
  \sep
  \I( \fFn( \langle \CovVX\VIx{\iIx}, \thetaStar \rangle ) \neq \Sign( \langle \CovVX\VIx{\iIx}, \thetaStar \rangle ) )
\TagEqn{\label{eqn:pf:lemma:concentration-ineq:noisy:17}}
.\end{align*}
Note that by a well-known property of Gaussian vectors, \(  \langle \CovVX\VIx{\iIx}, \Vec{\vV}\VIx{\jIx} \rangle \sim \N(0,1)  \) for each \(  \iIx \in [\m]  \), \(  \jIx \in [\kX]  \), and moreover, due to the orthogonality of \(  \Vec{\vV}\VIx{1}, \dots, \Vec{\vV}\VIx{\kX}  \), the random variables \(  \langle \CovVX\VIx{\iIx}, \Vec{\vV}\VIx{1} \rangle, \dots, \langle \CovVX\VIx{\iIx}, \Vec{\vV}\VIx{\kX} \rangle  \) are mutually independent.
In particular, the random variable \(  \langle \CovVX\VIx{\iIx}, \Vec{\vV}\VIx{\kX} \rangle = \langle \CovVX\VIx{\iIx}, \thetaStar \rangle  \) is independent of every \(  \langle \CovVX\VIx{\iIx}, \Vec{\vV}\VIx{\jIx} \rangle  \), \(  \jIx \in [\kX-1]  \).
Therefore, for every \(  \iIx \in [\m]  \), each \(  \jIx\Th  \) summand, \(  \jIx \in [\kX-1]  \), in \eqref{eqn:pf:lemma:concentration-ineq:noisy:17} follows the same distribution as
\begin{gather}
\label{eqn:pf:lemma:concentration-ineq:noisy:19}
  \langle \CovVX\VIx{\iIx}, \Vec{\vV}\VIx{\jIx} \rangle \Vec{\vV}\VIx{\jIx}
  \sep
  \Sign( \langle \CovVX\VIx{\iIx}, \thetaStar \rangle )
  \sep
  \I( \fFn( \langle \CovVX\VIx{\iIx}, \thetaStar \rangle ) \neq \Sign( \langle \CovVX\VIx{\iIx}, \thetaStar \rangle ) )
  \sim
  \Yi \Zij \Vec{\vV}\VIx{\jIx} = \Uij \Vec{\vV}\VIx{\jIx}
,\end{gather}
where
\(  \Zij[\iIx][1], \dots, \Zij[\iIx][\kX] \sim \N(0,1)  \)
are \iid Gaussian random variables, and where
\begin{gather*}
  \Ri \defeq \I( \fFn( \Zij[\iIx][\kX] ) \neq \Sign( \Zij[\iIx][\kX] ) )
  ,\\
  \Yi \defeq \Sign( \Zij[\iIx][\kX] ) \Ri
  ,\\
  \Uij \defeq \Yi \Zij
.\end{gather*}
Conditioned on \(  \Ri=1  \), the random variable is distributed as \(  ( \Yi \Mid| \Ri \neq 0 ) \sim \{ -1,1 \}  \), uniformly, and is independently of all \(  \Zij  \), \(  \jIx \in [\kX-1]  \).
Hence, conditioned on \(  \Ri  \), the random variable \(  \Uij \Mid| \Ri  \) has a density function given for \(  \zX \in \R  \) and \(  \rX \in \{ 0,1 \}  \) by
\begin{align*}
  \pdf{\Uij \Mid| \Ri}( \zX \Mid| \rX )
  &=
  \begin{cases}
  0, & \cIf \zX \neq 0, \rX=0, \\
  1, & \cIf \zX = 0,    \rX=0, \\
  \pdf{\Zij}( \zX ) \pdf{\Yi \Mid| \Ri}( 1 \Mid| 1 ) + \pdf{-\Zij}( \zX ) \pdf{\Yi \Mid| \Ri}( -1 \Mid| 1 ), & \cIf \rX=1,
  \end{cases}
  \\
  &=
  \begin{cases}
  0, & \cIf \zX \neq 0, \rX=0, \\
  1, & \cIf \zX = 0,    \rX=0, \\
  \frac{1}{2} \pdf{\Zij}( \zX ) + \frac{1}{2} \pdf{-\Zij}( \zX ), & \cIf \rX=1,
  \end{cases}
  \\
  &=
  \begin{cases}
  0, & \cIf \zX \neq 0, \rX=0, \\
  1, & \cIf \zX = 0,    \rX=0, \\
  \frac{1}{\sqrt{2\pi}} e^{-\frac{1}{2} \zX^{2}}, & \cIf \rX=1,
  \end{cases}
\TagEqn{\label{eqn:pf:lemma:concentration-ineq:noisy:18}}
\end{align*}
where the third case on the \RHS of the first equality is due to the law of total probability, the definition of conditional probabilities, the independence of \(  ( \Yi \Mid| \Ri = 1 )  \) and \(  \Zij  \), \(  \jIx \in [\kX-1]  \), and remarks made in the proof of \LEMMA \ref{lemma:concentration-ineq:noiseless}.
\EQUATION \eqref{eqn:pf:lemma:concentration-ineq:noisy:18} implies
\(  ( \Uij \Mid| \Ri=1 ) \sim \Zij  \).
Additionally, due to an earlier discussion in \SECTION \ref{outline:concentration-ineq|pf-noisy|1}, the mass function of the random variable \(  \Ri  \) is given for \(  \rX \in \{ 0,1 \}  \) by
\begin{align*}
  \pdf{\Ri}( \rX )
  =
  \begin{cases}
  1-\alphaX ,& \cIf \rX=0, \\
  \alphaX   ,& \cIf \rX=1.
  \end{cases}
\end{align*}
%
\par 
%
This concludes the preliminary work.
We now proceed to the derivations of \EQUATIONS \eqref{eqn:lemma:concentration-ineq:noisy:pr:2} and \eqref{eqn:lemma:concentration-ineq:noisy:ev:2}, beginning with the latter.
%
\paragraph{Verification of \EQUATION \eqref{eqn:lemma:concentration-ineq:noisy:ev:2}} 
%
It is now possible to verify \EQUATION \eqref{eqn:lemma:concentration-ineq:noisy:ev:2}.
Combining the above arguments with the law of total expectation and the definition of conditional expectations, the expectation of \(  \Uij  \) is calculated as follows:
\begin{align*}
  \E[ \Uij ]
  &=
  \pdf{\Ri}(0) \E[ \Uij \Mid| \Ri=0 ] + \pdf{\Ri}(1) \E[ \Uij \Mid| \Ri=1 ]
  \\
  &=
  ( 1-\alphaX ) 0 + \alphaX \int_{\zX=-\infty}^{\zX=\infty} \frac{1}{\sqrt{2\pi}} \zX e^{-\frac{1}{2} \zX^{2}} d\zX
  \\
  &=
  0
\TagEqn{\label{eqn:pf:lemma:concentration-ineq:noisy:22}}
.\end{align*}
\EQUATION \eqref{eqn:lemma:concentration-ineq:noisy:ev:2} now follows:
\begin{align*}
  \E[ \gfFn[\JCoordsX]( \thetaStar, \thetaStar ) ]
\XXX{
  \\
  &=
  \E \left[
    \frac{1}{\sqrt{2\pi}} \hfFn[\JCoordsX]( \thetaStar, \thetaStar )
    -
    \left\langle \frac{1}{\sqrt{2\pi}} \hfFn[\JCoordsX]( \thetaStar, \thetaStar ), \thetaStar \right\rangle
    \thetaStar
  \right]
  \\
  &\dCmt{by the definition of \(  \gfFn[\JCoordsX]  \) in \EQUATION \eqref{eqn:notations:gfJ:def}}
}
  &=
  -\frac{1}{\m}
  \sum_{\iIx=1}^{\m}
  \sum_{\jIx=1}^{\kX-1}
  \E \left[
    \langle \CovVX\VIx{\iIx}, \Vec{\vV}\VIx{\jIx} \rangle
    \sep
    \Sign( \langle \CovVX\VIx{\iIx}, \thetaStar \rangle )
    \sep
    \I( \fFn( \langle \CovVX\VIx{\iIx}, \thetaStar \rangle ) \neq \Sign( \langle \CovVX\VIx{\iIx}, \thetaStar \rangle ) )
  \right]
  \Vec{\vV}\VIx{\jIx}
  \\
  &\dCmt{by the definition of \(  \gfFn[\JCoordsX]  \) in \EQUATION \eqref{eqn:notations:gfJ:def} and by \EQUATION \eqref{eqn:pf:lemma:concentration-ineq:noisy:17}}
  \\
  &\dCmtx{and the linearity of expectation}
  \\
  &=
  -\frac{1}{\m}
  \sum_{\iIx=1}^{\m}
  \sum_{\jIx=1}^{\kX-1}
  \E[ \Uij ] \Vec{\vV}\VIx{\jIx}
  \\
  &\dCmt{by \EQUATION \eqref{eqn:pf:lemma:concentration-ineq:noisy:19}}
  \\
  &=
  \Vec{0}
  ,\\
  &\dCmt{by \EQUATION \eqref{eqn:pf:lemma:concentration-ineq:noisy:22}}
\end{align*}
as desired.
%
\paragraph{Verification of \EQUATION \eqref{eqn:lemma:concentration-ineq:noisy:pr:2}} 
%
The verification of \EQUATION \eqref{eqn:lemma:concentration-ineq:noisy:pr:2} will largely rely on the work already accomplished above, as well as \LEMMA \ref{lemma:norm-subgaussian:1}, which is restated below for convenience as \LEMMA \ref{lemma:norm-subgaussian:2}.
%
\begin{lemma}
\label{lemma:norm-subgaussian:2}
%
Fix \(  \tXXX, \sigma  > 0  \) and \(  0 < \dX \leq \n  \).
Let
\(  \ICoords \subseteq [\n]  \), \(  | \ICoords | = \dX  \).
Let
\(  \Vec{\XRV} \sim \N( \Vec{0}, \sigma^{2} \sum_{\jIx \in \ICoords} \ej \ej^{\T} )  \).
Then,
\begin{gather}
\label{eqn:lemma:norm-subgaussian:2:1}
  \Pr \left(
    \| \Vec{\XRV} - \E[ \Vec{\XRV} ] \|_{2}
    >
    \sqrt{\dX} \sigma
    +
    \tXXX
  \right)
  \leq
  \Pr \left(
    \| \Vec{\XRV} - \E[ \Vec{\XRV} ] \|_{2}
    >
    \E[ \| \Vec{\XRV} \|_{2} ]
    +
    \tXXX
  \right)
  \leq
  e^{-\frac{1}{2 \sigma^{2}} \tXXX^{2}}
.\end{gather}
\end{lemma}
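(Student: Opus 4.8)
The plan is to observe, first of all, that Lemma \ref{lemma:norm-subgaussian:2} is word-for-word identical to Lemma \ref{lemma:norm-subgaussian:1}, which has already been established above; the restatement appears only for the reader's convenience at the point where it is invoked in the proof of \EQUATION \eqref{eqn:lemma:concentration-ineq:noisy:pr:2}. So strictly speaking nothing new is needed, and the cleanest presentation is a one-line proof that defers to the subproof of Lemma \ref{lemma:norm-subgaussian:1}. For completeness, though, I would re-record the short three-ingredient argument.

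First I would use the hypothesis that \(  \Vec{\XRV}  \) is centered, i.e. \(  \E[ \Vec{\XRV} ] = \Vec{0}  \), to replace \(  \| \Vec{\XRV} - \E[ \Vec{\XRV} ] \|_{2}  \) by \(  \| \Vec{\XRV} \|_{2}  \) throughout. Second I would invoke the standard bound \(  \E[ \| \Vec{\XRV} \|_{2} ] \leq ( \E[ \| \Vec{\XRV} \|_{2}^{2} ] )^{1/2} = \sqrt{\dX}\, \sigma  \), which holds by Jensen's inequality together with the fact that \(  \Vec{\XRV}  \) is supported on \(  \dX = | \ICoords |  \) coordinates, each an independent \(  \N(0,\sigma^{2})  \); this yields the first inequality in the displayed chain, since enlarging the threshold from \(  \E[ \| \Vec{\XRV} \|_{2} ] + \tXXX  \) up to \(  \sqrt{\dX}\,\sigma + \tXXX  \) can only decrease the probability. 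Third I would apply Gaussian concentration of measure for \(  1  \)-Lipschitz functions (equivalently, the Borell--TIS inequality) to the map \(  \Vec{v} \mapsto \| \Vec{v} \|_{2}  \), which is \(  1  \)-Lipschitz, to conclude \(  \Pr( \| \Vec{\XRV} \|_{2} > \E[ \| \Vec{\XRV} \|_{2} ] + \tXXX ) \leq e^{-\tXXX^{2}/(2\sigma^{2})}  \); the effective variance proxy per coordinate is exactly \(  \sigma^{2}  \), which is the constant appearing in the exponent.

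There is no real obstacle here: the only point demanding a sliver of care is tracking the scaling by \(  \sigma^{2}  \) consistently through both the expected-norm bound and the Lipschitz-concentration exponent, and checking the direction of the first inequality (it holds precisely because \(  \E[ \| \Vec{\XRV} \|_{2} ] \leq \sqrt{\dX}\,\sigma  \) makes the event on the right-hand side contain the event on the left-hand side). Since the statement is a verbatim repeat of an already-proved lemma, I would keep the written proof to a single sentence pointing back to the subproof of Lemma \ref{lemma:norm-subgaussian:1}.
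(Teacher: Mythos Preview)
Your proposal is correct and matches the paper's treatment exactly: the paper explicitly introduces Lemma \ref{lemma:norm-subgaussian:2} as a verbatim restatement of Lemma \ref{lemma:norm-subgaussian:1} for convenience, with no separate proof given, and the three-step argument you outline (zero mean, the bound $\E[\|\Vec{\XRV}\|_2] \leq \sqrt{\dX}\,\sigma$, and Lipschitz concentration for the $\ell_2$-norm) is precisely the content of the earlier subproof.
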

%
Recall that for each \(  \jIx \in [\kX]  \),
\begin{gather*}
  \langle \CovVX\VIx{\iIx}, \Vec{\vV}\VIx{\jIx} \rangle \Vec{\vV}\VIx{\jIx}
  \sep
  \Sign( \langle \CovVX\VIx{\iIx}, \thetaStar \rangle )
  \sep
  \I( \fFn( \langle \CovVX\VIx{\iIx}, \thetaStar \rangle ) \neq \Sign( \langle \CovVX\VIx{\iIx}, \thetaStar \rangle ) )
  \sim
  \Yi \Zij \Vec{\vV}\VIx{\jIx} = \Uij \Vec{\vV}\VIx{\jIx}
\end{gather*}
per \EQUATION \eqref{eqn:pf:lemma:concentration-ineq:noisy:19}.
Due to the rotational invariance of Gaussians, we will assume going forward, without loss of generality, that the basis vectors, \(  \Vec{\vV}\VIx{1}, \dots, \Vec{\vV}\VIx{\kX}  \), are simply the first \(  \kX  \) standard basis vectors of \(  \R^{\n}  \), i.e., \(  \Vec{\vV}\VIx{\jIx} = \ej  \), \(  \jIx \in [\kX]  \), where \(  \ej \in \R^{\n}  \) is the vector in which  the \(  \jIx\Th  \) entry is \(  1  \) and all other entries are \(  0  \).
Note that under this assumption, \(  \thetaStar = \Vec{\vV}\VIx{\kX} = \ej[\kX]  \), but this, again, does not lose any generality.
For \(  \jIx \in [\kX-1]  \), let
\begin{gather*}
  \Uj \defeq \frac{1}{\m} \sum_{\iIx=1}^{\m} \Uij
,\end{gather*}
and let
\begin{gather*}
  \Vec{\URV} \defeq \sum_{\jIx=1}^{\kX-1} \Uj \ej
.\end{gather*}
Writing the random vector
\(  \Vec{\RRV} \defeq ( \Ri[1], \dots, \Ri[\m] )  \),
and fixing
\(  \Vec{\rX} \in \{ 0,1 \}^{\m}  \),
notice that
\begin{align*}
  ( \Uj \Mid| \Vec{\RRV}=\Vec{\rX} )
  &=
  \frac{1}{\m} \sum_{\iIx \in \Supp( \Vec{\rX} )} ( \Uij \Mid| \Vec{\RRV}=\Vec{\rX} )
\XXX{
  \\
  &=
  \frac{1}{\m} \sum_{\iIx \in \Supp( \Vec{\rX} )} ( \Uij \Mid| \Ri=\rX[\iIx] )
  \\
  &\dCmt{each \(  \Uij  \), \(  \iIx \in [\m]  \), is independent of \(  \{ \Ri[\iIx'] \}_{\iIx' \neq \iIx}  \)}
}
  \\
  &=
  \frac{1}{\m} \sum_{\iIx \in \Supp( \Vec{\rX} )} ( \Uij \Mid| \Ri=1 )
  \\
  &\dCmt{each \(  \Uij  \), \(  \iIx \in [\m]  \), is independent of \(  \{ \Ri[\iIx'] \}_{\iIx' \neq \iIx}  \),}
  \\
  &\dCmtx{and since for each \(  \iIx \in \Supp( \Vec{\rX} )  \), \(  \rX[\iIx]=1  \)}
  \\
  &\sim
  \frac{1}{\m} \sum_{\iIx \in \Supp( \Vec{\rX} )} \Zij
  ,\\
  &\dCmt{by the density of \(  \Uij \Mid| \Ri=1  \) in \EQUATION \eqref{eqn:pf:lemma:concentration-ineq:noisy:18}}
\end{align*}
and therefore,
\(  ( \Uj \Mid| \Vec{\RRV}=\Vec{\rX} ) \sim \N( 0, \frac{\| \Vec{\rX} \|_{0}}{\m^{2}} )  \).
Moreover, letting
\(  \LRV \defeq \| \Vec{\RRV} \|_{0}  \) and \(  \EllX \in \ZeroTo{\m}  \),
it also happens that
\(  ( \Uj \Mid| \LRV = \EllX ) \sim \N( 0, \frac{\EllX}{\m^{2}} )  \)
since the random variables, \(  \Zij  \), \(  \iIx \in [\m]  \), are \iid
(This can be more formally argued using the density function of the conditioned random variable \(  \Uj \Mid| \Vec{\RRV}  \), combined with the law of probability.)
Writing the \(  ( \kX-1 )  \)-sparse random vector
\begin{align*}
  \Vec{\URV} \defeq \sum_{\jIx=1}^{\kX-1} \Uj \ej
,\end{align*}
it follows that the conditioned random vector \(  \Vec{\URV} \Mid| \LRV = \EllX  \) follows a zero-mean Gaussian distribution such that
\begin{align*}
  ( \Vec{\URV} \Mid| \LRV = \EllX )
  =
  \left( \sum_{\jIx=1}^{\kX-1} \Uj \ej \middle| \LRV = \EllX \right)
  \sim
  \N \left( \Vec{0}, \frac{\EllX}{\m^{2}} \sum_{\jIx=1}^{\kX-1} \ej \ej^{\T} \right)
,\end{align*}
and hence, \(  \Vec{\URV} \Mid| \LRV \leq \EllX  \) is at most \(  \frac{\sqrt{\EllX}}{\m}  \)-\subgaussian with mean
\(  \E[ \Vec{\URV} \Mid| \LRV \leq \EllX ] = \Vec{0}  \)
and support of cardinality
\(  \| \Vec{\URV} \|_{0} = \kX-1  \).
Therefore, by \LEMMA \ref{lemma:norm-subgaussian:2} and standard properties of Gaussians,
\begin{align}
\label{eqn:pf:lemma:concentration-ineq:noisy:20}
  &
  \Pr \left(
    \| \Vec{\URV} - \E[ \Vec{\URV} ] \|_{2}
    >
    \frac{\sqrt{( \kX-1 ) \EllX}}{\m}
    +
    \alphaO \tX
  \middle|
    \LRV \leq \EllX
  \right)
  \\
  &\AlignIndent\leq
  \Pr \left(
    \| \Vec{\URV} - \E[ \Vec{\URV} ] \|_{2}
    >
    \frac{\sqrt{( \kX-1 ) \EllX}}{\m}
    +
    \alphaO \tX
  \middle|
    \LRV = \EllX
  \right)
  \\
  &\AlignIndent\leq
  e^{-\frac{\m^{2} \alphaO^{2} \tX^{2}}{2 \EllX}}
.\end{align}
Additionally, it is folklore that
\(  \LRV = \sum_{\iIx=1}^{\m} \Ri \sim \Binomial( \m, \alphaX )  \)
with
\(  \E[ \LRV ] = \E[ \sum_{\iIx=1}^{\m} \Ri ] = \alphaX \m  \)
(\seeeg \cite{charikar2002similarity}).
Thus, letting
\(  \LRVO \sim \Binomial( \m, \alphaO )  \)
be a binomial random variable with mean
\(  \E[ \LRVO ] = \alphaO \m  \),
by recalling that
\(  \alphaO \defeq \alphaOExpr \geq \alphaX  \),
and by a standard concentration inequality for binomial random variables, for \(  \sXXX \in (0,1)  \),
\begin{gather}
\label{eqn:pf:lemma:concentration-ineq:noisy:21}
  \Pr( \LRV > ( 1+\sXXX ) \alphaO \m )
  \leq
  \Pr( \LRVO > ( 1+\sXXX ) \alphaO \m )
  \leq
  e^{-\frac{1}{3} \alphaO \m \sXXX^{2}}
.\end{gather}
Applying the law of total probability gives way to
\begin{align*}
  &\negphantom{\AlignSp}
  \Pr \left(
    \| \Vec{\URV} - \E[ \Vec{\URV} ] \|_{2}
    >
    \sqrt{\frac{\alphaO ( 1+\sXXX ) ( \kX-1 )}{\m} }
    +
    \alphaO \tX
  \right)
  \\
  &=
  \Pr \left(
    \| \Vec{\URV} - \E[ \Vec{\URV} ] \|_{2} > \sqrt{\frac{\alphaO ( 1+\sXXX ) ( \kX-1 ) }{\m} } + \alphaO \tX
  \middle|
    \LRV \leq ( 1+\sXXX ) \alphaO \m
  \right)
  \Pr( \LRV \leq ( 1+\sXXX ) \alphaO \m )
  \\
  &\AlignSp+
  \Pr \left(
    \| \Vec{\URV} - \E[ \Vec{\URV} ] \|_{2} > \sqrt{\frac{\alphaO ( 1+\sXXX ) ( \kX-1 )}{\m} } + \alphaO \tX
  \middle|
    \LRV > ( 1+\sXXX ) \alphaO \m
  \right)
  \Pr( \LRV > ( 1+\sXXX ) \alphaO \m )
  \\
  &\dCmt{by the law of total probability and the definition of conditional probabilities}
  \\
  &\leq
  \Pr \left(
    \| \Vec{\URV} - \E[ \Vec{\URV} ] \|_{2} > \sqrt{\frac{\alphaO ( 1+\sXXX ) ( \kX-1 )}{\m} } + \alphaO \tX
  \middle|
    \LRV \leq ( 1+\sXXX ) \alphaO \m
  \right)
  +
  \Pr( \LRV > ( 1+\sXXX ) \alphaO \m )
  \\
  &\leq
  e^{-\frac{1}{2 ( 1+\sXXX )} \alphaO \m \tX^{2}}
  +
  e^{-\frac{1}{3} \alphaO \m \sXXX^{2}}
  \TagEqn{\label{eqn:pf:lemma:concentration-ineq:noisy:2:1}}
  .\\
  &\dCmt{by \EQUATIONS \eqref{eqn:pf:lemma:concentration-ineq:noisy:20} and \eqref{eqn:pf:lemma:concentration-ineq:noisy:21} and because \(  {\textstyle \frac{\alphaO^{2} \m^{2} \tX^{2}}{2 \alphaO \m ( 1+\sXXX )} = \frac{\alphaO \m \tX^{2}}{2 ( 1+\sXXX )}}  \)}
\end{align*}
For an arbitrary fixing of \(  \JCoordsX \in \JSX  \), the variables \(  \kX  \) and \(  \kOX  \) satisfy
\begin{gather*}
  \kX = | \Supp( \thetaStar ) \cup \JCoordsX | \leq \min \{ | \Supp( \thetaStar ) | + | \JCoordsX |, \n \} \leq \min \{ \k + \max_{\JCoordsX' \in \JSX} | \JCoordsX' |, \n \} = \kOX
.\end{gather*}
It follows from this and the above bound in \EQUATION \eqref{eqn:pf:lemma:concentration-ineq:noisy:2:1} that
\begin{align*}
  &\negphantom{\AlignIndent}
  \Pr \left(
    \left\| \frac{\gfFn[\JCoordsX]( \thetaStar, \thetaStar )}{\sqrt{2\pi}}  - \E \left[ \frac{\gfFn[\JCoordsX]( \thetaStar, \thetaStar )}{\sqrt{2\pi}}  \right]  \right\|_{2}
    >
    \sqrt{\frac{\alphaO ( 1+\sXXX ) ( \kOX-1 )}{\m} }
    +
    \alphaO \tX
  \right)
  \\
  &\leq
  \Pr \left(
    \left\| \frac{\gfFn[\JCoordsX]( \thetaStar, \thetaStar )}{\sqrt{2\pi}}  - \E \left[ \frac{\gfFn[\JCoordsX]( \thetaStar, \thetaStar )}{\sqrt{2\pi}}  \right]  \right\|_{2}
    >
    \sqrt{\frac{\alphaO ( 1+\sXXX ) ( \kX-1 )}{\m} }
    +
    \alphaO \tX
  \right)
  \\
  &\dCmt{due to the above remark that \(  \kX \leq \kOX  \)}
  \\
  &\leq
  e^{-\frac{1}{2 ( 1+\sXXX )} \alphaO \m \tX^{2}}
  +
  e^{-\frac{1}{3} \alphaO \m \sXXX^{2}}
  \TagEqn{\label{eqn:pf:lemma:concentration-ineq:noisy:2:2}}
  .\\
  &\dCmt{by \EQUATION \eqref{eqn:pf:lemma:concentration-ineq:noisy:2:1} and the definition of \(  \Vec{\URV}  \)}
\end{align*}
To obtain a uniform result over all \(  \JCoordsX \in \JSX  \), a union bound over \(  \JSX  \) can be applied to the probability corresponding to the first term on the \RHS of the above inequality, \eqref{eqn:pf:lemma:concentration-ineq:noisy:2:2}, yielding \EQUATION \eqref{eqn:lemma:concentration-ineq:noisy:pr:2}:
\begin{gather*}
  \Pr \left(
    \ExistsST{\JCoordsX \in \JSX}
    {\left\| \frac{\gfFn[\JCoordsX]( \thetaStar, \thetaStar )}{\sqrt{2\pi}}  - \E \left[ \frac{\gfFn[\JCoordsX]( \thetaStar, \thetaStar )}{\sqrt{2\pi}}  \right]  \right\|_{2}
    >
    \sqrt{\frac{\alphaO ( 1+\sXXX ) ( \kOX-1 )}{\m} }
    +
    \alphaO \tX}
  \right)
  \\
  \leq
  | \JSX | e^{-\frac{1}{2 ( 1+\sXXX )} \alphaO \m \tX^{2}}
  +
  e^{-\frac{1}{3} \alphaO \m \sXXX^{2}}
,\end{gather*}
as desired.
\end{proof}


\subsubsection{Proof of \LEMMA \ref{lemma:pf:lemma:concentration-ineq:noisy:f1,f2}}
\label{outline:concentration-ineq|pf-noisy|pf-f1,f2}

This section establishes the auxiliary result, \LEMMA \ref{lemma:pf:lemma:concentration-ineq:noisy:f1,f2}, stated and used in the proof of \LEMMA \ref{lemma:concentration-ineq:noisy}.

\begin{proof}
{\LEMMA \ref{lemma:pf:lemma:concentration-ineq:noisy:f1,f2}}
\checkoff%
Recall the definitions of the functions \(  \fFnX, \fFnXX : \R \to \R  \) from \EQUATIONS \eqref{eqn:pf:lemma:concentration-ineq:noisy:f1} and \eqref{eqn:pf:lemma:concentration-ineq:noisy:f2}, respectively:
\begin{gather*}
  \fFnX( \sX )
  \defeq
  \frac{1}{\sqrt{2\pi}}
  e^{-\sX \muX[1]}
  \int_{\zX=0}^{\zX=\infty}
  e^{-\frac{1}{2} (\zX-\sX)^{2}} (\pExpr{\zX})
  d\zX
  =
  \frac{1}{\sqrt{2\pi}}
  e^{-\sX \muX[1]}
  \int_{\zX=0}^{\zX=\infty}
  e^{-\frac{1}{2} (\zX-\sX)^{2}} \pExprFn( \zX )
  d\zX
  ,\\
  \fFnXX( \sX )
  \defeq
  \frac{1}{\sqrt{2\pi}}
  e^{\sX \muX[1]}
  \int_{\zX=0}^{\zX=\infty}
  e^{-\frac{1}{2} (\zX+\sX)^{2}} (\pExpr{\zX})
  d\zX
  =
  \frac{1}{\sqrt{2\pi}}
  e^{\sX \muX[1]}
  \int_{\zX=0}^{\zX=\infty}
  e^{-\frac{1}{2} (\zX+\sX)^{2}} \pExprFn( \zX )
  d\zX
,\end{gather*}
where
\(  \muX[1] = \frac{\sqrt{\hfrac{2}{\pi}}-\gammaX}{2\alphaX}  \)
and
\(  \pExprFn( \zX ) = \pExpr{\zX}  \), \(  \zX \in \R  \).
Due to \CONDITION \ref{condition:assumption:p:i} of \ASSUMPTION \ref{assumption:p}, the function \(  \pFn  \) is nondecreasing over the real line, which implies that \(  \pExprFn  \) is nonincreasing.
Additionally, by \CONDITION \ref{condition:assumption:p:ii} of \ASSUMPTION \ref{assumption:p}, \(  \pExprFn  \) satisfies
\begin{gather*}
  \frac{\pExprFn( \zX+\wX )}{\pExprFn( \zX )}
  \geq
  \frac{\pExprFn( \zXX+\wX )}{\pExprFn( \zXX )}
\end{gather*}
for
\(  \zX \leq \zXX \in [0,\infty)  \) and \(  \wX > 0  \).
%
\par 
%
Next, we will establish the lemma's result for \(  \fFnX( 0 )  \) and \(  \fFnXX( 0 )  \).
%
\paragraph{Verification of \EQUATIONS \eqref{eqn:pf:lemma:concentration-ineq:noisy:f1(0)} and \eqref{eqn:pf:lemma:concentration-ineq:noisy:f2(0)}}
%
\EQUATIONS \eqref{eqn:pf:lemma:concentration-ineq:noisy:f1(0)} and \eqref{eqn:pf:lemma:concentration-ineq:noisy:f2(0)} in the lemma are simple to verify:
\begin{align*}
  \fFnX( 0 )
  &=
  \frac{1}{\sqrt{2\pi}}
  e^{-0 \muX[1]}
  \int_{\zX=0}^{\zX=\infty}
  e^{-\frac{1}{2} (\zX-0)^{2}} (\pExpr{\zX})
  d\zX
  \\
  &=
  \frac{1}{\sqrt{2\pi}}
  \int_{\zX=0}^{\zX=\infty}
  e^{-\frac{1}{2} \zX^{2}} (\pExpr{\zX})
  d\zX
  \\
  &=
  \alphaX
,\end{align*}
and likewise,
\begin{align*}
  \fFnXX( 0 )
  &=
  \frac{1}{\sqrt{2\pi}}
  e^{0 \muX[1]}
  \int_{\zX=0}^{\zX=\infty}
  e^{-\frac{1}{2} (\zX+0)^{2}} (\pExpr{\zX})
  d\zX
  \\
  &=
  \frac{1}{\sqrt{2\pi}}
  \int_{\zX=0}^{\zX=\infty}
  e^{-\frac{1}{2} \zX^{2}} (\pExpr{\zX})
  d\zX
  \\
  &=
  \alphaX
,\end{align*}
where the last equality in each derivation follows directly from the definition of \(  \alphaX  \) in \EQUATION \eqref{eqn:notations:alpha:def}.
%
\paragraph{Verification of \EQUATION \eqref{eqn:pf:lemma:concentration-ineq:noisy:f1:ub}} 
%
Moving on to the upper bound on \(  \fFnX  \) in \EQUATION \eqref{eqn:pf:lemma:concentration-ineq:noisy:f1:ub}, it suffices to show that
\(  \frac{d}{d\sX} \fFnX( \sX ) \leq 0  \)
for all \(  \sX \geq 0  \) since this implies, by basic calculus, that
\(  \fFnX( \sX ) \leq \fFnX( 0 )  \)
over the interval \(  \sX \in [0,\infty)  \).
Observe:
\begin{gather*}
  \frac{d}{d\sX} \fFnX( \sX )
  =
  e^{-\sX \muX[1]}
  \frac{1}{\sqrt{2\pi}}
  \int_{\zX=0}^{\zX=\infty}
  ( \zX-\sX-\muX[1] )
  e^{-\frac{1}{2} ( \zX-\sX )^{2}}
  \pExprFn( \zX )
  d\zX
.\end{gather*}
When \(  \sX=0  \), the desired inequality,
\(  \frac{d}{d\sX} \fFnX( \sX ) \leq 0  \),
is true:
\begin{align*}
  \frac{d}{d\sX} \fFnX( 0 )
  &=
  \frac{1}{\sqrt{2\pi}}
  \int_{\zX=0}^{\zX=\infty}
  ( \zX-\muX[1] )
  e^{-\frac{1}{2} \zX^{2}}
  \pExprFn( \zX )
  d\zX
  \\
  &=
  \frac{1}{\sqrt{2\pi}}
  \int_{\zX=0}^{\zX=\infty}
  \zX
  e^{-\frac{1}{2} \zX^{2}}
  \pExprFn( \zX )
  d\zX
  -
  \muX[1]
  \frac{1}{\sqrt{2\pi}}
  \int_{\zX=0}^{\zX=\infty}
  e^{-\frac{1}{2} \zX^{2}}
  \pExprFn( \zX )
  d\zX
  \\
  &=
  \frac{\sqrt{\hfrac{2}{\pi}}-\gammaX}{2}
  -
  \frac{\sqrt{\hfrac{2}{\pi}}-\gammaX}{2\alphaX}
  \alphaX
  \\
  &\dCmt{by the definitions of \(  \alphaX, \gammaX  \) in \EQUATIONS \eqref{eqn:notations:alpha:def} and \eqref{eqn:notations:gamma:def}, respectively,}
  \\
  &\dCmtIndent\text{and an earlier remark in \EQUATION \eqref{eqn:pf:lemma:concentration-ineq:noisy:7} that \(  {\textstyle \muX[1] = \tfrac{\sqrt{\hfrac{2}{\pi}}-\gammaX}{2\alphaX}}  \)}
  \\
  &=
  0
.\end{align*}
%
\par 
%
On the other hand, the case when \(  \sX > 0  \) will require more work.
\renewcommand{\wX}{\sX}
Notice that
\begin{gather}
\label{eqn:pf:lemma:pf:lemma:concentration-ineq:noisy:f1,f2:1}
  \frac{d}{d\sX} \fFnX( \wX )
  =
  e^{-\sX \muX[1]}
  \frac{1}{\sqrt{2\pi}}
  \int_{\zX=0}^{\zX=\infty}
  ( \zX-\wX-\muX[1] )
  e^{-\frac{1}{2} ( \zX-\wX )^{2}}
  \pExprFn( \zX )
  d\zX
  < 0
\end{gather}
if and only if
\begin{gather}
\label{eqn:pf:lemma:pf:lemma:concentration-ineq:noisy:f1,f2:2}
  \int_{\zX=0}^{\zX=\infty}
  ( \zX-\wX-\muX[1] )
  e^{-\frac{1}{2} ( \zX-\wX )^{2}}
  \pExprFn( \zX )
  d\zX
  < 0
,\end{gather}
and similarly,
\begin{gather}
\label{eqn:pf:lemma:pf:lemma:concentration-ineq:noisy:f1,f2:3}
  \frac{d}{d\sX} \fFnX( 0 )
  =
  \frac{1}{\sqrt{2\pi}}
  \int_{\zX=0}^{\zX=\infty}
  ( \zX-\muX[1] )
  e^{-\frac{1}{2} \zX^{2}}
  \pExprFn( \zX )
  d\zX
  = 0
\end{gather}
if and only if
\begin{gather}
\label{eqn:pf:lemma:pf:lemma:concentration-ineq:noisy:f1,f2:4}
  \int_{\zX=0}^{\zX=\infty}
  ( \zX-\muX[1] )
  e^{-\frac{1}{2} \zX^{2}}
  \pExprFn( \zX )
  d\zX
  = 0
.\end{gather}
We already have that
\(  \frac{d}{d\sX} \fFnX( 0 ) = 0  \),
which implies by the above observation that \EQUATION \eqref{eqn:pf:lemma:pf:lemma:concentration-ineq:noisy:f1,f2:4} also holds.
%
\par 
%
The next argument focuses in on the former biconditional statement---in particular, the establishment of \EQUATION \eqref{eqn:pf:lemma:pf:lemma:concentration-ineq:noisy:f1,f2:2}.
To derive \EQUATION \eqref{eqn:pf:lemma:pf:lemma:concentration-ineq:noisy:f1,f2:2}, the interval of integration on its \LHS is partitioned into three intervals:
\begin{align*}
  &\negphantom{\AlignSp}
  \int_{\zX=0}^{\zX=\infty}
  ( \zX-\wX-\muX[1] )
  e^{-\frac{1}{2} ( \zX-\wX )^{2}}
  \pExprFn( \zX )
  d\zX
  \\
  &=
  \int_{\zX=0}^{\zX=\wX}
  ( \zX-\wX-\muX[1] )
  e^{-\frac{1}{2} ( \zX-\wX )^{2}}
  \pExprFn( \zX )
  d\zX
  +
  \int_{\zX=\wX}^{\zX=\wX+\muX[1]}
  ( \zX-\wX-\muX[1] )
  e^{-\frac{1}{2} ( \zX-\wX )^{2}}
  \pExprFn( \zX )
  d\zX
  \\
  &\AlignSp+
  \int_{\zX=\wX+\muX[1] }^{\zX=\infty}
  ( \zX-\wX-\muX[1] )
  e^{-\frac{1}{2} ( \zX-\wX )^{2}}
  \pExprFn( \zX )
  d\zX
  \\
  &=
  \int_{\zX=-\wX}^{\zX=0}
  ( \zX-\muX[1] )
  e^{-\frac{1}{2} \zX^{2}}
  \pExprFn( \zX+\wX )
  d\zX
  +
  \int_{\zX=0}^{\zX=\muX[1]}
  ( \zX-\muX[1] )
  e^{-\frac{1}{2} \zX^{2}}
  \pExprFn( \zX+\wX )
  d\zX
  \\
  &\AlignSp+
  \int_{\zX=\muX[1] }^{\zX=\infty}
  ( \zX-\muX[1] )
  e^{-\frac{1}{2} \zX^{2}}
  \pExprFn( \zX+\wX )
  d\zX
\TagEqn{\label{eqn:pf:lemma:pf:lemma:concentration-ineq:noisy:f1,f2:5}}
,\end{align*}
where the second equality applies a change of variables.
Clearly, the first of the three integrals in the last expression in \eqref{eqn:pf:lemma:pf:lemma:concentration-ineq:noisy:f1,f2:5} is negative when \(  \sX > 0  \):
\begin{gather}
\label{eqn:pf:lemma:pf:lemma:concentration-ineq:noisy:f1,f2:9}
  \int_{\zX=-\wX}^{\zX=0}
  ( \zX-\muX[1] )
  e^{-\frac{1}{2} \zX^{2}}
  \pExprFn( \zX+\wX )
  d\zX
  < 0
,\end{gather}
and thus, if the second and third integrals in the last expression in \eqref{eqn:pf:lemma:pf:lemma:concentration-ineq:noisy:f1,f2:5} sum to a nonpositive value, then \EQUATION \eqref{eqn:pf:lemma:pf:lemma:concentration-ineq:noisy:f1,f2:2}---and hence also \EQUATION \eqref{eqn:pf:lemma:pf:lemma:concentration-ineq:noisy:f1,f2:1}---will hold.
We will now show that this nonpositivity
indeed occurs.
Note the following property of an expression related to the integrand:
\begin{gather*}
  ( \zX-\muX[1] )
  e^{-\frac{1}{2} \zX^{2}}
  \pExprFn( \zX )
  \leq 0
  ,\quad
  \zX \in [0,\muX[1]]
  ,\\
  ( \zX-\muX[1] )
  e^{-\frac{1}{2} \zX^{2}}
  \pExprFn( \zX )
  \geq 0
  ,\quad
  \zX \in [\muX[1],\infty)
,\end{gather*}
which implies that
\begin{gather}
\label{eqn:pf:lemma:pf:lemma:concentration-ineq:noisy:f1,f2:10}
  - ( \zX-\muX[1] )
  e^{-\frac{1}{2} \zX^{2}}
  \pExprFn( \zX )
  =
  | ( \zX-\muX[1] )
  e^{-\frac{1}{2} \zX^{2}}
  \pExprFn( \zX ) |
  ,\quad
  \zX \in [0,\muX[1]]
  ,\\
\label{eqn:pf:lemma:pf:lemma:concentration-ineq:noisy:f1,f2:11}
  ( \zX-\muX[1] )
  e^{-\frac{1}{2} \zX^{2}}
  \pExprFn( \zX )
  =
  | ( \zX-\muX[1] )
  e^{-\frac{1}{2} \zX^{2}}
  \pExprFn( \zX ) |
  ,\quad
  \zX \in [\muX[1],\infty)
.\end{gather}
Then, for the second integral in \eqref{eqn:pf:lemma:pf:lemma:concentration-ineq:noisy:f1,f2:5}, observe:
\begin{align*}
  \int_{\zX=0}^{\zX=\muX[1]}
  ( \zX-\muX[1] )
  e^{-\frac{1}{2} \zX^{2}}
  \pExprFn( \zX+\wX )
  d\zX
  &=
  \int_{\zX=0}^{\zX=\muX[1]}
  | ( \zX-\muX[1] )
  e^{-\frac{1}{2} \zX^{2}}
  \pExprFn( \zX ) |
  \left( -\frac{\pExprFn( \zX+\wX )}{\pExprFn( \zX )} \right)
  d\zX
  \\
  &\dCmt{by \EQUATION \eqref{eqn:pf:lemma:pf:lemma:concentration-ineq:noisy:f1,f2:10}}
  \\
  &\leq
  \int_{\zX=0}^{\zX=\muX[1]}
  | ( \zX-\muX[1] ) )
  e^{-\frac{1}{2} \zX^{2}}
  \pExprFn( \zX ) |
  \left( -\frac{\pExprFn( \muX[1]+\wX )}{\pExprFn( \muX[1] )} \right)
  d\zX
  \\
  &\dCmt{by \CONDITION \ref{condition:assumption:p:ii} of \ASSUMPTION \ref{assumption:p},}
  \\
  &\dCmtIndent \text{and because \(  \zX \leq \muX[1]  \) for all \(  \zX \in [0, \muX[1]]  \)}
  \\
  &=
  \frac{\pExprFn( \muX[1]+\wX )}{\pExprFn( \muX[1] )}
  \int_{\zX=0}^{\zX=\muX[1]}
  ( \zX-\muX[1] )
  e^{-\frac{1}{2} \zX^{2}}
  \pExprFn( \zX )
  d\zX
\TagEqn{\label{eqn:pf:lemma:pf:lemma:concentration-ineq:noisy:f1,f2:6}}
  .\\
  &\dCmt{by \EQUATION \eqref{eqn:pf:lemma:pf:lemma:concentration-ineq:noisy:f1,f2:10}}
\end{align*}
Similarly, for the third integral from \eqref{eqn:pf:lemma:pf:lemma:concentration-ineq:noisy:f1,f2:5}, observe:
\begin{align*}
  \int_{\zX=\muX[1] }^{\zX=\infty}
  ( \zX-\muX[1] )
  e^{-\frac{1}{2} \zX^{2}}
  \pExprFn( \zX+\wX )
  d\zX
  &=
  \int_{\zX=\muX[1] }^{\zX=\infty}
  | ( \zX-\muX[1] )
  e^{-\frac{1}{2} \zX^{2}}
  \pExprFn( \zX ) |
  \frac{\pExprFn( \zX+\wX )}{\pExprFn( \zX )}
  d\zX
  \\
  &\dCmt{by \EQUATION \eqref{eqn:pf:lemma:pf:lemma:concentration-ineq:noisy:f1,f2:11}}
  \\
  &\leq
  \int_{\zX=\muX[1] }^{\zX=\infty}
  | ( \zX-\muX[1] )
  e^{-\frac{1}{2} \zX^{2}}
  \pExprFn( \zX ) |
  \frac{\pExprFn( \muX[1]+\wX )}{\pExprFn( \muX[1] )}
  d\zX
  \\
  &\dCmt{by \CONDITION \ref{condition:assumption:p:ii} of \ASSUMPTION \ref{assumption:p},}
  \\
  &\dCmtIndent \text{and because \(  \zX \geq \muX[1]  \) for all \(  \zX \in [\muX[1], \infty)  \)}
  \\
  &=
  \frac{\pExprFn( \muX[1]+\wX )}{\pExprFn( \muX[1] )}
  \int_{\zX=\muX[1] }^{\zX=\infty}
  ( \zX-\muX[1] )
  e^{-\frac{1}{2} \zX^{2}}
  \pExprFn( \zX )
  d\zX
\TagEqn{\label{eqn:pf:lemma:pf:lemma:concentration-ineq:noisy:f1,f2:7}}
  .\\
  &\dCmt{by \EQUATION \eqref{eqn:pf:lemma:pf:lemma:concentration-ineq:noisy:f1,f2:11}}
\end{align*}
Then, the sum of the two integrals is bounded from above as follows:
\begin{align*}
  &
  \int_{\zX=0}^{\zX=\muX[1]}
  ( \zX-\muX[1] )
  e^{-\frac{1}{2} \zX^{2}}
  \pExprFn( \zX+\wX )
  d\zX
  +
  \int_{\zX=\muX[1] }^{\zX=\infty}
  ( \zX-\muX[1] )
  e^{-\frac{1}{2} \zX^{2}}
  \pExprFn( \zX+\wX )
  d\zX
  \\
  &\AlignIndent\leq
  \frac{\pExprFn( \muX[1]+\wX )}{\pExprFn( \muX[1] )}
  \int_{\zX=0}^{\zX=\muX[1]}
  ( \zX-\muX[1] )
  e^{-\frac{1}{2} \zX^{2}}
  \pExprFn( \zX )
  d\zX
  +
  \frac{\pExprFn( \muX[1]+\wX )}{\pExprFn( \muX[1] )}
  \int_{\zX=\muX[1] }^{\zX=\infty}
  ( \zX-\muX[1] )
  e^{-\frac{1}{2} \zX^{2}}
  \pExprFn( \zX )
  d\zX
  \\
  &\AlignIndent\dCmt{by \EQUATIONS \eqref{eqn:pf:lemma:pf:lemma:concentration-ineq:noisy:f1,f2:6} and \eqref{eqn:pf:lemma:pf:lemma:concentration-ineq:noisy:f1,f2:7}}
  \\
  &\AlignIndent=
  \frac{\pExprFn( \muX[1]+\wX )}{\pExprFn( \muX[1] )}
  \left(
    \int_{\zX=0}^{\zX=\infty}
    \zX
    e^{-\frac{1}{2} \zX^{2}}
    \pExprFn( \zX )
    d\zX
    -
    \muX[1]
    \int_{\zX=0}^{\zX=\infty}
    e^{-\frac{1}{2} \zX^{2}}
    \pExprFn( \zX )
    d\zX
  \right)
  \\
  &\AlignIndent=
  \frac{\pExprFn( \muX[1]+\wX )}{\pExprFn( \muX[1] )}
  \left(
    \left( 1 - \sqrt{\frac{\pi}{2}} \gammaX \right)
    -
    \frac{\sqrt{\hfrac{2}{\pi}} - \gammaX}{2\alphaX}
    \sqrt{2\pi} \alphaX
  \right)
  \\
  &\AlignIndent\dCmt{by the definitions of \(  \alphaX, \gammaX  \) in \EQUATIONS \eqref{eqn:notations:alpha:def} and \eqref{eqn:notations:gamma:def}, respectively,}
  \\
  &\AlignIndent\dCmtIndent\text{and because \(  {\textstyle \muX[1] = \frac{\sqrt{\hfrac{2}{\pi}} - \gammaX}{2\alphaX}}  \) as in \EQUATION \eqref{eqn:pf:lemma:concentration-ineq:noisy:7}}
  \\
  &\AlignIndent=
  0
\TagEqn{\label{eqn:pf:lemma:pf:lemma:concentration-ineq:noisy:f1,f2:8}}
.\end{align*}
Substituting \EQUATIONS \eqref{eqn:pf:lemma:pf:lemma:concentration-ineq:noisy:f1,f2:9} and \eqref{eqn:pf:lemma:pf:lemma:concentration-ineq:noisy:f1,f2:8} into \EQUATION \eqref{eqn:pf:lemma:pf:lemma:concentration-ineq:noisy:f1,f2:5}, it follows that for \(  \sX > 0  \),
\begin{align*}
  &\negphantom{\AlignSp}
  \int_{\zX=0}^{\zX=\infty}
  ( \zX-\wX-\muX[1] )
  e^{-\frac{1}{2} ( \zX-\wX )^{2}}
  \pExprFn( \zX )
  d\zX
  \\
  &=
  \int_{\zX=-\wX}^{\zX=0}
  ( \zX-\muX[1] )
  e^{-\frac{1}{2} \zX^{2}}
  \pExprFn( \zX+\wX )
  d\zX
  +
  \int_{\zX=0}^{\zX=\muX[1]}
  ( \zX-\muX[1] )
  e^{-\frac{1}{2} \zX^{2}}
  \pExprFn( \zX+\wX )
  d\zX
  \\
  &\AlignSp+
  \int_{\zX=\muX[1] }^{\zX=\infty}
  ( \zX-\muX[1] )
  e^{-\frac{1}{2} \zX^{2}}
  \pExprFn( \zX+\wX )
  d\zX
  \\
  &\dCmt{by \EQUATION \eqref{eqn:pf:lemma:pf:lemma:concentration-ineq:noisy:f1,f2:5}}
  \\
  &<
  \int_{\zX=0}^{\zX=\muX[1]}
  ( \zX-\muX[1] )
  e^{-\frac{1}{2} \zX^{2}}
  \pExprFn( \zX+\wX )
  d\zX
  +
  \int_{\zX=\muX[1] }^{\zX=\infty}
  ( \zX-\muX[1] )
  e^{-\frac{1}{2} \zX^{2}}
  \pExprFn( \zX+\wX )
  \\
  &\dCmt{by \EQUATION \eqref{eqn:pf:lemma:pf:lemma:concentration-ineq:noisy:f1,f2:9}}
  \\
  &\leq
  0
  .\\
  &\dCmt{by \EQUATION \eqref{eqn:pf:lemma:pf:lemma:concentration-ineq:noisy:f1,f2:8}}
\end{align*}
In short, the above work has established that
\begin{align*}
  \int_{\zX=0}^{\zX=\infty}
  ( \zX-\wX-\muX[1] )
  e^{-\frac{1}{2} ( \zX-\wX )^{2}}
  \pExprFn( \zX )
  d\zX
  < 0
\end{align*}
when \(  \sX > 0  \), and that
\(  \frac{d}{d\sX} \fFnX( 0 ) = 0  \).
Therefore, by \EQUATIONS \eqref{eqn:pf:lemma:pf:lemma:concentration-ineq:noisy:f1,f2:1} and \eqref{eqn:pf:lemma:pf:lemma:concentration-ineq:noisy:f1,f2:2}, as well as the earlier discussion, it happens that
\(  \frac{d}{d\sX} \fFnX( \sX ) \leq 0  \)
for all
\(  \sX \geq 0  \).
By basic calculus, this implies that
\begin{gather*}
  \sup_{\sX \geq 0} \fFnX( \sX ) = \fFnX( 0 )
,\end{gather*}
verifying \EQUATION \eqref{eqn:pf:lemma:concentration-ineq:noisy:f1:ub}.
%
\paragraph{Verification of \EQUATION \eqref{eqn:pf:lemma:concentration-ineq:noisy:f2:ub}} 
%
\EQUATION \eqref{eqn:pf:lemma:concentration-ineq:noisy:f2:ub} can be derived through an analogous approach.
As such, most of the analysis to upper bound \(  \fFnXX  \) falls onto showing that
\(  \frac{d}{d\sX} \fFnXX( \sX ) \leq 0  \)
for all \(  \sX \geq 0  \), from which it will directly follow that
\(  \fFnXX( \sX ) \leq \fFnXX( 0 )  \)
for \(  \sX \geq 0  \).
The derivative of \(  \fFnXX  \) with respect to \(  \sX  \) is given by
\begin{gather*}
  \frac{d}{d\sX} \fFnXX( \sX )
  =
  e^{\sX \muX[1]}
  \frac{1}{\sqrt{2\pi}}
  \int_{\zX=0}^{\zX=\infty}
  ( \muX[1]-\zX-\sX )
  e^{-\frac{1}{2} ( \zX+\sX )^{2}}
  \pExprFn( \zX )
  d\zX
.\end{gather*}
At \(  \sX=0  \), this evaluates to
\begin{align*}
  \frac{d}{d\sX} \fFnXX( 0 )
  &=
  \frac{1}{\sqrt{2\pi}}
  \int_{\zX=0}^{\zX=\infty}
  ( \muX[1]-\zX )
  e^{-\frac{1}{2} \zX^{2}}
  \pExprFn( \zX )
  d\zX
  \\
  &=
  \muX[1]
  \frac{1}{\sqrt{2\pi}}
  \int_{\zX=0}^{\zX=\infty}
  e^{-\frac{1}{2} \zX^{2}}
  \pExprFn( \zX )
  d\zX
  -
  \frac{1}{\sqrt{2\pi}}
  \int_{\zX=0}^{\zX=\infty}
  \zX
  e^{-\frac{1}{2} \zX^{2}}
  \pExprFn( \zX )
  d\zX
  \\
  &=
  \frac{\sqrt{\hfrac{2}{\pi}} - \gammaX}{2\alphaX} \alphaX
  -
  \frac{\sqrt{\hfrac{2}{\pi}} - \gammaX}{2}
  \\
  &\dCmt{by the definitions of \(  \alphaX, \gammaX  \) in \EQUATIONS \eqref{eqn:notations:alpha:def} and \eqref{eqn:notations:gamma:def}, respectively,}
  \\
  &\dCmtIndent\text{and an earlier remark in \EQUATION \eqref{eqn:pf:lemma:concentration-ineq:noisy:7} that \(  {\textstyle \muX[1] = \frac{\sqrt{\hfrac{2}{\pi}} - \gammaX}{2\alphaX}}  \)}
  \\
  &=
  0
\TagEqn{\label{eqn:pf:lemma:pf:lemma:concentration-ineq:noisy:f1,f2:f2:7}}
,\end{align*}
which verifies the desired nonpositivity of \(  \frac{d}{d\sX} \fFnXX  \) in the case when \(  \sX=0  \), and which further implies that
\begin{gather}
\label{eqn:pf:lemma:pf:lemma:concentration-ineq:noisy:f1,f2:f2:1}
  \int_{\zX=0}^{\zX=\infty}
  ( \muX[1]-\zX )
  e^{-\frac{1}{2} \zX^{2}}
  \pExprFn( \zX )
  d\zX
  = 0
.\end{gather}
%
\par 
%
On the other hand, towards the case whee \(  \sX > 0  \),
note the following biconditional statement for \(  \wX > 0  \):
\begin{gather}
\label{eqn:pf:lemma:pf:lemma:concentration-ineq:noisy:f1,f2:f2:2}
  \frac{d}{d\sX} \fFnXX( \wX )
  =
  e^{\wX \muX[1]}
  \frac{1}{\sqrt{2\pi}}
  \int_{\zX=0}^{\zX=\infty}
  ( \muX[1]-\zX-\wX )
  e^{-\frac{1}{2} ( \zX+\wX )^{2}}
  \pExprFn( \zX )
  d\zX
  < 0
\end{gather}
if and only if
\begin{gather}
\label{eqn:pf:lemma:pf:lemma:concentration-ineq:noisy:f1,f2:f2:3}
  \int_{\zX=0}^{\zX=\infty}
  ( \muX[1]-\zX-\wX )
  e^{-\frac{1}{2} ( \zX+\wX )^{2}}
  \pExprFn( \zX )
  d\zX
  < 0
.\end{gather}
The next step is establishing the inequality in \eqref{eqn:pf:lemma:pf:lemma:concentration-ineq:noisy:f1,f2:f2:3} for \(  \sX > 0  \).
Throughout the upcoming analysis, take \(  \sX > 0  \) arbitrarily.
The interval of integration appearing on the \LHS of \EQUATION \eqref{eqn:pf:lemma:pf:lemma:concentration-ineq:noisy:f1,f2:f2:3} can be partitioned according to where the integrand takes positive verses nonpositive values:
\(  \zX \in [0, \muX[1]-\wX)  \) and \(  \zX \in [\muX[1]-\wX, \infty)  \),
respectively.
Hence, the integral in \eqref{eqn:pf:lemma:pf:lemma:concentration-ineq:noisy:f1,f2:f2:3} can be rewritten as:
\begin{align*}
  &\negphantom{\AlignSp}
  \int_{\zX=0}^{\zX=\infty}
  ( \muX[1]-\zX-\wX )
  e^{-\frac{1}{2} ( \zX+\wX )^{2}}
  \pExprFn( \zX )
  d\zX
  \\
  &=
  \int_{\zX=0}^{\zX=\muX[1]-\wX}
  ( \muX[1]-\zX-\wX )
  e^{-\frac{1}{2} ( \zX+\wX )^{2}}
  \pExprFn( \zX )
  d\zX
  +
  \int_{\zX=\muX[1]-\wX}^{\zX=\infty}
  ( \muX[1]-\zX-\wX )
  e^{-\frac{1}{2} ( \zX+\wX )^{2}}
  \pExprFn( \zX )
  d\zX
\TagEqn{\label{eqn:pf:lemma:pf:lemma:concentration-ineq:noisy:f1,f2:f2:4}}
.\end{align*}
The first of the two terms on the \RHS of \EQUATION \eqref{eqn:pf:lemma:pf:lemma:concentration-ineq:noisy:f1,f2:f2:4} is bounded from above by
\begin{align*}
  &
  \int_{\zX=0}^{\zX=\muX[1]-\wX}
  ( \muX[1]-\zX-\wX )
  e^{-\frac{1}{2} ( \zX+\wX )^{2}}
  \pExprFn( \zX )
  d\zX
  \\
  &\AlignSp<
  \int_{\zX=0}^{\zX=\muX[1]-\wX}
  ( \muX[1]-\zX )
  e^{-\frac{1}{2} ( \zX+\wX )^{2}}
  \pExprFn( \zX )
  d\zX
  \\
  &\AlignSp<
  \int_{\zX=0}^{\zX=\muX[1]}
  ( \muX[1]-\zX )
  e^{-\frac{1}{2} ( \zX+\wX )^{2}}
  \pExprFn( \zX )
  d\zX
  \\
  &\AlignSp\dCmt{the integrand is nonnegative on the interval \(  \zX \in [0,\muX[1]]  \)}
  \\
  &\AlignSp<
  \int_{\zX=0}^{\zX=\muX[1]}
  ( \muX[1]-\zX )
  e^{-\frac{1}{2} \zX^{2}}
  \pExprFn( \zX )
  d\zX
\TagEqn{\label{eqn:pf:lemma:pf:lemma:concentration-ineq:noisy:f1,f2:f2:5}}
,\end{align*}
while the second term on the \RHS of \eqref{eqn:pf:lemma:pf:lemma:concentration-ineq:noisy:f1,f2:f2:4} is upper bounded by
\begin{align*}
  &\negphantom{\AlignSp}
  \int_{\zX=\muX[1]-\wX}^{\zX=\infty}
  ( \muX[1]-\zX-\wX )
  e^{-\frac{1}{2} ( \zX+\wX )^{2}}
  \pExprFn( \zX )
  d\zX
  \\
  &=
  \int_{\zX=\muX[1]}^{\zX=\infty}
  ( \muX[1]-\zX )
  e^{-\frac{1}{2} \zX^{2}}
  \pExprFn( \zX-\wX )
  d\zX
  \\
  &=
  \int_{\zX=\muX[1]}^{\zX=\infty}
 |  ( \muX[1]-\zX )
  e^{-\frac{1}{2} \zX^{2}} |
  ( -\pExprFn( \zX-\wX ) )
  d\zX
  \\
  &\dCmt{since \(  {\textstyle -( \muX[1]-\zX ) e^{-\frac{1}{2} \zX^{2}} = | ( \muX[1]-\zX ) e^{-\frac{1}{2} \zX^{2}} |}  \) for \(  \zX \geq \muX[1]  \)}
  \\
  &\leq
  \int_{\zX=\muX[1]}^{\zX=\infty}
  | ( \muX[1]-\zX ) )
  e^{-\frac{1}{2} \zX^{2}} |
  ( -\pExprFn( \zX ) )
  d\zX
  \\
  &\dCmt{since \(  \pExprFn  \) is nonincreasing}
  \\
  &=
  \int_{\zX=\muX[1]}^{\zX=\infty}
  ( \muX[1]-\zX )
  e^{-\frac{1}{2} \zX^{2}}
  \pExprFn( \zX )
  d\zX
\TagEqn{\label{eqn:pf:lemma:pf:lemma:concentration-ineq:noisy:f1,f2:f2:6}}
.\end{align*}
Combining \EQUATIONS \eqref{eqn:pf:lemma:pf:lemma:concentration-ineq:noisy:f1,f2:f2:5} and \eqref{eqn:pf:lemma:pf:lemma:concentration-ineq:noisy:f1,f2:f2:6} into \EQUATION \eqref{eqn:pf:lemma:pf:lemma:concentration-ineq:noisy:f1,f2:f2:4} yields:
\begin{align*}
  &\negphantom{\AlignSp}
  \int_{\zX=0}^{\zX=\infty}
  ( \muX[1]-\zX-\wX )
  e^{-\frac{1}{2} ( \zX+\wX )^{2}}
  \pExprFn( \zX )
  d\zX
  \\
  &<
  \int_{\zX=0}^{\zX=\muX[1]}
  ( \muX[1]-\zX )
  e^{-\frac{1}{2} \zX^{2}}
  \pExprFn( \zX )
  d\zX
  +
  \int_{\zX=\muX[1]}^{\zX=\infty}
  ( \muX[1]-\zX )
  e^{-\frac{1}{2} \zX^{2}}
  \pExprFn( \zX )
  d\zX
  \\
  &\dCmt{by \EQUATIONS \eqref{eqn:pf:lemma:pf:lemma:concentration-ineq:noisy:f1,f2:f2:5} and \eqref{eqn:pf:lemma:pf:lemma:concentration-ineq:noisy:f1,f2:f2:6}}
  \\
  &=
  \int_{\zX=0}^{\zX=\infty}
  ( \muX[1]-\zX )
  e^{-\frac{1}{2} \zX^{2}}
  \pExprFn( \zX )
  d\zX
  \\
  &=
  0
  .\\
  &\dCmt{by \EQUATION \eqref{eqn:pf:lemma:pf:lemma:concentration-ineq:noisy:f1,f2:f2:1}}
\end{align*}
Thus, for every \(  \sX > 0  \), \EQUATION \eqref{eqn:pf:lemma:pf:lemma:concentration-ineq:noisy:f1,f2:f2:3} holds, implying that \EQUATION \eqref{eqn:pf:lemma:pf:lemma:concentration-ineq:noisy:f1,f2:f2:2} is also true due to the biconditional relationship between this pair of equations (which was stated earlier in the proof)---that is, it indeed happens that
\(  \frac{d}{d\sX} \fFnXX( \sX ) < 0  \)
for \(  \sX > 0  \).
Moreover, the derivation in \eqref{eqn:pf:lemma:pf:lemma:concentration-ineq:noisy:f1,f2:f2:7} showed that
\(  \frac{d}{d\sX} \fFnXX( 0 ) = 0  \).
It follows that
\(  \frac{d}{d\sX} \fFnXX( \sX ) \leq 0  \)
for all \(  \sX \geq 0  \), and therefore, due to standard facts about derivatives, \EQUATION \eqref{eqn:pf:lemma:concentration-ineq:noisy:f2:ub} holds:
\begin{gather*}
  \sup_{\sX \geq 0} \fFnXX( \sX ) = \fFnXX( 0 )
,\end{gather*}
concluding the proof of \LEMMA \ref{lemma:pf:lemma:concentration-ineq:noisy:f1,f2}.
\renewcommand{\wX}{w} 
\end{proof}



\end{document}